\documentclass[11pt,letterpaper]{amsart}
\usepackage[foot]{amsaddr}

\usepackage{mathtools}
\usepackage{amsmath}
\usepackage[T1]{fontenc}
\usepackage[latin9]{inputenc}
\usepackage{geometry}
\geometry{verbose,letterpaper,tmargin= 1in,bmargin=1in,lmargin=1in,rmargin=1in}
\usepackage{wrapfig}
\usepackage{multicol}
\usepackage{graphicx}
\usepackage{soul}
\usepackage{xcolor}
\usepackage{amssymb}
\usepackage{placeins}
\usepackage{bbm}
\setcounter{tocdepth}{1}
\usepackage{cite}
\usepackage{caption}
\usepackage{enumerate}
\usepackage{afterpage}
\usepackage{enumitem}
\usepackage{bmpsize}
\usepackage{hyperref}
\usepackage{tabu}
\usepackage{enumitem}
\numberwithin{equation}{section}
\usepackage{stmaryrd}
\usepackage{tikz}
\usetikzlibrary{matrix,graphs,arrows,positioning,calc,decorations.markings,shapes.symbols}

\makeatletter
\renewcommand{\email}[2][]{%
  \ifx\emails\@empty\relax\else{\g@addto@macro\emails{,\space}}\fi%
  \@ifnotempty{#1}{\g@addto@macro\emails{\textrm{(#1)}\space}}%
  \g@addto@macro\emails{#2}%
}
\makeatother


\newtheorem{theorem}{Theorem}[section]
\newtheorem{lemma}[theorem]{Lemma}
\newtheorem{proposition}[theorem]{Proposition}

\newtheorem{corollary}[theorem]{Corollary}
{ \theoremstyle{definition}
\newtheorem{definition}[theorem]{Definition}}
{ \theoremstyle{remark}
\newtheorem{remark}[theorem]{Remark}}

\newcommand{\R}{\mathbb{R}}

\newcommand{\ex}{\mathbb{E}}

\newcommand{\pr}{\mathbb{P}}

\newcommand{\weyl}{W^\circ}

\DeclarePairedDelimiter\abs{\lvert}{\rvert}
\DeclareMathOperator{\indic}{\mathbf{1}}

\title{Tightness of Bernoulli Gibbsian line ensembles}
\date{\today}
\author{Evgeni Dimitrov}
\author{Xiang Fang}
\author{Lukas Fesser}
\author{Christian Serio}
\author{Carson Teitler}
\author{Angela Wang}
\author{Weitao Zhu}

\begin{document}

\maketitle

\vspace{-9mm}
\begin{abstract}
A Bernoulli Gibbsian line ensemble $\mathfrak{L} = (L_1, \dots, L_N)$ is the law of the trajectories of $N-1$ independent Bernoulli random walkers $L_1, \dots, L_{N-1}$ with possibly random initial and terminal locations that are conditioned to never cross each other or a given random up-right path $L_N$ (i.e. $L_1 \geq \cdots \geq L_N$). In this paper we investigate the asymptotic behavior of sequences of Bernoulli Gibbsian line ensembles $\mathfrak{L}^N = (L^N_1, \dots, L^N_N)$ when the number of walkers tends to infinity. We prove that if one has mild but uniform control of the one-point marginals of the lowest-indexed (or top) curves $L_1^N$ then the sequence $\mathfrak{L}^N$ is tight in the space of line ensembles. Furthermore, we show that if the top curves $L_1^N$ converge in the finite dimensional sense to the parabolic Airy$_2$ process then $\mathfrak{L}^N$ converge to the parabolic Airy line ensemble.

This project was initiated during the summer research experience for undergraduates (REU) program at Columbia University in 2020.
\end{abstract}

\tableofcontents

%

\vspace{-5mm}
\section{Introduction and main results}\label{Section1}

%
\subsection{Gibbsian line ensembles}\label{Section1.1} In the last several years there has been a significant interest in {\em line ensembles} that satisfy what is known as the {\em Brownian Gibbs property}. A line ensemble is merely a collection of random continuous curves on some interval $\Lambda \subset \mathbb{R}$ (all defined on the same probability space) that are indexed by a set $\Sigma \subset \mathbb{Z}$. In this paper, we will almost exclusively have $\Sigma = \{1, \dots, N\}$ with $N \in \mathbb{N} \cup \{\infty \}$ and if $N = \infty$ we use the convention $\Sigma = \mathbb{N}$. We denote the line ensemble by $\mathcal{L}$ and by $\mathcal{L}_{i}(\omega)(x) :=\mathcal{L}(\omega)(i,x)$ the $i$-th continuous function (or line) in the ensemble, and typically we drop the dependence on $\omega$ from the notation as one does for Brownian motion. We say that a line ensemble $\mathcal{L}$ satisfies the Brownian Gibbs property if it is non-intersecting almost surely, i.e. $\mathcal{L}_i(s) < \mathcal{L}_{i-1}(s)$ for $i = 2, \dots, N$ and $s \in \Lambda$ and it satisfies the following resampling invariance. Suppose we sample $\mathcal{L}$ and fix two times $s, t \in \Lambda$ with $s < t$ and a finite interval $K = \{k_1,k_1 + 1, \dots, k_2\} \subset \Sigma$ with $k_1 \leq k_2$. We can erase the part of the lines $\mathcal{L}_k$ between the points $(s, \mathcal{L}_k(s))$ and $(t, \mathcal{L}_k(t))$ for $k = k_1, \dots, k_2$ and sample independently $k_2 - k_1 + 1$ random curves between these points according to the law of $k_2 - k_1 + 1$ Brownian bridges, which have been conditioned to not cross each other as well as the lines $\mathcal{L}_{k_1 - 1}$ and $\mathcal{L}_{k_2 + 1}$ with the convention that $\mathcal{L}_0 = \infty$ and $\mathcal{L}_{k_2 + 1} = -\infty$ if $k_2 + 1 \not \in \Sigma$. In this way we obtain a new random line ensemble $\mathcal{L}'$, and the essence of the Brownian Gibbs property is that the law of $\mathcal{L}'$ is the same as that of $\mathcal{L}$. The readers can find a precise definition of the Brownian Gibbs property in Definition \ref{DefBGP} but for now they can think of a line ensemble that satisfies the Brownian Gibbs property as $N$ random curves, which locally have the distribution of $N$ avoiding Brownian bridges. 

Part of the interest behind Brownian Gibbsian line ensembles is that they naturally arise in various models in statistical mechanics, integrable probability and mathematical physics. If $N$ is finite, a natural example of a Brownian Gibbsian line ensemble is given by Dyson Brownian motion with $\beta = 2$ (this is the law of $N$ independent one-dimensional Brownian motions all started at the origin and appropriately conditioned to never cross for all positive time). Other important examples of models that satisfy the Brownian Gibbs property include {\em Brownian last passage percolation}, which has been extensively studied recently in \cite{Ham1, Ham2, Ham3, Ham4} and the {\em parabolic Airy line ensemble} $\mathcal{L}^{Airy}$ \cite{Spohn, CorHamA}. The {\em Airy line ensemble} $\mathcal{A}$ was first discovered as a scaling limit of the multi-layer polynuclear growth model in \cite{Spohn}, where its finite dimensional distribution was derived. (The relationship between the Airy and parabolic Airy line ensemble is given by $\mathcal{A}_i(t) = 2^{1/2} \mathcal{L}_i^{Airy}(t) + t^2$ for $i \in \mathbb{N}$.) Subsequently, in \cite{CorHamA} it was shown that the edge of Dyson Brownian motion (or rather a closely related model of {\em Brownian watermelons}) converges uniformly over compacts to the parabolic Airy line ensemble $\mathcal{L}^{Airy}$, see Figure \ref{S1_1}. This stronger notion of convergence was obtained by utilizing the Brownian Gibbs property and the latter has led to the proof of many new and interesting properties of the Airy line ensemble \cite{Ham4, CorHamA, DV18}. Apart from its inherent beautiful structure, the Airy line ensemble plays a distinguished (conjectural) foundational role in the {\em Kardar-Parisi-Zhang (KPZ) universality class} through its relationship to the construction of the {\em Airy sheet} in \cite{DOV18}.

\begin{figure}[h]
\centering
\scalebox{0.70}{\includegraphics{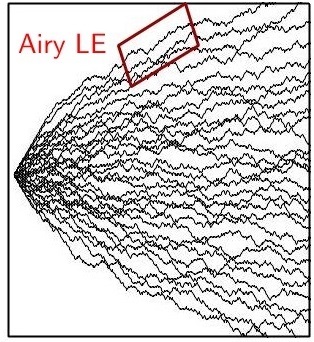}}
\caption{Dyson Brownian motion and the parabolic Airy line ensemble as its edge scaling limit.}
\label{S1_1}
\end{figure}
\vspace{-2.5mm}

The parabolic Airy line ensemble is believed to be a universal scaling limit of not just Dyson Brownian motion but many line ensembles that satisfy a Gibbs property. Recently, it was shown in the preprint \cite{DNV19} that uniform convergence to the parabolic Airy line ensemble holds for sequences of $N$ non-intersecting Bernoulli, geometric, exponential and Poisson random walks started from the origin as $N$ tends to infinity.(We mention that non-intersecting Bernoulli random walkers are equivalent to non-crossing ones after a deterministic shift.) These types of result are reminiscent of Donsker's theorem from classical probability theory, which establishes the convergence of generic random walks to Brownian motion. The difference is that as the number of avoiding walkers is increasing to infinity, one leaves the Gaussian universality class and enters the KPZ universality class. It is worth mentioning that the results in the preprint \cite{DNV19} rely on very precise integrable inputs (exact formulas for the finite dimensional distributions) for the random walkers for each fixed $N$, which are suitable for taking the large $N$ limit -- this is one reason only the packed initial condition is effectively treated. For more general initial conditions, the convergence even in the Bernoulli case, which is arguably the simplest, remains widely open. We also mention that the preprint \cite{DNV19} uses a slightly different notation than what we use in the present paper. Our use of the term Airy line ensemble (denoted by $\mathcal{A}$) agrees with the original definition in \cite{Spohn} and the term parabolic Airy line ensemble (denoted by $\mathcal{L}^{Airy}$) agrees with \cite{CHH19}. On the other hand, the preprint 
\cite{DNV19} calls $\mathcal{A}$ the ``stationary Airy line ensemble'' and $\sqrt{2} \mathcal{L}^{Airy}$ the ``Airy line ensemble''. We have chosen to follow the notation from \cite{CHH19} and not \cite{DNV19} in this paper, as it is more well-established in the field.

The goal of the present paper is to investigate asymptotics of $N$ avoiding Bernoulli random walkers with general (possibly random) initial and terminal conditions in the large $N$ limit. The main questions that motivate our work are:
\begin{enumerate}
\item What are sufficient conditions that ensure that the trajectories of $N$ avoiding Bernoulli random walkers are uniformly tight, meaning that they have uniform weak subsequential limits that are $\mathbb{N}$-indexed line ensembles on $\mathbb{R}$? 
\item What are sufficient conditions that ensure that the trajectories of $N$ avoiding Bernoulli random walkers converge uniformly to the parabolic Airy line ensemble $\mathcal{L}^{Airy}$?  
\end{enumerate}
If $\mathfrak{L}^N = (L^N_1, \dots, L^N_N)$ denotes the trajectories of the $N$ avoiding Bernoulli random walkers (with $L^N_1 \geq L_2^N \geq \cdots \geq L^N_N$) we show that as long as $L^N_1$ under suitable shifts and scales has one-point tight marginals that (roughly) globally approximate an inverted parabola, one can conclude that the whole line ensemble $\mathfrak{L}^N$ under the same shifts and scales is uniformly tight. In other words, having a mild but uniform control of the one-point marginals of the lowest-indexed (or top) curve $L^N_1$ one can conclude that the full line ensemble is tight and moreover any subsequential limit satisfies the Brownian Gibbs property. This is the main result of the paper and appears as Theorem \ref{Thm1}. 

Regarding the second question above, we show that if $L^N_1$ under suitable shifts and scales converges weakly to the {\em parabolic Airy$_2$ process} (the lowest indexed curve in the parabolic Airy line ensemble $\mathcal{L}^{Airy}$) in the finite dimensional sense, then the whole line ensemble $\mathfrak{L}^N$ under the same shifts and scales converges uniformly to the parabolic Airy line ensemble $\mathcal{L}^{Airy}$. The latter result is presented as Corollary \ref{Thm2} in the next section and is a relatively easy consequence of Theorem \ref{Thm1} and the recent characterization result of Brownian Gibbsian line ensembles in \cite{DimMat}.

It is worth pointing out that to establish tightness we do not require actual convergence of the marginals, which makes our approach more general than that of the preprint \cite{DNV19}. In particular, in the preprint \cite{DNV19} the authors assume finite dimensional convergence of $\mathfrak{L}^N$ to the parabolic Airy line ensemble, while our approach does not. In most studied cases for avoiding Bernoulli random walks, such as \cite{BorGor13, GorPet19, Joh05}, one has access to exact formulas that can be used to prove the finite dimensional convergence of $\mathfrak{L}^N$ to the parabolic Airy line ensemble, which makes this a natural assumption to make. Our motivation, however, is to formulate a framework that establishes tightness (or convergence) and relies as little as possible on exact formulas. We do this in the hopes of eventually extending this framework to other models of Gibbsian line ensembles -- ones with general (not necessarily Bernoulli) random walk paths and general (not necessarily avoiding) Gibbs properties. In this sense, the present paper should be thought of as a proof of concept -- showing in (arguably) the simplest case that the global behavior of a sequence of Gibbsian line ensemble can be effectively analyzed using only one-point marginal information for their lowest indexed curves. We mention that since this article has been completed, part of the framework in this paper has been applied to a general class of Gibbsian line ensembles in \cite{DimWu21}.

%
\subsection{Main results}\label{Section1.2}  We begin by giving some necessary definitions, which will be further elaborated in Section \ref{Section2} but will suffice for us to  present the main results of the paper. For $a, b \in \mathbb{Z}$ with $a < b$ we denote by $\llbracket a, b \rrbracket$ the set $\{a, a+1, \dots, b\}$. Given $T_0, T_1 \in \mathbb{Z}$ with $T_0 \leq T_1$ and $N \in \mathbb{N}$ we call a {\em $\llbracket 1, N \rrbracket$-indexed Bernoulli line ensemble on $\llbracket T_0, T_1 \rrbracket$} a random collection of $N$ up-right paths drawn in the region $\llbracket T_0, T_1 \rrbracket \times \mathbb{Z}$ in $\mathbb{Z}^2$ -- see the bottom-right part of Figure \ref{S1_2}. We denote a Bernoulli line ensemble by $\mathfrak{L}$ and $\mathfrak{L}(i,s)$ is the location of the $i$-th up-right path at time $s$ for $(i, s) \in \llbracket 1, N \rrbracket \times \llbracket T_0, T_1 \rrbracket$. For convenience we also denote $L_i(s) = \mathfrak{L}(i,s)$ the $i$-th up-right path in the ensemble and one can think of $L_i$'s as trajectories of Bernoulli random walkers that at each time either stay put or jump by one.

 We say that a Bernoulli line ensemble satisfies the {\em Schur Gibbs property} if it satisfies the following:
\vspace{-3mm}
\begin{enumerate} 
\item With probability $1$ we have $L_1(s) \geq L_2(s) \geq \cdots \geq L_N(s)$ for all $s \in \llbracket T_0, T_1 \rrbracket$.
\item For any $K = \{k_1, k_1 + 1, \dots, k_2 \} \subset \llbracket 1, N - 1 \rrbracket$ and $a,b \in \llbracket T_0, T_1 \rrbracket$ with $a < b$ the conditional law of $L_{k_1}, \dots, L_{k_2}$ in the region $D = \llbracket a , b  \rrbracket \times \mathbb{Z}$, given $\{ \mathfrak{L}(i,s): i \not \in K \mbox{ or } s \not \in \llbracket a+1, b-1\rrbracket \}$ is that of $k_2 - k_1 + 1$ independent Bernoulli random walks that are conditioned to start from $\vec{x} = (L_{k_1}(a), \dots, L_{k_2}(a))$ at time $a$, to end at $\vec{y} = (L_{k_1}(b), \dots, L_{k_2}(b))$ at time $b$ and to never cross each other or the paths $L_{k_1-1}$ or $L_{k_2 + 1}$ in the interval $\llbracket T_0, T_1\rrbracket$ (here $L_0 = \infty$).
\end{enumerate}
In simple words, the above definition states that a Bernoulli line ensemble satisfies the Schur Gibbs property if it is non-crossing and its local distribution is that of avoiding Bernoulli random walk bridges. We mention here that in the above definition the curve $L_N$ plays a special role, since we do not assume that its conditional distribution is that of a Bernoulli bridge conditioned to stay below $L_{N-1}$. Essentially, the curve $L_N$ plays the role of a bottom (random) boundary for our ensemble and a Bernoulli line ensemble satisfying the Schur Gibbs property can be seen to be equivalent to the statement that it is precisely the law of $N-1$ independent Bernoulli bridges that are conditioned to start from some random configuration at time $T_0$, end at some random configuration at time $T_1$ and never cross each other or a given random up-right path $L_N$ in the time interval $\llbracket T_0, T_1 \rrbracket$. We refer to Bernoulli line ensembles that satisfy the Schur Gibbs property as Bernoulli Gibbsian line ensembles. We mention that the name Schur Gibbs property originates from the connection between Bernoulli Gibbsian line ensembles and Schur symmetric polynomials, discussed later in Section \ref{Section9.2}.

A natural context in which Bernoulli Gibbsian line ensembles arise is {\em lozenge tilings} -- see Figure \ref{S1_2} and its caption. To be brief, one can take a finite tileable region in the hexagonal lattice and consider the uniform distribution on all possible tilings of this region with three types of rhombi (also called lozenges). The resulting measure on tilings has a natural Gibbs property, which is that if you freeze the tiling outside of some finite region the tiling inside that region will be conditionally uniform among all possible tilings. For special choices of tileable domains uniform lozenge tilings give rise to Bernoulli line ensembles (with deterministic packed starting and terminal conditions), and the tiling Gibbs property translated to the line ensemble becomes the Schur Gibbs property. In Figure \ref{S1_2} one observes that $L_3$ (which is the bottom-most curve in the ensemble) is not uniformly distributed among all up-right paths that stay below $L_2$ and have the correct endpoints since it needs to stay above the bottom boundary of the tiled region.  
\begin{figure}[h]
\centering
\scalebox{0.70}{\includegraphics{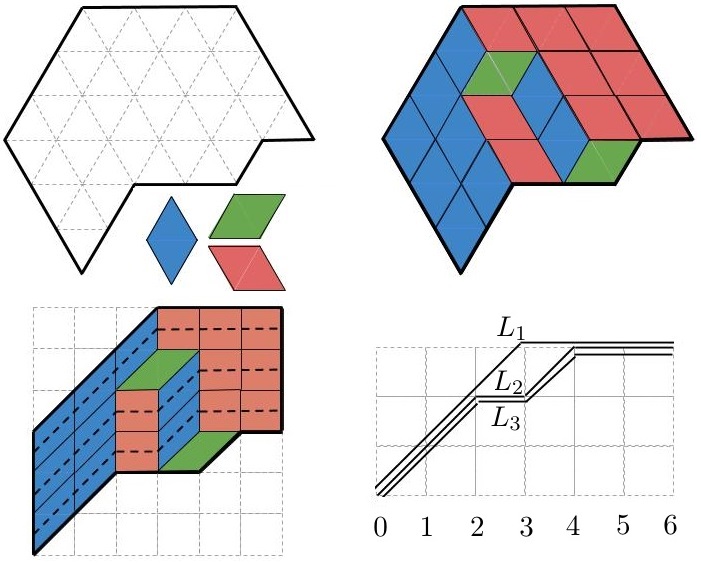}}
\caption{The top-left picture represents a tileable region in the triangular lattice and three types of lozenges. The top-right picture depicts a possible tiling of the region and the bottom-left picture represents the same tiling under an affine transformation. One draws lines through the mid-points of the vertical sides of the vertical rhombi and the squares and this gives rise to a collection of random up-right paths. If one shifts these lines down one obtains a Bernoulli line ensemble -- depicted in the bottom-right picture. If one takes the uniform measure on lozenge tilings the Bernoulli line ensemble one obtains through the above procedure satisfies the Schur Gibbs property.}
\label{S1_2}
\end{figure}

In the remainder of this section we fix a sequence $\mathfrak{L}^N = (L_1^N, \dots, L_N^N)$ of $\llbracket 1, N \rrbracket$-indexed Bernoulli Gibbsian line ensembles on $\llbracket a_N, b_N \rrbracket$ where $a_N \leq 0$ and $b_N \geq 0$ are integers. Our interest is in understanding the asymptotic behavior of $\mathfrak{L}^N$ as $N \rightarrow \infty$ (i.e. when the number of walkers tends to infinity). Below we we list several assumptions on the sequence $\mathfrak{L}^N$, which rely on parameters $\alpha > 0$, $p \in (0,1)$ and $\lambda >0$. The parameter $\alpha$ is related to the fluctuation exponent of the line ensemble and the assumptions below will indicate that $L_1^N(0)$ fluctuates on order $N^{\alpha/2}$. The parameter $p$ is the global slope of the line ensemble, and since we are dealing with Bernoulli walkers the global slope is in $[0,1]$ and we exclude the endpoints to avoid degenerate cases. The parameter $\lambda$ is related to the global curvature of the line ensemble, and the assumptions below will indicate that once the slope is removed the line ensemble approximates the parabola $-\lambda x^2$. We now turn to formulating our assumptions precisely.

{\bf \raggedleft Assumption 1.} We assume that there is a function $\psi: \mathbb{N} \rightarrow (0, \infty)$ such that $\lim_{N \rightarrow \infty} \psi(N) = \infty$ and $a_N < -\psi(N) N^{\alpha}$ while $b_N > \psi(N) N^{\alpha}$.

The significance of Assumption 1 is that the sequence of intervals $[a_N, b_N]$ (on which the line ensemble $\mathfrak{L}^N$ is defined) on scale $N^{\alpha}$ asymptotically covers the entire real line. The nature of $\psi$ is not important and any function converging to infinity along the integers works for our purposes.

{\bf \raggedleft Assumption 2.}  There is a function $\phi: (0, \infty) \rightarrow (0,\infty)$ such that for any $\epsilon > 0$ we have 
\begin{equation}\label{S1globalParabola}
 \sup_{n \in \mathbb{Z}} \limsup_{N \rightarrow \infty} \mathbb{P} \left( \left|N^{-\alpha/2}(L_1^N(n N^{\alpha}) - p n N^{\alpha} + \lambda n^2 N^{\alpha/2}) \right| \geq \phi(\epsilon) \right) \leq \epsilon.
\end{equation}

Let us elaborate on Assumption 2 briefly. If $n = 0$ the statement indicates that $N^{-\alpha/2} L_1(0)$ is a tight sequence of random variables and so $\alpha/2$ is the fluctuation exponent of the ensemble. The transversal exponent is $\alpha$ and is reflected in the way time (the argument in $L_1^N$) is scaled -- it is twice $\alpha/2$ as expected by Brownian scaling. The essence of Assumption 2 is that if one removes a global line with slope $p$ from $L_1^N$ and rescales by $N^{\alpha/2}$ vertically and $N^{\alpha}$ horizontally the resulting curve asymptotically approximates the parabola $-\lambda x^2$. The way the statement is formulated, this approximation needs to happen uniformly over the integers but the choice of $\mathbb{Z}$ is not important. Indeed, one can replace $\mathbb{Z}$ with any subset of $\mathbb{R}$ that has arbitrarily large and small points and the choice of $\mathbb{Z}$ is made for convenience. Equation (\ref{S1globalParabola}) indicates that for each $n \in \mathbb{Z}$ the sequence of random variables $X_n^N = N^{-\alpha/2}(L_1^N(n N^{\alpha}) - p n N^{\alpha} + \lambda n^2 N^{\alpha/2}) $ is tight, but it says a bit more. Namely, it states that if $\mathcal{M}_n$ is the family of all possible subsequential limits of $\{X_n^N\}_{N \geq 1}$ then $\cup_{n \in \mathbb{Z}} \mathcal{M}_n$ is itself a tight family of distributions on $\mathbb{R}$.  A simple case when Assumption 2 is satisfied is when $X_n^N$ converges to the Tracy-Widom distribution for all $n$ as $N \rightarrow \infty$. In this case the family $\cup_{n \in \mathbb{Z}} \mathcal{M}_n$ only contains the Tracy-Widom distribution and so is naturally tight. 

The final thing we need to do is to embed all of our line ensembles $\mathfrak{L}^N$ in the same space. The latter is necessary as we want to talk about tightness and convergence of line ensembles that presently are defined on different state spaces (remember that the number of up-right paths is changing with $N$). We consider $\mathbb{N} \times \mathbb{R}$ with the product topology coming from the discrete topology on $\mathbb{N}$ and the usual topology on $\mathbb{R}$. We let $C(\mathbb{N} \times \mathbb{R})$ be the space of continuous functions on $\mathbb{N} \times \mathbb{R}$ with the topology of uniform convergence over compacts and corresponding Borel $\sigma$-algebra. For each $N \in \mathbb{N}$ we let 
$$f^N_i(s) =  N^{-\alpha/2}(L^N_i(sN^{\alpha}) - p s N^{\alpha} + \lambda s^2 N^{\alpha/2}), \mbox{ for $s\in [-\psi(N) ,\psi(N)]$ and $i = 1,\dots, N$,}$$
and extend $f^N_i$ to $\mathbb{R}$ by setting for $i = 1, \dots, N$
$$f^N_i(s) = f^N_i(-\psi(N)) \mbox{ for $s \leq -\psi(N)$ and } f^N_i(s) = f_i^N(\psi(N)) \mbox{ for $s \geq \psi(N)$}.$$
If $i \geq N+1$ we define $f^N_i(s) = 0$ for $s \in \mathbb{R}$. With the above we have that $\mathcal{L}^N$ defined by 
\begin{equation}\label{DefLineEns}
\mathcal{L}^N(i,s) = \frac{f^N_i(s)  - \lambda s^2}{\sqrt{p(1-p)}}
\end{equation}
 is a random variable taking value in $C(\mathbb{N} \times \mathbb{R})$ and we let $\mathbb{P}_N$ denote its distribution. We remark that the particular extension we chose for $f^N_i$ outside of $[-\psi(N) ,\psi(N)]$ and for $i \geq N+1$ is immaterial since all of our convergence/tightness results are formulated for the topology of uniform convergence over compacts. Consequently, only the behavior of these functions on compact intervals and finite index matters and not what these functions do near infinity, which is where the modification happens as $\lim_{N \rightarrow \infty} \psi(N) = \infty$ by assumption. 

We are now ready to state our main result, whose proof can be found in Section \ref{Section2.4}.
\begin{theorem}\label{Thm1} Under Assumptions 1 and 2 the sequence $\mathbb{P}_N$ is tight. Moreover, if $\mathcal{L}^\infty$ denotes any subsequential limit of $\mathcal{L}^N$ then $\mathcal{L}^\infty$ satisfies the Brownian Gibbs property of Section \ref{Section1.1} (see also Definition \ref{DefPBGP}).
\end{theorem}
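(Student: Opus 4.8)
The plan is to follow the standard route for deducing tightness of a Gibbsian line ensemble from one‑point control of its top curve, adapted to Bernoulli walks. The argument splits into: (i) reducing tightness in $C(\mathbb{N}\times\mathbb{R})$ to curve‑by‑curve control on compacts; (ii) establishing uniform two‑sided height bounds for each curve on each compact; (iii) an equicontinuity estimate; and (iv) identifying the Brownian Gibbs property in the limit. For (i) I would record the (essentially soft) fact that a sequence of measures on $C(\mathbb{N}\times\mathbb{R})$ is tight iff for every $k\in\mathbb{N}$ and $T>0$ one has $\limsup_N \mathbb{P}_N\big(\sup_{s\in[-T,T]}|\mathcal{L}^N(k,s)|\ge M\big)\to 0$ as $M\to\infty$, together with $\lim_{\delta\downarrow 0}\limsup_N \mathbb{P}_N\big(\sup_{|s-t|\le\delta,\ s,t\in[-T,T]}|\mathcal{L}^N(k,s)-\mathcal{L}^N(k,t)|\ge\e\big)=0$ for all $\e>0$; this is an Arzel\`a--Ascoli‑type criterion on the countable product of spaces of continuous functions, and all the content is in the Schur Gibbs property.

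The heart is the height bound. Two things are cheap. First, unwinding \eqref{DefLineEns} gives $\mathcal{L}^N(i,s)=N^{-\alpha/2}\big(L^N_i(sN^\alpha)-psN^\alpha\big)/\sqrt{p(1-p)}$, which is order‑preserving in $i$, so $\mathcal{L}^N(k,s)\le\mathcal{L}^N(1,s)$ for all $k$, and Assumption 2 already controls $\mathcal{L}^N(1,\cdot)$ two‑sidedly at the grid points $s=n$; in particular the curves are bounded above at grid points. Second, the lower bound on $L^N_1$ between grid points is easy: by the Gibbs property $L^N_1$ on $[nN^\alpha,(n+1)N^\alpha]$ is a Bernoulli bridge with endpoints controlled by Assumption 2, conditioned to stay above $L^N_2$; by a monotone coupling this dominates the free bridge, whose infimum over an interval of length $N^\alpha$ is at least $-CN^{\alpha/2}$ with Gaussian tails, via a strong (KMT‑type) coupling to a Brownian bridge. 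The delicate points are the \emph{upper} bound on the curves between grid points and the \emph{lower} bounds on the lower‑indexed curves, and these have to be proved together to avoid circularity. The lower bounds I would obtain by an entropic‑repulsion argument: Assumption 2 forces $L^N_1(nN^\alpha)$ to sit near $pnN^\alpha-\lambda n^2 N^{\alpha/2}$, which for $|n|$ large is far \emph{above} the typical top of a free bridge over the whole domain; applying the Gibbs property to $L^N_1$ on a long interval $[-mN^\alpha,mN^\alpha]$, the only way $L^N_1$ can be lifted to its (relatively high) value at $s=0$ with non‑negligible probability is for $L^N_2$ near $0$ to be lifted as well, which bounds $\mathcal{L}^N(2,0)$ from below; iterating the same argument for $L^N_2,L^N_3,\dots$ bounds $\mathcal{L}^N(k,0)$ from below, and the uniformity over $n\in\mathbb{Z}$ in Assumption 2 makes this translation‑invariant, so it applies at every grid point. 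Given this, the upper bounds on the curves on $[nN^\alpha,(n+1)N^\alpha]$ follow from the Gibbs property applied to the top $k$ curves, using the now two‑sided control of their values at $nN^\alpha$ and $(n+1)N^\alpha$, a lower bound on the acceptance probability of the resampling (that $k$ free Bernoulli bridges with endpoints separated on scale $N^{\alpha/2}$ avoid one another and stay above $L^N_{k+1}$), and the Gaussian upper tail for the maximum of a free bridge; the between‑grid‑point lower bounds for the lower curves come out of the same circle of estimates via monotone couplings. Making this interlocking induction on the index $k$ close — so that upper and lower bounds feed each other without circularity — is where I expect the main difficulty to lie, and it is the part that genuinely uses the Bernoulli combinatorics (sharp bridge estimates, monotone couplings, avoidance‑probability lower bounds) rather than soft arguments.

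For the equicontinuity estimate (b) I would, on a fixed compact, condition via the Gibbs property on the values of the top $k$ curves at the endpoints of a slightly larger interval and resample on it; on the event (of probability close to $1$ by part (ii)) that those endpoint values are controlled, the resampled curves are avoiding Bernoulli bridges with endpoints on scale $N^{\alpha/2}$, and a strong coupling with a Brownian bridge transfers the Brownian modulus of continuity, yielding (b) up to dividing by the acceptance probability, which is again bounded below as above.

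Finally, for the Brownian Gibbs property of a subsequential limit $\mathcal{L}^\infty$, the plan is to pass the Schur Gibbs property through the weak limit. For a fixed finite index set $\{k_1,\dots,k_2\}$ and times $s<t$, the conditional law under $\mathbb{P}_N$ of $(\mathcal{L}^N(k_1,\cdot),\dots,\mathcal{L}^N(k_2,\cdot))$ on $[s,t]$ given the complementary data is that of $k_2-k_1+1$ avoiding Bernoulli bridges between the given rescaled endpoints staying above $\mathcal{L}^N(k_2+1,\cdot)$; under the diffusive scaling and the strong coupling this converges to the law of avoiding Brownian bridges with the corresponding data, and the relevant conditional expectations are continuous functionals of the line ensemble off a null set for the limit, using continuity of avoidance probabilities in the boundary data and the almost sure strict ordering of $\mathcal{L}^\infty$ (inherited from the height bounds together with a separate non‑touching estimate). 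Passing to the limit along the subsequence then gives the Brownian Gibbs property of Definition~\ref{DefPBGP} for $\mathcal{L}^\infty$; that $L^N_N$ is only a boundary curve is immaterial in the limit, since we only ever resample finite index ranges with $k_2+1\in\mathbb{N}$.
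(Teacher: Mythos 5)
Your overall architecture is the same as the paper's: reduce tightness in $C(\mathbb{N}\times\mathbb{R})$ to curve-by-curve one-point and modulus-of-continuity control (Lemma \ref{2Tight}), obtain two-sided height bounds from Assumption 2 plus the parabolic curvature via the Gibbs property and monotone couplings, control the modulus by conditioning on a slightly larger interval, bounding the Radon--Nikodym derivative against free bridges by the reciprocal of an acceptance probability, and transferring the Brownian modulus through the KMT coupling, and finally pass the Schur Gibbs property to the limit via convergence of rescaled avoiding Bernoulli bridges to avoiding Brownian bridges together with a strict-ordering (non-touching) statement and a monotone class/$\pi$--$\lambda$ argument. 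That is precisely the route through Theorem \ref{PropTightGood}, Lemmas \ref{PropSup}, \ref{PropSup2}, Proposition \ref{PropMain}, and Lemmas \ref{scaledavoidBB}, \ref{inftydistinct}.

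However, the two steps you yourself flag as the main difficulty are left unresolved, and as written one of them would be circular. First, you propose to get the between-grid-point upper bounds by resampling the top $k$ curves and invoking ``a lower bound on the acceptance probability of the resampling (that $k$ free Bernoulli bridges with endpoints separated on scale $N^{\alpha/2}$ avoid one another and stay above $L^N_{k+1}$).'' This has two problems: the resampling endpoints are random and need not be separated on scale $N^{\alpha/2}$ at all (consecutive curves can nearly coincide there), and ``stay above $L^N_{k+1}$'' requires an upper bound on the boundary curve over the whole interval, which is exactly the between-grid-point bound being proven. The paper breaks the circle by proving the sup bound for the \emph{top} curve first (Lemma \ref{PropSup}), with no acceptance-probability input, via a decomposition over the first time the running maximum exceeds the level, monotone coupling below by a free bridge, and the one-point bound at an interior grid point; ordering then controls every lower curve, including the boundary curve, from above. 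The inf bound is obtained in one shot for the lowest resampled curve (Lemma \ref{21} and Lemma \ref{PropSup2}) by conditioning on it being uniformly low over a long interval and showing the top curve's midpoint would then sit near the chord, violating the parabolic one-point bound --- your index-by-index iteration can be made to work, but it is not needed. Second, the acceptance-probability lower bound itself (Proposition \ref{PropMain} via Lemma \ref{LemmaAP1}) requires showing that, after first relaxing the avoidance and floor constraints on the middle interval, the curves at the resampling boundary are with high probability separated on scale $\sqrt{T}$; this rests on the weak convergence of the fixed-time law of avoiding Bernoulli bridges to a density supported on the open Weyl chamber (Proposition \ref{prob17} and the separation lemma derived from it), an ingredient your sketch does not supply and which cannot be replaced by the soft assertion of separated endpoints.
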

\begin{remark} In simple words, Theorem \ref{Thm1} states that if one has a sequence of Bernoulli Gibbsian line ensembles with a mild but uniform control of the one-point marginals of the top curves $L_1^N$ then the entire line ensembles need to be tight. The idea of utilizing the Gibbs property of a line ensemble to improve one-point tightness of the top curve to tightness of the entire curve or even the entire line ensemble has appeared previously in several different contexts. For line ensembles whose underlying path structure is Brownian it first appeared in the seminal work of \cite{CorHamA} and more recently in \cite{CIW19b,CIW19a}. For discrete Gibbsian line ensembles (more general than the one studied in this paper) it appeared in \cite{CD} and for line ensembles related to the inverse gamma directed polymer in \cite{Wu19}.
\end{remark}

Theorem \ref{Thm1} indicates that in order to ensure the existence of subsequential limits for $\mathcal{L}^N$ as in (\ref{DefLineEns}) it suffices to ensure tightness of the one-point marginals of the top curves $L_1^N$ in a sufficiently uniform sense. We next investigate the question of when $\mathcal{L}^N$ converges to the {parabolic Airy line ensemble} $\mathcal{L}^{Airy}$. We let $\mathcal{A} = \{\mathcal{A}_i\}_{i \in \mathbb{N}}$ be the $\mathbb{N}$-indexed Airy line ensemble and $\mathcal{L} = \{\mathcal{L}_i^{Airy}\}_{i \in \mathbb{N}}$ be given by $\mathcal{L}^{Airy}_i(x) = 2^{-1/2}( \mathcal{A}_i(x) - x^2)$ as in \cite[Theorem 3.1]{CorHamA}. In particular, both $\mathcal{A}$ and $\mathcal{L}$ are random variables taking values in the space $C(\mathbb{N} \times \mathbb{R})$, and $\mathcal{A}_1(\cdot)$ is the {\em Airy$_2$ process} while $\mathcal{L}_1^{Airy}(\cdot)$ is the {\em parabolic Airy$_2$ process}. To establish convergence of $\mathcal{L}^N$ to $\mathcal{L}^{Airy}$ we need the following strengthening of Assumption 2.

{\bf \raggedleft Assumption 2'.}  Let $c = \left( \frac{2\lambda^2}{p(1-p)}\right)^{1/3}.$ For any $k \in \mathbb{N}$,  $t_1, \dots, t_k, x_1, \dots, x_k \in \mathbb{R}$ we assume that 
\begin{equation}\label{S1globalParabola}
 \lim_{N \rightarrow \infty} \mathbb{P} \left(  \mathcal{L}^N_1(t_i) \leq x_i \mbox{ for $i = 1, \dots, k$} \right) =  \mathbb{P} \left(  c^{-1/2} \mathcal{L}^{Airy}_1(c t_i) \leq x_i \mbox{ for $i = 1, \dots, k$} \right).
\end{equation}

In plain words, Assumption 2' states that the top curves $\mathcal{L}_1^N(t)$ converge in the finite dimensional sense to $c^{-1/2}\mathcal{L}_1^{Airy}(c t)$. Let us briefly explain why Assumption 2' implies Assumption 2 (and hence we refer to it as a strengthening). Under Assumption 2', we would have that $N^{-\alpha/2}(L_1^N(x N^{\alpha}) - p x N^{\alpha} + \lambda x^2 N^{\alpha/2})$ converge in the finite dimensional sense to $\sqrt{\frac{p(1-p)}{2 c}} \mathcal{A}_1(c x)$. In particlar, for each $n \in \mathbb{Z}$ we have that 
$$\lim_{N \rightarrow \infty} \mathbb{P} \left( \left|N^{-\alpha/2}(L_1^N(n N^{\alpha}) - p n N^{\alpha} + \lambda n^2 N^{\alpha/2}) \right| \geq a \right) = \mathbb{P}\left(\sqrt{\frac{p(1-p)}{2 c}} |\mathcal{A}_1(c n)| \geq a  \right) = $$
$$1 - F_{GUE} \left( a \cdot \sqrt{\frac{2 c}{p(1-p)}} \right)  + F_{GUE} \left( -a \cdot \sqrt{\frac{2 c}{p(1-p)}} \right) ,$$
where we used that $\mathcal{A}_1(x)$ is a stationary process whose one point marginals are given by the Tracy-Widom distribution $F_{GUE}$, \cite{TWPaper}, and that $F_{GUE}$ is diffuse. In particular, given $\epsilon > 0$ we can find $a$ large enough so that the second line above is less than $\epsilon$ and such a choice of $a$ furnishes a function $\phi$ as in Assumption 2.

The next result gives conditions under which $\mathcal{L}^N$ converges to the parabolic Airy line ensemble $\mathcal{L}^{Airy}$. It is proved in Section \ref{Section2.4}
\begin{corollary}\label{Thm2} Under Assumptions 1 and 2' the sequence $\mathcal{L}^N$ converges weakly in the topology of uniform convergence over compacts to the line ensemble $\mathcal{L}^\infty$ defined by 
$$\mathcal{L}^\infty_i(t)  = c^{-1/2}\mathcal{L}_i^{Airy}(ct), \mbox{ for $i \in \mathbb{N}$ and $ t\in \mathbb{R}$, where we recall $c =  \left( \frac{2\lambda^2}{p(1-p)}\right)^{1/3}$. }$$
\end{corollary}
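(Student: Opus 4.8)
The plan is to combine the tightness already furnished by Theorem \ref{Thm1} with a characterization theorem for Brownian Gibbsian line ensembles, so that one only needs to check that every subsequential limit has the finite dimensional distributions of $\mathcal{L}^\infty$. First I would observe that Assumption 2' implies Assumption 2 (this is explained in the excerpt), so Theorem \ref{Thm1} applies: the sequence $\mathbb{P}_N$ is tight, and every subsequential limit $\mathcal{L}^\infty$ is a $\mathbb{N}$-indexed line ensemble on $\mathbb{R}$ satisfying the Brownian Gibbs property of Section \ref{Section1.1}. Since weak convergence of a tight sequence follows once all subsequential limits are shown to agree, it suffices to identify the law of an arbitrary subsequential limit. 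Passing to such a subsequence, let $\mathcal{L}^\infty$ be the limit.

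Next I would pin down the first curve. By Assumption 2', the one-point and, more generally, finite dimensional marginals of $\mathcal{L}^N_1(\cdot)$ converge to those of $c^{-1/2}\mathcal{L}^{Airy}_1(c\,\cdot)$. Since the map sending a line ensemble to the (vector of) values of its first curve at finitely many times is continuous in the topology of uniform convergence over compacts, and since along our subsequence $\mathcal{L}^N \Rightarrow \mathcal{L}^\infty$, the convergence of these marginals forces
\[
\bigl(\mathcal{L}^\infty_1(t_1), \dots, \mathcal{L}^\infty_1(t_k)\bigr) \stackrel{d}{=} \bigl(c^{-1/2}\mathcal{L}^{Airy}_1(c t_1), \dots, c^{-1/2}\mathcal{L}^{Airy}_1(c t_k)\bigr)
\]
for all $k$ and all $t_1,\dots,t_k$. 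Thus the top curve of $\mathcal{L}^\infty$ has the same law as the top curve of the line ensemble $\widetilde{\mathcal{L}}$ defined by $\widetilde{\mathcal{L}}_i(t) = c^{-1/2}\mathcal{L}^{Airy}_i(ct)$. One should check that this scaling $(i,t)\mapsto c^{-1/2}\mathcal{L}^{Airy}_i(ct)$ preserves the Brownian Gibbs property — this is a standard affine-invariance computation: scaling time by $c$ and space by $c^{-1/2}$ is exactly Brownian scaling, so non-intersecting Brownian bridges are mapped to non-intersecting Brownian bridges, and hence $\widetilde{\mathcal{L}}$ also satisfies the Brownian Gibbs property (with the parabolic shift absorbed appropriately, as in the relationship $\mathcal{A}_i(t) = 2^{1/2}\mathcal{L}_i^{Airy}(t) + t^2$).

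Finally I would invoke the characterization result of \cite{DimMat}: a $\mathbb{N}$-indexed line ensemble on $\mathbb{R}$ that satisfies the Brownian Gibbs property and whose first curve is distributed as (a suitably affine image of) the parabolic Airy$_2$ process is necessarily (that same affine image of) the parabolic Airy line ensemble. Applying this to $\mathcal{L}^\infty$, whose first curve has the law of $\widetilde{\mathcal{L}}_1$, we conclude $\mathcal{L}^\infty \stackrel{d}{=} \widetilde{\mathcal{L}}$, i.e. $\mathcal{L}^\infty_i(t) = c^{-1/2}\mathcal{L}^{Airy}_i(ct)$ in distribution. Since the subsequence was arbitrary, every subsequential limit of the tight sequence $\mathcal{L}^N$ has this law, hence $\mathcal{L}^N$ converges weakly to $\mathcal{L}^\infty$ as claimed. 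The main obstacle is the continuity/measurability bookkeeping needed to transfer the finite dimensional convergence of Assumption 2' through the weak limit, together with verifying the precise hypotheses of the characterization theorem of \cite{DimMat} — in particular that the parabolic normalization and the constant $c$ are matched so that $\mathcal{L}^\infty_1$ lands exactly on $c^{-1/2}\mathcal{L}^{Airy}_1(c\,\cdot)$ rather than some other affine variant; the tightness itself, which is usually the hard analytic input, is already handed to us by Theorem \ref{Thm1}.
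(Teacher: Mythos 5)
Your proposal is correct and follows essentially the same route as the paper: tightness and the Brownian Gibbs property of subsequential limits from Theorem \ref{Thm1}, identification of the top curve of any subsequential limit with $c^{-1/2}\mathcal{L}^{Airy}_1(c\,\cdot)$ via Assumption 2', invariance of the Brownian Gibbs property under the scaling $(i,t)\mapsto c^{-1/2}\mathcal{L}^{Airy}_i(ct)$ (Brownian scaling, cf. \cite[Lemma 3.5]{DimMat}), and then the characterization theorem \cite[Theorem 1.1]{DimMat} to conclude all subsequential limits coincide, hence the full sequence converges.
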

\begin{remark} In plain words, Corollary \ref{Thm2} states that to prove the convergence of a sequence of Bernoulli Gibbsian line ensembles $\mathcal{L}^N$ to the parabolic Airy line ensemble $\mathcal{L}^{Airy}$, it suffices to show that the top curves $\mathcal{L}_1^N$ (one for each ensemble) converge in the finite dimensional sense to the parabolic Airy$_2$ process. We mention that the convergence in Corollary \ref{Thm2} is in the uniform topology over compacts, which is stronger than finite-dimensional convergence. We also mention that in the preprint \cite{DNV19} the conclusion of Corollary \ref{Thm2} was established under the assumption that $\mathcal{L}^N$ converge to $\mathcal{L}^\infty$ in the finite dimensional sense. Simply put, we require as input only the finite dimensional convergence of the top curves, while \cite[Theorem 1.5]{DNV19} requires the finite dimensional convergence of not just the top but {\em all} curves in the line ensemble, which is a much stronger assumption.
\end{remark}

The remainder of the paper is organized as follows. In Section \ref{Section2} we introduce the basic definitions and notation for line ensembles. The main technical result of the paper, Theorem \ref{PropTightGood}, is presented in Section \ref{Section2.3} and Theorem \ref{Thm1} and Corollary \ref{Thm2} are proved in Section \ref{Section2.4} by appealing to it. In Section 3 we prove several statements for Bernoulli random walk bridges, by using a strong coupling result that allows us to compare the latter with Brownian bridges. The proof of Theorem \ref{PropTightGood} is presented in Section \ref{Section4} and is based on three key lemmas. Two of these lemmas are proved in Section \ref{Section5} and the last one in Section \ref{Section6}. The paper ends with Sections \ref{Section8} and \ref{Section9}, where various technical results needed throughout the paper are proved.

\subsection*{Acknowledgments} This project was initiated during the summer REU program at Columbia University in 2020 and we thank the organizer from the Mathematics department, Michael Woodbury, for this wonderful opportunity. E.D. is partially supported by the Minerva Foundation Fellowship.

%
\section{Line ensembles} \label{Section2}
In this section we introduce various definitions and notation that are used throughout the paper. 

%
\subsection{Line ensembles and the Brownian Gibbs property}\label{Section2.1}
In this section we introduce the notions of a {\em line ensemble} and the {\em (partial) Brownian Gibbs property}. Our exposition in this section closely follows that of \cite[Section 2]{CorHamA} and \cite[Section 2]{DimMat}. 

Given two integers $p \leq q$, we let $\llbracket p, q \rrbracket$ denote the set $\{p, p+1, \dots, q\}$. If $p > q$ then $\llbracket p, q \rrbracket = \emptyset$. Given an interval $\Lambda \subset \mathbb{R}$ we endow it with the subspace topology of the usual topology on $\mathbb{R}$. We let $(C(\Lambda), \mathcal{C})$ denote the space of continuous functions $f: \Lambda \rightarrow \mathbb{R}$ with the topology of uniform convergence over compacts, see \cite[Chapter 7, Section 46]{Munkres}, and Borel $\sigma$-algebra $\mathcal{C}$. Given a set $\Sigma \subset \mathbb{Z}$ we endow it with the discrete topology and denote by $\Sigma \times \Lambda$ the set of all pairs $(i,x)$ with $i \in \Sigma$ and $x \in \Lambda$ with the product topology. We also denote by $\left(C (\Sigma \times \Lambda), \mathcal{C}_{\Sigma}\right)$ the space of continuous functions on $\Sigma \times \Lambda$ with the topology of uniform convergence over compact sets and Borel $\sigma$-algebra $\mathcal{C}_{\Sigma}$. Typically, we will take $\Sigma = \llbracket 1, N \rrbracket$ (we use the convention $\Sigma = \mathbb{N}$ if $N = \infty$) and then we write  $\left(C (\Sigma \times \Lambda), \mathcal{C}_{|\Sigma|}\right)$ in place of $\left(C (\Sigma \times \Lambda), \mathcal{C}_{\Sigma}\right)$.

The following defines the notion of a line ensemble.
\begin{definition}\label{CLEDef}
Let $\Sigma \subset \mathbb{Z}$ and $\Lambda \subset \mathbb{R}$ be an interval. A {\em $\Sigma$-indexed line ensemble $\mathcal{L}$} is a random variable defined on a probability space $(\Omega, \mathcal{F}, \mathbb{P})$ that takes values in $\left(C (\Sigma \times \Lambda), \mathcal{C}_{\Sigma}\right)$. Intuitively, $\mathcal{L}$ is a collection of random continuous curves (sometimes referred to as {\em lines}), indexed by $\Sigma$,  each of which maps $\Lambda$ in $\mathbb{R}$. We will often slightly abuse notation and write $\mathcal{L}: \Sigma \times \Lambda \rightarrow \mathbb{R}$, even though it is not $\mathcal{L}$ which is such a function, but $\mathcal{L}(\omega)$ for every $\omega \in \Omega$. For $i \in \Sigma$ we write $\mathcal{L}_i(\omega) = (\mathcal{L}(\omega))(i, \cdot)$ for the curve of index $i$ and note that the latter is a map $\mathcal{L}_i: \Omega \rightarrow C(\Lambda)$, which is $(\mathcal{C}, \mathcal{F})-$measurable. If $a,b \in \Lambda$ satisfy $a < b$ we let $\mathcal{L}_i[a,b]$ denote the restriction of $\mathcal{L}_i$ to $[a,b]$.
\end{definition}

We will require the following result, whose proof is postponed until Section \ref{Section8.1}. In simple terms it states that the space $C (\Sigma \times \Lambda)$ where our random variables $\mathcal{L}$ take value has the structure of a complete, separable metric space. 

\begin{lemma}\label{Polish} Let $\Sigma \subset \mathbb{Z}$ and $\Lambda \subset \mathbb{R}$ be an interval. Suppose that $\{a_n\}_{n = 1}^\infty, \{b_n\}_{n = 1}^\infty$ are sequences of real numbers such that $a_n < b_n$, $[a_n, b_n] \subset \Lambda$, $a_{n+1} \leq a_n$, $b_{n+1} \geq b_n$ and $\cup_{n = 1}^\infty [a_n, b_n] = \Lambda$. For $n \in \mathbb{N}$ let $K_n = \Sigma_n \times [a_n, b_n]$ where $\Sigma_n = \Sigma \cap \llbracket -n, n \rrbracket$. Define $d: C (\Sigma \times \Lambda) \times C (\Sigma \times \Lambda) \rightarrow [0, \infty)$ by
\begin{equation} d (f,g) = \sum_{n=1}^\infty 2^{-n}\min\bigg\{\sup_{(i,t)\in K_n} |f(i,t) - g(i,t)|, \, 1\bigg\}.
\end{equation}
Then $d$ defines a metric on $C (\Sigma \times \Lambda) $ and moreover the metric space topology defined by $d$ is the same as the topology of uniform convergence over compact sets. Furthermore, the metric space $(C (\Sigma \times \Lambda), d)$ is complete and separable.
\end{lemma}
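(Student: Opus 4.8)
The plan is to verify the four assertions of Lemma \ref{Polish} in sequence: that $d$ is a well-defined metric, that the $d$-topology coincides with the topology of uniform convergence over compacts, that $(C(\Sigma\times\Lambda),d)$ is complete, and that it is separable. First I would check that $d$ is well-defined and finite: each summand is bounded by $2^{-n}$, so the series converges and $d(f,g)\le 1$. Symmetry is immediate. The triangle inequality needs a short argument: the map $x\mapsto \min\{x,1\}$ is nondecreasing and subadditive on $[0,\infty)$, and $\sup_{K_n}|f-h|\le \sup_{K_n}|f-g|+\sup_{K_n}|g-h|$, so each truncated summand obeys the triangle inequality and then one sums over $n$ with the weights $2^{-n}$. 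For $d(f,g)=0$: this forces $\sup_{K_n}|f-g|=0$ for every $n$, and since $\cup_n K_n = \Sigma\times\Lambda$ (using $\cup_n[a_n,b_n]=\Lambda$ and $\cup_n\Sigma_n=\Sigma$), we get $f=g$ pointwise, hence as functions.

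Next I would show the topologies agree. The key point is that $\{K_n\}$ is an exhaustion of $\Sigma\times\Lambda$ by compact sets with the property that every compact $K\subset\Sigma\times\Lambda$ is contained in some $K_n$; this is where I use $a_{n+1}\le a_n$, $b_{n+1}\ge b_n$, $[a_n,b_n]\subset\Lambda$, and $\Sigma_n=\Sigma\cap\llbracket -n,n\rrbracket$, together with the fact that a compact subset of the discrete space $\Sigma$ is finite and a compact subset of $\Lambda$ is bounded. Given this, I would argue both inclusions of topologies. If $f_k\to f$ uniformly over compacts, then for each fixed $n$, $\sup_{K_n}|f_k-f|\to 0$; splitting the series defining $d(f_k,f)$ into the first $M$ terms (which go to $0$) and the tail (bounded by $2^{-M}$), one gets $d(f_k,f)\to 0$. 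Conversely, if $d(f_k,f)\to 0$, then for each fixed $n$ the $n$-th summand $2^{-n}\min\{\sup_{K_n}|f_k-f|,1\}\to 0$, so $\sup_{K_n}|f_k-f|\to 0$; since every compact sits in some $K_n$, this gives uniform convergence over compacts. Because both topologies are metrizable (the uniform-over-compacts topology is first countable via this exhaustion), agreement on convergent sequences suffices, though I would phrase it directly in terms of basic open sets to be safe.

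For completeness, let $\{f_k\}$ be $d$-Cauchy. Then for each fixed $n$, $\{f_k|_{K_n}\}$ is Cauchy in the sup-norm on $C(K_n)$ (from the $n$-th summand), and since $C(K_n)$ with the sup norm is complete, $f_k|_{K_n}$ converges uniformly to some continuous $g^{(n)}\in C(K_n)$. The $g^{(n)}$ are consistent on overlaps because $K_n\subset K_{n+1}$, so they glue to a single function $g:\Sigma\times\Lambda\to\mathbb{R}$ which is continuous (continuity is local and every point lies in the interior of some $K_n$ relative to $\Sigma\times\Lambda$ — here I should be slightly careful at the endpoints of $\Lambda$ if $\Lambda$ is not open, but the exhaustion condition $\cup_n[a_n,b_n]=\Lambda$ handles this, as each point of $\Lambda$ lies in some $[a_n,b_n]\subset[a_{n+1},b_{n+1}]$). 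Then $f_k\to g$ uniformly over each $K_n$, hence $d(f_k,g)\to 0$ by the argument above, so $g\in C(\Sigma\times\Lambda)$ and the space is complete.

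For separability, I would build a countable dense set. On each compact $[a_n,b_n]$, the space $C([a_n,b_n])$ is separable (e.g. polynomials with rational coefficients, by Weierstrass, or piecewise-linear functions with rational nodes and values), and $\Sigma_n$ is finite, so $C(\Sigma_n\times[a_n,b_n])$ is separable; pick a countable dense set $D_n$ there. Given $f\in C(\Sigma\times\Lambda)$ and $\varepsilon>0$, choose $M$ with $2^{-M}<\varepsilon/2$, approximate $f|_{K_M}$ within $\varepsilon/2$ by some $g_0\in D_M$, and extend $g_0$ to an element $g$ of $C(\Sigma\times\Lambda)$ (extend in the index direction by, say, the last available curve or zero, and in the $\Lambda$ direction by the boundary values, exactly as the paper does for $f_i^N$); then $d(f,g)\le \sum_{n\le M}2^{-n}\sup_{K_M}|f-g_0| + \sum_{n>M}2^{-n}<\varepsilon$. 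Collecting these extensions over all $n$ and all elements of $D_n$ gives a countable dense subset. I expect the main obstacle to be the bookkeeping in the topology-comparison step — specifically, cleanly justifying that every compact subset of $\Sigma\times\Lambda$ is contained in some $K_n$ and then translating between the basic open sets of the two topologies — rather than any deep difficulty; completeness and separability are then routine consequences of the corresponding facts for $C$ of a compact space.
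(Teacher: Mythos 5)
Your proposal is correct and follows essentially the same route as the paper's proof: the same exhaustion argument showing every compact set lies in some $K_n$, the same verification of the metric axioms via the truncated seminorms, the same basic-open-set comparison of the two topologies, gluing of uniform limits on the $K_n$ for completeness, and piecewise approximation with rational data on each $K_n$ (extended to all of $\Sigma\times\Lambda$) for separability. No gaps worth noting.
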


\begin{definition}
Given a sequence $\{ \mathcal{L}^n: n \in \mathbb{N} \}$ of random $\Sigma$-indexed line ensembles we say that $\mathcal{L}^n$ {\em converge weakly} to a line ensemble $\mathcal{L}$, and write $\mathcal{L}^n \implies \mathcal{L}$ if for any bounded continuous function $f: C (\Sigma \times \Lambda) \rightarrow \mathbb{R}$ we have that 
$$\lim_{n \rightarrow \infty} \mathbb{E} \left[ f(\mathcal{L}^n) \right] = \mathbb{E} \left[ f(\mathcal{L}) \right].$$

We also say that $\{ \mathcal{L}^n: n \in \mathbb{N} \}$ is {\em tight} if for any $\epsilon > 0$ there exists a compact set $K \subset C (\Sigma \times \Lambda)$ such that $\mathbb{P}(\mathcal{L}^n \in K) \geq 1- \epsilon$ for all $n \in \mathbb{N}$.

We call a line ensemble {\em non-intersecting} if $\mathbb{P}$-almost surely $\mathcal{L}_i(r) > \mathcal{L}_j(r)$  for all $i < j$ and $r \in \Lambda$.
\end{definition}

We will require the following sufficient condition for tightness of a sequence of line ensembles, which extends \cite[Theorem 7.3]{Billing}. We give a proof in Section \ref{Section8.2}.

\begin{lemma}\label{2Tight}
	Let $\Sigma \subset \mathbb{Z}$ and $\Lambda\subset\mathbb{R}$ be an interval. Suppose that $\{a_n\}_{n = 1}^\infty, \{b_n\}_{n = 1}^\infty$ are sequences of real numbers such that $a_n < b_n$, $[a_n, b_n] \subset \Lambda$, $a_{n+1} \leq a_n$, $b_{n+1} \geq b_n$ and $\cup_{n = 1}^\infty [a_n, b_n] = \Lambda$. Then $\{\mathcal{L}^n\}$ is tight if and only if for every $i\in\Sigma$ we have:
	\begin{enumerate}[label=(\roman*)]
		\item $\lim_{a\to\infty} \limsup_{n\to\infty}\, \pr(|\mathcal{L}^n_i(a_0)|\geq a) = 0$ for some $a_0 \in \Lambda$;
		\item For all $\epsilon>0$ and $k \in \mathbb{N}$,  $\lim_{\delta\to 0} \limsup_{n\to\infty}\, \pr\bigg(\sup_{\substack{x,y\in [a_k,b_k], \\ |x-y|\leq\delta}} |\mathcal{L}^n_i(x) - \mathcal{L}^n_i(y)| \geq \epsilon\bigg) = 0.$
		
	\end{enumerate}
\end{lemma}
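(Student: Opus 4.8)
The plan is to reduce the tightness of a sequence of line ensembles on $\Sigma\times\Lambda$ to the tightness of each individual curve, and then apply the classical Arzel\`a--Ascoli-type criterion for $C(\Lambda)$ as in \cite[Theorem 7.3]{Billing}. The starting point is Lemma \ref{Polish}: the space $C(\Sigma\times\Lambda)$ is a complete separable metric space under the metric $d$, so by Prohorov's theorem $\{\mathcal{L}^n\}$ is tight if and only if it is relatively compact in distribution, and it suffices to characterize the compact subsets of $C(\Sigma\times\Lambda)$. First I would establish the following topological fact: a set $A\subset C(\Sigma\times\Lambda)$ has compact closure if and only if for every $i\in\Sigma$ the set of restrictions $A_i=\{f(i,\cdot):f\in A\}$ has compact closure in $C(\Lambda)$. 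The ``only if'' direction is immediate since $f\mapsto f(i,\cdot)$ is continuous; the ``if'' direction uses a diagonal argument, extracting from any sequence in $A$ a subsequence converging on the index $1$ curve, then a further subsequence on the index $2$ curve, and so on, and checking that the diagonal subsequence converges in the metric $d$ (here the geometric weights $2^{-n}$ and the fact that only finitely many indices appear in each $K_n$ make the estimate routine).

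Next I would combine this with the known compactness criterion in $C(\Lambda)$. Recall that a subset of $C(\Lambda)$ (with the topology of uniform convergence over compacts, and $\Lambda=\cup_n[a_n,b_n]$ an increasing exhaustion by compact intervals) has compact closure iff it is uniformly bounded at one point $a_0$ and, for each $k$, equicontinuous on $[a_k,b_k]$ in the sense of a uniform modulus of continuity. Translating this through Prohorov's theorem in the standard way (as in the proof of \cite[Theorem 7.3]{Billing}), $\{\mathcal{L}^n_i\}$ is tight as a sequence of $C(\Lambda)$-valued random variables precisely when conditions (i) and (ii) of the lemma hold for that $i$. So the remaining task is purely bookkeeping: assembling the per-index tightness into tightness on $C(\Sigma\times\Lambda)$.

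For the ``if'' direction of Lemma \ref{2Tight}: given $\epsilon>0$, for each $i\in\Sigma$ use (i) and (ii) to find a compact $K^{(i)}\subset C(\Lambda)$ with $\mathbb{P}(\mathcal{L}^n_i\in K^{(i)})\geq 1-\epsilon\,2^{-i-1}$ for all $n$ (being slightly careful when $\Sigma$ is infinite versus finite, indexing the $2^{-i-1}$ weights over $\Sigma$); then $K=\{f\in C(\Sigma\times\Lambda): f(i,\cdot)\in K^{(i)}\ \forall i\in\Sigma\}$ is compact by the topological fact above, and a union bound gives $\mathbb{P}(\mathcal{L}^n\in K)\geq 1-\epsilon$ for all $n$. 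For the ``only if'' direction: given a compact $K\subset C(\Sigma\times\Lambda)$ with $\mathbb{P}(\mathcal{L}^n\in K)\geq 1-\epsilon$, its image $K_i$ under the continuous restriction map is compact in $C(\Lambda)$, hence uniformly bounded at $a_0$ and equicontinuous on each $[a_k,b_k]$, and $\{\mathcal{L}^n_i\in K_i\}$ forces the events in (i) and (ii) to be small; letting $\epsilon\to0$ (equivalently, taking $a\to\infty$ and $\delta\to0$) yields (i) and (ii).

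The main obstacle is the ``if'' direction of the topological fact, namely verifying that the diagonal subsequence actually converges in the metric $d$ rather than merely curve-by-curve: one must show that uniform convergence on each $K_n=\Sigma_n\times[a_n,b_n]$ (which involves only the finitely many indices in $\Sigma_n$) plus the summability of the weights $2^{-n}$ forces $d(f_{m_j},f)\to0$. This is where one has to be attentive to the interplay between the exhaustion $\{[a_n,b_n]\}$ of $\Lambda$ and the exhaustion $\{\Sigma_n\}$ of $\Sigma$, and to the case $N=\infty$ versus $N$ finite (in the finite case $\Sigma_n=\Sigma$ for large $n$ and the argument simplifies). Everything else is a routine transcription of the one-dimensional Billingsley argument, and I would state the two directions of the proof in that order, invoking Prohorov's theorem and \cite[Theorem 7.3]{Billing} as black boxes.
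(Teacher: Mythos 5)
Your proposal is correct and follows essentially the same route as the paper's proof: reduce tightness of the ensemble to tightness of each indexed curve (the paper packages this as a homeomorphism with $\prod_{i\in\Sigma} C(\Lambda)$ plus Prohorov's and Tychonoff's theorems, you via a direct compact-projection/diagonalization argument), then apply the extension of \cite[Theorem 7.3]{Billing} on each compact interval $[a_k,b_k]$ and assemble compact sets over the exhaustions of $\Sigma$ and $\Lambda$ with geometrically weighted union bounds and a diagonal argument. The differences are in packaging, not substance.
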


We next turn to formulating the Brownian Gibbs property -- we do this in Definition \ref{DefBGP} after introducing some relevant notation and results. If $W_t$ denotes a standard one-dimensional Brownian motion, then the process
$$\tilde{B}(t) =  W_t - t W_1, \hspace{5mm} 0 \leq t \leq 1,$$
is called a {\em Brownian bridge (from $\tilde{B}(0) = 0$ to $\tilde{B}(1) = 0 $) with diffusion parameter $1$.} For brevity we call the latter object a {\em standard Brownian bridge}.

Given $a, b,x,y \in \mathbb{R}$ with $a < b$ we define a random variable on $(C([a,b]), \mathcal{C})$ through
\begin{equation}\label{BBDef}
B(t) = (b-a)^{1/2} \cdot \tilde{B} \left( \frac{t - a}{b-a} \right) + \left(\frac{b-t}{b-a} \right) \cdot x + \left( \frac{t- a}{b-a}\right) \cdot y, 
\end{equation}
and refer to the law of this random variable as a {\em Brownian bridge (from $B(a) = x$ to $B(b) = y$) with diffusion parameter $1$.} Given $k \in \mathbb{N}$ and $\vec{x}, \vec{y} \in \mathbb{R}^k$ we let $\mathbb{P}^{a,b, \vec{x},\vec{y}}_{free}$ denote the law of $k$ independent Brownian bridges $\{B_i: [a,b] \rightarrow \mathbb{R} \}_{i = 1}^k$ from $B_i(a) = x_i$ to $B_i(b) = y_i$ all with diffusion parameter $1$.

We next state a couple of results about Brownian bridges from \cite{CorHamA} for future use.
\begin{lemma}\label{NoTouch} \cite[Corollary 2.9]{CorHamA}. Fix a continuous function $f: [0,1] \rightarrow \mathbb{R}$ such that $f(0) > 0$ and $f(1) > 0$. Let $B$ be a standard Brownian bridge and let $C = \{ B(t) > f(t) \mbox{ for some $t \in [0,1]$}\}$ (crossing) and $T = \{ B(t) = f(t) \mbox{ for some } t\in [0,1]\}$ (touching). Then $\mathbb{P}(T \cap C^c) = 0.$
\end{lemma}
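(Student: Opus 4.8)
The plan is to reduce the statement to the claim that a certain random supremum has no atom at $0$, and then to extract that from a one-parameter convexity argument. For the reduction, observe that $B-f$ is continuous on the compact interval $[0,1]$, hence attains its supremum, and that $B(0)-f(0)=-f(0)<0$ and $B(1)-f(1)=-f(1)<0$. Therefore the event $T\cap C^c$ is precisely $\{\max_{t\in[0,1]}(B(t)-f(t))=0\}$: on $C^c$ the maximum is $\le0$, on $T$ it is $\ge0$, and when it equals $0$ it is attained at an interior point, which is a touching point at which no crossing occurs. So it suffices to show $\mathbb{P}\big(\max_{[0,1]}(B-f)=0\big)=0$.

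To do this, fix any $r\in(0,1)$ and decompose the standard bridge $B$ through its value $Y:=B(r)$, writing $B(t)=\hat\beta(t)+\tfrac{t}{r}\,Y$ for $t\in[0,r]$ and $B(t)=\check\beta(t)+\tfrac{1-t}{1-r}\,Y$ for $t\in[r,1]$, where $\hat\beta$ and $\check\beta$ are Brownian bridges pinned at $0$ at both endpoints of $[0,r]$ and of $[r,1]$ respectively. A short covariance computation shows that $\hat\beta$, $\check\beta$, and $Y$ are mutually independent, with $Y\sim N(0,r(1-r))$ non-atomic. Now
\[
\max_{[0,1]}(B-f)=\max\!\Big(\sup_{t\in[0,r]}\big(\hat\beta(t)+\tfrac{t}{r}Y-f(t)\big),\ \sup_{t\in[r,1]}\big(\check\beta(t)+\tfrac{1-t}{1-r}Y-f(t)\big)\Big),
\]
which, with $(\hat\beta,\check\beta)$ held fixed, is a function $\Psi(Y)$ that is a supremum of affine functions of $Y$ with slopes in $[0,1]$, hence finite, convex and nondecreasing. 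Taking $t=r$ (where $\hat\beta$ and $\check\beta$ vanish) gives $\Psi(Y)\ge Y-f(r)\to+\infty$ as $Y\to+\infty$; and as $Y\to-\infty$ the terms with $t$ bounded away from the relevant endpoint are pushed to $-\infty$ while continuity of $f,\hat\beta,\check\beta$ together with $\hat\beta(0)=\check\beta(1)=0$ force $\Psi(Y)\to\max(-f(0),-f(1))<0$. A finite convex function on $\mathbb{R}$ that is eventually negative and eventually positive has a zero set consisting of at most one point (it cannot be constantly $0$ on a nondegenerate interval, as convexity would then force it to be $\ge0$ to the left of that interval). Hence, conditionally on $(\hat\beta,\check\beta)$, the event $\{\Psi(Y)=0\}$ requires the independent non-atomic variable $Y$ to lie in a set of at most one point and so has conditional probability $0$; taking expectations completes the proof.

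The step demanding the most care is the $Y\to-\infty$ asymptotics of $\Psi$, since that is exactly where the hypotheses $f(0)>0$ and $f(1)>0$ are used: if either failed, the zero set of $\Psi$ could be an unbounded interval of positive $Y$-probability, which is precisely the degenerate situation where $B$ touches $f$ at an endpoint without crossing it. The mutual independence in the decomposition, though standard, should also be verified with care, since the whole argument rests on conditioning on $(\hat\beta,\check\beta)$ and treating $Y$ as a fresh non-atomic random variable.
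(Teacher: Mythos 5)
Your proof is correct. Note first that the paper does not actually prove this lemma: it is quoted verbatim from \cite[Corollary 2.9]{CorHamA}, so there is no in-paper argument to match. Your route is a legitimate, self-contained alternative: the reduction $T\cap C^c=\{\max_{[0,1]}(B-f)=0\}$ is exactly the standard first step, and your mechanism for excluding the atom at $0$ — pin the bridge at an interior time $r$, so that $B$ splits into two independent zero-pinned bridges plus the independent Gaussian $Y=B(r)$ entering through the tent function, and then observe that for fixed $(\hat\beta,\check\beta)$ the map $Y\mapsto\max(B-f)$ is finite, convex, nondecreasing, tends to $+\infty$ as $Y\to+\infty$ and to $\max(-f(0),-f(1))<0$ as $Y\to-\infty$ — is clean and complete. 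The delicate points all check out: monotonicity makes the zero set of $\Psi$ an interval, convexity together with a negative value to its left rules out a nondegenerate interval (your convex-combination inequality does this), and joint measurability plus independence of $Y$ from $(\hat\beta,\check\beta)$ lets you finish by Fubini and the atomlessness of $Y$. It is also worth noting that your splitting of the bridge at time $r$ is precisely the content of the paper's own Lemma \ref{2bridges} (read in reverse), so your argument could be phrased so as to reuse that lemma; and your closing remark correctly identifies that $f(0),f(1)>0$ is what prevents the conditional zero set from being an unbounded interval of positive $Y$-probability, i.e.\ the endpoint-touching degeneracy. The only cosmetic caveat is that your parenthetical justification of ``at most one zero'' addresses only the constant-on-an-interval case; this suffices because you have already established that $\Psi$ is nondecreasing (so its zero set is an interval), but it is worth saying that explicitly.
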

\begin{lemma}\label{Spread} \cite[Corollary 2.10]{CorHamA}. Let $U$ be an open subset of $C([0,1])$, which contains a function $f$ such that $f(0) = f(1) = 0$. If $B:[0,1] \rightarrow \mathbb{R}$ is a standard Brownian bridge then $\mathbb{P}(B[0,1] \subset U) > 0$.
\end{lemma}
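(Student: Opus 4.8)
\textbf{Proof proposal for Lemma \ref{Spread}.}

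The plan is to reduce the statement to a standard support property of Brownian motion via the support theorem (sometimes called the Cameron--Martin / Stroock--Varadhan support theorem) for Brownian bridge, combined with the fact that $U$ is open. First I would fix the function $f \in U$ with $f(0) = f(1) = 0$. Since $U$ is open in $C([0,1])$ with the uniform topology, there is some $\eta > 0$ such that the uniform ball $B_\eta(f) = \{ g \in C([0,1]) : \sup_{t \in [0,1]} |g(t) - f(t)| < \eta \}$ is contained in $U$. It then suffices to show that $\mathbb{P}\left( \sup_{t \in [0,1]} |B(t) - f(t)| < \eta \right) > 0$, because on this event $B[0,1] \subset B_\eta(f) \subset U$.

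Next I would establish this positivity. The cleanest route is to invoke the known fact that the topological support of the law of a standard Brownian bridge on $C([0,1])$ is exactly $\{ g \in C([0,1]) : g(0) = g(1) = 0 \}$; since $f$ lies in this set and $B_\eta(f)$ is an open neighborhood of $f$, the support property gives $\mathbb{P}(B \in B_\eta(f)) > 0$ immediately. If one prefers to avoid citing the support theorem as a black box, one can argue directly: smooth functions vanishing at the endpoints are dense in $\{g : g(0)=g(1)=0\}$, so it is enough to treat $f$ smooth (replacing $f$ by a smooth $\tilde f$ with $\sup|f - \tilde f| < \eta/2$ and shrinking the radius to $\eta/2$). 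For smooth $f$, write $B(t) = f(t) + (B(t) - f(t))$ and use Girsanov's theorem: the law of $B - f$ is absolutely continuous with respect to the law of the standard Brownian bridge $B$, with a Radon--Nikodym derivative that is a.s.\ positive (it is the exponential of a well-defined stochastic integral, finite because $f'$ is bounded). Hence $\mathbb{P}(\sup_t |B(t) - f(t)| < \eta) = \mathbb{E}\left[ \frac{d\mathbb{Q}}{d\mathbb{P}} \indic\{ \sup_t |B(t)| < \eta \} \right]$ for the appropriate measure change, and this is strictly positive as long as $\mathbb{P}(\sup_t |B(t)| < \eta) > 0$. The latter is the small-ball probability for Brownian bridge, which is classical and strictly positive for every $\eta > 0$ (it follows, e.g., from the corresponding small-ball estimate for Brownian motion via the representation (\ref{BBDef}) with $a=0$, $b=1$, $x=y=0$, or simply from the fact that the Gaussian measure of any nonempty open set in a separable Banach space is positive).

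The main obstacle, such as it is, is purely expository: deciding how much of the Girsanov/support-theorem machinery to spell out versus cite. There is no genuine difficulty, since the absolute-continuity class of Brownian bridge under deterministic shifts by Cameron--Martin functions is completely standard and every open uniform ball around a Cameron--Martin-type function has positive measure. I would most likely present the short version — reduce to an open ball by openness of $U$, then quote the support theorem for Brownian bridge — and remark that the positivity can alternatively be seen from Girsanov plus the positivity of Brownian small-ball probabilities, leaving the routine verification to the reader. One minor point to handle carefully is that the lemma only assumes $f(0) = f(1) = 0$ for the \emph{particular} witnessing function, not for all of $U$; this is exactly what makes $f$ lie in the support, so the hypothesis is used precisely where needed.
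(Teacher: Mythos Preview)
Your proposal is correct, but there is nothing to compare it against: the paper does not prove Lemma~\ref{Spread} at all. It is stated with a citation to \cite[Corollary~2.10]{CorHamA} and used as a black box (for instance, in Definition~\ref{DefAvoidingLaw} to verify that the avoidance event has positive probability). So you have supplied a self-contained argument where the paper simply imports the result.

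As for the argument itself, both routes you outline are standard and sound. The reduction to a uniform ball $B_\eta(f)\subset U$ is exactly right, and then either (i) the support theorem for the Brownian bridge law on $C([0,1])$, or (ii) density of smooth functions vanishing at the endpoints together with the Cameron--Martin/Girsanov shift and the positivity of the small-ball probability $\mathbb{P}(\sup_t|B(t)|<\eta)>0$, finishes the job. Your remark that the hypothesis $f(0)=f(1)=0$ is used precisely to place $f$ in the support is accurate. If you write this up, the short version (openness $\Rightarrow$ ball, then support theorem) is entirely adequate; the Girsanov alternative is a fine footnote but not needed.
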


The following definition introduces the notion of an $(f,g)$-avoiding Brownian line ensemble, which in simple terms is a collection of $k$ independent Brownian bridges, conditioned on not intersecting each other and staying above the graph of $g$ and below the graph of $f$ for two continuous functions $f$ and $g$.
\begin{definition}\label{DefAvoidingLaw}
Let $k \in \mathbb{N}$ and $\weyl_k$ denote the open Weyl chamber in $\mathbb{R}^{k}$, i.e.
$$\weyl_k = \{ \vec{x} = (x_1, \dots, x_k) \in \mathbb{R}^k: x_1 > x_2 > \cdots > x_k \}.$$
(In \cite{CorHamA} the notation $\mathbb{R}_{>}^k$ was used for this set.)
Let $\vec{x}, \vec{y} \in \weyl_k$, $a,b \in \mathbb{R}$ with $a < b$, and $f: [a,b] \rightarrow (-\infty, \infty]$ and $g: [a,b] \rightarrow [-\infty, \infty)$ be two continuous functions. The latter condition means that either $f: [a,b] \rightarrow \mathbb{R}$ is continuous or $f = \infty$ everywhere, and similarly for $g$. We also assume that $f(t) > g(t)$ for all $t \in[a,b]$, $f(a) > x_1, f(b) > y_1$ and $g(a) < x_k, g(b) < y_k.$

With the above data we define the {\em $(f,g)$-avoiding Brownian line ensemble on the interval $[a,b]$ with entrance data $\vec{x}$ and exit data $\vec{y}$} to be the $\Sigma$-indexed line ensemble $\mathcal{Q}$ with $\Sigma = \llbracket 1, k\rrbracket$ on $\Lambda = [a,b]$ and with the law of $\mathcal{Q}$ equal to $\mathbb{P}^{a,b, \vec{x},\vec{y}}_{free}$ (the law of $k$ independent Brownian bridges $\{B_i: [a,b] \rightarrow \mathbb{R} \}_{i = 1}^k$ from $B_i(a) = x_i$ to $B_i(b) = y_i$) conditioned on the event 
$$E  = \left\{ f(r) > B_1(r) > B_2(r) > \cdots > B_k(r) > g(r) \mbox{ for all $r \in[a,b]$} \right\}.$$ 

It is worth pointing out that $E$ is an open set of positive measure and so we can condition on it in the usual way -- we explain this briefly in the following paragraph.  Let $\left(\Omega, \mathcal{F}, \mathbb{P}\right)$ be a probability space that supports $k$ independent Brownian bridges $\{B_i: [a,b] \rightarrow \mathbb{R} \}_{i = 1}^k$ from $B_i(a) = x_i$ to $B_i(b) = y_i$ all with diffusion parameter $1$. Notice that we can find $\tilde{u}_1, \dots, \tilde{u}_k \in C([0,1])$ and $\epsilon > 0$ (depending on $\vec{x}, \vec{y}, f, g, a, b$) such that $\tilde{u}_i(0) = \tilde{u}_i(1) = 0$ for $i = 1, \dots, k$ and such that if $\tilde{h}_1, \dots, \tilde{h}_k \in C([0,1])$ satisfy $\tilde{h}_i(0) = \tilde{h}_i(1) = 0$ for $i = 1, \dots, k$ and $\sup_{t \in [0,1]}|\tilde{u}_i(t) - \tilde{h}_i(t)| < \epsilon$ then the functions
$$h_i(t) = (b-a)^{1/2} \cdot \tilde{h}_i \left( \frac{t - a}{b-a} \right) + \left(\frac{b-t}{b-a} \right) \cdot x_i + \left( \frac{t- a}{b-a}\right) \cdot y_i,$$ 
satisfy $f(r) > h_1(r) > \cdots > h_k(r) > g(r)$. It follows from Lemma \ref{Spread} that 
$$\mathbb{P}(E) \geq \mathbb{P}\left(\max_{1 \leq i \leq k} \sup_{r \in [0,1]}|\tilde{B}_i(r) - \tilde{u}_i(r)| < \epsilon \right) = \prod_{i = 1}^k \mathbb{P} \left( \sup_{r \in [0,1]}|\tilde{B}_i(r) - \tilde{u}_i(r)| < \epsilon  \right)> 0,$$
 and so we can condition on the event $E$. 

To construct a realization of $\mathcal{Q}$ we proceed as follows. For $\omega \in E$ we define
$$\mathcal{Q}(\omega)(i,r) = B_i(r)(\omega) \mbox{ for $i = 1, \dots, k$ and $r \in [a,b]$}.$$
Observe that for $i \in \{1, \dots, k\}$ and an open set $U \in C([a,b])$ we have that 
$$\mathcal{Q}^{-1}(\{i\} \times U) = \{B_i \in U \} \cap E\; \in\; \mathcal{F},$$
and since the sets $\{i\} \times U$ form an open basis of $C(\llbracket 1, k \rrbracket \times [a,b])$ we conclude that $\mathcal{Q}$ is $\mathcal{F}$-measurable. This implies that the law $\mathcal{Q}$ is indeed well-defined and also it is non-intersecting almost surely.  Also, given measurable subsets $A_1, \dots, A_k$ of $C([a,b])$ we have that 
$$\mathbb{P}(\mathcal{Q}_i \in A_i \mbox{ for $i = 1, \dots, k$} ) = \frac{\mathbb{P}^{a,b, \vec{x},\vec{y}}_{free} \left( \{ B_i \in A_i \mbox{ for $i = 1, \dots, k$}\} \cap E \right) }{\mathbb{P}^{a,b, \vec{x},\vec{y}}_{free}(E)}.$$
We denote the probability distribution of $\mathcal{Q}$ as $\mathbb{P}_{avoid}^{a,b, \vec{x}, \vec{y}, f, g}$ and write $\mathbb{E}_{avoid}^{a,b, \vec{x}, \vec{y}, f, g}$ for the expectation with respect to this measure. 
\end{definition}

The following definition introduces the notion of the Brownian Gibbs property from \cite{CorHamA}.
\begin{definition}\label{DefBGP}
Fix a set $\Sigma = \llbracket 1, N \rrbracket$ with $N \in \mathbb{N}$ or $N = \infty$ and an interval $\Lambda \subset \mathbb{R}$ and let $K = \{k_1, k_1 + 1, \dots, k_2 \} \subset \Sigma$ be finite and $a,b \in \Lambda$ with $a < b$. Set $f = \mathcal{L}_{k_1 - 1}$ and $g = \mathcal{L}_{k_2 + 1}$ with the convention that $f = \infty$ if $k_1 - 1 \not \in \Sigma$ and $g = -\infty$ if $k_2 +1 \not \in \Sigma$. Write $D_{K,a,b} = K \times (a,b)$ and $D_{K,a,b}^c = (\Sigma \times \Lambda) \setminus D_{K,a,b}$. A $\Sigma$-indexed line ensemble $\mathcal{L} : \Sigma \times \Lambda \rightarrow \mathbb{R}$ is said to have the {\em Brownian Gibbs property} if it is non-intersecting and 
$$\mbox{ Law}\left( \mathcal{L}|_{K \times [a,b]} \mbox{ conditional on } \mathcal{L}|_{D^c_{K,a,b}} \right)= \mbox{Law} \left( \mathcal{Q} \right),$$
where $\mathcal{Q}_i = \tilde{\mathcal{Q}}_{i - k_1 + 1}$ and $\tilde{\mathcal{Q}}$ is the $(f,g)$-avoiding Brownian line ensemble on $[a,b]$ with entrance data $(\mathcal{L}_{k_1}(a), \dots, \mathcal{L}_{k_2}(a))$ and exit data $(\mathcal{L}_{k_1}(b), \dots, \mathcal{L}_{k_2}(b))$ from Definition \ref{DefAvoidingLaw}. Note that $\tilde{Q}$ is introduced because, by definition, any such $(f,g)$-avoiding Brownian line ensemble is indexed from $1$ to $k_2 - k_1 + 1$ but we want $\mathcal{Q}$ to be indexed from $k_1$ to $k_2$.

An equivalent way to express the Brownian Gibbs property is as follows. A $\Sigma$-indexed line ensemble $\mathcal{L}$ on $\Lambda$ satisfies the Brownian Gibbs property if and only if it is non-intersecting and for any finite $K = \{k_1, k_1 + 1, \dots, k_2 \} \subset \Sigma$ and $[a,b] \subset \Lambda$ and any bounded Borel-measurable function $F: C(K \times [a,b]) \rightarrow \mathbb{R}$ we have $\mathbb{P}$-almost surely
\begin{equation}\label{BGPTower}
\mathbb{E} \left[ F\left(\mathcal{L}|_{K \times [a,b]} \right)  {\big \vert} \mathcal{F}_{ext} (K \times (a,b))  \right] =\mathbb{E}_{avoid}^{a,b, \vec{x}, \vec{y}, f, g} \bigl[ F(\tilde{\mathcal{Q}}) \bigr],
\end{equation}
where
$$\mathcal{F}_{ext} (K \times (a,b)) = \sigma \left \{ \mathcal{L}_i(s): (i,s) \in D_{K,a,b}^c \right\}$$
is the $\sigma$-algebra generated by the variables in the brackets above, $ \mathcal{L}|_{K \times [a,b]}$ denotes the restriction of $\mathcal{L}$ to the set $K \times [a,b]$, $\vec{x} = (\mathcal{L}_{k_1}(a), \dots, \mathcal{L}_{k_2}(a))$, $\vec{y} = (\mathcal{L}_{k_1}(b), \dots, \mathcal{L}_{k_2}(b))$, $f = \mathcal{L}_{k_1 - 1}[a,b]$ (the restriction of $\mathcal{L}$ to the set $\{k_1 - 1 \} \times [a,b]$) with the convention that $f = \infty$ if $k_1 - 1 \not \in \Sigma$, and $g = \mathcal{L}_{k_2 +1}[a,b]$ with the convention that $g =-\infty$ if $k_2 +1 \not \in \Sigma$. 
\end{definition}
\begin{remark}\label{RemMeas} Let us briefly explain why equation (\ref{BGPTower}) makes sense. Firstly, since $\Sigma \times \Lambda$ is locally compact, we know by \cite[Lemma 46.4]{Munkres} that $\mathcal{L} \rightarrow \mathcal{L}|_{K \times [a,b]}$ is a continuous map from $C(\Sigma \times \Lambda)$ to $C(K \times [a,b])$, so that the left side of (\ref{BGPTower}) is the conditional expectation of a bounded measurable function, and is thus well-defined. A more subtle question is why the right side of (\ref{BGPTower})  is $\mathcal{F}_{ext} (K \times (a,b))$-measurable. This question was resolved in \cite[Lemma 3.4]{DimMat}, where it was shown that the right side is measurable with respect to the $\sigma$-algebra 
$$ \sigma \left\{ \mathcal{L}_i(s) : \mbox{  $i \in K$ and $s \in \{a,b\}$, or $i \in \{k_1 - 1, k_2 +1 \}$ and $s \in [a,b]$} \right\},$$
which in particular implies the measurability with respect to $\mathcal{F}_{ext} (K \times (a,b))$.
\end{remark}

In the present paper it is convenient for us to use the following modified version of the definition above, which we call the partial Brownian Gibbs property -- it was first introduced in \cite{DimMat}. We explain the difference between the two definitions, and why we prefer the second in Remark \ref{RPBGP}.
\begin{definition}\label{DefPBGP}
Fix a set $\Sigma = \llbracket 1 , N \rrbracket$ with $N \in \mathbb{N}$ or $N  = \infty$ and an interval $\Lambda \subset \mathbb{R}$.  A $\Sigma$-indexed line ensemble $\mathcal{L}$ on $\Lambda$ is said to satisfy the {\em partial Brownian Gibbs property} if and only if it is non-intersecting and for any finite $K = \{k_1, k_1 + 1, \dots, k_2 \} \subset \Sigma$ with $k_2 \leq N - 1$ (if $\Sigma \neq \mathbb{N}$), $[a,b] \subset \Lambda$ and any bounded Borel-measurable function $F: C(K \times [a,b]) \rightarrow \mathbb{R}$ we have $\mathbb{P}$-almost surely
\begin{equation}\label{PBGPTower}
\mathbb{E} \left[ F(\mathcal{L}|_{K \times [a,b]}) {\big \vert} \mathcal{F}_{ext} (K \times (a,b))  \right] =\mathbb{E}_{avoid}^{a,b, \vec{x}, \vec{y}, f, g} \bigl[ F(\tilde{\mathcal{Q}}) \bigr],
\end{equation}
where we recall that $D_{K,a,b} = K \times (a,b)$ and $D_{K,a,b}^c = (\Sigma \times \Lambda) \setminus D_{K,a,b}$, and
$$\mathcal{F}_{ext} (K \times (a,b)) = \sigma \left \{ \mathcal{L}_i(s): (i,s) \in D_{K,a,b}^c \right\}$$
is the $\sigma$-algebra generated by the variables in the brackets above, $ \mathcal{L}|_{K \times [a,b]}$ denotes the restriction of $\mathcal{L}$ to the set $K \times [a,b]$, $\vec{x} = (\mathcal{L}_{k_1}(a), \dots, \mathcal{L}_{k_2}(a))$, $\vec{y} = (\mathcal{L}_{k_1}(b), \dots, \mathcal{L}_{k_2}(b))$, $f = \mathcal{L}_{k_1 - 1}[a,b]$ with the convention that $f = \infty$ if $k_1 - 1 \not \in \Sigma$, and $g = \mathcal{L}_{k_2 +1}[a,b]$.
\end{definition}
\begin{remark} \label{BGPDeg}
Observe that if $N = 1$ then the conditions in Definition \ref{DefPBGP} become void, i.e., any line ensemble with one line satisfies the partial Brownian Gibbs property. Also, we mention that (\ref{PBGPTower}) makes sense by the same reason that (\ref{BGPTower}) makes sense, see Remark \ref{RemMeas}.
\end{remark}
\begin{remark}\label{RPBGP}
Definition \ref{DefPBGP} is slightly different from the Brownian Gibbs property of Definition~\ref{DefBGP} as we explain here. Assuming that $\Sigma = \mathbb{N}$ the two definitions are equivalent. However, if $\Sigma = \{1, \dots, N\}$ with $1 \leq N < \infty$ then a line ensemble that satisfies the Brownian Gibbs property also satisfies the partial Brownian Gibbs property, but the reverse need not be true. Specifically, the Brownian Gibbs property allows for the possibility that $k_2 = N$ in Definition \ref{DefPBGP} and in this case the convention is that $g = -\infty$. As the partial Brownian Gibbs property is more general we prefer to work with it and most of the results later in this paper are formulated in terms of it rather than the usual Brownian Gibbs property.
\end{remark}

%
\subsection{Bernoulli Gibbsian line ensembles}\label{Section2.2}
In this section we introduce the notion of a {\em Bernoulli line ensemble} and the {\em Schur Gibbs property}. Our discussion will parallel that of \cite[Section 3.1]{CD}, which in turn goes back to \cite[Section 2.1]{CorHamK}.

\begin{definition}\label{DefDLE}
Let $\Sigma \subset \mathbb{Z}$ and $T_0, T_1 \in \mathbb{Z}$ with $T_0 < T_1$. Consider the set $Y$ of functions $f: \Sigma \times \llbracket T_0, T_1 \rrbracket \rightarrow \mathbb{Z}$ such that $f(j, i+1) - f(j,i) \in \{0, 1\}$ when $j \in \Sigma$ and $i \in\llbracket T_0, T_1 -1 \rrbracket$ and let $\mathcal{D}$ denote the discrete topology on $Y$. We call a function $f: \llbracket T_0, T_1 \rrbracket \rightarrow \mathbb{Z}$ such that $f( i+1) - f(i) \in \{0, 1\}$ when $i \in\llbracket T_0, T_1 -1 \rrbracket$  an {\em up-right path} and elements in $Y$ {\em collections of up-right paths}. 

A $\Sigma$-{\em indexed Bernoulli line ensemble $\mathfrak{L}$ on $\llbracket T_0, T_1 \rrbracket$}  is a random variable defined on a probability space $(\Omega, \mathcal{B}, \mathbb{P})$, taking values in $Y$ such that $\mathfrak{L}$ is a $(\mathcal{B}, \mathcal{D})$-measurable function. 
\end{definition}

\begin{remark} In \cite[Section 3.1]{CD} Bernoulli line ensembles $\mathfrak{L}$ were called {\em discrete line ensembles} in order to distinguish them from the continuous line ensembles from Definition \ref{CLEDef}. In this paper we have opted to use the term Bernoulli line ensembles to emphasize the fact that the functions $f \in Y$ satisfy the property that $f(j, i+1) - f(j,i) \in \{0, 1\}$ when $j \in \Sigma$ and $i \in\llbracket T_0, T_1 -1 \rrbracket$. This condition essentially means that for each $j \in \Sigma$ the function $f(j, \cdot)$ can be thought of as the trajectory of a Bernoulli random walk from time $T_0$ to time $T_1$. As other types of discrete line ensembles, see e.g. \cite{Wu19}, have appeared in the literature we have decided to modify the notation in \cite[Section 3.1]{CD} so as to avoid any ambiguity.
\end{remark}

The way we think of Bernoulli line ensembles is as random collections of up-right paths on the integer lattice, indexed by $\Sigma$ (see Figure \ref{S3_1}). Observe that one can view an up-right path $L$ on $\llbracket T_0, T_1 \rrbracket$ as a continuous curve by linearly interpolating the points $(i, L(i))$. This allows us to define $ (\mathfrak{L}(\omega)) (i, s)$ for non-integer $s \in [T_0,T_1]$ and to view Bernoulli line ensembles as line ensembles in the sense of Definition \ref{CLEDef}. In particular, we can think of $\mathfrak{L}$ as a random variable taking values in $\left(C (\Sigma \times \Lambda), \mathcal{C}_{\Sigma}\right)$ with $\Lambda = [T_0, T_1]$.
\begin{figure}[h]
\centering
\scalebox{0.15}{\includegraphics{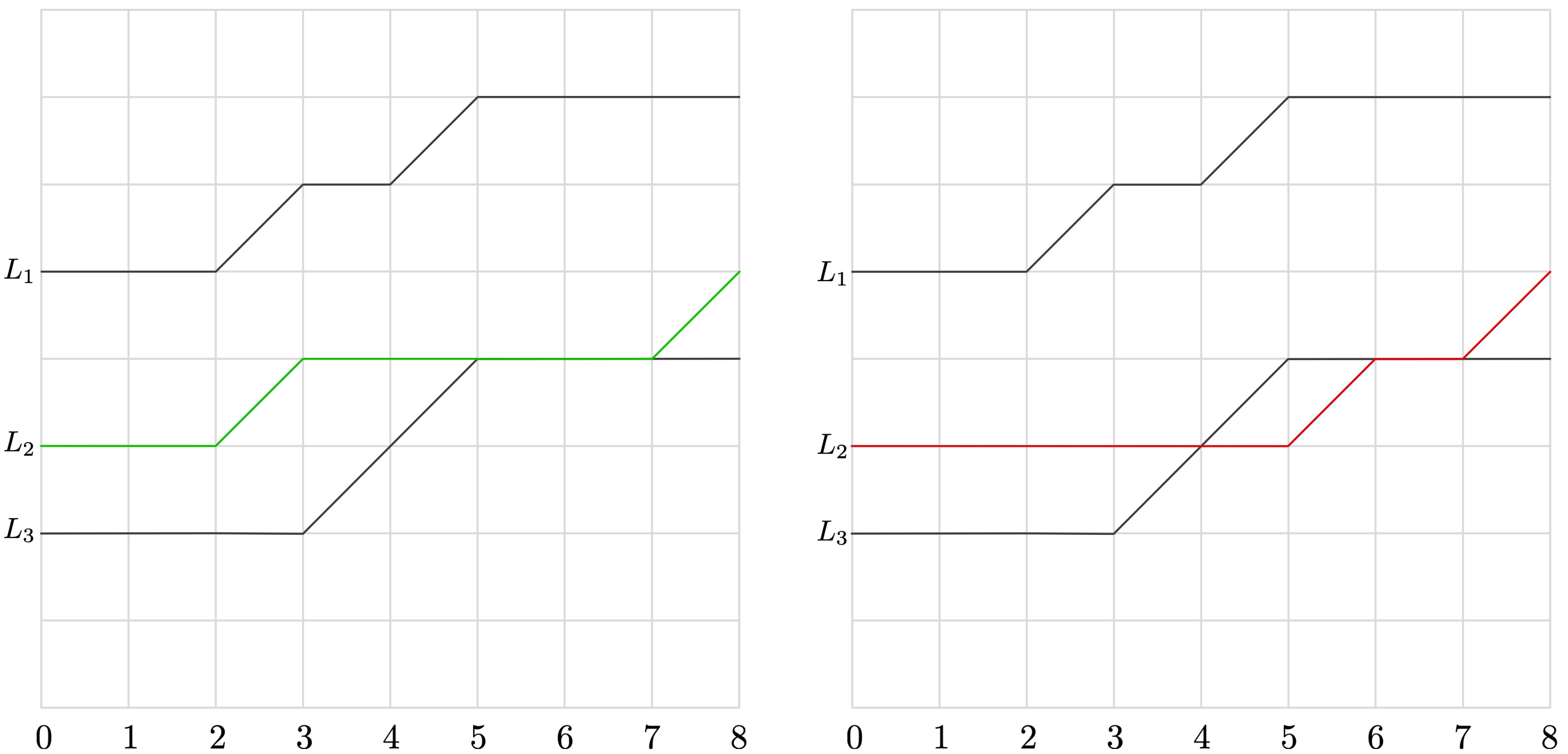}}
\caption{Two samples of $\llbracket 1,3\rrbracket$-indexed Bernoulli line ensembles with $T_0 = 0$ and $T_1 = 8$, with the left ensemble avoiding and the right ensemble nonavoiding.}
\label{S3_1}
\end{figure}
We will often slightly abuse notation and write $\mathfrak{L}: \Sigma \times \llbracket T_0, T_1 \rrbracket \rightarrow \mathbb{Z}$, even though it is not $\mathfrak{L}$ which is such a function, but rather $\mathfrak{L}(\omega)$ for each $\omega \in \Omega$. Furthermore we write $L_i = (\mathfrak{L}(\omega)) (i, \cdot)$ for the index $i \in \Sigma$ path. If $L$ is an up-right path on $\llbracket T_0, T_1 \rrbracket$ and $a, b \in \llbracket T_0, T_1 \rrbracket$ satisfy $a < b$ we let $L\llbracket a, b \rrbracket$ denote the restriction of $L$ to $\llbracket a,b\rrbracket$. \\

Let $t_i, z_i \in \mathbb{Z}$ for $i = 1,2$ be given such that $t_1 < t_2$ and $0 \leq z_2 - z_1 \leq t_2 - t_1$. We denote by $\Omega(t_1,t_2,z_1,z_2)$ the collection of up-right paths that start from $(t_1,z_1)$ and end at $(t_2,z_2)$, by $\mathbb{P}_{Ber}^{t_1,t_2, z_1, z_2}$ the uniform distribution on $\Omega(t_1,t_2,z_1,z_2)$ and write $\mathbb{E}^{t_1,t_2,z_1,z_2}_{Ber}$ for the expectation with respect to this measure. One thinks of the distribution $\mathbb{P}_{Ber}^{t_1,t_2, z_1, z_2}$ as the law of a simple random walk with i.i.d. Bernoulli increments with parameter $p \in (0,1)$ that starts from $z_1$ at time $t_1$ and is conditioned to end in $z_2$ at time $t_2$ -- this interpretation does not depend on the choice of $p \in (0,1)$. Notice that by our assumptions on the parameters the state space $\Omega(t_1,t_2,z_1,z_2)$ is non-empty.  

Given $k \in \mathbb{N}$, $T_0, T_1 \in \mathbb{Z}$ with $T_0 < T_1$ and $\vec{x}, \vec{y} \in \mathbb{Z}^k$ we let $\mathbb{P}^{T_0,T_1, \vec{x},\vec{y}}_{Ber}$ denote the law of $k$ independent Bernoulli bridges $\{B_i: \llbracket T_0, T_1 \rrbracket  \rightarrow \mathbb{Z} \}_{i = 1}^k$ from $B_i(T_0) = x_i$ to $B_i(T_1) = y_i$. Equivalently, this is just $k$ independent random up-right paths $B_i \in \Omega(T_0,T_1,x_i,y_i)$ for $i = 1, \dots, k$ that are uniformly distributed. This measure is well-defined provided that $\Omega(T_0,T_1,x_i,y_i)$ are non-empty for $i = 1, \dots, k$, which holds if $T_1 - T_0 \geq y_i - x_i \geq 0$ for all $i = 1, \dots, k$.

The following definition introduces the notion of an $(f,g)$-avoiding Bernoulli line ensemble, which in simple terms is a collection of $k$ independent Bernoulli bridges, conditioned on not-crossing each other and staying above the graph of $g$ and below the graph of $f$ for two functions $f$ and $g$.
\begin{definition}\label{DefAvoidingLawBer}
Let $k \in \mathbb{N}$ and $\mathfrak{W}_k$ denote the set of signatures of length $k$, i.e.
$$\mathfrak{W}_k = \{ \vec{x} = (x_1, \dots, x_k) \in \mathbb{Z}^k: x_1 \geq  x_2 \geq  \cdots \geq  x_k \}.$$
Let $\vec{x}, \vec{y} \in \mathfrak{W}_k$, $T_0, T_1 \in \mathbb{Z}$ with $T_0 < T_1$, $S\subseteq\llbracket T_0,T_1\rrbracket$, and $f: \llbracket T_0, T_1 \rrbracket \rightarrow (-\infty, \infty]$ and $g: \llbracket T_0, T_1 \rrbracket \rightarrow [-\infty, \infty)$ be two functions. 

With the above data we define the {\em $(f,g;S)$-avoiding Bernoulli line ensemble on the interval $\llbracket T_0, T_1 \rrbracket$ with entrance data $\vec{x}$ and exit data $\vec{y}$} to be the $\Sigma$-indexed Bernoulli line ensemble $\mathfrak{Q}$ with $\Sigma = \llbracket 1, k\rrbracket$ on $\llbracket T_0, T_1 \rrbracket$ and with the law of $\mathfrak{Q}$ equal to $\mathbb{P}^{T_0,T_1, \vec{x},\vec{y}}_{Ber}$ (the law of $k$ independent uniform up-right paths $\{B_i: \llbracket T_0, T_1 \rrbracket \rightarrow \mathbb{R} \}_{i = 1}^k$ from $B_i(T_0) = x_i$ to $B_i(T_1) = y_i$) conditioned on the event 
$$E_S  = \left\{ f(r) \geq B_1(r) \geq B_2(r) \geq \cdots \geq B_k(r) \geq g(r) \mbox{ for all $r \in S$} \right\}.$$ 
The above definition is well-posed if there exist $B_i \in \Omega(T_0,T_1,x_i,y_i)$ for $i = 1, \dots, k$ that satisfy the conditions in $E_S$ (i.e. if the set of such up-right paths is not empty). We will denote by $\Omega_{avoid}(T_0, T_1, \vec{x}, \vec{y}, f,g; S)$ the set of collections of $k$ up-right paths that satisfy the conditions in $E_S$ and then the distribution on $\mathfrak{Q}$ is simply the uniform measure on $\Omega_{avoid}(T_0, T_1, \vec{x}, \vec{y}, f,g; S)$. We denote the probability distribution of $\mathfrak{Q}$ as $\mathbb{P}_{avoid, Ber;S}^{T_0,T_1, \vec{x}, \vec{y}, f, g}$ and write $\mathbb{E}_{avoid, Ber; S}^{T_0, T_1, \vec{x}, \vec{y}, f, g}$ for the expectation with respect to this measure. If $S = \llbracket T_0,T_1\rrbracket$, we write $\Omega_{avoid}(T_0,T_1,\vec{x},\vec{y},f,g)$, $\mathbb{P}_{avoid, Ber}^{T_0,T_1, \vec{x}, \vec{y}, f, g}$, and $\mathbb{E}_{avoid, Ber}^{T_0, T_1, \vec{x}, \vec{y}, f, g}$. If $f=+\infty$ and $g=-\infty$, we write $\Omega_{avoid}(T_0,T_1,\vec{x},\vec{y})$, $\mathbb{P}^{T_0, T_1, \vec{x},\vec{y}}_{avoid, Ber}$, and $\mathbb{E}^{T_0, T_1, \vec{x},\vec{y}}_{avoid, Ber}$.
\end{definition}

It will be useful to formulate simple conditions under which $\Omega_{avoid}(T_0, T_1, \vec{x}, \vec{y}, f,g)$ is non-empty and thus $\mathbb{P}_{avoid, Ber}^{T_0,T_1, \vec{x}, \vec{y}, f, g}$ is well-defined. Note that $\Omega_{avoid}(T_0, T_1, \vec{x}, \vec{y}, f, g; S) \supseteq \Omega_{avoid}(T_0, T_1, \vec{x}, \vec{y}, f,g)$ for any $S\subseteq\llbracket T_0, T_1\rrbracket$, so $\mathbb{P}_{avoid, Ber; S}^{T_0,T_1, \vec{x}, \vec{y}, f, g}$ is also well-defined in this case. We accomplish this in the following lemma, whose proof is postponed until Section \ref{LemmaWDProof}.

\begin{lemma}\label{LemmaWD} Suppose that $k \in \mathbb{N}$ and $T_0, T_1 \in \mathbb{Z}$ with $T_0 < T_1$. Suppose further that 
\begin{enumerate}
\item $\vec{x}, \vec{y} \in \mathfrak{W}_k$ satisfy $T_1 - T_0 \geq y_i - x_i \geq 0$ for $i = 1, \dots, k$,
\item $f : \llbracket T_0, T_1 \rrbracket \rightarrow (-\infty, \infty]$ and $g : \llbracket T_0, T_1 \rrbracket \rightarrow [-\infty, \infty)$ satisfy $f (i+1) = f(i)$ or $f(i+1) = f(i) + 1$, and $g(i+1) = g(i)$ or $g(i+1) = g(i) +1$ for $i = T_0, \dots, T_1 -1$,
\item $f(T_0) \geq x_1, f(T_1) \geq y_1$, $g(T_0) \leq x_k, g(T_1) \leq y_k$ and $f(i) \geq g(i)$ for $i \in \llbracket T_0, T_1 \rrbracket$.
\end{enumerate}
Then the set $\Omega_{avoid}(T_0, T_1, \vec{x}, \vec{y}, f,g)$ from Definition \ref{DefAvoidingLawBer} is non-empty.
\end{lemma}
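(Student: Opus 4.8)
The plan is to prove the lemma constructively, by writing down an explicit element of $\Omega_{avoid}(T_0, T_1, \vec{x}, \vec{y}, f, g)$: the collection of ``lowest'' up-right paths. Concretely, for $i = 1, \dots, k$ I would set
$$B_i(r) = \max\bigl(x_i,\ g(r),\ y_i - (T_1 - r)\bigr), \qquad r \in \llbracket T_0, T_1 \rrbracket,$$
with the convention that the middle term is simply omitted when $g \equiv -\infty$. Intuitively $B_i$ is the pointwise-smallest up-right path running from $(T_0, x_i)$ to $(T_1, y_i)$ that stays weakly above $g$ (a path is forced up near the right end in order to reach $y_i$, and is forced up whenever it would otherwise dip below $g$), which is why it is the natural candidate, and why taking it for the bottom index $k$ leaves the most room for the paths above it.

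The verification then splits into three routine checks, each consuming one of the three hypotheses. First, each $B_i$ is a genuine up-right path: it is a finite pointwise maximum of functions that are non-decreasing with increments in $\{0,1\}$ -- the constant $x_i$, the up-right path $g$ (hypothesis (2)), and $r \mapsto y_i - (T_1 - r)$ -- and such a maximum is again non-decreasing with increments in $\{0,1\}$ (if the maximum is attained on one branch at $r$ and on another at $r+1$, the new value is squeezed between the old value and the old value plus one). Second, $B_i$ has the correct endpoints: $B_i(T_0) = x_i$ because $g(T_0) \le x_k \le x_i$ (hypothesis (3) together with $\vec{x} \in \mathfrak{W}_k$) and $y_i - (T_1 - T_0) \le x_i$ (hypothesis (1)), and $B_i(T_1) = y_i$ because $x_i \le y_i$ (hypothesis (1)) and $g(T_1) \le y_k \le y_i$ (hypothesis (3) together with $\vec{y} \in \mathfrak{W}_k$); hence $B_i \in \Omega(T_0, T_1, x_i, y_i)$. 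Third, the chain $f(r) \ge B_1(r) \ge \cdots \ge B_k(r) \ge g(r)$ holds for every $r$: the bottom inequality is immediate from the maximum, the monotonicity $B_i(r) \ge B_{i+1}(r)$ follows since $x_i \ge x_{i+1}$ and $y_i \ge y_{i+1}$ and the maximum is monotone in those two arguments, and the top inequality $B_1(r) \le f(r)$ uses that $f$ is non-decreasing with increments in $\{0,1\}$ together with hypothesis (3): $x_1 \le f(T_0) \le f(r)$, $g(r) \le f(r)$, and $y_1 - (T_1 - r) \le f(T_1) - (T_1 - r) \le f(r)$ (this last being vacuous when $f \equiv +\infty$). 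Thus $(B_1, \dots, B_k) \in \Omega_{avoid}(T_0, T_1, \vec{x}, \vec{y}, f, g)$ and the set is non-empty, so $\mathbb{P}_{avoid, Ber}^{T_0, T_1, \vec{x}, \vec{y}, f, g}$ is well-defined.

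I do not expect a genuine obstacle here; the statement is elementary and the only points demanding care are the bookkeeping around the degenerate boundaries $f \equiv +\infty$ and $g \equiv -\infty$ (which render the relevant inequalities vacuous) and the small observation that a pointwise maximum of finitely many up-right paths is again an up-right path. An alternative, essentially equivalent route is induction on $k$: take $B_k$ to be the lowest up-right path for index $k$ lying above $g$ as above, verify that the reduced data $(x_1, \dots, x_{k-1})$, $(y_1, \dots, y_{k-1})$, $f$, $B_k$ still satisfies hypotheses (1)--(3) (the new lower-boundary conditions $B_k(T_0) = x_k \le x_{k-1}$, $B_k(T_1) = y_k \le y_{k-1}$, and $B_k \le f$ being exactly what was checked above), and invoke the inductive hypothesis; but the direct construction makes the induction unnecessary.
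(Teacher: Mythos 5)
Your proof is correct, but it is not the paper's argument: you exhibit the \emph{minimal} admissible ensemble in closed form, $B_i(r) = \max\bigl(x_i,\, g(r),\, y_i - (T_1 - r)\bigr)$, whereas the paper constructs the \emph{maximal} one by an inductive greedy rule, starting from $B_0 = f$ and letting each $B_j$ increase by $1$ at each step whenever this keeps it weakly below $\min\{B_{j-1}(\cdot), y_j\}$. The trade-off is instructive: with the maximal construction the ordering below $f$ and among the curves is immediate, but the paper must then work to show that each path actually reaches $y_j$ at time $T_1$ (a maximality argument) and that $B_k$ stays above $g$ (a contradiction propagated upward through all $k$ curves); your minimal construction dualizes this, getting the terminal values, the mutual ordering, and the avoidance of $g$ essentially for free from monotonicity of the pointwise maximum, and paying only the small cost of checking $B_1 \le f$ via $f(r) \ge f(T_0) \ge x_1$, $f \ge g$, and $f(r) \ge f(T_1) - (T_1 - r) \ge y_1 - (T_1 - r)$, all of which you do correctly, including the observation that a finite maximum of up-right paths is again an up-right path and the degenerate cases $f \equiv \infty$, $g \equiv -\infty$. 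One caveat, shared with the paper: both arguments implicitly use that $f$ and $g$ are integer-valued when finite (as they are in every application of the lemma, where they arise as up-right paths or $\pm\infty$); your formula makes this dependence visible, since at a point where the maximum is attained by $g(r)$ a non-integer $g$ would produce a path not in $\Omega(T_0,T_1,x_i,y_i)$, and indeed for genuinely real-valued $f,g$ satisfying only (1)--(3) the conclusion can fail, so this integrality convention is needed for either proof and is worth stating explicitly if you write yours up.
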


The following definition introduces the notion of the Schur Gibbs property, which can be thought of a discrete analogue of the partial Brownian Gibbs property in the same way that Bernoulli random walks are discrete analogues of Brownian motion. 
\begin{definition}\label{DefSGP}
Fix a set $\Sigma = \llbracket 1, N \rrbracket$ with $N \in \mathbb{N}$ or $N = \infty$ and $T_0, T_1\in \mathbb{Z}$ with $T_0 < T_1$. A $\Sigma$-indexed Bernoulli line ensemble $\mathfrak{L} : \Sigma \times \llbracket T_0, T_1 \rrbracket \rightarrow \mathbb{Z}$ is said to satisfy the {\em Schur Gibbs property} if it is non-crossing, meaning that 
$$ L_j(i) \geq L_{j+1}(i) \mbox{ for all $j = 1, \dots, N-1$ and $i \in \llbracket T_0, T_1 \rrbracket$},$$
and for any finite $K = \{k_1, k_1 + 1, \dots, k_2 \} \subset \llbracket 1, N - 1 \rrbracket$ and $a,b \in \llbracket T_0, T_1 \rrbracket$ with $a < b$ the following holds.  Suppose that $f, g$ are two up-right paths drawn in $\{ (r,z) \in \mathbb{Z}^2 : a \leq r \leq b\}$ and $\vec{x}, \vec{y} \in \mathfrak{W}_k$ with $k=k_2-k_1+1$ altogether satisfy that $\mathbb{P}(A) > 0$ where $A$ denotes the event $$A =\{ \vec{x} = ({L}_{k_1}(a), \dots, {L}_{k_2}(a)), \vec{y} = ({L}_{k_1}(b), \dots, {L}_{k_2}(b)), L_{k_1-1} \llbracket a,b \rrbracket = f, L_{k_2+1} \llbracket a,b \rrbracket = g \},$$
where if $k_1 = 1$ we adopt the convention $f = \infty = L_0$. Then writing $k = k_2 - k_1 + 1$, we have for any $\{ B_i \in \Omega(a, b, x_i , y_i) \}_{i = 1}^k$ that
\begin{equation}\label{SchurEq}
{\bf 1}_A \cdot \mathbb{P}\left( L_{i + k_1-1}\llbracket a,b \rrbracket = B_{i} \mbox{ for $i = 1, \dots, k$}  \vert \mathcal{F}_{ext}^{Ber} \right) ={\bf 1}_A \cdot  \mathbb{P}_{avoid, Ber}^{a,b, \vec{x}, \vec{y}, f, g} \left( \cap_{i = 1}^k\{ \mathfrak{Q}_i = B_i \} \right),
\end{equation}
where $\mathcal{F}_{ext}^{Ber} =  \sigma(L_{i}(j): (i,j) \in \Sigma \times \llbracket T_0, T_1 \rrbracket \setminus \llbracket k_1, k_2 \rrbracket \times \llbracket a + 1, b-1 \rrbracket )$.
\end{definition}
\begin{remark}\label{RemSGB} In simple words, a Bernoulli line ensemble is said to satisfy the Schur Gibbs property if the distribution of any finite number of consecutive paths, conditioned on their end-points and the paths above and below them is simply the uniform measure on all collection of up-right paths that have the same end-points and do not cross each other or the paths above and below them. 
\end{remark}

\begin{remark}\label{RemSGB2} Observe that in Definition \ref{DefSGP} the index $k_2$ is assumed to be less than or equal to $N-1$, so that if $N < \infty$ the $N$-th path is special and is not conditionally uniform. This is what makes Definition \ref{DefSGP} a discrete analogue of the partial Brownian Gibbs property rather than the usual Brownian Gibbs property. Similarly to the partial Brownian Gibbs property, see Remark \ref{BGPDeg}, if $N = 1$ then the conditions in Definition \ref{DefSGP} become void, i.e., any Bernoulli line ensemble with one line satisfies the Schur Gibbs property. Also we mention that the well-posedness of $\mathbb{P}_{avoid, Ber}^{T_0,T_1, \vec{x}, \vec{y}, f, g}$ in (\ref{SchurEq}) is a consequence of Lemma \ref{LemmaWD} and our assumption that $\mathbb{P}(A) > 0$.
\end{remark}

\begin{remark} In \cite{CD} the authors studied a generalization of the Gibbs property in Definition \ref{DefSGP} depending on a parameter $t \in (0,1)$, which was called the {\em Hall-Littlewood Gibbs property} due to its connection to Hall-Littlewood polynomials \cite{Mac}. The property in Definition \ref{DefSGP} is the $t \rightarrow 0$ limit of the Hall-Littlewood Gibbs property. Since under this $t \rightarrow 0$ limit Hall-Littlewood polynomials degenerate to Schur polynomials we have decided to call the Gibbs property in Definition \ref{DefSGP} the Schur Gibbs property.
\end{remark}
\begin{remark} \label{restrict}  An immediate consequence of Definition \ref{DefSGP} is that if $M \leq N$, we have that the induced law on $\{L_i\}_{i = 1}^M$ also satisfies the Schur Gibbs property as an $\{1,\dots,M\}$-indexed Bernoulli line ensemble on $\llbracket T_0, T_1 \rrbracket$.
\end{remark}

We end this section with the following definition of the term acceptance probability. 
\begin{definition}\label{DefAP} Assume the same notation as in Definition \ref{DefAvoidingLawBer} and suppose that $ T_1 - T_0 \geq y_i -x_i \geq 0$ for $i =1, \dots, k$. We define the {\em acceptance probability } $Z(  T_0, T_1, \vec{x}, \vec{y}, f, g)$ to be the ratio
\begin{equation}\label{EqAP}
Z(  T_0, T_1, \vec{x}, \vec{y}, f, g) = \frac{|\Omega_{avoid}(T_0, T_1, \vec{x}, \vec{y}, f,g)|}{\prod_{i = 1}^k |\Omega(T_0, T_1, x_i, y_i)|}.
\end{equation}
\end{definition}
\begin{remark}\label{RemAP} The quantity $Z(  T_0, T_1, \vec{x}, \vec{y}, f, g)$ is precisely the probability that if $B_i$ are sampled uniformly from $\Omega(T_0, T_1, x_i, y_i)$ for $i =1 , \dots, k$ then the $B_i$ satisfy the condition
$$ E=  \left\{ f(r) \geq B_1(r) \geq B_2(r) \geq \cdots \geq B_k(r) \geq g(r) \mbox{ for all $r \in \llbracket T_0, T_1 \rrbracket$} \right\}.$$
 Let us explain briefly why we call this quantity an acceptance probability. One way to sample $\mathbb{P}_{avoid, Ber}^{T_0,T_1, \vec{x}, \vec{y}, f, g}$ is as follows. Start by sampling a sequence of i.i.d. up-right paths $B^N_i$ uniformly from $\Omega(T_0, T_1, x_i, y_i)$ for $i =1 , \dots, k$ and $N \in \mathbb{N}$. For each $n$ check if $B^n_1, \dots, B^n_k$ satisfy the condition $E$ and let $M$ denote the smallest index that accomplishes this. If $\Omega_{avoid}(T_0, T_1, \vec{x}, \vec{y}, f,g)$ is non-empty then $M$ is geometrically distributed with parameter $Z(  T_0, T_1, \vec{x}, \vec{y}, f, g)$, and in particular $M$ is finite almost surely and $\{B^M_i\}_{i =1}^k$ has distribution $\mathbb{P}_{avoid, Ber}^{T_0,T_1, \vec{x}, \vec{y}, f, g}$. In this sampling procedure we construct a sequence of candidates $\{B^N_i\}_{i =1}^k$ for $N \in \mathbb{N}$ and reject those that fail to satisfy condition $E$, the first candidate that satisfies it is accepted and has law $\mathbb{P}_{avoid, Ber}^{T_0,T_1, \vec{x}, \vec{y}, f, g}$ and the probability that a candidate is accepted is precisely $Z(  T_0, T_1, \vec{x}, \vec{y}, f, g)$, which is why we call it an acceptance probability.
\end{remark}

%
\subsection{Main technical result}\label{Section2.3} In this section we present the main technical result of the paper. We start with the following technical definition.
\begin{definition}\label{Def1} Fix $k \in \mathbb{N}$, $\alpha, \lambda > 0$ and $p \in (0,1)$. Suppose we are given a sequence $\{ T_N \}_{N = 1}^\infty$ with $T_N \in \mathbb{N}$ and that $\{\mathfrak{L}^N\}_{N = 1}^\infty$, $\mathfrak{L}^N = (L^N_1, L^N_2, \dots, L^N_k)$ is a sequence of $\llbracket 1, k \rrbracket$-indexed Bernoulli line ensembles on $ \llbracket -T_N, T_N \rrbracket$. We call the sequence $(\alpha,p,\lambda)$-{\em good} if there exists $N_0 \in \mathbb{N}$ such that 
\begin{itemize}
\item $\mathfrak{L}^N$ satisfies the Schur Gibbs property of Definition \ref{DefSGP} for $N \in \mathbb{N}, N \geq N_0$;  
\item there is a function $\psi: \mathbb{N} \rightarrow (0, \infty)$ such that $\lim_{N \rightarrow \infty} \psi(N) = \infty$ and for each $N \geq N_0$ we have that $ T_N > \psi(N)N^{\alpha}$;
\item  there are functions $\phi_1: \mathbb{Z} \times (0, \infty) \rightarrow (0,\infty)$ and $\phi_2: (0,\infty) \rightarrow \infty$ such that for any $\epsilon > 0$, $n \in \mathbb{Z}$ and $N \geq \phi_1(n, \epsilon)$ we have 
\begin{equation}\label{globalParabola}
 \mathbb{P} \left( \left|N^{-\alpha/2}(L_1^N(n N^{\alpha}) - p n N^{\alpha} + \lambda n^2 N^{\alpha/2}) \right| \geq \phi_2(\epsilon) \right) \leq \epsilon.
\end{equation}
\end{itemize}
\end{definition}
\begin{remark} Let us elaborate on the meaning of Definition \ref{Def1}. In order for a sequence of $\mathfrak{L}^N $ of $\llbracket 1, k \rrbracket$-indexed Bernoulli line ensembles on $ \llbracket -T_N, T_N \rrbracket$ to be $(\alpha,p,\lambda)$-{\em good} we want several conditions to be satisfied. Firstly, we want for all large $N$ the Bernoulli line ensemble $\mathfrak{L}^N$ to satisfy the Schur Gibbs property. The second condition is that while the interval of definition of $\mathfrak{L}^N$ is finite for each $N$ and given by $\llbracket -T_N, T_N \rrbracket$, we want this interval to grow at least with speed $N^{\alpha}$. This property is quantified by the function $\psi$, which can be essentially thought of as an arbitrary unbounded increasing function on $\mathbb{N}$. The third condition is that we want for each $n \in \mathbb{Z}$ the sequence of random variables $N^{-\alpha/2}(L_1^N(n N^{\alpha}) - p n N^{\alpha})$ to be tight but moreover we want globally these random variables to look like the parabola $-\lambda n^2$. This statement is reflected in (\ref{globalParabola}), which provides a certain uniform tightness of the random variables $N^{-\alpha/2}(L_1^N(n N^{\alpha}) - p n N^{\alpha} + \lambda n^2 N^{\alpha/2}) $. A particular case when (\ref{globalParabola}) is satisfied is for example if we know that  for each $n \in \mathbb{Z}$ the random variables $N^{-\alpha/2}(L_1^N(n N^{\alpha}) - p n N^{\alpha} + \lambda n^2 N^{\alpha/2})$ converge to the same random variable $X$. In the applications that we have in mind these random variables would converge to the $1$-point marginals of the Airy$_2$ process that are all given by the same Tracy-Widom distribution (since the Airy$_2$ process is stationary). Equation (\ref{globalParabola}) is a significant relaxation of the requirement that $N^{-\alpha/2}(L_1^N(n N^{\alpha}) - p n N^{\alpha} + \lambda n^2 N^{\alpha/2})$ all converge weakly to the Tracy-Widom distribution -- the convergence requirement is replaced with a mild but uniform control of all subsequential limits.
\end{remark}

The main technical result of the paper is given below and proved in Section \ref{Section4}.
\begin{theorem}\label{PropTightGood}
Fix $k \in \mathbb{N}$ with $k \geq 2$, $\alpha, \lambda > 0$ and $p \in (0,1)$ and let $\mathfrak{L}^N = (L^N_1, L^N_2, \dots, L^N_k)$ be an $(\alpha, p, \lambda)$-good sequence of $\llbracket 1, k \rrbracket$-indexed Bernoulli line ensembles.  Set
$$f^N_i(s) =  N^{-\alpha/2}(L^N_i(sN^{\alpha}) - p s N^{\alpha} + \lambda s^2 N^{\alpha/2}), \mbox{ for $s\in [-\psi(N) ,\psi(N)]$ and $i = 1,\dots, k -1$,}$$
and extend $f^N_i$ to $\mathbb{R}$ by setting for $i = 1, \dots, k - 1$
$$f^N_i(s) = f^N_i(-\psi(N)) \mbox{ for $s \leq -\psi(N)$ and } f^N_i(s) = f_i^N(\psi(N)) \mbox{ for $s \geq \psi(N)$}.$$
Let $\mathbb{P}_N$ denote the law of $\{f^N_i\}_{i = 1}^{k-1}$ and $\tilde{\mathbb{P}}_N$ that of $\{\tilde{f}^N_i\}_{i = 1}^{k-1} := \{(f_i^N(s) - \lambda s^2)/\sqrt{p(1-p)}\}_{i=1}^{k-1}$ both as $\llbracket 1, k-1 \rrbracket$-indexed line ensembles $\mathrm{(}$i.e. as random variables in $(C( \llbracket 1, k -1 \rrbracket \times \mathbb{R}), \mathcal{C}_{k-1}))$. Then
\begin{enumerate}[label=(\roman*)]
	\item The sequences $\mathbb{P}_N$ and $\tilde{\mathbb{P}}_N$ are tight; 
	\item Any subsequential limit $\mathcal{L}^\infty = \{\tilde{f}_i^{\infty}\}_{i = 1}^{k-1}$ of $\tilde{\mathbb{P}}_N$ satisfies the partial Brownian Gibbs property of Definition \ref{DefPBGP}.
\end{enumerate}
\end{theorem}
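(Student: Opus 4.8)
The plan is to prove the two parts of Theorem~\ref{PropTightGood} in sequence, using the Schur Gibbs property together with a strong coupling between Bernoulli bridges and Brownian bridges to transfer known estimates for avoiding Brownian line ensembles (from \cite{CorHamA} and the tools built in Section~\ref{Section3} of the paper) down to the discrete setting. The key reduction is that, by Assumption~(\ref{globalParabola}), the top curve $f_1^N$ has uniformly tight one-point marginals on the parabolic scale at the integer lattice points $s=n$; one first upgrades this to uniform tightness of the one-point marginals of $f_1^N(s)$ for all real $s$ in a compact interval by a standard monotonicity/interpolation argument (the increments of $L_1^N$ are in $\{0,1\}$, so between consecutive lattice points $n,n+1$ the deviation of $f_1^N$ from the parabola is controlled deterministically up to $O(N^{\alpha})$ jumps, which vanish after rescaling). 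The real work is controlling the \emph{lower} curves $f_2^N,\dots,f_{k-1}^N$ and establishing the modulus-of-continuity bound (ii) of Lemma~\ref{2Tight}, and for this one uses the Gibbs property: conditionally on $f_1^N$ and on the endpoints, the curves $L_2^N,\dots,L_{k-1}^N$ form an $(f,g)$-avoiding Bernoulli line ensemble with $f = L_1^N$, $g=-\infty$ (or $g=L_k^N$), and one compares this with a Brownian analogue.

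First I would carry out the top-curve analysis: using (\ref{globalParabola}) and the $\{0,1\}$-increment property, show that for each compact $[-M,M]$ and each $\e>0$ there is $R>0$ with $\limsup_N \pr(\sup_{s\in[-M,M]}|f_1^N(s)| > R) \le \e$, and moreover that $f_1^N$ has a uniform modulus of continuity on $[-M,M]$ in probability --- the latter because the top curve is a Bernoulli bridge conditioned only from above by nothing and below by the lower curves, so it is stochastically dominated by a free Bernoulli bridge with the same endpoints, whose rescaled version converges to Brownian motion with diffusion parameter $\sqrt{p(1-p)}$ by Donsker/strong coupling. This gives tightness of the marginal law of $f_1^N$ (equivalently $\tilde f_1^N$) via Lemma~\ref{2Tight}.

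Next I would handle the bulk curves by induction on the index, running the Gibbs property on the interval $[-M-1, M+1]$ (rescaled: $\llbracket -(M+1)N^\alpha, (M+1)N^\alpha\rrbracket$) with $K=\llbracket 2, j\rrbracket$. Conditionally on the top curve $L_1^N$ as an upper barrier and on the entrance/exit data $(L_i^N(\pm(M+1)N^\alpha))_{i=2}^{j}$, the law of $(L_2^N,\dots,L_j^N)$ is $\mathbb{P}_{avoid,Ber}$. The entrance and exit data are themselves tight (sandwiched between $f_1^N$ and $f_1^N$ shifted down, using non-crossing and the fact that the spacing between consecutive curves at a fixed time cannot be too large --- this needs a separate estimate, e.g.\ a lower bound on the acceptance probability $Z$ showing the curves cannot spread out too far, which is exactly the kind of statement proved in Section~\ref{Section3} via the Brownian coupling and Lemmas~\ref{NoTouch}--\ref{Spread}). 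Given tight boundary data and a tight upper barrier, one shows that the avoiding Bernoulli line ensemble has tight rescaled paths: couple each Bernoulli bridge with a Brownian bridge up to error $O(N^{\alpha/2}\log N)/N^{\alpha/2} = o(1)$ (the standard Komlós--Major--Tusnády-type coupling, invoked in Section~\ref{Section3}), note that the avoiding \emph{event} is comparable under the two laws, and then use the known tightness and modulus-of-continuity estimates for $(f,g)$-avoiding Brownian line ensembles. Iterating up to $j=k-1$ yields (i).

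For part (ii), once tightness is established, fix a subsequential limit $\mathcal{L}^\infty=\{\tilde f_i^\infty\}_{i=1}^{k-1}$ of $\tilde{\mathbb{P}}_N$ along a subsequence. Non-intersection of $\mathcal{L}^\infty$ follows because the $f_i^N$ are (weakly) ordered and any two distinct limiting curves are a.s.\ separated on compacts --- this again uses a quantitative lower bound on the gap between consecutive curves, i.e.\ that $\tilde f_i^N - \tilde f_{i+1}^N$ does not collapse, which one gets from the acceptance-probability estimates. To verify (\ref{PBGPTower}), fix $K=\llbracket k_1,k_2\rrbracket$ with $k_2 \le k-2$, an interval $[a,b]$, and a bounded continuous $F$. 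For the prelimiting ensembles, the Schur Gibbs property gives $\ex[F(\tilde{\mathcal{L}}^N|_{K\times[a,b]\cap \text{lattice}})\mid \mathcal{F}_{ext}^{Ber}] = \mathbb{E}_{avoid,Ber}^{\cdots}[F(\tilde{\mathfrak{Q}})]$; I would pass to the limit on both sides. The left side converges to $\ex[F(\mathcal{L}^\infty|_{K\times[a,b]})\mid \mathcal{F}_{ext}]$ using the Portmanteau theorem together with a.s.\ continuity of the restriction map and the fact that conditional expectations converge because the conditioning $\sigma$-algebras and the integrands converge (here one must be slightly careful and test against bounded continuous functions of the external data, reducing to convergence of joint laws). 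The right side converges to $\mathbb{E}_{avoid}^{a,b,\vec x,\vec y,f,g}[F(\tilde{\mathcal{Q}})]$ --- the law of the \emph{Brownian} avoiding ensemble --- by the strong coupling of Section~\ref{Section3}: the discrete avoiding law, suitably rescaled, converges weakly to the Brownian avoiding law, continuously in the (tight, hence convergent along a further subsequence) boundary data $\vec x,\vec y$ and barriers $f,g$. This last continuity-in-the-data statement is the main obstacle: one needs that $\mathbb{P}_{avoid,Ber}$ depends continuously on its boundary/barrier data after rescaling and that the limit is exactly $\mathbb{P}_{avoid}$, and one must rule out degenerations where the barrier touches the boundary data in the limit (handled via Lemma~\ref{NoTouch}, so that touching the barrier has limiting probability zero). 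Assembling these gives (\ref{PBGPTower}) for the subsequential limit, i.e.\ the partial Brownian Gibbs property, completing the proof.

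I expect the hardest step to be the joint passage to the limit in the Gibbs identity --- specifically, proving that the rescaled discrete avoiding ensemble converges to the Brownian avoiding ensemble uniformly over (and continuously in) the boundary data and upper/lower barriers, including the a.s.\ no-touch property needed to make the conditioning well-behaved in the limit. Everything else is either a direct application of Lemma~\ref{2Tight} and Lemma~\ref{LemmaWD}, or the strong Bernoulli--Brownian coupling estimates developed in Section~\ref{Section3}.
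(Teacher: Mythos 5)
Your overall architecture (Schur Gibbs property, KMT-type coupling, acceptance-probability control, convergence of rescaled avoiding laws) matches the paper's, but there is a genuine gap at the heart of part (i): your mechanism for controlling the lower curves is circular. You propose to handle $L_2^N,\dots,L_{k-1}^N$ by sandwiching them under $L_1^N$ and invoking ``a lower bound on the acceptance probability $Z$ showing the curves cannot spread out too far.'' But every lower bound on $Z$ of the required type (the paper's Lemma \ref{LemmaAP1}, packaged as Proposition \ref{PropMain}) takes as hypotheses precisely the two-sided one-point control you are trying to derive: that the boundary data $\mathfrak{L}^N(\pm t_3)$ lie within $O(N^{\alpha/2})$ of the line of slope $p$ \emph{from below as well as above}, and that the bottom path does not rise too high. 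The missing ingredient is a direct argument that $L_{k-1}^N$ cannot dip far below $ps$ on compacts; in the paper this is Lemma \ref{PropSup2} (via Lemma \ref{21}), and its proof crucially uses the parabolic term $-\lambda n^2 N^{\alpha/2}$ in Definition \ref{Def1}: if $L_{k-1}^N$ were uniformly very low on $[rN^{\alpha},RN^{\alpha}]$, then by the monotone coupling lemmas the top curve would behave like a free Bernoulli bridge between its (tight) endpoints, so its midpoint value would concentrate near the chord, which by concavity of $-\lambda x^2$ lies far below the parabola, contradicting (\ref{globalParabola}). Your sketch uses the parabolic shift only through one-point tightness of the top curve, so nothing in it rules out the whole ensemble sagging downward, and your induction on the curve index has no non-circular starting point.

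A secondary flaw: your modulus-of-continuity argument for the top curve via ``stochastic domination by a free Bernoulli bridge'' is wrong in direction and insufficient in substance --- conditioning the top curve to stay above the lower ones pushes it \emph{up} (Lemma \ref{MCLfg}), and one-sided domination does not control oscillations in any case. The paper instead obtains the modulus of continuity for all $k-1$ curves simultaneously from the Radon--Nikodym identity (\ref{RN}): on the event $\{Z>\delta_1\}$, which has high probability by Proposition \ref{PropMain}, the avoiding law has density at most $1/\delta_1$ with respect to the free bridge law, so the free-bridge modulus estimate (Lemma \ref{MOCLemmaS4}, proved via Theorem \ref{KMT}) transfers directly. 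Your outline of part (ii) is essentially the paper's route (Lemma \ref{scaledavoidBB}, the fixed-time separation of Lemma \ref{inftydistinct}, and a monotone-class/$\pi$--$\lambda$ argument), and it correctly isolates the main difficulty, but it relies on the same acceptance-probability and separation estimates whose derivation your part (i) leaves circular.
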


Roughly, Theorem \ref{PropTightGood} (i) states that if we have a sequence of $\llbracket 1, k \rrbracket$-indexed Bernoulli line ensembles that satisfy the Schur Gibbs property and the top paths of these ensembles under some shift and scaling have tight one-point marginals with a non-trivial parabolic shift, then  under the same shift and scaling the top $k-1$ paths of the line ensemble will be tight. The extension of $f^N_i$ to $\mathbb{R}$ is completely arbitrary and irrelevant for the validity of Theorem \ref{PropTightGood} since the topology on $C( \llbracket 1, k -1 \rrbracket \times \mathbb{R})$ is that of uniform convergence over compacts. Consequently, only the behavior of these functions on compact intervals matters in Theorem \ref{PropTightGood} and not what these functions do near infinity, which is where the modification happens as $\lim_{N \rightarrow \infty} \psi(N) = \infty$  by assumption. The only reason we perform the extension is to embed all Bernoulli line ensembles into the same space $(C( \llbracket 1, k -1 \rrbracket \times \mathbb{R}), \mathcal{C}_{k-1})$.

 We mention that the $k$-th up-right path in the sequence of Bernoulli line ensembles is special and Theorem \ref{PropTightGood} provides no tightness result for it. The reason for this stems from the Schur Gibbs property, see Definition \ref{DefSGP}, which assumes less information for the $k$-th path. In practice, one either has an infinite Bernoulli line ensemble for each $N$ or one has a Bernoulli line ensemble with finite number of paths, which increase with $N$ to infinity. In either of these settings one can use Theorem \ref{PropTightGood} to prove tightness of the full line ensemble, we will see this when we prove Theorem \ref{Thm1} in the next section.

%
\subsection{Proofs of Theorem \ref{Thm1} and Corollary \ref{Thm2}}\label{Section2.4} Here we prove Theorem \ref{Thm1} and Corollary \ref{Thm2}.

\begin{proof}(of Theorem \ref{Thm1}) We use the same notation and assumptions as in the statement of the theorem. For clarity we split the proof into two steps.

{\bf \raggedleft Step 1.} In this step we prove that $\mathcal{L}^N$ is tight. In view of Lemma \ref{2Tight} to establish the tightness of $\mathcal{L}^N$ it suffices to show that for every $k \in \mathbb{N}$  
\begin{enumerate}[label=(\roman*)]
\item $\lim_{a\to\infty} \limsup_{N\to\infty}\, \pr(|\mathcal{L}^N_k(0)|\geq a) = 0$.
\item For all $\epsilon>0$ and $m \in \mathbb{N}$,  $\lim_{\delta\to 0} \limsup_{N\to\infty}\, \pr\bigg(\sup_{\substack{x,y\in [-m,m], \\ |x-y|\leq\delta}} |\mathcal{L}^N_k(x) - \mathcal{L}^N_k(y)| \geq \epsilon\bigg) = 0.$
\end{enumerate}
Let $T_N = \min (-a_N, b_N)$ and for $N \geq k +1$ let $\tilde{\mathfrak{L}}^N = (\tilde{L}_1^N , \tilde{L}_2^N, \dots, \tilde{L}_{k+1}^N)$ denote the $\llbracket 1, k+1\rrbracket$-indexed Bernoulli line ensemble obtained from $\mathfrak{L}^N$ by restriction to the top $k+1$ lines and the interval $\llbracket -T_N, T_N \rrbracket$. In particular, since $\mathfrak{L}^N$ satisfies the Schur Gibbs property we conclude the same is true for $\tilde{\mathfrak{L}}^N$ and moreover Assumptions 1 and 2 in Section \ref{Section1.2} imply that $\{\tilde{\mathfrak{L}}^N\}_{N \geq k+1}$ is an $(\alpha, p, \lambda)$-good in the sense of Definition \ref{Def1}. Specifically, the conditions of Definition \ref{Def1} are satisfied with $N_0 = k+1$, $k$ in the definition equals $k+1$ as above, $\alpha, p, \lambda$ as in the statement of the theorem, $T_N$ as above and $\psi$ as in Assumption 2 in Section \ref{Section1.2}. For the functions $\phi_1, \phi_2$ we may set $\phi_2(\epsilon) = \phi(\epsilon/2)$, where $\phi$ is as in Assumption 2 in Section \ref{Section1.2}, which we recall required that 
$$ \sup_{n \in \mathbb{Z}} \limsup_{N \rightarrow \infty} \mathbb{P} \left( \left|N^{-\alpha/2}(L_1^N(n N^{\alpha}) - p n N^{\alpha} + \lambda n^2 N^{\alpha/2}) \right| \geq \phi(\epsilon) \right) \leq \epsilon.$$
The last equation and the fact that $\phi_2(\epsilon) = \phi(\epsilon/2)$ implies that for each $n \in \mathbb{Z}$ and $\epsilon > 0$ there exists $A(n,\epsilon) \in \mathbb{N}$ such that for $N \geq A(n,\epsilon)$ we have
$$\mathbb{P} \left( \left|N^{-\alpha/2}(L_1^N(n N^{\alpha}) - p n N^{\alpha} + \lambda n^2 N^{\alpha/2}) \right| \geq \phi_2(\epsilon) \right) \leq \epsilon,$$
and then we can set $\phi_1(n,\epsilon) = A(n,\epsilon)$. 

 Since $\{ \tilde{\mathfrak{L}}^N\}_{N \geq k+1}$ is an $(\alpha, p, \lambda)$-good sequence we have from Theorem \ref{PropTightGood} that $\{ \tilde{f}^N_i \}_{i = 1}^{k}$ as in the statement of that theorem for the line ensembles $ \tilde{\mathfrak{L}}^N$ are tight in $(C( \llbracket 1, k   \rrbracket \times \mathbb{R}), \mathcal{C}_{k})$. We may now apply the ``only if'' part of Lemma \ref{2Tight} to $\{\tilde{f}_i^N\}_{i = 1}^k$ and conclude that statements (i) and (ii) from the beginning of this step hold for $\tilde{f}_k^N$, which in turn implies they hold for $\mathcal{L}^N_k$, since by construction $\mathcal{L}_k^N$ and $\tilde{f}_k^N$ have the same distribution.\\

{\bf \raggedleft Step 2.} We next suppose that $\mathcal{L}^\infty$ is any subsequential limit of $\mathcal{L}^N$ and that $n_m \uparrow \infty$ is a sequence such that $\mathcal{L}^{n_m}$ converges weakly to $\mathcal{L}^\infty$. We want to show that $\mathcal{L}^\infty$ satisfies the Brownian Gibbs property. Suppose that $a, b \in \mathbb{R}$ with $a < b$ and $K = \{k_1, k_1+1, \dots, k_2\} \subset \mathbb{N}$ are given. We wish to show that $\mathcal{L}^{\infty}$ is almost surely non-intersecting and for any bounded Borel-measurable function $F: C(K \times [a,b]) \rightarrow \mathbb{R}$ almost surely
\begin{equation}\label{BGPTowerV2S2}
\mathbb{E} \left[ F\left(\mathcal{L}^\infty|_{K \times [a,b]} \right)  {\big \vert} \mathcal{F}_{ext} (K \times (a,b))  \right] =\mathbb{E}_{avoid}^{a,b, \vec{x}, \vec{y}, f, g} \bigl[ F(\tilde{\mathcal{Q}}) \bigr],
\end{equation}
where we use the same notation as in Definition \ref{DefBGP}. In particular, we recall that 
$$\mathcal{F}_{ext} (K \times (a,b)) = \sigma \left \{ \mathcal{L}^\infty_i(s): (i,s) \in D_{K,a,b}^c \right\} \mbox{, with $D_{K,a,b}^c = (\mathbb{N} \times \mathbb{R}) \setminus K \times (a,b).$ } $$

Let $k \geq k_2 +1$ and consider the map $\Pi_{k}: C( \mathbb{N} \times \mathbb{R}) \rightarrow C( \llbracket 1, k  \rrbracket \times \mathbb{R})$ given by $[\Pi_{k}(g)](i,t) = g(i,t)$, which is continuous, and so $\Pi_{k} [\mathcal{L}^{n_m}]$ converge weakly to $\Pi_{k}[\mathcal{L}^\infty]$ as random variables in $C( \llbracket 1, k  \rrbracket \times \mathbb{R})$. If $\{\tilde{f}^N_i \}_{i = 1}^{k}$ are as in Step 1, then we know by construction that the resitrction of $\{\tilde{f}^N_i \}_{i = 1}^{k}$ to $[- \psi(N), \psi(N)]$ has the same distribution as the restriction of $\Pi_{k} [\mathcal{L}^{N}]$ to the same interval. Since $\psi(N) \rightarrow \infty$ by assumption and $\Pi_{k} [\mathcal{L}^{n_m}]$ converge weakly to $\Pi_{k}[\mathcal{L}^\infty]$ we conclude that $\{\tilde{f}^{n_m}_i \}_{i = 1}^{k}$ converge weakly to $\Pi_{k}[\mathcal{L}^\infty]$ (here we used that the topology is that of uniform convergence over compacts). In particular, by the second part of Theorem \ref{PropTightGood} we conclude that $\Pi_{k} [\mathcal{L}^{\infty}]$ satisfies the partial Brownian Gibbs property as a $\llbracket 1, k\rrbracket$-indexed line ensemble on $\mathbb{R}$. The latter implies that $\Pi_k[\mathcal{L}^{\infty}]$ is non-intersecting almost surely and almost surely 
\begin{equation}\label{BGPTowerV2S3}
\mathbb{E} \left[ F\left(\mathcal{L}^\infty|_{K \times [a,b]} \right)  {\big \vert} \tilde{\mathcal{F}}_{ext} (K \times (a,b))  \right] =\mathbb{E}_{avoid}^{a,b, \vec{x}, \vec{y}, f, g} \bigl[ F(\tilde{\mathcal{Q}}) \bigr],
\end{equation}
where
$$\tilde{\mathcal{F}}_{ext} (K \times (a,b)) = \sigma \left \{ \mathcal{L}^\infty_i(s): (i,s) \in \tilde{D}_{K,a,b}^c \right\} \mbox{, with $\tilde{D}_{K,a,b}^c = ( \llbracket 1, k \rrbracket \times \mathbb{R}) \setminus K \times (a,b).$ } $$

Since $\Pi_k[\mathcal{L}^{\infty}]$ is non-intersecting almost surely  and $k \geq k_2 + 1$ was arbitrary we conclude that $\mathcal{L}^{\infty}$ is almost surely non-intersecting. Let $\mathcal{A}$ denote the collection of sets $A$ of the form
$$A = \{ \mathcal{L}^\infty(i_r, x_r) \in B_r \mbox{ for $r = 1, \dots, p$ } \},$$
where $p \in \mathbb{N}$, $B_1, \dots, B_p \in \mathcal{B}(\mathbb{R})$ (the Borel $\sigma$-algebra on $\mathbb{R}$ and $(i_1, x_1), \dots, (i_p, x_p) \in D_{K,a,b}^c$. Since in (\ref{BGPTowerV2S3}) we have that $k \geq k_2 +1$ was arbitrary we conclude that for all $A \in \mathcal{A}$ we have 
$$\mathbb{E} \left[  F\left(\mathcal{L}^\infty|_{K \times [a,b]} \right)   \cdot {\bf 1 }_A \right] = \mathbb{E} \left[  \mathbb{E}_{avoid}^{a,b, \vec{x}, \vec{y}, f, g} \bigl[ F(\tilde{\mathcal{Q}}) \bigr] \cdot {\bf 1 }_A \right].$$
In view of the bounded convergence theorem, we see that the collection of sets $A$ that satisfies the last equation is a $\lambda$-system and as it contains the $\pi$-system $\mathcal{A}$ we conclude by the $\pi-\lambda$ theorem that it contains $\sigma(\mathcal{A})$, which is precisely $  \mathcal{F}_{ext} (K \times (a,b)) $. We may thus conclude (\ref{BGPTowerV2S2}) from the defining properties of conditional expectation and the fact that the right side of (\ref{BGPTowerV2S2}) is $\mathcal{F}_{ext} (K \times (a,b))$-measurable as follows from \cite[Lemma 3.4]{DimMat}. This suffices for the proof.
\end{proof}

\begin{proof}(of Corollary \ref{Thm2}) As explained in Section \ref{Section1.2} we have that Assumption 2' implies Assumption 2 and so by Theorem \ref{Thm1} we know that $\mathcal{L}^N$ is a tight sequence of line ensembles. Let $\mathcal{L}_{sub}^\infty$ be any subsequential limit. We will prove that $\mathcal{L}_{sub}^\infty$ has the same distribution as $\mathcal{L}^\infty$ as in the statement of the theorem. If true, this would imply that $\mathcal{L}^N$ has only one possible subsequential limit (namely $\mathcal{L}^\infty$) which combined with the tightness of $\mathcal{L}^N$ would imply convergence of the sequence to $\mathcal{L}^\infty$.

By Theorem \ref{Thm1} we know that $\mathcal{L}^\infty_{sub}$ satisfies the Brownian Gibbs property and by Assumption 2', we know that $\mathcal{L}^\infty_{sub,1}$ (the top curve of $\mathcal{L}^\infty_{sub}$) has the same distribution as $\mathcal{L}^\infty_1$. In \cite{CorHamA} it was proved that $\mathcal{L}^{Airy}$ satisfies the Brownian Gibbs property and since $\mathcal{L}^\infty_i(t)  = c^{-1/2}\mathcal{L}_i^{Airy}(ct), \mbox{ for $i \in \mathbb{N}$ and $ t\in \mathbb{R}$}$ we conclude that $\mathcal{L}^\infty$ also satisfies the Brownian Gibbs property. To prove the latter one only needs to utilize the fact that if $B_t$ is a standard Brownian motion so is $c^{-1/2} B_{ct}$ -- see e.g. \cite[Lemma 3.5]{DimMat} where a related result is established. Combining all of the above observations, we see that $\mathcal{L}^\infty_{sub}$ and $\mathcal{L}^\infty$ both satisfy the Brownian Gibbs property and have the same top curve distribution, which by \cite[Theorem 1.1]{DimMat} implies that $\mathcal{L}^\infty_{sub}$ and $\mathcal{L}^\infty$ have the same law.
\end{proof}

%
\section{Properties of Bernoulli line ensembles}\label{Section3} In this section we derive several results for Bernoulli line ensembles, which will be used in the proof of Theorem \ref{PropTightGood} in Section \ref{Section4}.

%
\subsection{Monotone coupling lemmas}\label{Section3.1}
 In this section we formulate two lemmas that provide couplings of two Bernoulli line ensembles of non-crossing Bernoulli bridges on the same interval, which depend monotonically on their boundary data. Schematic depictions of the couplings are provided in Figure \ref{fig:MCL}. We postpone the proof of these lemmas until Section \ref{Section8}. 
\begin{figure}[ht]
  \includegraphics[width=0.95\textwidth]{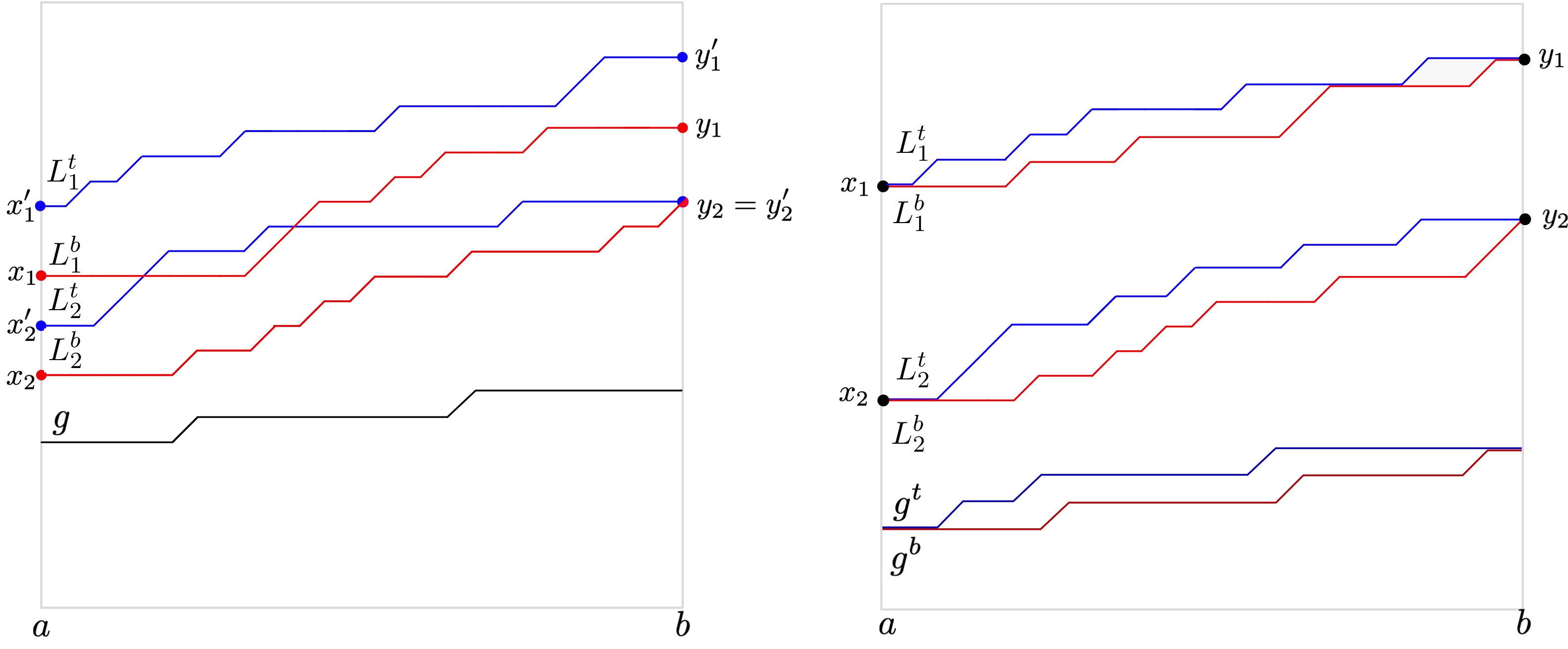}
  \caption{Two diagrammatic depictions of the monotone coupling Lemma \ref{MCLxy} (left part) and Lemma \ref{MCLfg} (right part). Red depicts the lower line ensemble and accompanying entry data, exit data, and bottom bounding curve, while blue depicts that of the higher ensemble.}
  \label{fig:MCL}
\end{figure}

\begin{lemma}\label{MCLxy} Assume the same notation as in Definition \ref{DefAvoidingLawBer}. Fix $k \in \mathbb{N}$, $T_0, T_1 \in \mathbb{Z}$ with $T_0 < T_1$, $S\subseteq\llbracket T_0,T_1\rrbracket$, a function $g: \llbracket T_0, T_1 \rrbracket  \rightarrow [-\infty, \infty)$ as well as $\vec{x}, \vec{y}, \vec{x}\,', \vec{y}\,' \in \mathfrak{W}_k$. Assume that $\Omega_{avoid}(T_0, T_1, \vec{x}, \vec{y}, \infty,g; S)$ and $\Omega_{avoid}(T_0, T_1, \vec{x}', \vec{y}', \infty,g; S)$ are both non-empty. Then there exists a probability space $(\Omega, \mathcal{F}, \mathbb{P})$, which supports two $\llbracket 1, k \rrbracket$-indexed Bernoulli line ensembles $\mathfrak{L}^t$ and $\mathfrak{L}^b$ on $\llbracket T_0, T_1 \rrbracket$ such that the law of $\mathfrak{L}^{t}$ {\big (}resp. $\mathfrak{L}^b${\big )} under $\mathbb{P}$ is given by $\mathbb{P}_{avoid, Ber; S}^{T_0, T_1, \vec{x}\,', \vec{y}\,', \infty, g}$ {\big (}resp. $\mathbb{P}_{avoid, Ber; S}^{T_0, T_1, \vec{x}, \vec{y}, \infty, g}${\big )} and such that $\mathbb{P}$-almost surely we have ${L}_i^t(r) \geq {L}^b_i(r)$ for all $i = 1,\dots, k$ and $r \in \llbracket T_0, T_1 \rrbracket$.
\end{lemma}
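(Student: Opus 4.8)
The plan is to recognize Lemma~\ref{MCLxy} as a stochastic-domination statement between two uniform measures on finite posets and to derive it from Holley's inequality. Throughout I take as given the (implicit) monotonicity of the boundary data, $x_i \le x_i'$ and $y_i \le y_i'$ for $i=1,\dots,k$, without which the conclusion $L_i^t \ge L_i^b$ cannot hold.

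First I would set up the lattice. Since every up-right path occurring in $\Omega_{avoid}(T_0,T_1,\vec x,\vec y,\infty,g;S)$ or $\Omega_{avoid}(T_0,T_1,\vec x',\vec y',\infty,g;S)$ has prescribed endpoints and increments in $\{0,1\}$, all of them take values in a fixed finite interval of $\mathbb{Z}$; hence both sets, which I call $\mathcal{S}_b$ and $\mathcal{S}_t$, sit inside a finite distributive lattice $L$ consisting of all $\mathbb{Z}$-valued functions on $\llbracket 1,k\rrbracket\times\llbracket T_0,T_1\rrbracket$ bounded into that interval, with the coordinatewise partial order, coordinatewise maximum $\vee$ and minimum $\wedge$. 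Let $\mu_b,\mu_t$ be the uniform probability measures on $\mathcal{S}_b,\mathcal{S}_t$; these are precisely $\mathbb{P}_{avoid,Ber;S}^{T_0,T_1,\vec x,\vec y,\infty,g}$ and $\mathbb{P}_{avoid,Ber;S}^{T_0,T_1,\vec x',\vec y',\infty,g}$, both well defined by the non-emptiness hypothesis.

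The combinatorial heart of the argument is a closure property: the pointwise maximum of two up-right paths is again an up-right path, and likewise for the minimum. This is a two-line check from the increment condition --- if at time $i$ the first path is the larger, then at time $i+1$ the maximum either follows the first path's $\{0,1\}$ increment, or else switches to the second path, which by that time has risen strictly above the first and by at most one. Consequently, for $a\in\mathcal{S}_b$ and $b\in\mathcal{S}_t$, both $a\vee b$ and $a\wedge b$ are $k$-tuples of up-right paths; they are non-crossing on $S$ and weakly above $g$ on $S$ (both properties pass coordinatewise through $\vee$ and $\wedge$), there is no ceiling constraint since $f=\infty$, and at the endpoints $(a\vee b)_i(T_0)=\max(x_i,x_i')=x_i'$, $(a\wedge b)_i(T_0)=\min(x_i,x_i')=x_i$, and similarly at $T_1$. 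Hence $a\vee b\in\mathcal{S}_t$ and $a\wedge b\in\mathcal{S}_b$. This yields Holley's lattice condition $\mu_t(a\vee b)\,\mu_b(a\wedge b)\ge\mu_b(a)\,\mu_t(b)$ for all $a,b\in L$ --- in fact the two sides are equal, to $(|\mathcal{S}_b|\,|\mathcal{S}_t|)^{-1}$, whenever the right side is nonzero, and the right side is zero otherwise. Holley's inequality then gives $\mu_b\preceq_{\mathrm{st}}\mu_t$ for the coordinatewise order, and Strassen's theorem produces a probability space carrying $\mathfrak{L}^b\sim\mu_b$ and $\mathfrak{L}^t\sim\mu_t$ with $L_i^b(r)\le L_i^t(r)$ for all $(i,r)$ almost surely, which is exactly the assertion.

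The one genuine technical point, and the one I expect to require care, is that the textbook form of Holley's inequality asks for the measures to be positive on the whole of $L$, which $\mu_b$ and $\mu_t$ are not. The cleanest way to make the argument self-contained is to bypass Holley and build the monotone coupling directly from a single-site heat-bath Markov chain on $\mathcal{S}_b$ and on $\mathcal{S}_t$: at each step one resamples a single value $L_i(r)$ from its conditionally uniform law given all other values (a law supported on at most two integers), driving both chains by the same uniform variable through the monotone quantile coupling. Since that conditional law is stochastically monotone in the surrounding data, the coordinatewise order between the two chains is preserved at every step; starting the bottom chain at the least element of $\mathcal{S}_b$ and the top chain at the greatest element of $\mathcal{S}_t$ --- which dominates it, because $\min\mathcal{S}_b\le(\min\mathcal{S}_b)\vee(\max\mathcal{S}_t)=\max\mathcal{S}_t$ by the closure property --- both chains are irreducible and aperiodic, hence converge to $\mu_b$ and $\mu_t$, and any limit of the (finite-state) joint law is a monotone coupling of $\mu_b$ and $\mu_t$. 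The small lemma one must not skip on this route is the irreducibility of the single-site dynamics on the staircase region $\mathcal{S}_t$ (respectively $\mathcal{S}_b$) for an arbitrary constraint set $S\subseteq\llbracket T_0,T_1\rrbracket$; either route then yields Lemma~\ref{MCLxy}.
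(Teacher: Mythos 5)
Your proposal is correct (granting the ordering $x_i\le x_i'$, $y_i\le y_i'$ for all $i$, which the paper's appendix version of the lemma states explicitly), and once you abandon Holley for exactly the positivity reason you identify, your fallback route is essentially the paper's own proof: a monotone coupling of single-site heat-bath (Glauber) chains on the two path spaces driven by common randomness, started from ordered extremal configurations, with irreducibility, aperiodicity and the uniform invariant measures yielding the ordered coupling in the limit. The irreducibility step you flag is handled in the paper by sweeping any target configuration down from the maximal one, bottom path first, so your outline matches the paper's argument in all essentials.
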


\begin{lemma}\label{MCLfg} Assume the same notation as in Definition \ref{DefAvoidingLawBer}. Fix $k \in \mathbb{N}$,  $T_0, T_1 \in \mathbb{Z}$ with $T_0 < T_1$, $S\subseteq\llbracket T_0, T_1\rrbracket$, two functions $g^t, g^b: \llbracket T_0, T_1 \rrbracket \rightarrow [-\infty,\infty)$ and $\vec{x}, \vec{y} \in \mathfrak{W}_k$. We assume that $g^t(r) \geq g^b(r)$ for all $r \in \llbracket T_0, T_1 \rrbracket$ and that $\Omega_{avoid}(T_0, T_1, \vec{x}, \vec{y}, \infty,g^t; S)$ and $\Omega_{avoid}(T_0, T_1, \vec{x}, \vec{y}, \infty,g^b; S)$ are both non-empty. Then there exists a probability space $(\Omega, \mathcal{F}, \mathbb{P})$, which supports two $\llbracket 1, k \rrbracket$-indexed Bernoulli line ensembles $\mathfrak{L}^t$ and $\mathfrak{L}^b$ on $\llbracket T_0, T_1 \rrbracket$ such that the law of $\mathfrak{L}^{t}$ {\big (}resp. $\mathfrak{L}^b${\big )} under $\mathbb{P}$ is given by $\mathbb{P}_{avoid, Ber; S}^{T_0, T_1, \vec{x}, \vec{y}, \infty, g^t}$ {\big (}resp. $\mathbb{P}_{avoid, Ber; S}^{T_0,T_1, \vec{x}, \vec{y}, \infty, g^b}${\big )} and such that $\mathbb{P}$-almost surely we have ${L}_i^t(r) \geq {L}^b_i(r)$ for all $i = 1,\dots, k$ and $r \in \llbracket T_0, T_1 \rrbracket$.
\end{lemma}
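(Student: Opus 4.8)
The plan is to realize the two Bernoulli line ensembles as the time-$n$ marginals of a single Markov chain that updates both configurations with shared randomness and is monotone, and then to pass to the stationary limit.

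\textbf{Setup and dynamics.} By Definition \ref{DefAvoidingLawBer}, $\mathbb{P}_{avoid, Ber; S}^{T_0, T_1, \vec{x}, \vec{y}, \infty, g}$ is the uniform measure on the finite set $\Omega_{avoid}(T_0, T_1, \vec{x}, \vec{y}, \infty, g; S)$ of $k$-tuples of up-right paths with fixed endpoints $\vec{x},\vec{y}$ that are non-crossing and lie weakly above $g$ on $S$. Since $g^t \ge g^b$ on all of $\llbracket T_0, T_1 \rrbracket$, we have $\Omega^t \subseteq \Omega^b$, where $\Omega^t := \Omega_{avoid}(T_0,T_1,\vec{x},\vec{y},\infty,g^t;S)$ and $\Omega^b := \Omega_{avoid}(T_0,T_1,\vec{x},\vec{y},\infty,g^b;S)$, and $\Omega^t \ne \emptyset$ by hypothesis. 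On a configuration space $\Omega^{\bullet}$ I would run the single-site heat-bath chain: at each step draw an interior time $r \in \llbracket T_0 + 1, T_1 - 1 \rrbracket$ and an index $i \in \llbracket 1, k \rrbracket$ uniformly, draw $U \sim \mathrm{Unif}[0,1]$, and replace $\sigma_i(r)$ by a uniform sample (quantile-coupled through $U$) from the set of integers consistent with all constraints at the site $(i,r)$: the up-right condition $\sigma_i(r) \in \{\sigma_i(r-1), \sigma_i(r-1)+1\} \cap \{\sigma_i(r+1)-1, \sigma_i(r+1)\}$, non-crossing with the neighbours ($\sigma_i(r) \le \sigma_{i-1}(r)$ and $\sigma_i(r) \ge \sigma_{i+1}(r)$, with $\sigma_0 \equiv +\infty$ since $f = \infty$), and, when $i = k$ and $r \in S$, the floor constraint $\sigma_i(r) \ge g(r)$. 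This set always contains the current value $\sigma_i(r)$ and so is non-empty; the move leaves the uniform measure on $\Omega^{\bullet}$ invariant; adding a holding probability makes the chain aperiodic; and it is irreducible on $\Omega^{\bullet}$ because its non-trivial transitions are exactly the elementary corner flips, whose connectivity on the set of non-crossing up-right path configurations is classical. Hence the time-$n$ law converges to the uniform measure on $\Omega^{\bullet}$.

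\textbf{Monotonicity --- the main point.} Write $\llbracket \ell(\sigma), h(\sigma) \rrbracket$ for the set of allowed values of $\sigma_i(r)$ above. Two facts make the update monotone under shared randomness. First, this set has at most two elements, since $\ell(\sigma) \ge \sigma_i(r-1)$ while $h(\sigma) \le \sigma_i(r-1)+1$. Second, both $\ell$ and $h$ are non-decreasing functions of the configuration and of the floor curve, so if $\sigma^t \ge \sigma^b$ pointwise and $g^t \ge g^b$ then $\ell(\sigma^t) \ge \ell(\sigma^b)$ and $h(\sigma^t) \ge h(\sigma^b)$. Letting $\Phi(\ell, h, U) = \ell$ if $h = \ell$ and $\Phi(\ell, h, U) = \ell + \mathbf{1}\{U \ge 1/2\}$ if $h = \ell+1$ denote the quantile-coupled uniform sample on a set of size at most two, a short four-case check shows $\Phi(\ell^t, h^t, U) \ge \Phi(\ell^b, h^b, U)$ whenever $\ell^t \ge \ell^b$, $h^t \ge h^b$, and both sets have size at most two. (This would be false for a naive quantile coupling of wider ranges, which is exactly why the size-$\le 2$ feature of the Bernoulli/up-right setting is used.) Thus, updating the same site $(i,r)$ in both configurations with the same $U$ --- $\sigma^t$ using the floor $g^t$ and $\sigma^b$ the floor $g^b$ --- preserves $\sigma^t \ge \sigma^b$, and by induction it is preserved along the whole coupled trajectory. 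I expect this to be the only genuinely delicate step.

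\textbf{Passing to the limit.} I would run the coupled chain starting from a common configuration $\sigma^{(0)} \in \Omega^t \subseteq \Omega^b$, so the ordering holds at time $0$. The joint law at time $n$ is a probability measure on the finite set $\Omega^t \times \Omega^b$ supported on $\{(\sigma^t, \sigma^b) : \sigma^t_i(r) \ge \sigma^b_i(r) \text{ for all } (i,r)\}$, and marginally the first coordinate is the heat-bath chain on $\Omega^t$ while the second is the heat-bath chain on $\Omega^b$. Extracting a subsequential weak limit $\pi$ of these joint laws, its two marginals are the uniform measures $\mathbb{P}_{avoid, Ber; S}^{T_0, T_1, \vec{x}, \vec{y}, \infty, g^t}$ and $\mathbb{P}_{avoid, Ber; S}^{T_0, T_1, \vec{x}, \vec{y}, \infty, g^b}$ by the convergence above, and the ordering persists under $\pi$ because it is a closed condition. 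Taking $(\mathfrak{L}^t, \mathfrak{L}^b)$ to be a sample from $\pi$ produces the probability space asserted in the lemma. Lemma \ref{MCLxy} is proved the same way, the only change being that the two chains differ in their entrance/exit data rather than in the floor curve; one then starts them from their respective maximal configurations, which are ordered because the maximal configuration depends monotonically on $(\vec{x},\vec{y})$.
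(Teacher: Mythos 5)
Your overall strategy is essentially the one the paper uses: couple two Glauber-type Markov chains on the finite configuration spaces through shared update randomness, check that a single-site update preserves the pointwise ordering of the two configurations, and pass to the stationary (or subsequential) limit, starting Lemma \ref{MCLxy} from the ordered maximal configurations. (The paper's update proposes a level $z$ together with a coin flip rather than a heat-bath resample, but the monotonicity verification is of the same nature as your four-case check, and your observation that the allowed set at a site is an interval of size at most two with endpoints monotone in the configuration and the floor is correct.)

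There is, however, a genuine gap in the state space of your chain. By Definition \ref{DefAvoidingLawBer}, $\mathbb{P}_{avoid,Ber;S}^{T_0,T_1,\vec{x},\vec{y},\infty,g}$ is the uniform measure on $\Omega_{avoid}(T_0,T_1,\vec{x},\vec{y},\infty,g;S)$, where \emph{both} the non-crossing condition between consecutive curves \emph{and} the floor condition are imposed only at times $r\in S$; off $S$ the curves are allowed to cross each other and to dip below $g$. You correctly restrict the floor constraint to $i=k$, $r\in S$, but you impose the non-crossing constraint $\sigma_{i-1}(r)\geq\sigma_i(r)\geq\sigma_{i+1}(r)$ at every interior time $r$. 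Consequently your heat-bath chain evolves in the strictly smaller set of configurations that are non-crossing on all of $\llbracket T_0,T_1\rrbracket$ and lie above $g$ on $S$, its invariant law is the uniform measure on that smaller set, and it can never reach configurations of $\Omega_{avoid}(T_0,T_1,\vec{x},\vec{y},\infty,g;S)$ that cross somewhere off $S$ -- so it is neither irreducible on the correct space nor does it have the correct stationary law unless $S=\llbracket T_0,T_1\rrbracket$. This generality of $S$ is not decorative: the lemma is applied in the paper with $S=\llbracket -t_3,-t_1\rrbracket\cup\llbracket t_1,t_3\rrbracket$ in Section \ref{Section6}. The repair is exactly what the paper's proof does: enforce the non-crossing/floor constraint in the update only when $r\in S$, letting updates at $r\notin S$ go through even if they create a crossing. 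Your monotonicity argument survives this modification verbatim, since the allowed set at a site remains a nonempty interval of length at most two whose endpoints are non-decreasing in the configuration and in the floor, and the rest of your limiting argument (common initial condition in $\Omega^t\subseteq\Omega^b$, subsequential limits of the joint laws on a finite space, closedness of the ordering) is fine.
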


In plain words, Lemma \ref{MCLxy} states that one can couple two Bernoulli line ensembles $\mathfrak{L}^{t}$ and $\mathfrak{L}^{b}$ of non-crossing Bernoulli bridges, bounded from below by the same function $g$, in such a way that if all boundary values of $\mathfrak{L}^{t}$ are above the respective boundary values of $\mathfrak{L}^{b}$, then all up-right paths of $\mathfrak{L}^{t}$ are almost surely above the respective up-right paths of $\mathfrak{L}^{b}$. See the left part of Figure \ref{fig:MCL}. Lemma \ref{MCLfg}, states that one can couple two Bernoulli line ensembles $\mathfrak{L}^{t}$ and $\mathfrak{L}^{b}$ that have the same boundary values, but the lower bound $g^t$ of $\mathfrak{L}^{t}$ is above the lower bound $g^b$ of $\mathfrak{L}^{b}$, in such a way that all up-right paths of $\mathfrak{L}^{t}$ are almost surely above the respective up-right paths of $\mathfrak{L}^{b}$. See the right part of Figure \ref{fig:MCL}.

%
\subsection{Properties of Bernoulli and Brownian bridges}\label{Section3.2} In this section we derive several results about Bernoulli bridges, which are random up-right paths that have law $\mathbb{P}_{Ber}^{T_0, T_1, x,y}$ as in Section \ref{Section2.2}, as well as Brownian bridges with law $\mathbb{P}^{T_0,T_1,x,y}_{free}$ as in Section \ref{Section2.1}. Our results will rely on the two monotonicity Lemmas \ref{MCLxy} and \ref{MCLfg} as well as a strong coupling between Bernoulli bridges and Brownian bridges from \cite{CD} -- recalled here as Theorem \ref{KMT}.

If $W_t$ denotes a standard one-dimensional Brownian motion and $\sigma > 0$, then the process
$$B^{\sigma}_t = \sigma (W_t - t W_1), \hspace{5mm} 0 \leq t \leq 1,$$
is called a {\em Brownian bridge (conditioned on $B_0 = 0, B_1 = 0$) with diffusion parameter $\sigma$.} We note that $B^\sigma$ is the unique a.s. continuous Gaussian process on $[0,1]$ with $B_0 = B_1  = 0$, $\ex[B^\sigma_t] = 0$, and
\begin{equation}\label{BBcovar}
\ex[B^\sigma_r B^\sigma_s] = \sigma^2(r\wedge s - rs - sr + sr) = \sigma^2 (r\wedge s - rs).
\end{equation}  
With the above notation we state the strong coupling result we use.
\begin{theorem}\label{KMT}
Let $p \in (0,1)$. There exist constants $0 < C, a, \alpha < \infty$ (depending on $p$) such that for every positive integer $n$, there is a probability space on which are defined a Brownian bridge $B^\sigma$ with diffusion parameter $\sigma = \sqrt{p(1-p)}$ and a family of random paths $\ell^{(n,z)} \in \Omega(0,n, 0, z)$ for $z = 0,\dots,n$ such that $\ell^{(n,z)}$ has law $\mathbb{P}^{0,n,0,z}_{Ber}$ and
\begin{equation}\label{KMTeq}
\mathbb{E}\left[ e^{a \Delta(n,z)} \right] \leq C e^{\alpha (\log n)^2}e^{|z- p n|^2/n}, \mbox{ where $\Delta(n,z):=  \sup_{0 \leq t \leq n} \left| \sqrt{n} B^\sigma_{t/n} + \frac{t}{n}z - \ell^{(n,z)}(t) \right|.$}
\end{equation}
\end{theorem}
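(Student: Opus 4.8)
The plan is to prove a Komlós--Major--Tusnády-type strong approximation directly at the level of bridges, via a dyadic (recursive-midpoint) construction. The classical ingredient I would invoke is the one-step \emph{quantile coupling} underlying the Hungarian construction: for i.i.d.\ increments with finite exponential moments (Bernoulli$(p)$ qualifies) the comparison of a partial sum with a Gaussian is carried out by matching quantile functions, and the same estimate is available when the binomial is replaced by a hypergeometric. Quantitatively, the fact I would use is that if $X$ is an integer-valued random variable with mean $0$, variance $v$, obeying a local central limit theorem with Gaussian-type tails, and $Y \sim N(0,\tilde v)$, then on a common space one can arrange $|X - Y| \lesssim 1 + X^2/v + |\sqrt{v} - \sqrt{\tilde v}|\cdot|X|/\sqrt{v}$ with exponential tails, uniformly in the parameters.

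Concretely, I would first realize $B^\sigma$ on a probability space and write $\widehat B(t) = \sqrt n\, B^\sigma_{t/n}$ for its rescaling to $[0,n]$, a Brownian bridge from $0$ to $0$ with diffusion parameter $\sigma$. For each $z \in \{0,\dots,n\}$ I would build $\ell^{(n,z)}$ recursively over the dyadic times $t_{k,j} = j\,n\,2^{-k}$: conditionally on the already-fixed values $\ell^{(n,z)}(a) = u$ and $\ell^{(n,z)}(b) = v$ at the ends of a dyadic cell $[a,b]$, the quantity $\ell^{(n,z)}\big(\tfrac{a+b}{2}\big) - \tfrac{u+v}{2}$ is a centered hypergeometric with variance $\asymp (b-a)\,q(1-q)$, where $q := (v-u)/(b-a)$, which I would couple to the centered Gaussian increment $\widehat B\big(\tfrac{a+b}{2}\big) - \tfrac12\big(\widehat B(a)+\widehat B(b)\big)$ (variance $\sigma^2(b-a)/4$) by the quantile coupling above, using fresh independent randomness in each cell and the \emph{same} $\widehat B$ for every $z$. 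Because the conditional laws are matched exactly at each stage, linear interpolation at the finest scale yields an element of $\Omega(0,n,0,z)$ with law $\mathbb{P}^{0,n,0,z}_{Ber}$, as required.

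It then remains to control $\Delta(n,z) = \sup_{0\le t\le n}\big|\widehat B(t) + \tfrac tn z - \ell^{(n,z)}(t)\big|$. For fixed $t$ this is dominated by the sum, over the $\sim \log_2 n$ dyadic levels, of the single-cell coupling error at the cell containing $t$, plus the within-cell oscillation of $\widehat B$ and of $\ell^{(n,z)}$ at the finest scale. Each single-cell error has an exponential tail above a constant multiple of $\log n$ and, by the estimate above, carries an additional systematic term of order $\big|\sqrt{q(1-q)} - \sqrt{p(1-p)}\big|\cdot\sqrt{b-a}$, reflecting that the bridge to $z$ has local variance parameter $q \approx z/n$ rather than $p$; since $\big|\sqrt{q(1-q)} - \sqrt{p(1-p)}\big| = O(|q-p|)$ and the relevant $q$ are all $\approx z/n$, summing over levels produces a total of order $|z-pn|/\sqrt n$ up to logarithmic factors. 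A union bound over the $O(n)$ cells gives $\mathbb{P}\big(\Delta(n,z) \ge C(\log n)^2 + C\,|z-pn|\log n/\sqrt n + \lambda\big) \le C e^{-c\lambda}$; then choosing $a$ small (it may depend on $p$), exponentiating, and applying $xy \le \tfrac12 x^2 + \tfrac12 y^2$ with $x \asymp \log n$ and $y \asymp |z-pn|/\sqrt n$ converts the middle term into a factor $e^{|z-pn|^2/n}$, so that $\mathbb{E}\big[e^{a\Delta(n,z)}\big] \le C e^{\alpha(\log n)^2} e^{|z-pn|^2/n}$.

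The hard part is the uniform quantile-coupling estimate in the hypergeometric regime with the correct parameter dependence: one needs exponential-tail control on the difference of quantile functions of a centered hypergeometric and a centered Gaussian \emph{uniformly} over all cell sizes $b-a$ and occupation fractions $q$ appearing in the recursion, and one must bookkeep the variance mismatch carefully enough that its accumulation yields exactly the exponent $|z-pn|^2/n$ rather than something larger. (Deducing the bridge statement from the classical KMT theorem for the free walk by conditioning on the endpoint looks shorter but is in fact delicate: conditioning the free walk to terminate at an atypical $z$ costs a reciprocal-probability factor $\asymp \sqrt n\, e^{c(z-pn)^2/n}$ whose constant $c$ may exceed $1$, overshooting the target; avoiding this forces one to first align the walk's parameter with $z/n$, which brings back precisely the variance-mismatch analysis above.)
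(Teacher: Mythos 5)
The paper does not prove Theorem \ref{KMT} at all: it is imported as a black box, with the $p=1/2$ case attributed to \cite[Theorem 6.3]{LF}, the general $p\in(0,1)$ case to \cite[Theorem 4.5]{CD}, and a sharper version (with $(\log n)^2$ replaced by $\log n$) to \cite[Theorem 2.3]{DW19}. Your dyadic midpoint scheme --- coupling the hypergeometric conditional law of the bridge's midpoint, given its values at the endpoints of a dyadic cell, to the Gaussian midpoint of the Brownian bridge via quantile coupling, and summing the errors over the $\sim \log_2 n$ levels --- is essentially the strategy carried out in those references, so your route matches the actual source of the theorem rather than anything argued in this paper.

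As a standalone argument, however, your proposal has a genuine gap, and it is precisely the one you flag yourself: the uniform quantile-coupling estimate for a centered hypergeometric versus a centered Gaussian, with exponential tails uniform over all cell lengths and occupation fractions $q$ arising in the recursion, together with the bookkeeping showing that the accumulated variance mismatch yields exactly the factor $e^{|z-pn|^2/n}$, is asserted rather than proved. That estimate is the entire technical content of \cite{LF}, \cite{CD} and \cite{DW19}: the local-CLT-based comparison of quantile functions in the hypergeometric regime (where both endpoint parameters vary from cell to cell) is delicate, and the constant multiplying $|z-pn|^2/n$ must be tracked through the recursion so as not to overshoot $1$ --- the same pitfall you correctly identify as blocking the shortcut of conditioning the classical KMT coupling of the free walk on its endpoint. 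So the plan is the right one and faithfully reflects how the theorem is actually proved in the literature, but without a proof (or at least a precise citation) of that uniform coupling lemma and the level-by-level variance bookkeeping, the argument is incomplete.
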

\begin{remark} When $p = 1/2$ the above theorem follows (after a trivial affine shift) from \cite[Theorem 6.3]{LF} and the general $p \in (0,1)$ case was done in \cite[Theorem 4.5]{CD}. We mention that a significant generalization of Theorem \ref{KMT} for general random walk bridges has recently been proved in \cite[Theorem 2.3]{DW19}, and in particular the inequality in (\ref{KMTeq}) was shown to hold with $(\log n)^2$ replaced with $\log n$.
\end{remark}

We will use the following simple corollary of Theorem \ref{KMT} to compare Bernoulli bridges with Brownian bridges. We use the same notation as in the theorem.

\begin{corollary}\label{Cheb}
	Fix $p\in (0,1)$, $\beta > 0$, and $A>0$. Suppose $|z-pn| \leq K\sqrt{n}$ for a constant $K>0$. Then for any $\epsilon > 0$, there exists $N$ large enough depending on $p,\epsilon,A,K$ so that for $n\geq N$,
	\[
	\mathbb{P}\Big(\Delta(n,z) \geq An^\beta\Big) < \epsilon.
	\]
\end{corollary}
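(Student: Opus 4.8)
The plan is to deduce Corollary~\ref{Cheb} from Theorem~\ref{KMT} by a routine application of Markov's inequality together with the exponential moment bound in (\ref{KMTeq}). First I would observe that since $|z - pn| \leq K\sqrt{n}$, the term $e^{|z-pn|^2/n}$ appearing on the right-hand side of (\ref{KMTeq}) is bounded above by $e^{K^2}$, a constant depending only on $K$. Thus Theorem~\ref{KMT} gives
\begin{equation*}
\mathbb{E}\left[ e^{a \Delta(n,z)} \right] \leq C e^{K^2} e^{\alpha (\log n)^2}
\end{equation*}
for all $n$ (with $C, a, \alpha$ the constants from the theorem, depending only on $p$).

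Next I would apply Markov's inequality to the nonnegative random variable $e^{a\Delta(n,z)}$: for any threshold $t > 0$,
\begin{equation*}
\mathbb{P}\big(\Delta(n,z) \geq t\big) = \mathbb{P}\big(e^{a\Delta(n,z)} \geq e^{at}\big) \leq e^{-at}\, \mathbb{E}\left[ e^{a\Delta(n,z)}\right] \leq C e^{K^2} e^{\alpha(\log n)^2 - at}.
\end{equation*}
Taking $t = An^\beta$ yields the bound $C e^{K^2} e^{\alpha(\log n)^2 - a A n^\beta}$. Since $\beta > 0$, the exponent $\alpha(\log n)^2 - aAn^\beta \to -\infty$ as $n \to \infty$ (a power of $n$ dominates a power of $\log n$), so the whole expression tends to $0$. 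Hence given $\epsilon > 0$ there is an $N$, depending on $p, \epsilon, A, K$ (through $C, a, \alpha, K$ and the rate at which $n^\beta$ overtakes $(\log n)^2$), such that for all $n \geq N$ the probability is below $\epsilon$, which is the claim.

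There is essentially no serious obstacle here; this is a soft corollary. The only mild point of care is that Theorem~\ref{KMT} as stated constructs, for each fixed $n$, a probability space carrying the whole family $\{\ell^{(n,z)}\}_{z=0}^n$, so one should simply fix $n$ and the relevant $z$ and work on that space — the statement of Corollary~\ref{Cheb} is about a single $(n,z)$ pair at a time, so no uniformity over $z$ beyond what the hypothesis $|z - pn| \leq K\sqrt n$ already provides is needed. It is also worth noting that the parameter named $\beta$ in the corollary is unrelated to (and should not be confused with) the constant $\alpha$ in Theorem~\ref{KMT}; the argument only uses $\beta > 0$ to guarantee $n^\beta$ grows faster than $(\log n)^2$. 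If one wanted an explicit $N$, one could solve $aAn^\beta \geq \alpha(\log n)^2 + \log(Ce^{K^2}/\epsilon)$, but for the purposes of the corollary the qualitative statement suffices.
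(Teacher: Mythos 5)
Your proposal is correct and follows essentially the same route as the paper: apply Markov's inequality to $e^{a\Delta(n,z)}$, bound the exponential moment via (\ref{KMTeq}) using $|z-pn|^2/n \leq K^2$, and note that $\alpha(\log n)^2 - aAn^\beta \to -\infty$. In fact your write-up is slightly more careful than the paper's (which drops the factor $a$ in the exponent $e^{-An^\beta}$ and writes $K$ instead of $K^2$, both harmless slips).
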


\begin{proof}
	Applying Chebyshev's inequality and \eqref{KMTeq} gives
	\begin{align*}
	\mathbb{P}\Big(\Delta(n,z) \geq An^\beta\Big) &\leq e^{-An^\beta}\ex\Big[e^{a\Delta(n,z)}\Big] \leq C\exp\Big[-An^\beta + \alpha(\log n)^2 + \frac{|z-pn|^2}{n}\Big]\\
	&\leq C\exp\Big[-An^\beta + \alpha(\log n)^2 + K\Big].
	\end{align*}
	The conclusion is now immediate.
\end{proof}

We also state the following result regarding the distribution of the maximum of a Brownian bridge, which follows from formulas in \cite[Section 12.3]{Dudley}.

\begin{lemma}\label{BBmax}
	Fix $p\in (0,1)$, and let $B^\sigma$ be a Brownian bridge of diffusion parameter $\sigma = \sqrt{p(1-p)}$ on $[0,1]$. Then for any $C,T> 0$ we have
	\begin{equation}\label{BBmaxeq}
	\begin{split}
	\mathbb{P}\left(\max_{s\in[0,T]} B^\sigma_{s/T} \geq C\right) &= \exp\left( - \frac{2C^2}{p(1-p)}\right), \\ \mathbb{P}\left(\max_{s\in[0,T]} \big| B^\sigma_{s/T} \big| \geq C\right) &= 2\sum_{n=1}^\infty (-1)^{n-1} \exp\left(-\frac{2n^2C^2}{p(1-p)}\right).
	\end{split}
	\end{equation}
	In particular,
	\begin{equation}\label{sepBd}
	\mathbb{P}\left(\max_{s\in[0,T]} \big|B^\sigma_{s/T}\big| \geq C\right) \leq 2\exp\left( - \frac{2C^2}{p(1-p)}\right).
	\end{equation}
\end{lemma}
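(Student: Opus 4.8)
The plan is to strip away the cosmetic parameters $T$ and $\sigma$ and reduce \eqref{BBmaxeq} to two classical distributional identities for the running maximum and the running absolute maximum of a standard Brownian bridge on $[0,1]$, which are recorded in \cite[Section 12.3]{Dudley}.

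First I would note that $s\mapsto s/T$ is an increasing bijection of $[0,T]$ onto $[0,1]$, so that $\max_{s\in[0,T]} B^\sigma_{s/T} = \max_{u\in[0,1]} B^\sigma_u$ and likewise $\max_{s\in[0,T]}|B^\sigma_{s/T}| = \max_{u\in[0,1]}|B^\sigma_u|$; in particular the quantities in \eqref{BBmaxeq} do not depend on $T$. Next, writing $B^\sigma_u = \sigma \tilde B_u$ with $\tilde B$ a standard Brownian bridge on $[0,1]$ and $\sigma = \sqrt{p(1-p)}$, the events of interest become $\{\max_{u\in[0,1]} \tilde B_u \geq C/\sigma\}$ and $\{\max_{u\in[0,1]} |\tilde B_u| \geq C/\sigma\}$.

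Then I would invoke the two classical formulas: $\mathbb{P}\big(\max_{u\in[0,1]} \tilde B_u \geq c\big) = e^{-2c^2}$ for $c\ge 0$ (a standard consequence of the reflection principle for the Brownian bridge) and $\mathbb{P}\big(\max_{u\in[0,1]} |\tilde B_u| \geq c\big) = 2\sum_{n=1}^\infty (-1)^{n-1} e^{-2n^2 c^2}$ (the tail of the Kolmogorov--Smirnov distribution); both are contained in \cite[Section 12.3]{Dudley}. Substituting $c = C/\sigma$ and using $\sigma^2 = p(1-p)$ produces precisely the two identities in \eqref{BBmaxeq}. For \eqref{sepBd} it then suffices to observe that $\sum_{n\ge 1}(-1)^{n-1} e^{-2n^2 C^2/(p(1-p))}$ is an alternating series whose terms $e^{-2n^2C^2/(p(1-p))}$ strictly decrease to $0$ in $n$ (for $C>0$), so by the alternating series estimate the sum is at most its first term, giving the factor $2e^{-2C^2/(p(1-p))}$.

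There is essentially no serious obstacle: the content is entirely the bookkeeping of the rescaling and the correct normalization of the quoted formulas. The one point worth verifying carefully is the constant $2$ in the exponent, which I would cross-check by the direct route of writing $\tilde B_u = W_u - uW_1$ for a standard Brownian motion $W$ and applying the reflection principle, confirming agreement with \cite[Section 12.3]{Dudley}.
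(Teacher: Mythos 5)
Your proposal is correct and follows essentially the same route as the paper: reduce via the time reparametrization and the scaling $B^\sigma = \sigma \tilde B$ to a standard Brownian bridge on $[0,1]$, then quote the two classical formulas from \cite[Section 12.3]{Dudley} (Propositions 12.3.3 and 12.3.4) with $c = C/\sigma$. The only (minor) divergence is in deriving \eqref{sepBd}: the paper obtains it from the first identity via symmetry and a union bound, $\mathbb{P}(\max_{s}|B^\sigma_{s/T}|\geq C) \leq 2\,\mathbb{P}(\max_s B^\sigma_{s/T}\geq C)$, whereas you bound the alternating series in the second identity by its first term; both are valid one-line arguments.
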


\begin{proof}
	Let $B^1$ be a Brownian bridge with diffusion parameter 1 on $[0,1]$. Then $B^\sigma_t$ has the same distribution as $\sigma B^1_t$. Hence
	\begin{equation*}
	\mathbb{P}\left( \max_{s\in[0,T]} B^\sigma_{s/T} \geq C \right) = \mathbb{P}\left( \max_{t\in[0,1]} B^1_t \geq C/\sigma \right) = e^{-2(C/\sigma)^2} = e^{-2C^2/p(1-p)}.
	\end{equation*}
	The second equality follows from \cite[Proposition 12.3.3]{Dudley}. This proves the first equality in \eqref{BBmaxeq}. Similarly, using \cite[Proposition 12.3.4]{Dudley} we find
	\begin{equation*}
	\mathbb{P}\left( \max_{s\in[0,T]} \big| B^\sigma_{s/T}\big| \geq C \right) = \mathbb{P}\left( \max_{t\in[0,1]} \big| B^1_t\big| \geq C/\sigma \right) = 2\sum_{n=1}^\infty (-1)^{n-1}e^{-2n^2C^2/\sigma^2},
	\end{equation*}
	proving the second inequality in \eqref{BBmaxeq}.
	
	Lastly to prove \eqref{sepBd}, observe that since $B^\sigma_t$ has mean 0, $B^\sigma_t$ and $-B^\sigma_t$ have the same distribution. It follows from the first equality above that
	\begin{equation*}
	\begin{split}
	&\mathbb{P}\left( \max_{s\in[0,T]} \big| B^\sigma_{s/T}\big| \geq C \right) \leq \mathbb{P}\left( \max_{s\in[0,T]}  B^\sigma_{s/T} \geq C \right) + \mathbb{P}\left( \max_{s\in[0,T]}  \big(-B^\sigma_{s/T}\big) \geq C \right) = \\
	&2\,\mathbb{P}\left( \max_{s\in[0,T]}  B^\sigma_{s/T} \geq C \right) = 2e^{-2C^2/p(1-p)}.
	\end{split}
	\end{equation*}
\end{proof}

We state one more lemma about Brownian bridges, which allows us to decompose a bridge on $[0,1]$ into two independent bridges with Gaussian affine shifts meeting at a point in $(0,1)$.

\begin{lemma}\label{2bridges}
	Fix $p\in (0,1)$, $T>0$, $t\in(0,T)$. Let $\xi$ be a Gaussian random variable with mean 0 and variance
	\[
	\ex[\xi^2] = \sigma^2\frac{t}{T}\left(1-\frac{t}{T}\right).
	\]
	Let $B^1,B^2$ be two independent Brownian bridges on $[0,1]$ with diffusion parameters $\sigma \sqrt{t/T}$ and $\sigma \sqrt{(T-t)/T}$ respectively, also independent from $B^\sigma$. Define the process
	\[
	\tilde{B}_{s/T} = \begin{dcases}
	\frac{s}{t}\,\xi + B^1\Big(\frac{s}{t}\Big), & s\leq t,\\
	\frac{T-s}{T-t}\,\xi + B^2\Big(\frac{s-t}{T-t}\Big), & s\geq t,
	\end{dcases}
	\]
	for $s\in [0,T]$. Then $\tilde{B}$ is a Brownian bridge with diffusion parameter $\sigma$.
\end{lemma}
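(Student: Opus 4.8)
The plan is to apply the characterization of a Brownian bridge with diffusion parameter $\sigma$ recorded just before \eqref{BBcovar}: it is the unique a.s.\ continuous Gaussian process $X$ on $[0,1]$ with $X_0 = X_1 = 0$, $\ex[X_u] = 0$ for all $u$, and $\ex[X_u X_v] = \sigma^2(u\wedge v - uv)$. Accordingly I would verify these four properties for the process $u \mapsto \tilde B_u$ on $[0,1]$, where $\tilde B_{s/T}$ is its value at $u = s/T$.

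First, $\tilde B$ is well defined and a.s.\ continuous: at $s = t$ the two branches agree, since the first gives $\tfrac{t}{t}\xi + B^1(1) = \xi$ and the second gives $\tfrac{T-t}{T-t}\xi + B^2(0) = \xi$, using $B^1(1) = B^2(0) = 0$; away from $s = t$ continuity is clear because $B^1, B^2$ are a.s.\ continuous and the prefactors $s\mapsto s/t$, $s\mapsto (T-s)/(T-t)$ are continuous. Gaussianity is immediate: $\xi$, $B^1$, $B^2$ are independent Gaussian objects, hence jointly Gaussian, and each $\tilde B_{s/T}$ is a linear functional of them, so finite collections $(\tilde B_{s_1/T},\dots,\tilde B_{s_m/T})$ are jointly Gaussian. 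The mean is zero because $\ex[\xi] = 0$ and $B^1, B^2$ are centered, and the endpoint conditions hold since $\tilde B_{0/T} = B^1(0) = 0$ and $\tilde B_{T/T} = B^2(1) = 0$.

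The only computational step is the covariance: for $r,s\in[0,T]$ one must check $\ex[\tilde B_{r/T}\tilde B_{s/T}] = \sigma^2\big(\tfrac{r\wedge s}{T} - \tfrac{rs}{T^2}\big)$. Taking $r\le s$ without loss of generality, this splits into three cases depending on whether $r\le s\le t$, $r\le t\le s$, or $t\le r\le s$. In each case one expands the product, discards the cross terms by independence of $\xi,B^1,B^2$, and substitutes $\ex[\xi^2] = \sigma^2\tfrac{t}{T}\big(1-\tfrac{t}{T}\big)$ together with \eqref{BBcovar} applied to $B^1$ and $B^2$ with their respective diffusion parameters $\sigma\sqrt{t/T}$ and $\sigma\sqrt{(T-t)/T}$. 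The cross case $r\le t\le s$ is the quickest: only the $\xi\xi$ term survives, giving $\tfrac{r}{t}\cdot\tfrac{T-s}{T-t}\,\ex[\xi^2] = \sigma^2\tfrac{r(T-s)}{T^2} = \sigma^2\big(\tfrac{r}{T} - \tfrac{rs}{T^2}\big)$; the two same-side cases are a short algebraic simplification and are structurally identical under the exchange of the two halves. Once all four defining properties are matched, uniqueness yields that $\tilde B$ is a Brownian bridge with diffusion parameter $\sigma$.

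I do not anticipate a genuine difficulty here. Conceptually the statement is just the standard decomposition of a Brownian bridge at an interior time: conditioning $B^\sigma$ on its value at $t/T$ produces two conditionally independent sub-bridges joined by a Gaussian pivot, and the prescribed variance $\ex[\xi^2] = \sigma^2\tfrac{t}{T}(1-\tfrac{t}{T})$ is exactly $\mathrm{Var}(B^\sigma_{t/T})$, which is what makes the glued process have the correct law. The only thing to be careful about is the bookkeeping --- keeping the reparametrization $u = s/T$ consistent and tracking the rescaled diffusion parameters of $B^1$ and $B^2$ through \eqref{BBcovar}.
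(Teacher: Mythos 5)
Your proposal matches the paper's proof: both verify that $\tilde{B}$ is an a.s.\ continuous, centered Gaussian process vanishing at the endpoints and then check the bridge covariance $\sigma^2\big(\tfrac{r}{T}\big)\big(1-\tfrac{s}{T}\big)$ in the three cases $r\le s\le t$, $r<t<s$, $t\le r\le s$ using independence of $\xi$, $B^1$, $B^2$ and the variance of $\xi$. Your cross-case computation agrees with the paper's, and the remaining two cases are the same routine algebra the paper carries out explicitly.
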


\begin{proof}
	It is clear that the process $\tilde{B}$ is a.s. continuous. Since $\tilde{B}$ is built from three independent zero-centered Gaussian processes, it is itself a zero-centered Gaussian process and thus completely characterized by its covariance. Consequently, to show that $\tilde{B}$ is a Brownian bridge with diffusion parameter $\sigma$, it suffices to show by (\ref{BBcovar}) that if $0\leq r\leq s\leq T$ we have
	\begin{equation}\label{covar}
	\ex[\tilde{B}_{r/T}\tilde{B}_{s/T}] = \sigma^2 \frac{r}{T}\Big(1-\frac{s}{T}\Big).
	\end{equation}
	First assume $s\leq t$ Using the fact that $\xi$ and $B^1_\cdot$ are independent with mean 0, we find
	\begin{equation*}
	\begin{split}
	&\ex[\tilde{B}_{r/T}\tilde{B}_{s/T}] = \frac{rs}{t^2}\cdot\sigma^2\frac{t}{T}\Big(1-\frac{t}{T}\Big) + \sigma^2\frac{t}{T}\cdot\frac{r}{t}\Big(1-\frac{s}{t}\Big) =\\
	&\sigma^2\frac{r}{T}\Big(\frac{s}{t} - \frac{s}{T} + 1 - \frac{s}{t}\Big) = \sigma^2\frac{r}{T}\Big(1-\frac{s}{T}\Big).
	\end{split}
	\end{equation*}
	If $r\geq t$, we compute
	\begin{equation*}
	\begin{split}
	&\ex[\tilde{B}_{r/T}\tilde{B}_{s/T}] = \frac{(T-r)(T-s)}{(T-t)^2}\cdot\sigma^2\frac{t}{T}\Big( 1 - \frac{t}{T}\Big) + \sigma^2\frac{T-t}{T}\cdot\frac{r-t}{T-t}\Big( 1 - \frac{s-t}{T-t}\Big) =\\
	&\frac{\sigma^2(T-s)}{T(T-t)}\left(\frac{t(T-r)}{T} + r-t\right) = \frac{\sigma^2(T-s)}{T(T-t)}\cdot\frac{r(T-t)}{T} = \sigma^2\frac{r}{T}\Big(1-\frac{s}{T}\Big).
	\end{split}
	\end{equation*}
	If $r < t < s$, then since $\xi$, $B^1_\cdot$, and $B^2_\cdot$ are all independent, we have
	\[
	\ex[\tilde{B}_{r/T}\tilde{B}_{s/T}] = \frac{r}{t}\cdot\frac{T-s}{T-t}\cdot\sigma^2\frac{t(T-t)}{T^2} = \sigma^2\frac{r(T-s)}{T^2} = \sigma^2\frac{r}{T}\Big(1-\frac{s}{T}\Big).
	\]
	This proves \eqref{covar} in all cases.
\end{proof}

Below we list four lemmas about Bernoulli bridges. We provide a brief informal explanation of what each result says after it is stated. All four lemmas are proved in a similar fashion. For the first two lemmas one observes that the event whose probability is being estimated is monotone in $\ell$. This allows us by Lemmas \ref{MCLxy} and \ref{MCLfg} to replace $x,y$ in the statements of the lemmas with the extreme values of the ranges specified in each. Once the choice of $x$ and $y$ is fixed one can use our strong coupling results, Theorem \ref{KMT} and Corollary \ref{Cheb}, to reduce each of the lemmas to an analogous one involving a Brownian bridge with some prescribed diffusion parameter. The latter statements are then easily confirmed as one has exact formulas for Brownian bridges, such as Lemma \ref{BBmax}.\\

\begin{lemma}\label{LemmaHalfS4} Fix $p \in (0,1)$, $T \in \mathbb{N}$ and $x, y\in \mathbb{Z}$ such that $T \geq y-x \geq 0$, and suppose that $\ell$ has distribution $\mathbb{P}^{0,T,x,y}_{Ber}$. Let $M_1, M_2 \in \mathbb{R}$ be given. Then we can find $W_0 = W_0(p,M_2 - M_1) \in \mathbb{N}$ such that for $T \geq W_0$, $x \geq M_1 T^{1/2}$, $y \geq pT + M_2 T^{1/2}$ and $s \in [0,T]$ we have
\begin{equation}\label{halfEq1S4}
\mathbb{P}^{0,T,x,y}_{Ber}\left( \ell(s)  \geq \frac{T-s}{T} \cdot M_1 T^{1/2} + \frac{s}{T} \cdot \big(p T + M_2 T^{1/2}\big) - T^{1/4} \right) \geq \frac{1}{3}.
\end{equation}
\end{lemma}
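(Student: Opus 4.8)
The plan is to exploit the monotonicity of the event in (\ref{halfEq1S4}) in the path $\ell$ in order to reduce to a single ``extremal'' choice of endpoints, and then to transfer the resulting statement to a Brownian bridge via the strong coupling of Theorem \ref{KMT}.

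\emph{Reduction to minimal endpoints.} First I would set $x_0 = \lceil M_1 T^{1/2}\rceil$ and $y_0 = \lceil pT + M_2 T^{1/2}\rceil$, the smallest integers with $x_0 \geq M_1 T^{1/2}$ and $y_0 \geq pT + M_2 T^{1/2}$; thus $x_0 \leq x$, $y_0 \leq y$, and $y_0 - x_0 = pT + (M_2 - M_1)T^{1/2} + O(1)$, so there is $W_0 = W_0(p, M_2 - M_1)$ such that for $T \geq W_0$ we have $0 \leq y_0 - x_0 \leq T$ (this uses $p \in (0,1)$), hence $\Omega(0,T,x_0,y_0) \neq \emptyset$. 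Since the quantity on the right of the inequality in (\ref{halfEq1S4}) does not depend on $x$ or $y$, and since the event $\{\ell(s) \geq c\}$ is increasing in $\ell$, Lemma \ref{MCLxy} — applied with $k=1$, $S = \llbracket 0,T\rrbracket$, $g \equiv -\infty$, lower boundary data $(x_0),(y_0)$ and upper boundary data $(x),(y)$ — lets me couple $\ell \sim \mathbb{P}^{0,T,x,y}_{Ber}$ with $\ell' \sim \mathbb{P}^{0,T,x_0,y_0}_{Ber}$ so that $\ell(r) \geq \ell'(r)$ for all $r$. Therefore it suffices to prove (\ref{halfEq1S4}) with $(x,y)$ replaced by $(x_0,y_0)$.

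\emph{Comparison with a Brownian bridge.} Writing $z = y_0 - x_0 \in \{0,\dots,T\}$ and using translation invariance, a path of law $\mathbb{P}^{0,T,x_0,y_0}_{Ber}$ has the same law as $x_0 + \ell^{(T,z)}$. Working on the probability space of Theorem \ref{KMT} (with $\sigma = \sqrt{p(1-p)}$) and setting $\ell(t) = x_0 + \ell^{(T,z)}(t)$, one has for every $s\in[0,T]$
\[
\ell(s) \;=\; \Big(\tfrac{T-s}{T}\,x_0 + \tfrac{s}{T}\,y_0\Big) + \sqrt{T}\,B^\sigma_{s/T} - E_s, \qquad |E_s|\le \Delta(T,z).
\]
Because $x_0 \ge M_1 T^{1/2}$ and $y_0 \ge pT + M_2 T^{1/2}$, the affine term $\tfrac{T-s}{T}x_0 + \tfrac{s}{T}y_0$ is at least $\tfrac{T-s}{T}M_1 T^{1/2} + \tfrac{s}{T}(pT + M_2 T^{1/2})$; consequently, on the event $\{\Delta(T,z) \le \tfrac12 T^{1/4}\} \cap \{B^\sigma_{s/T} \ge -\tfrac12 T^{-1/4}\}$ one has $\ell(s) \ge \tfrac{T-s}{T}M_1 T^{1/2} + \tfrac{s}{T}(pT + M_2 T^{1/2}) - T^{1/4}$. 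Since $|z - pT| \le (|M_2 - M_1| + 1)\sqrt{T}$, Corollary \ref{Cheb} (with $\beta = \tfrac14$, $A = \tfrac12$) gives $\mathbb{P}(\Delta(T,z) > \tfrac12 T^{1/4}) < \tfrac16$ once $T$ exceeds a threshold depending only on $p$ and $M_2 - M_1$, which I would absorb into $W_0$. For the Brownian event, $B^\sigma_{s/T}$ is a centered Gaussian for each fixed $s$ (identically $0$ when $s\in\{0,T\}$), so by symmetry $\mathbb{P}(B^\sigma_{s/T} \ge -\tfrac12 T^{-1/4}) \ge \mathbb{P}(B^\sigma_{s/T} \ge 0) \ge \tfrac12$, uniformly in $s\in[0,T]$. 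A union bound then yields $\mathbb{P}^{0,T,x_0,y_0}_{Ber}(\ell(s) \ge c) \ge \tfrac12 - \tfrac16 = \tfrac13$, which is (\ref{halfEq1S4}) for $(x_0,y_0)$ and hence, by the first step, the desired conclusion.

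\emph{Main obstacle.} The only step that requires genuine care is the endpoint reduction: without first collapsing $(x,y)$ to $(x_0,y_0)$, the displacement $|z - pT| = |(y-x) - pT|$ can be of order $T$, in which case Corollary \ref{Cheb} — and with it the Brownian comparison — fails outright. The monotone coupling Lemma \ref{MCLxy} is precisely what legitimizes this reduction, and one must check along the way that the reduced pair $(x_0,y_0)$ remains admissible, i.e. $0 \le y_0 - x_0 \le T$, for all large $T$ — which is where the assumption $p\in(0,1)$ is used, and which fixes the dependence of $W_0$ on $p$ and $M_2-M_1$.
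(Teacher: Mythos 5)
Your proposal is correct and follows essentially the same route as the paper: reduce to minimal admissible boundary data via the monotone coupling of Lemma \ref{MCLxy}, then use the KMT coupling (Theorem \ref{KMT} together with Corollary \ref{Cheb}, noting $|z-pT|\leq(|M_2-M_1|+1)\sqrt{T}$) to compare with a Brownian bridge and obtain $\tfrac12-\tfrac16=\tfrac13$. The only differences (ceilings instead of floors for the extremal endpoints, and splitting the $T^{1/4}$ error allowance symmetrically between $\Delta(T,z)$ and the Gaussian term rather than absorbing a $+1$) are cosmetic.
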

\begin{remark}
If $M_1, M_2 = 0$ then Lemma \ref{LemmaHalfS4} states that if a Bernoulli bridge $\ell$ is started from $(0,x)$ and terminates at $(T,y)$, which are above the straight line of slope $p$, then at any given time $s \in [0,T]$ the probability that $\ell(s)$ goes a modest distance below the straight line of slope $p$ is upper bounded by $ 2/3$.
\end{remark}
\begin{proof}
	Define $A = \lfloor M_1T^{1/2}\rfloor$ and $B = \lfloor pT + M_2 T^{1/2}\rfloor$. Then since $A\leq x$ and $B\leq y$, it follows from Lemma \ref{MCLxy} that there is a probability space with measure $\mathbb{P}_0$ supporting random variables $L_1$ and $L_2$, whose laws under $\mathbb{P}_0$ are $\mathbb{P}^{0,T,A,B}_{Ber}$ and $\mathbb{P}^{0,T,x,y}_{Ber}$ respectively, and $\mathbb{P}_0$-a.s. $L_1\leq L_2$. Thus
	\begin{equation}\label{HalfS4MC}
	\begin{split}
	&\mathbb{P}^{0,T,x,y}_{Ber}\left( \ell(s)  \geq \frac{T-s}{T} \cdot M_1 T^{1/2} + \frac{s}{T} \cdot \big(p T + M_2 T^{1/2}\big) - T^{1/4} \right) =\\
	& \mathbb{P}_0\left( L_2(s)  \geq \frac{T-s}{T} \cdot M_1 T^{1/2} + \frac{s}{T} \cdot \big(p T + M_2 T^{1/2}\big) - T^{1/4} \right) \geq\\
	&\mathbb{P}_0\left( L_1(s)  \geq \frac{T-s}{T} \cdot M_1 T^{1/2} + \frac{s}{T} \cdot \big(p T + M_2 T^{1/2}\big) - T^{1/4} \right) =\\
	&\mathbb{P}^{0,T,A,B}_{Ber}\left( \ell(s)  \geq \frac{T-s}{T} \cdot M_1 T^{1/2} + \frac{s}{T} \cdot \big(p T + M_2 T^{1/2}\big) - T^{1/4} \right).
	\end{split}
	\end{equation}
	Since the uniform distribution on upright paths on $\llbracket 0,T\rrbracket \times \llbracket A,B\rrbracket$ is the same as that on upright paths on $\llbracket 0,T\rrbracket \times \llbracket 0, B-A\rrbracket$ shifted vertically by $A$, the last line of \eqref{HalfS4MC} is equal to
	\[
	\mathbb{P}^{0,T,0,B-A}_{Ber}\left( \ell(s) + A  \geq \frac{T-s}{T} \cdot M_1 T^{1/2} + \frac{s}{T} \cdot \big(p T + M_2 T^{1/2}\big) - T^{1/4} \right).
	\]
	Now we employ the coupling provided by Theorem \ref{KMT}. We have another probability space $(\Omega,\mathcal{F},\mathbb{P})$ supporting a random variable $\ell^{(T,B-A)}$ whose law under $\mathbb{P}$ is $\mathbb{P}^{0,T,0,B-A}_{Ber}$ as well as a Brownian bridge $B^\sigma$ coupled with $\ell^{(T,B-A)}$. We have 
	\begin{equation}\label{HalfS4split}
	\begin{split}
	&\mathbb{P}^{0,T,0,B-A}_{Ber}\left( \ell(s) + A  \geq \frac{T-s}{T} \cdot M_1 T^{1/2} + \frac{s}{T} \cdot \big(p T + M_2 T^{1/2}\big) - T^{1/4} \right) =\\
	& \mathbb{P}\left( \ell^{(T,B-A)}(s) + A \geq \frac{T-s}{T} \cdot M_1 T^{1/2} + \frac{s}{T} \cdot \big(p T + M_2 T^{1/2}\big) - T^{1/4} \right) =\\
	& \mathbb{P}\bigg( \left[\ell^{(T,B-A)}(s) - \sqrt{T} B^\sigma_{s/T} - \frac{s}{T}\cdot(B-A)\right] + \sqrt{T}B^\sigma_{s/T} \geq \\
	&\qquad -A-\frac{s}{T}\cdot(B-A) +\frac{T-s}{T} \cdot M_1 T^{1/2} + \frac{s}{T} \cdot \big(p T + M_2 T^{1/2}\big) - T^{1/4} \bigg).
	\end{split}
	\end{equation}
	From the definitions of $A$ and $B$, we can rewrite the quantity in the last line of \eqref{HalfS4split} and bound by
	\begin{align*}
	&\frac{T-s}{T}\cdot(M_1T^{1/2}-A) + \frac{s}{T}\cdot(pT + M_2T^{1/2} - B) - T^{1/4} \leq \\
	& \frac{T-s}{T} + \frac{s}{T} - T^{1/4} = -T^{1/4} + 1.
	\end{align*}
	Thus the last line of \eqref{HalfS4MC} is bounded below by
	\begin{equation}\label{HalfS4KMT}
	\begin{split}
	& \mathbb{P}\left( \left[\ell^{(T,B-A)}(s) - \sqrt{T} B^\sigma_{s/T} - \frac{s}{T}\cdot(B-A)\right] + \sqrt{T}B^\sigma_{s/T} \geq -T^{1/4} + 1 \right) \geq\\
	& \mathbb{P}\left( \sqrt{T}B^\sigma_{s/T} \geq 0 \quad \mathrm{and} \quad \Delta(T,B-A) < T^{1/4} - 1 \right) \geq\\
	& \mathbb{P}\left( B^\sigma_{s/T} \geq 0 \right) - \mathbb{P}\left( \Delta(T,B-A) \geq T^{1/4} - 1 \right) =\\
	& \frac{1}{2} - \mathbb{P}\left( \Delta(T,B-A) \geq T^{1/4} - 1 \right).
	\end{split}
	\end{equation}
	For the first inequality, we used the fact that the quantity in brackets is bounded in absolute value by $\Delta(T,B-A)$. The second inequality follows by dividing the event $\{B^\sigma_{s/T}\geq 0\}$ into cases and applying subadditivity. Since $|B-A-pT|\leq (|M_2-M_1|+1)\sqrt{T}$, Corollary \ref{Cheb} allows us to choose $W_0$ large enough depending on $p$ and $M_2-M_1$ so that if $T \geq W_0$, then the last line of \eqref{HalfS4KMT} is bounded above by $1/2 - 1/6 = 1/3$. In combination with \eqref{HalfS4MC} this proves \eqref{halfEq1S4}.
\end{proof}

\begin{lemma}\label{LemmaMinFreeS4} Fix $p \in (0,1)$, $T \in \mathbb{N}$ and $y,z\in \mathbb{Z}$ such that $T \geq y,z \geq 0$, and suppose that $\ell_y,\ell_z$ have distributions $\mathbb{P}^{0,T,0,y}_{Ber}$, $\mathbb{P}^{0,T,0,z}_{Ber}$ respectively. Let $M > 0$ and $\epsilon > 0$ be given. Then we can find $W_1=W_1(M,p, \epsilon) \in \mathbb{N}$ and $A=A(M,p, \epsilon) > 0$ such that for $T \geq W_1$, $ y \geq p T -  MT^{1/2}$, $z \leq pT + MT^{1/2}$ we have
\begin{equation}\label{minFree1S4}
\begin{split}
&\mathbb{P}^{0,T,0,y}_{Ber}\left( \inf_{s \in [ 0, T]}\big[ \ell_y(s) -  ps \big] \leq -AT^{1/2} \right) \leq \epsilon, \\ &\mathbb{P}^{0,T,0,z}_{Ber}\,\bigg( \sup_{s \in [ 0, T]}\big[ \ell_z(s) -  ps \big] \geq AT^{1/2} \bigg) \leq \epsilon.
\end{split}
\end{equation}
\end{lemma}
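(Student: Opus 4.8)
The plan is to prove both displayed bounds in \eqref{minFree1S4} by the same three-step scheme — a monotone-coupling reduction pinning the endpoint of the Bernoulli bridge near its typical location $pT$, a strong coupling to a Brownian bridge via Theorem \ref{KMT}, and an explicit Brownian-bridge tail estimate from Lemma \ref{BBmax}. I spell out the argument for the first inequality and indicate the minor changes needed for the second.

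\emph{Step 1 (monotone reduction).} The event $\{\inf_{s\in[0,T]}[\ell(s)-ps]\le -AT^{1/2}\}$ is decreasing in the up-right path $\ell$: if $\ell'\le\ell$ pointwise and $\ell$ lies in the event, then so does $\ell'$, since the infimum can only decrease. For $T$ large in terms of $M,p$, the integer $y_0:=\lceil pT-MT^{1/2}\rceil$ satisfies $0\le y_0\le T$, $pT-MT^{1/2}\le y_0\le y$, $y_0-pT\le 0$, and $|y_0-pT|\le MT^{1/2}$. Applying Lemma \ref{MCLxy} with $k=1$, lower curve $g\equiv-\infty$, and exit data $y_0\le y$ yields a coupling of $\ell_y\sim\mathbb{P}^{0,T,0,y}_{Ber}$ and $\ell_{y_0}\sim\mathbb{P}^{0,T,0,y_0}_{Ber}$ with $\ell_{y_0}\le\ell_y$ a.s., so
\[
\mathbb{P}^{0,T,0,y}_{Ber}\Big(\inf_{s}[\ell_y(s)-ps]\le -AT^{1/2}\Big)\le\mathbb{P}^{0,T,0,y_0}_{Ber}\Big(\inf_{s}[\ell_{y_0}(s)-ps]\le -AT^{1/2}\Big),
\]
and it remains to bound the right-hand side.

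\emph{Steps 2--3 (coupling and tail estimate).} By Theorem \ref{KMT} we realize $\ell_{y_0}$ on a probability space carrying a Brownian bridge $B^\sigma$ with diffusion parameter $\sigma=\sqrt{p(1-p)}$, with coupling error $\Delta(T,y_0)$ as in \eqref{KMTeq}. Decomposing $\ell_{y_0}(s)-ps$ as the coupling error (bounded in absolute value by $\Delta(T,y_0)$) plus $\sqrt{T}B^\sigma_{s/T}$ plus the affine term $\tfrac{s}{T}(y_0-pT)\in[-MT^{1/2},0]$, and using $\inf_{u\in[0,1]}B^\sigma_u=-\sup_{u\in[0,1]}(-B^\sigma_u)$, we get
\[
\inf_{s\in[0,T]}[\ell_{y_0}(s)-ps]\ \ge\ -\Delta(T,y_0)-MT^{1/2}-\sqrt{T}\,\sup_{u\in[0,1]}(-B^\sigma_u).
\]
Assuming $A>M$, the event in Step 1 forces $\Delta(T,y_0)+\sqrt{T}\sup_{u}(-B^\sigma_u)\ge (A-M)T^{1/2}$, so by a union bound its probability is at most $\mathbb{P}(\Delta(T,y_0)\ge\tfrac{A-M}{2}T^{1/2})+\mathbb{P}(\sup_{u}(-B^\sigma_u)\ge\tfrac{A-M}{2})$. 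Since $-B^\sigma\stackrel{d}{=}B^\sigma$, Lemma \ref{BBmax} identifies the second term as $\exp(-(A-M)^2/(2p(1-p)))$, which is $<\epsilon/2$ once $A=A(M,p,\epsilon)$ is chosen large enough. With $A$ now fixed, Corollary \ref{Cheb} (with $n=T$, $z=y_0$, $\beta=\tfrac12$, $K=M$, threshold coefficient $\tfrac{A-M}{2}$) makes the first term $<\epsilon/2$ for all $T\ge W_1$ with $W_1=W_1(M,p,\epsilon)$. This gives the first inequality of \eqref{minFree1S4}.

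\emph{Second inequality and main point of care.} The event $\{\sup_{s}[\ell(s)-ps]\ge AT^{1/2}\}$ is increasing in $\ell$, so Lemma \ref{MCLxy} lets us replace $z$ by $z_0:=\lfloor pT+MT^{1/2}\rfloor\ge z$, which lies in $[0,T]$ and has $z_0-pT\in[0,MT^{1/2}]$ for $T$ large. The identical coupling computation gives $\sup_{s}[\ell_{z_0}(s)-ps]\le\Delta(T,z_0)+MT^{1/2}+\sqrt{T}\sup_{u}B^\sigma_u$, so the event forces $\Delta(T,z_0)+\sqrt{T}\sup_u B^\sigma_u\ge(A-M)T^{1/2}$, and one finishes exactly as above via Lemma \ref{BBmax} and Corollary \ref{Cheb}. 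There is no genuine difficulty in this lemma; it is the standard ``monotone reduction $+$ KMT coupling $+$ Brownian-bridge tail'' recipe. The steps requiring care are: (i) checking the monotonicity direction of each event, so that Lemma \ref{MCLxy} is invoked with the correct extreme endpoint and the degenerate lower curve $g\equiv-\infty$; (ii) taking $T$ large enough that $y_0,z_0\in[0,T]$ with $|y_0-pT|,|z_0-pT|\le MT^{1/2}$, which is exactly what lets Corollary \ref{Cheb} apply; and (iii) fixing the constants in the right order — first $A$ in terms of $M,p,\epsilon$ to control the Brownian-bridge tail, then $W_1$ (depending on $A$, hence only on $M,p,\epsilon$) to control the coupling error.
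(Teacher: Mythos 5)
Your proposal is correct and follows essentially the same route as the paper: a monotone-coupling reduction via Lemma \ref{MCLxy} to the extreme endpoint near $pT\mp MT^{1/2}$, the strong coupling of Theorem \ref{KMT}, the Brownian-bridge maximum formula of Lemma \ref{BBmax}, and Corollary \ref{Cheb} for the coupling error, with constants fixed in the same order ($A$ first, then $W_1$). The only (immaterial) difference is bookkeeping: you peel off the affine drift term separately and work with threshold $(A-M)T^{1/2}/2$, whereas the paper absorbs it into the coupling-error bound via $\sup_s|ps-(s/T)B|\le MT^{1/2}+1$ and takes $A>2M$.
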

\begin{remark} Roughly, Lemma \ref{LemmaMinFreeS4} states that if a Bernoulli bridge $\ell$ is started from $(0,0)$ and terminates at time $T$ not significantly lower (resp. higher) than the straight line of slope $p$, then the event that $\ell$ goes significantly below (resp. above) the straight line of slope $p$ is very unlikely.
\end{remark}
\begin{proof}
	The two inequalities are proven in essentially the same way. We begin with the first inequality. If $B=\lfloor pT - MT^{1/2}\rfloor$ then it follows from Lemma \ref{MCLxy} that
	\begin{equation}\label{MinFreeS4MC}
	\mathbb{P}^{0,T,0,y}_{Ber}\left( \inf_{s \in [ 0, T]}\big[ \ell_y(s) -  ps \big] \leq -AT^{1/2} \right) \leq \mathbb{P}^{0,T,0,B}_{Ber}\left( \inf_{s \in [ 0, T]}\big[ \ell(s) -  ps \big] \leq -AT^{1/2} \right),
	\end{equation}
	where $\ell$ has law $\mathbb{P}^{0,T,0,B}_{Ber}$. By Theorem \ref{KMT}, there is a probability space $(\Omega,\mathcal{F},\mathbb{P})$ supporting a random variable $\ell^{(T,B)}$ whose law under $\mathbb{P}$ is also $\mathbb{P}^{0,T,0,B}_{Ber}$, and a Brownian bridge $B^\sigma$ with diffusion parameter $\sigma = \sqrt{p(1-p)}$. Therefore
	\begin{equation}\label{MinFreeS4ineq}
	\begin{split}
	&\mathbb{P}^{0,T,0,B}_{Ber}\left( \inf_{s \in [ 0, T]}\big[ \ell(s) -  ps \big] \leq -AT^{1/2} \right) = \mathbb{P}\left( \inf_{s \in [ 0, T]}\big[ \ell^{(T,B)}(s) -  ps \big] \leq -AT^{1/2} \right) \leq\\
	& \mathbb{P}\left( \inf_{s \in [ 0, T]}  \sqrt{T}B^\sigma_{s/T} \leq -\frac{1}{2}AT^{1/2} \right) + \mathbb{P}\left( \sup_{s\in [0,T]} \left|\sqrt{T} B^\sigma_{s/T} + ps - \ell^{(T,B)}(s) \right| \geq \frac{1}{2}AT^{1/2} \right) \leq\\
	& \mathbb{P}\left( \max_{s\in[0,T]} B^\sigma_{s/T} \geq A/2 \right) + \mathbb{P}\left(\Delta(T,B) \geq \frac{1}{2}AT^{1/2} - MT^{1/2} - 1\right). 
	\end{split}
	\end{equation}
	For the first term in the last line, we used the fact that $B^\sigma$ and $-B^\sigma$ have the same distribution. For the second term, we used the fact that
	\begin{align*}
	\sup_{s\in[0,T]}\Big| ps - \frac{s}{T}\cdot B \Big| &\leq \sup_{s\in[0,T]}\Big| ps - \frac{pT-MT^{1/2}}{T}\cdot s \Big| + 1 = MT^{1/2} + 1.
	\end{align*}
	By Lemma \ref{BBmax}, the first term in the last line of \eqref{MinFreeS4ineq} is equal to $e^{-A^2/2p(1-p)}$. If we choose $A \geq \sqrt{2p(1-p)\log(2/\epsilon)}$, then this is $\leq \epsilon/2$. If we also take $A > 2M$, then since $|B-pT| \leq (M+1)\sqrt{T}$, Corollary \ref{Cheb} gives us a $W_1$ large enough depending on $M,p,\epsilon$ so that the second term in the last line of \eqref{MinFreeS4ineq} is also $<\epsilon/2$ for $T\geq W_1$. Adding the two terms and using \eqref{MinFreeS4MC} gives the first inequality in \eqref{minFree1S4}.
	
	If we replace $B$ with $\lceil pT + MT^{1/2} \rceil$ and change signs and inequalities where appropriate, then the same argument proves the second inequality in \eqref{minFree1S4}.
\end{proof}

We need the following definition for our next result. For a function $f \in C([a,b])$ we define its {\em modulus of continuity} for $\delta > 0$ by
\begin{equation}\label{MOCS4}
w(f,\delta) = \sup_{\substack{x,y \in [a,b]\\ |x-y| \leq \delta}} |f(x) - f(y)|.
\end{equation}
\begin{lemma}\label{MOCLemmaS4}Fix $p \in (0,1)$, $T \in \mathbb{N}$ and $y\in \mathbb{Z}$ such that $T \geq y \geq 0$, and suppose that $\ell$ has distribution $\mathbb{P}^{0,T,0,y}_{Ber}$. For each positive $M$, $\epsilon$ and $\eta$, there exist a $\delta(\epsilon, \eta, M) > 0$ and $W_2 = W_2(M, p, \epsilon, \eta) \in \mathbb{N}$ such that  for $T \geq W_2$ and $|y - pT| \leq MT^{1/2}$ we have
\begin{equation}\label{MOCeqS4}
\mathbb{P}^{0,T,0,y}_{Ber}\left( w\big({f^\ell},\delta\big) \geq \epsilon \right) \leq \eta,
\end{equation}
where $f^\ell(u) = T^{-1/2}\big(\ell(uT) - puT\big)$  for $u \in [0,1]$.
\end{lemma}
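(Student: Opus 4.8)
The plan is to follow the same template used in the previous two lemmas: reduce from the Bernoulli bridge to a Brownian bridge via the strong coupling of Theorem \ref{KMT}, and then use the known modulus-of-continuity estimates for Brownian motion. First I would use Theorem \ref{KMT} to put on one probability space a Bernoulli bridge $\ell^{(T,y)}$ with law $\mathbb{P}^{0,T,0,y}_{Ber}$ together with a Brownian bridge $B^\sigma$ of diffusion parameter $\sigma = \sqrt{p(1-p)}$, so that $\Delta(T,y) = \sup_{0\leq t\leq T}|\sqrt{T}B^\sigma_{t/T} + (t/T)y - \ell^{(T,y)}(t)|$ has the exponential tail bound in \eqref{KMTeq}. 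Writing $g^B(u) = B^\sigma_u + (y/T - p)\sqrt{T}\,u\cdot T^{-1/2}\cdot\sqrt{T}$ — more cleanly, setting $h(u) := B^\sigma_u$ plus a linear term — one sees that $f^\ell(u) = T^{-1/2}(\ell^{(T,y)}(uT) - puT)$ differs from the affinely-shifted Brownian bridge $B^\sigma_u + (y - pT)T^{-1/2}\,u$ by at most $T^{-1/2}\Delta(T,y)$ uniformly in $u\in[0,1]$. Since a linear function $cu$ with $|c| = |y-pT|T^{-1/2}\leq M$ has modulus of continuity at most $M\delta$, the triangle inequality for $w(\cdot,\delta)$ gives
\begin{equation*}
w(f^\ell,\delta) \leq w(B^\sigma,\delta) + M\delta + 2T^{-1/2}\Delta(T,y).
\end{equation*}

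Next I would bound each of the three terms. Choose $\delta$ small enough that $M\delta \leq \epsilon/3$. For the Brownian bridge term, $B^\sigma$ on $[0,1]$ has the same law as $\sigma B^1$ for a standard bridge $B^1$, and the modulus-of-continuity of Brownian motion satisfies $\lim_{\delta\to 0}\limsup$-type control: for any $\eta' > 0$ there is $\delta_0 > 0$ with $\mathbb{P}(w(B^\sigma,\delta_0)\geq \epsilon/3) \leq \eta/2$ — this is the classical estimate behind \cite[Theorem 7.3]{Billing}, applied to a Brownian bridge (which is an absolutely continuous, or directly computable, functional of Brownian motion on a compact interval). So shrink $\delta$ further to handle this. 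Finally, for the coupling error, since $|y - pT|\leq MT^{1/2}\leq M\sqrt{T}$ we may apply Corollary \ref{Cheb} with $\beta = 1/2$ and the constant $A$ there replaced by $\epsilon/6$: this yields $W_2 = W_2(M,p,\epsilon,\eta)$ such that $\mathbb{P}(2T^{-1/2}\Delta(T,y)\geq \epsilon/3) = \mathbb{P}(\Delta(T,y)\geq (\epsilon/6)\sqrt{T}) \leq \eta/2$ for $T\geq W_2$. Combining, $\mathbb{P}(w(f^\ell,\delta)\geq \epsilon) \leq \mathbb{P}(w(B^\sigma,\delta)\geq\epsilon/3) + \mathbb{P}(2T^{-1/2}\Delta(T,y)\geq\epsilon/3) \leq \eta$, which is \eqref{MOCeqS4}. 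One caveat: unlike Lemmas \ref{LemmaHalfS4} and \ref{LemmaMinFreeS4}, the event $\{w(f^\ell,\delta)\geq\epsilon\}$ is not monotone in the endpoint, so I would not invoke the monotone coupling Lemmas \ref{MCLxy}/\ref{MCLfg} here — the strong coupling alone suffices, and in fact the endpoint $y$ is already fixed in the statement, so no replacement of endpoints is needed.

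The only genuinely non-routine point is the Brownian-bridge modulus-of-continuity estimate, i.e. controlling $\mathbb{P}(w(B^\sigma,\delta)\geq\epsilon/3)$ uniformly for small $\delta$. This is standard (Lévy's modulus of continuity, or the tightness criterion of \cite[Theorem 7.3]{Billing} specialized to Brownian motion on $[0,1]$, noting that a Brownian bridge on $[0,1]$ can be written as $W_t - tW_1$, whose modulus of continuity is dominated by that of $W$ plus a Lipschitz correction), so I expect it to go through without difficulty; if one wanted to be fully self-contained one could cite \cite{Billing} directly. Everything else — the affine-shift triangle inequality and the Chebyshev-type tail from Corollary \ref{Cheb} — is bookkeeping already rehearsed in the preceding lemmas.
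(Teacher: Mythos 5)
Your proposal is correct and follows essentially the same route as the paper's proof: strong coupling via Theorem \ref{KMT}, the triangle-inequality bound $w(f^\ell,\delta)\leq w(B^\sigma,\delta)+M\delta+2T^{-1/2}\Delta(T,y)$, Corollary \ref{Cheb} for the coupling error, and almost-sure uniform continuity of the Brownian bridge to control $w(B^\sigma,\delta)$ for small $\delta$. The only differences are cosmetic (splitting $\epsilon$ into thirds rather than halves), and your observation that no monotone coupling is needed here matches the paper.
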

\begin{remark}
Lemma \ref{MOCLemmaS4} states that if $\ell$ is a Bernoulli bridge that is started from $(0,0)$ and terminates at $(T,y)$ with $y$ close to $pT$ (i.e. with well-behaved endpoints) then the modulus of continuity of $\ell$ is also well-behaved with high probability.
\end{remark}
\begin{proof}
	By Theorem \ref{KMT}, we have a probability measure $\mathbb{P}$ supporting a random variable $\ell^{(T,y)}$ with law $\mathbb{P}^{0,T,0,y}_{Ber}$ as well as a Brownian bridge $B^\sigma$ with diffusion parameter $\sigma = \sqrt{p(1-p)}$. We have
	\begin{equation}\label{MOCKMT}
	\mathbb{P}^{0,T,0,y}_{Ber}\Big( w\big({f^\ell},\delta\big) \geq \epsilon \Big) = \mathbb{P}\Big( w\big(f^{\ell^{(T,y)}},\delta\big) \geq \epsilon \Big),
	\end{equation}
	and
	\begin{equation}\label{MOCbound}
	\begin{split}
	& w\big(f^{\ell^{(T,y)}},\delta\big) = T^{-1/2} \sup_{s,t\in[0,1],\, |s-t|\leq\delta} \Big| \ell^{(T,y)}(sT) - psT - \ell^{(T,y)}(tT) + ptT \Big| \leq\\
	&T^{-1/2} \sup_{s,t \in [0,1], \, |s-t| \leq \delta} \bigg(\left| \sqrt{T}\,B^\sigma_s + sy - psT - \sqrt{T}\,B^\sigma_t - ty + ptT \right| +\\
	&\qquad \qquad \left|\sqrt{T}\,B^\sigma_s + sy - \ell^{(T,y)}(sT)\right| + \left|\sqrt{T}\,B^\sigma_t + ty - \ell^{(T,y)}(tT)\right|\bigg) \leq\\
	& \sup_{s,t \in [0,1], \, |s-t| \leq \delta} \left| B^\sigma_s - B^\sigma_t + T^{-1/2} (y-pT)(s-t)\right| + 2T^{-1/2}\Delta(T,y) \leq\\
	& w\big(B^\sigma,\delta\big) + M\delta + 2T^{-1/2}\Delta(T,y).
	\end{split}
	\end{equation}
	The last line follows from the assumption that $|y-pT|\leq MT^{1/2}$. Now \eqref{MOCKMT} and \eqref{MOCbound} imply
	\begin{equation}\label{MOCsplit}
	\begin{split}
	&\mathbb{P}^{0,T,0,y}_{Ber}\left( w\big(f^{\ell},\delta\big) \geq \epsilon \right) \leq \mathbb{P}\left( w\big(B^\sigma,\delta\big) + M\delta + 2T^{-1/2}\Delta(T,y) \geq \epsilon \right) \leq\\
	&\mathbb{P}\left( w\big(B^\sigma,\delta\big) + M\delta \geq \epsilon/2 \right) + \mathbb{P}\left( \Delta(T,y) \geq \epsilon\, T^{1/2}/4 \right).
	\end{split}
	\end{equation}
	Corollary \ref{Cheb} gives us a $W_2$ large enough depending on $M,p,\epsilon,\eta$ so that the second term in the second line of \ref{MOCsplit} is $\leq\eta/2$ for $T\geq W_2$. Since $B^\sigma$ is a.s. uniformly continuous on the compact interval $[0,1]$, $w(B^\sigma,\delta) \to 0$ as $\delta\to 0$. Thus we can find $\delta_0>0$ small enough depending on $\epsilon,\eta$ so that $w(B^\sigma,\delta_0) < \epsilon/4$ with probability at least $1-\eta/2$. Then with $\delta = \min(\delta_0, \epsilon/4M)$, the first term in the second line of \eqref{MOCsplit} is $\leq\eta/2$ as well. This implies \eqref{MOCeqS4}.
\end{proof}

\begin{lemma}\label{CurveSeparation} Fix $T\in\mathbb{N}$, $p\in (0,1)$, $C,K>0$, and $a,b\in \mathbb{Z}$ such that $\Omega(0,T,a,b)$ is nonempty. Let $\ell_{bot} \in \Omega(0,T,a,b)$ or $\ell_{bot} = -\infty$. Suppose $\vec{x},\vec{y}\in\mathfrak{W}_{k-1}$, $k\geq 2$, are such that $T \geq y_i - x_i \geq 0$ for $1\leq i\leq k-1$. Write $\vec{z} = \vec{y} - \vec{x}$, and suppose that
	\begin{enumerate}[label=(\arabic*)]
		
		\item $x_{k-1} + (z_{k-1}/T)s - \ell_{bot}(s) \geq C\sqrt{T}$ for all $s\in[0,T]$
		
		\item $x_i - x_{i+1} \geq C\sqrt{T}$ and $y_i - y_{i+1} \geq C\sqrt{T}$ for $1\leq i\leq k-2$,
		
		\item $|z_i - pT| \leq K\sqrt{T}$ for $1\leq i\leq k-1$, for a constant $K > 0$.
		
	\end{enumerate}
	Let $\mathfrak{L} = (L_1,\dots,L_{k-1})$ be a line ensemble with law $\mathbb{P}^{0,T,\vec{x},\vec{y}}_{Ber}$, and let $E$ denote the event 
	\[ E=\left\{L_1(s)\geq \cdots \geq L_{k-1}(s)\geq \ell_{bot}(s) \; \mathrm{for} \; s\in [0,T]\right\}.
	\] 
	Then we can find $W_3 = W_3(p,C,K)$ so that for $T\geq W_3$,
	\begin{equation}\label{SepBound1}
	\mathbb{P}^{0,T,\vec{x},\vec{y}}_{Ber}(E) \geq \left(\frac{1}{2} - \sum_{n=1}^\infty (-1)^{n-1} e^{-n^2C^2/8p(1-p)}\right)^{k-1}.
	\end{equation}
	Moreover if $C \geq \sqrt{8p(1-p)\log 3}$, then for $T\geq W_3$ we have
	\begin{equation}\label{SepBound2}
	\mathbb{P}^{0,T,\vec{x},\vec{y}}_{Ber}(E) \geq \left(1 - 3e^{-C^2/8p(1-p)}\right)^{k-1}.
	\end{equation}
\end{lemma}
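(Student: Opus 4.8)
The plan is to sandwich the event $E$ between an intersection of independent, per-curve events, and then to control each of these via the strong coupling of Theorem~\ref{KMT} together with the exact maximal distribution of a Brownian bridge from Lemma~\ref{BBmax}.

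First I would introduce, for $1\le i\le k-1$, the linear interpolant $g_i(s)=x_i(1-s/T)+y_i(s/T)=x_i+(z_i/T)s$ of the endpoints of $L_i$. Hypotheses (1) and (2) then give the staircase separation of these guides: for $1\le i\le k-2$ the difference $g_i-g_{i+1}$ is the linear interpolant of $x_i-x_{i+1}\ge C\sqrt T$ and $y_i-y_{i+1}\ge C\sqrt T$, hence $g_i(s)-g_{i+1}(s)\ge C\sqrt T$ for all $s\in[0,T]$; and $g_{k-1}(s)-\ell_{bot}(s)\ge C\sqrt T$ is exactly (1) (vacuous when $\ell_{bot}=-\infty$). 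Set $A_i=\{\sup_{s\in[0,T]}|L_i(s)-g_i(s)|<\tfrac C2\sqrt T\}$. On $\bigcap_{i=1}^{k-1}A_i$, for every $s$ and every $i\le k-2$ one has $L_i(s)-L_{i+1}(s)=(L_i(s)-g_i(s))+(g_i(s)-g_{i+1}(s))+(g_{i+1}(s)-L_{i+1}(s))>-\tfrac C2\sqrt T+C\sqrt T-\tfrac C2\sqrt T=0$, and likewise $L_{k-1}(s)-\ell_{bot}(s)>\tfrac C2\sqrt T\ge 0$; thus $\bigcap_iA_i\subseteq E$. Since the $L_i$ are independent under $\mathbb{P}^{0,T,\vec x,\vec y}_{Ber}$ and $A_i$ depends only on $L_i$, this gives $\mathbb{P}^{0,T,\vec x,\vec y}_{Ber}(E)\ge\prod_{i=1}^{k-1}\mathbb{P}^{0,T,x_i,y_i}_{Ber}(A_i)$, and it remains to bound each factor from below.

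For the $i$-th factor I would write $L_i=x_i+\ell_i$ with $\ell_i$ of law $\mathbb{P}^{0,T,0,z_i}_{Ber}$, so that $L_i(s)-g_i(s)=\ell_i(s)-(z_i/T)s$, and realize $\ell_i$ on a copy of the probability space of Theorem~\ref{KMT} (with $n=T$, $z=z_i$, $\sigma=\sqrt{p(1-p)}$), coupled to a Brownian bridge $B^\sigma$ with error $\Delta(T,z_i)$. Since $|L_i(s)-g_i(s)-\sqrt T B^\sigma_{s/T}|\le\Delta(T,z_i)$ for all $s$, we get
\[
\mathbb{P}(A_i)\ \ge\ \mathbb{P}\Big(\sup_{s\in[0,T]}\big|B^\sigma_{s/T}\big|<\tfrac C4\Big)\ -\ \mathbb{P}\Big(\Delta(T,z_i)\ge\tfrac C4\sqrt T\Big).
\]
By Lemma~\ref{BBmax}, applied with $C/4$ in place of $C$, the first term equals $2q$ with $q:=\tfrac12-\sum_{n\ge1}(-1)^{n-1}e^{-n^2C^2/8p(1-p)}>0$, and since an alternating series with decreasing terms is bounded by its first term, $2q\ge 1-2e^{-C^2/8p(1-p)}$. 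Using $|z_i-pT|\le K\sqrt T$ from hypothesis (3), I would apply Corollary~\ref{Cheb} with $\beta=1/2$, $A=C/4$, and tolerance $\epsilon:=\min\{q,\,e^{-C^2/8p(1-p)}\}$ — a quantity depending only on $p$ and $C$ — to obtain $W_3=W_3(p,C,K)$ such that for $T\ge W_3$ and every $i$, $\mathbb{P}(\Delta(T,z_i)\ge\tfrac C4\sqrt T)<\epsilon$. Then $\mathbb{P}(A_i)\ge 2q-q=q$ for $T\ge W_3$, which yields \eqref{SepBound1} upon taking the product over $i$; and if in addition $C\ge\sqrt{8p(1-p)\log 3}$ (so $e^{-C^2/8p(1-p)}\le\tfrac13$), then $\mathbb{P}(A_i)\ge(1-2e^{-C^2/8p(1-p)})-e^{-C^2/8p(1-p)}=1-3e^{-C^2/8p(1-p)}$, which yields \eqref{SepBound2}.

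I expect no serious obstacle here: the content is the per-curve reduction of the first step followed by routine applications of the coupling and of Lemma~\ref{BBmax}. The one point requiring care is making the tolerance $\epsilon$ in the invocation of Corollary~\ref{Cheb} depend only on $p$ and $C$, so that a single threshold $W_3=W_3(p,C,K)$ works simultaneously for \eqref{SepBound1} and \eqref{SepBound2}; the precise exponents $-n^2C^2/8p(1-p)$ in the statement are dictated by the exact two-sided bridge maximum formula in Lemma~\ref{BBmax} evaluated at argument $C/4$.
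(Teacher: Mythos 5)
Your proof is correct and follows essentially the same route as the paper: reduce $E$ to each curve staying within $C\sqrt{T}/2$ of its linear interpolant, use independence to factorize, and bound each factor via the KMT coupling (Theorem \ref{KMT}, Corollary \ref{Cheb}) together with the exact bridge-maximum formula of Lemma \ref{BBmax} evaluated at $C/4$. Your only deviation — choosing a single tolerance $\epsilon=\min\{q,\,e^{-C^2/8p(1-p)}\}$ so that one threshold $W_3(p,C,K)$ serves both \eqref{SepBound1} and \eqref{SepBound2} — is a harmless repackaging of the paper's ``possibly enlarging $W_3$'' step.
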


\begin{remark}
	This lemma states that if $k$ independent Bernoulli bridges are well-separated from each other and $\ell_{bot}$, then there is a positive probability that the curves will cross neither each other nor $\ell_{bot}$. We will use this result to compare curves in an avoiding Bernoulli line ensemble with free Bernoulli bridges.
\end{remark}

\begin{proof}
	Observe that condition (1) simply states that $\ell_{bot}$ lies a distance of at least $C\sqrt{T}$ uniformly below the line segment connecting $x_{k-1}$ and $y_{k-1}$. Thus (1) and (2) imply that $E$ occurs if each curve $L_i$ remains within a distance of $C\sqrt{T}/2$ from the line segment connecting $x_i$ and $y_i$. As in Theorem \ref{KMT}, let $\mathbb{P}_i$ be probability measures supporting $\ell^{(T,z_i)}$ with laws $\mathbb{P}^{0,T,0,z_i}_{Ber}$. Then
	\begin{equation}\label{SepEst}
	\begin{split}
	& \mathbb{P}^{0, T,\vec{x},\vec{y}}_{Ber} (E) \geq \mathbb{P}^{0,T,\vec{x},\vec{y}}_{Ber} \left(\sup_{s\in[0,T]} \big|L_i(s) - x_i - (z_i/T)s\big| \leq C\sqrt{T}/2, \;1\leq i\leq k-1\right) =\\
	&\prod_{i=1}^{k-1}\left[ \mathbb{P}^{0,T,0,z_i}_{Ber} \left(\sup_{s\in[0,T]} \big|L_i(s) - (z_i/T)s\big| \leq C\sqrt{T}/2\right)\right] =\\
	&\prod_{i=1}^{k-1}\left[ 1 - \mathbb{P}_i\left(\sup_{s\in[0,T]} \big|\ell^{(T,z_i)}(s) - (z_i/T)s\big| > C\sqrt{T}/2\right)\right]. \end{split}
	\end{equation}
	In the second line, we used the fact that $L_1,\dots,L_{k-1}$ are independent from each other under $\mathbb{P}^{0,T,0,z_i}_{Ber}$. Let $B^{\sigma,i}$ be the Brownian bridge with diffusion parameter $\sigma = \sqrt{p(1-p)}$ coupled with $\ell^{(T,z_i)}$ given by Theorem \ref{KMT}. Then we have
	\begin{equation}\label{SepCheb}
	\begin{split}
	&\mathbb{P}_i \left(\sup_{s\in[0,T]} \big|\ell^{(T,z_i)}(s) - (z_i/T)s\big| > C\sqrt{T}/2\right) \leq \\
	& \mathbb{P}_i\left(\sup_{s\in[0,T]} |\sqrt{T}B^{\sigma}_{s/T}| > C\sqrt{T}/4\right) + \mathbb{P}_i\left(\Delta(T,z_i) > C\sqrt{T}/4\right).
	\end{split}
	\end{equation}
	By Lemma \ref{BBmax}, the first term in the second line of \eqref{SepCheb} is equal to $2\sum_{n=1}^\infty (-1)^{n-1} e^{-n^2C^2/8p(1-p)}$. Moreover, condition (3) in the hypothesis and Corollary \ref{Cheb} allow us to find $W_3$ depending on $p,C,K$ but not on $i$ so that the last probability in \eqref{SepCheb} is bounded above by $\frac{1}{2} - \sum_{n=1}^\infty (-1)^{n-1} e^{-n^2C^2/8p(1-p)}$ (note that this quantity is positive by (\ref{BBmaxeq})) for $T\geq W_3$. Adding these two terms and referring to \eqref{SepEst} proves \eqref{SepBound1}.
	
	Now suppose $C\geq\sqrt{8p(1-p)\log 3}$. By \eqref{sepBd} in Lemma \ref{BBmax}, the first term in the second line of \eqref{SepCheb} is bounded above by $2e^{-C^2/8p(1-p)}$. After possibly enlarging $W_3$ from above, the second term is $<e^{-C^2/8p(1-p)}$ for $T\geq W_3$. The assumption on $C$ implies that $1-3e^{-C^2/8p(1-p)}\geq 0$, and now combining \eqref{SepCheb} and \eqref{SepEst} proves \eqref{SepBound2}.
\end{proof}

%
\subsection{Properties of avoiding Bernoulli line ensembles}\label{Section3.3}  In this section we derive two results about avoiding Bernoulli line ensembles, which are Bernoulli line ensembles with law $\mathbb{P}_{avoid, Ber;
S}^{T_0,T_1, \vec{x}, \vec{y}, f, g}$ as in Definition \ref{DefAvoidingLawBer}. The lemmas we prove only involve the case when $f(r) = \infty$ for all $r \in \llbracket T_0, T_1 \rrbracket$ and we denote the measure in this case by $\mathbb{P}_{avoid, Ber;S}^{T_0,T_1, \vec{x}, \vec{y}, \infty, g}$. A $\mathbb{P}_{avoid, Ber;S}^{T_0,T_1, \vec{x}, \vec{y}, \infty, g}$-distributed random variable will be denoted by $\mathfrak{Q} = (Q_1, \dots, Q_k)$ where $k$ is the number of up-right paths in the ensemble. As usual, if $g=-\infty$, we write $\mathbb{P}_{avoid, Ber;S}^{T_0,T_1, \vec{x}, \vec{y}}$. Our first result will rely on the two monotonicity Lemmas \ref{MCLxy} and \ref{MCLfg} as well as the strong coupling between Bernoulli bridges and Brownian bridges from Theorem \ref{KMT}, and the further results make use of the material in Section \ref{Section9}.

\begin{lemma}\label{prob19}  Fix $p\in(0,1)$, $k\in\mathbb{N}$. Let $\vec{x},\vec{y}\in\mathfrak{W}_k$ be such that $T \geq y_i - x_i \geq 0$ for $i=1,\dots,k$. Then for any $M,M_1 > 0$ we can find $W_4\in\mathbb{N}$ depending on $p,k,M,M_1$ such that if $T\geq W_4$, $x_k \geq - M_1\sqrt{T}$, and $y_k \geq pT - M_1\sqrt{T}$, then for any $S\subseteq \llbracket 0,T\rrbracket$ we have
	\begin{equation}\label{19ineq}
	\mathbb{P}^{0,T,\vec{x},\vec{y}}_{avoid, Ber; S}\left(Q_k(T/2) - pT/2 \geq M\sqrt{T}\right) \geq \frac{2^{k/2}\big(1-2e^{-4/p(1-p)}\big)^{2k} \exp\left(-\frac{2k(M+M_1+6)^2}{p(1-p)}\right)}{[\pi p(1-p)]^{k/2}}.
	\end{equation}
\end{lemma}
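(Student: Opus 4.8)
Since $\vec x,\vec y\in\mathfrak W_k$ with $x_k\ge -M_1\sqrt T$ and $y_k\ge pT-M_1\sqrt T$, every $x_i$ (resp. $y_i$) is an integer $\ge -M_1\sqrt T$ (resp. $\ge pT-M_1\sqrt T$), hence $x_i\ge x':=\lceil -M_1\sqrt T\rceil$ and $y_i\ge y':=\lceil pT-M_1\sqrt T\rceil$. For $T$ large, $z':=y'-x'\in(pT-1,pT+1)\subseteq[0,T]$, so $\vec x'=(x',\dots,x')$, $\vec y'=(y',\dots,y')$ are admissible boundary data with $\Omega_{avoid}(0,T,\vec x',\vec y',\infty,-\infty)\ne\emptyset$ by Lemma~\ref{LemmaWD}, and crucially every bridge arising below has endpoint-difference within $O(\sqrt T)$ of $p$ times its length, so Theorem~\ref{KMT} and Corollary~\ref{Cheb} apply throughout (this is where the degenerate cases $y_i-x_i$ far from $pT$ disappear). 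As $\{Q_k(T/2)-pT/2\ge M\sqrt T\}$ is increasing in the bottom curve, Lemma~\ref{MCLxy} gives $\mathbb P^{0,T,\vec x,\vec y}_{avoid,Ber;S}(\cdots)\ge\mathbb P^{0,T,\vec x',\vec y'}_{avoid,Ber;S}(\cdots)$; and since imposing non-crossing at additional times only pushes the bottom curve down, monotonicity of $\mathbb P^{0,T,\vec x',\vec y'}_{avoid,Ber;S}$ in $S$ (a consequence of the monotone couplings) lets us reduce to $S=\llbracket 0,T\rrbracket$. Write $h=pT/2+M\sqrt T$ and $\bar\ell=(x'+y')/2=pT/2-M_1\sqrt T+O(1)$.

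\textbf{Step 2 (midpoint decomposition).} By the Schur Gibbs property, conditionally on $\vec w:=(Q_1(T/2),\dots,Q_k(T/2))\in\mathfrak W_k$ the restrictions of the ensemble to $[0,T/2]$ and to $[T/2,T]$ are independent avoiding Bernoulli line ensembles with boundary data $(\vec x',\vec w)$ and $(\vec w,\vec y')$, so the law of $\vec w$ is proportional to $\mu(\vec w)\,Z(0,T/2,\vec x',\vec w)\,Z(T/2,T,\vec w,\vec y')$, where $\mu(\vec w)=\prod_{i=1}^k\mathbb P^{0,T,x',y'}_{Ber}(\ell(T/2)=w_i)$ and $Z$ is the acceptance probability of Definition~\ref{DefAP}. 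Hence
\begin{equation*}
\mathbb P^{0,T,\vec x',\vec y'}_{avoid,Ber}\!\big(Q_k(T/2)\ge h\big)=\frac{\sum_{\vec w\in\mathfrak W_k,\;w_k\ge h}\mu(\vec w)\,Z(0,T/2,\vec x',\vec w)\,Z(T/2,T,\vec w,\vec y')}{\sum_{\vec w\in\mathfrak W_k}\mu(\vec w)\,Z(0,T/2,\vec x',\vec w)\,Z(T/2,T,\vec w,\vec y')}.
\end{equation*}

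\textbf{Step 3 (good window and Brownian comparison).} Fix a separation constant $C=C(p)$ and let $\mathcal W$ be the ladders $\vec w$ with $w_i\in[a_i,a_i+\lceil\sqrt T\rceil)$, $a_k=\lceil h\rceil$, $a_i=a_{i+1}+\lceil C\sqrt T\rceil$ for $i<k$. For $\vec w\in\mathcal W$: by the local limit theorem for the bridge marginals coming from Theorem~\ref{KMT} and Corollary~\ref{Cheb}, $\mu(\vec w)$ is bounded below by $T^{-k/2}$ times the product over $i$ of the $\mathrm N(0,p(1-p)/4)$-density at $(w_i-\bar\ell)/\sqrt T$, which (after using $(w_i-\bar\ell)/\sqrt T\le M+M_1+6$, the ``$6$'' absorbing the $O(1)$ errors from Steps~1--3) contributes $\big(\sqrt{2/\pi p(1-p)}\big)^{k}\exp\!\big(-2k(M+M_1+6)^2/p(1-p)\big)$; and since each half-ensemble has a $C\sqrt T$-separated ladder of entrance/exit data, Lemma~\ref{CurveSeparation} together with \eqref{sepBd} of Lemma~\ref{BBmax} applied (via the strong coupling) to each of the $k$ Brownian half-bridges on each of the two halves yields $Z(0,T/2,\vec x',\vec w)\,Z(T/2,T,\vec w,\vec y')\ge\big(1-2e^{-4/p(1-p)}\big)^{2k}$. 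Summing $\mu$ over the $\lceil\sqrt T\rceil^{\,k}$ points of $\mathcal W$ turns the density factors into the constant in \eqref{19ineq} times the denominator's worth of $T$-power.

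\textbf{Main obstacle.} The delicate point is the denominator, which equals the full acceptance probability $Z(0,T,\vec x',\vec y')$; because the reduced boundary data are constant this decays polynomially in $T$ (like $T^{-\binom k2}$), so it cannot be bounded by $1$ — one must show this decay cancels the identical decay hidden in the numerator. I would address this by complementing Step~3 with a matching \emph{upper} bound on the denominator obtained from the same midpoint decomposition: the total midpoint mass concentrates, up to $T$-independent constants, on ladders within $O(\sqrt T)$ of the typical avoiding profile, which is a bounded vertical translate of $\mathcal W$, so a term-by-term comparison of the two $\vec w$-sums (controlling $\mu$, $Z(0,T/2,\vec x',\cdot)$ and $Z(T/2,T,\cdot,\vec y')$ simultaneously from above and below through the strong coupling and Lemma~\ref{CurveSeparation}) makes the powers of $T$ drop out and leaves exactly the right-hand side of \eqref{19ineq}. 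Making the surviving constant come out as $\big(1-2e^{-4/p(1-p)}\big)^{2k}$ and $M+M_1+6$, rather than something larger, is what forces the careful bookkeeping of all $O(1)$ and coupling-error terms.
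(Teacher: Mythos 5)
Your Step 1 reduction and your Step 2 midpoint decomposition are fine as far as they go, but the proof is not complete: the entire difficulty of the lemma is concentrated in what you call the ``main obstacle,'' and your proposed resolution is only a sketch. Because you lower the boundary data to the \emph{constant} vectors $\vec x'=(x',\dots,x')$, $\vec y'=(y',\dots,y')$, the normalizing constant $Z(0,T,\vec x',\vec y')$ in your denominator really does decay polynomially in $T$, and the claim that this decay is exactly cancelled by a matching decay in the numerator (``the midpoint mass concentrates on a bounded translate of $\mathcal W$, so a term-by-term comparison makes the powers of $T$ drop out'') is an unproven assertion of essentially the same order of difficulty as the lemma itself; nothing in Theorem~\ref{KMT}, Corollary~\ref{Cheb}, Lemma~\ref{CurveSeparation} or Lemma~\ref{BBmax} gives you two-sided control of $Z(0,T/2,\vec x',\vec w)$ uniformly over the ladders $\vec w$ carrying most of the mass. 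A second, smaller gap: your reduction from general $S$ to $S=\llbracket 0,T\rrbracket$ invokes a monotonicity of $\mathbb P^{0,T,\vec x',\vec y'}_{avoid,Ber;S}$ in the set $S$; the monotone couplings of Lemmas~\ref{MCLxy} and~\ref{MCLfg} vary the boundary data or the bounding path for a \emph{fixed} $S$, so this step would need its own coupling argument.

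The paper's proof avoids both issues by two moves you did not make. First, it lowers the data in a \emph{staggered} way, $x_i'=\lfloor -M_1\sqrt T\rfloor-10(i-1)\lceil\sqrt T\rceil$ and similarly for $y_i'$, so the target event can be taken to be an event $E$ in which each curve $Q_i$ stays within $3\sqrt T$ of its own well-separated piecewise linear profile through a high midpoint $K_i$. Second, and crucially, since $E$ already forces $Q_1\ge\cdots\ge Q_k$ on all of $\llbracket 0,T\rrbracket$ (hence on any $S$), one has the elementary inequality
\begin{equation*}
\mathbb P^{0,T,\vec x',\vec y'}_{avoid,Ber;S}(E)\;\ge\;\mathbb P^{0,T,\vec x',\vec y'}_{Ber}(E),
\end{equation*}
because $|\Omega(0,T,\vec x',\vec y')|\ge|\Omega_{avoid}(0,T,\vec x',\vec y',\infty,-\infty;S)|$. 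This single inclusion eliminates the acceptance probability (your denominator) from the problem entirely and handles arbitrary $S$ at the same time. What remains is a product of $k$ independent free-bridge probabilities, each estimated via the KMT coupling and the midpoint decomposition of a Brownian bridge (Lemma~\ref{2bridges}) into a Gaussian $\xi$ and two half-bridges; the Gaussian mass of $\{|\xi-(M+M_1+5)|\le 1\}$ and two applications of \eqref{sepBd} produce exactly the constant in \eqref{19ineq}. If you want to salvage your route, the fix is to import these two ideas: stagger the lowered boundary data so the free paths are ordered with probability bounded below, and replace the ratio of Step~2 by the inclusion $E\subset\{\text{ordered}\}$, after which your Step~3 computations go through without any bookkeeping of powers of $T$.
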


\begin{proof} A sketch of the proof is given in Figure \ref{S3F3} and its caption.
\begin{figure}[h]
\includegraphics[scale=0.5]{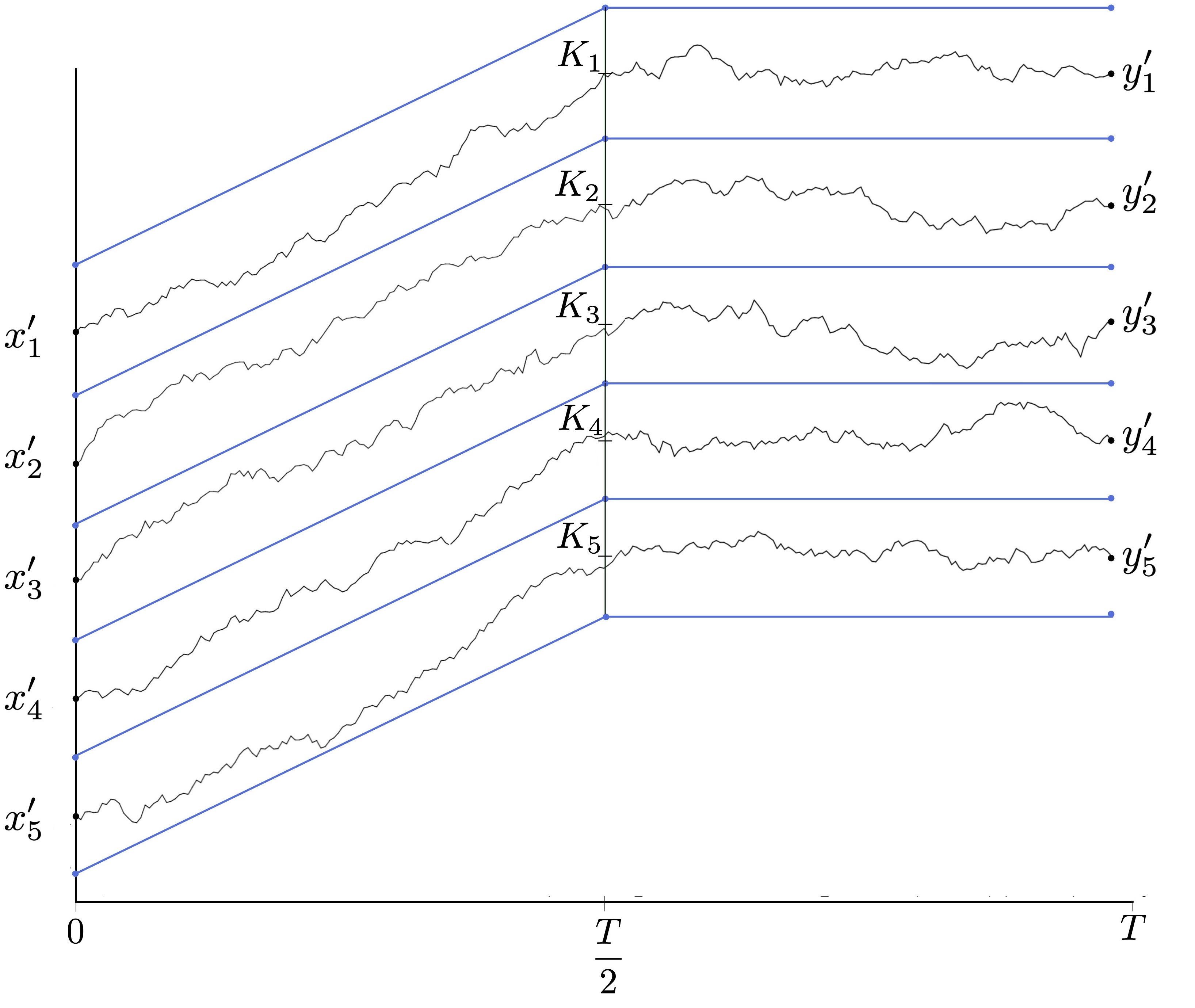}
	\caption{Sketch of the argument for Lemma \ref{prob19}: We use Lemma \ref{MCLxy} to lower the entry and exit data $\vec{x},\vec{y}$ of the curves to $\vec x\,'$ and $\vec y\,'$. We define $E$ to be the event that that the lines in the line ensemble lie in well-separated strips with all the strips high enough so that $E$ is contained in the event we want to lower bound in (\ref{19ineq}). We then use strong coupling with Brownian bridges via Theorem \ref{KMT} and bound the probability of the bridges remaining within the blue windows to lower bound $\mathbb{P}(E)$.}\label{S3F3}
\end{figure}
Define vectors $\vec{x},\vec{y}\in\mathfrak{W}_k$ by
\begin{align*}
x_i' = \lfloor - M_1\sqrt{T} \rfloor - 10(i-1)\lceil\sqrt{T}\rceil, \hspace{2mm} y_i' = \lfloor pT - M_1\sqrt{T}\rfloor - 10(i-1)\lceil\sqrt{T}\rceil.
\end{align*}
Then $x_i'\leq x_k \leq x_i$ and $y_i' \leq y_k \leq y_i$ for $1\leq i\leq k-1$. Thus by Lemma \ref{MCLxy}, we have
\begin{equation*}
\mathbb{P}^{0,T,\vec{x},\vec{y}}_{avoid, Ber; S} \Big(Q_k(T/2) - pT/2 \geq M\sqrt{T}\Big) \geq \mathbb{P}^{0,T,\vec{x}\,',\vec{y}\,'}_{avoid, Ber; S} \Big(Q_k(T/2) - pT/2 \geq M\sqrt{T}\Big).
\end{equation*}
Let us write $K_i = pT/2 + M\sqrt{T}+(10(k-i)-5)\lceil\sqrt{T}\rceil$ for $1\leq i\leq k$. Note $K_i$ is the midpoint of $pT/2 + M\sqrt{T} + 10(k-i-1)\lceil\sqrt{T}\rceil$ and $pT/2 + M\sqrt{T}+10(k-i)\lceil\sqrt{T}\rceil$. Let $E$ denote the event that the following conditions hold for $1\leq i\leq k$:
\begin{enumerate}[label=(\arabic*)]
\item $\left| Q_i(T/2) - K_i \right| \leq 2\lceil\sqrt{T}\rceil$,
\item $\sup_{s\in[0,T/2]} \Big|Q_i(s)-x_i'-\dfrac{K_i-x_i'}{T/2}\,s\Big| \leq 3\sqrt{T}$,
\item $\sup_{s\in[T/2,T]} \Big|Q_i(s)-K_i-\dfrac{y_i'-K_i}{T/2}(s-T/2)\Big| \leq 3\sqrt{T}$.
\end{enumerate}
The first condition implies in particular that $Q_k(T/2)-pT/2 \geq M\sqrt{T}$, and also that $Q_i(T/2)-Q_{i+1}(T/2)\geq 6\sqrt{T}$ for each $i$. The second and third conditions require that each curve $Q_i$ remain within a distance of $3\sqrt{T}$ of the graph of the piecewise linear function on $[0,T]$ passing through the points $(0,x_1')$, $(T/2,K_i)$, and $(T,y_i')$. We observe that
	\[
	\mathbb{P}^{0,T,\vec{x}\,',\vec{y}\,'}_{avoid, Ber; S} \left(Q_k(T/2) - pT/2 \geq M\sqrt{T}\right) \geq \mathbb{P}^{0,T,\vec{x}\,',\vec{y}\,'}_{avoid, Ber; S}(E) \geq \mathbb{P}^{0,T,\vec{x}\,',\vec{y}\,'}_{Ber}(E).
	\]
	The second inequality follows since on $E$ we have $Q_1(s)\geq\cdots\geq Q_k(s)$ for all $s\in\llbracket 0,T \rrbracket$ (here we used that $|\Omega(T_0, T_1, \vec{x}', \vec{y}')| \geq |\Omega_{avoid}(T_0, T_1, \vec{x}', \vec{y}',\infty, -\infty;S)|$ ). Writing $z=y_k'-x_k'$ we have
	\begin{equation}\label{19gibbs}
	\begin{split}
	\mathbb{P}^{0,T,\vec{x}\,',\vec{y}\,'}_{Ber}(E) &= \bigg[\mathbb{P}^{0,T,0,z}_{Ber}\bigg(\left|\ell(T/2)-pT/2-M\sqrt{T}-5\lceil\sqrt{T}\rceil+x_1'\right|\leq 2\lceil\sqrt{T}\rceil \quad\mathrm{and}\\
	&\qquad\qquad \sup_{s\in[0,T/2]}\left|\ell(s) - \frac{K_1-x_1'}{T/2}\,s\right| \leq 3\sqrt{T}\quad\mathrm{and}\\
	&\qquad\qquad \sup_{s\in[T/2,T]}\left|\ell(s)-(K_1-x_1')-\frac{y_1'-K_1}{T/2}(s-T/2)\right| \leq 3\sqrt{T}\bigg) \bigg]^k.
	\end{split}
	\end{equation}

	Let $\mathbb{P}$ be a probability space supporting a random variable $\ell^{(T,z)}$ with law $\mathbb{P}^{0,T,0,z}$ coupled with a Brownian bridge $B^\sigma$ with diffusion parameter $\sigma$, as in Theorem \ref{KMT}. Then the expression on the right in \eqref{19gibbs} being raised to the $k$-th power is bounded below for large enough $T$ by
	\begin{equation}\label{19BB}
	\begin{split}
	& \mathbb{P}^{0,T,0,z}_{Ber}\bigg(\left|\ell(T/2)-pT/2-(M+M_1+5)\sqrt{T}\right|\leq 2\sqrt{T} - 10 \hspace{2mm} \mathrm{and}\\
	&\hspace{2mm}  \sup_{s\in[0,T/2]}\left|\ell(s)-ps-\frac{M+M_1+5}{\sqrt{T}/2}\,s\right| \leq 3\sqrt{T} - 1  \hspace{2mm} \mathrm{and}\\
	&\hspace{2mm}  \sup_{s\in[T/2,T]}\left|\ell(s)-ps-(M+M_1+5)\sqrt{T}+\frac{M+M_1+5}{\sqrt{T}/2}(s-T/2)\right| \leq 3\sqrt{T} - 1 \bigg) \geq\\
	& \mathbb{P}\bigg(\left|\sqrt{T}\,B^\sigma_{1/2} - (M+M_1+5)\sqrt{T}\right|\leq \sqrt{T} \hspace{2mm} \mathrm{and}\\
	&\hspace{2mm} \sup_{s\in[0,T/2]}\left|\sqrt{T}\,B^\sigma_{s/T}-(M+M_1+5)\sqrt{T}\cdot\frac{s}{T/2}\right| \leq 2\sqrt{T}\hspace{2mm} \mathrm{and}\\
	&\hspace{2mm}  \sup_{s\in[T/2,T]}\left|\sqrt{T}\,B^\sigma_{s/T}-(M+M_1+5)\sqrt{T}\cdot\frac{T-s}{T/2}\right| \leq 2\sqrt{T} \bigg) -  \mathbb{P}\left(\Delta(T,z) > \sqrt{T}/2\right).
	\end{split}
	\end{equation}
	Note that $B^\sigma_{1/2}$ is a centered Gaussian random variable with variance $p(1-p)/4 = \sigma^2(1/2)(1-1/2)$. Writing $\xi = B^\sigma_{1/2}$, it follows from Lemma \ref{2bridges} that there exist independent Brownian bridges $B^1,B^2$ with diffusion parameters $\sigma/\sqrt{2}$ so that $B^\sigma_s$ has the same law as $\frac{s}{T/2}\xi + B^1_{2s/T}$ for $s\in[0,T/2]$ and $\frac{T-s}{T/2}\xi + B^2_{(2s-T)/T}$ for $s\in[T/2,T]$. The first term in the last expression in \eqref{19BB} is thus equal to
	\begin{equation}\label{prob19split}
	\begin{split}
	&\mathbb{P}\bigg(|\xi - (M+M_1+5)|\leq 1 \hspace{2mm} \mathrm{and} \sup_{s\in[0,T/2]}\left|B^1_{s/T}-(M+M_1+5-\xi)\cdot\frac{s}{T/2}\right| \leq 2 \hspace{2mm}  \mathrm{and}\\
	&\hspace{2mm}   \sup_{s\in[T/2,T]}\left|B^2_{(2s-T)/T}-(M+M_1+5-\xi)\cdot\frac{T-s}{T/2}\right| \leq 2 \bigg) \geq \\
	& \mathbb{P}\bigg(|\xi - (M+M_1+5)|\leq 1 \hspace{2mm} \mathrm{and} \sup_{s\in[0,T/2]}\big|B^1_{2s/T}\big| \leq 1 \hspace{2mm}  \mathrm{and}\hspace{2mm}  \sup_{s\in[T/2,T]}\big|B^2_{(2s-T)/T}\big| \leq 1 \bigg) =\\
	& \mathbb{P}\Big(|\xi-(M+M_1+5)|\leq 1\Big)\cdot \mathbb{P}\bigg(\sup_{s\in[0,T/2]} \big|B^1_{2s/T}\big|\leq 1\bigg)\cdot \mathbb{P}\bigg(\sup_{s\in[0,T/2]} \big|B^2_{(2s-T)/T}\big|\leq 1\bigg) \geq\\
	& \left(1 \hspace{-0.5mm}-\hspace{-0.5mm}2e^{-4/p(1-p)}\right)^2 \hspace{-2mm} \int_{M+M_1+4}^{M+M_1+6} \frac{e^{-2\xi^2/p(1-p)}d\xi  }{\sqrt{\pi p(1-p)/2}} \geq \frac{2\sqrt{2}\,e^{-2(M+M_1+6)^2/p(1-p)}}{\sqrt{\pi p(1-p)}} \big(1-2e^{-4/p(1-p)}\big)^2.
	\end{split}
	\end{equation}
	In the fourth line, we used the fact that $\xi$, $B^1_\cdot$, and $B^2_\cdot$ are independent, and in the second to last line we used Lemma \ref{BBmax}. Since $|z-pT|\leq (M_1+1)\sqrt{T}$, Lemma \ref{Cheb} allows us to choose $T$ large enough so that $\mathbb{P}(\Delta(T,z) > \sqrt{T}/2)$ is less than 1/2 the expression in the last line of \eqref{prob19split}. Then in view of \eqref{19gibbs} and \eqref{19BB}, we conclude \eqref{19ineq}.
	
\end{proof}

We now state an important weak convergence result, whose proof is presented in Section \ref{Section9} (more specifically see Proposition \ref{WeakConvDistinct}).

\begin{proposition}\label{prob17}
Fix $p,t\in(0,1)$, $k\in\mathbb{N}$, $\vec a, \vec b\in W_k$, where we recall
$$W_k = \{ \vec{x} \in \mathbb{R}^k: x_1 \geq x_2 \geq \cdots \geq x_k \}.$$ 
Suppose $\vec{x}^{T}=(x_{1}^{T},\dots,x_{k}^{T})$ and $\vec{y}^{T}=(y_{1}^{T},\dots,y_{k}^{T})$ are two sequences in $\mathfrak{W}_k$ such that for $i\in \llbracket 1,k\rrbracket$ $$\lim_{T\rightarrow\infty}\frac{x_{i}^{T}}{\sqrt{T}}=a_{i} \quad \text{and} \quad \lim_{T\rightarrow\infty}\frac{y_{i}^{T}-pT}{\sqrt{T}}=b_{i}.$$  Let $(Q_1^T,\dots,Q_k^T)$ have law $\mathbb{P}^{0,T,\vec{x}^{T},\vec{y}^{T}}_{avoid,Ber}$, and define the sequence $\{Z^T\}$ of random $k$-dimensional vectors 
$$Z^{T}=\left(\frac{Q_{1}^T(tT)-ptT}{\sqrt{T}},\dots,\frac{Q_{k}^T(tT)-ptT}{\sqrt{T}}\right).$$
 Then as $T\to\infty$, $Z^{T}$ converges weakly to a random vector $\hat Z$ on $\mathbb{R}^k$ with a probability density $\rho$ supported on $W_k^\circ$.
\end{proposition}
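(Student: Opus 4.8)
The plan is to prove Proposition \ref{prob17} by establishing a local limit theorem for the one-time marginal $\vec Q^T(tT)$, exploiting the exact determinantal (equivalently, Schur-polynomial) description of the avoiding Bernoulli line ensemble discussed in Section \ref{Section9.2}. First I would record the formula. Since a $\mathbb P^{0,T,\vec x^T,\vec y^T}_{avoid,Ber}$-distributed ensemble is, after the standard shift $Q_i\mapsto Q_i-(i-1)$, a uniformly random family of strictly non-intersecting up-right paths, the Lindström--Gessel--Viennot / Karlin--McGregor determinantal formula (the second factorisation coming from Cauchy--Binet) gives, for $\vec w\in\mathfrak W_k$,
\[
\mathbb P^{0,T,\vec x^T,\vec y^T}_{avoid,Ber}\!\bigl(\vec Q^T(m)=\vec w\bigr)=\frac{\det\!\Bigl[\binom{m}{\,w_j-x^T_i\,}\Bigr]_{i,j=1}^{k}\;\det\!\Bigl[\binom{T-m}{\,y^T_i-w_j\,}\Bigr]_{i,j=1}^{k}}{\det\!\Bigl[\binom{T}{\,y^T_j-x^T_i\,}\Bigr]_{i,j=1}^{k}},
\]
with the convention $\binom{n}{\ell}=0$ if $\ell<0$ or $\ell>n$, and $m=\lfloor tT\rfloor$ (after a harmless $O(1)$ shift inside the binomials coming from the above relabelling); the value of the interpolated path $Q^T_i(tT)$ differs from $Q^T_i(m)$ by $O(1)$, hence by $o(\sqrt T)$ after rescaling, so it suffices to analyse the integer time $m$.

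The heart of the proof is the asymptotic analysis of this ratio along the scaling $w_\ell=pm+\sqrt T\,v_\ell$. Writing $\sigma=\sqrt{p(1-p)}$ and using the elementary expansion $\binom{n}{pn+s}=\binom{n}{pn}\bigl(\tfrac{1-p}{p}\bigr)^{s}\exp\!\bigl(-\tfrac{s^2}{2\sigma^2 n}(1+o(1))\bigr)$, valid uniformly for $s=O(\sqrt n)$, together with Stirling in the form $\binom{n}{pn}\,p^{pn}(1-p)^{(1-p)n}\sim(2\pi\sigma^2 n)^{-1/2}$, one checks that each of the three determinants factorises as (a scalar depending only on the row index)$\times$(a scalar depending only on the column index)$\times$(a Gaussian determinant $\det[e^{-(\cdot-\cdot)^2/2\sigma^2(\cdot)}]$). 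In the ratio all the row/column scalars cancel — in particular the entire dependence on $\vec v$ cancels — and the remaining combinatorial constants collapse to a single factor $(2\pi\sigma^2 t(1-t)T)^{-k/2}$, leaving, uniformly over $\vec v$ in compact subsets of $W_k$,
\[
\mathbb P^{0,T,\vec x^T,\vec y^T}_{avoid,Ber}\!\bigl(\vec Q^T(m)=\vec w\bigr)=\bigl(2\pi\sigma^2 t(1-t)T\bigr)^{-k/2}\,\rho(\vec v)\,(1+o(1)),
\]
\[
\rho(\vec v)=\bigl(2\pi\sigma^2 t(1-t)\bigr)^{-k/2}\,\frac{\det\!\bigl[e^{-(v_j-a_i)^2/2\sigma^2 t}\bigr]\;\det\!\bigl[e^{-(b_i-v_j)^2/2\sigma^2(1-t)}\bigr]}{\det\!\bigl[e^{-(b_j-a_i)^2/2\sigma^2}\bigr]}.
\]
Since $Z^T$ takes values in a lattice of spacing $T^{-1/2}$ in each coordinate, this is exactly pointwise convergence of the mass function of $Z^T$, rescaled by $T^{k/2}$, to $\rho$. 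One recognises $\rho$ as the Karlin--McGregor density of the time-$t$ location of $k$ non-intersecting Brownian bridges of diffusion parameter $\sigma$ run from $\vec a$ at time $0$ to $\vec b$ at time $1$; in particular $\rho\ge 0$, $\int_{\mathbb R^k}\rho=1$, $\rho$ vanishes on $\partial W_k$ (two equal columns in a Gaussian determinant), so $\rho$ is supported on $W_k^\circ$, and $\hat Z$ is the random vector with density $\rho$.

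To upgrade this to weak convergence $Z^T\Rightarrow\hat Z$ I would combine the pointwise convergence with tightness and absence of escape of mass. Erasing the non-crossing constraint only raises $Q^T_1$ and only lowers $Q^T_k$, so by the monotone coupling Lemma \ref{MCLxy} the law of $Q^T_1(tT)$ is stochastically dominated by, and that of $Q^T_k(tT)$ stochastically dominates, the law of a free Bernoulli bridge; applying the strong coupling Theorem \ref{KMT} with Corollary \ref{Cheb} and the Gaussian tail bound Lemma \ref{BBmax} then yields uniform (Gaussian-type) tail bounds on $Z^T$. Hence $\{Z^T\}$ is tight, no mass escapes to infinity, and together with $\int\rho=1$ and the pointwise convergence of the rescaled mass functions on compacts this forces $Z^T$ to converge weakly to the measure with density $\rho$.

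The step I expect to be the main obstacle is the degenerate boundary case, i.e.\ when $\vec a$ or $\vec b$ has repeated coordinates (lies on $\partial W_k$), for which the prefactor $\det[e^{-(b_j-a_i)^2/2\sigma^2}]$ in the formula for $\rho$ vanishes and the naive asymptotics above break down. I would handle it by first proving the proposition in the strictly ordered case $\vec a,\vec b\in W_k^\circ$ (where the argument above goes through cleanly), and then passing to the general case by sandwiching $\vec x^T,\vec y^T$ between strictly ordered data via Lemma \ref{MCLxy} and letting the perturbation vanish, using continuity of the limiting law in the boundary data; alternatively, one can run a confluent (L'Hôpital-type) expansion of the vanishing determinants, which still produces a finite positive limit and reflects the instantaneous separation of conditioned non-intersecting bridges. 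A secondary technical point is making the binomial asymptotics genuinely uniform over the $\sqrt T$-window, including the moderate-deviation regime of large $\vec v$ required for the tail and no-loss-of-mass estimates in the previous paragraph.
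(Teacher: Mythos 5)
Most of your outline mirrors the paper's argument: the binomial-determinant formula is the paper's Lemma \ref{BerDist} (obtained there via Jacobi--Trudi, with $e_N(1^m)=\binom{m}{N}$), the Stirling/Gaussian asymptotics and the uniform local limit on compacts are Lemmas \ref{Limh} and \ref{ALambdaBLambda} together with Step 3 of Section \ref{Section9.4}, and your treatment of repeated entries in $\vec a,\vec b$ (prove the strictly ordered case first, then sandwich the data by $\epsilon$-perturbed strictly ordered vectors via Lemma \ref{MCLxy} and justify continuity of the limit law by a confluent expansion of the degenerate determinants) is exactly Sections \ref{Section9.5}--\ref{Section9.6}. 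The genuine problem is the step you use to upgrade the local limit to weak convergence. You claim that removing the non-crossing conditioning ``only raises $Q_1^T$ and only lowers $Q_k^T$,'' so that $Q_1^T(tT)$ is stochastically dominated by a free Bernoulli bridge and $Q_k^T(tT)$ dominates one. The domination goes the other way: conditioning on non-crossing pushes the top curve up and the bottom curve down. Indeed, conditionally on $Q_2,\dots,Q_k$, the law of $Q_1$ is that of a free bridge conditioned to stay above $Q_2$, which by the monotonicity of Lemma \ref{MCLfg} stochastically dominates the free bridge (a two-step example with $k=2$, $T=2$, endpoints $0$ and $1$ already shows $\mathbb{P}(Q_1(1)=1)=2/3>1/2$). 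Thus the free bridge is a stochastic \emph{lower} bound for $Q_1^T$ and an \emph{upper} bound for $Q_k^T$, which is useless for the upper tail of $Q_1^T$ and the lower tail of $Q_k^T$ that your tightness/no-escape-of-mass step requires; as written, that step fails, and the monotone coupling lemmas of the paper only compare two avoiding ensembles, not an avoiding ensemble with a single free bridge in the direction you need.

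The gap is repairable, and the paper's repair is purely analytic rather than probabilistic: the uniform Gaussian-type bound \eqref{ebounded} on $e_N(1^n)$, inserted into the permutation expansion of the two numerator determinants, yields an explicit integrable dominating function for the rescaled one-point mass function (equations \eqref{BLambdaBounded}--\eqref{DominatefT}); dominated convergence then gives both the exact normalizing constant \eqref{ZCform} (equivalently your claim $\int\rho=1$, which in your write-up is asserted by ``recognising'' the Karlin--McGregor density and would itself need the Cauchy--Binet/semigroup identity) and the convergence of rectangle probabilities, after which weak convergence follows from the open-set portmanteau criterion. You partially anticipate this when you mention needing moderate-deviation uniformity of the binomial asymptotics, but in your proposal the burden of tightness is placed on the incorrect coupling statement, so that part of the argument must be replaced by a bound of this kind.
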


The convergence result in Proposition \ref{prob17} allows us to prove the following lemma, which roughly states that if the entrance and exit data of a sequence of avoiding Bernoulli line ensembles remain in compact windows, then with high probability the curves of the ensemble will remain separated from one another at each point by some small positive distance on scale $\sqrt{T}$. This is how Proposition \ref{prob17} will be used in the main argument in the text, although in Section \ref{Section9} we give a detailed description of the density $\rho$ in Proposition \ref{prob17}.

\begin{lemma}\label{prob 20}
Fix $p,t\in (0,1)$ and $ k\in \mathbb{N}$. Suppose that $\vec x\,^T=(x_1^T,\dots, x_k^T)$, $\vec y\,^T=(y_1^T,\dots , y_k^T)$ are elements of $\mathfrak{W}_k$ such that $T\geq y_i^T-x_i^T\geq 0$ for $i\in \llbracket 1,k\rrbracket$. Then for any $M_1,M_2>0$ and $\epsilon>0$ there exists $W_5\in\mathbb{N}$ and $\delta>0$ depending on $p,k,M_1,M_2$ such that if $T\geq W_5$, $|x_i^T|\leq M_1\sqrt{T}$ and $|y_i^T-pT|\leq M_2\sqrt{T}$, then 
\[
\pr^{0,T,\vec x^T, \vec y^T}_{avoid,Ber}\left(\min_{1\leq i\leq k-1} \big[Q_i(tT)-Q_{i+1}(tT)\big]<\delta\sqrt{T}\right)<\epsilon.
\]
\end{lemma}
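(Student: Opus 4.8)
The plan is to argue by contradiction and reduce the statement to the weak convergence result of Proposition \ref{prob17}, using a compactness argument to remove the hypothesis there that the rescaled boundary data actually converge. First observe that the event to be controlled is expressed directly through the rescaled vector appearing in Proposition \ref{prob17}: setting $Z^T = \big((Q_i(tT)-ptT)/\sqrt{T}\big)_{i=1}^{k}$ we have $Q_i(tT) - Q_{i+1}(tT) = \sqrt{T}\,(Z^T_i - Z^T_{i+1})$, so that
\[
\left\{\min_{1\leq i\leq k-1}\big[Q_i(tT) - Q_{i+1}(tT)\big] < \delta\sqrt{T}\right\} = \left\{\min_{1\leq i\leq k-1}\big(Z^T_i - Z^T_{i+1}\big) < \delta\right\}.
\]
Thus it suffices to find $\delta > 0$ and $W_5 \in \mathbb{N}$ such that $\mathbb{P}^{0,T,\vec{x}\,^T,\vec{y}\,^T}_{avoid,Ber}\big(\min_i(Z^T_i - Z^T_{i+1}) < \delta\big) < \epsilon$ whenever $T \geq W_5$ and $|x_i^T| \leq M_1\sqrt{T}$, $|y_i^T - pT| \leq M_2\sqrt{T}$.

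Suppose no such $\delta$ and $W_5$ exist for some fixed $\epsilon, M_1, M_2 > 0$. Then for each $m \in \mathbb{N}$, using the failure with $\delta = 1/m$ and threshold $m$, we obtain an integer $T_m \geq m$ (and, passing to a further subsequence, we may take $(T_m)$ strictly increasing) together with vectors $\vec{x}\,^{T_m}, \vec{y}\,^{T_m} \in \mathfrak{W}_k$ satisfying $|x_i^{T_m}| \leq M_1\sqrt{T_m}$, $|y_i^{T_m} - pT_m| \leq M_2\sqrt{T_m}$ and
\[
\mathbb{P}^{0,T_m,\vec{x}\,^{T_m},\vec{y}\,^{T_m}}_{avoid,Ber}\left(\min_{1\leq i\leq k-1}\big(Z^{T_m}_i - Z^{T_m}_{i+1}\big) < \tfrac{1}{m}\right) \geq \epsilon .
\]
Since the vectors $\big(x_i^{T_m}/\sqrt{T_m}\big)_i$ and $\big((y_i^{T_m}-pT_m)/\sqrt{T_m}\big)_i$ lie in the compact boxes $[-M_1,M_1]^k$ and $[-M_2,M_2]^k$, after passing to a subsequence (not relabeled) we may assume $x_i^{T_m}/\sqrt{T_m} \to a_i$ and $(y_i^{T_m}-pT_m)/\sqrt{T_m} \to b_i$ as $m \to \infty$ for all $i$, for some $\vec{a}, \vec{b} \in \mathbb{R}^k$. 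Because the coordinates of each $\vec{x}\,^{T_m}$ and $\vec{y}\,^{T_m}$ are weakly decreasing, so are those of $\vec{a}$ and $\vec{b}$, hence $\vec{a}, \vec{b} \in W_k$, and Proposition \ref{prob17} applies along this subsequence: $Z^{T_m}$ converges weakly to a random vector $\hat Z$ whose law has a density $\rho$ supported on $W_k^\circ$, so $\hat Z \in W_k^\circ$ almost surely.

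To reach a contradiction, fix $j \in \mathbb{N}$ and let $F_j = \{\vec z \in \mathbb{R}^k : \min_{1\leq i\leq k-1}(z_i - z_{i+1}) \leq 1/j\}$, which is closed. For every $m \geq j$ we have $\{\min_i(Z^{T_m}_i - Z^{T_m}_{i+1}) < 1/m\} \subseteq \{Z^{T_m} \in F_j\}$, so $\mathbb{P}(Z^{T_m} \in F_j) \geq \epsilon$ for all such $m$; by the Portmanteau theorem applied to the closed set $F_j$ this gives $\mathbb{P}(\hat Z \in F_j) \geq \limsup_{m\to\infty}\mathbb{P}(Z^{T_m}\in F_j) \geq \epsilon$. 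The sets $F_j$ decrease, as $j\to\infty$, to $\bigcap_{j\geq 1} F_j = \{\vec z : \min_i(z_i - z_{i+1}) \leq 0\} = \mathbb{R}^k \setminus W_k^\circ$, so continuity of the probability measure from above yields $\lim_{j\to\infty}\mathbb{P}(\hat Z \in F_j) = \mathbb{P}(\hat Z \notin W_k^\circ) = 0$, contradicting $\mathbb{P}(\hat Z \in F_j) \geq \epsilon$ for all $j$. This completes the argument. The only genuinely substantial input is the absolute continuity of the limiting law in Proposition \ref{prob17}, and in particular the fact that it assigns no mass to the walls of the Weyl chamber; everything else here is the standard device for upgrading weak convergence together with compactness of the boundary data to the desired uniform separation estimate, with the shrinking closed sets $F_j$ serving to convert control at fixed thresholds into control of the vanishing gap.
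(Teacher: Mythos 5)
Your proof is correct and follows essentially the same route as the paper: argue by contradiction, use compactness of the rescaled boundary data to pass to a convergent subsequence, invoke Proposition \ref{prob17} for weak convergence to an absolutely continuous limit supported on $W_k^\circ$, and conclude via the Portmanteau theorem. The only (cosmetic) difference is the final step, where you use the nested closed sets $F_j$ and continuity from above of the limiting law, whereas the paper bounds the event by a sum of pairwise-gap probabilities and sends $\delta\to 0$ by dominated convergence; both rest on the same fact that the limit puts no mass on the walls of the Weyl chamber.
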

\begin{proof}
	We prove the claim by contradiction. Suppose there exist $M_1,M_2,\epsilon>0$ such that for any $W_5\in \mathbb{N}$ and $\delta>0$ there exists some $T\geq W_5$ with
	\[
	\pr^{0,T,\vec x^T,\vec y^T}_{avoid, Ber}\left(\min_{1\leq i\leq k-1}\big[Q_i(tT)-Q_{i+1}(tT)\big]<\delta\sqrt{T}\right)\geq \epsilon.
	\]
	Then we can obtain sequences $T_n$, $\delta_n>0$, $T_n \nearrow \infty$, $\delta_n \searrow 0$, such that for all $n$ we have  
	\[
	\pr^{0,T,\vec x\,^{T_n},\vec y\,^{T_n}}_{avoid, Ber}\left(\min_{1\leq i\leq k-1}\left[\frac{Q_i(tT_n)-Q_{i+1}(tT_n)}{\sqrt{T_n}}\right]<\delta_n\right)\geq \epsilon.
	\]
	Since $|x_i^{T_n}| \leq M_1\sqrt{T_n}$ and $|y_i^{T_n}-pT_n|\leq M_2\sqrt{T_n}$ for $1\leq i\leq k$, the sequences $\{\vec{x}\,^{T_n}/\sqrt{T_n}\}$, $\{(\vec{y}\,^{T_n}-pT_n)/\sqrt{T_n}\}$ are bounded in $\mathbb{R}^k$. It follows that there exist $\vec{x},\vec{y}\in\mathbb{R}^n$ and a subsequence $\{T_{n_m}\}$ such that 
	\[
	\frac{\vec{x}\,^{T_{n_m}}}{\sqrt{T_{n_m}}} \longrightarrow \vec x, \quad
	\frac{\vec{y}\,^{T_{n_m}}-pT_{n_m}}{\sqrt{T_{n_m}}} \longrightarrow \vec y
	\] 
	as $m\to\infty$ (see \cite[Theorem 3.6]{Rudin}). Denote $$Z_i^m =\frac{Q_i(tT_{n_m})-ptT_{n_m}}{\sqrt{T_{n_m}}}.$$ Fix $\delta > 0$ and choose $M$ large enough so that if $m\geq M$ then $\delta_m < \delta$. Then for $m\geq M$ we have
	\begin{equation}\label{prob20inf}
	\epsilon\leq \liminf_{m\to\infty}\pr\left(\min_{1\leq i\leq k-1} \big[Z_i^m-Z_{i+1}^m\big] < \delta_{n_m}\right) \leq \liminf_{m\to\infty}\pr\left(\min_{1\leq i\leq k-1} \big[ Z_i^m-Z_{i+1}^m\big] \leq\delta\right).
	\end{equation}
	Now by Lemma \ref{prob17}, $(Z_1^m,\dots, Z_k^m)$ converges weakly to a random vector $\hat Z$ on $\mathbb{R}^k$ with a density $\rho$. It follows from the portmanteau theorem \cite[Theorem 3.2.11]{Durrett} applied with the closed set $K=[0,\delta]$
	\begin{equation}\label{prob20sup}
	\limsup_{m\to\infty }\pr\left(\min_{1\leq i\leq k-1} \big[Z_i^m-Z_{i+1}^m\big]\in K\right)\leq \pr\left(\min_{1\leq i\leq k-1} \big[\hat Z_i-\hat Z_{i+1}\big]\in K\right).
	\end{equation}
	Combining (\ref{prob20inf}) and (\ref{prob20sup}), we obtain
	\begin{equation}\label{eq:ineq}
	\epsilon\leq \pr\left(0 \leq \min_{1\leq i\leq k-1} \big[\hat Z_i-\hat Z_{i+1}\big]\leq\delta\right)\leq \sum_{i=1}^{k-1}\pr\left(0 \leq \hat Z_i-\hat Z_{i+1}\leq \delta\right).
	\end{equation}
	To conclude the proof, we find a $\delta$ for which \eqref{eq:ineq} cannot hold. For $\tilde\eta\geq 0$ put
	\[
	E_i^{\tilde\eta} = \{\vec{z} \in\mathbb{R}^k : 0 \leq z_i-z_{i+1}\leq\tilde\eta\}.
	\] 
	For each $i\in\llbracket 1, k-1\rrbracket$ and $\eta > 0$, we have
	\begin{equation}\label{eq:integral}
	\mathbb{P}\left(0 \leq \hat{Z}_i - \hat{Z}_{i+1} \leq \eta\right) = \int_{\mathbb{R}^k} \rho\cdot\mathbf{1}_{E_i^\eta}\,dz_1\cdots dz_k.
	\end{equation}
	Clearly $\rho \cdot \mathbf{1}_{E_i^\eta} \to \rho \cdot \mathbf{1}_{E_i^0}$ pointwise as $\eta \to 0$, and $E_i^0 = \{\vec{z} \in\mathbb{R}^k : z_i = z_{i+1}\}$ has Lebesgue measure 0. Thus $\rho\cdot\mathbf{1}_{E_i^\eta} \to 0$ a.e. as $\eta\to 0$. Since $|\rho\cdot\mathbf{1}_{E_i^\eta}| \leq \rho$ and $\rho$ is integrable, the dominated convergence theorem and \eqref{eq:integral} imply that
	\begin{equation*}
	\mathbb{P}\left(0 \leq \hat{Z}_i - \hat{Z}_{i+1} \leq \eta\right) \longrightarrow  0
	\end{equation*}
	as $\eta\to 0$.	Thus for each $i\in\llbracket 1,k-1\rrbracket$ and $\epsilon > 0$ we can find an $\eta_i > 0$ such that $0<\eta \leq \eta_i$ implies $\mathbb{P}(0 \leq \hat{Z}_i - \hat{Z}_{i+1} \leq \eta)<\epsilon/(k-1)$. Putting $\delta=\min_{1\leq i\leq k-1}\eta_i$ we find that 
	\begin{equation*}
	\sum_{i=1}^{k-1}\pr\left( 0 \leq \hat Z_i-\hat Z_{i+1}\leq\delta \right) < \epsilon,
	\end{equation*}
	contradicting \eqref{eq:ineq} for this choice of $\delta$.
	
\end{proof}

%
\section{Proof of Theorem \ref{PropTightGood} }\label{Section4}

The goal of this section is to prove Theorem \ref{PropTightGood}. Throughout this section, we assume that we have fixed $k \in \mathbb{N}$ with $k \geq 2$, $p \in (0,1)$, $\alpha, \lambda > 0$, and
\begin{equation*}
\big\{\mathfrak{L}^N = (L^N_1,L^N_2, \dots, L^N_k)\big\}_{N=1}^{\infty}
\end{equation*}
an $(\alpha,p,\lambda)$-good sequence of $\llbracket 1, k\rrbracket$-indexed Bernoulli line ensembles as in Definition \ref{Def1}, all defined on a probability space with measure $\mathbb{P}$. The proof of Theorem \ref{PropTightGood} depends on three results -- Proposition \ref{PropMain} and Lemmas \ref{PropSup} and \ref{PropSup2}. In these three statements we establish various properties of the sequence of line ensembles $\mathfrak{L}^N$. The constants in these statements depend implicitly on $\alpha$, $p$, $\lambda$, $k$, and the functions $\phi_1, \phi_2, \psi$ from Definition \ref{Def1}, which are fixed throughout. We will not list these dependencies explicitly. The proof of Proposition \ref{PropMain} is given in Section \ref{Section4.1} while the proofs of Lemmas \ref{PropSup} and \ref{PropSup2} are in Section \ref{Section5}. Theorem \ref{PropTightGood} (i) and (ii) are proved in Sections \ref{Section4.2} and \ref{Section4.3} respectively.
%
\subsection{Preliminary results}\label{Section4.1}
The main result in this section is presented as Proposition \ref{PropMain} below. In order to formulate it and some of the lemmas below, it will be convenient to adopt the following notation for any $r > 0$ and $m \in \mathbb{N}$:
\begin{equation}\label{eqsts}
t_m =\lfloor (r+m) N^{\alpha} \rfloor.
\end{equation}
\begin{proposition}\label{PropMain} Let $\mathbb{P}$ be the measure from the beginning of this section. For any $\epsilon > 0$, $r > 0$ there exist $\delta = \delta(\epsilon, r) > 0$ and $N_1 = N_1(\epsilon, r)$ such that for all $N \geq N_1$
	we have 
	$$\mathbb{P}\Big(Z\big( -t_1, t_1, \vec{x}, \vec{y} , \infty,  L^N_{k}\llbracket -t_1, t_1\rrbracket\big) < \delta\Big) < \epsilon,$$
	where $\vec{x} = (L_1^N(-t_1), \dots, L_{k-1}^N(-t_1))$, $\vec{y} = (L_1^N(t_1), \dots, L^N_{k-1}(t_1))$,  $ L^N_{k}\llbracket -t_1, t_1\rrbracket$ is the restriction of $L^N_k$ to the set $\llbracket -t_1, t_1\rrbracket$, and $Z$ is the acceptance probability of Definition \ref{DefAP}. 
\end{proposition}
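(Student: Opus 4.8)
The plan is to obtain the lower bound on the acceptance probability $Z$ by verifying, on an event of probability at least $1-\epsilon$, the hypotheses of the curve-separation estimate Lemma \ref{CurveSeparation}: that the entrance and exit data $\vec x,\vec y$ are pairwise separated on the scale $N^{\alpha/2}$, that the bottom path $L^N_k\llbracket -t_1,t_1\rrbracket$ lies a definite multiple of $N^{\alpha/2}$ below the chord of the lowest curve, and that the slopes $z_i=y_i-x_i$ are within $O(N^{\alpha/2})$ of $p\cdot 2t_1$. Since $t_1\asymp (r+1)N^{\alpha}$, the global curvature of the ensemble over $\llbracket -t_1,t_1\rrbracket$ is itself of order $N^{\alpha/2}$, so one should not work with a single chord over the whole interval; instead I would partition $\llbracket -t_1,t_1\rrbracket$ into finitely many pieces on which the relevant curves are nearly affine, apply Lemma \ref{CurveSeparation} on each piece, and recombine using the bridge-decomposition (Markov) property of free Bernoulli bridges together with the fact — via Lemma \ref{BBmax} and the strong coupling Theorem \ref{KMT} — that the probability for a Bernoulli bridge to stay within $O(N^{\alpha/2})$ of its chord over the whole interval is bounded below, so the entropic price of conditioning the intermediate configurations into good windows is $O(1)$.

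\textbf{The favorable event.} The only concrete input is the one-point control \eqref{globalParabola} of the top curve. From it, for any $\eta>0$ one gets $R(\eta)$ and a threshold so that for $N$ large, with probability $\ge 1-\eta$, $L^N_1(mN^{\alpha})$ is within $RN^{\alpha/2}$ of $pmN^{\alpha}-\lambda m^{2}N^{\alpha/2}$ for every integer $m$ in a fixed finite window (chosen large enough, which is possible because $T_N>\psi(N)N^{\alpha}\to\infty$ leaves room for control points beyond $\pm t_1$); by the ordering $L^N_1\ge\cdots\ge L^N_k$ this is at once an \emph{upper} barrier for the whole ensemble, and resampling $L^N_1$ between successive control points and comparing with a free Bernoulli bridge (Theorem \ref{KMT}, Corollary \ref{Cheb}, Lemmas \ref{LemmaMinFreeS4} and \ref{MOCLemmaS4}) should upgrade it to: with probability $\ge 1-\eta$, $L^N_1$ stays within $O(N^{\alpha/2})$ of the parabola $s\mapsto ps-\lambda(s/N^{\alpha})^2 N^{\alpha/2}$ throughout $\llbracket -t_2,t_2\rrbracket$.

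\textbf{Main obstacle.} The crux is to turn this one-sided, single-curve input into a two-sided, mutually separated control of \emph{all} of $L^N_1,\dots,L^N_{k-1}$ and, hardest of all, of the bottom curve $L^N_k$, which is special under the Schur Gibbs property (not resampleable) and about which goodness says nothing. I would do this conditionally: fixing $L^N_k$ and the boundary values $L^N_i(\pm t_2)$, $1\le i\le k-1$, the Schur Gibbs property makes $(L^N_1,\dots,L^N_{k-1})$ on $\llbracket -t_2,t_2\rrbracket$ an avoiding Bernoulli line ensemble, which by Lemma \ref{MCLfg} stochastically dominates the ensemble with floor $-\infty$; on that reference ensemble Lemma \ref{prob19} and Lemma \ref{prob 20} (the latter resting on the weak convergence Proposition \ref{prob17}, whose limiting density is supported on the open Weyl chamber) provide, respectively, a lower bound with probability bounded below for how far the lowest curve clears the line at interior times, and a high-probability uniform separation of the $k-1$ curves on scale $N^{\alpha/2}$. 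Combined with the one-point tightness of $L^N_1$ and the upper barrier already in hand — and, if needed, a compactness/contradiction argument in the spirit of the proof of Lemma \ref{prob 20} — these should confine, on an event of probability $\ge 1-\epsilon$, the curves $L^N_1,\dots,L^N_{k-1}$ to an $O(N^{\alpha/2})$-neighborhood of the parabola with pairwise gaps $\gtrsim N^{\alpha/2}$, while the ordering $L^N_k\le L^N_{k-1}$ places $L^N_k$ far enough below the chords on each piece of the partition for the piecewise application of Lemma \ref{CurveSeparation} to go through. Assembling the finitely many high-probability events by a union bound and fixing the constants $R,C$ (hence $\delta$) and the threshold $N_1$ in terms of $\epsilon,r$ (and the fixed data $\alpha,p,\lambda,k,\psi,\phi_1,\phi_2$) then completes the proof; the remaining choices — of the control-time window, the partition, and the constants — are routine once the confinement-and-separation estimate of this paragraph, which is the real work, is established.
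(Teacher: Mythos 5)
Your proposal diverges from the paper's argument in a way that leaves a genuine gap. The paper does \emph{not} try to exhibit a high-probability event on which the boundary data $(\vec{x},\vec{y},L_k^N\llbracket -t_1,t_1\rrbracket)$ is ``good'' and then bound $Z$ deterministically via Lemma \ref{CurveSeparation}. Instead, after restricting to the event $E_N$ controlled by Lemmas \ref{PropSup} and \ref{PropSup2} and conditioning on $(\mathfrak{L}^N(\pm t_3), L_k^N\llbracket -t_3,t_3\rrbracket)$ via the Schur Gibbs property, it invokes Lemma \ref{LemmaAP1}, whose engine is a size-biasing identity: the Radon--Nikodym derivative of the fully avoiding law on $\llbracket -t_3,t_3\rrbracket$ with respect to the law that avoids only on $S=\llbracket -t_3,-t_1\rrbracket\cup\llbracket t_1,t_3\rrbracket$ is exactly $Z/Z'$ (equation (\ref{propRadon})), so it suffices to show (Lemma \ref{LemmaAP2}) that under the \emph{relaxed} measure the acceptance probability is bounded below \emph{with probability merely bounded below}, and the small probability of $\{Z\le \tilde h\tilde\epsilon\}$ under the true measure follows automatically. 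Your plan requires the much stronger statement that, with probability $\ge 1-\epsilon$ under the true law, all of: the curves $L_1^N,\dots,L_{k-1}^N$ are confined near the parabola, their values at $\pm t_1$ are pairwise separated on scale $N^{\alpha/2}$, and $L_k^N$ sits a definite multiple of $N^{\alpha/2}$ below the relevant chords. This is precisely what is not accessible with the tools you cite.

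Concretely, two steps would fail. First, the separation of $L_1^N,\dots,L_{k-1}^N$ at $\pm t_1$ cannot be imported from the floor-free results: Lemma \ref{prob 20} (via Proposition \ref{prob17}) applies only to $\mathbb{P}^{\cdot,\cdot,\vec{x},\vec{y}}_{avoid,Ber}$ with $g=-\infty$, and the monotone couplings of Lemmas \ref{MCLxy} and \ref{MCLfg} only transfer \emph{monotone} events, which ``all consecutive gaps exceed $\delta N^{\alpha/2}$'' is not; there is also no weak-convergence result in the paper for avoiding ensembles above a random floor, so the ``compactness/contradiction argument in the spirit of Lemma \ref{prob 20}'' would be a substantial new ingredient rather than a routine adaptation. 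Second, and more fundamentally, the assertion that ``the ordering $L_k^N\le L_{k-1}^N$ places $L_k^N$ far enough below the chords on each piece'' is unjustified: the ordering gives no quantitative gap, hypothesis (1) of Lemma \ref{CurveSeparation} requires $\ell_{bot}$ to lie a full $C\sqrt{T}$ below the chord uniformly (in particular at $\pm t_1$, where $L_k^N$ may touch $L_{k-1}^N$), and the only information available about $L_k^N$ is the one-sided bound $L_k^N\le L_1^N$; no lower-tail or gap control on the special, non-resampleable bottom curve is assumed or provable here with probability $1-\epsilon$. The paper's Lemma \ref{LemmaAP2} sidesteps exactly this by pushing the data at $\pm t_1$ high above the line (above the floor's possible maximum) only with probability bounded below under the relaxed measure, which is enough because of the size-biasing step — the step your proposal is missing and which cannot be replaced by a union of high-probability events.
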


The general strategy we use to prove Proposition \ref{PropMain} is inspired by the proof of \cite[Proposition 6.5]{CorHamK}. We begin by stating three key lemmas that will be required. The proofs of Lemmas \ref{PropSup} and \ref{PropSup2} are postponed to Section \ref{Section5} and Lemma \ref{LemmaAP1} is proved in Section \ref{Section6}.

\begin{lemma}\label{PropSup}  Let $\mathbb{P}$ be the measure from the beginning of this section. For any $\epsilon > 0$, $r > 0$ there exist $R_1=R_1(\epsilon,r) > 0$ and $N_2= N_2(\epsilon,r)$ such that for $N \geq N_2$ 
	$$\mathbb{P} \left( \sup_{s \in [ -t_3, t_3] }\big[ L^N_1(s) - p s \big] \geq  R_1N^{\alpha/2} \right) < \epsilon.$$
\end{lemma}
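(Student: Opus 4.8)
\textbf{Proof proposal for Lemma \ref{PropSup}.}

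The plan is to bound the sup of $L_1^N(s) - ps$ over $s \in [-t_3, t_3]$ by splitting into a ``global'' part, which follows from the one-point parabolic control of Assumption 2 / Definition \ref{Def1}, and a ``local fluctuation'' part, which is handled by the Schur Gibbs property and the strong coupling with Brownian bridges. Concretely, observe that on the interval $[-t_3, t_3]$ the deterministic parabola $-\lambda s^2 N^{-\alpha/2}$ (in the scaled coordinates) is bounded: $|\lambda s^2 N^{-\alpha/2}| \leq \lambda (r+3)^2 N^{\alpha/2}$. So up to an additive $O(N^{\alpha/2})$ term it suffices to control $\sup_{s \in [-t_3,t_3]} [L_1^N(s) - ps + \lambda s^2 N^{-\alpha/2}]$, i.e. the ``de-parabolized'' top curve on a compact (scaled) interval. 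Fix a grid of times $n_j N^\alpha$, $j = -(r+3), \dots, (r+3)$ (rounded to integers as in \eqref{eqsts}), covering $[-t_3, t_3]$ with $O(1)$ many points. By Definition \ref{Def1}, for each such grid point and for $N$ large the random variable $N^{-\alpha/2}(L_1^N(n_j N^\alpha) - p n_j N^\alpha + \lambda n_j^2 N^{\alpha/2})$ is at most $\phi_2(\epsilon/C)$ in absolute value with probability $\geq 1 - \epsilon/C$, where $C$ is the number of grid points; a union bound controls all grid values simultaneously with probability $\geq 1 - \epsilon/2$.

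The main work is then to pass from control at the $O(1)$ grid points to control of the supremum over the whole interval. This is where the Schur Gibbs property enters. Condition on $L_1^N$ at the two endpoints $\pm t_4$ (one grid step beyond $\pm t_3$) and on $L_2^N$ on all of $\llbracket -t_4, t_4 \rrbracket$. By the Schur Gibbs property applied to the single curve $L_1^N$ (i.e. $K = \{1\}$, which is allowed since $k \geq 2$ so $k_2 = 1 \leq k-1$), conditionally $L_1^N$ restricted to $\llbracket -t_4, t_4 \rrbracket$ is a Bernoulli bridge conditioned to stay above $L_2^N$. Using the monotone coupling Lemma \ref{MCLfg}, dropping the lower boundary $L_2^N$ to $-\infty$ only increases $L_1^N$ pointwise, so it suffices to bound the sup of a \emph{free} Bernoulli bridge with the given (controlled) endpoints. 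Here I would invoke Lemma \ref{LemmaMinFreeS4} (the ``sup'' half of \eqref{minFree1S4}) and/or Lemma \ref{MOCLemmaS4}: given that $L_1^N(\pm t_4) - p(\pm t_4)$ are of order at most (const)$\cdot N^{\alpha/2}$ on the good event, a free Bernoulli bridge between these endpoints stays within a further $O(N^{\alpha/2})$ of the straight line with high probability. More carefully, because the grid points inside $[-t_3,t_3]$ are also controlled, I would work on each length-$N^\alpha$ subinterval between consecutive grid points, apply the Gibbs property / coupling / free-bridge modulus bound on that subinterval with its two controlled endpoints, and union bound over the $O(1)$ subintervals. This yields $\sup_{s\in[-t_3,t_3]}[L_1^N(s) - ps + \lambda s^2 N^{-\alpha/2}] \leq R N^{\alpha/2}$ with probability $\geq 1 - \epsilon$ for a suitable $R = R(\epsilon, r)$, and absorbing the $\lambda(r+3)^2 N^{\alpha/2}$ parabola term into $R_1$ finishes the proof.

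The subtle point — and the main obstacle — is the conditioning step: a priori we do not have a good \emph{a priori} bound on $L_1^N(\pm t_4) - p(\pm t_4)$ except the one-point tightness from Definition \ref{Def1}, and we must be careful that the event we condition on (good grid values) has probability close to $1$ uniformly in $N$ and that on this event the free-bridge estimates apply with the stated $N^{\alpha/2}$ scaling. One must also check that the shift by the parabola is harmless: the relevant quantity $L_1^N(s) - ps$ is what appears in the statement, but $L_1^N(s) - ps + \lambda s^2 N^{-\alpha/2}$ is what is tight, and on $[-t_3,t_3]$ these differ by a bounded-on-this-scale deterministic function, so the conversion costs only an enlargement of the constant. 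Finally, I would note that the particular choice of $t_3$ versus $t_4$ (one extra grid step of slack) is what makes the endpoint-conditioning argument go through cleanly, mirroring the structure of \cite[Proposition 6.5]{CorHamK}.
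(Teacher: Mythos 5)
Your overall plan (grid control from Assumption 2, then use the Schur Gibbs property to upgrade one-point control at the grid to control of the supremum on each length-$N^\alpha$ block) is in the right spirit, but the key step is carried out with the monotone coupling pointing in the wrong direction, and this is a genuine gap. By Lemma \ref{MCLfg}, raising the bottom boundary raises the ensemble; hence, conditionally on the boundary data, the law of $L_1^N$ (a Bernoulli bridge conditioned to stay above $L_2^N$) stochastically \emph{dominates} the free Bernoulli bridge --- dropping $L_2^N$ to $-\infty$ \emph{decreases} the curve, not increases it, contrary to what you assert. Consequently the free-bridge estimates (Lemma \ref{LemmaMinFreeS4}, Lemma \ref{MOCLemmaS4}) only give \emph{lower} bounds for the conditioned top curve and cannot upper bound $\sup_s [L_1^N(s)-ps]$ on a block: the avoidance conditioning pushes $L_1^N$ up by an amount governed by $L_2^N$ in the interior of the block, and at this stage you have no control of $L_2^N$ there (indeed $L_2^N \leq L_1^N$, so controlling it from above is essentially the statement being proved, making the scheme circular).

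The paper's proof runs the comparison in the direction where monotonicity helps. It decomposes the event that the supremum over $[0,s_4]$ exceeds $R_1 N^{\alpha/2}$ according to the \emph{last} time $s$ at which $L_1^N(s)-ps$ is above the threshold, together with the values $a = L_1^N(-s_4)$, $b = L_1^N(s)$, the trajectory of $L_1^N$ to the right of $s$, and that of $L_2^N$ to the left of $s$. On each such event the Gibbs property makes $L_1^N$ on $\llbracket -s_4, s\rrbracket$ an avoiding bridge above $L_2^N$ from a controlled value $a$ (on the complement of the bad event where $|L_1^N(-s_4)+ps_4|$ is large, handled by one-point tightness) to the \emph{high} value $b$; by Lemma \ref{MCLfg} this dominates the free bridge, and Lemma \ref{LemmaHalfS4} shows the free bridge is, at the fixed time $-s_3$, above $-ps_3+MN^{\alpha/2}$ with probability at least $1/3$. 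Thus a large supremum forces $L_1^N(-s_3)-p(-s_3)\geq MN^{\alpha/2}$ with conditional probability at least $1/3$, contradicting the one-point tightness at $-s_3$. To salvage your block-by-block scheme you would need some substitute for this bootstrap; conditioning on endpoints and comparing with free bridges alone cannot work, because the comparison inequality goes the wrong way.
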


\begin{lemma}\label{PropSup2}  Let $\mathbb{P}$ be the measure from the beginning of this section.  For any $\epsilon > 0$, $r > 0$ there exist $R_2=R_2( \epsilon,r) > 0$ and $N_3=N_3(\epsilon,r)$ such that for $N \geq N_3$
	$$\mathbb{P}\left( \inf_{s \in [ -t_3, t_3 ]}\big[L^N_{k-1}(s) - p s \big] \leq - R_2N^{\alpha/2} \right) < \epsilon.$$
\end{lemma}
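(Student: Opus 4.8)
The plan is to prove Lemma~\ref{PropSup2} as a mirror image of Lemma~\ref{PropSup}, using the monotone coupling lemmas and the strong coupling with Brownian bridges. I would argue by an iterative resampling/comparison scheme that reduces control of the curve $L_{k-1}^N$ on a large interval to control of a single Bernoulli bridge, which in turn is handled by Theorem~\ref{KMT} and Lemma~\ref{LemmaMinFreeS4}.

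First I would fix $\epsilon,r>0$ and observe that by Assumption~2 (more precisely by the third bullet of Definition~\ref{Def1}, equation~\eqref{globalParabola}), the one-point marginals of $L_1^N$ at the finitely many space-time points $nN^\alpha$ for $n$ in a suitable window $\llbracket -(r+4), r+4 \rrbracket$ can all be controlled: for $N$ large the event that $L_1^N(nN^\alpha) - pnN^\alpha$ is below $-M_0 N^{\alpha/2}$ (and, combined with Lemma~\ref{PropSup}, above $M_0 N^{\alpha/2}$) fails with probability at most $\epsilon/C$ for an appropriate constant $C$ and a large constant $M_0 = M_0(\epsilon,r)$. Since the line ensemble is non-crossing, $L_{k-1}^N \leq L_1^N$ pointwise, so the \emph{upper} bound on $L_{k-1}^N$ at these finitely many times is automatic. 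The content of the lemma is the \emph{lower} bound $L_{k-1}^N(s) - ps \geq -R_2 N^{\alpha/2}$ uniformly over the continuous interval $[-t_3,t_3]$; the difficulty is upgrading pointwise control at integer-multiple times to a uniform-in-$s$ lower bound, and pushing the control down from the top curve $L_1^N$ to $L_{k-1}^N$.

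For the upgrade from finitely many times to a uniform bound I would use the Schur Gibbs property on successive blocks. Fix a mesh of times $a_j = jN^\alpha$, $j\in\llbracket -(r+3),r+3\rrbracket$. Conditionally on $\mathfrak{L}^N$ outside the strip $\llbracket 1,k-1\rrbracket \times \llbracket a_j+1, a_{j+1}-1\rrbracket$, the curves $L_1^N,\dots,L_{k-1}^N$ restricted to $[a_j,a_{j+1}]$ are distributed as $k-1$ avoiding Bernoulli bridges above $L_k^N$ with the prescribed endpoints. By Lemma~\ref{MCLxy} and Lemma~\ref{MCLfg} I can lower these further: replace the endpoints of $L_{k-1}^N$ at $a_j,a_{j+1}$ by the lower bounds $pa_j - M_0N^{\alpha/2}$, $pa_{j+1}-M_0N^{\alpha/2}$ and replace the lower boundary $L_k^N$ by $-\infty$, which only decreases the curve. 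This reduces the event $\{\inf_{s\in[a_j,a_{j+1}]}[L_{k-1}^N(s)-ps] \leq -R_2N^{\alpha/2}\}$ to the corresponding event for a single free Bernoulli bridge of length $N^\alpha$ from $pa_j - M_0N^{\alpha/2}$ to $pa_{j+1}-M_0N^{\alpha/2}$. Now apply Lemma~\ref{LemmaMinFreeS4} (after the affine shift bringing $a_j$ to $0$ and subtracting the slope-$p$ line), which gives exactly this type of deviation estimate: with $T = N^\alpha$, the probability that such a bridge dips below $ps - A T^{1/2}$ is at most $\epsilon' := \epsilon/(2(2r+7))$ for $A = A(M_0,p,\epsilon')$ and $N$ large. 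Choosing $R_2 = A + M_0$ and taking a union bound over the $O(r)$ blocks $j$, together with the event that the mesh endpoints of $L_{k-1}^N$ (hence of $L_1^N$) behave, yields the claim for $N\geq N_3(\epsilon,r)$.

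The main obstacle I anticipate is the conditioning/monotonicity bookkeeping: to invoke the Gibbs property one must first know the endpoints $L_1^N(a_j),\dots,L_{k-1}^N(a_j)$ are controlled (in particular that they are $\geq pa_j - M_0N^{\alpha/2}$), and the lower bound on the \emph{lowest} curve $L_{k-1}^N(a_j)$ at the mesh times is not directly given by Assumption~2 which only controls $L_1^N$. One must therefore run the argument recursively from the top: first control $L_1^N$ on the whole interval (Lemma~\ref{PropSup} handles the upper side; a symmetric argument or direct use of~\eqref{globalParabola} handles the lower side at mesh times), then, conditioning on $L_1^N$ and using that $L_2^N \leq L_1^N$ together with the avoiding-bridge structure and Lemma~\ref{LemmaMinFreeS4}, control $L_2^N$ from below, and so on down to $L_{k-1}^N$ — each step loses a bounded additive constant in the scale-$N^{\alpha/2}$ lower bound and a fixed fraction of the probability budget, which is fine since $k$ is fixed. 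Care is also needed because one wants the estimate uniformly over $s$ in a \emph{closed} interval slightly larger than the target $[-t_3,t_3]$, so I would run the mesh over $\llbracket -(r+3-\text{something}), \cdot\rrbracket$ with a bit of room, and use that $t_3 = \lfloor (r+3)N^\alpha\rfloor$ lies safely inside; Assumption~1 guarantees $T_N > \psi(N)N^\alpha \gg (r+4)N^\alpha$ for large $N$ so all these times are within the domain of definition of $\mathfrak{L}^N$.
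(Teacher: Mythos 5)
There is a genuine gap, and it is exactly at the point you flag as "the main obstacle": your recursive top-down scheme does not close. The non-crossing constraint $L_{i+1}^N\leq L_i^N$ transfers \emph{upper} bounds downward, but a lower bound on $L_1^N$ (which is all that Assumption 2 / Definition \ref{Def1} provides) gives no information whatsoever about $L_2^N,\dots,L_{k-1}^N$ from below. When you condition via the Schur Gibbs property on a block $[a_j,a_{j+1}]$ and try to invoke Lemma \ref{LemmaMinFreeS4} for $L_{k-1}^N$ (or for $L_2^N$ in the recursive step), that lemma requires the bridge's endpoints to lie at most $O(N^{\alpha/2})$ below the slope-$p$ line --- but the endpoints $L_{k-1}^N(a_j)$ are random and controlling them is precisely the statement you are trying to prove; the monotone coupling Lemmas \ref{MCLxy} and \ref{MCLfg} let you lower known boundary data, they cannot manufacture a floor for unknown boundary data. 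Nothing in your argument uses the curvature parameter $\lambda$ or a horizon longer than $O(rN^\alpha)$, and without that input the lower curves could a priori collapse arbitrarily far down consistently with everything you have assumed.

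The paper supplies the missing input as Lemma \ref{21}: choosing $R=R(\epsilon,r)$ \emph{large}, it shows that with high probability $L_{k-1}^N$ rises to within $O(N^{\alpha/2})$ of the line $ps$ at some point of $[(r+3)N^\alpha,RN^\alpha]$ and some point of $[-RN^\alpha,-(r+3)N^\alpha]$. The mechanism is a concavity argument that has no analogue in your sketch: on the event that $L_{k-1}^N$ stays uniformly very low on $[rN^\alpha,RN^\alpha]$, the top $k-2$ curves do not feel the bottom boundary (quantified by raising the entry/exit data and bounding the non-crossing probability below via Lemma \ref{CurveSeparation}), so $L_1^N$ behaves like a free bridge whose midpoint hugs the chord between its endpoints; since the chord lies $\approx\lambda(R-r)^2N^{\alpha/2}/4$ below the parabolic profile at the midpoint, this contradicts the one-point control of $L_1^N$ there once $R$ is large. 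Only after these two anchor points $a<-t_3<t_3<b$ with $L_{k-1}^N$ high are secured does the argument proceed along the lines of your final reduction (Gibbs property on $[a,b]$, lowering of boundary data, comparison with free bridges, Lemma \ref{LemmaMinFreeS4}); note also that even this last step is not a direct "single free bridge" statement --- one must divide by the probability that independent bridges with well-separated, lowered boundary data remain ordered (Lemma \ref{CurveSeparation}), a point your sketch glosses over but which is fixable, unlike the missing anchor argument.
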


\begin{lemma}\label{LemmaAP1} Fix $k \in \mathbb{N}$, $k \geq 2$, $p \in (0,1)$, $r, \alpha, M_1, M_2 > 0$ . Suppose that $\ell_{bot}: \llbracket -t_3, t_3 \rrbracket \rightarrow \mathbb{R} \cup \{ - \infty \}$, and $\vec{x}, \vec{y} \in \mathfrak{W}_{k-1}$ are such that $|\Omega_{avoid}(-t_3, t_3, \vec{x}, \vec{y}, \infty, \ell_{bot})| \geq 1$. Suppose further that
	\begin{enumerate}
		\item $\sup_{s \in [- t_3,t_3]}\big[\ell_{bot}(s)  - ps \big]  \leq M_2 (2t_3)^{1/2}$,
		\item  $-pt_3 + M_1 (2t_3)^{1/2} \geq  x_1 \geq  x_{k-1} \geq \max\left(\ell_{bot}(-t_3), -pt_3- M_1 (2t_3)^{1/2}\right),$
		\item $pt_3 + M_1 (2t_3)^{1/2} \geq y_1 \geq y_{k-1} \geq  \max \left( \ell_{bot}(t_3),  p t_3- M_1(2t_3)^{1/2} \right).$
	\end{enumerate}
	Then there exist constants $\tilde{h} > 0$ and $N_4 \in \mathbb{N}$, depending on $ M_1, M_2, p , k, r, \alpha$,  such that for any $\tilde{\epsilon}  > 0$ and $N \geq N_4$ we have
	\begin{equation}\label{eqn60}
	\mathbb{P}^{-t_3, t_3, \vec{x},\vec{y}, \infty, \ell_{bot} }_{avoid, Ber} \Big( Z\big(  -t_1, t_1, \mathfrak{Q}(-t_1) ,\mathfrak{Q}(t_1), \infty,  \ell_{bot}\llbracket -t_1, t_1\rrbracket\big) \leq  \tilde{h} \tilde{\epsilon}   \Big)  \leq \tilde{\epsilon},
	\end{equation}
	where $Z$ is the acceptance probability of Definition \ref{DefAP}, $\ell_{bot}\llbracket -t_1, t_1\rrbracket$ is the vector, whose coordinates match those of $\ell_{bot}$ on $\llbracket -t_1, t_1\rrbracket$ and $\mathfrak{Q}(a) = (Q_1(a), \dots, Q_{k-1}(a))$ is the value of the line ensemble $\mathfrak{Q} = (Q_1, \dots, Q_{k-1})$ whose law is $\mathbb{P}^{-t_3, t_3, \vec{x},\vec{y}, \infty, \ell_{bot} }_{avoid, Ber}$ at location $a$.
\end{lemma}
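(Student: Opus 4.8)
The plan is to derive \eqref{eqn60} from a single first--moment estimate, exploiting an exact combinatorial identity for $\mathbb{E}[1/Z]$ under the avoiding measure.

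\textbf{Step 1: reduction to a counting inequality.} Abbreviate $\mathbb{P}_0 = \mathbb{P}^{-t_3,t_3,\vec{x},\vec{y},\infty,\ell_{bot}}_{avoid,Ber}$ and, for a configuration $\mathfrak{Q}$ in its support, put $Z_{\mathfrak{Q}} = Z(-t_1,t_1,\mathfrak{Q}(-t_1),\mathfrak{Q}(t_1),\infty,\ell_{bot}\llbracket -t_1,t_1\rrbracket)$. Note $Z_{\mathfrak{Q}}\ge\big(\prod_{i=1}^{k-1}|\Omega(-t_1,t_1,Q_i(-t_1),Q_i(t_1))|\big)^{-1}>0$, since $\mathfrak{Q}\llbracket -t_1,t_1\rrbracket$ itself belongs to $\Omega_{avoid}(-t_1,t_1,\mathfrak{Q}(-t_1),\mathfrak{Q}(t_1),\infty,\ell_{bot}\llbracket -t_1,t_1\rrbracket)$; in particular $1/Z_{\mathfrak{Q}}$ is bounded and $\mathbb{E}_{\mathbb{P}_0}[1/Z_{\mathfrak{Q}}]<\infty$. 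Because $\mathbb{P}_0$ is uniform and a collection in $\Omega_{avoid}(-t_3,t_3,\vec x,\vec y,\infty,\ell_{bot})$ is determined by its three restrictions to $\llbracket -t_3,-t_1\rrbracket$, $\llbracket -t_1,t_1\rrbracket$, $\llbracket t_1,t_3\rrbracket$ (the non-crossing and above-$\ell_{bot}$ conditions being checked block by block), the law of $(\mathfrak{Q}(-t_1),\mathfrak{Q}(t_1))$ is a product of three avoiding counts; multiplying by $1/Z_{\mathfrak{Q}}$ the avoiding count on the middle block cancels and one gets
\[
\mathbb{E}_{\mathbb{P}_0}\!\left[1/Z_{\mathfrak{Q}}\right]=\frac{N_{mix}(\vec x,\vec y)}{|\Omega_{avoid}(-t_3,t_3,\vec x,\vec y,\infty,\ell_{bot})|},
\]
where $N_{mix}(\vec x,\vec y)$ counts the collections of $k-1$ up-right paths from $\vec x$ to $\vec y$ on $\llbracket -t_3,t_3\rrbracket$ that are non-crossing and above $\ell_{bot}$ on $\llbracket -t_3,-t_1\rrbracket\cup\llbracket t_1,t_3\rrbracket$ but otherwise unconstrained on $\llbracket -t_1,t_1\rrbracket$. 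Hence if we can produce $C_0=C_0(M_1,M_2,p,k,r,\alpha)$ and $N_4$ with $N_{mix}(\vec x,\vec y)\le C_0\,|\Omega_{avoid}(-t_3,t_3,\vec x,\vec y,\infty,\ell_{bot})|$ for $N\ge N_4$, then for every $\tilde\epsilon>0$ Markov's inequality gives $\mathbb{P}_0(Z_{\mathfrak{Q}}\le\tilde h\tilde\epsilon)=\mathbb{P}_0(1/Z_{\mathfrak{Q}}\ge(\tilde h\tilde\epsilon)^{-1})\le\tilde h\tilde\epsilon\,\mathbb{E}_{\mathbb{P}_0}[1/Z_{\mathfrak{Q}}]\le\tilde\epsilon$ upon choosing $\tilde h:=C_0^{-1}$, which is \eqref{eqn60}.

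\textbf{Step 2: lower bounding $|\Omega_{avoid}(-t_3,t_3,\cdots)|$.} Fix constants $C>0$, $R>C'>0$ and $K,\tilde C>0$, all depending only on $M_1,M_2,p,k,r,\alpha$, and call a pair $(\vec u,\vec v)\in\mathfrak{W}_{k-1}^2$ \emph{good} if every coordinate of $\vec u$ lies in $[-pt_1+C'N^{\alpha/2},-pt_1+RN^{\alpha/2}]$ and of $\vec v$ in $[pt_1+C'N^{\alpha/2},pt_1+RN^{\alpha/2}]$, consecutive coordinates of $\vec u$ and of $\vec v$ differ by at least $\tilde C N^{\alpha/2}$, and --- using that $\ell_{bot}(s)\le ps+M_2(2t_3)^{1/2}$ by hypothesis (1) and $t_3\asymp t_1$ --- the segment from $u_{k-1}$ to $v_{k-1}$ exceeds $\ell_{bot}$ by at least $C(2t_1)^{1/2}$ on $\llbracket -t_1,t_1\rrbracket$. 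The constants are chosen so that, for good $(\vec u,\vec v)$ and $N\ge N_4$, all hypotheses of Lemma \ref{CurveSeparation} hold on $\llbracket -t_1,t_1\rrbracket$, so that $Z(-t_1,t_1,\vec u,\vec v,\infty,\ell_{bot}\llbracket -t_1,t_1\rrbracket)\ge c_1>0$ by \eqref{SepBound1}, with $c_1=c_1(C,p,k)$. Decomposing $|\Omega_{avoid}(-t_3,t_3,\vec x,\vec y,\infty,\ell_{bot})|$ over the passage values at $\pm t_1$, discarding all but the good pairs, and writing $|\Omega_{avoid}(-t_1,t_1,\vec u,\vec v,\infty,\ell_{bot}\llbracket -t_1,t_1\rrbracket)|=Z(-t_1,t_1,\vec u,\vec v,\infty,\ell_{bot}\llbracket -t_1,t_1\rrbracket)\prod_i|\Omega(-t_1,t_1,u_i,v_i)|\ge c_1\prod_i|\Omega(-t_1,t_1,u_i,v_i)|$ on the middle block, we obtain $|\Omega_{avoid}(-t_3,t_3,\cdots)|\ge c_1 N_{mix}^{good}(\vec x,\vec y)$, where $N_{mix}^{good}$ counts the mixed collections whose values at $\pm t_1$ are good. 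Thus it suffices to prove $N_{mix}^{good}(\vec x,\vec y)\ge c_2 N_{mix}(\vec x,\vec y)$ for a fixed $c_2>0$ --- equivalently, that a uniformly random mixed collection takes good values at $\pm t_1$ with probability at least $c_2$ --- after which $C_0:=(c_1c_2)^{-1}$ works.

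\textbf{Step 3: the good event under the mixed measure --- the main obstacle.} This is the heart of the matter. The ``near the line from above'' piece is routine: conditioning a uniform mixed collection on its bottom $k-2$ curves, the top curve $Q_1$ is, via Lemma \ref{MCLfg} with bottom bound lowered to $-\infty$, stochastically dominated by a free Bernoulli bridge from $x_1$ to $y_1$, for which $\sup_s[Q_1(s)-ps]=O(N^{\alpha/2})$ with probability as close to $1$ as we like by Lemma \ref{LemmaMinFreeS4} (using $|x_1+pt_3|,|y_1-pt_3|\le M_1(2t_3)^{1/2}$ from hypotheses (2)--(3)); this supplies the upper endpoint of the good window for all curves at once. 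The separation of the coordinates of $\mathfrak{Q}(\pm t_1)$ and the requirement that $Q_{k-1}$ sit a fixed distance $C'N^{\alpha/2}$ above the line at $\pm t_1$ are obtained, respectively, from Lemma \ref{prob 20} (equivalently Proposition \ref{prob17}) and from Lemma \ref{prob19}, after conditioning the mixed collection on its values at suitably chosen intermediate times --- arranged so that $\pm t_1$ become the midpoints of sub-intervals on which, given those conditioned endpoints, the collection is an $(\infty,\ell_{bot})$-avoiding Bernoulli line ensemble with $S$ a sub-interval (so Lemma \ref{prob19} still applies) --- while the reweighting from the free middle block $\llbracket -t_1,t_1\rrbracket$ factors as a product over individual curves and leaves those estimates intact. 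The two genuinely delicate points, carried out in Section \ref{Section6}, are: (a) removing the lower curve $\ell_{bot}$ from the separation estimates of Section \ref{Section3.3}, which we do by first using Lemma \ref{prob19} and the monotone couplings of Section \ref{Section3.1} to push $Q_{k-1}$ well above $\ell_{bot}$ on the relevant range, so that the $\ell_{bot}$-constraint becomes inactive there; and (b) assembling all of these events on a single event of probability at least a constant $c_2$ depending only on $M_1,M_2,p,k,r,\alpha$ and not on $\tilde\epsilon$ --- it is exactly this uniformity, which the crude lower bound of Lemma \ref{CurveSeparation} alone cannot deliver (its dependence on the separation scale degrades too fast), that makes the detour through the exact identity of Step 1 necessary.
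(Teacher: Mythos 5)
Your Steps 1--2 are sound, but they are not really a different route: the identity $\mathbb{E}_{\mathbb{P}_0}[1/Z_{\mathfrak{Q}}]=N_{mix}/|\Omega_{avoid}(-t_3,t_3,\vec x,\vec y,\infty,\ell_{bot})|$ (an equality when $\ell_{bot}$ is an up-right path or $-\infty$, and in general the inequality $\leq$, which is the direction you need) is exactly the paper's computation of the normalizing constant $Z'=\mathbb{E}_{\tilde{\mathfrak{Q}}'}[Z]=|\Omega_{avoid}(\cdots)|/|\Omega_{avoid}(\cdots;S)|$ in Section \ref{sect61}, and your Markov step reproduces the paper's bound $\mathbb{P}_{\mathfrak{Q}'}(E)\leq (Z')^{-1}\tilde h\tilde\epsilon$. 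So everything hinges, exactly as in the paper, on a lower bound for $Z'$, i.e.\ on showing that under the ``mixed'' measure (free on $\llbracket -t_1,t_1\rrbracket$, avoiding on $S$) the values at $\pm t_1$ are good with probability bounded below by a constant depending only on $M_1,M_2,p,k,r,\alpha$. That is precisely Lemma \ref{LemmaAP2}, whose proof (via Lemmas \ref{LemmaBP1} and \ref{LemmaBP2} and the multi-step argument of Section \ref{sect62}) is the technical heart of the result.

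This is where your proposal has a genuine gap: Step 3 does not prove this statement, it only outlines it, and at the decisive points it either defers to ``Section \ref{Section6}'' of the paper or relies on a mechanism that fails. Concretely, your ``routine'' control of the top curve is backwards: conditioning $Q_1$ on the lower curves gives a Bernoulli bridge conditioned to stay \emph{above} a floor on $S$, and by Lemma \ref{MCLfg} lowering that floor to $-\infty$ produces a curve that is stochastically \emph{below} $Q_1$; hence the free bridge from $x_1$ to $y_1$ is a lower bound, not an upper bound, and cannot bound $\sup_s[Q_1(s)-ps]$ from above. The paper instead bounds such upper-deviation events by raising the boundary data via Lemma \ref{MCLxy} and then writing the avoiding probability as (free probability of the bad event)/(free non-crossing probability), with the denominator bounded below by Lemma \ref{CurveSeparation} --- see Step 3 of the proof of Lemma \ref{LemmaBP2} and Step 2 of the proof of Lemma \ref{LemmaAP2}. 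Likewise, the separation at $\pm t_1$ and the uniform height above $\ell_{bot}$ require the full conditioning scheme of Section \ref{sect62} (conditioning at $0$ and $\pm t_2$, applying Lemma \ref{prob19} and Lemma \ref{prob 20} at $\pm t_{12}$, and then propagating to $\pm t_1$), none of which is carried out in your proposal; asserting that ``the reweighting from the free middle block factors and leaves those estimates intact'' is not a proof of the uniform-in-$N$ constant $c_2$ you need. Until this analogue of Lemma \ref{LemmaAP2} is actually established, the argument is incomplete.
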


\begin{proof}[Proof of Proposition \ref{PropMain}] Let $\epsilon > 0$ be given. Define the event
	\begin{equation*}
	\begin{split}
	E_N = &\left\{    L_{k-1}^N(  \pm t_3) \mp pt_3 \geq  - M_1 (2t_3)^{1/2}\right\} \cap \left\{    L_{1}^N(  \pm t_3) \mp pt_3 \leq   M_1 (2t_3)^{1/2}\right\} \cap \\
	& \left\{ \sup_{s \in [ -t_3, t_3]} [{L}^N_{k}(s) - p s ]\leq M_2  (2t_3)^{1/2} \right\}.
	\end{split}
	\end{equation*}
	In view of  Lemmas \ref{PropSup} and \ref{PropSup2} and the fact that $\mathbb{P}$-almost surely $L_1^N(s) \geq L_k^N(s)$ for all $s \in [ -t_3, t_3]$ we can find sufficiently large $M_1, M_2$ and $N_2$ such that for $N \geq N_2$ we have 
	\begin{equation}\label{UBEC}
	\mathbb{P}(E_N^c) <  \epsilon / 2.
	\end{equation}

	Let $\tilde{h}, N_4$ be as in Lemma \ref{LemmaAP1} for the values $M_1, M_2$ as above, the values $\alpha, p, k$ from the beginning of the section and $r$ as in the statement of the proposition. For $\delta = (\epsilon/2) \cdot \tilde{h}$ we denote
	$$V = \Big\{Z\big( -t_1,t_1, \vec{x}, \vec{y} , \infty,  L^N_{k}\llbracket -t_1, t_1\rrbracket\big)< \delta\Big\}$$
	and make the following deduction for $N \geq N_4$
	\begin{equation}\label{boundoneEN}
	\begin{split}
	&\mathbb{P}\big( V \cap E_N \big) =\mathbb{E} \bigg[    \mathbb{E}\Big[{\bf 1}_{E_N} \cdot {\bf 1}_{V} \Big{|} \sigma \big( \mathfrak{L}^N(-t_3),  \mathfrak{L}^N(t_3),L^N_k\llbracket -t_3, t_3\rrbracket   \big)\Big] \bigg] = \\
	&\mathbb{E} \bigg[ {\bf 1}_{E_N} \cdot   \mathbb{E}\Big[ {\bf 1} \{ Z\big( -t_1,t_1, \vec{x}, \vec{y} , \infty,  L^N_{k}\llbracket -t_1, t_1\rrbracket \big) < \delta\}   \Big{|} \sigma \big( \mathfrak{L}^N(-t_3),  \mathfrak{L}^N(t_3),L^N_k\llbracket -t_3, t_3\rrbracket   \big) \Big] \bigg]  = \\
	&\mathbb{E} \left[ {\bf 1}_{E_N} \cdot  \mathbb{E}_{avoid}\left[ {\bf 1} \{ Z\big( -t_1, t_1, \mathfrak{L}(-t_1),\mathfrak{L}(t_1), \infty, {L}^N_k\llbracket -t_1, t_1\rrbracket \big) < \delta\} \right] \right] \leq  \mathbb{E} \left[ {\bf 1}_{E_N} \cdot  \epsilon/2 \right] \leq \epsilon/2.
	\end{split}
	\end{equation}
	In (\ref{boundoneEN}) we have written $\mathbb{E}_{avoid}$ in place of $\mathbb{E}^{-t_3, t_3, \mathfrak{L}^N(-t_3), \mathfrak{L}^N(t_3), \infty, L^N_k\llbracket -t_3, t_3\rrbracket }_{avoid, Ber}$ to ease the notation; in addition, we have that $\mathfrak{L}^N(a) = (L_1^N(a), \dots, L_{k-1}^N(a))$ and $\mathfrak{L}$ on the last line is distributed according to $\mathbb{P}^{-t_3, t_3, \mathfrak{L}^N(-t_3), \mathfrak{L}^N(t_3), \infty, L^N_k\llbracket -t_3, t_3\rrbracket }_{avoid, Ber}$. We elaborate on (\ref{boundoneEN}) in the paragraph below.
	
	The first equality in (\ref{boundoneEN}) follows from the tower property for conditional expectations. The second equality uses the definition of $V$ and the fact that ${\bf 1}_{E_N} $ is $\sigma \big( \mathfrak{L}^N(-t_3),  \mathfrak{L}^N(t_3),L^N_k\llbracket -t_3, t_3\rrbracket   \big)$-measurable and can thus be taken outside of the conditional expectation. The third equality uses the Schur Gibbs property, see Definition \ref{DefSGP}. The first inequality on the third line holds if $N \geq N_4$ and uses Lemma \ref{LemmaAP1} with $\tilde{\epsilon} = \epsilon/2$  as well as the fact that on the event $E_N$ the random variables $\mathfrak{L}^N(-t_3), \mathfrak{L}^N(t_3)$ and $L^N_k \llbracket -t_3, t_3 \rrbracket$ (that play the roles of $\vec{x}, \vec{y}$ and $\ell_{bot}$) satisfy the inequalities 
	\begin{enumerate}
		\item $\sup_{s \in [- t_3,t_3]}\big[L^N_k(s)  - ps \big]  \leq M_2 (2t_3)^{1/2}$,
		\item  $-pt_3 + M_1 (2t_3)^{1/2} \geq  {L}^N_1(-t_3) \geq  {L}^N_{k-1}(-t_3) \geq \max\left(L^N_k(-t_3), -pt_3- M_1 (2t_3)^{1/2}\right),$
		\item $pt_3 + M_1 (2t_3)^{1/2} \geq {L}^N_1(t_3) \geq {L}^N_{k-1}(t_3) \geq  \max \left(L^N_k(t_3),  p t_3- M_1(2t_3)^{1/2} \right).$
	\end{enumerate}
	The last inequality in (\ref{boundoneEN}) is trivial.
	
	Combining (\ref{boundoneEN}) with (\ref{UBEC}), we see that for all $N \geq \max(N_2, N_4)$ we have
	$$\mathbb{P}\left( V  \right) = \mathbb{P}(V \cap E_N) + \mathbb{P}(V \cap E_N^c) \leq \epsilon/2 + \mathbb{P}(E_N^c) < \epsilon,$$
	which proves the proposition.
\end{proof}

%
\subsection{Proof of Theorem \ref{PropTightGood} (i) }\label{Section4.2} Since $\tilde{f}^N_n$ are obtained from $f^N_n$ by subtracting a deterministic continuous function (namely $\lambda s^2$) and rescaling by a constant (namely $\sqrt{p(1-p)}$) we see that $\tilde{\mathbb{P}}_N$ is tight if and only if $\mathbb{P}_N$ is tight and so it suffices to show that $\mathbb{P}_N$ is tight. By Lemma \ref{2Tight}, it suffices to verify the following two conditions for all $i \in \llbracket 1,k-1\rrbracket$, $r>0$, and $\epsilon>0$:
\begin{equation}\label{ThmCond1}
\lim_{a\to\infty} \limsup_{N\to\infty} \pr(|f^N_i(0)|\geq a) = 0 
\end{equation}
\begin{equation}\label{ThmCond2}
\lim_{\delta\to 0} \limsup_{N\to\infty} \pr\left(\sup_{{x,y\in [-r,r], |x-y|\leq\delta}} |f^N_i(x) - f^N_i(y)| \geq \epsilon \right)= 0.
\end{equation}
For the sake of clarity, we will prove these conditions in several steps.\\

\noindent\textbf{Step 1.} In this step we prove \eqref{ThmCond1}. Let $\epsilon > 0$ be given. Then by Lemmas \ref{PropSup} and \ref{PropSup2} we can find $N_2, N_3$ and $R_1, R_2$ such that for $N \geq \max(N_1, N_2)$
\begin{align*}
\pr\left(\sup_{s\in[-t_3,t_3]} [L_1^N(s)-ps]\geq R_1N^{\alpha/2}\right)<\epsilon/2,\\
\pr\left(\inf_{s\in[-t_3,t_3]}[L_{k-1}^N(s)-ps]\leq -R_2 N^{\alpha/2}\right)<\epsilon/2.
\end{align*}
In particular, if we set $R  = \max(R_1, R_2)$ and utilize the fact that $L_1^N(0) \geq \cdots \geq L_{k-1}^N(0)$ we conclude that for any $i \in \llbracket 1, k-1 \rrbracket$ we have
$$\pr\big(|L_i^N(0)|\geq R N^{\alpha/2}\big)\leq \pr\big(L_1^N(0)\geq R_1 N^{\alpha/2}\big) + \pr\big(L_{k-1}^N(0)\leq -R_2N^{\alpha/2}\big) < \epsilon,$$
which implies \eqref{ThmCond1}.\\

\noindent\textbf{Step 2.} In this step we prove (\ref{ThmCond2}). In the sequel we fix $r, \epsilon > 0$ and $i \in \llbracket 1, k-1 \rrbracket$. To prove (\ref{ThmCond2}) it suffices to show that for any $\eta>0$, there exists a $\delta > 0$ and $N_0$ such that $N \geq N_0$ implies 
\begin{equation}\label{S2R1}
\pr\left(\sup_{{x,y\in [-r,r], |x-y|\leq\delta}} |f^N_i(x) - f^N_i(y)| \geq \epsilon\right)<\eta.
\end{equation}

For $\delta > 0$ we define the event
\begin{equation}\label{Adelta}
A^N_\delta =\left\{\sup_{{x,y\in [-t_1,t_1], |x-y|\leq \delta N^\alpha}} \abs*{L_i^N(x)-L_i^N(y)-p(x-y)}\geq \frac{3N^{\alpha/2}\epsilon}4\right\},
\end{equation}
where we recall that $t_1 = \lfloor (r + 1)N^{\alpha} \rfloor$ from (\ref{eqsts}). We claim that there exist $\delta_0 > 0$ and $N_0 \in \mathbb{N}$ such that for $\delta \in (0, \delta_0]$ and $N \geq N_0$ we have
\begin{equation}\label{Abound}
\mathbb{P}(A^N_\delta) < \eta.
\end{equation}
We prove (\ref{Abound}) in the steps below. Here we assume its validity and conclude the proof of (\ref{S2R1}).\\

Observe that if $\delta \in \left (0, \min \left(\delta_0, \epsilon \cdot (8 \lambda r)^{-1} \right) \right)$, where $\lambda$ is as in the statement of the theorem, we have the following tower of inequalities
\begin{equation}\label{Term1}
\begin{split}
&\pr\left(\sup_{{x,y\in [-r,r], |x-y|\leq\delta}} |f^N_i(x) - f^N_i(y)| \geq \epsilon\right) = \\
&\pr\left(\sup_{{x,y\in [-r,r],|x-y|\leq\delta}} \abs*{N^{-\alpha/2}\left(L^N_i(xN^{\alpha}) - L^N_i(yN^\alpha)\right)-p(x-y)N^{\alpha/2}+\lambda(x^2-y^2)} \geq \epsilon\right) \leq \\
&\pr\left(\sup_{{x,y\in [-r,r], |x-y|\leq\delta}} N^{-\alpha/2}\abs*{L_i^N(xN^\alpha)-L_i^N(yN^\alpha)-p(x-y)N^\alpha}+2\lambda r\delta\geq \epsilon\right) \leq \\
& \pr\left(\sup_{{x,y\in [-r,r], |x-y|\leq\delta}} \abs*{L_i^N(xN^\alpha)-L_i^N(yN^\alpha)-p(x-y)N^\alpha}\geq \frac{3N^{\alpha/2}\epsilon}4\right)\leq \mathbb{P}(A^N_\delta) < \eta .
\end{split}
\end{equation}
In (\ref{Term1}) the first equality follows from the definition of $f_i^N$, and the inequality on the second line follows from the inequality $|x^2-y^2|\leq 2r\delta$, which holds for all $x,y \in [-r,r]$ such that $|x-y| \leq \delta$. The inequality in the third line of (\ref{Term1}) follows from our assumption that $\delta <   \epsilon \cdot (8 \lambda r)^{-1}$ and the first inequality on the last line follows from the definition of $A^N_\delta$ in (\ref{Adelta}), and the fact that $t_1 \geq r N^{\alpha}$. The last inequality follows from our assumption that $\delta < \delta_0$ and (\ref{Abound}). From (\ref{Term1}) we get (\ref{S2R1}).\\

{\bf \raggedleft Step 3.} In this step we prove (\ref{Abound}) and fix $\eta > 0$ in the sequel. For $\delta_1, M_1 > 0$ and $N \in \mathbb{N}$ we define the events
\begin{equation}\label{ESetsDef}
\begin{split}
E_1= \hspace{-0.5mm}\left\{\max_{1\leq j \leq k-1}\abs*{L_j^N(\pm t_1) \mp pt_1}\leq M_1N^{\alpha/2} \right\}, E_2=\hspace{-0.5mm}\left\{Z(-t_1, t_1, \vec x, \vec y, \infty, L_{k}^N\llbracket -t_1, t_1 \rrbracket)>\delta_1\right\},
\end{split}
\end{equation}
where we used the same notation as in Proposition \ref{PropMain} (in particular $\vec x = (L^N_1(-t_1), \dots, L_{k-1}^N(-t_1))$ and $\vec y = (L_1^N(t_1), \dots, L_{k-1}^N(t_1))$). Combining Lemmas \ref{PropSup}, \ref{PropSup2} and Proposition \ref{PropMain} we know that we can find $\delta_1 > 0$ sufficiently small, $M_1$ sufficiently large and $\tilde{N} \in \mathbb{N}$ such that for $N \geq \tilde{N}$ we know
\begin{equation}\label{BoundECup}
\mathbb{P} \left(E_1^c \cup E_2^c \right) < \eta/2.
\end{equation}
We claim that we can find $\delta_0 > 0$ and $N_0 \geq \tilde{N}$ such that for $N \geq N_0$ and $\delta \in (0, \delta_0)$ we have
\begin{equation}\label{Abound2}
\mathbb{P}(A^N_\delta \cap E_1 \cap E_2 ) < \eta/2.
\end{equation}
Since
$$\pr(A^N_\delta)= \pr(A^N_\delta\cap E_1\cap E_2)+\pr(A^N_\delta\cap\left(E_1^c\cup E_2^c\right))\leq \pr(A^N_\delta\cap E_1\cap E_2)+\mathbb{P} \left(E_1^c \cup E_2^c \right),$$
we see that (\ref{BoundECup}) and (\ref{Abound2}) together imply (\ref{Abound}).\\

{\bf \raggedleft Step 4.} In this step we prove (\ref{Abound2}). We define the $\sigma$-algebra 
$$\mathcal{F}=\sigma\left(L_{k}^N \llbracket -t_1, t_1 \rrbracket ,L_1^N(\pm t_1 ), L_2^N(\pm t_1 ),\dots, L_{k-1}^N(\pm t_1 )\right).$$
Clearly $E_1, E_2\in \mathcal{F}$, so the indicator random variables $\indic_{E_1}$ and $\indic_{E_2}$ are $\mathcal{F}$-measurable. It follows from the tower property of conditional expectation that
\begin{equation}\label{tower}
\pr\left(A^N_\delta\cap E_1\cap E_2\right)=\ex\left[\indic_{A^N_\delta} \indic_{E_1} \indic_{E_2}\right] =\ex\left[\indic_{E_1} \indic_{E_2}\ex\left[\indic_{A^N_\delta}\mid \mathcal{F}\right]\right]. 
\end{equation}
By the Schur-Gibbs property (see Definition \ref{DefSGP}), we know that $\mathbb{P}$-almost surely
\begin{equation}\label{tower2}
\ex\left[\indic_{A^N_\delta}\mid \mathcal{F}\right]=\ex_{avoid,Ber}^{-t_1,t_1,\vec x, \vec y, \infty, L_{k}^N \llbracket -t_1, t_1 \rrbracket}\left[\indic_{A^N_\delta}\right].
\end{equation}
We now observe that the Radon-Nikodym derivative of $\pr_{avoid,Ber}^{-t_1,t_1,\vec x, \vec y, \infty, L_{k}^N \llbracket -t_1, t_1 \rrbracket}$ with respect to $\pr_{Ber}^{-t_1, t_1,\vec x,\vec y}$ is given by 
\begin{equation}\label{RN}
\frac{d\pr_{avoid,Ber}^{-t_1,t_1,\vec x, \vec y, \infty, L_{k}^N\llbracket -t_1, t_1 \rrbracket } (Q_1, \dots, Q_{k-1})}{d\pr_{Ber}^{-t_1, t_1,\vec x,\vec y}} = \frac{\indic_{\left\{Q_1 \geq \cdots \geq Q_{k-1} \geq Q_k \right\}}}{Z(-t_1,t_1,\vec x, \vec y, \infty,  L_{k}^N\llbracket -t_1, t_1 \rrbracket )},
\end{equation}
where $\mathfrak{Q} = (Q_1, \dots, Q_{k-1})$ is $\pr_{Ber}^{-t_1, t_1,\vec x,\vec y}$-distributed and $Q_k = L_{k}^N\llbracket -t_1, t_1 \rrbracket$. To see this, note that by Definition \ref{DefAvoidingLawBer} we have for any set $A \subset  \prod_{i = 1}^{k-1}\Omega(-t_1, t_1, x_i, y_i )$ that 
\begin{equation*}
\begin{split}
&\pr_{avoid,Ber}^{-t_1,t_1,\vec x, \vec y, \infty, L_{k}^N\llbracket -t_1, t_1 \rrbracket}(A) = \frac{\pr_{Ber}^{-t_1,t_1,\vec x, \vec y}(A\cap\left\{Q_1 \geq \cdots \geq Q_{k-1} \geq Q_k\right\})}{\pr_{Ber}^{-t_1,t_1,\vec x, \vec y}(Q_1\geq\cdots\geq Q_{k-1} \geq Q_k)} = \\
& \frac{\ex_{Ber}^{-t_1,t_1,\vec x, \vec y}\left[\indic_A \cdot  \indic_{\left\{Q_1\geq\cdots\geq Q_{k-1} \geq Q_k \right\}}\right]}{Z(-t_1,t_1,\vec{x},\vec{y}, \infty, L^N_{k} \llbracket -t_1, t_1 \rrbracket)} = \int_A \frac{\indic_{\left\{Q_1\geq\cdots\geq Q_{k-1} \geq Q_k \right\}}}{Z(-t_1,t_1,\vec x, \vec y, \infty, L_{k}^N \llbracket -t_1, t_1 \rrbracket )}\,d\pr_{Ber}^{-t_1,t_1,\vec x, \vec y}.
\end{split}
\end{equation*}
It follows from \eqref{tower}, \eqref{RN}, and the definition of $E_2$ in \ref{ESetsDef} that
\begin{equation}\label{RT1}
\begin{split}
&\pr(A^N_\delta\cap E_1\cap E_2) =\ex\left[\indic_{E_1}\indic_{E_2} \ex_{Ber}^{-t_1, t_1,\vec x,\vec y}\left[\frac{\indic_{B^N_\delta}\cdot \indic_{\left\{Q_1\geq\cdots\geq Q_{k}\right\}}}{Z(-t_1,t_1,\vec x, \vec y, L^N_{k} \llbracket -t_1, t_1 \rrbracket)}\right]\right] \leq \\
&\leq \ex\left[\indic_{E_1}\indic_{E_2}\ex_{Ber}^{-t_1,t_1,\vec x,\vec y}\left[\frac{\indic_{B^N_\delta}}{\delta_1}\right]\right] \leq \frac{1}{\delta_1} \ex\left[ \indic_{E_1} \cdot\pr_{Ber}^{-t_1,t_1,\vec{x}, \vec{y} }(B^N_\delta) \right],
\end{split}
\end{equation}
where $\delta_1$ is as in \eqref{ESetsDef}, and
\begin{equation*}
B^N_\delta = \left\{\sup_{{x,y\in [-t_1,t_1], |x-y|\leq \delta N^\alpha}} \abs*{Q_i(x)-Q_i(y)-p(x-y)}\geq \frac{3N^{\alpha/2}\epsilon}4\right\}.
\end{equation*}
Notice that under $\pr_{Ber}^{-t_1,t_1,\vec{x}, \vec{y} }$ the law of $Q_i$ is precisely $\pr_{Ber}^{-t_1,t_1,x_i, y_i }$, and so we conclude that 
\begin{equation}\label{RT2}
\begin{split}
& \pr_{Ber}^{-t_1,t_1,\vec{x}, \vec{y} }(B^N_\delta) = \pr_{Ber}^{0,2t_1,0, y_i - x_i }\left( \sup_{{x,y\in [0,2t_1], |x-y|\leq \delta N^\alpha}} \abs*{\ell(x)-\ell(y)-p(x-y)}\geq \frac{3N^{\alpha/2}\epsilon}4  \right),
\end{split}
\end{equation}
where $\ell$ has law $\pr_{Ber}^{0,2t_1,0, y_i - x_i }$ (note that in (\ref{RT2}) we implicitly translated the path $\ell$ to the right by $t_1$ and up by $-x_i$, which does not affect the probability in question). Since on the event $E_1$ we know that $|y_i -x_i - 2 p t_1| \leq 2M_1 N^{\alpha}$  we conclude from Lemma \ref{MOCLemmaS4} that we can find $N_0$ and $\delta_0 > 0$ depending on $M_1, r, \alpha$ such that for $N \geq N_0$ and $\delta \in (0, \delta_0)$ we have
\begin{equation}\label{RT3}
\begin{split}
& \indic_{E_1} \cdot \pr_{Ber}^{0,2t_1,0, y_i - x_i }\left( \sup_{{x,y\in [0,2t_1], |x-y|\leq \delta N^\alpha}} \abs*{\ell(x)-\ell(y)-p(x-y)}\geq \frac{3N^{\alpha/2}\epsilon}4  \right) < \frac{\delta_1 \eta}{2}.
\end{split}
\end{equation}
Combining (\ref{RT1}), (\ref{RT2}) and (\ref{RT3}) we conclude (\ref{Abound2}), and hence statement (i) of the theorem.

%
\subsection{Proof of Theorem \ref{PropTightGood} (ii)}\label{Section4.3}

In this section we fix a subsequential limit $\mathcal{L}^\infty = (\tilde{f}_1^\infty,\dots,\tilde{f}_{k-1}^\infty)$ of the sequence $\tilde{\mathbb{P}}_N$ as in the statement of Theorem \ref{PropTightGood}, and we prove that $\mathcal{L}^\infty$ possesses the partial Brownian Gibbs property. Our approach is similar to that in \cite[Sections 5.1 and 5.2]{DimMat}. We first give a definition of measures on scaled free and avoiding Bernoulli random walks. These measures will appear when we apply the Schur Gibbs property to the scaled line ensembles $\{ \tilde{f}_i^N \}_{i = 1}^{k-1}$. 

\begin{definition}\label{scaledRW}
	Let $a,b\in N^{-\alpha}\mathbb{Z}$ with $a<b$ and $x,y\in N^{-\alpha/2}\mathbb{Z}$ satisfy $0\leq y-x \leq (b-a)N^{\alpha/2}$. Let $\ell^{(T,z)}$ denote a random variable with law $\mathbb{P}^{0,T,0,z}_{Ber}$ as before Definition \ref{DefAvoidingLawBer}. We define $\mathbb{P}^{a,b,x,y}_{free,N}$ to be the law of the $C([a,b])$-valued random variable $Y$ given by
	\[
	Y(t) = \frac{x + N^{-\alpha/2}\left[\ell^{((b-a)N^\alpha,\,(y-x)N^{\alpha/2}))}((t-a)N^\alpha) - ptN^\alpha\right]}{\sqrt{p(1-p)}}, \quad t\in [a,b].
	\]
	Now for $i\in\llbracket 1,k\rrbracket$, let $\ell^{(N,z),i}$ denote i.i.d. random variables with laws $\mathbb{P}^{0,N,0,z}_{Ber}$. Let $\vec{x},\vec{y}\in(N^{-\alpha/2}\mathbb{Z})^k$ satisfy $0\leq y_i-x_i \leq (b-a)N^{\alpha/2}$ for $i\in\llbracket 1,k\rrbracket$. We define the $\llbracket 1,k\rrbracket$-indexed line ensemble $\mathcal{Y}^N$ by
	\[
	\mathcal{Y}^N_i(t) = \frac{x_i + N^{-\alpha/2}\left[\ell^{((b-a)N^\alpha,\,(y_i-x_i)N^{\alpha/2})),i}((t-a)N^\alpha) - ptN^\alpha\right]}{\sqrt{p(1-p)}}, \quad i\in\llbracket 1,k\rrbracket, t\in [a,b].
	\]
	We let $\mathbb{P}^{a,b,\vec{x},\vec{y}}_{free,N}$ denote the law of $\mathcal{Y}^N$. Suppose $\vec{x},\vec{y}\in (N^{-\alpha/2}\mathbb{Z})^k\cap W_k$, where 
$$W_k =  \{ \vec{x} = (x_1, \dots, x_k) \in \mathbb{R}^k: x_1 \geq x_2 \geq \cdots \geq x_k \}.$$ 
Suppose further that $f : [a,b] \to (-\infty,\infty]$, $g:[a,b]\to [-\infty,\infty)$ are continuous functions. We define the probability measure $\mathbb{P}^{a,b,\vec{x},\vec{y},f,g}_{avoid,N}$ to be $\mathbb{P}^{a,b,\vec{x},\vec{y}}_{free,N}$ conditioned on the event
	\[
	E = \{f(r) \geq \mathcal{Y}^N_1(r) \geq \cdots \geq \mathcal{Y}^N_k(r) \geq g(r) \mbox{ for } r\in[a,b]\}.
	\]
	This measure is well-defined if $E$ is nonempty.
	
\end{definition}

Next, we state two lemmas whose proofs we give in Section \ref{BGPapp}. The first lemma proves weak convergence of the scaled avoiding random walk measures in Definition \ref{scaledRW}. It states roughly that if the boundary data of these measures converge, then the measures converge weakly to the law of avoiding Brownian bridges with the boundary limiting data, as in Definition \ref{DefAvoidingLaw}.

\begin{lemma}\label{scaledavoidBB}
	Fix $k\in\mathbb{N}$ and $a,b\in\mathbb{R}$ with $a<b$, and let $f:[a-1,b+1]\to(-\infty,\infty]$, $g:[a-1,b+1]\to[-\infty,\infty)$ be continuous functions such that $f(t) > g(t)$ for all $t\in[a-1,b+1]$. Let $\vec{x},\vec{y}\in W_k^\circ$ be such that $f(a) > x_1$, $f(b) > y_1$, $g(a) < x_k$, and $g(b) < y_k$. Let $a_N = \lfloor aN^\alpha\rfloor N^{-\alpha}$ and $b_N = \lceil bN^\alpha\rceil N^{-\alpha}$, and let $f_N : [a-1,b+1]\to(-\infty,\infty]$ and $g_N : [a-1,b+1]\to[-\infty,\infty)$ be continuous functions such that $f_N\to f$ and $g_N\to g$ uniformly on $[a-1,b+1]$. If $f \equiv \infty$ the last statement means that $f_N \equiv \infty$ for all large enough $N$ and if $g \equiv - \infty$ the latter means that $g_N \equiv -\infty$ for all large enough $N$.

Lastly, let $\vec{x}\,^N, \vec{y}\,^N \in (N^{-\alpha/2}\mathbb{Z})^k \cap W_k$, write $\tilde{x}^N_i = (x_i^N - pa_N N^{\alpha/2})/\sqrt{p(1-p)}$, $\tilde{y}^N_i = (y_i^N - pb_N N^{\alpha/2})/\sqrt{p(1-p)}$, and suppose that $\tilde{x}^N_i \to x_i$ and $\tilde{y}^N_i \to y_i$ as $N\to\infty$ for each $i\in\llbracket 1,k\rrbracket$. Then there exists $N_0 \in \mathbb{N}$ so that $\mathbb{P}^{a_N,b_N,\vec{x}\,^N,\vec{y}\,^N,f_N,g_N}_{avoid,N}$ is well-defined for $N\geq N_0$. Moreover, if $\mathcal{Y}^N$ have laws $\mathbb{P}^{a_N,b_N,\vec{x}\,^N,\vec{y}\,^N,f_N,g_N}_{avoid,N}$ and $\mathcal{Z}^N = \mathcal{Y}^N|_{\llbracket 1, k\rrbracket \times[a,b]}$, i.e. $\mathcal{Z}^N$ is a sequence of random variables on $C(\llbracket 1, k \rrbracket \times [a,b])$ obtained by projecting $\mathcal{Y}^N$ to $\llbracket 1, k\rrbracket \times[a,b]$, then the law of $\mathcal{Z}^N$ converges weakly to $\mathbb{P}^{a,b,\vec{x},\vec{y},f,g}_{avoid}$ as $N\to\infty$.
\end{lemma}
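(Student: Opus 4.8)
The overall strategy is to reduce the statement about scaled avoiding Bernoulli bridges to the corresponding statement about scaled \emph{free} Bernoulli bridges, and then invoke the strong coupling with Brownian bridges from Theorem \ref{KMT} for the free case. First I would address well-posedness: for $N$ large the endpoints $\tilde x^N_i, \tilde y^N_i$ are within $o(1)$ of the strictly ordered vectors $\vec x, \vec y \in W_k^\circ$, and $f_N(a_N), g_N(a_N)$ etc. converge to $f(a) > x_1$, $g(a) < x_k$, so the analogue of the positivity argument in Definition \ref{DefAvoidingLaw} (combined with the non-emptiness criterion Lemma \ref{LemmaWD} at the discrete level) shows $\mathbb{P}^{a_N,b_N,\vec x\,^N,\vec y\,^N,f_N,g_N}_{avoid,N}$ is well-defined for $N \geq N_0$. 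The key quantitative input here is Lemma \ref{CurveSeparation} (or equivalently Lemma \ref{prob19}/Lemma \ref{prob 20}): with strictly ordered limiting endpoints, the probability that $k$ independent scaled Bernoulli bridges on $[a_N,b_N]$ stay ordered and between $f_N, g_N$ is bounded \emph{below} by a positive constant uniformly in $N$.

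Next I would establish weak convergence of the \emph{free} measures. Define $\mathcal{Y}^{N,free}$ with law $\mathbb{P}^{a_N,b_N,\vec x\,^N,\vec y\,^N}_{free,N}$ and let $\mathcal{Z}^{N,free} = \mathcal{Y}^{N,free}|_{\llbracket 1,k\rrbracket\times[a,b]}$. Since the $k$ coordinates are independent, it suffices to treat a single scaled bridge. Using Theorem \ref{KMT} on the interval $[0,(b_N-a_N)N^\alpha]$ with endpoint displacement $(y_i^N - x_i^N)N^{\alpha/2}$ — whose deviation from $p(b_N-a_N)N^\alpha$ is $O(N^{\alpha/2})$ by the convergence of $\tilde y^N_i - \tilde x^N_i$ — Corollary \ref{Cheb} with $\beta < 1/2$ shows that the rescaled sup-distance $N^{-\alpha/2}\Delta((b_N-a_N)N^\alpha, \cdot) \to 0$ in probability. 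Hence $\mathcal{Z}^{N,free}$ is within $o(1)$ (in the uniform-on-$[a,b]$ metric) of a sequence of affinely-shifted Brownian bridges $B^{\sigma,i}$ with diffusion parameter $\sigma = \sqrt{p(1-p)}$, rescaled by the Brownian scaling $B \mapsto (b-a)^{1/2} B(\cdot/(b-a))$, whose endpoints converge to $x_i, y_i$; by continuity of the Brownian bridge construction \eqref{BBDef} in its endpoints, these converge in law to $\mathbb{P}^{a,b,\vec x,\vec y}_{free}$. Slutsky's theorem then gives $\mathcal{Z}^{N,free} \Rightarrow \mathbb{P}^{a,b,\vec x,\vec y}_{free}$.

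To pass from free to avoiding, let $E_N = \{f_N(r) \geq \mathcal{Y}^N_1(r) \geq \cdots \geq \mathcal{Y}^N_k(r) \geq g_N(r), r \in [a_N,b_N]\}$ and let $E = \{f(r) \geq \mathcal{Q}_1(r) \geq \cdots \geq \mathcal{Q}_k(r) \geq g(r), r \in [a,b]\}$ be the corresponding event for the free Brownian ensemble. The avoiding measure restricted to $[a,b]$ is the free measure restricted to $[a,b]$ conditioned on $E_N$; I want to show the conditioning events converge nicely. Since $\mathcal{Z}^{N,free} \Rightarrow \mathbb{P}^{a,b,\vec x,\vec y}_{free}$ and $E$ is (essentially) an open set whose boundary has measure zero under $\mathbb{P}^{a,b,\vec x,\vec y}_{free}$ — here one uses that a free Brownian bridge touches a fixed continuous curve with positive clearance at isolated points only, a consequence of Lemma \ref{NoTouch} applied coordinatewise and between consecutive indices — the portmanteau theorem gives $\mathbb{P}^{a,b,\vec x,\vec y}_{free}(\mathcal{Z}^{N,free} \in E) \to \mathbb{P}^{a,b,\vec x,\vec y}_{free}(E) > 0$. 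A little more care is needed because the actual conditioning event $E_N$ involves $f_N, g_N$ on the \emph{larger} interval $[a_N,b_N]$ rather than the limiting $f,g$ on $[a,b]$; to handle this I would sandwich $E_N$ between slightly shrunk and enlarged versions of $E$ using uniform convergence $f_N \to f$, $g_N \to g$, together with a priori tightness of $\mathcal{Y}^{N,free}$ near the endpoints of $[a_N,b_N]$ (Lemma \ref{MOCLemmaS4} controls the modulus of continuity, so the curves cannot move by more than $o(N^{\alpha/2})\cdot N^{-\alpha/2}\to 0$ over the $O(N^{-\alpha})$-length fringe $[a_N,a]\cup[b,b_N]$). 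Combining, for any bounded continuous $F$ on $C(\llbracket 1,k\rrbracket\times[a,b])$,
\[
\mathbb{E}\big[F(\mathcal{Z}^N)\big] = \frac{\mathbb{E}\big[F(\mathcal{Z}^{N,free})\indic_{E_N}\big]}{\mathbb{P}(E_N)} \longrightarrow \frac{\mathbb{E}_{free}^{a,b,\vec x,\vec y}\big[F\indic_E\big]}{\mathbb{P}_{free}^{a,b,\vec x,\vec y}(E)} = \mathbb{E}_{avoid}^{a,b,\vec x,\vec y,f,g}[F],
\]
which is the claim. The main obstacle is the last step: carefully controlling the discrepancy between conditioning on $E_N$ (with the moving boundary curves $f_N,g_N$ on the enlarged, $N$-dependent interval $[a_N,b_N]$) and on $E$, and verifying that $\partial E$ is $\mathbb{P}^{a,b,\vec x,\vec y}_{free}$-null so that portmanteau applies — this is where Lemmas \ref{NoTouch} and \ref{MOCLemmaS4} do the real work, and where the strict ordering of $\vec x, \vec y$ (ensuring positive clearance at the endpoints) is essential.
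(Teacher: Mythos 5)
Your overall route is essentially the paper's: establish weak convergence of the rescaled \emph{free} Bernoulli bridge ensemble to $\mathbb{P}^{a,b,\vec{x},\vec{y}}_{free}$ via the KMT coupling of Theorem \ref{KMT} (this is exactly Lemma \ref{scaledRWbb}), write the avoiding law as the free law conditioned on the avoidance event, and use Lemma \ref{NoTouch} to kill the boundary of that event. The paper implements the last step with the Skorohod representation theorem plus bounded convergence (the indicator of the avoidance event converges a.s.\ because touching-without-crossing is null), while you use the portmanteau theorem plus a boundary-null argument; these are equivalent. For the fringe $[a_N,a]\cup[b,b_N]$ you propose a modulus-of-continuity sandwich via Lemma \ref{MOCLemmaS4}, whereas the paper avoids this entirely by noting that, since $a_N,b_N$ are within one lattice spacing of $a,b$ and the paths are piecewise linear on the lattice, the restriction to $[a,b]$ already determines the path on all of $[a_N,b_N]$ — a simpler observation, though your sandwich would also work.

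One citation in your well-posedness paragraph does not apply as stated: Lemma \ref{CurveSeparation} has no upper boundary $f$ at all and requires the bottom boundary to lie a distance $C\sqrt{T}$ below the chord joining $x_{k-1}$ and $y_{k-1}$, which a general continuous $g$ (allowed to bulge above that chord in the interior) need not satisfy; likewise Lemma \ref{LemmaWD} requires $f,g$ to be up-right paths, not arbitrary continuous functions. The correct mechanism — and the one the paper uses — is the one you gesture at with ``the analogue of the positivity argument in Definition \ref{DefAvoidingLaw}'': build $\epsilon$-windows $h_i$ strictly separating $f,g$ and each other, get $\mathbb{P}^{a,b,\vec{x},\vec{y}}_{free}(\text{windows})>0$ from Lemma \ref{Spread}, and transfer this to the discrete free measure for large $N$ via the weak convergence of Lemma \ref{scaledRWbb}; note also that your own portmanteau step, which yields $\mathbb{P}(E_N)\to\mathbb{P}^{a,b,\vec{x},\vec{y}}_{free}(E)>0$, already delivers well-posedness for large $N$, so the appeal to Lemmas \ref{CurveSeparation} and \ref{LemmaWD} should simply be dropped.
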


The next lemma shows that at any given point, the values of the $k-1$ curves in $\mathcal{L}^\infty$ are each distinct, so that Lemma \ref{scaledavoidBB} may be applied.

\begin{lemma}\label{inftydistinct} Let $\mathcal{L}^\infty$ be as in the beginning of this section. Then for any $s \in \mathbb{R}$ we have $\mathcal{L}^\infty(s) = (\tilde{f}_1^\infty(s),\dots,\tilde{f}_{k-1}^\infty(s)) \in W^\circ_{k-1}$, $\mathbb{P}$-a.s.
\end{lemma}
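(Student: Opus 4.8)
The plan is to argue by contradiction, exploiting the partial Brownian Gibbs property of $\mathcal{L}^\infty$ (established in Theorem \ref{PropTightGood}(ii), which we may invoke for indices below $k-1$) together with Lemma \ref{prob 20}, which gives uniform separation of avoiding Bernoulli bridges at an interior time. Fix $s\in\mathbb{R}$ and suppose that $\mathbb{P}(\tilde f^\infty_i(s) = \tilde f^\infty_{i+1}(s)) > 0$ for some $i\in\llbracket 1,k-2\rrbracket$. The strategy is to first transfer this coincidence to the finite-$N$ level: along the subsequence $n_m$ realizing $\mathcal{L}^\infty$ as a weak limit, the scaled top $k-1$ curves $\tilde f^{n_m}_j$ converge weakly (in $C(\llbracket 1,k-1\rrbracket\times\mathbb{R})$) to $\mathcal{L}^\infty$, so by the portmanteau theorem applied to the closed set $\{|\tilde f_i(s)-\tilde f_{i+1}(s)|\le\delta\}$ we would get, for every $\delta>0$,
\[
\limsup_{m\to\infty}\mathbb{P}\big(|\tilde f^{n_m}_i(s)-\tilde f^{n_m}_{i+1}(s)|\le\delta\big)\ \ge\ \mathbb{P}\big(\tilde f^\infty_i(s)=\tilde f^\infty_{i+1}(s)\big)\ =:\ \epsilon_0\ >\ 0.
\]
So it suffices to produce a $\delta>0$ for which this $\limsup$ is $<\epsilon_0$, which gives the contradiction.

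To get such a $\delta$, I would condition on a large good block. Pick $r>0$ with $s\in(-r,r)$ and recall the times $t_3 = \lfloor (r+3)N^\alpha\rfloor$ from \eqref{eqsts}. Restrict the ensemble to the top $k-1$ curves on $\llbracket -t_3,t_3\rrbracket$ and condition on the exterior $\sigma$-algebra $\mathcal{F}=\sigma(\mathfrak{L}^N(-t_3),\mathfrak{L}^N(t_3))$ (with no bottom boundary, using $L^N_{k-1}$'s Gibbs property only through $f=\infty$, $g=-\infty$ after dropping $L^N_k$; more carefully, keep $L^N_k$ as the bottom curve and note that adding a bottom boundary only increases separation by the monotone coupling Lemma \ref{MCLfg}). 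Conditionally on $\mathcal{F}$, by the Schur Gibbs property the curves $(L^N_1,\dots,L^N_{k-1})$ on $\llbracket -t_3,t_3\rrbracket$ have law $\mathbb{P}^{-t_3,t_3,\vec x,\vec y,\infty,L^N_k}_{avoid,Ber}$. On the good event $E_N$ from the proof of Proposition \ref{PropMain} (boundary data within $M_1(2t_3)^{1/2}$ of the line of slope $p$, and $L^N_k$ below the line of slope $p$ by at most $M_2(2t_3)^{1/2}$), which has probability $\ge 1-\epsilon_0/4$ for $M_1,M_2$ large and $N$ large by Lemmas \ref{PropSup} and \ref{PropSup2}, the hypotheses of Lemma \ref{prob 20} are met (after the affine recentering $s\mapsto (s+t_3)$, $z\mapsto z - (\cdot)$ that turns the interval into $\llbracket 0,2t_3\rrbracket$ with left endpoint $0$, and using the monotone coupling to drop the bottom curve $L^N_k$, which can only decrease the minimal gap). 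Lemma \ref{prob 20}, applied with the interior fraction $t\in(0,1)$ chosen so that $tN^\alpha\cdot(\text{length}) $ lands at the time $sN^\alpha$, yields $\delta>0$ and $W_5$ so that for $N\ge W_5$,
\[
\mathbb{P}^{-t_3,t_3,\vec x,\vec y,\infty,-\infty}_{avoid,Ber}\Big(\min_{1\le j\le k-2}\big[Q_j(sN^\alpha)-Q_{j+1}(sN^\alpha)\big] < \delta\sqrt{2t_3}\Big) < \epsilon_0/4.
\]
Translating back through the scaling $\tilde f^N_j(s) = (L^N_j(sN^\alpha) - psN^\alpha + \lambda s^2 N^{\alpha/2})/(\sqrt{p(1-p)}N^{\alpha/2}) - \lambda s^2/\sqrt{p(1-p)}$ — note the common parabolic and slope terms cancel in a difference $\tilde f^N_i - \tilde f^N_{i+1}$ — and using $\sqrt{2t_3}\asymp \sqrt{2(r+3)}\,N^{\alpha/2}$, this says that for a suitable $\delta'>0$ (a fixed multiple of $\delta$) and all large $N$, the conditional-on-$E_N$ probability that $|\tilde f^N_i(s)-\tilde f^N_{i+1}(s)|\le\delta'$ is $<\epsilon_0/4$. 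Integrating over $\mathcal{F}$ and adding $\mathbb{P}(E_N^c)<\epsilon_0/4$ gives $\mathbb{P}(|\tilde f^N_i(s)-\tilde f^N_{i+1}(s)|\le\delta') < \epsilon_0/2$ for all large $N$ along our subsequence, contradicting the portmanteau lower bound above. Finally, a union bound over $i\in\llbracket 1,k-2\rrbracket$ upgrades pairwise distinctness to $\mathcal{L}^\infty(s)\in W^\circ_{k-1}$ a.s.

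The main obstacle I expect is the bookkeeping around the bottom curve and the boundary data: Lemma \ref{prob 20} is stated for \emph{free} avoiding ensembles with no bottom bound ($g=-\infty$) and with boundary data in compact windows on scale $\sqrt{T}$, whereas the conditional law of $(L^N_1,\dots,L^N_{k-1})$ given $\mathcal{F}$ carries the random bottom curve $L^N_k$ and the boundary vectors $\vec x,\vec y$ only after intersecting with $E_N$. Handling this cleanly requires invoking the monotone coupling Lemma \ref{MCLfg} to compare the $L^N_k$-bounded ensemble with the unbounded one (the minimal adjacent gap is stochastically larger \emph{with} a bottom boundary, since raising $g$ pushes all curves up but by Lemma \ref{MCLfg} the coupling is monotone — one must check this implies gaps do not shrink, which follows because the bottom curve $Q_{k-1}$ is pushed up while the interior spacing structure is governed by the same avoiding law; alternatively, one simply notes $\Omega_{avoid}$ with bottom bound is a subset and re-derives a density statement), and then carefully tracking the affine shift that normalizes the interval to $\llbracket 0,T\rrbracket$ with $T = 2t_3$ and the target time to $tT = sN^\alpha + t_3$. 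A secondary, purely technical point is ensuring $sN^\alpha$ (or rather the recentered time) is of the form $tT$ with $t\in(0,1)$ bounded away from $0$ and $1$ uniformly in $N$; since $|s|<r$ and $T\sim 2(r+3)N^\alpha$, we have $t = (s+(r+3))/(2(r+3)) + o(1) \in (1/(2(r+3)), 1 - 1/(2(r+3)))$, which is fine, and the $o(1)$ from the floors in $t_3$ is harmless because Lemma \ref{prob 20}'s conclusion is uniform over sequences of boundary data.
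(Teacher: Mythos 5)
Your overall architecture (portmanteau plus a finite-$N$ gap-separation estimate obtained through the Gibbs property) is reasonable, but the step on which everything hinges does not hold as stated. Because the Schur Gibbs property (Definition \ref{DefSGP}) only resamples indices in $\llbracket 1,k-1\rrbracket$, the conditional law of the top $k-1$ curves given the boundary data is necessarily $\mathbb{P}^{\,\cdot,\cdot,\vec x,\vec y,\infty,L^N_k}_{avoid,Ber}$, with the random curve $L^N_k$ as a floor; you cannot simply drop it. Your proposed remedy --- that by Lemma \ref{MCLfg} ``adding a bottom boundary only increases separation'' --- is not a consequence of the monotone coupling: Lemmas \ref{MCLxy} and \ref{MCLfg} produce a coupling in which each curve of one ensemble lies above the corresponding curve of the other, but they give no control on the differences $Q_i-Q_{i+1}$, and raising the floor can perfectly well compress adjacent gaps. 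Since Lemma \ref{prob 20} (and the underlying Proposition \ref{prob17}) is proved only for the free avoiding law with $g=-\infty$, your quantitative input does not apply to the conditional law you actually obtain. The missing ingredient is an acceptance-probability bound: writing the bottom-bounded law as the free avoiding law conditioned on $\{Q_{k-1}\geq L^N_k\}$ gives $\mathbb{P}^{\,\cdot,\cdot,\vec x,\vec y,\infty,L^N_k}_{avoid,Ber}(A)\leq \mathbb{P}^{\,\cdot,\cdot,\vec x,\vec y}_{avoid,Ber}(A)/Z$, and Proposition \ref{PropMain} supplies $Z\geq\delta_1$ with high probability on $\llbracket -t_1,t_1\rrbracket$; only by paying this factor (and working on $\llbracket -t_1,t_1\rrbracket$, which suffices since $|s|<r$) does your route close. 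As written you never invoke Proposition \ref{PropMain}, so there is a genuine gap. Two smaller points: the portmanteau inequality you need is the open-set one, $\liminf_m \mathbb{P}\bigl(|\tilde f^{n_m}_i(s)-\tilde f^{n_m}_{i+1}(s)|<\delta\bigr)\geq\epsilon_0$, not a $\limsup$ bound for a closed set; and Lemma \ref{prob 20} is stated for a fixed interior fraction $t$, whereas in your application the fraction $(sN^\alpha+t_3)/(2t_3)$ varies with $N$, so the lemma must either be rerun for $t_N\to t$ or supplemented by a modulus-of-continuity step.

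For comparison, the paper's proof avoids Lemma \ref{prob 20} entirely. After a Skorohod coupling, it notes that on the event $\tilde f^\infty_i(s)=\tilde f^\infty_{i+1}(s)$ two \emph{independent} Bernoulli bridges on $[sN^\alpha,(s+2)N^\alpha]$ with the corresponding (asymptotically coinciding) left endpoints have a difference which, by Lemmas \ref{scaledRWbb} and \ref{BBDif}, converges to a Brownian bridge started at $0$; by Lemma \ref{BBcross0} such a bridge almost surely dips below $0$, so the probability that the two free bridges do not cross tends to $0$. Since the acceptance probability is bounded above by this non-crossing probability, the touching event forces $Z<\delta$, which Proposition \ref{PropMain} rules out up to probability $\epsilon$. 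So the key quantitative input in the paper is again Proposition \ref{PropMain}, combined only with single-bridge strong-coupling asymptotics rather than the fixed-time density machinery of Section \ref{Section9}; if you repair your argument as indicated above, you obtain a valid but heavier alternative that additionally relies on that machinery.
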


Using these two lemmas whose proofs are postponed, we now give the proof of Theorem \ref{PropTightGood} (ii).

\begin{proof} (of Theorem \ref{PropTightGood} (ii)) For clarity we split the proof into three steps. \\

{\bf \raggedleft Step 1.} We write $\Sigma = \llbracket 1, k-1 \rrbracket$ and $\tilde{\mathcal{L}}^N = (\tilde{f}^N_1,\dots,\tilde{f}^N_{k-1}) $, where we recall that 
$$\tilde{f}^N_i(s) = N^{-\alpha/2}(L^N_i(sN^\alpha)-psN^\alpha)/\sqrt{p(1-p)}.$$
Since $\mathcal{L}^\infty$ is a weak subsequential limit of $\tilde{\mathcal{L}}^N$ we know there is a subsequence $\{N_m\}_{m = 1}^\infty$ such that $\tilde{\mathcal{L}}^{N_m} \implies \mathcal{L}^\infty$. We will still call the subsequence $\tilde{\mathcal{L}}^N$ to not overburden the notation. By the Skorohod representation theorem \cite[Theorem 6.7]{Billing}, we can also assume that $\tilde{\mathcal{L}}^N$ and $\mathcal{L}^\infty$ are all defined on the same probability space with measure $\mathbb{P}$ and the convergence is happening $\mathbb{P}$-almost surely. Here we are implicitly using Lemma \ref{Polish}  from which we know that the random variables $\tilde{\mathcal{L}}^N$ and $\mathcal{L}^\infty$ take value in a Polish space so that the Skorohod representation theorem is applicable. 

Fix a set $K = \llbracket k_1,k_2\rrbracket \subseteq \llbracket 1, k-2\rrbracket$ and $a,b\in\mathbb{R}$ with $a<b$. We claim that for any bounded Borel-measurable function $F:C(K\times[a,b])\to\mathbb{R}$ we have
\begin{equation}\label{BGPcondex}
\ex[F(\mathcal{L}^\infty|_{K\times[a,b]})\,|\,\mathcal{F}_{ext}(K\times(a,b))] = \ex^{a,b,\vec{x},\vec{y},f,g}_{avoid}[F(\mathcal{Q})],
\end{equation}
where $\vec{x} = (\tilde{f}^\infty_{k_1}(a),\dots,\tilde{f}^\infty_{k_2}(a))$, $\vec{y} = (\tilde{f}^\infty_{k_1}(b),\dots,\tilde{f}^\infty_{k_2}(b))$, $f=\tilde{f}^\infty_{k_1-1}$ (with $\tilde{f}^\infty_0 = +\infty$), $g=\tilde{f}^\infty_{k_2+1}$, the $\sigma$-algebra $\mathcal{F}_{ext}(K\times(a,b))$ is as in Definition \ref{DefBGP}, and $\mathcal{Q}$ has law $\mathbb{P}^{a,b,\vec{x},\vec{y},f,g}_{avoid}$. We mention that by Lemma \ref{inftydistinct} we have $\mathbb{P}$-a.s. that $\tilde{f}_i^{\infty}(x) > \tilde{f}_{i+1}(x)$ for all $i \in \llbracket 1, k - 2 \rrbracket$ so that the right side of (\ref{BGPcondex}) is well-defined. We will prove (\ref{BGPcondex}) in the steps below. Here we assume its validity and conclude the proof of the theorem.\\

We first observe from (\ref{BGPcondex}) that for any bounded Borel-measurable $F_1: C( \llbracket 1, k-2 \rrbracket \times[a,b])\to\mathbb{R}$ and $F_2: C([a,b]) \rightarrow \mathbb{R}$ we have $\mathbb{P}$-almost surely
\begin{equation}\label{BGPfarg}
F_2(\tilde{f}^{\infty}_{k-1}) \cdot \ex[F_1(\mathcal{L}^\infty|_{ \llbracket 1, k-2 \rrbracket \times[a,b]})\,|\,\mathcal{F}_{ext}(\llbracket 1, k-2 \rrbracket  \times(a,b))] = F_2(\tilde{f}^{\infty}_{k-1}) \cdot  \ex^{a,b,\vec{x},\vec{y},f,g}_{avoid}[F_1(\mathcal{Q})].
\end{equation}
Let $\mathcal{H}$ be the class of bounded Borel-measurable functions $H:C( \llbracket 1, k-1 \rrbracket \times[a,b])\to\mathbb{R}$ that satisfy
\begin{equation}\label{BGPfarg2}
 \ex[H(\mathcal{L}^\infty|_{\llbracket 1, k-1 \rrbracket  \times[a,b]})\,|\,\mathcal{F}_{ext}( \llbracket 1, k-2 \rrbracket \times(a,b))] =    \ex^{a,b,\vec{x},\vec{y},f,g}_{avoid}[H(\mathcal{Q},g)],
\end{equation}
where on the right side $(\mathcal{Q},g)$ is the line ensemble with $k-1$ curves, whose top $k-2$ curves agree with $\mathcal{Q}$ and the $k-1$-st one agrees with $g$. From (\ref{BGPfarg}) $\mathcal{H}$ contains functions of the form 
$$H(f_1,\dots, f_{k-1}) = \prod_{i = 1}^{k-1}{\bf 1}\{f_i \in B_i\}$$
for any Borel sets $B_1, \dots, B_{k-1} \subset C([a,b])$. In addition, it is clear from (\ref{BGPfarg2}) that $\mathcal{H}$ is closed under linear combinations (by linearity of conditional expectations) and under bounded monotone limits (by the monotone convergence theorem for conditional expectations). Thus by the monotone class theorem \cite[Theorem 5.2.2]{Durrett}, $\mathcal{H}$ contains all bounded Borel measurable functions $H:C( \llbracket 1, k-1 \rrbracket \times[a,b])\to\mathbb{R}$. 

In particular, setting $H(f_1, \dots, f_{k-1}) = {\bf 1}\{ f_1(s) > \cdots > f_{k-1}(s) \mbox{ for all } s \in [a,b]\}$ in (\ref{BGPfarg2}) we conclude that $\mathcal{L}^\infty|_{\llbracket 1, k-1 \rrbracket  \times[a,b]}$ is non-intersecting $\mathbb{P}$-a.s. for any $a < b$. Taking $a = -m$, $b = m$ and a countable intersection over $m \in \mathbb{Z}$ we conclude that $\mathcal{L}^{\infty}$ is non-intersecting $\mathbb{P}$-a.s. The latter and (\ref{BGPcondex}) together imply that $\mathcal{L}^{\infty}$ satisfies the partial Brownian Gibbs property of Definition \ref{DefPBGP}.\\

\noindent\textbf{Step 2.} In this and the next step we prove (\ref{BGPcondex}). Fix $m\in\mathbb{N}$, $n_1,\dots,n_m\in\Sigma$, $t_1,\dots,t_m\in\mathbb{R}$, and $h_1,\dots,h_m : \mathbb{R}\to\mathbb{R}$ bounded continuous functions. Define $S = \{i\in\llbracket 1,m\rrbracket : n_i \in K, t_i \in [a,b]\}$. In this step we prove that
	\begin{equation}\label{BBcondexsplit}
		\ex\left[\prod_{i=1}^m h_i(\tilde{f}^\infty_{n_i}(t_i))\right] = \ex\left[\prod_{s\notin S} h_s(\tilde{f}^\infty_{n_s}(t_s))\cdot\ex^{a,b,\vec{x},\vec{y},f,g}_{avoid}\left[\prod_{s\in S} h_s(Q_{n_s}(t_s))\right]\right],
	\end{equation}
	where $Q$ denotes a random variable with law $\pr^{a,b,\vec{x},\vec{y},f,g}_{avoid}$. By assumption, we have
	\begin{equation}\label{BGPweak}
		\lim_{N\to\infty}\ex\left[\prod_{i=1}^m h_i(\tilde{f}^N_{n_i}(t_i))\right] = \ex\left[\prod_{i=1}^m h_i(\tilde{f}^\infty_{n_i}(t_i))\right].
	\end{equation}
	We define the sequences $a_N = \lfloor aN^\alpha\rfloor N^{-\alpha}$, $b_N = \lceil bN^\alpha\rceil N^{-\alpha}$, $\vec{x}\,^N = (L_{k_1}^N(a_N),\dots,L_{k_2}^N(a_N))$, $\vec{y}\,^N = (L_{k_1}^N(b_N),\dots,L_{k_2}^N(b_N))$, $f_N = \tilde{f}_{k_1-1}^N$ (where $\tilde{f}^N_0 = +\infty$), $g_N = \tilde{f}_{k_2+1}^N$. Since $a_N \to a$, $b_N\to b$, we may choose $N_0$ sufficiently large so that if $N\geq N_0$, then $t_s < a_N$ or $t_s > b_N$ for all $s\notin S$ with $n_s \in K$. Since the line ensemble $(L_1^N,\dots,L_{k-1}^N)$ in the definition of $\tilde{\mathcal{L}}^N$ satisfies the Schur Gibbs property (see Definition \ref{DefSGP}), we see from Definition \ref{scaledRW} that the law of $\tilde{\mathcal{L}}^N|_{K\times[a,b]}$ conditioned on the $\sigma$-algebra $\mathcal{F} = \sigma\left(\tilde{f}^N_{k_1-1}, \tilde{f}^N_{k_2+1}, \tilde{f}^N_{k_1}(a_N), \tilde{f}^N_{k_1}(b_N),\dots,\tilde{f}^N_{k_2}(a_N),\tilde{f}^N_{k_2}(b_N)\right)$ is (upto a reindexing of the curves) precisely $\mathbb{P}^{a_N,b_N,\vec{x}\,^N,\vec{y}\,^N,f_N,g_N}_{avoid,N}$. Therefore, writing $Z^N$ for a random variable with this law, we have
	\begin{equation}\label{BBschur}
		\ex\left[\prod_{i=1}^m h_i(\tilde{f}^N_{n_i}(t_i))\right] = \ex\left[\prod_{s\notin S} h_s(\tilde{f}^N_{n_s}(t_s))\cdot\ex^{a_N,b_N,\vec{x}\,^N,\vec{y}\,^N,f_N,g_N}_{avoid,N}\left[\prod_{s\in S} h_s(Z^N_{n_s-k_1+1}(t_s))\right]\right].
	\end{equation}
	Now by Lemma \ref{inftydistinct}, we have $\mathbb{P}$-a.s. that $\tilde{f}^\infty_{k_1 -1}(a) > \tilde{f}^\infty_{k_1}(a) > \cdots > \tilde{f}^\infty_{k_2}(a) > \tilde{f}^\infty_{k_2+1}(a)$ and also $\tilde{f}^\infty_{k_1 -1}(b) > \tilde{f}^\infty_{k_1}(b) > \cdots > \tilde{f}^\infty_{k_2}(b) > \tilde{f}^\infty_{k_2+1}(b)$.  In addition, we have by part (i) of Theorem \ref{PropTightGood} that $\mathbb{P}$-almost surely $f_N\to f = \tilde{f}^\infty_{k_1-1}$ and $g_N\to g = \tilde{f}^\infty_{k_2+1}$ uniformly on $[a-1,b+1]\supseteq [a_N,b_N]$, and $(x_i^N - pa_N N^{\alpha/2})/\sqrt{p(1-p)}\to\vec{x}$, $(y_i^N-pb_N N^{\alpha/2})/\sqrt{p(1-p)}\to\vec{y}$ for $i\in\llbracket 1,k-1\rrbracket$. It follows from Lemma \ref{scaledavoidBB} that 
	\begin{equation}\label{BGPNweak}
		\lim_{N\to\infty} \ex^{a_N,b_N,\vec{x}\,^N,\vec{y}\,^N,f_N,g_N}_{avoid,N}\left[\prod_{s\in S} h_s(Z^N_{n_s-k_1+1}(t_s))\right] = \ex^{a,b,\vec{x},\vec{y},f,g}_{avoid}\left[\prod_{s\in S} h_s(Q_{n_s}(t_s))\right].
	\end{equation}
	Lastly, the continuity of the $h_i$ implies that
	\begin{equation}\label{BGPuniform}
		\lim_{N\to\infty}\prod_{s\notin S} h_s(\tilde{f}^N_{n_s}(t_s)) = \prod_{s\notin S} h_s(\tilde{f}^\infty_{n_s}(t_s)).
	\end{equation}
	Combining \eqref{BGPweak}, \eqref{BBschur}, \eqref{BGPNweak}, and \eqref{BGPuniform} with the bounded convergence theorem proves \eqref{BBcondexsplit}.\\
	
	\noindent\textbf{Step 3.} In this step we use \eqref{BBcondexsplit} to prove \eqref{BGPcondex}. The argument below is a standard monotone class argument. For $n\in\mathbb{N}$ we define piecewise linear functions
	\[
	\chi_n(x,r) = \begin{cases}
		0, & x > r + 1/n,\\
		1-n(x-r), & x\in[r,r+1/n],\\
		1, & x < r.
	\end{cases}
	\]
	We fix $m_1,m_2\in\mathbb{N}$, $n^1_1,\dots,n^1_{m_1},n^2_1,\dots,n^2_{m_2}\in\Sigma$, $t^1_1,\dots,t^1_{m_1},t^2_1,\dots,t^2_{m_2}\in\mathbb{R}$, such that $(n^1_i,t^1_i)\notin K\times[a,b]$ and $(n^2_i,t^2_i)\in K\times[a,b]$ for all $i$. Then \eqref{BBcondexsplit} implies that
	\[
	\ex\left[\prod_{i=1}^{m_1} \chi_n(f_{n_i^1}^\infty(t_i^1),a_i)\prod_{i=1}^{m_2}\chi_n(f_{n_i^2}^\infty(t_i^2),b_i)\right] = \ex\left[\prod_{i=1}^{m_1} \chi_n(f_{n_i^1}^\infty(t_i^1),a_i)\ex^{a,b,\vec{x},\vec{y},f,g}_{avoid}\left[\prod_{i=1}^{m_2} \chi_n(Q_{n_i^2}(t_i^2),b_i)\right]\right].
	\]
	Letting $n\to\infty$, we have $\chi_n(x,r)\to \chi(x,r)=\mathbf{1}_{x\leq r}$, and the bounded convergence theorem gives
	\[
	\ex\left[\prod_{i=1}^{m_1} \chi(f_{n_i^1}^\infty(t_i^1),a_i)\prod_{i=1}^{m_2}\chi(f_{n_i^2}^\infty(t_i^2),b_i)\right] = \ex\left[\prod_{i=1}^{m_1} \chi(f_{n_i^1}^\infty(t_i^1),a_i)\ex^{a,b,\vec{x},\vec{y},f,g}_{avoid}\left[\prod_{i=1}^{m_2} \chi(Q_{n_i^2}(t_i^2),b_i)\right]\right].
	\]
	Let $\mathcal{H}$ denote the space of bounded Borel measurable functions $H:C(K\times[a,b])\to\mathbb{R}$ satisfying
	\begin{equation}\label{BGPH}
		\ex\left[\prod_{i=1}^{m_1} \chi(f_{n_i^1}^\infty(t_i^1),a_i)H(\mathcal{L}^\infty|_{K\times[a,b]})\right] = \ex\left[\prod_{i=1}^{m_1} \chi(f_{n_i^1}^\infty(t_i^1),a_i)\ex^{a,b,\vec{x},\vec{y},f,g}_{avoid}\left[H(\mathcal{Q})\right]\right].
	\end{equation}
	The above shows that $\mathcal{H}$ contains all functions $\mathbf{1}_A$ for sets $A$ contained in the $\pi$-system $\mathcal{A}$ consisting of sets of the form
	\[
	\{h\in C(K\times[a,b]) : h(n_i^2,t_i^2) \leq b_i \mbox{ for } i\in\llbracket 1,m_2\rrbracket\}.
	\]
	We note that $\mathcal{H}$ is closed under linear combinations simply by linearity of expectation, and if $H_n\in\mathcal{H}$ are nonnegative bounded measurable functions converging monotonically to a bounded function $H$, then $H\in\mathcal{H}$ by the monotone convergence theorem. Thus by the monotone class theorem \cite[Theorem 5.2.2]{Durrett}, $\mathcal{H}$ contains all bounded $\sigma(\mathcal{A})$-measurable functions. Since the finite dimensional sets in $\mathcal{A}$ generate the full Borel $\sigma$-algebra $\mathcal{C}_K$ (see for instance \cite[Lemma 3.1]{DimMat}), we have in particular that $F\in\mathcal{H}$.
	
	Now let $\mathcal{B}$ denote the collection of sets $B\in\mathcal{F}_{ext}(K\times(a,b))$ such that
	\begin{equation}\label{BGPB}
		\ex[\mathbf{1}_B \cdot F(\mathcal{L}^\infty|_{K\times[a,b]})] = \ex[\mathbf{1}_B \cdot \ex^{a,b,\vec{x},\vec{y},f,g}_{avoid}[F(\mathcal{Q})]].
	\end{equation}
	We observe that $\mathcal{B}$ is a $\lambda$-system. Indeed, since \eqref{BGPH} holds for $H=F$, taking $a_i,b_i\to\infty$ and applying the bounded convergence theorem shows that \eqref{BGPB} holds with $\mathbf{1}_B = 1$. Thus if $B\in\mathcal{B}$ then $B^c\in\mathcal{B}$ since $\mathbf{1}_{B^c} = 1-\mathbf{1}_B$. If $B_i\in\mathcal{B}$, $i\in\mathbb{N}$, are pairwise disjoint and $B=\bigcup_i B_i$, then $\mathbf{1}_B = \sum_i \mathbf{1}_{B_i}$, and it follows from the monotone convergence theorem that $B\in\mathcal{B}$. Moreover, \eqref{BGPH} with $H=F$ implies that $\mathcal{B}$ contains the $\pi$-system $P$ of sets of the form
	\[
	\{h\in C(\Sigma\times\mathbb{R}) : h(n_i,t_i) \leq a_i \mbox{ for } i \in\llbracket 1,m_1\rrbracket, \mbox{ where } (n_i,t_i)\notin K\times(a,b)\}.
	\]
	By the $\pi$-$\lambda$ theorem \cite[Theorem 2.1.6]{Durrett} it follows that $\mathcal{B}$ contains $\sigma(P) = \mathcal{F}_{ext}(K\times(a,b))$. Thus \eqref{BGPB} holds for all $B\in\mathcal{F}_{ext}(K\times(a,b))$. It is proven in \cite[Lemma 3.4]{DimMat} that $\ex^{a,b,\vec{x},\vec{y},f,g}_{avoid}[F(\mathcal{Q})]$ is an $\mathcal{F}_{ext}(K\times(a,b))$-measurable function. Therefore \eqref{BGPcondex} follows from \eqref{BGPB} by the definition of conditional expectation. This suffices for the proof.
	
\end{proof}

%
\section{Bounding the max and min}\label{Section5}

In this section we prove Lemmas \ref{PropSup} and \ref{PropSup2} and we assume the same notation as in the statements of these lemmas. In particular, we assume that $k \in \mathbb{N}$, $k \geq 2$, $p \in (0,1)$, $\alpha, \lambda > 0$ are all fixed and 
\begin{equation*}
	\big\{\mathfrak{L}^N = (L^N_1,L^N_2, \dots, L^N_k)\big\}_{N=1}^{\infty},
\end{equation*}
is an $(\alpha,p,\lambda)$-good sequence of $\llbracket 1, k\rrbracket$-indexed Bernoulli line ensembles as in Definition \ref{Def1} that are all defined on a probability space with measure $\mathbb{P}$. The proof of Lemma \ref{PropSup} is given in Section \ref{Section5.1} and Lemma \ref{PropSup2} is proved in Section \ref{Section5.2}.

%
\subsection{Proof of Lemma \ref{PropSup}}\label{Section5.1}

Our proof of Lemma \ref{PropSup} is similar to that of \cite[Lemma 5.2]{CD}. For clarity we split the proof into three steps. In the first step we introduce some notation that will be required in the proof of the lemma, which is presented in Steps 2 and 3. \\

{\bf \raggedleft Step 1.} We write $s_4 = \lfloor \lceil r+4 \rceil  N^\alpha \rfloor$, $s_3 = \lfloor \lfloor (r+3) \rfloor N^\alpha\rfloor $, so that $s_3 \leq t_3 \leq s_4$, and assume that $N$ is large enough so that $\psi(N)N^{\alpha}$ from Definition \ref{Def1} is at least $s_4$. Notice that such a choice is possible by our assumption that $\mathfrak{L}^N$ is an $(\alpha,p,\lambda)$-good sequence and in particular, we know that $L_i^N$ are defined at $\pm s_4$ for $i \in \llbracket 1, k \rrbracket$. We define events 
$$E(M) = \Big\{\big|L_1^N(-s_4) + ps_4\big| > MN^{\alpha/2}\Big\}, \quad F(M) = \Big\{L_1^N(-s_3) > -ps_3 + MN^{\alpha/2} \Big\},$$
$$G(M) = \Bigg\{\sup_{s\in[0,s_4]} \big[L_1^N(s) - ps \big] \geq (6r+22)(2r+10)^{1/2}(M+1)N^{\alpha/2} \Bigg\}.$$

If $\epsilon > 0$ is as in the statement of the lemma, we note by (\ref{globalParabola}) that we can find $M$ and $\tilde{N}_1$ sufficiently large so that if $N \geq \tilde{N}_1$ we have 
\begin{equation}\label{4.2EFbounds}
	\mathbb{P}(E(M)) < \epsilon/4 \mbox{ and } \mathbb{P}(F(M)) < \epsilon/12.
\end{equation}
In the remainder of this step we show that the event $G(M) \setminus E(M)$ can be written as a {\em countable disjoint union} of certain events, i.e. we show that
\begin{equation}\label{WR3}
	\bigsqcup\limits_{(a,b,s,\ell_{top},\ell_{bot}) \in D(M)}  E(a,b,s,\ell_{top},\ell_{bot}) = G(M) \setminus E(M),
\end{equation}
where the sets $E(a,b,s,\ell_{top},\ell_{bot})$ and $D(M)$ are described below. 

For $a,b,z_1,z_2,z_3 \in\mathbb{Z}$ with $z_1\leq a$, $z_2\leq b$, $s\in\llbracket 0, s_4 \rrbracket$, $\ell_{bot}\in\Omega(-s_4,s,z_1,z_2)$ and $\ell_{top}\in\Omega(s,s_4,b,z_3)$ we define $E(a,b,s,\ell_{top},\ell_{bot})$ to be the event that $L_1^N(-s_4) = a$, $L_1^N(s) = b$, $L_1^N$ agrees with $\ell_{top}$ on $\llbracket s,s_4\rrbracket$, and $L_2^N$ agrees with $\ell_{bot}$ on $\llbracket -s_4,s \rrbracket$.  Let $D(M)$ be the set of tuples $(a,b,s, \ell_{top}, \ell_{bot})$ satisfying
\begin{enumerate}[label=(\arabic*)]
	\item $0\leq s\leq s_4$,
	\item $0\leq b-a \leq s + s_4$, $|a + ps_4| \leq MN^{\alpha/2}$, and $b-ps \geq (6r+22)(2r+10)^{1/2}(M+1)N^{\alpha/2}$,
	\item $z_1\leq a$, $z_2\leq b$, and $\ell_{bot}\in\Omega(-s_4, s, z_1, z_2)$,
	\item $b \leq z_3 \leq b + (s_4 - s)$, and $\ell_{top} \in\Omega(s, s_4, b, z_3)$,
	\item if $s < s' \leq s_4$, $s' \in \mathbb{Z}$ then $\ell_{top}(s') -ps' < (6r+22)(2r+10)^{1/2}(M+1)N^{\alpha/2}$.
\end{enumerate}
It is clear that $D(M)$ is countable. The five conditions above together imply that 
$$\bigcup_{(a,b,s,\ell_{top},\ell_{bot}) \in D(M)}  E(a,b,s,\ell_{top},\ell_{bot}) = G(M) \setminus E(M),$$
and what remains to be shown to prove (\ref{WR3}) is that $E(a,b,s,\ell_{top},\ell_{bot})$ are pairwise disjoint. 

On the intersection of $E(a,b,s,\ell_{top},\ell_{bot})$ and $E(\tilde{a},\tilde{b},\tilde{s},\tilde{\ell}_{top},\tilde{\ell}_{bot})$ we must have $\tilde{a} = L_1^N(-s_4) = a$ so that $a = \tilde{a}$. Furthermore, we have by properties (2) and (5) that $s \geq \tilde{s}$ and $\tilde{s} \geq s$ from which we conclude that $s= \tilde{s}$ and then we conclude $\tilde{b} = b$, $\ell_{top} = \tilde{\ell}_{top}$, $\ell_{bot} = \tilde{\ell}_{bot}$. In summary, if $E(a,b,s,\ell_{top},\ell_{bot})$ and $E(\tilde{a},\tilde{b},\tilde{s},\tilde{\ell}_{top},\tilde{\ell}_{bot})$ have a non-trivial intersection then $(a,b,s,\ell_{top},\ell_{bot}) = (\tilde{a},\tilde{b},\tilde{s},\tilde{\ell}_{top},\tilde{\ell}_{bot})$, which proves (\ref{WR3}).\\

{\raggedleft \bf Step 2.} In this step we prove that  we can find an $N_2$ so that for $N \geq N_2$
\begin{equation}\label{WR1}
	\mathbb{P}\left( \sup_{s \in [0,t_3] }\big[ L^N_1(s) - p s \big] \geq  (6r+22)(2r+10)^{1/2}(M+1)N^{\alpha/2} \right) \leq \mathbb{P}(G(M)) < \epsilon/2.
\end{equation}
A similar argument, which we omit, proves the same inequality with $[-t_3,0]$ in place of $[0,t_3]$ and then the statement of the lemma holds for all $N\geq N_2$, with $R_1 = (6r+22)(2r+10)^{1/2}(M+1)$.

We claim that we can find $\tilde{N}_2 \in \mathbb{N}$ sufficiently large so that if $N \geq \tilde{N}_2$ and $(a,b,s,\ell_{top},\ell_{bot})\in D(M)$ satisfies $\mathbb{P}( E(a,b,s,\ell_{top},\ell_{bot})) > 0$ then we have
\begin{equation}\label{WR7}
	\mathbb{P}^{-s_4,s,a,b,\infty,\ell_{bot}}_{avoid, Ber}\left(\ell(-s_3) > -ps_3 + MN^{\alpha/2}\right) \geq \frac{1}{3}.
\end{equation}
We will prove (\ref{WR7}) in Step 3. For now we assume its validity and conclude the proof of (\ref{WR1}).\\

Let $(a,b,s,\ell_{top},\ell_{bot})\in D(M)$ be such that $\mathbb{P}( E(a,b,s,\ell_{top},\ell_{bot})) > 0$. By the Schur Gibbs property, see Definition \ref{DefSGP}, we have for any $\ell_0\in\Omega(-s_4,s,a,b)$ that
\begin{equation}\label{WR9}
	\mathbb{P}\left(L_1^N\llbracket -s_4,s \rrbracket = \ell_0\,|\,E(a,b,s,\ell_{top}, \ell_{bot}) \right) = \mathbb{P}^{-s_4,s,a,b,\infty,\ell_{bot}}_{avoid, Ber}(\ell = \ell_0),
\end{equation}
where $L_1^N\llbracket -s_4,s \rrbracket$ denotes the restriction of $L_1^N$ to the set $\llbracket -s_4,s \rrbracket$.

Combining (\ref{WR7}) and (\ref{WR9}) we get for $N \geq \tilde{N}_2$
\begin{equation}\label{WR10}
	\begin{split}
		&\mathbb{P}\left( L_1^N(-s_3) > -ps_3 + MN^{\alpha/2} \vert E(a,b,s, \ell_{top}, \ell_{bot})\right) = \\
		&\mathbb{P}^{-s_4,s,a,b,\infty,\ell_{bot}}_{avoid, Ber}\left(\ell(-s_3) > -ps_3 + MN^{\alpha/2}\right) \geq \frac{1}{3}.
	\end{split}
\end{equation}

It follows from (\ref{WR10}) that for $N \geq \tilde{N}_2$ we have
\begin{equation}\label{WR11}
	\begin{split}
		& \epsilon/12 > \mathbb{P}(F(M)) \geq \sum_{\substack{(a,b,s,\ell_{top}, \ell_{bot})\in D(M), \\ \mathbb{P}(E(a,b,s, \ell_{top}, \ell_{bot})) > 0}} \mathbb{P}\left(F(M)\cap E(a,b,s, \ell_{top}, \ell_{bot})\right) = \\
		&\sum_{\substack{(a,b,s,\ell_{top}, \ell_{bot})\in D(M), \\ \mathbb{P}(E(a,b,s, \ell_{top}, \ell_{bot})) > 0}} \hspace{-5mm} \mathbb{P}\left( L_1^N(-s_3) > -ps_3 + MN^{\alpha/2} \vert  E(a,b,s,\ell_{top}, \ell_{bot}) \right )\mathbb{P}\left(E(a,b,s,\ell_{top}, \ell_{bot}) \right) \geq \\
		& \sum_{\substack{(a,b,s,\ell_{top}, \ell_{bot})\in D(M), \\ \mathbb{P}(E(a,b,s, \ell_{top}, \ell_{bot})) > 0}} \frac{1}{3} \cdot \mathbb{P}\left(E(a,b,s,\ell_{top}, \ell_{bot}) \right)  =  \frac{1}{3} \cdot \mathbb{P}(G(M)\setminus E(M)),
	\end{split}
\end{equation}
where in the last equality we used (\ref{WR3}). From (\ref{4.2EFbounds}) and (\ref{WR11}) we have for $N \geq N_2 = \max(\tilde{N}_1, \tilde{N}_2)$  
$$\mathbb{P}(G(M)) \leq \mathbb{P}(E(M)) + \mathbb{P}(G(M)\setminus E(M))< \epsilon/4 + \epsilon/4,$$
which proves (\ref{WR1}).\\

{\bf \raggedleft Step 3.} In this step we prove (\ref{WR7}) and in the sequel we let $(a,b,s,\ell_{top},\ell_{bot})\in D(M)$ be such that $\mathbb{P}( E(a,b,s,\ell_{top},\ell_{bot})) > 0$. We remark that the condition $\mathbb{P}( E(a,b,s,\ell_{top},\ell_{bot})) >0 $ implies that $\Omega_{avoid}({-s_4,s,a,b,\infty,\ell_{bot}})$ is not empty. By Lemma \ref{MCLfg} we know that 
$$\mathbb{P}^{-s_4,s,a,b,\infty,\ell_{bot}}_{avoid, Ber}\left(\ell(-s_3) > -ps_3 + MN^{\alpha/2}\right) \geq \mathbb{P}^{-s_4,s,a,b}_{ Ber}\left(\ell(-s_3) > -ps_3 + MN^{\alpha/2}\right),$$
and so it suffices to show that 
\begin{equation}\label{WT0}
	\mathbb{P}^{-s_4,s,a,b}_{Ber}\left(\ell(-s_3) > -ps_3 + MN^{\alpha/2}\right) \geq \frac{1}{3}.
\end{equation}

One directly observes that
\begin{equation}\label{WT2}
	\begin{split}
		&\mathbb{P}^{-s_4,s,a,b}_{Ber}\left( \ell(-s_3) > -ps_3 + MN^{\alpha/2}\right) = \mathbb{P}^{0,s+s_4,0,b-a}_{Ber}\left(\ell(s_4-s_3) + a \geq -ps_3 + MN^{\alpha/2}\right) \geq\\
		& \mathbb{P}^{0,s+s_4,0,b-a}_{Ber}\left(\ell(s_4-s_3) \geq p(s_4-s_3) + 2MN^{\alpha/2}\right),
	\end{split}
\end{equation}
where the inequality follows from the assumption in (2) that $a+ps_4 \geq -MN^{\alpha/2}$. Moreover, since $b-ps \geq (6r+22)(2r+10)^{1/2}(M+1)N^{\alpha/2}$ and $a+ps_4 \leq MN^{\alpha/2}$, we have 
$$b-a \geq p(s+s_4) + (6r+21)(2r+10)^{1/2}(M+1)N^{\alpha/2} \geq p(s+s_4) + (6r+21)(M+1)(s+s_4)^{1/2}.$$
The second inequality follows since $s+s_4 \leq 2s_4 \leq (2r+10)N^{\alpha}$. 

It follows from Lemma \ref{LemmaHalfS4} with $M_1 = 0$, $M_2 = (6r+21)(M+1)$ that for sufficiently large $N$
\begin{equation}\label{WT3}
	\mathbb{P}^{0,s+s_4,0,b-a}_{Ber}\Big(\ell(s_4-s_3) \geq \frac{s_4-s_3}{s+s_4}[p(s+s_4) + M_2 (s+s_4)^{1/2}] - (s+s_4)^{1/4}\Big) \geq 1/3.
\end{equation}

Note that $\frac{s_4-s_3}{s+s_4} \geq \frac{N^\alpha }{(2r+10)N^\alpha} = \frac{1}{2r+10}$ and so for all $N \in \mathbb{N}$ we have
\begin{equation}\label{WT4}
	\begin{split}
		& \frac{s_4-s_3}{s+s_4}[p(s+s_4) + M_2 (s+s_4)^{1/2}] - (s+s_4)^{1/4} \geq \\
		& p(s_4-s_3) + \frac{(6r+21)(M+1) (s+s_4)^{1/2}}{2r+10}  - (s+s_4)^{1/4} \geq p(s_4-s_3) + 2MN^{\alpha/2}.
	\end{split}
\end{equation}
Combining (\ref{WT2}), (\ref{WT3}) and (\ref{WT4}) we conclude that we can find $\tilde{N}_2 \in \mathbb{N}$ such that if $N \geq \tilde{N}_2$ we have (\ref{WT0}). This suffices for the proof.

%
\subsection{Proof of Lemma \ref{PropSup2}}\label{Section5.2} We mention that the general idea behind the proof of Lemma \ref{PropSup2} has similarities with the proof of \cite[Proposition 2.7]{Ham4}.

We begin by proving the following important lemma, which shows that it is unlikely that the curve $L_{k-1}^N$ falls uniformly very low on a large interval.
\begin{lemma}\label{21}
	Under the same conditions as in Lemma \ref{PropSup2} the following holds. For any $r,\epsilon>0$ there exist $R>0$ and $N_5 \in \mathbb{N}$ such that for all $N\geq N_5$
	\begin{equation}\label{E21}
		\mathbb{P}\left(\sup_{x\in[r,R]} \left(L_{k-1}^N(xN^\alpha) - pxN^\alpha\right) \leq -(\lambda R^2 + \phi_2(\epsilon/16) + 1)N^{\alpha/2}\right) < \epsilon,
	\end{equation}
	where $\lambda, \phi$ are as in the definition of an $(\alpha,p,\lambda)$-good sequence of line ensembles, see Definition \ref{Def1}. The same statement holds if $[r,R]$ is replaced with $[-R,-r]$ and the constants $N_5, R$ depend on $\epsilon, r$ as well as the parameters $\alpha, p, \lambda, k$ and the functions $\phi_2, \psi$ from Definition \ref{Def1}. 
\end{lemma}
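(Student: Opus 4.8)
Let me sketch how I would attack Lemma \ref{21}.

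The plan is to prove the statement for $[r,R]$; the case $[-R,-r]$ is identical after the reflection $s\mapsto -s$, $p\mapsto 1-p$. Fix $\epsilon,r>0$, write $h=h(R):=\lambda R^2+\phi_2(\epsilon/16)+1$, and let $\mathcal E_N$ denote the event appearing in \eqref{E21}. The first step is to pin the top curve: applying \eqref{globalParabola} with a union bound over the finitely many integers $n\in\llbracket r,R\rrbracket$ (take $R\in\mathbb N$), together with Lemma \ref{PropSup}, one produces for each fixed $R$ an event $G_N=G_N(R)$ with $\limsup_{N}\mathbb P(G_N^c)<\epsilon/2$ on which $\big|N^{-\alpha/2}(L_1^N(nN^\alpha)-pnN^\alpha+\lambda n^2N^{\alpha/2})\big|\le\phi_2\big(\tfrac{\epsilon}{4(R-r+1)}\big)$ for every integer $n\in\llbracket r,R\rrbracket$ and $\sup_{s\in[-t_3,t_3]}(L_1^N(s)-ps)\le R_1N^{\alpha/2}$. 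In particular, on $G_N$ the curve $L_1^N$ — hence every $L_i^N$ and in particular $L_k^N$, by the ordering — stays within $O_{r,R,\epsilon}(N^{\alpha/2})$ of the line $x\mapsto pxN^\alpha$ on all of $[r,R]$, and it suffices to bound $\mathbb P(\mathcal E_N\cap G_N)<\epsilon/2$.

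The core of the argument is a Gibbsian ``pushing up'' of the bottom curve. Partition $\llbracket rN^\alpha,RN^\alpha\rrbracket$ into the $m=R-r$ unit blocks $\llbracket(r+j)N^\alpha,(r+j+1)N^\alpha\rrbracket$. By the Schur Gibbs property (Definition \ref{DefSGP}), conditionally on $L_k^N$ and on the values of $L_1^N,\dots,L_{k-1}^N$ at all block endpoints, the restrictions of $(L_1^N,\dots,L_{k-1}^N)$ to distinct blocks are independent, the $j$-th being an avoiding Bernoulli line ensemble on that block with lower boundary $L_k^N$. On any one block, dropping the lower boundary only lowers the ensemble (Lemma \ref{MCLfg}), so Lemma \ref{prob19} applies (with its parameter $k$ equal to $k-1$ and $T=N^\alpha$): \emph{provided} the entrance and exit heights of $L_{k-1}^N$ on that block differ from the line by at most $M_1\sqrt T$, the conditional probability that $L_{k-1}^N$ exceeds the line by $M\sqrt T$ at the block's midpoint — hence, for $M=M(R)$ large enough, beats the barrier $-hN^{\alpha/2}$ there — is at least a constant $c=c(M_1,M,k,p)>0$ that does not depend on $N$. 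Conditional independence across the $m$ blocks then shows that $L_{k-1}^N$ beats the barrier at the midpoint of at least one block with probability $\ge 1-(1-c)^m$, and enlarging $R$ (hence $m$) makes $(1-c)^m<\epsilon/4$. Since $\mathcal E_N$ forbids this, $\mathbb P(\mathcal E_N\cap G_N)$ is at most $\epsilon/4$ plus the probability that the proviso fails, and combining everything yields \eqref{E21}.

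The step I expect to be the main obstacle is exactly that \emph{proviso}: it asks that $L_{k-1}^N$ at each block endpoint not lie far below the line $pxN^\alpha$, whereas on $\mathcal E_N$ the curve $L_{k-1}^N$ lies far below that line everywhere on $[r,R]$, so no a priori lower bound on $L_{k-1}^N$ at those points is available. Taking $M_1=M_1(R)$ large enough that the region above the barrier is contained in $\{\,\cdot\ge pxN^\alpha-M_1\sqrt TN^{\alpha/2}\,\}$ reduces matters to excluding an even deeper drop of $L_{k-1}^N$ at a block endpoint, but does not itself break the circularity; resolving it is the technical heart of the proof, and I would do so by a bootstrap — running the same scheme over a growing family of nested blocks so that a deep drop is pushed outward — coupled with an induction on $k$, using that the restriction $(L_1^N,\dots,L_{k-1}^N)$ of a $k$-good sequence is again $(k-1)$-good (Remark \ref{restrict}, since goodness only constrains $L_1^N$). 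The base case $k=2$ is immediate: then $L_{k-1}^N=L_1^N$, and \eqref{globalParabola} at $n=r$ already gives $\mathbb P\big(L_1^N(rN^\alpha)-prN^\alpha\le -hN^{\alpha/2}\big)<\epsilon$ for $R>r$ and $N$ large, whence \eqref{E21} since the supremum over $[r,R]$ dominates the value at $r$. The resulting $N_5$ and $R$ then depend only on $\epsilon,r$ and on $\alpha,p,\lambda,k$ and the functions $\phi_2,\psi$.
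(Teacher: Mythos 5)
Your scheme has a genuine gap, and it is exactly the one you flag: the ``proviso'' is not a technical nuisance that a bootstrap can absorb, it is the whole difficulty. On the bad event $\mathcal{E}_N$ the values of $L_{k-1}^N$ at your block endpoints are themselves far below the line $pxN^\alpha$, so after conditioning on the block-endpoint data (as the Schur Gibbs property forces you to do) the hypotheses of Lemma \ref{prob19} simply fail, and worse, the conditioned law on each block is that of bridges pinned at deeply depressed endpoints, which stay low with high probability. In other words, the event $\mathcal{E}_N$ is perfectly consistent with the Gibbs property applied inside $[rN^\alpha,RN^\alpha]$, so no resampling argument confined to that window and conditioned at its interior mesh points can produce a contradiction. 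The two fixes you propose do not close this. The ``induction on $k$'' targets the wrong curve: by Remark \ref{restrict} the restriction $(L_1^N,\dots,L_{k-1}^N)$ is again a good sequence, but the lemma applied to it controls $L_{k-2}^N$, not $L_{k-1}^N$, and the ordering $L_{k-1}^N\le L_{k-2}^N$ goes the wrong way to transfer a lower bound downward. The ``bootstrap over nested blocks'' is not formulated at all, and it would face the same circularity at every scale. (A smaller slip: on your event $G_N$ only $L_1^N$ is pinned near the line; the ordering gives the other curves an upper bound, not the two-sided control you assert.)

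Note also that your argument never uses the curvature $\lambda>0$, whereas it is the engine of the paper's proof, which runs along entirely different lines: one conditions on the bad event together with the event that $L_1^N(\gamma)$ and $L_1^N(\Gamma)$ (with $\gamma\approx rN^\alpha$, $\Gamma\approx RN^\alpha$) sit near the parabola, observes via Lemmas \ref{MCLxy}, \ref{CurveSeparation} and the KMT coupling that on the bad event the top curve between $\gamma$ and $\Gamma$ behaves like a free bridge (the floor $L_{k-1}^N$ being too low to matter), and then exploits concavity: for $R$ large, $\lambda(R-r)^2/4$ dominates $\phi_2(\epsilon/16)$, so the chord through the endpoint values lies far below the parabola at the midpoint $m=(r+R)/2$, forcing $\mathbb{P}(A\mid B\cap F)\ge 1/4$ for the event $A$ that $L_1^N(mN^\alpha)$ dips well below the parabola; since $\mathbb{P}(A)<\epsilon/8$ by (\ref{globalParabola}), this bounds $\mathbb{P}(B)$. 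Some interaction of this kind with the one-point parabolic control of $L_1^N$ at an \emph{interior} point is unavoidable, and your proposal contains no substitute for it.
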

\begin{proof}
	Before we go into the proof we give an informal description of the main ideas. The key to this lemma is the parabolic shift implicit in the definition of an $(\alpha,p,\lambda)$-good sequence. This shift requires that the deviation of the top curve $L_1^N$ from the line of slope $p$ to appear roughly parabolic. On the event in equation (\ref{E21}) we have that the $(k-1)$-th curve dips very low uniformly on the interval $[r,R]$ and we will argue that on this event the top $k-2$ curves essentially do not feel the presence of the $(k-1)$-th curve. After a careful analysis using the monotone coupling lemmas from Section \ref{Section3.1} we will see that the latter statement implies that the curve $L_1^N$ behaves like a free bridge between its end-points that have been slightly raised. Consequently, we would expect the midpoint $L_1^N \left( N^{\alpha} (R+r)/2 \right)$ to be close (on scale $N^{\alpha/2}$) to $[L_1^N(rN^{\alpha}) + L^N_1(RN^{\alpha})]/2.$ However, with high probability $[L_1^N(rN^{\alpha}) + L^N_1(RN^{\alpha})]/2$ lies much lower than the inverted parabola $-\lambda(R+r)^2 N^{\alpha/2}/4 $ (due to the concavity of the latter), and so it is very unlikely for $L_1^N \left( N^{\alpha} (R+r)/2 \right)$ to be near it by our assumption. The latter would imply that the event in (\ref{E21}) is itself unlikely, since conditional on it an unlikely event suddenly became likely. 
	
	We proceed to fill in the details of the above sketch of the proof in the following steps. In total there are six steps and we will only prove the statement of the lemma for the interval $[r,R]$, since the argument for $[-R,-r]$ is very similar. \\
	
	\noindent\textbf{Step 1.} We begin by specifying the choice of $R$ in the statement of the lemma, fixing some notation and making a few simplifying assumptions. 
	
	Fix $r , \epsilon > 0$ as in the statement of the lemma. Note that for any $R>r$,
	$$\sup_{r\leq x\leq R} \big(L_{k-1}^N(xN^\alpha) - pxN^\alpha\big) \geq \sup_{\lceil r\rceil \leq x \leq R} \big(L_{k-1}^N(xN^\alpha) - pxN^\alpha\big).$$
	Thus by replacing $r$ with $\lceil r\rceil$, we can assume that $r\in\mathbb{Z}$, which we do in the sequel. Notice that by our assumption that $\mathfrak{L}^N$ is $(\alpha, p,\lambda)$-good we know that (\ref{E21}) holds trivially if $k = 2$ (with the right side of (\ref{E21}) being any number greater than $\epsilon/16$ and in particular $\epsilon$) and so in the sequel we assume that $k \geq 3$. 
	
	Define constants
	\begin{equation}\label{21Cdef}
		C = \sqrt{ 8p(1-p) \log\frac{3}{1-(11/12)^{1/(k-2)}}},
	\end{equation}
	and $R_0 > r$ sufficiently large so that for $R \geq R_0$ and $N \in \mathbb{N}$ we have
	\begin{equation}\label{21Rdef2}
		\frac{\lambda(R-r)^2}{4} \geq  2\phi_2(\epsilon/16) + 2 + k \lceil C  \lceil RN^{\alpha} \rceil -  \lfloor rN^{\alpha} \rfloor \rceil N^{-\alpha/2}.
	\end{equation}
	We define $R = \lceil R_0\rceil + \mathbf{1}_{\lceil R_0\rceil + r\;\mathrm{odd}}$, so that $R\geq R_0$ and the midpoint $(R+r)/2$ are integers. This specifies our choice of $R$ and for convenience we denote $m = (R+r)/2$. 
	
	In the following, we always assume $N$ is large enough so that $\psi(N) > R$, hence $L_i^N$ are defined at $RN^\alpha$ for $1\leq i\leq k$. We may do so by the second condition in the definition of an $(\alpha,p,\lambda)$-good sequence (see Definition \ref{Def1}). \\
	
	With the choice of $R$ as above we define the events
	\begin{equation}\label{21AB}
		\begin{split}
			A &= \left\{L_1^N\left(mN^{\alpha}\right) - pm N^\alpha  + \lambda m^2 N^{\alpha/2} < -\phi_2(\epsilon/16)N^{\alpha/2}\right\},\\
			B &= \left\{\sup_{x\in[r,R]} \left(L_{k-1}^N(xN^\alpha) - pxN^\alpha\right) \leq -[\lambda R^2 + \phi_2(\epsilon/16) + 1] N^{\alpha/2} \right\}.
		\end{split}
	\end{equation}
	The goal of the lemma is to prove that we can find $N_5\in\mathbb{N}$ so that for all $N\geq N_5$
	\begin{equation}\label{21Bbound}
		\mathbb{P}(B) < \epsilon,
	\end{equation}
	which we accomplish in the steps below.\\
	
	\noindent\textbf{Step 2.} In this step we introduce some notation that will be used throughout the next steps. Let $\gamma = \lfloor rN^{\alpha} \rfloor$ and $\Gamma = \lceil RN^{\alpha} \rceil$. We also define the event
	\begin{equation}\label{21x1y1}
		\begin{split}
			& F = \left\{ \sup_{s \in \{\gamma,  \Gamma\}} \left|L_1^N(s) - ps + \lambda s^2N^{-\alpha/2}\right|< [\phi_2(\epsilon/16) + 1] N^{\alpha/2} \right\}.
		\end{split}
	\end{equation}
	In the remainder of this step we show that $F \cap B$ can be written as a \textit{countable disjoint union}
	\begin{equation}\label{21F}
		F \cap B = \bigsqcup_{(\vec{x},\vec{y},\ell_{bot})\in D}  E(\vec{x},\vec{y},\ell_{bot}),
	\end{equation} 
	where the sets $E(\vec{x},\vec{y},\ell_{bot})$ and $D$ are defined below.
	
	For $\vec{x},\vec{y}\in\mathfrak{W}_{k-2}$, $z_1,z_2\in\mathbb{Z}$, and $\ell_{bot}\in\Omega(\gamma, \Gamma ,z_1,z_2)$, let $E(\vec{x},\vec{y},\ell_{bot})$ denote the event that $L_i^N(\gamma) = x_i$ and $L_i^N(\Gamma)=y_i$ for $1\leq i\leq k-2$, and $L_{k-1}^N$ agrees with $\ell_{bot}$ on $[\gamma, \Gamma]$. Let $D$ denote the set of triples $(\vec{x},\vec{y},\ell_{bot})$ satisfying
	\begin{enumerate}[label=(\arabic*)]
		\item $0\leq y_i - x_i \leq \Gamma - \gamma$ for $1\leq i\leq k-2$,
		\item $|x_1 - p\gamma + \lambda \gamma^2 N^{-3\alpha/2}| < \phi_2(\epsilon/16) N^{\alpha/2}$ and $|y_1- p \Gamma  + \lambda \Gamma^2N^{-3\alpha/2}| < \phi_2(\epsilon/16)N^{\alpha/2}$,
		\item $z_1\leq x_{k-2}$, $z_2\leq y_{k-2}$, and $\ell_{bot} \in \Omega(\gamma , \Gamma,z_1,z_2)$,
		\item $\sup_{x \in [r, R]} [\ell_{bot}(x N^{\alpha}) - pxN^{\alpha} ] \leq -[\lambda R^2 + \phi_2(\epsilon/16) + 1]N^{\alpha/2}$.
	\end{enumerate}
	It is clear that $D$ is countable, the events $E(\vec{x},\vec{y},\ell_{bot})$ are pairwise disjoint for different elements in $D$ and (\ref{21F}) is satisfied. \\

	\noindent\textbf{Step 3.} We claim that we can find $\tilde{N}_0$ so that for $N\geq \tilde{N}_0$ we have
	\begin{equation}\label{21AEbound}
		\mathbb{P}(A| E(\vec{x},\vec{y},\ell_{bot})) \geq 1/4
	\end{equation}
	for all $(\vec{x}, \vec{y}, \ell_{bot}) \in D$ such that $\mathbb{P}(E(\vec{x},\vec{y},\ell_{bot})) > 0$. We will prove (\ref{21AEbound}) in the steps below. In this step we assume its validity and conclude the proof of (\ref{21Bbound}). \\
	
	It follows from \eqref{21F} and \eqref{21AEbound} that for $N\geq\tilde{N}_0$ and $\mathbb{P}(F\cap B) > 0$ we have
	\begin{equation*}
		\begin{split}
			&\mathbb{P}(A|F \cap B) = \sum_{(\vec{x},\vec{y},\ell_{bot})\in D, \mathbb{P}(E(\vec{x},\vec{y},\ell_{bot}))) > 0} \frac{\mathbb{P}(A| E(\vec{x},\vec{y},\ell_{bot})\mathbb{P}( E(\vec{x},\vec{y},\ell_{bot}))}{\mathbb{P}(F \cap B)} \geq \\
			& \frac{1}{4}\cdot\frac{\sum_{(\vec{x},\vec{y},\ell_{bot})\in D, \mathbb{P}(E(\vec{x},\vec{y},\ell_{bot}))) > 0} \mathbb{P}( E(\vec{x},\vec{y},\ell_{bot}))}{\mathbb{P}(F\cap B)} = \frac{1}{4}.
		\end{split}
	\end{equation*}
	
	From the third condition in the definition of an $(\alpha,p,\lambda)$-good sequence, see Definition \ref{Def1}, we can find $\tilde{N}_1$ so that $\mathbb{P}(A) < \epsilon/8$ for $N\geq\tilde{N}_1$. Hence if $N \geq \max(\tilde{N}_1, \tilde{N}_2)$ and $\mathbb{P}(F\cap B) > 0$ we have
	\begin{equation}\label{NM1}
		\mathbb{P}(F \cap B) = \frac{\mathbb{P}(A\cap F \cap B)}{\mathbb{P}(A|F \cap B)} \leq 4\mathbb{P}(A) < \epsilon/2.
	\end{equation} 
	Lastly, by the same condition in Definition \ref{Def1} we can find $\tilde{N}_2$ so that for $N\geq\tilde{N}_2$ we have
	\begin{equation}\label{NM2}
		\mathbb{P}(F^c) = 2 \cdot \epsilon/8 = \epsilon/4.
	\end{equation}
	In deriving (\ref{NM2}) we used the fact that $|L_1^N(\gamma)  - L_1^N(rN^{\alpha})| \leq 1$, $|L_1^N(\Gamma)  - L_1^N(RN^{\alpha})| \leq 1$ and $p \in [0,1]$. Combining (\ref{NM1}) and (\ref{NM2}) we conclude that if $N \geq N_5 = \max(\tilde{N}_0, \tilde{N}_1, \tilde{N_2})$
	$$\mathbb{P}(B) \leq \mathbb{P}(F\cap B) + \mathbb{P}(F^c) \leq \epsilon/2 + \epsilon/4 < \epsilon,$$
	which proves (\ref{21Bbound}).\\
	
	\noindent\textbf{Step 4.} In this step we prove \eqref{21AEbound}. We define $\vec{x}\,',\vec{y}\,'\in\mathfrak{W}_{k-2}$ through
	\begin{equation}\label{21xybar}
		\begin{split}
			&x_i' = \overline{x} + (k-1-i)\lceil C\sqrt{T}\,\rceil, \quad y_i' = \overline{y} + (k-1-i)\lceil C\sqrt{T}\,\rceil \mbox{ for $i = 1, \dots, k-2$ with } \\
			&\overline{x} = \lceil  p\gamma  - \lambda \gamma^2 N^{-3\alpha/2} \hspace{-0.5mm}+\hspace{-0.5mm} [\phi_2(\epsilon/16) + 1]N^{\alpha/2} \rceil, \overline{y} = \lceil p \Gamma - \lambda \Gamma^2 N^{-3\alpha/2} \hspace{-0.5mm}+\hspace{-0.5mm} [\phi_2(\epsilon/16) + 1]N^{\alpha/2} \rceil,
		\end{split}
	\end{equation} 
	where $C$ is as in (\ref{21Cdef}) and $T = \Gamma - \gamma$. Note that for any $(\vec{x}, \vec{y}, \ell_{bot}) \in D$ we have
	$$x_i' \geq \overline{x} \geq x_1 \geq x_i \mbox{ and }y_i' \geq \overline{y} \geq y_1 \geq y_i$$
	for each $i = 1, \dots, k-2$. Furthermore, 
	$$x_i' - x_{i+1}' \geq C\sqrt{T} \mbox{ and }y_i' - y_{i+1}' \geq C\sqrt{T}$$
	for all $i = 1, \dots, k-2$ with the convention $x_{k-1}' = \overline{x}$ and $y_{k-1}' = \overline{y}$. \\
	
	We claim that we can find $\tilde{N}_0$ so that for all $N\geq\tilde{N}_0$ and $(\vec{x}, \vec{y}, \ell_{bot}) \in D$ such that $\mathbb{P}(E(\vec{x},\vec{y},\ell_{bot})) > 0$ we have $\prod_{i = 1}^{k-2} |\Omega(\gamma, \Gamma, x_i', y_i')| \geq |\Omega_{avoid}(\gamma, \Gamma, \vec{x}', \vec{y}', \infty, \ell_{bot})| \geq 1$ and moreover we have
	\begin{equation}\label{21 1/3}
		\mathbb{P}^{\gamma, \Gamma,\vec{x}',\vec{y}'}_{Ber} \left(  Q_1\left(mN^{\alpha}\right) - pm N^\alpha  + \lambda m^2 N^{\alpha/2} < -\phi_2(\epsilon/16)N^{\alpha/2}  \right) \geq 1/3,
	\end{equation}
	\begin{equation}\label{21 1/12}
		\mathbb{P}^{\gamma, \Gamma,\vec{x}',\vec{y}'}_{Ber} \left( Q_1 \geq \cdots \geq Q_{k-1} \right) \geq 11/12,
	\end{equation}
	where $\mathfrak{Q} = (Q_1, \dots, Q_{k-2})$ is $\mathbb{P}^{\gamma, \Gamma,\vec{x}',\vec{y}'}_{Ber}$-distributed and we used the convention that $Q_{k-1} = \ell_{bot}$. We prove (\ref{21 1/3}) and (\ref{21 1/12}) in the steps below. In this step we assume their validity and conclude the proof of (\ref{21AEbound}).\\
	
	Observe that for any $(\vec{x}, \vec{y}, \ell_{bot}) \in D$ such that $\mathbb{P}(E(\vec{x},\vec{y},\ell_{bot})) > 0$ we have the following tower of inequalities provided that $N \geq \tilde{N}_0$
	\begin{equation}\label{21xyest}
		\begin{split}
			&\mathbb{P}(A| E(\vec{x},\vec{y},\ell_{bot})) = \mathbb{P}^{\gamma, \Gamma,\vec{x},\vec{y},\infty,\ell_{bot}}_{avoid,Ber} \left( Q_1\left(mN^{\alpha}\right) - pm N^\alpha  + \lambda m^2 N^{\alpha/2} < -\phi_2(\epsilon/16)N^{\alpha/2} \right) \geq  \\
			& \mathbb{P}^{\gamma, \Gamma,\vec{x}',\vec{y}',\infty,\ell_{bot}}_{avoid,Ber} \left( Q_1\left(mN^{\alpha}\right) - pm N^\alpha  + \lambda m^2 N^{\alpha/2} < -\phi_2(\epsilon/16)N^{\alpha/2} \right) = \\
			&\frac{\mathbb{P}^{\gamma, \Gamma,\vec{x}',\vec{y}'}_{Ber} \left( \{ Q_1\left(mN^{\alpha}\right) - pm N^\alpha  + \lambda m^2 N^{\alpha/2} < -\phi_2(\epsilon/16)N^{\alpha/2} \} \cap \{Q_1 \geq \cdots \geq Q_{k-1} \} \right) }{\mathbb{P}^{\gamma, \Gamma,\vec{x}',\vec{y}'}_{Ber} \left( Q_1 \geq \cdots \geq Q_{k-1} \right) }.
		\end{split}
	\end{equation}
	Let us elaborate on (\ref{21xyest}) briefly. The condition that $\mathbb{P}(E(\vec{x},\vec{y},\ell_{bot})) > 0$ is required to ensure that the probabilities on the first line of (\ref{21xyest}) are well-defined and $N \geq \tilde{N}_0$ ensures that all other probabilities are also well-defined. The equality on the first line of (\ref{21xyest}) follows from the definition of $A$ and the Schur Gibbs property, see Definition \ref{DefSGP}, and $\mathfrak{Q} = (Q_1, \dots, Q_{k-2})$ is $\mathbb{P}^{\gamma, \Gamma,\vec{x},\vec{y},\infty,\ell_{bot}}_{avoid,Ber} $-distributed. The inequality in the first line of (\ref{21xyest}) follows from  Lemma \ref{MCLxy}, while the equality in the second line follows from Definition \ref{DefAvoidingLawBer}, and now $\mathfrak{Q} = (Q_1, \dots, Q_{k-2})$ is $\mathbb{P}^{\gamma, \Gamma,\vec{x}',\vec{y}'}_{Ber}$-distributed with the convention that $Q_{k-1} = \ell_{bot}$. 
	
	Combining (\ref{21 1/3}), (\ref{21 1/12}) and (\ref{21xyest}) we conclude that 
	$$\mathbb{P}(A| E(\vec{x},\vec{y},\ell_{bot}))  \geq 1/3- 1/12 = 1/4,$$
	which proves (\ref{21AEbound}).\\
	
	\noindent\textbf{Step 5.} In this step we prove \eqref{21 1/3}. We observe that since $\mathbb{P}(E(\vec{x},\vec{y},\ell_{bot})) > 0$ we know that $|\Omega_{avoid}(\gamma, \Gamma, \vec{x}, \vec{y}, \infty, \ell_{bot})| \geq 1$ and then we conclude from Lemma \ref{LemmaWD} that there exist $\hat{N}_1 \in \mathbb{N}$ such that for $N \geq \hat{N}_1$ we have $|\Omega_{avoid}(\gamma, \Gamma, \vec{x}', \vec{y}', \infty, \ell_{bot})| \geq 1$.
	
	Below $\ell$ will be used for a generic random variable with law $\mathbb{P}^{\cdot,\cdot,\cdot,\cdot}_{Ber}$, where the boundary data changes from line to line. With $\overline{x},\overline{y}$ as in \eqref{21xybar}, write $\overline{z} = \overline{y}-\overline{x}$ and recall that $T = \Gamma - \gamma$. Then
	\begin{equation} \label{21convex}
		\begin{split}
			& \mathbb{P}^{\gamma, \Gamma,x_1',y_1'}_{Ber} \left(\ell\left(mN^{\alpha}\right) - pm N^\alpha  + \lambda m^2 N^{\alpha/2} < -\phi_2(\epsilon/16)N^{\alpha/2}  \right) = \\
			& \mathbb{P}^{0,T,x_1',y_1'}_{Ber} \left(\ell(T/2) - pm N^\alpha  + \lambda m^2 N^{\alpha/2} < -\phi_2(\epsilon/16)N^{\alpha/2}\right) =\\
			& \mathbb{P}^{0,T,\overline{x},\overline{y}}_{Ber}\left(\ell(T/2) - pm N^\alpha  + \lambda m^2 N^{\alpha/2} < -\phi_2(\epsilon/16)N^{\alpha/2} - (k-2)\lceil C\sqrt{T}\rceil\right) \geq\\
			& \mathbb{P}^{0,T,\overline{x},\overline{y}}_{Ber}\left(\ell(T/2)   - \frac{\overline{x} + \overline{y}}{2}  <  \lambda \left(\frac{\gamma^2 + \Gamma^2}{2 N^{3\alpha/2}} \right)-[2\phi_2(\epsilon/16) + 1 +  \lambda m^2]N^{\alpha/2} - k \lceil C\sqrt{T} \rceil\right) = \\
			&\mathbb{P}^{0,T,0,\overline{z}}_{Ber}\left(\ell(T/2) - \overline{z}/2 < \lambda \left(\frac{\gamma^2 + \Gamma^2}{2 N^{3\alpha/2}} \right)-[2\phi_2(\epsilon/16) + 1 +  \lambda m^2]N^{\alpha/2} - k \lceil C\sqrt{T} \rceil\right).
		\end{split}
	\end{equation}
	The equalities in (\ref{21convex}) follow from shifting the boundary data of the curve $\ell$, while the inequality on the third line follows from the definition of $\overline{x},\overline{y}$ as in \eqref{21xybar}.
	
	From our choice of $R$ in Step 1 and the definition of $\gamma, \Gamma$ we know that 
	$$\lambda \frac{\gamma^2+\Gamma^2}{2N^{2\alpha}} -  \lambda m^2 \geq \lambda \frac{(R-r)^2}{4} - \frac{r\lambda}{N^{\alpha}} \geq 2\phi_2(\epsilon/16) + 2 + k \lceil C\sqrt{T} \rceil N^{-\alpha/2}- \frac{r\lambda}{N^{\alpha}} .$$
	$$ .$$
	The last inequality and (\ref{21convex}) imply 
	\begin{equation} \label{22convex}
		\begin{split}
			& \mathbb{P}^{\gamma, \Gamma,x_1',y_1'}_{Ber} \left(\ell\left(mN^{\alpha}\right) - pm N^\alpha  + \lambda m^2 N^{\alpha/2} < -\phi_2(\epsilon/16)N^{\alpha/2}  \right) \geq \\
			&\mathbb{P}^{0,T,0,\overline{z}}_{Ber}\left(\ell(T/2) - \overline{z}/2 < N^{\alpha/2} -  r\lambda N^{-\alpha/2} \right).
		\end{split}
	\end{equation}
	
	Let $\tilde{\mathbb{P}}$ be the probability measure on the space afforded by Theorem \ref{KMT}, supporting a random variable $\ell^{(T,\overline{z})}$ with law $\mathbb{P}^{0,T,0,\overline{z}}_{Ber}$ and a Brownian bridge $B^\sigma$ with diffusion parameter $\sigma = \sqrt{p(1-p)}$. Then the probability in the last line of \eqref{21convex} is equal to
	\begin{equation} \label{23convex}
		\begin{split}
			&\mathbb{P}^{0,T,0,\overline{z}}_{Ber}\left(\ell(T/2) - \overline{z}/2 < N^{\alpha/2} -  r\lambda N^{-\alpha/2} \right) = \tilde{\mathbb{P}} \left( \ell^{(T,\overline{z})}(T/2) - \overline{z}/2 <N^{\alpha/2} -  r\lambda N^{-\alpha/2}    \right) \geq \\
			& \tilde{\mathbb{P}} \left( \sqrt{T}B^\sigma_{1/2} < 0 \mbox{ and }\Delta(T,\overline{z}) <N^{\alpha/2} -  r\lambda N^{-\alpha/2}     \right) \geq \frac{1}{2} - \tilde{\mathbb{P}} \left( \Delta(T,\overline{z}) \geq N^{\alpha/2} -  r\lambda N^{-\alpha/2} \right),
		\end{split}
	\end{equation}
	where we recall that $\Delta(T,\overline{z})$ is as in \eqref{KMTeq}. Since as $N \rightarrow \infty$ we have 
	\begin{equation*}
		T \sim (R-r) N^{\alpha} \mbox{ and }\frac{|\overline{z} - pT|^2}{T} \sim (R+r),
	\end{equation*}
	we conclude from Corollary \ref{Cheb} that there exists $\hat{N}_2 \in \mathbb{N}$ such that if $N \geq \max(\hat{N}_1, \hat{N}_2)$ we have
	\begin{equation} \label{24convex}
		\begin{split}
			\tilde{\mathbb{P}} \left( \Delta(T,\overline{z}) \geq N^{\alpha/2} -  r\lambda N^{-\alpha/2} \right) \leq \frac{1}{6}.
		\end{split}
	\end{equation}
	Combining (\ref{22convex}), (\ref{23convex}) and (\ref{24convex}) we obtain \eqref{21 1/3}. \\

	\noindent\textbf{Step 6.} In this last step, we prove \eqref{21 1/12}. Let $\overline{\ell}_{bot}$ be the straight segment connecting $\overline{x}$ and $\overline{y}$, defined in \eqref{21xybar}. By construction, we have that there is $\hat{N}_3 \in \mathbb{N}$ such that if $N \geq \hat{N}_3$ we have for any $(\vec{x}, \vec{y}, \ell_{bot}) \in D$ that $\ell_{bot}$ lies uniformly below the line segment $\overline{\ell}_{bot}$, which in turn lies at least $C \sqrt{T}$ below the straight segment connecting $x_{k-2}'$ and $y_{k-2}'$. If $\hat{N}_1$ is as in Step 5 we conclude from Lemma \ref{CurveSeparation} that there exists $\hat{N}_4 \in \mathbb{N}$ such that if $N \geq \max(\hat{N}_1, \hat{N}_3, \hat{N}_4)$ and $\mathbb{P}(E(\vec{x},\vec{y},\ell_{bot})) > 0$ 
	\begin{equation}\label{21UP}
		\mathbb{P}^{\gamma, \Gamma,\vec{x}',\vec{y}'}_{Ber} \left( Q_1 \geq \cdots \geq Q_{k-1} \right) \geq \left(1-3e^{-C^2/8p(1-p)}\right)^{k-2} = \frac{11}{12}.
	\end{equation}
	where the condition that $N \geq \hat{N}_1 $ is included to ensure that the probability $\mathbb{P}^{\gamma, \Gamma,\vec{x}',\vec{y}'}_{Ber} $ is well-defined. In deriving (\ref{21UP}) we also used (\ref{21Cdef}), which implies
	$$C = \sqrt{ 8p(1-p) \log\frac{3}{1-(11/12)^{1/(k-2)}}} \geq \sqrt{8p(1-p) \log 3}.$$
	We see that (\ref{21UP}) implies (\ref{21 1/12}), which concludes the proof of the lemma.
\end{proof}

In the remainder of this section we use Lemma \ref{21} to prove Lemma \ref{PropSup2}.
\begin{proof} (of Lemma \ref{PropSup2}) For clarity we split the proof into five steps.\\ 
	
	\noindent\textbf{Step 1.} In this step we specify the choice of $R_2$ in the statement of the lemma and introduce some notation that will be used in the proof of the lemma, which is given in Steps 2-5 below. Throughout we fix $r, \epsilon > 0$. Define the constant
	\begin{equation}\label{4.3Cdef}
		C_1 = \sqrt{16p(1-p)\log\frac{3}{1-2^{-1/(k-1)}}}.
	\end{equation}
	Let $R > r + 3$, $M > 0$ and $\tilde{N}_1 \in \mathbb{N}$ be such that for $N \geq \tilde{N}_1$ we have that the event
	\begin{equation}\label{S5EventB}
\begin{split}
		B = &\left\{ \sup_{x\in [r+3, R] } \big(L^N_{k-1}(xN^\alpha) - pxN^\alpha\big) \geq -MN^{\alpha/2} \right\} \cap  \\
&\left\{ \sup_{x\in [-R,-r-3]} \big(L^N_{k-1}(xN^\alpha) - pxN^\alpha\big) \geq -MN^{\alpha/2} \right\}
\end{split}
	\end{equation}
	satisfies 
	\begin{equation}\label{S5EventBineq}
		\mathbb{P} \left( B\right) \geq 1 - \epsilon/2.
	\end{equation}
	Such a choice of $R, M, \tilde{N}_1$ is possible by Lemma \ref{21}.
	
	Let us set 
	$$s^-_1 = \lceil - R \cdot N^{\alpha} \rceil, \hspace{2mm} s^-_2 =  \lfloor -(r+3) \cdot N^{\alpha} \rfloor, \hspace{2mm} s^+_1 = \lceil (r+3) \cdot N^{\alpha} \rceil, \hspace{2mm} s^+_2 =  \lfloor R \cdot N^{\alpha} \rfloor,$$
	and for $a \in \llbracket s_1^-, s_2^- \rrbracket$ and $b \in \llbracket s_1^+, s_2^+ \rrbracket$ we define $\vec{x}\,',\vec{y}\,' \in \mathfrak{W}_{k-1}$ by
	\begin{equation}\label{S5Boundary}
		\begin{split}
			x_i' &= \lfloor pa - MN^{\alpha/2}\rfloor - (i-1)\lceil C_1(2R)^{1/2}N^{\alpha/2}\rceil,\\
			y_i' &= \lfloor pb - MN^{\alpha/2}\rfloor - (i-1)\lceil C_1(2R)^{1/2} N^{\alpha/2}\rceil,
		\end{split}
	\end{equation}  
	for $i = 1, \dots, k-1$. We will write $\vec{z} = \vec{y}' - \vec{x}'$, and we note that $z_{k-1} \geq p (b-a) -1$ and also $2RN^{\alpha} \geq b-a \geq 2(r+3) N^{\alpha}$. The latter and Lemma \ref{LemmaMinFreeS4} imply that there exists $R_2 > 0$ and $\tilde{N}_2 \in \mathbb{N}$ such that if $N \geq \tilde{N}_2$ we have
	\begin{equation}\label{S5DefR2}
		\mathbb{P}^{0, b - a, 0, z_{k-1}}_{Ber} \left( \inf_{s\in[0,b-a]} \big(\ell(s) - ps\big) \leq -(R_2 - M - C_1 (2R)^{1/2} k) N^{\alpha/2} \right) < \epsilon/4.
	\end{equation}
	This fixes our choice of $R_2$ in the statement of the lemma. \\
	
	With the above choice of $R_2$ we define the event
	\begin{equation}\label{S5EventA}
		A = \left\{\inf_{s \in [ -t_3, t_3 ]}\big[L^N_{k-1}(s) - p s \big] \leq - R_2N^{\alpha/2}\right\},
	\end{equation}
	and then to prove the lemma it suffices to show that there exists $N_4 \in \mathbb{N}$ such that for $N \geq N_4$ 
	\begin{equation}\label{4.3Abound}
		\mathbb{P}(A) < \epsilon
	\end{equation}

	\noindent\textbf{Step 2.} In this step, we prove that the event $B$ from (\ref{S5EventB}) can be written as a \textit{countable disjoint union} of the form
	\begin{equation}\label{4.3disj}
		B = \bigsqcup_{(a,b,\vec{x},\vec{y},\ell_{bot}, \ell_{top}^-, \ell_{top}^+)\in D} E(a,b,\vec{x},\vec{y},\ell_{bot},\ell_{top}^-, \ell_{top}^+),
	\end{equation}
	where the set $D$ and events $E(a,b,\vec{x},\vec{y},\ell_{bot},\ell_{top}^-, \ell_{top}^+)$ are defined below.
	
	For $a \in \llbracket s_1^-, s_2^- \rrbracket$ and $b \in \llbracket s_1^+, s_2^+ \rrbracket$, $\vec{x},\vec{y}\in\mathfrak{W}_{k-1}$, $z_1,z_2,z_1^-, z_2^+\in\mathbb{Z}$, $\ell_{bot} \in \Omega(a, b, z_1, z_2)$, $\ell_{top}^- \in \Omega(s_1^-, a, z_1^-, x_{k-1})$ , $\ell_{top}^+ \in \Omega(b, s_2^+, y_{k-1}, z_2^+)$ we define $E(a,b,\vec{x},\vec{y},\ell_{bot}, \ell_{top}^-, \ell_{top}^+)$ to be the event that $L_i^N(a) = x_i$ and $L_i^N(b) = y_i$ for $1\leq i\leq k-1$, and $L_k^N$ agrees with $\ell_{bot}$ on $\llbracket a,b \rrbracket$, $L_{k-1}^N$ agrees with $\ell_{top}^-$ on $\llbracket s_1^-, a \rrbracket$ and with $\ell_{top}^+$ on $\llbracket b, s_2^+ \rrbracket$.

	We also let $D$ be the collection of tuples $(a,b,\vec{x},\vec{y},\ell_{bot}, \ell_{top}^-, \ell_{top}^+)$ satisfying:
	\begin{enumerate}[label=(\arabic*)]
		\item $a \in \llbracket s_1^-, s_2^- \rrbracket$, $b \in \llbracket s_1^+, s_2^+ \rrbracket$;
		\item $\vec{x}, \vec{y} \in \mathfrak{W}_{k-1}$, $0 \leq y_i - x_i \leq b- a$, $x_{k-1} - pa > - MN^{\alpha/2}$, and $y_{k-1} - pb > - MN^{\alpha/2}$;
		\item if $c \in \llbracket s_1^-, s_2^- \rrbracket \cap (- \infty, a)$ then $\ell_{top}^-(c) - pc \geq -MN^{\alpha/2}$;
		\item if $d \in \llbracket s_1^+, s_2^+ \rrbracket \cap (b, \infty)$ then $\ell_{top}^+(d) - pd\geq -MN^{\alpha/2}$;
		\item $z_1\leq x_{k-1}$, $z_2\leq y_{k-1}$, and $\ell_{bot}\in\Omega(a,b,z_1,z_2)$.
	\end{enumerate} 
	It is clear that $D$ is countable, and that 
	$$B = \bigcup_{(a,b,\vec{x},\vec{y},\ell_{bot})\in D} E(a,b,\vec{x},\vec{y},\ell_{bot}, \ell_{top}^-, \ell_{top}^+),$$
	so to prove (\ref{4.3disj}) it suffices to show that the events $E(a,b,\vec{x},\vec{y},\ell_{bot}, \ell_{top}^-, \ell_{top}^+)$ are pairwise disjoint. Observe that on the intersection of $E(a,b,\vec{x},\vec{y},\ell_{bot}, \ell_{top}^-, \ell_{top}^+)$ and $E(\tilde a,\tilde b,\tilde{\vec{x}},\tilde{\vec{y}},\tilde{\ell}_{bot}, \tilde{\ell}_{top}^-, \tilde{\ell}_{top}^+)$, conditions (2) and (3) imply that $a = \tilde{a}$, while conditions (2) and (4) that $b = \tilde{b}$. Afterwards, we conclude that $\vec{x}=\tilde{\vec{x}}$, $\vec{y} = \tilde{\vec{y}}$, $\ell_{bot} = \tilde{\ell}_{bot}$, $\ell_{top}^- = \tilde{\ell}^-_{top}$ and $\ell_{top}^+ = \tilde{\ell}^+_{top}$, confirming \eqref{4.3disj}.\\

	\noindent\textbf{Step 3.} In this step we prove (\ref{4.3Abound}). We claim that we can find $\tilde{N}_3 \in \mathbb{N}$ such that if $N \geq \tilde{N}_3$ and $(a,b,\vec{x},\vec{y},\ell_{bot}, \ell_{top}^-, \ell_{top}^+) \in D$ is such that $\mathbb{P} \left( E(a,b,\vec{x},\vec{y},\ell_{bot}, \ell_{top}^-, \ell_{top}^+) \right) > 0$ we have
	\begin{equation}\label{4.3AEbound}
		\mathbb{P}(A\,|\,E(a,b,\vec{x},\vec{y},\ell_{bot}, \ell_{top}^-, \ell_{top}^+)) < \epsilon/2.
	\end{equation}
	We will prove (\ref{4.3AEbound}) in the steps below. Here we assume its validity and conclude the proof of (\ref{4.3Abound}).\\
	
	If $N \geq \max ( \tilde{N}_1, \tilde{N}_2, \tilde{N}_3)$ we have in view of \eqref{4.3disj} and \eqref{4.3AEbound} that 
	\begin{equation*}
		\begin{split}
			&\mathbb{P}(A) \leq \mathbb{P}(A \cap B) + \mathbb{P}(B^c) =\mathbb{P}(B^c) +  \sum_{\substack{(a,b,\vec{x},\vec{y},\ell_{bot}, \ell_{top}^-, \ell_{top}^+)\in D\\ \mathbb{P} \left( E(a,b,\vec{x},\vec{y},\ell_{bot}, \ell_{top}^-, \ell_{top}^+) \right) > 0}}  \mathbb{P}(A|E(a,b,\vec{x},\vec{y},\ell_{bot}, \ell_{top}^-, \ell_{top}^+)) \times \\
			&\mathbb{P}(E(a,b,\vec{x},\vec{y},\ell_{bot}, \ell_{top}^-, \ell_{top}^+)) \leq \mathbb{P}(B^c) + \frac{\epsilon}{2}\sum_{\substack{(a,b,\vec{x},\vec{y},\ell_{bot}, \ell_{top}^-, \ell_{top}^+)\in D\\ \mathbb{P} \left( E(a,b,\vec{x},\vec{y},\ell_{bot}, \ell_{top}^-, \ell_{top}^+) \right) > 0}}  \mathbb{P}(E(a,b,\vec{x},\vec{y},\ell_{bot}, \ell_{top}^-, \ell_{top}^+)) =\\
			&\mathbb{P}(B^c) + \frac{\epsilon}{2}\cdot  \mathbb{P}(B) < \epsilon,
		\end{split}
	\end{equation*}
	where in the last inequality we used (\ref{S5EventBineq}). The above inequality clearly implies (\ref{4.3Abound}).\\
	
	\noindent\textbf{Step 4.} In this step we prove \eqref{4.3AEbound}. We claim that there exists $\tilde{N}_4 \in \mathbb{N}$ such that if $N \geq \tilde{N}_4$, $a \in \llbracket s_1^-, s_2^- \rrbracket$ and $b \in \llbracket s_1^+, s_2^+ \rrbracket$ we have that $\prod_{i = 1}^{k-1} |\Omega(a, b,x_i', y_i')| \geq 1$ and
	\begin{equation}\label{S5T1}
		\mathbb{P}^{a, b, \vec{x}', \vec{y}'}_{Ber}(Q_1 \geq \cdots \geq Q_{k-1}) \geq \frac{1}{2},
	\end{equation}
	where $\mathfrak{Q} = (Q_1, \dots, Q_{k-1})$ is $\mathbb{P}^{a, b, \vec{x}', \vec{y}'}_{Ber}$-distributed and we recall that $\vec{x}'$, $\vec{y}'$ were defined in (\ref{S5Boundary}).
	We will prove (\ref{S5T1}) in Step 5 below. Here we assume its validity and conclude the proof of (\ref{4.3AEbound}).\\

	Observe that by condition (2) in Step 2, we have that $x_i'\leq pa - MN^{\alpha/2} \leq x_{k-1} \leq x_i$, and similarly $y_i' \leq pb - MN^{\alpha/2} \leq y_{k-1} \leq y_i$ for $i = 1, \dots, k-1$. From this observation we conclude that if $N \geq \tilde{N}_4$ is sufficiently large and $(a,b,\vec{x},\vec{y},\ell_{bot}, \ell_{top}^-, \ell_{top}^+) \in D$ is such that $\mathbb{P} \left( E(a,b,\vec{x},\vec{y},\ell_{bot}, \ell_{top}^-, \ell_{top}^+) \right) > 0$ we have
	\begin{equation}\label{4.3main}
		\begin{split}
			&\mathbb{P}(A|E(a,b,\vec{x},\vec{y},\ell_{bot},\ell_{top}^-, \ell_{top}^+)) \leq \\
			&\mathbb{P}\left( \inf_{s\in[a, b]} \left(L_{k-1}^N(s) - ps\right) \leq -R_2N^{\alpha/2}\, \big| \, E(a,b,\vec{x},\vec{y},\ell_{bot},\ell_{top}^-, \ell_{top}^+) \right) = \\
			&\mathbb{P}^{a,b, \vec{x}, \vec{y},\infty,\ell_{bot}}_{avoid, Ber} \left( \inf_{s\in[a, b]} \left(Q_{k-1}(s) - ps\right) \leq -R_2N^{\alpha/2} \right) \leq\\
			& \mathbb{P}^{a, b, \vec{x}', \vec{y}'}_{avoid, Ber} \left( \inf_{s\in[a,b]} \big(Q_{k-1}(s) - ps\big) \leq -R_2N^{\alpha/2} \right) = \\
			&\frac{\mathbb{P}^{a, b, \vec{x}', \vec{y}'}_{Ber} \left( \{ \inf_{s\in[a,b]} \big(Q_{k-1}(s) - ps\big) \leq -R_2 N^{\alpha/2} \}  \cap \{ Q_1 \geq \cdots \geq Q_{k-1} \} \right)}{\mathbb{P}^{a, b, \vec{x}\,', \vec{y}\,'}_{Ber}(Q_1 \geq \cdots \geq Q_{k-1})} \leq \\
			&\frac{\mathbb{P}^{a, b, \vec{x}', \vec{y}'}_{Ber} \left( \inf_{s\in[a,b]} \big(Q_{k-1}(s) - ps\big) \leq -R_2 N^{\alpha/2} \right)}{\mathbb{P}^{a, b, \vec{x}\,', \vec{y}\,'}_{Ber}(Q_1 \geq \cdots \geq Q_{k-1})}.
		\end{split}
	\end{equation}
	Let us elaborate on (\ref{4.3main}) briefly. The first inequality in (\ref{4.3main}) follows from the definition of $A$ and the fact that $a \leq -t_3$ while $b \geq t_3$ by construction. The condition $\mathbb{P} \left( E(a,b,\vec{x},\vec{y},\ell_{bot}, \ell_{top}^-, \ell_{top}^+) \right) > 0$ ensures that the first three probabilities in (\ref{4.3main}) are all well-defined. The equality on the second line follows from the Schur Gibbs property and the inequality on the third line follows from Lemmas \ref{MCLxy} and \ref{MCLfg} since $x_i' \leq x_i$ and $y_i' \leq y_i$ by construction. To ensure that the probability in the fourth line is well-defined (and hence Lemmas \ref{MCLxy} and \ref{MCLfg} are applicable) it suffices to assume that $N \geq \tilde{N}_4$, in view of Lemma \ref{LemmaWD}. The equality on the fourth line follows from the definition of $\mathbb{P}^{a, b, \vec{x}', \vec{y}'}_{avoid, Ber} $, see Definition \ref{DefAvoidingLawBer} and the last inequality is trivial.\\
	
	By our choice of $R_2$, see (\ref{S5DefR2}), we know that there is $\tilde{N}_5 \in \mathbb{N}$ such that if $N \geq \tilde{N}_5$
	\begin{equation}\label{4.3main2}
		\begin{split}
			&\mathbb{P}^{a, b, \vec{x}', \vec{y}'}_{Ber} \left( \inf_{s\in[a,b]} \big(Q_{k-1}(s) - ps\big) \leq -R_2 N^{\alpha/2} \right) = \\
			&\mathbb{P}^{0, b-a, 0, z_{k-1}}_{Ber} \left( \inf_{s\in[0,b-a]} \big(\ell(s) - ps\big) \leq -R_2 N^{\alpha/2} - x_{k-1}'\right) \leq \\
			&\mathbb{P}^{0, b-a, 0, z_{k-1}}_{Ber} \left( \inf_{s\in[0,b-a]} \big(\ell(s) - ps\big) \leq -(R_2 - M - C_1(2R)^{1/2} k) N^{\alpha/2} \right) < \epsilon/4.
		\end{split}
	\end{equation}
	Combining (\ref{S5T1}), (\ref{4.3main}) and (\ref{4.3main}) we conclude that for $N \geq \tilde{N}_3 = \max(\tilde{N}_4, \tilde{N}_5)$ we have
	$$\mathbb{P}(A|E(a,b,\vec{x},\vec{y},\ell_{bot},\ell_{top}^-, \ell_{top}^+)) < 2 \cdot \epsilon/4 = \epsilon/2,$$
	which implies (\ref{4.3AEbound}).\\
	
	\noindent\textbf{Step 5.} In this final step we prove (\ref{S5T1}). Set $T = b -a $ and note that by our assumption that $a \in \llbracket  s_1^-, s_2^- \rrbracket$ and $b \in \llbracket s_1^+, s_2^+\rrbracket$ we know that $(2r+6) N^{\alpha} \leq T \leq 2R N^{\alpha}$. This implies that $1 + C_1 (2R)^{1/2} N^{\alpha/2} \geq x_i' - x_{i+1}' \geq C_1 \sqrt{T}$ and likewise for $y_i'$. It follows from Lemma \ref{CurveSeparation}, applied with $\ell_{bot} = -\infty$ that there is $\tilde{N_4} \in \mathbb{N}$ such that if $N \geq \tilde{N}_4$ we have $T \geq y_i' - x_i' \geq 0$ for all $i$ so that $\prod_{i = 1}^{k-1} |\Omega(a, b,x_i', y_i')| \geq 1$ and moreover
\begin{equation}\label{4.3avoid}
\begin{split}
& \mathbb{P}^{a, b, \vec{x}', \vec{y}'}_{Ber}(Q_1 \geq \cdots \geq Q_{k-1}) = \mathbb{P}^{0, b-a, \vec{x}', \vec{y}'}_{Ber}(Q_1 \geq \cdots \geq Q_{k-1}) \geq\\ & \left(1 - 3e^{-C_1^2/8p(1-p)}\right)^{k-1} \geq 1/2
\end{split}
\end{equation}
 In deriving (\ref{4.3avoid}) we also used (\ref{4.3Cdef}), which implies
$$C_1 = \sqrt{16p(1-p)\log\frac{3}{1-2^{-1/(k-1)}}} \geq \sqrt{8p(1-p) \log 3}. $$
Equation (\ref{4.3avoid}) clearly implies (\ref{S5T1}) and this concludes the proof of the lemma.
\end{proof}

%
\section{Lower bounds on the acceptance probability}\label{Section6} We prove Lemma \ref{LemmaAP1} in Section \ref{sect61} by using Lemma \ref{LemmaAP2}, whose proof is presented in Section \ref{sect62}.

%
\subsection{Proof of Lemma \ref{LemmaAP1}}\label{sect61} Throughout this section we assume the same notation as in Lemma \ref{LemmaAP1}, i.e., we assume that we have fixed $k \in \mathbb{N}$, $p \in (0,1)$, $M_1, M_2 > 0$, $\ell_{bot}: \llbracket -t_3, t_3 \rrbracket \rightarrow \mathbb{R} \cup \{ - \infty \}$, and $\vec{x}, \vec{y} \in \mathfrak{W}_{k-1}$ such that $|\Omega_{avoid}(-t_3, t_3, \vec{x}, \vec{y}, \infty, \ell_{bot})| \geq 1$. We also assume that
\begin{enumerate}
	\item $\sup_{s \in [- t_3,t_3]}\big[\ell_{bot}(s)  - ps \big]  \leq M_2 (2t_3)^{1/2}$,
	\item  $-pt_3 + M_1 (2t_3)^{1/2} \geq  x_1 \geq  x_{k-1} \geq \max\left(\ell_{bot}(-t_3), -pt_3 - M_1 (2t_3)^{1/2}\right),$
	\item $pt_3 + M_1 (2t_3)^{1/2} \geq y_1 \geq y_{k-1} \geq  \max \left( \ell_{bot}(t_3),  p t_3 - M_1(2t_3)^{1/2} \right).$
\end{enumerate}

\begin{definition}\label{TildeDef}
	We write $S = \llbracket -t_3,-t_1\rrbracket\cup \llbracket t_1,t_3\rrbracket$, and we denote by $\mathfrak{Q} = (Q_1,\dots,Q_{k-1})$ and $\tilde{\mathfrak{Q}} = (\tilde{Q}_1, \dots, \tilde{Q}_{k-1})$ the $\llbracket 1, k-1 \rrbracket$-indexed line ensembles which are uniformly distributed on $\Omega_{avoid}(-t_3,t_3,\vec{x},\vec{y},\infty, \ell_{bot})$ and $\Omega_{avoid}(-t_3, t_3, \vec{x}, \vec{y}, \infty, \ell_{bot};S)$ respectively. We let $\mathbb{P}_{\mathfrak{Q}}$ and $\mathbb{P}_{\tilde{\mathfrak{Q}}}$ denote these uniform measures.
\end{definition}
In other words, $\tilde{\mathfrak{Q}}$ has the law of $k-1$ independent Bernoulli bridges that have been conditioned on not-crossing each other on the set $S$ and also staying above the graph of $\ell_{bot}$ but only on the intervals $\llbracket-t_3, -t_1\rrbracket$ and $\llbracket t_1, t_3\rrbracket$. The latter restriction means that the lines are allowed to cross on $\llbracket -t_1+1,t_1-1\rrbracket$, and  $\tilde{Q}_{k-1}$ is allowed to dip below $\ell_{bot}$ on $\llbracket -t_1+1,t_1-1\rrbracket$ as well.

\begin{lemma}\label{LemmaAP2} There exists $N_5 \in \mathbb{N}$ and constants $g,h > 0$ such that for $N \geq N_5$ we have
	\begin{equation}\label{eqn57}
		\mathbb{P}_{\tilde{\mathfrak{Q}}} \left( Z\big(  -t_1, t_1, \tilde{\mathfrak{Q}}(-t_1) , \tilde{\mathfrak{Q}}(t_1), \ell_{bot}\llbracket -t_1, t_1\rrbracket\big)\geq g    \right) \geq h.
	\end{equation}
\end{lemma}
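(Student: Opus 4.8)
The goal is to produce a uniform (in $N$) lower bound on the acceptance probability $Z\big(-t_1,t_1,\tilde{\mathfrak{Q}}(-t_1),\tilde{\mathfrak{Q}}(t_1),\ell_{bot}\llbracket -t_1,t_1\rrbracket\big)$, conditional on the restricted‐avoiding ensemble $\tilde{\mathfrak{Q}}$. The acceptance probability in question is exactly the probability that $k-1$ independent Bernoulli bridges on $\llbracket -t_1,t_1\rrbracket$ run between the (random) endpoints $\tilde{\mathfrak{Q}}(-t_1)$ and $\tilde{\mathfrak{Q}}(t_1)$ avoid one another and stay above $\ell_{bot}$ on $\llbracket -t_1,t_1\rrbracket$. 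By Lemma \ref{CurveSeparation}, such a probability is bounded below by a positive constant $\big(1-3e^{-C^2/8p(1-p)}\big)^{k-1}$, \emph{provided} the entrance and exit data are separated from one another, and from $\ell_{bot}$, by at least $C\sqrt{2t_1}$, and the increments $z_i$ of the endpoints satisfy $|z_i - p(2t_1)|\le K\sqrt{2t_1}$ for some constant $K$. So the whole argument reduces to showing that, with probability at least $h$ under $\mathbb{P}_{\tilde{\mathfrak{Q}}}$, the vectors $\tilde{\mathfrak{Q}}(-t_1)$ and $\tilde{\mathfrak{Q}}(t_1)$ are ``good'' in this quantitative sense. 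Note that $t_3 - t_1$ and $t_1$ are both of order $N^\alpha$ (specifically $t_m = \lfloor (r+m)N^\alpha\rfloor$), so $\sqrt{2t_1} \asymp N^{\alpha/2}$ is the natural scale.

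\textbf{Key steps.} First, I would observe that on the two ``side'' intervals $\llbracket -t_3,-t_1\rrbracket$ and $\llbracket t_1,t_3\rrbracket$, the ensemble $\tilde{\mathfrak{Q}}$ restricted to each of these intervals is, conditionally on its endpoints, distributed as an avoiding Bernoulli line ensemble (with bottom curve $\ell_{bot}$ on that interval) — this is immediate from the definition of $\mathbb{P}_{\tilde{\mathfrak{Q}}}$ as uniform on $\Omega_{avoid}(-t_3,t_3,\vec{x},\vec{y},\infty,\ell_{bot};S)$ and the fact that $S$ is precisely the union of these two side intervals (the middle interval $\llbracket -t_1+1, t_1-1\rrbracket$ imposes no constraint). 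Second, I would control the endpoint data $\tilde{\mathfrak{Q}}(\pm t_1)$. The entrance/exit data $\vec x,\vec y$ at $\pm t_3$ are controlled by hypotheses (2)–(3): they lie within $M_1\sqrt{2t_3}$ of the line of slope $p$, and $\ell_{bot}$ lies below the slope-$p$ line by at most $M_2\sqrt{2t_3}$ (hypothesis (1)). Using Lemmas \ref{LemmaMinFreeS4} (applied to the side bridges, which have endpoints $x_i$, $\tilde Q_i(-t_1)$ etc., monotone-coupled via Lemmas \ref{MCLxy}, \ref{MCLfg} to free bridges) and \ref{LemmaHalfS4}, together with Corollary \ref{Cheb}/Theorem \ref{KMT}, I would show that for large $N$ the values $\tilde Q_i(-t_1) - p(-t_1)$ and $\tilde Q_i(t_1) - pt_1$ all lie in a window of size $O(N^{\alpha/2})$ around the slope-$p$ line with probability at least, say, $1 - h/2$, and moreover that $\ell_{bot}(\pm t_1) - p(\pm t_1) \le M_2\sqrt{2t_3} = O(N^{\alpha/2})$ automatically from hypothesis (1). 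This gives the window constraint $|z_i - p(2t_1)| \le K\sqrt{2t_1}$ needed for Lemma \ref{CurveSeparation}.

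\textbf{The separation step (the main obstacle).} The genuine difficulty is that $\tilde{\mathfrak{Q}}(\pm t_1)$, being the values of an avoiding ensemble at an \emph{interior} point, need not be separated — the curves could be close together at $\pm t_1$ even though they are well-ordered on $S$. This is exactly where the weak-convergence input is used. I would rescale: the side ensemble on $\llbracket t_1, t_3\rrbracket$ has length of order $(t_3 - t_1) \asymp N^\alpha$, and after dividing positions by $\sqrt{N^\alpha}$ and centering by $ps$, its endpoints converge (along subsequences, using the window bounds just established to extract convergent rescaled boundary data) to boundary data in $W_{k-1}^\circ$. Then Lemma \ref{prob 20} (which itself rests on Proposition \ref{prob17}) applies: it says that the rescaled curves of an avoiding Bernoulli ensemble evaluated at a fixed fraction of the time interval are, with probability at least $1-h/4$, separated by at least $\delta\sqrt{T}$ for some $\delta>0$ depending only on $p, k, M_1, M_2$. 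Applying this to the side ensemble on $\llbracket t_1,t_3\rrbracket$ at its left endpoint (i.e. $t=t_1$, which is a fixed fraction $t_1/(t_3-t_1) \cdot(\cdots)$ — more carefully, one reparametrizes $\llbracket t_1,t_3\rrbracket \cong \llbracket 0, t_3-t_1\rrbracket$ and evaluates at the left end, but since Lemma \ref{prob 20} needs $t\in(0,1)$ one instead inserts an auxiliary point slightly to the right of $t_1$ inside the side interval, or equivalently works with the ensemble on a slightly enlarged interval), and likewise at $-t_1$ using the left side interval, gives $\min_i[\tilde Q_i(\pm t_1) - \tilde Q_{i+1}(\pm t_1)] \ge \delta\sqrt{2t_1}$ and $\tilde Q_{k-1}(\pm t_1) - \ell_{bot}(\pm t_1) \ge \delta\sqrt{2t_1}$ (the last gap by a similar application, or by Lemma \ref{CurveSeparation}/monotonicity comparing $\tilde Q_{k-1}$ to a free bridge that sits well above $\ell_{bot}$ using hypothesis (1)), each with probability $\ge 1 - h/4$. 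To make Lemma \ref{prob 20} applicable I must feed in a uniform-over-$N$ statement, so the clean way is to run the contradiction/compactness argument of Lemma \ref{prob 20} directly in this setting, or to condition further on the (windowed) endpoint data at $\pm t_1$ and then invoke Lemma \ref{prob 20} for that fixed data.

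\textbf{Conclusion.} Taking $\delta$ as the separation constant and setting $C := \delta$ (shrinking $\delta$ if needed so that $C \ge \sqrt{8p(1-p)\log 3}$ is \emph{not} required — we use the general bound \eqref{SepBound1} of Lemma \ref{CurveSeparation}, not \eqref{SepBound2}), define the event $G$ that all the separation and window bounds above hold at both $\pm t_1$; then $\mathbb{P}_{\tilde{\mathfrak{Q}}}(G) \ge 1 - h$ for a suitable $h\in(0,1)$ and large $N$. On $G$, Lemma \ref{CurveSeparation} applied on the interval $\llbracket -t_1, t_1\rrbracket$ with $T = 2t_1$, entrance data $\tilde{\mathfrak{Q}}(-t_1)$, exit data $\tilde{\mathfrak{Q}}(t_1)$, and bottom curve $\ell_{bot}\llbracket -t_1,t_1\rrbracket$ — whose hypotheses (1)(2)(3) of that lemma are exactly the separation and window bounds encoded in $G$, valid once $N$ (hence $T$) exceeds the threshold $W_3(p,C,K)$ — yields
\begin{equation*}
Z\big(-t_1,t_1,\tilde{\mathfrak{Q}}(-t_1),\tilde{\mathfrak{Q}}(t_1),\ell_{bot}\llbracket -t_1,t_1\rrbracket\big) \;\ge\; \left(\frac{1}{2} - \sum_{n=1}^\infty (-1)^{n-1} e^{-n^2 C^2/8p(1-p)}\right)^{k-1} =: g > 0,
\end{equation*}
since the quantity in parentheses is positive by \eqref{BBmaxeq}. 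Hence $\mathbb{P}_{\tilde{\mathfrak{Q}}}\big(Z(\cdots)\ge g\big) \ge \mathbb{P}_{\tilde{\mathfrak{Q}}}(G)\ge h$, which is \eqref{eqn57}, for $N \ge N_5$ chosen large enough to absorb all the thresholds ($W_3$, and the $N$-thresholds from Lemmas \ref{LemmaMinFreeS4}, \ref{LemmaHalfS4}, \ref{prob 20} and Corollary \ref{Cheb}). The main obstacle, to reiterate, is the interior-separation estimate at $\pm t_1$, which is not a formula computation but requires the weak-convergence machinery of Proposition \ref{prob17}/Lemma \ref{prob 20}, plus care in reducing the conditional law of the restricted ensemble $\tilde{\mathfrak{Q}}$ on a side interval to an honest avoiding Bernoulli line ensemble so that Lemma \ref{prob 20} is literally applicable.
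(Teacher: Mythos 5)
Your overall architecture is the right one and matches the paper's: control the boundary data $\tilde{\mathfrak{Q}}(\pm t_1)$ (windows, separation, clearance above $\ell_{bot}$) and then apply Lemma \ref{CurveSeparation} on $\llbracket -t_1,t_1\rrbracket$ (this last step is exactly the paper's Lemma \ref{LemmaBP1}). However, the two probabilistic inputs you claim at $\pm t_1$ do not hold as stated, and filling them in is where the real work of the paper lies. First, the separation step: Lemma \ref{prob 20} requires \emph{fixed} (deterministic, window-bounded) entrance and exit data and yields separation only at an \emph{interior} point of a genuinely avoiding ensemble. The points $\pm t_1$ are endpoints of the side intervals, and the data there is precisely the random vector whose separation you want; your fallback of ``conditioning on the (windowed) endpoint data at $\pm t_1$ and then invoking Lemma \ref{prob 20} for that fixed data'' is circular, since after conditioning the separation at $\pm t_1$ is no longer a random event. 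Applying Lemma \ref{prob 20} at an auxiliary point slightly inside the side interval (the paper uses $t_{12}=\lfloor (t_1+t_2)/2\rfloor$) gives separation at that auxiliary point only; transferring it to $\pm t_1$ is a separate argument — the paper's Step 4, which conditions on separated, windowed values at $\pm t_{12}$, uses that the curves are unconstrained on $\llbracket -t_1+1,t_1-1\rrbracket$, and confines the resulting free bridges to disjoint tubes of width $\epsilon(2t_3)^{1/2}$. This transfer succeeds only with probability bounded below by a constant of the form $\bigl(\tfrac12-\sum_{n\geq 1}(-1)^{n-1}e^{-\epsilon^2n^2/2p(1-p)}\bigr)^{k-1}$, not with probability $1-h/4$ as you assert, so your claimed near-certain separation at $\pm t_1$ is both unproved and quantitatively wrong (though a constant bound suffices for the lemma).

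Second, the clearance above $\ell_{bot}$: hypothesis (1) of Lemma \ref{CurveSeparation} requires the segment joining $\tilde Q_{k-1}(-t_1)$ and $\tilde Q_{k-1}(t_1)$ to lie at least $C\sqrt{2t_1}$ above $\ell_{bot}$ on \emph{all} of $[-t_1,t_1]$, and hypothesis (1) of Lemma \ref{LemmaAP1} only caps $\ell_{bot}(s)-ps$ by $M_2(2t_3)^{1/2}$; meanwhile hypotheses (2)--(3) allow $x_{k-1},y_{k-1}$ to sit as low as $\mp pt_3-M_1(2t_3)^{1/2}$. Hence you need the event $\tilde Q_{k-1}(\pm t_1)\mp pt_1\geq (M_2+1)(2t_3)^{1/2}$, a ``push-up'' of the bottom curve far above its typical location, which cannot be obtained with probability close to $1$ from Lemmas \ref{LemmaMinFreeS4}, \ref{LemmaHalfS4} and monotone coupling; it only holds with a (possibly small) constant probability, and producing that constant is exactly the role of Lemma \ref{prob19} (via the paper's Lemma \ref{LemmaBP2}, which first secures good data at $\pm t_2$ and contributes the constant $h_1$ that determines $h$). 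Your proposal omits this input entirely and implicitly treats the endpoint control as a $1-h/2$ event, so the bound \eqref{eqn57} does not follow from the steps you give; the missing ingredients are precisely the prob19-type push-up estimate and the $t_{12}\to t_1$ tube-transfer argument.
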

We will prove Lemma \ref{LemmaAP2} in Section \ref{sect62}. In the remainder of this section, we give the proof of Lemma \ref{LemmaAP1} using Lemma \ref{LemmaAP2}. The proof begins by evaluating the Radon-Nidokym derivative between $\pr_{\mathfrak{Q}'}$ and $\pr_{\tilde{\mathfrak{Q}}'}$. We then use this Radon-Nikodym derivative to transition between $\tilde{\mathfrak{Q}}$ in Lemma \ref{LemmaAP2} which ignores $\ell_{bot}$ on $\llbracket -(t_1-1),t_1-1\rrbracket$ and $\mathfrak{Q}$ in Lemma \ref{LemmaAP1} which avoids $\ell_{bot}$ everywhere. 

\begin{proof}[Proof of Lemma \ref{LemmaAP1}]
	
	Let us denote by $\pr_{\mathfrak{Q}'}$ and $\pr_{\tilde{\mathfrak{Q}}'}$ the measures on $\llbracket 1, k-1\rrbracket$-indexed Bernoulli line ensembles $\mathfrak{Q}'$, $\tilde{\mathfrak{Q}}'$ on the set $S$ in Definition \ref{TildeDef} induced by the restrictions of the measures $\mathbb{P}_{\mathfrak{Q}}$, $\mathbb{P}_{\tilde{\mathfrak{Q}}}$ to $S$. Also let us write $\Omega_a(\cdot)$ for $\Omega_{avoid}(\cdot)$ for simplicity, and denote by $\Omega_a(S)$ the set of elements of $\Omega_{avoid}(-t_3,t_3,\tilde{\mathfrak{Q}}(-t_3),\tilde{\mathfrak{Q}}(t_3))$ restricted to $S$. We claim the Radon-Nikodym derivative between these two restricted measures on elements $\mathfrak B = (B_1, \dots, B_{k-1})$ of $\Omega_a(S)$ is given by 
\begin{equation}\label{propRadon}
\frac{d\pr_{\mathfrak{Q}'}}{d\pr_{\tilde{\mathfrak{Q}}'}}\left(\mathfrak B\right) = \frac{\mathbb{P}_{\mathfrak{Q}'}(\mathfrak{B})}{\mathbb{P}_{\tilde{\mathfrak{Q}}'}(\mathfrak{B})} = (Z')^{-1} Z\left(-t_1,t_1,\mathfrak{B}\left(-t_1\right),\mathfrak{B}\left(t_1\right),\ell_{bot}\llbracket -t_1,t_1\rrbracket\right),
\end{equation}
with $Z' = \ex_{\tilde{\mathfrak{Q}}'}\left[Z\left(-t_1, t_1, \mathfrak B(-t_1), \mathfrak B(t_1), \ell_{bot}\llbracket -t_1, t_1\rrbracket\right)\right]$. The first equality holds simply because the measures are discrete. To prove the second equality, observe that
	\begin{equation}
		\begin{split}
			\pr_{\mathfrak{Q}'}(\mathfrak{B}) &= \frac{|\Omega_a(-t_1,t_1,\mathfrak{B}(-t_1),\mathfrak{B}(t_1),\ell_{bot}\llbracket -t_1,t_1\rrbracket)|}{|\Omega_a(-t_3,t_3,{\mathfrak{Q}}(-t_3),{\mathfrak{Q}}(t_3),\ell_{bot})|}, 
			\\
			\pr_{\tilde{\mathfrak{Q}}'}(\mathfrak{B}) &= \frac{\prod_{i = 1}^{k-1}|\Omega(-t_1,t_1,B_i(-t_1),B_i(t_1))|}{|\Omega_a(-t_3,t_3,\tilde{\mathfrak{Q}}(-t_3),\tilde{\mathfrak{Q}}(t_3),\ell_{bot};S)|}
		\end{split}
	\end{equation}
	These identities follow from the restriction, and the fact that the measures are uniform. Then from Definition \ref{DefAP} we know
	\[
	Z(-t_1,t_1,\mathfrak{B}(-t_1),\mathfrak{B}(t_1),\ell_{bot}) = \frac{|\Omega_a(-t_1,t_1,\mathfrak{B}(-t_1),\mathfrak{B}(t_1),\ell_{bot}\llbracket-t_1,t_1\rrbracket)|}{\prod_{i = 1}^{k-1}|\Omega(-t_1,t_1, B_i(-t_1),B_i(t_1))|}
	\]
	and hence
	\begin{equation*}
		\begin{split}
			Z' =&\sum_{\mathfrak{B}\in \Omega_a(S)}\frac{\prod_{i = 1}^{k-1}|\Omega(-t_1,t_1,B_i(-t_1),B_i(t_1))|}{|\Omega_a(-t_3,t_3,\tilde{\mathfrak{Q}}(-t_3),\tilde{\mathfrak{Q}}(t_3),\ell_{bot};S)|}\cdot\frac{|\Omega_a(-t_1,t_1,\mathfrak B(-t_1),\mathfrak{B}(t_1),\ell_{bot})|}{\prod_{i = 1}^{k-1}|\Omega(-t_1,t_1, B_i(-t_1),B_i(t_1))|}=\\ 
			&\frac{\sum_{\mathfrak{B}\in\Omega_a(S)}|\Omega_a(-t_1,t_1,\mathfrak B(-t_1),\mathfrak B(t_1),\ell_{bot})|}{|\Omega_a(-t_3,t_3,\tilde{\mathfrak{Q}}(-t_3),\tilde{\mathfrak{Q}}(t_3),\ell_{bot};S)|} = \frac{|\Omega_a(-t_3,t_3,{\mathfrak{Q}}(-t_3),{\mathfrak{Q}}(t_3),\ell_{bot})|}{|\Omega_a(-t_3,t_3,\tilde{\mathfrak{Q}}(-t_3),\tilde{\mathfrak{Q}}(t_3),\ell_{bot};S)|}.
		\end{split}
	\end{equation*}
	Comparing the above identities proves the second equality in \eqref{propRadon}.
	
	Now note that $Z\left(-t_1, t_1, \mathfrak B(-t_1), \mathfrak B(t_1), \ell_{bot}\llbracket -t_1, t_1\rrbracket\right)$ is a deterministic function of $\left((\mathfrak B(-t_1), \mathfrak B(t_1)\right)$. In fact, the law of $\left((\mathfrak B(-t_1), \mathfrak B(t_1)\right)$ under $\pr_{\tilde{\mathfrak{Q}}'}$ is the same as that of $\big(\tilde{\mathfrak{Q}}(-t_1), \tilde{\mathfrak{Q}}(t_1)\big)$ by way of the restriction. It follows from Lemma \ref{LemmaAP2} that
	\begin{align*}
		Z' &= \ex_{\tilde{\mathfrak{Q}}'}\left[Z\left(-t_1, t_1, \mathfrak B(-t_1), \mathfrak B(t_1), \ell_{bot}\llbracket -t_1, t_1\rrbracket\right)\right]\\
		&= \ex_{\tilde{\mathfrak{Q}}}\left[Z\left(-t_1, t_1, \mathfrak{Q}(-t_1), \mathfrak{Q}(t_1), \ell_{bot}\llbracket -t_1, t_1\rrbracket\right)\right]\geq gh,
	\end{align*}	
	which gives us 
	\begin{equation}
		\label{Zineq} (Z')^{-1}\leq \frac{1}{gh}.
	\end{equation}
	Similarly,  the law of $\left(\mathfrak B(-t_1), \mathfrak{B}(t_1)\right)$ under $\pr_{\mathfrak{Q}'}$ is the same as that of $\left(\mathfrak{Q}(-t_1), \mathfrak{Q}(t_1)\right)$ under $\pr_{\mathfrak{Q}}$. Hence
	\begin{equation}\label{LBswap}
		\begin{split}
			&\pr_{\mathfrak{Q}}\Big(Z(-t_1, t_1, \mathfrak{Q}(-t_1), \mathfrak{Q}(t_1), \ell_{bot}\llbracket -t_1, t_1\rrbracket)\leq gh\tilde \epsilon\Big)=\\
			&\qquad\pr_{\mathfrak{Q}'}\Big(Z\left(-t_1, t_1, \mathfrak B(-t_1), \mathfrak B(t_1), \ell_{bot}\llbracket -t_1, t_1\rrbracket\right)\leq gh\tilde \epsilon\Big).
		\end{split}
	\end{equation}
	Now let us write $E=\{Z\left(-t_1, t_1, \mathfrak B(-t_1), \mathfrak B(t_1), \ell_{bot}\llbracket -t_1, t_1\rrbracket\right)\leq gh\tilde\epsilon\}\subset \Omega_a(S)$. Then according to \eqref{propRadon}, we have
	\[
	\pr_{\mathfrak{Q}'}(E)=\int_{\Omega_a(S)} \indic_E\, d\pr_{\mathfrak{Q}'} = (Z')^{-1}\int_{\Omega_a(S)}\indic_E \cdot\, Z\left(-t_1, t_1, \mathfrak B(-t_1), \mathfrak B(t_1), \ell_{bot}\llbracket -t_1, t_1\rrbracket\right)\, d\pr_{\tilde{\mathfrak{Q}}'}(\mathfrak{B}).
	\]
	From the definition of $E$, the inequality \eqref{Zineq}, and the fact that $\mathbf{1}_E \leq 1$, it follows that
	\[
	\pr_{\mathfrak{Q}'}(E)\leq (Z')^{-1}\int_{\Omega_a(S)} \indic_E\cdot\, gh\tilde{\epsilon}\, d\pr_{\tilde{\mathfrak{Q}}'} \leq \frac{1}{gh}\int_{\Omega_a(S)} gh\tilde\epsilon\, d\pr_{\tilde{\mathfrak{Q}}'}\leq \tilde{\epsilon}.
	\]
	In combination with \eqref{LBswap}, this proves \eqref{eqn60} with $\tilde{h} = gh$.
	
\end{proof}

%
\subsection{Proof of Lemma \ref{LemmaAP2}} \label{sect62} In this section, we prove Lemma \ref{LemmaAP2}. We first state and prove two auxiliary lemmas necessary for the proof. The first lemma establishes a set of conditions under which we have the desired lower bound on the acceptance probability. 

\begin{lemma}\label{LemmaBP1} Let $\epsilon > 0$ and $V^{top} > 0$ be given such that $V^{top} > M_2 + 6 (k-1) \epsilon$. Suppose further that $\vec{a}, \vec{b} \in \mathfrak{W}_{k-1}$ are such that 
\begin{enumerate}
\item $V^{top} (2t_3)^{1/2} \geq a_1 + p t_1 \geq a_{k-1} + pt_1 \geq (M_2 + 2 \epsilon) (2t_3)^{1/2}$;
\item $V^{top} (2t_3)^{1/2} \geq b_1 - p t_1 \geq b_{k-1} - pt_1 \geq (M_2 + 2 \epsilon) (2t_3)^{1/2}$; 
\item $a_i -a_{i+1} \geq 3\epsilon (2t_3)^{1/2}$ and $b_{i} - b_{i+1} \geq 3 \epsilon (2t_3)^{1/2}$ for $i = 1, \dots, k-2$.
\end{enumerate}
Then we can find $g = g(\epsilon, V^{top}, M_2) > 0$ and $N_6 \in \mathbb{N}$ such that for all $N \geq N_6$ we have 
\begin{equation}\label{eqnRT}
Z\big(  -t_1, t_1, \vec{a} ,\vec{b}, \ell_{bot}\llbracket -t_1, t_1\rrbracket\big) \geq g.
\end{equation}
\end{lemma}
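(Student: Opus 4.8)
The plan is to reduce the statement to a single application of the curve–separation estimate, Lemma~\ref{CurveSeparation} (the $Z$ here carries the implicit convention that the upper boundary is $+\infty$, as in Lemma~\ref{LemmaAP2}). First I would use the fact that the acceptance probability $Z$ of Definition~\ref{DefAP} is invariant under a simultaneous translation of the time coordinate and the bottom boundary: setting $T = 2t_1$ and $\hat\ell_{bot}(s) = \ell_{bot}(s - t_1)$ for $s \in \llbracket 0, T\rrbracket$, the time shift $\phi \mapsto \phi(\cdot - t_1)$ is a bijection between the relevant path spaces preserving all avoidance constraints, so
\[
Z\big(-t_1, t_1, \vec{a}, \vec{b}, \infty, \ell_{bot}\llbracket -t_1, t_1\rrbracket\big) = Z\big(0, T, \vec{a}, \vec{b}, \infty, \hat\ell_{bot}\big).
\]
By Remark~\ref{RemAP} the right-hand side equals $\mathbb{P}^{0,T,\vec{a},\vec{b}}_{Ber}(E)$, where $E = \{L_1(s) \geq \cdots \geq L_{k-1}(s) \geq \hat\ell_{bot}(s) \text{ for all } s \in \llbracket 0, T\rrbracket\}$. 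Thus it suffices to bound $\mathbb{P}^{0,T,\vec{a},\vec{b}}_{Ber}(E)$ below by a constant depending only on $\epsilon$ (with $p, k$ fixed), uniformly for $N$ large.

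Next I would verify the hypotheses of Lemma~\ref{CurveSeparation} for the ensemble with law $\mathbb{P}^{0,T,\vec{a},\vec{b}}_{Ber}$, bottom curve $\hat\ell_{bot}$, separation constant $C = 2\epsilon$, and a constant $K = K(\epsilon, V^{top}, M_2)$ (also depending on the fixed $r$), for all sufficiently large $N$. Write $\alpha_i = a_i + p t_1$ and $\beta_i = b_i - p t_1$, so that hypotheses (1)–(2) of Lemma~\ref{LemmaBP1} say $\alpha_i, \beta_i \in [(M_2 + 2\epsilon)(2t_3)^{1/2},\, V^{top}(2t_3)^{1/2}]$ and hypothesis (3) gives $a_i - a_{i+1}, b_i - b_{i+1} \geq 3\epsilon(2t_3)^{1/2}$. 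Since $t_1, t_3$ are of order $N^\alpha$ while $(2t_3)^{1/2}$ is of order $N^{\alpha/2}$, from $z_i := b_i - a_i = 2pt_1 + (\beta_i - \alpha_i)$ one gets $0 \leq z_i \leq T$ for $N$ large (the $N^{\alpha/2}$-correction $\beta_i - \alpha_i$ is absorbed by the gaps $2pt_1 \geq 0$ and $(1-p)\cdot 2t_1$), and $|z_i - pT| = |\beta_i - \alpha_i| \leq (V^{top} - M_2 - 2\epsilon)(2t_3)^{1/2} \leq K T^{1/2}$ for $N$ large; this is hypothesis (3). Hypothesis (2) of Lemma~\ref{CurveSeparation} follows from $3\epsilon(2t_3)^{1/2} \geq 2\epsilon(2t_1)^{1/2} = C T^{1/2}$. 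For hypothesis (1), observe that the affine interpolant $s \mapsto a_{k-1} + (z_{k-1}/T)s$ of the lowest endpoints has the property that its value minus $p(s - t_1)$ is the affine interpolation between $\alpha_{k-1}$ (at $s = 0$) and $\beta_{k-1}$ (at $s = T$), hence is at least $\min(\alpha_{k-1}, \beta_{k-1}) \geq (M_2 + 2\epsilon)(2t_3)^{1/2}$; since the ambient bound (1) on $\ell_{bot}$ gives $\hat\ell_{bot}(s) - p(s - t_1) \leq M_2(2t_3)^{1/2}$, the interpolant stays at least $2\epsilon(2t_3)^{1/2} \geq C T^{1/2}$ above $\hat\ell_{bot}$ (if $\ell_{bot} = -\infty$ this hypothesis is vacuous).

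Finally, choosing $N$ large enough that in addition $T = 2t_1 \geq W_3(p, C, K)$, Lemma~\ref{CurveSeparation} yields
\[
\mathbb{P}^{0,T,\vec{a},\vec{b}}_{Ber}(E) \geq \left(\frac{1}{2} - \sum_{n=1}^\infty (-1)^{n-1} e^{-n^2\epsilon^2/2p(1-p)}\right)^{k-1} =: g,
\]
which is strictly positive by \eqref{BBmaxeq}, and depends only on $\epsilon$ together with the fixed $p, k$; combined with the first paragraph this gives \eqref{eqnRT} with $N_6$ chosen to accommodate all the "$N$ large" requirements above. I do not expect a genuine obstacle here: the only delicate point is the bookkeeping in the middle paragraph — keeping the two scales ($N^\alpha$ from the endpoints of the interval, $N^{\alpha/2}$ from the transversal fluctuations) straight when checking $z_i \in [0,T]$ and hypothesis (3), and the elementary convexity remark that the affine interpolation of two endpoint heights is minimized at an endpoint, which is precisely why hypotheses (1)–(2) of Lemma~\ref{LemmaBP1} are phrased in terms of both $a_{k-1}$ and $b_{k-1}$ lying above the required level rather than their average.
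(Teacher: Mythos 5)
Your proposal is correct and follows essentially the same route as the paper: after translating the acceptance probability into a free-bridge probability via Remark \ref{RemAP}, both arguments verify the hypotheses of Lemma \ref{CurveSeparation} with $C = 2\epsilon$ (well-separated endpoints from condition (3), $\ell_{bot}$ at least $2\epsilon(2t_3)^{1/2}$ below the segment joining $a_{k-1}$ and $b_{k-1}$ from conditions (1)--(2) together with the bound on $\ell_{bot}$, and $|b_i - a_i - 2pt_1| \leq (V^{top}-M_2-2\epsilon)(2t_3)^{1/2}$ giving the $K\sqrt{T}$ condition), arriving at the same constant $g = \bigl(\tfrac{1}{2} - \sum_{n=1}^\infty (-1)^{n-1} e^{-\epsilon^2 n^2/2p(1-p)}\bigr)^{k-1}$. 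Your treatment is merely more explicit about the time shift and the two-scale bookkeeping, which the paper leaves implicit.
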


\begin{proof}Observe by the rightmost inequalities in conditions (1) and (2) in the hypothesis, as well as condition (1) in Lemma \ref{LemmaAP1}, that $\ell_{bot}$ lies a distance of at least $2\epsilon(2t_3)^{1/2} \geq 2\epsilon(2t_1)^{1/2}$ uniformly below the line segment connecting $a_{k-1}$ and $b_{k-1}$. Also note that (1) and (2) imply $|b_i-a_i-2pt_1| \leq (V^{top} - M_2-2\epsilon)(2t_3)^{1/2}$ for each $i$. Lastly noting (3), we see that the conditions of Lemma \ref{CurveSeparation} are satisfied with $C = 2\epsilon$. This implies \eqref{eqnRT}, with
	\[
	g = \left( \frac{1}{2} - \sum_{n=1}^\infty (-1)^{n-1} e^{-\epsilon^2n^2/2p(1-p)}\right)^{k-1}.
	\]
	
\end{proof}

\noindent The next lemma helps us derive the lower bound $h$ in \eqref{eqn57}.

\begin{lemma}\label{LemmaBP2} For any $R > 0$ we can find $V_1^t, V_1^b \geq M_2 + R$, $h_1 > 0$ and $N_7 \in \mathbb{N}$ (depending on $R$) such that if $N \geq N_7$ we have
\begin{equation}\label{eqnRT2}
\mathbb{P}_{\tilde{\mathfrak{Q}}} \left(  (2t_3)^{1/2} V_1^t \geq \tilde{Q}_1(\pm t_2) \mp p t_2 \geq \tilde{Q}_{k-1}(\pm t_2) \mp p t_2 \geq (2t_3)^{1/2} V_1^b  \right) \geq h_1.
\end{equation}	
\end{lemma}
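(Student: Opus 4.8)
The plan is to split the event in \eqref{eqnRT2} into a \emph{lower} part --- that $\tilde Q_{k-1}(\pm t_2)\mp pt_2 \ge V_1^b(2t_3)^{1/2}$ at both $\pm t_2$ --- and an \emph{upper} part --- that $\tilde Q_1(\pm t_2)\mp pt_2 \le V_1^t(2t_3)^{1/2}$ at both $\pm t_2$ (note that $\tilde Q_1\ge \tilde Q_{k-1}$ on $S$ is automatic). The lower event is increasing in the boundary data $\vec x,\vec y$ and in the bottom bounding curve, while the upper event is decreasing in $\vec x,\vec y$; since $\mathbb{P}_{\tilde{\mathfrak Q}}(\mathrm{lb}\cap\mathrm{ub})\ge \mathbb{P}_{\tilde{\mathfrak Q}}(\mathrm{lb})-\mathbb{P}_{\tilde{\mathfrak Q}}(\mathrm{ub}^c)$, it suffices to produce $h_1>0$ with $\mathbb{P}_{\tilde{\mathfrak Q}}(\mathrm{lb})\ge 2h_1$, and then to choose $V_1^t$ large enough that $\mathbb{P}_{\tilde{\mathfrak Q}}(\mathrm{ub}^c)<h_1$. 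I will take $V_1^b=M_2+R$ (this value comes out of the construction below with room to spare) and fix $V_1^t$ at the end. Throughout, $\pm t_2$ lie at distance of order $N^\alpha$ from the endpoints $\pm t_3$, while all vertical scales of interest are of order $(2t_3)^{1/2}\sim N^{\alpha/2}$; this separation of scales is what makes the argument run, and it mirrors the proof of Lemma \ref{prob19}.

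\textbf{The lower part.} Since $\{\tilde Q_{k-1}(\pm t_2)\mp pt_2\ge V_1^b(2t_3)^{1/2}\text{ for both signs}\}$ is increasing, Lemma \ref{MCLfg} lets me replace $\ell_{bot}$ by $-\infty$, and then Lemma \ref{MCLxy} lets me replace $\vec x,\vec y$ by vectors $\vec x\,',\vec y\,'\in\mathfrak W_{k-1}$ lying componentwise below $\vec x,\vec y$ --- possible because $x_{k-1}\ge -pt_3-M_1(2t_3)^{1/2}$ and $y_{k-1}\ge pt_3-M_1(2t_3)^{1/2}$; I take $\vec x\,',\vec y\,'$ to be $10\lceil(2t_3)^{1/2}\rceil$-separated, with $x_i',y_i'$ of the form $\lfloor\mp pt_3-M_1(2t_3)^{1/2}\rfloor-10(i-1)\lceil(2t_3)^{1/2}\rceil$. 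Next I introduce the event $E$ that each curve $\tilde Q_i$ stays within $3(2t_3)^{1/2}$ of the piecewise-linear ``corridor'' $\tau_i$ that runs linearly from $x_i'$ at $-t_3$ up to height $(M_2+R+10(k-i))(2t_3)^{1/2}$ above the slope-$p$ line at $-t_2$, then parallel to the slope-$p$ line across $[-t_2,t_2]$, and finally linearly back down to $y_i'$ at $t_3$. All slopes of $\tau_i$ equal $p+O(N^{-\alpha/2})\in[0,1]$ for large $N$ since $t_3-t_2\sim N^\alpha$ and the total vertical excursion is $O((2t_3)^{1/2})$. On $E$ the corridors are separated, hence the curves are ordered on $S$, so $E\subseteq E_S$ and $\mathbb{P}^{-t_3,t_3,\vec x\,',\vec y\,'}_{avoid,Ber;S}(E)\ge \mathbb{P}^{-t_3,t_3,\vec x\,',\vec y\,'}_{Ber}(E)=\prod_{i=1}^{k-1}\mathbb{P}(\text{$i$-th free Bernoulli bridge lies in its corridor})$. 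Each factor is bounded below uniformly in $N$: couple the bridge to a Brownian bridge by Theorem \ref{KMT}, use Corollary \ref{Cheb} to discard the coupling error on scale $o((2t_3)^{1/2})$, and then rescale $[-t_3,t_3]\to[0,1]$ and subtract the straight interpolation, turning the corridor into a fixed $O_{k,R,M_1,M_2,p}(1)$ piecewise-linear band vanishing at the endpoints whose center converges uniformly to a fixed continuous function; Lemma \ref{Spread} then gives a positive probability independent of $N$. This yields $\mathbb{P}_{\tilde{\mathfrak Q}}(\mathrm{lb})\ge \mathbb{P}^{-t_3,t_3,\vec x\,',\vec y\,'}_{avoid,Ber;S}(E)\ge c^{k-1}=:2h_1>0$, and on $E$ one has $\tilde Q_{k-1}(\pm t_2)\mp pt_2\ge (M_2+R+7)(2t_3)^{1/2}$, so $V_1^b=M_2+R$ is a valid choice.

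\textbf{The upper part.} Here I use Lemma \ref{MCLxy} in the other direction, raising $\vec x,\vec y$ to $10\lceil(2t_3)^{1/2}\rceil$-separated $\vec x\,'',\vec y\,''\in\mathfrak W_{k-1}$ whose lowest coordinates lie at height $(M_2+C)(2t_3)^{1/2}$ above the slope-$p$ line, where $C\ge\sqrt{8p(1-p)\log 3}$ is a fixed constant; this is possible since the coordinates of $\vec x,\vec y$ are at most $\mp pt_3+M_1(2t_3)^{1/2}$. Because $\ell_{bot}(s)-ps\le M_2(2t_3)^{1/2}$, the segment joining the lowest endpoints of $\vec x\,'',\vec y\,''$ lies at least $C(2t_3)^{1/2}$ above $\ell_{bot}$, so Lemma \ref{CurveSeparation} gives $Z(-t_3,t_3,\vec x\,'',\vec y\,'',\infty,\ell_{bot})\ge (1-3e^{-C^2/8p(1-p)})^{k-1}=:c_0>0$. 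Since the conditioning event for the $;S$ measure contains that for the $\llbracket -t_3,t_3\rrbracket$ measure, $\mathbb{P}^{-t_3,t_3,\vec x\,'',\vec y\,'',\infty,\ell_{bot}}_{avoid,Ber;S}$ is dominated by $c_0^{-1}$ times the free law $\mathbb{P}^{-t_3,t_3,\vec x\,'',\vec y\,''}_{Ber}$; under the latter $\tilde Q_1$ is a single Bernoulli bridge, and Theorem \ref{KMT} together with the maximal inequality \eqref{sepBd} of Lemma \ref{BBmax} bound $\tilde Q_1(\pm t_2)\mp pt_2$ above by $(M_2+C+10k)(2t_3)^{1/2}+\sqrt{2t_3}\,\|B^\sigma\|_\infty + o((2t_3)^{1/2})$ with overwhelming probability. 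Hence $\mathbb{P}_{\tilde{\mathfrak Q}}(\mathrm{ub}^c)\le 2c_0^{-1}\exp(-2(V_1^t-M_2-C-10k-1)^2/p(1-p)) + o(1)$, which is $<h_1$ once $V_1^t$ is chosen large (depending only on $R,M_1,M_2,k,p$) and $N\ge N_7$. Combining, $\mathbb{P}_{\tilde{\mathfrak Q}}(\mathrm{lb}\cap\mathrm{ub})\ge 2h_1-h_1=h_1$, which is \eqref{eqnRT2}.

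\textbf{Main obstacle.} The delicate point is the arbitrary up-right bottom curve $\ell_{bot}$: it is controlled only from above, by $ps+M_2(2t_3)^{1/2}$, and it can climb to that height within $O((2t_3)^{1/2})$ steps of $-t_3$, while the bottom curve is pinned at $x_{k-1}\le -pt_3+M_1(2t_3)^{1/2}$; consequently any corridor for $\tilde Q_{k-1}$ in the spirit of Lemma \ref{prob19} is forced below $\ell_{bot}$ near the endpoints, so one cannot run the construction against $\ell_{bot}$ directly. The escape --- replacing $\ell_{bot}$ by $-\infty$, which is legitimate precisely because the relevant event is monotone --- is what compels the decomposition into a monotone lower part handled by the corridor construction and an antimonotone upper part handled by comparison with free bridges via Lemma \ref{CurveSeparation}. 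The remaining work is bookkeeping: checking that the rise and descent of $\tau_i$ occur on a fixed proportion of $[-t_3,t_3]$ bounded away from $\pm t_2$, that all corridor slopes lie in $[0,1]$ for large $N$ (using $p<1$), that $2t_3\ge y_i''-x_i''\ge 0$, that $\Omega_{avoid}(-t_3,t_3,\vec x\,',\vec y\,',\infty,-\infty;S)$ and the raised configurations are nonempty, and that the finitely many $N$-thresholds from Lemmas \ref{MCLxy}, \ref{MCLfg}, \ref{CurveSeparation}, \ref{KMT}, \ref{BBmax} and Corollary \ref{Cheb} can be absorbed into a single $N_7$.
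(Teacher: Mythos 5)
Your proposal is correct in outline and arrives at the same decomposition as the paper --- a lower bound $2h_1$ for the event $\{\tilde{Q}_{k-1}(\pm t_2)\mp pt_2 \geq V_1^b(2t_3)^{1/2}\}$ and an upper bound $h_1$ for $\{\tilde{Q}_1(\pm t_2)\mp pt_2 > V_1^t(2t_3)^{1/2}\}$, combined via $\mathbb{P}(\mathrm{lb}\cap\mathrm{ub})\geq\mathbb{P}(\mathrm{lb})-\mathbb{P}(\mathrm{ub}^c)$ --- but your treatment of the lower part is genuinely different from the paper's. The paper conditions on the value of $\tilde{\mathfrak{Q}}$ at $0$, applies Lemma \ref{prob19} to push $\tilde{Q}_{k-1}(0)$ up to height $K_1(2t_3)^{1/2}$, and then uses Lemma \ref{LemmaHalfS4} (after Lemmas \ref{MCLxy} and \ref{CurveSeparation}) separately on $[-t_3,0]$ and $[0,t_3]$ to propagate this height to $\mp t_2$ with probability at least $1/4$ per side. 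You instead remove $\ell_{bot}$ and lower the boundary data exactly as in Lemma \ref{prob19}, but then run that lemma's corridor construction in one shot over all of $[-t_3,t_3]$, replacing the single peak at $0$ by a plateau over $[-t_2,t_2]$, so that both values at $\pm t_2$ are controlled simultaneously by one application of Theorem \ref{KMT} and Lemma \ref{Spread} per curve. This avoids the conditioning step and Lemma \ref{LemmaHalfS4} entirely, at the price of a less explicit constant $h_1$ (the paper's route produces the explicit Gaussian-type constant in (\ref{5.10const})); since the lemma only requires some $h_1>0$ depending on the fixed data and $R$, either is adequate. Your upper part is essentially the paper's Step 3: raise the boundary data, compare with free bridges through the acceptance probability supplied by Lemma \ref{CurveSeparation}, and suppress the maximum of a single free bridge by taking $V_1^t$ large (the paper invokes Lemma \ref{LemmaMinFreeS4}, you use Theorem \ref{KMT} and \eqref{sepBd} directly, which is the same content).

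Two constants need adjusting. First, in the upper part you raise $\vec{x},\vec{y}$ to vectors whose lowest coordinates sit at height $(M_2+C)(2t_3)^{1/2}$ above the slope-$p$ line, justifying this by $x_1\leq -pt_3+M_1(2t_3)^{1/2}$; but Lemma \ref{MCLxy} requires the new data to dominate the old componentwise, which fails if $M_1>M_2+C$, so the justification as written is a non sequitur. Place the lowest raised coordinate at height $(M_1+M_2+C)(2t_3)^{1/2}$ instead: this dominates $x_1$ (hence all $x_i$) and still clears $\ell_{bot}$ by $C(2t_3)^{1/2}$, and the only effect downstream is a harmless shift of the centering constant before you choose $V_1^t$. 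Second, with $C=\sqrt{8p(1-p)\log 3}$ exactly, $1-3e^{-C^2/8p(1-p)}=0$, so your $c_0$ vanishes; take $C$ strictly larger (for instance so that the bound from Lemma \ref{CurveSeparation} is $11/12$, as in the paper) to guarantee $c_0>0$. With these repairs the argument is complete.
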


\begin{proof}
	We first define the constants $V_1^b$ and $h_1$, as well as two other constants $C$ and $K_1$ to be used in the proof. We put
	\begin{equation}\label{5.10const}
	\begin{split}
	C &= \sqrt{8p(1-p)\log\frac{3}{1-(11/12)^{1/(k-2)}}},\\
	V_1^b &= M_1 + Ck + M_2 + R, \qquad K_1 = (4r+10)V_1^b,\\
	h_1 &=  \frac{2^{k/2-5}\big(1-2e^{-4/p(1-p)}\big)^{2k}}{(\pi p(1-p))^{k/2}}\,\exp\left(-\frac{2k(K_1+M_1+6)^2}{p(1-p)}\right).
	\end{split}
	\end{equation}
	Note in particular that $V_1^b > M_2 + R$. We will fix $V_1^t > V_1^b$ in Step 3 below depending on $h_1$. We will prove in the following steps that for these choices of  $V_1^b, V_1^t, h_1$, we can find $N_7$ so that for $N\geq N_7$ we have
	\begin{align}
	\mathbb{P}_{\tilde{\mathfrak{Q}}}\left(\tilde{Q}_{k-1}(\pm t_2) \mp pt_2 \geq (2t_3)^{1/2}V_1^b\right) &\geq 2h_1, \label{5.10bound1}\\
	\mathbb{P}_{\tilde{\mathfrak{Q}}}\left(\tilde{Q}_1(\pm t_2) \mp pt_2 > (2t_3)^{1/2}V_1^t\right) &\leq h_1. \label{5.10bound2}
	\end{align}
	Assuming the validity of the claim, we then observe that the probability in \eqref{eqnRT2} is bounded below by $2h_1 - h_1 = h_1$, proving the lemma. We will prove \eqref{5.10bound1} and \eqref{5.10bound2} in three steps.\\
	
	\noindent\textbf{Step 1.} In this step we prove that there exists $N_7$ so that \eqref{5.10bound1} holds for $N\geq N_7$, assuming results from Step 2 below. We condition on the value of $\tilde{\mathfrak{Q}}$ at 0 and divide $\tilde{\mathfrak{Q}}$ into two independent line ensembles on $[-t_3,0]$ and $[0,t_3]$. Observe by Lemma \ref{MCLfg} that
	\begin{equation}\label{5.10MC}
	\mathbb{P}_{\tilde{\mathfrak{Q}}}\left(\tilde{Q}_{k-1}(\pm t_2) \mp pt_2 \geq (2t_3)^{1/2}V_1^b\right) \geq \mathbb{P}^{-t_3,t_3,\vec{x},\vec{y}}_{avoid, Ber; S}\left(\tilde{Q}_{k-1}(\pm t_2) \mp pt_2 \geq (2t_3)^{1/2}V_1^b\right).
	\end{equation}
	With $K_1$ as in \eqref{5.10const}, we define events
	\[
	E_{\vec{z}} = \left\{\big(\tilde{Q}_1(0),\dots,\tilde{Q}_{k-1}(0)\big) = \vec{z}\right\}, \quad X = \left\{ \vec{z}\in\mathfrak{W}_{k-1} : z_{k-1} \geq K_1(2t_3)^{1/2} \mbox { and } \mathbb{P}^{-t_3,t_3,\vec{x},\vec{y}}_{avoid,Ber; S}(E_{\vec{z}}) > 0\right\},
	\]
	and $E = \bigsqcup_{\vec{z} \in X} E_{\vec{z}}$. By Lemma \ref{LemmaWD}, we can choose $\tilde{N}_0$ large enough depending on $M_1,C,k,M_2,R$ so that $X$ is non-empty for $N\geq\tilde{N}_0$. By Lemma \ref{prob19} we can find $\tilde{N}_1$ so that
	\begin{equation}\label{5.10Ebound}
	\mathbb{P}^{-t_3,t_3,\vec{x},\vec{y}}_{avoid, Ber; S}(E) \geq \mathbb{P}^{-t_3,t_3,\vec{x},\vec{y}}_{avoid,Ber; S}\left(\tilde{Q}_{k-1}(0) \geq K_1(2t_3)^{1/2}\right) \geq A\exp\left(-\frac{2k(K_1+M_1+6)^2}{p(1-p)}\right)
	\end{equation}
	for $N\geq\tilde{N}_1$, where $A = A(p,k)$ is a constant given explicitly in \eqref{19ineq}.
	
	Now let $\tilde{Q}_i^1$ and $\tilde{Q}_i^2$ denote the restrictions of $\tilde{Q}_i$ to $[-t_3,0]$ and $[0,t_3]$ respectively for $1\leq i\leq k-1$, and write $S_1 = S\cap\llbracket -t_3,0\rrbracket$, $S_2 = S\cap\llbracket 0, t_3\rrbracket$. We observe that if $\vec{z}\in X$, then
	\begin{equation}\label{5.10split}
	\mathbb{P}^{-t_3,t_3,\vec{x},\vec{y}}_{avoid,Ber;S}\left(\tilde{Q}^1_{k-1} = \ell_1, \tilde{Q}^2_{k-1} = \ell_2 \, |\, E_{\vec{z}}\right) = \mathbb{P}^{-t_3,0,\vec{x},\vec{z}}_{avoid,Ber;S_1}(\ell_1)\cdot\mathbb{P}^{0,t_3,\vec{z},\vec{y}}_{avoid,Ber;S_2}(\ell_2).
	\end{equation}
	In Step 2, we will find $\tilde{N}_2$ so that for $N\geq\tilde{N}_2$ we have
	\begin{equation}\label{5.10fourth}
	\begin{split}
	&\mathbb{P}^{-t_3,0,\vec{x},\vec{z}}_{avoid,Ber;S_1}\left(\tilde{Q}^1_{k-1}(-t_2) + pt_2 \geq (2t_3)^{1/2}V_1^b\right) \geq \frac{1}{4},\\
	&\mathbb{P}^{0,t_3,\vec{x},\vec{z}}_{avoid,Ber;S_2}\left(\tilde{Q}^2_{k-1}(t_2) - pt_{2} \geq (2t_3)^{1/2}V_1^b\right) \geq \frac{1}{4}.
	\end{split}
	\end{equation}
	Using \eqref{5.10Ebound}, \eqref{5.10split}, and \eqref{5.10fourth}, we conclude that
	\[
	\mathbb{P}^{-t_3,t_3,\vec{x},\vec{y}}_{avoid, Ber;S}\left(\tilde{Q}_{k-1}(\pm t_2) \mp pt_2 \geq (2t_3)^{1/2}V_1^b\right) \geq \frac{A}{16}\exp\left(-\frac{2k(K_1+M_1+6)^2}{p(1-p)}\right)
	\]
	for $N\geq N_7 = \max(\tilde{N}_0,\tilde{N}_1,\tilde{N}_2)$. In combination with \eqref{5.10MC}, this proves \eqref{5.10bound1} with $h_1 = A/16$ as in \eqref{5.10const}.\\
	
	\noindent\textbf{Step 2.} In this step, we prove the inequalities in \eqref{5.10fourth} from Step 1, using Lemma \ref{LemmaHalfS4}. Let us define vectors $\vec{x}\,', \vec{z}\,', \vec{y}\,'$ by
	\begin{align*}
	x_i' &= \lfloor -pt_3 - M_1(2t_3)^{1/2}\rfloor - (i-1)\lceil C(2t_3)^{1/2}\rceil,\\
	z_i' &= \lfloor K_1(2t_3)^{1/2}\rfloor - (i-1)\lceil C(2t_3)^{1/2}\rceil,\\
	y_i' &= \lfloor pt_3 - M_1(2t_3)^{1/2}\rfloor - (i-1)\lceil C(2t_3)^{1/2}\rceil.
	\end{align*}
	Note that $x_i' \leq x_{k-1} \leq x_i$ and $x_i' - x_{i+1}' \geq C(2t_3)^{1/2}$ for $1\leq i\leq k - 1$, and likewise for $z_i',y_i'$. By Lemma \ref{MCLxy} we have
	\begin{equation}\label{5.10separate}
	\begin{split}
	&\mathbb{P}^{-t_3,0,\vec{x},\vec{z}}_{avoid,Ber;S_1}  \hspace{-1mm} \left(\tilde{Q}^1_{k-1}(-t_2) + pt_2 \geq (2t_3)^{1/2}V_1^b\right)  \hspace{-1mm}\geq  \hspace{-1mm}\mathbb{P}^{-t_3,0,\vec{x}\,',\vec{z}\,'}_{avoid,Ber;S_1} \hspace{-1mm} \left(\tilde{Q}^1_{k-1}(-t_2) + pt_2 \geq (2t_3)^{1/2}V_1^b\right)  \\ 
	& \geq \mathbb{P}^{-t_3,0,x_{k-1}',z_{k-1}'}_{Ber}\left(\ell_1(-t_2) + pt_2 \geq (2t_3)^{1/2}V_1^b\right) - \left( 1 - \mathbb{P}^{-t_3,t_3,\vec{x}\,',\vec{z}\,'}_{Ber}\left(\tilde{Q}^1_1 \geq \cdots \geq \tilde{Q}_{k-1}^1\right)\right).
	\end{split}
	\end{equation} 
	To bound the first term on the second line, first note that $x_{k-1}' \geq -pt_3 - (M_1+C(k-1))(2t_3)^{1/2}$ and $z_{k-1}' \geq K_1(2t_3)^{1/2} - C(k-1)(2t_3)^{1/2}$ for sufficiently large $N$. Let us write $\tilde{x},\tilde{z}$ for these two lower bounds. Then by Lemma \ref{LemmaHalfS4}, we have an $\tilde{N}_3$ so that for $N\geq\tilde{N}_3$,
	\begin{equation}\label{5.10third1}
	\mathbb{P}^{-t_3,0,x_{k-1}',z_{k-1}'}_{Ber}\left(\ell_1(-t_2) \geq \frac{t_2}{t_3}\,\tilde{x} + \frac{t_3-t_2}{t_3}\,\tilde{z} - (2t_3)^{1/4}\right) \geq \frac{1}{3}.
	\end{equation}  
	Moreover, as long as $\tilde{N}_3^\alpha > 2$, we have for $N\geq\tilde{N}_3^\alpha$ that
	\begin{equation}\label{2r+5}
	\frac{t_3-t_2}{t_3} \geq 1 - \frac{(r+2)N^\alpha}{(r+3)N^\alpha - 1} > 1-\frac{r+2}{r+5/2} = \frac{1}{2r+5}.
	\end{equation}
	It follows from our choice of $V_1^b$ and $K_1 = 2(2r+5)V_1^b$ in \eqref{5.10const}, as well as \eqref{2r+5}, that 
	\begin{align*}
	&\frac{t_2}{t_3}\,\tilde{x} + \frac{t_3-t_2}{t_3}\,\tilde{z} - (2t_3)^{1/4} = -pt_2 - C(k-1)(2t_3)^{1/2} - \frac{t_2}{t_3}\,M_1(2t_3)^{1/2} + \\
& \frac{t_3-t_2}{t_3}\,K_1(2t_3)^{1/2} - (2t_3)^{1/4} \geq -pt_2 - Ck(2t_3)^{1/2} - M_1(2t_3)^{1/2} + \frac{1}{2r+5}\,K_1(2t_3)^{1/2}\\
& =  -pt_2 + (M_1 + Ck + 2(M_2+R))(2t_3)^{1/2}>  -pt_2 + (2t_3)^{1/2}V_1^b.
	\end{align*}
	For the first inequality, we used the fact that $t_2/t_3 < 1$, and we assumed that $\tilde{N}_3$ is sufficiently large so that $C(k-1)(2t_3)^{1/2} + (2t_3)^{1/4} \leq Ck(2t_3)^{1/2}$ for $N\geq\tilde{N}_3$. Using \eqref{5.10third1}, we conclude for $N\geq\tilde{N}_3$
	\begin{equation}\label{5.10third2}
	\mathbb{P}^{-t_3,0,x_{k-1}',z_{k-1}'}_{Ber}\left(\ell_1(-t_2) + pt_2 \geq (2t_3)^{1/2}V_1^b\right) \geq \frac{1}{3}.
	\end{equation}
	Since $|z_i'-x_i'-pt_2| \leq (K_1+M_1+1)(2t_2)^{1/2}$, we have by Lemma \ref{CurveSeparation} and our choice of $C$ that the second probability in the second line of \eqref{5.10separate} is bounded below by
	\[
	\left(1-3e^{-C^2/8p(1-p)}\right)^{k-1} \geq 11/12
	\]
	for $N$ larger than some $\tilde{N}_4$. It follows from \eqref{5.10separate} and \eqref{5.10third2} that for $N\geq\tilde{N}_2 = \max(\tilde{N}_3,\tilde{N}_4)$,
	\begin{equation*}
	\mathbb{P}^{-t_3,0,\vec{x},\vec{z}}_{avoid,Ber;S_1}\left(\tilde{Q}^1_{k-1}(-t_2) + pt_2 \geq (2t_3)^{1/2}V_1^b\right) \geq \frac{1}{3} - \frac{1}{12} = \frac{1}{4},
	\end{equation*}
	proving the first inequality in \eqref{5.10fourth}. The second inequality is proven similarly.
	\\
	
	\noindent\textbf{Step 3.} In this last step, we fix $V_1^t$ and prove that we can enlarge $N_7$ from Step 1 so that \eqref{5.10bound2} holds for $N\geq N_7$. Let $C$ be as in \eqref{5.10const}, and define vectors $\vec{x}\,'', \vec{y}\,''\in\mathfrak{W}_{k-1}$ by
	\begin{align*}
	x_i'' &= \lceil -pt_3 + M_1(2t_3)^{1/2}\rceil + (k-i)\lceil C(2t_3)^{1/2}\rceil,\\
	y_i'' &= \lceil pt_3 + M_1(2t_3)^{1/2}\rceil + (k-i)\lceil C(2t_3)^{1/2}\rceil.
	\end{align*}
	Note that $x_i'' \geq x_1 \geq x_i$ and $x_i''-x_{i+1}'' \geq C(2t_3)^{1/2}$, and likewise for $y_i''$. Moreover, $\ell_{bot}$ lies a distance of at least $C(2t_3)^{1/2}$ uniformly below the line segment connecting $x_{k-1}''$ and $y_{k-1}''$. By Lemma \ref{MCLxy} we have
	\begin{align*}
	&\mathbb{P}_{\tilde{\mathfrak{Q}}}\left(\tilde{Q}_1(\pm t_2) \mp pt_2 > (2t_3)^{1/2}V_1^t\right) \leq \mathbb{P}^{-t_3,t_3,\vec{x}\,'',\vec{y}\,'',\infty,\ell_{bot}}_{avoid,Ber;S}\left(\sup_{s\in[-t_3,t_3]} \big[\tilde{Q}_1(s)-ps\big] \geq (2t_3)^{1/2}V_1^t\right)\leq\\
	& \frac{\mathbb{P}^{-t_3,t_3,x_1'',y_1''}_{Ber}\left(\sup_{s\in[-t_3,t_3]} \big[\tilde{L}_1(s)-ps\big] \geq (2t_3)^{1/2}V_1^t\right)}{\mathbb{P}^{-t_3,t_3,\vec{x}\,'',\vec{y}\,''}_{Ber}\left(\tilde{L}_1\geq\cdots\geq\tilde{L}_{k-1}\geq\ell_{bot}\right)}.
	\end{align*}
	In the numerator in the second line, we used the fact that the curves $\tilde{L}_1,\dots,\tilde{L}_{k-1}$ are independent under $\mathbb{P}^{-t_3,t_3,x_1'',y_1''}_{Ber}$, and the event in the parentheses depends only on $\tilde{L}_1$. By Lemma \ref{LemmaMinFreeS4}, since $\min(x_1'' + pt_3, \, y_1'' - pt_3) \leq (M_1+C(k-1))(2t_3)^{1/2}$, we can choose $V_1^t > V_1^b$ as well as $\tilde{N}_5$ large enough so that the numerator is bounded above by $h_1/2$ for $N\geq\tilde{N}_5$. Since $|y_i'' - x_i'' - 2pt_3| \leq 1$, our choice of $C$ and Lemma \ref{CurveSeparation} give a $\tilde{N}_6$ so that the denominator is at least $11/12$ for $N\geq\tilde{N}_6$. This gives an upper bound of $12/11\cdot h_1/2< h_1$ in the above as long as $N_7\geq\max(\tilde{N}_5,\tilde{N}_6)$, which concludes the proof of \eqref{5.10bound2}.

\end{proof}

We are now equipped to prove Lemma \ref{LemmaAP2}. Let us put for convenience
\begin{equation}\label{t12}
t_{12} = \left\lfloor \frac{t_1+t_2}{2}\right\rfloor.
\end{equation}

\begin{proof}(of Lemma \ref{LemmaAP2}) We first introduce some notation to be used in the proof. Let $S$ be as in Definition \ref{TildeDef}. For $\vec{c}, \vec{d} \in \mathfrak{W}_{k-1}$, let us write $\tilde{S} = \llbracket -t_2,-t_1\rrbracket \cup \llbracket t_1,t_2\rrbracket$, $\tilde{\Omega}(\vec{c},\vec{d}) = \Omega_{avoid}(-t_2, t_2, \vec{c}, \vec{d}, \infty, \ell_{bot}; \tilde{S})$. For $s\in\tilde{S}$ we define events
	\begin{equation}
	\begin{split}
	&A(\vec{c}, \vec{d},s) = \left\{\tilde{\mathfrak{Q}} \in \tilde\Omega(\vec{c},\vec{d}): \tilde Q_{k-1}(\pm s) \mp ps \geq (M_2 + 1) (2t_3)^{1/2} \right\}, \\
	&B(\vec{c},\vec{d},V^{top},s) = \left\{ \tilde{\mathfrak{Q}} \in \tilde\Omega(\vec{c},\vec{d}): \tilde Q_{1}(\pm s) \mp ps \leq V^{top} (2t_3)^{1/2} \right\},\\
	&C(\vec{c}, \vec{d}, \epsilon, s) = \left\{ \tilde{\mathfrak{Q}} \in \tilde\Omega(\vec{c},\vec{d}): \min_{1\leq i\leq k-2, \, \varsigma \in \{-1, 1\}} \big[\tilde Q_{i}(\varsigma s) - \tilde Q_{i+1}(\varsigma s)\big] \geq 3\epsilon (2t_3)^{1/2} \right\},\\
	&D(\vec{c},\vec{d},V^{top},\epsilon,s) = A(\vec{c}, \vec{d},s) \cap B(\vec{c},\vec{d},V^{top},s) \cap C(\vec{c}, \vec{d}, \epsilon, s).
	\end{split}
	\end{equation}
	Here, $\epsilon$ and $V^{top}$ are constants which we will specify later. By Lemma \ref{LemmaBP1}, for all $(\vec{c},\vec{d})$ and $N$ sufficiently large we have
\begin{equation}\label{S6R1} 
D(\vec{c},\vec{d},V^{top},\epsilon,s)  \subset \left\{Z\left(  -t_1, t_1, \mathfrak{Q}(-t_1), \mathfrak{Q}(t_1), \ell_{bot}\llbracket -t_1, t_1\rrbracket\right) > g\right\}
\end{equation}
for some $g$ depending on $\epsilon,V^{top},M_2$. The above gives all the notation we require. 

We now turn to the proof of the lemma, which split is into several steps.\\
	
	{\bf \raggedleft Step 1.} In this step, we show that there exist $R > 0$ and $\bar{N}_0$ sufficiently large so that if $c_{k-1} + pt_2 \geq (2t_3)^{1/2} (M_2 + R)$ and $d_{k-1} - pt_2 \geq (2t_3)^{1/2} (M_2 + R)$, then for all $s\in\tilde{S}$ and $N\geq\bar{N}_0$ we have
	\begin{equation}\label{6.2step1}
	\begin{split}
	\mathbb{P}^{-t_2,t_2,\vec{c},\vec{d},\infty,\ell_{bot}}_{avoid,Ber; \tilde S}\big(A(\vec{c},\vec{d},s)\big) \geq  \frac{19}{20} \quad \mathrm{and} \quad \mathbb{P}^{-t_2,t_2,\vec{c},\vec{d}}_{avoid,Ber;\tilde S}\left(Q_{k-1}|_{\tilde S} \geq \ell_{bot}|_{\tilde S}\right) \geq \frac{99}{100}.
	\end{split}
	\end{equation} 
	Let us begin with the first inequality. We observe via Lemma \ref{MCLfg} that
	\begin{equation}\label{6.2step1MC}
	\mathbb{P}^{-t_2,t_2,\vec{c},\vec{d},\infty,\ell_{bot}}_{avoid,Ber; \tilde S}\big(A(\vec{c},\vec{d},s)\big) \geq \mathbb{P}^{-t_2,t_2,\vec{c},\vec{d}}_{avoid,Ber; \tilde S}\big(A(\vec{c},\vec{d},s)\big).
	\end{equation}
	Now define the constant
	\begin{equation}\label{6.2C}
	C = \sqrt{8p(1-p)\log\frac{3}{1-(199/200)^{1/(k-1)}}}
	\end{equation}
	and vectors $\vec{c}\,', \vec{d}\,' \in \mathfrak{W}_k$ by
	\begin{align*}
	c_i' &= \lfloor -pt_2 + (M_2+R)(2t_3)^{1/2}\rfloor - (i-1)\lceil C(2t_2)^{1/2}\rceil,\\
	d_i' &= \lfloor pt_2 + (M_2+R)(2t_3)^{1/2}\rfloor - (i-1)\lceil C(2t_2)^{1/2}\rceil.
	\end{align*}
	Then by Lemma \ref{MCLxy} we have
	\begin{equation}\label{6.2step1split}
	\begin{split}
	&\mathbb{P}^{-t_2,t_2,\vec{c},\vec{d}}_{avoid,Ber; \tilde S}\big(A(\vec{c},\vec{d},s)\big) \geq \mathbb{P}^{-t_2, t_2, \vec{c}\,', \vec{d}\,'}_{avoid, Ber; \tilde S}(A(\vec{c}\,', \vec{d}\,',s)) \geq \\ 
	&\mathbb{P}^{-t_2, t_2, c_{k-1}', d_{k-1}'}_{Ber}\hspace{-1mm}\left(\inf_{s\in \tilde S}\big[\ell(s) - ps\big] \geq (M_2+1)(2t_3)^{1/2}\right)\hspace{-1mm} - \hspace{-1mm} \left( \hspace{-0.5mm}1 - \mathbb{P}^{-t_2, t_2, \vec{c}\,', \vec{d}\,'}_{Ber}\left(L_1 \geq \cdots \geq L_{k-1}\right) \hspace{-0.5mm} \right).
	\end{split}
	\end{equation}
	By Lemma \ref{CurveSeparation} and our choice of $C$, we can find $\tilde{N}_0$ so that $\mathbb{P}^{-t_2, t_2, \vec{c}\,', \vec{d}\,'}_{Ber}(L_1 \geq \cdots \geq L_{k-1})>199/200 > 39/40$ for $N\geq\tilde{N}_0$. Writing $z = d_{k-1}' - c_{k-1}'$, the term in the second line of \eqref{6.2step1split} is equal to
	\begin{align*}
	&\mathbb{P}^{-t_2, t_2, 0, z}_{Ber}\Big(\inf_{s\in \tilde S}\big[\ell(s) + c_{k-1}' - ps\big]  \geq (M_2 + 1)(2t_3)^{1/2}\Big) \geq \\
	& \mathbb{P}^{0, 2t_2, 0, z}_{Ber}\Big(\inf_{s\in [0,2t_2]}\big[\ell(s) - ps\big] \geq (-R+Ck+1)(2t_3)^{1/2}\Big).
	\end{align*}
	In the second line, we used the estimate $c_{k-1}' \geq -pt_2 + (M_2+R-Ck)(2t_3)^{1/2}$. Now by Lemma \ref{LemmaMinFreeS4}, we can choose $R$ large enough depending on $C,k,M_2,p$ so that this probability is greater than $39/40$ for $N$ greater than some $\tilde{N}_1$. This gives a lower bound in \eqref{6.2step1split} of $39/40-1/40 = 19/20$ for $N\geq\max(\tilde{N}_0,\tilde{N}_1)$, and in combination with \eqref{6.2step1MC} this proves the first inequality in \eqref{6.2step1}.
	
	We prove the second inequality in \eqref{6.2step1} similarly. Note that since $\ell_{bot}(s) \leq ps + M_2(2t_3)^{1/2}$ on $[-t_3,t_3]$ by assumption, we have
	\begin{equation} \label{sigma}
	\begin{split}
	&\mathbb{P}^{-t_2,t_2,\vec{c},\vec{d}}_{avoid,Ber;\tilde S}\left(\tilde Q_{k-1}|_{\tilde S} \geq \ell_{bot}|_{\tilde S}\right) \geq \mathbb{P}^{-t_2, t_2, \vec{c},\vec{d}}_{avoid, Ber; \tilde S}\left(\inf_{s\in[-t_2, t_2]} \big[Q_{k-1}(s) - ps\big] \geq M_2(2t_3)^{1/2}\right) \geq\\
	&\mathbb{P}^{-t_2, t_2, \vec{c}\,',\vec{d}\,'}_{avoid, Ber;\tilde S}\left(\inf_{s\in[-t_2, t_2]} \big[\tilde Q_{k-1}(s) - ps\big] \geq M_2(2t_3)^{1/2}\right) \geq\\
	&\mathbb{P}^{0, 2t_2, 0, z}_{Ber}\hspace{-1mm}\left(\inf_{s\in[0, 2t_2]} \big[\ell(s) - ps\big] \geq -(R-Ck)(2t_3)^{1/2} \hspace{-1mm} \right) - \left(\hspace{-0.5mm} 1 - \mathbb{P}^{-t_2,t_2,\vec{c}\,',\vec{d}\,'}_{Ber}(\tilde L_1\geq \cdots \geq \tilde L_{k-1})\hspace{-0.5mm}\right).
	\end{split}
	\end{equation}
	We enlarge $R$ if necessary so that the probability in the third line of \eqref{sigma} is $>199/200$ for $N\geq\tilde{N}_2$ by Lemma \ref{LemmaMinFreeS4}, and \ref{CurveSeparation} implies as above that the second expression in the last line of \eqref{sigma} is $>-1/200$ for $N\geq\tilde{N}_3$. This gives us a lower bound of $199/200 - 1/200 = 99/100$ for $N\geq\tilde{N}_0 = \max(\tilde{N}_2,\tilde{N}_3)$ as desired. This proves the two inequalities in \eqref{6.2step1} for $N\geq\bar{N}_0 = \max(\tilde{N}_0,\tilde{N}_1,\tilde{N}_2,\tilde{N}_3)$.\\
	
	{\bf \raggedleft Step 2.} In this step we fix $R$ sufficiently large so that $R >C$ from (\ref{6.2C}) and  the inequalities in (\ref{6.2step1}) both hold for this choice of $R$. Our work from Step 1 ensures that such a choice for $R$ is possible. Let $V_1^t, V_1^b$, and $h_1$ be as in Lemma \ref{LemmaBP2}  for this choice of $R$. Define the set
	\begin{equation}\label{6.2E}
	\begin{split}
	E = \big\{ \vec{c}, \vec{d} \in \mathfrak{W}_{k-1} : &\; (2t_3)^{1/2} V_1^t \geq \max(c_1 + p t_2 d_1 - pt_2) \mbox{ and }\\
	&\; \min(c_{k-1} + p t_2, d_{k-1} - pt_2)  \geq (2t_3)^{1/2} V_1^b \big\}.
	\end{split}
	\end{equation}
	We show in this step that there exists $V^{top} \geq M_2 + 6(k-1)$ and $\bar{N}_1$ such that for all $(\vec{c}, \vec{d}) \in E$, $s\in\tilde{S}$, and $N\geq\bar{N}_1$ we have
	\begin{equation}\label{6.2step2}
	\mathbb{P}^{-t_2,t_2,\vec{c},\vec{d},\infty,\ell_{bot}}_{avoid,Ber;\tilde S}\big(B(\vec{c},\vec{d},V^{top},s)\big) \geq  \frac{19}{20}.
	\end{equation}
	Let $C$ be as in \eqref{6.2C}, and define $\vec{c}\,'', \vec{d}\,'' \in \mathfrak{W}_{k-1}$ by
	\begin{align*}
	c_i'' &= \lceil -pt_2 + (2t_3)^{1/2} V_1^t\rceil + (k-1-i)\lceil C(2t_2)^{1/2}\rceil,\\
	d_i'' &= \lceil pt_2 + (2t_3)^{1/2} V_1^t\rceil + (k-1-i)\lceil C(2t_2)^{1/2}\rceil.
	\end{align*}
	Then $c_i'' \geq c_1 \geq c_i$ and $c_i'' - c_{i+1}'' \geq C(2t_2)^{1/2}$ for each $i$, and likewise for $d_i''$. By Lemma \ref{MCLxy}, the left hand side of \eqref{6.2step2} is bounded below by
	\begin{equation}\label{6.2step3split}
	\begin{split}
	&\mathbb{P}^{-t_2,t_2,\vec{c}\,'', \vec{d}\,'', \infty,\ell_{bot}}_{avoid, Ber;\tilde S}\left(\sup_{s\in \tilde S}\big[\tilde Q_1(s) - ps\big] \leq V^{top}(2t_3)^{1/2}\right) \geq\\
	& \mathbb{P}^{0,2t_2,0,z'}_{Ber}\left(\sup_{s\in[-t_2,t_2]}\big[\ell(s) - ps\big] \leq (V^{top}-V_1^t-Ck)(2t_3)^{1/2}\right) -\\ &\qquad\qquad \left(1 - \mathbb{P}^{-t_2,t_2,\vec{c}\,'', \vec{d}\,'', \infty,\ell_{bot}}_{Ber}\left(L_1\geq\cdots\geq L_{k-1}\geq \ell_{bot}\right)\right).
	\end{split}
	\end{equation}
	In the last line, we have written $z' = d_1'' - c_1''$, and we used the fact that $c_1'' \leq -pt_2 + (V_1^t + Ck)(2t_3)^{1/2}$. By Lemma \ref{LemmaMinFreeS4}, we can find $V^{top}$ large enough depending on $V_1^t,C,k,p$ so that the probability in the third line of \eqref{6.2step3split} is at least $39/40$ for $N\geq\tilde{N}_4$. On the other hand, the above observations regarding $\vec{c}\,''$, $\vec{d}\,''$, and $\ell_{bot}$, as well as the fact that $|d_1'' - c_1'' - 2pt_2| \leq 1$, allow us to conclude from Lemma \ref{CurveSeparation} that the probability in the last line of \eqref{6.2step3split} is at least $39/40$ for $N\geq\tilde{N}_5$.  In applying Lemma \ref{CurveSeparation} we used the fact that $V_1^b \geq M_2+R$, which implies that $\ell_{bot}$ lies a distance of at least $R(2t_3)^{1/2}$ (and hence $C (2t_3)^{1/2}$ as $R > C$ by construction) uniformly below the line segment connecting $c_{k-1}''$ and $d_{k-1}''$. We thus obtain a lower bound of $39/40 - 1/40 = 19/20$ in \eqref{6.2step3split} for $\bar{N}_1 = \max(\tilde{N}_4,\tilde{N}_5)$, which proves (\ref{6.2step2}) as desired.\\
	
	{\bf \raggedleft Step 3.} In this step, we show that with $E$, $V_1^t$, and $V_1^b$ as in Step 2, there exist $\epsilon > 0$ sufficiently small and $\bar{N}_2$ such that for all $(\vec{c}, \vec{d}) \in E$ and $N\geq\bar{N}_2$, we have
	\begin{equation}\label{LemmaBP2Step3}
	\mathbb{P}^{-t_2,t_2,\vec{c},\vec{d},\infty,\ell_{bot}}_{avoid,Ber;\tilde S}\big(D(\vec{c},\vec{d},V^{top},\epsilon,t_{12}) \big) \geq \frac{1}{2}.
	\end{equation}
	We claim that this follows if we find $\tilde{N}_6$ so that for $N\geq\tilde{N}_6$,
	\begin{equation}\label{6.2step3cond}
	\mathbb{P}^{-t_2,t_2,\vec{c},\vec{d}}_{avoid,Ber;\tilde S}\big(C(\vec{c},\vec{d},\epsilon,t_{12})\,|\,A(\vec{c},\vec{d},t_1) \cap B(\vec{c},\vec{d},V^{top},t_1)\big) \geq \frac{9}{10}.
	\end{equation}
	To see this, note that \eqref{6.2step1} and \eqref{6.2step2} imply that for $N\geq\max(\bar{N}_0,\bar{N}_1)$,
	\[
	\mathbb{P}^{-t_2,t_2,\vec{c},\vec{d}}_{avoid,Ber;\tilde S}\big(A(\vec{c},\vec{d},t_1) \cap B(\vec{c},\vec{d},V^{top},t_1)\big) \geq \left( \frac{19}{20} - \frac{1}{20} \right) \cdot \frac{99}{100}  > \frac{4}{5},
	\]
	and then \eqref{6.2step3cond} and the second inequality in \eqref{6.2step1} imply that for $N\geq \bar{N}_2 = \max(\bar{N}_0,\bar{N}_1,\tilde{N}_6)$,
	\[
	\mathbb{P}^{-t_2,t_2,\vec{c},\vec{d},\infty,\ell_{bot}}_{avoid,Ber;\tilde S}\big(A(\vec{c},\vec{d},t_1) \cap B(\vec{c},\vec{d},V^{top},t_1) \cap C(\vec{c},\vec{d},\epsilon,t_{12})\big) > \frac{9}{10}\cdot\frac{4}{5}  - \frac{1}{99} > \frac{1}{2},
	\]
	which gives (\ref{LemmaBP2Step3}) once we recall the definition of $D(\vec{c},\vec{d},V^{top},\epsilon,t_{12})$.\\

	In the remainder of this step, we verify \eqref{6.2step3cond}. Observe that $A(\vec{c},\vec{d},t_1) \cap B(\vec{c},\vec{d},V^{top},t_1)$ can be written as a countable disjoint union: 
	\begin{equation}\label{6.2step3disj}
	A(\vec{c},\vec{d},t_1) \cap B(\vec{c},\vec{d},V^{top},t_1) = \bigsqcup_{(\vec{a},\vec{b})\in I} F(\vec{a},\vec{b}).
	\end{equation}
	Here, for $\vec{a},\vec{b}\in\mathfrak{W}_{k-1}$, $F(\vec{a},\vec{b})$ is the event that $\mathfrak{Q}(-t_1) = \vec{a}$ and $\mathfrak{Q}(t_1) = \vec{b}$, and $I$ is the collection of pairs $(\vec{a},\vec{b})$ satisfying
	\begin{enumerate}[label = (\arabic*)]
		
		\item $ 0 \leq \min(a_i - c_i,\, d_i - b_i) \leq t_2 - t_1$ and $0\leq b_i-a_i \leq 2t_1$ for $1\leq i\leq k-1$,
		
		\item $\min(a_{k-1} + pt_1,\, b_{k-1} - pt_1) \geq (M_2+1)(2t_3)^{1/2}$,
		
		\item $\max(a_1 + pt_1,\, b_1 - pt_1) \leq V^{top}(2t_3)^{1/2}$.
		
	\end{enumerate}
	Now let $\mathfrak{Q}^1 = (Q^1_1,\dots,Q^1_{k-1})$ and $\mathfrak{Q}^2 = (Q^2_2,\dots,Q^2_{k-1})$ denote the restrictions of $\tilde{\mathfrak{Q}}$ to $\llbracket -t_2,-t_1\rrbracket$ and $\llbracket t_1,t_2\rrbracket$ respectively. Then we observe that
	\begin{equation}\label{6.2step3ind}
	\begin{split}
	&\mathbb{P}^{-t_2,t_2,\vec{c},\vec{d}}_{avoid, Ber; \tilde S}\left(\mathfrak{Q}^1 = \mathfrak{B}^1, \mathfrak{Q}^2 = \mathfrak{B}^2\,\big|\,F(\vec{a},\vec{b})\right) = \mathbb{P}^{-t_2,-t_1,\vec{c},\vec{a}}_{avoid, Ber}\left(\mathfrak{Q}^1 = \mathfrak{B}^1\right) \cdot \mathbb{P}^{t_1,t_2,\vec{b},\vec{d}}_{avoid, Ber}\left(\mathfrak{Q}^2 = \mathfrak{B}^2\right).
	\end{split}
	\end{equation}
	We also let $\tilde{I} = \{(\vec{a},\vec{b})\in I : \pr^{-t_2,t_2,\vec c, \vec d}_{avoid,Ber;\tilde{S}}(F(\vec{a},\vec{b})) > 0\}$, and we choose $\tilde{N}_7$ so that $\tilde{I}$ is nonempty for $N\geq\tilde{N}_7$ using Lemma \ref{LemmaWD}. We now fix $(\vec{a},\vec{b})$ and argue that we can choose $\epsilon > 0$ small enough and $\tilde{N}_8$ so that for $N\geq\tilde{N}_8$,
	\begin{equation}\label{6.2step3 9/10}
	\pr^{-t_2,t_2,\vec c, \vec d}_{avoid,Ber;\tilde{S}}\left(C(\vec{c},\vec{d},\epsilon,t_{12})\,\big|\, F(\vec{a},\vec{b})\right) \geq \frac{9}{10}.
	\end{equation}
	Then using \eqref{6.2step3 9/10} and \eqref{6.2step3disj} and summing over $\tilde{I}$ proves \eqref{6.2step3cond} for $N\geq\tilde{N}_6 = \max(\tilde{N}_7,\tilde{N}_8)$.
	
	To prove \eqref{6.2step3 9/10}, we first show that we can find $\delta > 0$ and $\tilde{N}_7$ so that
	\begin{equation}\label{6.2step3right}
	\mathbb{P}^{-t_2,-t_1,\vec{c},\vec{a}}_{avoid, Ber}\left(\max_{1\leq i\leq k-2} \big[Q^1_i(-t_{12}) - Q^1_{i+1}(-t_{12})\big] \geq \delta(2t_3)^{1/2}\right) \geq \frac{3}{\sqrt{10}}
	\end{equation}
	for $N\geq\tilde{N}_7$. We prove this inequality using Lemma \ref{prob 20}. In order to apply this result, we first observe that since $|-t_{12}+\frac{1}{2}(t_1+t_2)| \leq 1$ by \eqref{t12}, we have
	\begin{equation}\label{Lt1Set}
	0 \leq Q^1_i(-t_{12})-Q^1_i(-\tfrac{1}{2}(t_1+t_2)) \leq 1.
	\end{equation}
	Now applying Lemma \ref{prob 20} with $M_1 = V_1^t$, $M_2 = V^{top}$, we obtain $\tilde{N}_7$ and $\delta>0$ such that if $N \geq \tilde{N}_7$, then
	\[
	\pr^{-t_2,-t_1,\vec c, \vec a}_{avoid,Ber}\left(\min_{1\leq i\leq k-1} \big[Q^1_i(-\tfrac{1}{2}(t_1+t_2))-Q_{i+1}^1(-\tfrac{1}{2}(t_1+t_2))\big] < \delta(t_2-t_1)^{1/2}\right) < 1 - \frac{3}{\sqrt{10}}.
	\]
	Together with \eqref{Lt1Set} and the fact that $t_3/4 < t_2-t_1$, this implies that
	\begin{equation}\label{step3delta/2}
	\pr^{-t_2,-t_1,\vec c, \vec a}_{avoid,Ber}\left(\min_{1\leq i\leq k-1} \big[Q_i^1(-t_{12})-Q_{i+1}^1(-t_{12})\big]<(\delta/2)(2t_3)^{1/2}-1\right) < 1 - \frac{3}{\sqrt{10}}
	\end{equation}
	for $N\geq \tilde{N}_7$. Now we observe that as long as $\tilde{N}_7^\alpha \geq \frac{1+8/\delta^2}{r+3}$, then $(\delta/4)(2t_3)^{1/2} \leq (\delta/2)(2t_2)^{1/2} - 1$ for $N\geq\tilde{N}_7$. This implies \eqref{6.2step3right}. A similar argument gives us a $\tilde{\delta}>0$ such that
	\[
	\pr^{-t_2,-t_1,\vec c, \vec a}_{avoid,Ber}\left(\min_{1\leq i\leq k-1} \big[Q_i(-t_{12})-Q_{i+1}(-t_{12})\big]<(\tilde\delta/4)(2t_3)^{1/2}\right)<1 - \frac{3}{\sqrt{10}}
	\]
	for $N\geq\tilde{N}_7$. Then putting $\epsilon = \min(\delta,\tilde{\delta})/12$ and using \eqref{6.2step3ind}, we obtain \eqref{6.2step3 9/10} for $N\geq\tilde{N}_7$.\\
	
	{\bf \raggedleft Step 4.} In this step, we find $\bar{N}_3$ so that
	\begin{equation}\label{6.2step4}
	\mathbb{P}^{-t_2,t_2,\vec{c},\vec{d},\infty,\ell_{bot}}_{avoid,Ber;\tilde S}\big(D(\vec{c},\vec{d},V^{top},\epsilon,t_1) \big) \geq \frac{1}{2}\left(\frac{1}{2} - \sum_{n=1}^\infty (-1)^{n-1} e^{-\epsilon^2 n^2/2p(1-p)}\right)^{k-1}
	\end{equation}
	for $N\geq\bar{N}_3$. We will find $\tilde{N}_9$ so that for $N\geq\tilde{N}_9$,
	\begin{equation}\label{6.2step4sep}
\begin{split}
&\mathbb{P}^{-t_2,t_2,\vec{c},\vec{d},\infty,\ell_{bot}}_{avoid,Ber;\tilde S}\left(D(\vec{c},\vec{d},V^{top},\epsilon,t_1) \,\big|\,D(\vec c, \vec d, V^{top}, \epsilon,t_{12})\right) \geq \\
& \left(\frac{1}{2} - \sum_{n=1}^\infty (-1)^{n-1} e^{-\epsilon^2 n^2/2p(1-p)}\right)^{k-1}.
\end{split}
	\end{equation}
	Then \eqref{LemmaBP2Step3} implies \eqref{6.2step4} for $N\geq\bar{N}_3 = \max(\bar{N}_2,\tilde{N}_9)$.
	
	To prove \eqref{6.2step4sep} we first observe that we can write
	\begin{equation}\label{6.2step4disj}
	D(\vec c, \vec d, V^{top}, \epsilon,t_{12}) = \bigsqcup_{(\vec{a},\vec{b})\in J} G(\vec{a},\vec{b}).
	\end{equation}
	Here, for $\vec{a},\vec{b}\in\mathfrak{W}_{k-1}$, $G(\vec{a},\vec{b})$ is the event that $\mathfrak{Q}(-t_{12}) = \vec{a}$ and $\mathfrak{Q}(t_{12}) = \vec{b}$, and $J$ is the collection of $(\vec{a},\vec{b})$ satisfying
	\begin{enumerate}[label = (\arabic*)]
		
		\item $ 0 \leq \min(a_i - c_i,\, d_i - b_i) \leq t_2 - t_{12}$ and $0\leq b_i-a_i \leq 2t_{12}$ for $1\leq i\leq k-1$,
		
		\item $\min(a_{k-1} + pt_1,\, b_{k-1} - pt_1) \geq (M_2+1)(2t_3)^{1/2}$,
		
		\item $\max(a_1 + pt_1,\, b_1 - pt_1) \leq V^{top}(2t_3)^{1/2}$,
		
		\item $\min(a_i-a_{i+1}, \, b_i-b_{i+1}) \geq 3\epsilon(2t_3)^{1/2}$ for $1\leq i\leq k-2$.
		
	\end{enumerate}
	We let $\tilde{J} = \{(\vec{a},\vec{b})\in J : \mathbb{P}^{-t_2,t_2,\vec{c},\vec{d},\infty,\ell_{bot}}_{avoid,Ber;\tilde S}(G(\vec{a},\vec{b})) > 0\}$, and we take $\tilde{N}_9$ large enough by Lemma \ref{LemmaWD} so that $\tilde{J}\neq\varnothing$. We also let $\tilde{D}(V^{top},\epsilon,t_1)$ denote the set consisting of elements of $D(\vec{c},\vec{d},V^{top},\epsilon,t_1)$ restricted to $\llbracket -t_{12},t_{12}\rrbracket$. Then for $(\vec{a},\vec{b})\in\tilde{J}$ we have
	\begin{equation}\label{6.2step4cond}
	\begin{split}
	&\mathbb{P}^{-t_2,t_2,\vec{c},\vec{d},\infty,\ell_{bot}}_{avoid,Ber;\tilde S}\left(D(\vec{c},\vec{d},V^{top},\epsilon,t_1) \,\big|\,G(\vec{a},\vec{b})\right) = \mathbb{P}^{-t_{12},t_{12},\vec{a},\vec{b},\infty,\ell_{bot}}_{avoid,Ber;\tilde S}\left(\tilde D(V^{top},\epsilon,t_1)\right) \geq \\
	&\mathbb{P}^{-t_{12},t_{12},\vec{a},\vec{b}}_{Ber}\left(\tilde D(V^{top},\epsilon,t_1)\cap \{L_1\geq\cdots\geq L_{k-1}\geq\ell_{bot}\}\right).
	\end{split}
	\end{equation}
	We observe that the event in the second line of \eqref{6.2step4cond} occurs as long as each curve $L_i$ remains within a distance of $\epsilon(2t_3)^{1/2}$ from the straight line segment connecting $a_i$ and $b_i$ on $[-t_{12},t_{12}]$, for $1\leq i\leq k-2$. By the argument in the proof of Lemma \ref{CurveSeparation}, we can enlarge $\tilde{N}_9$ so that the probability of this event is bounded below by the expression on the right in \eqref{6.2step4sep} for $N\geq\tilde{N}_9$. Then using \eqref{6.2step4cond} and \eqref{6.2step4disj} and summing over $\tilde{J}$ implies \eqref{6.2step4sep}.\\
	
	{\bf \raggedleft Step 5.} In this last step, we complete the proof of the lemma, fixing the constants $g$ and $h$ as well as $N_5$. Let $g=g(\epsilon,V^{top},M_2)$ be as in Lemma \ref{LemmaBP1} for the choices of $\epsilon,V^{top}$ in Steps 2 and 3, let
	\[
	h = \frac{h_1}{2}\left(\frac{1}{2} - \sum_{n=1}^\infty (-1)^{n-1} e^{-\epsilon^2 n^2/2p(1-p)}\right)^{k-1}
	\]
	with $h_1$ as in Step 2, and let $N_5 = \max(\bar{N}_0,\bar{N}_1,\bar{N}_2,\bar{N}_3,N_7)$, with $N_7$ as in Lemma \ref{LemmaBP2}. In the following we assume that $N\geq N_5$. By \eqref{6.2step4} we have that if $(\vec{c},\vec{d})\in E$ and $N\geq N_5$, then
	$$\mathbb{P}_{avoid, Ber;\tilde{S}}^{-t_2, t_2, \vec{c}, \vec{d}, \infty, \ell_{bot}} ( H) \geq \frac{h}{h_1},$$
	where $H$ is the event that
	
	\begin{enumerate}
		\item $V^{top} (2t_3)^{1/2} \geq \tilde Q_1(-t_1) + p t_1 \geq \tilde Q_{k-1}(-t_1) + pt_1 \geq (M_2 + 1) (2t_2)^{1/2}$,
		\item $V^{top} (2t_3)^{1/2} \geq \tilde Q_1(t_1) - p t_1 \geq \tilde Q_{k-1}(t_1) - pt_1 \geq (M_2 + 1) (2t_3)^{1/2}$,
		\item $\tilde Q_i(-t_1) - \tilde Q_{i+1}(-t_1) \geq 3\epsilon (2t_2)^{1/2}$ and $\tilde Q_i(t_1) - \tilde Q_{i+1}(t_1)  \geq 3 \epsilon (2t_2)^{1/2}$ for $i = 1, \dots, k-2$.
	\end{enumerate}
	Let $Y$ denote the event appearing in \eqref{eqnRT2}. Then we can write $Y = \bigsqcup_{(\vec{c},\vec{d})\in E} Y(\vec{c},\vec{d})$, where $Y(\vec{c},\vec{d})$ is the event that $\tilde{\mathfrak{Q}}(-t_2) = \vec{c}$, $\tilde{\mathfrak{Q}}(t_2) = \vec{d}$, and $E$ is defined in Step 2. If $\tilde{E} = \{(\vec{c},\vec{d})\in E : \mathbb{P}_{\tilde{\mathfrak{Q}}}(Y(\vec{c},\vec{d})) > 0\}$, we can assume by Lemma \ref{LemmaWD} that $N_5$ is large enough so that $\tilde{E}\neq\varnothing$. It follows from Lemma \ref{LemmaBP2} that $\mathbb{P}_{\tilde{\mathfrak Q}}(Y) \geq h_1$. We conclude from the definition of $\mathbb{P}_{\tilde{\mathfrak{Q}}}$ that for all $N\geq N_5$,
	\begin{align*}
	&\mathbb{P}_{\tilde{\mathfrak{Q}}}(H) \geq \mathbb{P}_{\tilde{\mathfrak{Q}}}(H\cap Y) = \sum_{(\vec{c},\vec{d})\in \tilde E} \mathbb{P}_{\tilde{\mathfrak{Q}}}(Y(\vec{c},\vec{d}))\cdot \mathbb{P}_{\tilde{\mathfrak{Q}}}(H\,|\,Y(\vec{c},\vec{d})) =\\
	&\sum_{(\vec{c},\vec{d})\in \tilde E} \mathbb{P}_{\tilde{\mathfrak{Q}}}(Y(\vec{c},\vec{d}))\cdot \mathbb{P}^{-t_2,t_2,\vec{c},\vec{d},\infty,\ell_{bot}}_{avoid,Ber;\tilde S}(H) \geq \frac{h}{h_1}\sum_{(\vec{c},\vec{d})\in \tilde E} \mathbb{P}_{\tilde{\mathfrak{Q}}}(Y(\vec{c},\vec{d})) = \frac{h}{h_1}\,\mathbb{P}_{\tilde{\mathfrak{Q}}}(Y) \geq h.
	\end{align*}
	Now Lemma \ref{LemmaBP1} implies \eqref{eqn57}, completing the proof.
\end{proof}

%

\section{Appendix A} \label{Section8}

In this section we prove Lemmas \ref{Polish}, \ref{2Tight},  \ref{MCLxy} and \ref{MCLfg}. 

%
\subsection{Proof of Lemma \ref{Polish}}\label{Section8.1} We adopt the same notation as in the statement of Lemma \ref{Polish} and proceed with its proof. \\

Observe that the sets $K_1\subset K_2\subset\cdots\subset\Sigma\times\Lambda$ are compact, they cover $\Sigma\times\Lambda$, and any compact subset $K$ of $\Sigma\times\Lambda$ is contained in all $K_n$ for sufficiently large $n$. To see this last fact, let $\pi_1,\pi_2$ denote the canonical projection maps of $\Sigma\times\Lambda$ onto $\Sigma$ and $\Lambda$ respectively. Since these maps are continuous, $\pi_1(K)$ and $\pi_2(K)$ are compact in $\Sigma$ and $\Lambda$. This implies that $\pi_1(K)$ is finite, so it is contained in $\Sigma_{n_1} = \Sigma\cap\llbracket -n_1,n_1\rrbracket$ for some $n_1$. On the other hand, $\pi_2(K)$ is closed and bounded in $\mathbb{R}$, thus contained in some closed interval $[\alpha,\beta]\subseteq\Lambda$. Since $a_n\searrow a$ and $b_n\nearrow b$, we can choose $n_2$ large enough so that $\pi_2(K)\subseteq[\alpha,\beta]\subseteq[a_{n_2},b_{n_2}]$. Then taking $n=\max(n_1,n_2)$, we have $K \subseteq \pi_1(K) \times \pi_2(K) \subseteq \Sigma_n \times [a_n,b_n] = K_n$.

We now split the proof into several steps.\\

\noindent\textbf{Step 1.} In this step, we show that the function $d$ defined in the statement of the lemma is a metric. For each $n$ and $f,g\in C(\Sigma\times\Lambda)$, we define
\[
d_n(f,g) = \sup_{(i,t)\in K_n} |f(i,t)-g(i,t)|,\quad d_n'(f,g) = \min\{d_n(f,g), 1\} 
\]
Then we have
\[
d(f,g) = \sum_{n=1}^\infty 2^{-n} d_n'(f,g).
\]
Clearly each $d_n$ is nonnegative and satisfies the triangle inequality, and it is then easy to see that the same properties hold for $d_n'$. Furthermore, $d_n'\leq 1$, so $d$ is well-defined and $d(f,g) \in[0,1]$. Observe that $d$ is nonnegative, and if $f=g$, then each $d_n'(f,g)=0$, so the sum $d(f,g)$ is 0. Conversely, if $f\neq g$, then since the $K_n$ cover $\Sigma\times\Lambda$, we can choose $n$ large enough so that $K_n$ contains an $x$ with $f(x)\neq g(x)$. Then $d_n'(f,g)\neq 0$, and hence $d(f,g)\neq 0$. Lastly, the triangle inequality holds for $d$ since it holds for each $d_n'$.\\

\noindent\textbf{Step 2.} Now we prove that the topology $\tau_d$ on $C(\Sigma\times\Lambda)$ induced by $d$ is the same as the topology of uniform convergence over compacts, which we denote by $\tau_c$. Recall that $\tau_c$ is generated by the basis consisting of sets
\[
B_K(f,\epsilon) = \Big\{g\in C(\Sigma\times\Lambda) : \sup_{(i,t)\in K} |f(i,t) - g(i,t)| < \epsilon \Big\},
\]
for $K\subset\Sigma\times\Lambda$ compact, $f\in C(\Sigma\times\Lambda)$, and $\epsilon>0$, and $\tau_d$ is generated by sets of the form $B^d_\epsilon(f) = \{g:d(f,g) < \epsilon\}$. 

We first show that $\tau_d \subseteq \tau_c$. It suffices to prove that every set $B_\epsilon^d(f)$ is a union of sets $B_K(f,\epsilon)$. First, choose $\epsilon>0$ and $f\in C(\Sigma\times\Lambda)$. Let $g\in B^d_\epsilon(f)$. We will find a basis element $A_g$ of $\tau_c$ such that $g\in A_g\subset B^d_\epsilon(f)$. Let $\delta = d(f,g) < \epsilon$, and choose $n$ large enough so that $\sum_{k>n} 2^{-k} < \frac{\epsilon-\delta}{2}$. Define $A_g = B_{K_n}(g,\frac{\epsilon-\delta}{n})$, and suppose $h\in A_g$. Then since $K_m\subseteq K_n$ for $m\leq n$, we have
\begin{align*}
d(f,h) &\leq d(f,g) + d(g,h) \leq \delta + \sum_{k=1}^n 2^{-k}d_n(g,h) + \sum_{k>n} 2^{-k} < \delta + \frac{\epsilon-\delta}{2} + \frac{\epsilon-\delta}{2} = \epsilon.
\end{align*}
Therefore $g\in A_g\subset B^d_\epsilon(f)$. Then we can write
\[
B^d_\epsilon(f) = \bigcup_{g\in B^d_\epsilon(f)} A_g,
\]
a union of basis elements of $\tau_c$.

We now prove conversely that $\tau_c\subseteq\tau_d$. Let $K\subset\Sigma\times\Lambda$ be compact, $f\in C(\Sigma\times\Lambda)$, and $\epsilon>0$. Choose $n$ so that $K\subset K_n$, and let $g\in B_K(f,\epsilon)$ and $\delta = \sup_{x\in K} |f(x)-g(x)| < \epsilon$. If $d(g,h) < 2^{-n}(\epsilon-\delta)$, then $d_n'(g,h) \leq 2^n d(g,h) < \epsilon-\delta$, hence $d_n(g,h) < \epsilon-\delta$, assuming without loss of generality that $\epsilon \leq 1$. It follows that
\begin{align*}
\sup_{x\in K} |f(x)-h(x)| &\leq \delta + \sup_{x\in K} |g(x)-h(x)| \leq \delta + d_n(g,h) < \delta + \epsilon-\delta = \epsilon.
\end{align*}
Thus $g\in B^d_{2^{-n}(\epsilon-\delta)}(g) \subset B_K(f,\epsilon)$, proving that $B_K(f,\epsilon)\in\tau_d$ by the same argument as above. We conclude that $\tau_d = \tau_c$.\\

\noindent\textbf{Step 3.} In this step, we show that $(C(\Sigma\times\Lambda), d)$ is a complete metric space. Let $\{f_n\}_{n\geq 1}$ be Cauchy with respect to $d$. Then we claim that $\{f_n\}$ must be Cauchy with respect to $d_n'$, on each $K_n$. This follows from the observation that $ d_n'(f_\ell, f_m) \leq 2^n d(f_\ell, f_m)$. Thus $\{f_n\}$ is Cauchy with respect to the uniform metric on each $K_n$, and hence converges uniformly to a continuous limit $f^{K_n}$ on each $K_n$ (see \cite[Theorem 7.15]{Rudin}). Since the pointwise limit must be unique at each $x\in \Sigma\times\Lambda$, we have $f^{K_n}(x) = f^{K_m}(x)$ if $x\in K_n\cap K_m$. Since $\cup_n K_n = \Sigma\times\Lambda$, we obtain a well-defined function $f$ on all of $\Sigma\times\Lambda$ given by $f(x)=\lim_{n\to\infty} f^{K_n}(x)$. We have $f\in C(\Sigma\times\Lambda)$ since $f|_{K_n} = f^{K_n}$ is continuous on $K_n$ for all $n$. Moreover, if $K\subset\Sigma\times\Lambda$ is compact and $n$ is large enough so that $K\subset K_n$, then because $f_n \to f^{K_n} = f|_{K_n}$ uniformly on $K_n$, we have $f_n \to f^{K_n}|_K = f|_K$ uniformly on $K$. That is, for any $K\subset\Sigma\times\Lambda$ compact and $\epsilon>0$, we have $f_n \in B_K(f,\epsilon)$ for all sufficiently large $n$. Therefore $f_n \to f$ in $\tau_c$, and equivalently in the metric $d$ by Step 2.\\

\noindent\textbf{Step 4.} Lastly, we prove separability by adapting the arguments from \cite[Example 1.3]{Billing}. For each pair of positive integers $n,k$, let $D_{n,k}$ be the subcollection of $C(\Sigma\times\Lambda)$ consisting of polygonal functions that are piecewise linear on $\{j\}\times I_{n,k,i}$ for each $j\in\Sigma_n$ and each subinterval 
\[
I_{n,k,i} = \big[a_n+\tfrac{i-1}{k}(b_n-a_n), \, a_n+\tfrac{i}{k}(b_n-a_n)\big], \quad 1\leq i\leq k,
\] 
taking rational values at the endpoints of these subintervals, and extended constantly to all of $\Lambda$. Then $D = \cup_{n,k} D_{n,k}$ is countable, and we claim that it is dense in $\tau_c$. To see this, let $K\subset\Sigma\times\Lambda$ be compact, $f\in C(\Sigma\times\Lambda)$, and $\epsilon>0$, and choose $n$ so that $K\subset K_n$. Since $f$ is uniformly continuous on $K_n$, we can choose $k$ large enough so that for $0\leq i\leq k$, if $t\in I_{n,k,i}$, then 
\[
\big|f(j,t) - f(j, a_n + \tfrac{i}{k}(b_n-a_n))\big| < \epsilon/2
\]
for all $j\in\Sigma_n$. Using that $\mathbb{Q}$ is dense in $\mathbb{R}$ we can choose $g\in \cup_k D_{n,k}$ with $|g(j,a_n + \frac{i}{k}(b_n-a_n)) - f(j,a_n + \frac{i}{k}(b_n-a_n))| < \epsilon/2$. Then we have 
\begin{align*}
	\big|f(j,t) - g(j, a_n + \tfrac{i-1}{k}(b_n-a_n))\big| < \epsilon \quad \mathrm{and} \quad \big|f(j,t) - g(j, a_n + \tfrac{i}{k}(b_n-a_n))\big| < \epsilon.
\end{align*}
Since $g(j,t)$ is a convex combination of $g(j, a_n + \tfrac{i-1}{k}(b_n-a_n))$ and $g(j, a_n + \tfrac{i}{k}(b_n-a_n))$, we get
\[
|f(j,t) - g(j,t)| < \epsilon
\]
as well. In summary,
\[
\sup_{(j,t)\in K} |f(j,t)-g(j,t)| \leq \sup_{(j,t)\in K_n} |f(j,t)-g(j,t)| < \epsilon,
\] 
so $g\in B_K(f,\epsilon)$. This proves that $D$ is a countable dense subset of $C(\Sigma\times\Lambda)$.

%
\subsection{Proof of Lemma \ref{2Tight}}\label{Section8.2}

We first prove two lemmas that will be used in the proof of Lemma \ref{2Tight}. The first result allows us to identify the space $C(\Sigma\times\Lambda)$ with a product of copies of $C(\Lambda)$. In the following, we assume the notation of Lemma \ref{2Tight}.

\begin{lemma}\label{ProdTop}
	Let $\pi_i: C (\Sigma \times \Lambda) \rightarrow C(\Lambda)$, $i \in \Sigma$, be the projection maps given by
	$\pi_i(F)(x) = F(i, x)$ for $x \in \Lambda$. Then the $\pi_i$ are continuous. Endow the space $\prod_{i\in\Sigma} C(\Lambda)$ with the product topology induced by the topology of uniform convergence over compacts on $C(\Lambda)$. Then the mapping
	\begin{align*}
		F : C(\Sigma\times\Lambda) \longrightarrow \prod_{i\in\Sigma} C(\Lambda), \quad f\mapsto (\pi_i(f))_{i\in\Sigma}
	\end{align*}
	is a homeomorphism.
\end{lemma}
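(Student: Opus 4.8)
The plan is to verify the three claimed properties of the map $F$: that each $\pi_i$ is continuous, that $F$ is a continuous bijection, and that its inverse is continuous.

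First I would show each $\pi_i$ is continuous. Fix $i \in \Sigma$. Given a compact $K \subset \Lambda$ and $\epsilon > 0$, the set $\{i\} \times K$ is compact in $\Sigma \times \Lambda$ (product of a point and a compact set), and for $f, g \in C(\Sigma \times \Lambda)$ we have $\sup_{x \in K} |\pi_i(f)(x) - \pi_i(g)(x)| = \sup_{(j,t) \in \{i\} \times K} |f(j,t) - g(j,t)|$. Hence the preimage under $\pi_i$ of a basic open ball $B_K(\pi_i(f), \epsilon)$ in $C(\Lambda)$ contains the basic open ball $B_{\{i\}\times K}(f, \epsilon)$, so $\pi_i$ is continuous. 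Since the product topology on $\prod_{i \in \Sigma} C(\Lambda)$ is the initial topology for the coordinate projections, and each coordinate of $F$ is the continuous map $\pi_i$, the map $F$ itself is continuous.

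Next, $F$ is a bijection. It is injective because if $\pi_i(f) = \pi_i(g)$ for all $i \in \Sigma$, then $f(i,t) = g(i,t)$ for all $(i,t) \in \Sigma \times \Lambda$, so $f = g$. For surjectivity, given $(f_i)_{i \in \Sigma} \in \prod_{i \in \Sigma} C(\Lambda)$, define $f : \Sigma \times \Lambda \to \mathbb{R}$ by $f(i,t) = f_i(t)$; I would check $f$ is continuous on $\Sigma \times \Lambda$ by noting that a sequence (or net) $(i_n, t_n) \to (i,t)$ in the product topology must eventually have $i_n = i$ (discrete topology on $\Sigma$) and $t_n \to t$, whence $f(i_n, t_n) = f_i(t_n) \to f_i(t) = f(i,t)$ by continuity of $f_i$; alternatively, $f$ is continuous because its restriction to each $\{i\} \times \Lambda$ is continuous and these sets form an open cover of $\Sigma \times \Lambda$. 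Then $F(f) = (f_i)_{i \in \Sigma}$.

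Finally, I would show $F^{-1}$ is continuous, which is the main point requiring care. It suffices to show that for every compact $K \subset \Sigma \times \Lambda$, $f_0 \in C(\Sigma \times \Lambda)$, and $\epsilon > 0$, the image $F(B_K(f_0, \epsilon))$ contains a neighborhood of $F(f_0)$ in the product topology. As recalled at the start of the proof of Lemma \ref{Polish}, $\pi_1(K)$ is a finite subset $\{i_1, \dots, i_m\} \subset \Sigma$ and $\pi_2(K)$ is contained in a compact interval $[\alpha, \beta] \subset \Lambda$, so $K \subseteq \bigcup_{r=1}^m \{i_r\} \times [\alpha,\beta]$. Then the set
\begin{equation*}
U = \bigcap_{r=1}^m \pi_{i_r}^{-1}\!\left( B_{[\alpha,\beta]}\big(\pi_{i_r}(f_0), \epsilon\big) \right) \times \prod_{i \notin \{i_1,\dots,i_m\}} C(\Lambda)
\end{equation*}
is an open neighborhood of $F(f_0)$ in $\prod_{i \in \Sigma} C(\Lambda)$, and if $(g_i)_{i \in \Sigma} \in U$ with $g = F^{-1}((g_i)_i)$, then $\sup_{(j,t) \in K} |g(j,t) - f_0(j,t)| \leq \max_{1 \leq r \leq m} \sup_{t \in [\alpha,\beta]} |g_{i_r}(t) - f_0(i_r,t)| < \epsilon$, so $g \in B_K(f_0, \epsilon)$. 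Hence $F(B_K(f_0,\epsilon)) \supseteq U$, proving $F$ is an open map; combined with being a continuous bijection, $F$ is a homeomorphism. I do not anticipate a genuine obstacle here — the only subtlety is correctly using the discreteness of $\Sigma$ to reduce compacts in $\Sigma \times \Lambda$ to finitely many ``slices,'' which is exactly the fact already established at the beginning of Section \ref{Section8.1}.
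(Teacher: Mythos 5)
Your proof is correct and follows essentially the same route as the paper: both arguments rest on the fact that a compact subset of $\Sigma\times\Lambda$ has finite projection to $\Sigma$ and compact projection to $\Lambda$, so that uniform-over-compacts conditions decouple into finitely many slices $\{i\}\times K_i$, and both construct the inverse by gluing the coordinate functions using discreteness of $\Sigma$. The only cosmetic differences are that you get continuity of $F$ from the universal property of the product topology (the paper instead shows $F^{-1}$ is an open map by computing images of basis elements exactly), and in your displayed set $U$ the symbol $\pi_{i_r}^{-1}$ should denote the preimage under the coordinate projection of the product $\prod_{i\in\Sigma}C(\Lambda)$ rather than under the map $\pi_{i_r}$ defined on $C(\Sigma\times\Lambda)$ — a notational slip only, since the intended basic open set is clear and the containment argument goes through.
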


\begin{proof}
	We first prove that the $\pi_i$ are continuous. We know $C(\Sigma\times\Lambda)$ is metrizable by Lemma \ref{Polish}, and by a similar argument so is $C(\Lambda)$ (take $\Sigma = \{0\}$ in Lemma \ref{Polish}). Consequently, it suffices to assume that $f_n\to f$ in $C(\Sigma\times\Lambda)$ and show that $\pi_i(f_n)\to \pi_i(f)$ in $C(\Lambda)$. Let $K$ be compact in $\Lambda$. Then $\{i\}\times K$ is compact in $\Sigma\times\Lambda$, and $f_n\to f$ uniformly on $\{i\}\times K$ by assumption, so we have $\pi_i(f_n)|_K = f_n|_{\{i\}\times K} \to f|_{\{i\}\times K} = \pi_i(f)|_K$ uniformly on $K$. Since $K$ was arbitrary, we conclude that $\pi_i(f_n) \to \pi_i(f)$ in $C(\Lambda)$ as desired. 
	
We now observe that $F$ is invertible. If $(f_i)_{i\in\Sigma} \in \prod_{i\in\Sigma} C(\Lambda)$, then the function $f$ defined by $f(i,\cdot) = f_i(\cdot)$ is in $C(\Sigma\times\Lambda)$, since $\Sigma$ has the discrete topology. This gives a well-defined inverse for $F$. It suffices to prove that $F$ and $F^{-1}$ are open maps.
	
We first show that $F$ sends each basis element $B_K(f,\epsilon)$ of $C(\Sigma\times\Lambda)$ to a basis element in $\prod_{i\in\Sigma} C(\Lambda)$. Note that a basis for the product topology is given by products $\prod_{i\in\Sigma} B_{K_i}(f_i,\epsilon)$, where at most finitely many of the $K_i$ are nonempty. Here, we use the convention that $B_{\varnothing}(f_i,\epsilon) = C(\Lambda)$. Let $\pi_\Sigma,\pi_\Lambda$ denote the canonical projections of $\Sigma\times\Lambda$ onto $\Sigma,\Lambda$. The continuity of $\pi_\Sigma$ implies that if $K\subset\Sigma\times\Lambda$ is compact, then $\pi_\Sigma(K)$ is compact in $\Sigma$, hence finite. Observe that the set $K\cap(\{i\}\times\Lambda)$ is an intersection of a compact set with a closed set and is hence compact in $\Sigma\times\Lambda$. Therefore the sets $K_i = \pi_\Lambda(K\cap(\{i\}\times\Lambda))$ are compact in $\Lambda$ for each $i\in\Sigma$ since $\pi_\Lambda$ is continuous. We observe that $F(B_K(f,\epsilon)) = \prod_{i\in\Sigma} U_i$, where
	\[
	U_i = B_{K_i}(\pi_i(f),\epsilon), \quad\mathrm{if} \quad i \in \pi_\Sigma(K),
	\]
	and $U_i = C(\Lambda)$ otherwise. Since $\pi_\Sigma(K)$ is finite and the $K_i$ are compact, we see that $F(B_K(f,\epsilon))$ is a basis element in the product topology as claimed.
	
	Lastly, we show that $F^{-1}$ sends each basis element $U = \prod_{i\in\Sigma} B_{K_i}(f_i,\epsilon)$ for the product topology to a set of the form $B_K(f,\epsilon)$. We have $K_i=\varnothing$ for all but finitely many $i$. Write $f = F^{-1}((f_i)_{i\in\Sigma})$ and $K=\cup_{i \in \Sigma} ( \{i\} \times K_i)$. Notice that $K$ is compact in $\Sigma\times\Lambda$ as a finite union of compact sets (each of $\{i\} \times K_i$ is compact by Tychonoff's theorem, \cite[Theorem 37.3]{Munkres}). Moreover, one has
	\[
	F^{-1}(U) = B_K(f,\epsilon),
	\]
which proves that $F^{-1}$ is also an open map.
	
\end{proof}

We next prove a lemma which states that a sequence of line ensembles is tight if and only if all individual curves form tight sequences.

\begin{lemma}\label{ProjTight}
	Suppose that $\{\mathcal{L}^n\}_{n\geq 1}$ is a sequence of $\Sigma$-indexed line ensembles on $\Lambda$, and let $X_i^n = \pi_i(\mathcal{L}^n)$. Then the $X_i^n$ are $C(\Lambda)$-valued random variables on $(\Omega,\mathcal{F},\mathbb{P})$, and $\{\mathcal{L}^n\}$ is tight if and only if for each $i \in \Sigma$ the sequence $\{X_i^n\}_{n\geq 1}$ is tight.
	
\end{lemma}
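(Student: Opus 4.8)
The plan is to deduce the statement from the homeomorphism $F$ of Lemma \ref{ProdTop}, which identifies $C(\Sigma\times\Lambda)$ with the product space $\prod_{i\in\Sigma} C(\Lambda)$ (carrying the product of the compact-uniform topologies). First I would dispatch the measurability assertion: $\mathcal{L}^n$ is $(\mathcal{F},\mathcal{C}_\Sigma)$-measurable by Definition \ref{CLEDef}, and $\pi_i$ is continuous by Lemma \ref{ProdTop}, hence Borel measurable, so the composition $X_i^n = \pi_i(\mathcal{L}^n)$ is an $(\mathcal{F},\mathcal{C})$-measurable map into $C(\Lambda)$ (this is also recorded in Definition \ref{CLEDef}). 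For the forward implication, assume $\{\mathcal{L}^n\}$ is tight and fix $\epsilon>0$ and $i\in\Sigma$; choose a compact $K\subseteq C(\Sigma\times\Lambda)$ with $\mathbb{P}(\mathcal{L}^n\in K)\geq 1-\epsilon$ for all $n$. Since $\pi_i$ is continuous, $\pi_i(K)$ is compact in $C(\Lambda)$, and the inclusion of events $\{\mathcal{L}^n\in K\}\subseteq\{X_i^n\in\pi_i(K)\}$ gives $\mathbb{P}(X_i^n\in\pi_i(K))\geq 1-\epsilon$ for all $n$, so each $\{X_i^n\}$ is tight.

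For the reverse implication, suppose each $\{X_i^n\}$ is tight. Enumerate $\Sigma=\{i_1,i_2,\dots\}$, which is possible since $\Sigma\subseteq\mathbb{Z}$ is at most countable; when $\Sigma$ is finite the argument only simplifies. Fix $\epsilon>0$, and for each $k$ pick a compact $K_{i_k}\subseteq C(\Lambda)$ with $\mathbb{P}(X_{i_k}^n\in K_{i_k})\geq 1-\epsilon 2^{-k}$ for all $n$. By Tychonoff's theorem the product $\prod_{i\in\Sigma}K_i$ is compact in $\prod_{i\in\Sigma}C(\Lambda)$, so $K:=F^{-1}\!\left(\prod_{i\in\Sigma}K_i\right)$ is compact in $C(\Sigma\times\Lambda)$ because $F$ is a homeomorphism. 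Since $F(\mathcal{L}^n)=(X_i^n)_{i\in\Sigma}$, the event $\{\mathcal{L}^n\notin K\}$ equals $\{\exists k:\,X_{i_k}^n\notin K_{i_k}\}$, whence
\[
\mathbb{P}(\mathcal{L}^n\notin K)\;\leq\;\sum_{k=1}^\infty \mathbb{P}\big(X_{i_k}^n\notin K_{i_k}\big)\;\leq\;\sum_{k=1}^\infty \epsilon 2^{-k}\;=\;\epsilon
\]
uniformly in $n$, so $\{\mathcal{L}^n\}$ is tight.

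The only genuinely delicate point is the reverse direction in the case $\Sigma=\mathbb{N}$: one must combine Tychonoff's theorem (to see that an infinite product of compacts is compact) with a summable allocation of the tolerance $\epsilon$ over the infinitely many coordinates, and then invoke Lemma \ref{ProdTop} to transport compactness from the product space back to $C(\Sigma\times\Lambda)$. Everything else reduces to the elementary facts that continuous images of compact sets are compact and that $F$ intertwines $\mathcal{L}^n$ with the family $(X_i^n)_{i\in\Sigma}$, together with the obvious inclusions of the relevant events.
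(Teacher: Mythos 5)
Your proposal is correct, and its reverse direction coincides with the paper's: a summable allocation $\epsilon 2^{-k}$ of the tolerance over the coordinates, Tychonoff's theorem to get compactness of the product $\prod_{i\in\Sigma}K_i$, and the homeomorphism of Lemma \ref{ProdTop} to transport that compact set back to $C(\Sigma\times\Lambda)$. Where you genuinely diverge is the forward direction. You argue directly: take a compact $K\subseteq C(\Sigma\times\Lambda)$ with $\mathbb{P}(\mathcal{L}^n\in K)\geq 1-\epsilon$, note that $\pi_i(K)$ is compact as a continuous image of a compact set, and use the event inclusion $\{\mathcal{L}^n\in K\}\subseteq\{X_i^n\in\pi_i(K)\}$. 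The paper instead runs a Prohorov-theorem argument in both directions: tightness of $\{\mathcal{L}^n\}$ together with Polishness of $C(\Sigma\times\Lambda)$ (Lemma \ref{Polish}) gives relative compactness, the continuous mapping theorem transfers subsequential weak limits along $\pi_i$, and Prohorov's converse (using Polishness of $C(\Lambda)$) recovers tightness of $\{X_i^n\}$. Your route is more elementary and needs neither Prohorov nor the Polish-space structure for that implication, while the paper's is the standard probabilistic template of passing through relative compactness; both are complete proofs, and your pushforward-of-compacts argument arguably makes the forward direction shorter and more transparent.
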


\begin{proof}

The fact that the $X_i^n$ are random variables follows from the continuity of the $\pi_i$ in Lemma \ref{ProdTop} and \cite[Theorem 1.3.4]{Durrett}. First suppose the sequence $\{\mathcal{L}^n\}$ is tight. By Lemma \ref{Polish}, $C(\Sigma\times\Lambda)$ is a Polish space, so it follows from Prohorov's theorem, \cite[Theorem 5.1]{Billing}, that $\{\mathcal{L}^n\}$ is relatively compact. That is, every subsequence $\{\mathcal{L}^{n_k}\}$ has a further subsequence $\{\mathcal{L}^{n_{k_\ell}}\}$ converging weakly to some $\mathcal{L}$. Then for each $i\in\Sigma$, since $\pi_i$ is continuous by the above, the subsequence $\{\pi_i(\mathcal{L}^{n_{k_\ell}})\}$ of $\{\pi_i(\mathcal{L}^{n_k})\}$ converges weakly to $\pi_i(\mathcal{L})$ by the Continuous mapping theorem, \cite[Theorem 2.7]{Billing}. Thus every subsequence of $\{\pi_i(\mathcal{L}^n)\}$ has a convergent subsequence. Since $C(\Lambda)$ is a Polish space (apply Lemma \ref{Polish} with $\Sigma = \{0\}$), Prohorov's theorem, \cite[Theorem 5.2]{Billing}, implies $\{\pi_i(\mathcal{L}^n)\}$ is tight.

Conversely, suppose $\{X_i^n\}$ is tight for all $i\in\Sigma$. Then given $\epsilon > 0$, we can find compact sets $K_i\subset C(\Lambda)$ such that
\[
\mathbb{P}(X_i^n \notin K_i) \leq \epsilon/2^i
\]
for each $i\in\Sigma$. By Tychonoff's theorem, \cite[Theorem 37.3]{Munkres}, the product $\tilde{K} = \prod_{i\in\Sigma} K_i$ is compact in $\prod_{i\in\Sigma} C(\Lambda)$. We have
\begin{equation}\label{tychonoff}
\mathbb{P}\big((X_i^n)_{i\in\Sigma} \notin \tilde{K} \big) \leq \sum_{i\in\Sigma} \mathbb{P}(X_i^n \notin K_i) \leq \sum_{i=1}^\infty \epsilon/2^i = \epsilon.
\end{equation}
By Lemma \ref{ProdTop}, we have a homeomorphism $G : \prod_{i\in\Sigma} C(\Lambda) \to C(\Sigma\times\Lambda)$. We observe that $G((X_i^n)_{i\in\Sigma}) = \mathcal{L}^n$, and $K = G(\tilde{K})$ is compact in $C(\Sigma\times\Lambda)$. Thus $\mathcal{L}^n \in K$ if and only if $(X_i^n)_{i\in\Sigma} \in \tilde{K}$, and it follows from \eqref{tychonoff} that
\[
\mathbb{P}(\mathcal{L}^n \in K) \geq 1 - \epsilon.
\]
This proves that $\{\mathcal{L}^n\}$ is tight.

\end{proof}

We are now ready to prove Lemma \ref{2Tight}.

\begin{proof} (of Lemma \ref{2Tight}) By a direct extension of \cite[Theorem 7.3]{Billing}, a sequence $\{P_n\}_{n \geq 1}$ of probability measures on $C([u,v])$ with the uniform topology is tight if and only if the following conditions hold:
\begin{equation}\label{S8iop}
\begin{split}
&\mbox{ for some $w \in [u,v]$ we have }\lim_{a\to\infty} \limsup_{n\to\infty} P_n(|x(w)|\geq a) = 0, \\
&\lim_{\delta\to 0} \limsup_{n\to\infty} P_n\Big(\sup_{|s-t|\leq\delta} |x(s)-x(t)| \geq \epsilon\Big) = 0 \quad \textrm{for all}\;\epsilon>0.
\end{split}
\end{equation}
If $\{ \mathcal{L}^n\}_{n \geq 1}$ is tight we conclude the sequence $\{\mathcal{L}_i^n|_{[a_m,b_m]}\}_{n \geq 1}$ is tight for every $m\geq 1$. This is because the projection map is continuous. The last two statements prove the ``only if'' part of the lemma. In the remainder we focus on the ``if'' part, i.e. proving that $\{ \mathcal{L}^n\}_{n \geq 1}$ is tight, given that conditions (i) and (ii) in the statement of the lemma are satisfied.\\

 Fix $i\in\Sigma$. By Lemma \ref{ProjTight}, it suffices to show that the sequence $\{\mathcal{L}_i^n\}_{n\geq 1}$ of $C(\Lambda)$-valued random variables is tight. From (\ref{S8iop}) we see that conditions (i) and (ii)  in the lemma imply that the sequence $\{\mathcal{L}_i^n|_{[a_m,b_m]}\}_{n \geq 1}$ is tight for every $m\geq 1$. Let $\pi_m : C(\Lambda) \to C([a_m,b_m])$ denote the map $f \mapsto f|_{[a_m,b_m]}$ and note that $\pi_m$ is continuous. It follows from \cite[Theorem 1.3.4]{Durrett} that $\pi_m(\mathcal{L}^n) = \mathcal{L}_i^n|_{[a_m,b_m]}$ is a $C([a_m,b_m])$-valued random variable. Tightness of the sequence implies that for any $\epsilon > 0$, we can find compact sets $K_m\subset C([a_m,b_m])$ so that
\[
\mathbb{P}\big(\pi_m(\mathcal{L}_i^n) \notin K_m \big) \leq \epsilon/2^m
\]
for each $m\geq 1$. Writing $K = \cap_{m=1}^\infty \pi_m^{-1}(K_m)$, it follows that
\[
\mathbb{P}\big(\mathcal{L}^n_i \in K\big) \geq 1 - \sum_{m=1}^\infty \epsilon/2^m = 1 - \epsilon.
\]
To conclude tightness of $\{\mathcal{L}_i^n\}$, it suffices to prove that $K = \cap_{m=1}^\infty \pi_m^{-1}(K_m)$ is sequentially compact in $C(\Lambda)$. We argue by diagonalization. Let $\{f_n\}$ be a sequence in $K$, so that $f_n|_{[a_m,b_m]} \in K_m$ for every $m,n$. Since $K_1$ is compact, there is a sequence $\{n_{1,k}\}$ of natural numbers such that the subsequence $\{f_{n_{1,k}}|_{[a_1,b_1]}\}_k$ converges in $C([a_1,b_1])$. Since $K_2$ is compact, we can take a further subsequence $\{n_{2,k}\}$ of $\{n_{1,k}\}$ so that $\{f_{n_{2,k}}|_{[a_2,b_2]}\}_k$ converges in $C([a_2,b_2])$. Continuing in this manner, we obtain sequences $\{n_{1,k}\} \supseteq \{n_{2,k}\} \supseteq\cdots$ so that $\{f_{n_{m,k}}|_{[a_m,b_m]}\}_k$ converges in $C([a_m,b_m])$ for all $m$. Writing $n_k = n_{k,k}$, it follows that the sequence $\{f_{n_k}\}$ converges uniformly on each $[a_m,b_m]$. If $K$ is any compact subset of $C(\Lambda)$, then $K\subset [a_m,b_m]$ for some $m$, and hence $\{f_{n_k}\}$ converges uniformly on $K$. Therefore $\{f_{n_k}\}$ is a convergent subsequence of $\{f_n\}$.

\end{proof}

%
\subsection{Proof of Lemma \ref{LemmaWD}}\label{LemmaWDProof} We adopt the same notation as in the statement of Lemma \ref{LemmaWD} and proceed with its proof.\\

We first construct a candidate $\mathfrak{B}$ and then we prove that $\mathfrak{B} \in \Omega_{avoid}(T_0,T_1,\vec x,\vec y, f,g)$. Denote $B_0=f$ and $B_{k+1}=g$ with $x_0=f(T_0)$ and $y_0=f(T_1)$. By Condition (3) of Lemma \ref{LemmaWD} we know $x_0\geq x_1$ and $y_0\geq y_1$. We define inductively $B_j$ for $j = 1,\dots, k$ as follows (recall that $B_0 = f$). Assuming that $B_{j-1}$ has been constructed we let $B_j(T_0)=x_j$ and then for $i\in \llbracket T_0, T_{1}-1\rrbracket$ we define
\begin{equation}\label{MaxEns}
B_j(i+1)=\begin{cases}
B_j(i)+1 & \text{if } B_j(i)+1\leq \min\{B_{j-1}(i+1),y_j\}\\
B_j(i) &\text{else.}
\end{cases}
\end{equation} 
This gives our candidate $\mathfrak{B} = (B_1, \dots, B_k)$. In order to verify that this candidate ensemble $\mathfrak{B}$ is an element of $\Omega_{avoid}(T_0,T_1,\vec x,\vec y,f,g)$, three properties must be ensured:
\begin{equation}\label{nonEmpCond}
\begin{split}
\text{(a) } &\mathfrak{B}(T_0)=\vec x\text{ and }\mathfrak{B}(T_1)=\vec y\\
\text{(b) } &f(i)\geq B_1(i)\geq \cdots \geq B_k(i)\geq g(i)\text{ for all }i\in \llbracket T_0,T_1\rrbracket\\
\text{(c) } &B_{j}(i+1)-B_j(i)\in \{0,1\}\text{ for all }i\in \llbracket T_0,T_1-1\rrbracket\text{ and }j\in \llbracket 1,k\rrbracket
\end{split}
\end{equation}
 Property (c) follows directly from our definition in (\ref{MaxEns}). We split the proof of (a) and (b) above into three steps.\\

{\bf \raggedleft Step 1.} In this step we prove that for each $j = 1, \dots, k$ that $B_{j-1}(i) \geq B_j(i)$ for $i \in \llbracket T_0, T_1\rrbracket$. If $j = 1$ and $f \equiv \infty$ there is nothing to prove, so we may assume that either $ j \geq 2$ or $j = 1$ and $f$ is an up-right path -- the proofs in these cases are the same.  Suppose that for some $i \in \llbracket T_0, T_1 - 1\rrbracket$ we have that $B_{j}(i) \leq B_{j-1}(i)$ then we know by construction that $B_j(i+1) = B_{j}(i)$ or $B_{j}(i) +1$. In the former case, we trivially get
$$B_{j}(i+1) = B_j(i) \leq B_{j-1}(i) \leq B_{j-1}(i+1),$$
where the last inequality used that $B_{j-1}$ is an up-right path. If $B_j(i+1) =B_{j}(i) +1$ from (\ref{MaxEns}) we see that $B_{j}(i) +1 \leq B_{j-1}(i+1)$ and so we again conclude that $B_{j}(i+1) \leq  B_{j-1}(i+1)$. By assumption we know that $B_j(T_0) = x_j \leq x_{j-1} = B_{j-1}(T_0),$ and so by inducting on $i$ from $T_0$ to $T_1$ we conclude that $B_{j-1}(i) \geq B_j(i)$ for $i \in \llbracket T_0, T_1\rrbracket$ and $j = 1, \dots, k$. To summarize, we have proved that for $i\in \llbracket T_0,T_1\rrbracket$
\begin{equation}\label{NonEmpPartialIneq}
 f(i)\geq B_1(i)\geq \cdots \geq B_k(i).
\end{equation}

{\bf \raggedleft Step 2.} In this step we prove (a). By construction we already know that $\mathfrak{B}(T_0) = \vec{x}$ and so we only need to prove that $\mathfrak{B}(T_1)=\vec y$. We will show this claim inductively on $j$: we trivially know the claim is true for $j=0$, since $y_0=f(T_1)$ is given. Then suppose that $B_j(T_1)=y_j$ holds up to $j=n-1$. We seek to prove that $B_n(T_1) = y_n$. Notice that by construction we know that $B_n(i) \leq y_n$ for all $i \in \llbracket T_0, T_1\rrbracket$ and so we only need to show that $B_n(T_1) \geq y_n$.

Suppose first that $B_n(i +1) = B_n(i) + 1$ for all $i \in \llbracket T_0, T_1 - 1\rrbracket$. Then we know that $B_n(T_1) = x_n + (T_1 - T_0) \geq y_n$ by assumption (1) in Lemma \ref{LemmaWD}, and so we are done. Conversely, there is an $i_0 \in \llbracket T_0, T_1 - 1\rrbracket$ such that $B_n(i_0+1) = B_n(i_0)$ and we can take $i_0$ to be the largest index in $\llbracket T_0, T_1 - 1\rrbracket$ satisfying this condition. Observe that by (\ref{MaxEns}) we must have that either $B_n(i_0) \geq y_n$ or $  B_n(i_0) \geq B_{n-1}(i_0 + 1) $. In the former case, we see that since $B_n$ is an up-right path we must have $B_n(T_1) \geq B_n(i_0) \geq y_n$ and again we are done. Thus we only need to consider the case when $B_{n-1}(i_0 + 1) \leq B_n(i_0).$ By the maximality of $i_0$ we know that $B_n(i+1) = B_n(i) + 1$ for $i = i_0 + 1, \dots,T_1$ and so we see that 
$$B_n(T_1) = B_n(i_0+1) + (T_1 - i_0 - 1) = B_n(i_0) + (T_1 - i_0 - 1) \geq $$
$$B_{n-1}(i_0 + 1) + (T_1 - i_0 - 1) \geq B_{n-1} (T_1) = y_{n-1} \geq y_n.$$
Overall, we conclude in all cases that $B_n(T_1) \geq y_n$ which concludes the proof of (a).\\

{\bf \raggedleft Step 3.} In this step we prove (b), and in view of (\ref{NonEmpPartialIneq}) we see that it suffices to show that $B_{k}(i)\geq g(i)$ for all $i$. If $g \equiv -\infty$ there is nothing to prove and so we may assume that $g$ is an up-right path.

 Suppose that $g(i)>B_k(i)$ for some $i\in \llbracket T_0,T_1\rrbracket$. Since $g(T_0)\leq B_k(T_0)=x_k$ by Condition (3) in Lemma \ref{LemmaWD}, we know that there exists some point $i_0$ such that $g(i_0)=B_k(i_0)$ and $g(i_0+1)>B_k(i_0+1)$. In particular, since $g$ and $B_k$ can each only increase by $1$, this implies $B_k(i_0)=B_k(i_0+1)$ and $g(i_0 + 1) = g(i_0)  + 1$. This implies either $B_k(i_0)=y_k$ or $B_k(i_0)+1>B_{k-1}(i_0+1)$. If $B_k(i_0)=y_k$ then by assumption (3) of Lemma \ref{LemmaWD} we conclude
$$y_k \geq g(T_1) \geq g(i_0+1) = g(i_0) +1 = B_k(i_0) + 1\geq  y_k + 1,$$
which is an obvious contradiction. 
		
Therefore, it must be the case that $B_k(i_0)+1>B_{k-1}(i_0+1)$ and then we conclude that $B_{k-1}(i_0 + 1 ) = B_{k-1}(i_0 ) = B_k(i_0)$ in view of (\ref{NonEmpPartialIneq}). By the same argument we see that $B_{k-1}(i_0 + 1 ) = B_{k-1}(i_0 )$ can only occur if $B_{k-2}(i_0 + 1 ) = B_{k-2}(i_0 ) = B_{k-1}(i_0 )$ and iterating this $k$ times we conclude that $B_0(i_0+1) = B_0(i_0) = B_1(i_0) = \cdots = B_k(i_0) = g(i_0) = g(i_0+1) - 1$. But then $g(i_0 +1) > f(i_0+1)$, which contradicts condition (3) in Lemma \ref{LemmaWD}. The contradiction arose from our assumption that $g(i)>B_k(i)$ for some $i\in \llbracket T_0,T_1\rrbracket$ and so no such $i$ exists, proving (b).

%

\subsection{Proof of Lemmas \ref{MCLxy} and \ref{MCLfg}}

We will prove the following lemma, of which the two lemmas are immediate consequences. In particular, Lemma \ref{MCLxy} is the special case when $g^b = g^t$, and Lemma \ref{MCLfg} is the case when $\vec{x} = \vec{x}\,'$ and $\vec{y} = \vec{y}\,'$. We argue in analogy to \cite[Lemma 5.6]{DimMat}.

\begin{lemma}
	Fix $k \in \mathbb{N}$, $T_0, T_1 \in \mathbb{Z}$ with $T_0 < T_1$, $S\subseteq\llbracket T_0, T_1\rrbracket$, and two functions $g^b, g^t: \llbracket T_0, T_1 \rrbracket  \rightarrow [-\infty, \infty)$ with $g^b\leq g^t$ on $S$. Also fix $\vec{x}, \vec{y}, \vec{x}\,', \vec{y}\,' \in \mathfrak{W}_k$ such that $x_i\leq x_i'$, $y_i\leq y_i'$ for $1\leq i\leq k$. Assume that $\Omega_{avoid}(T_0, T_1, \vec{x}, \vec{y}, \infty,g^b; S)$ and $\Omega_{avoid}(T_0, T_1, \vec{x}\,', \vec{y}\,', \infty,g^t; S)$ are both non-empty. Then there exists a probability space $(\Omega, \mathcal{F}, \mathbb{P})$, which supports two $\llbracket 1, k \rrbracket$-indexed Bernoulli line ensembles $\mathfrak{L}^t$ and $\mathfrak{L}^b$ on $\llbracket T_0, T_1 \rrbracket$ such that the law of $\mathfrak{L}^{t}$ {\big (}resp. $\mathfrak{L}^b${\big )} under $\mathbb{P}$ is given by $\mathbb{P}_{avoid, Ber; S}^{T_0, T_1, \vec{x}\,', \vec{y}\,', \infty, g^t}$ {\big (}resp. $\mathbb{P}_{avoid, Ber; S}^{T_0, T_1, \vec{x}, \vec{y}, \infty, g^b}${\big )} and such that $\mathbb{P}$-almost surely we have ${L}_i^t(r) \geq {L}^b_i(r)$ for all $i = 1,\dots, k$ and $r \in \llbracket T_0, T_1 \rrbracket$.
\end{lemma}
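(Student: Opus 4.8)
The plan is to construct the coupling via a monotone Markov chain on pairs of line ensembles, in the spirit of \cite[Lemma 5.6]{DimMat} and the standard Glauber-dynamics argument for monotone systems. First I would set up the state space: let $\mathcal{S}^t = \Omega_{avoid}(T_0,T_1,\vec{x}\,',\vec{y}\,',\infty,g^t;S)$ and $\mathcal{S}^b = \Omega_{avoid}(T_0,T_1,\vec{x},\vec{y},\infty,g^b;S)$, which are nonempty by hypothesis and finite since they consist of collections of up-right paths on a bounded region (the paths are pinned at the endpoints, hence uniformly bounded). On each of these finite sets the target measure is the uniform distribution. I would run a single-site heat-bath dynamics: pick $(i,r)$ with $i\in\llbracket 1,k\rrbracket$ and $r\in\llbracket T_0+1,T_1-1\rrbracket$ uniformly (the endpoints are frozen), and resample the value of the $i$-th path at time $r$ from its conditional distribution given all other values, which here is the uniform distribution on the (at most two, since it is a Bernoulli path) admissible values compatible with being an up-right path and with the avoidance constraints against curves $i-1$, $i+1$, the floor $g^t$ (resp. $g^b$) on $S$, and $+\infty$ as a trivial ceiling. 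Each of $\mathbb{P}_{avoid,Ber;S}^{T_0,T_1,\vec{x}\,',\vec{y}\,',\infty,g^t}$ and $\mathbb{P}_{avoid,Ber;S}^{T_0,T_1,\vec{x},\vec{y},\infty,g^b}$ is the unique stationary (indeed reversible) measure of its chain, and the chain is irreducible — this last point uses essentially the construction in the proof of Lemma \ref{LemmaWD}, namely that from any admissible configuration one can reach the ``maximal'' configuration by a sequence of single-site moves, so any two states communicate.

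Next I would couple the two chains. Use the same clock: at each step draw the same site $(i,r)$ and the same uniform random variable $U\in[0,1]$ for both chains, and use $U$ to select the resampled value by inverse-CDF from each chain's (ordered, finitely-supported) conditional law. The key monotonicity input is that if $\mathfrak{L}^t \geq \mathfrak{L}^b$ coordinatewise before the update, then the conditional law of $L_i^t(r)$ stochastically dominates that of $L_i^b(r)$: the admissible interval for $L_i^t(r)$ is $[\max(L_i^t(r-1), L_{i+1}^t(r+1)-1, \dots), \min(\dots, L_{i-1}^t(r), \dots)]$ and each of these bounds is $\geq$ the corresponding bound for the $b$-system because the neighboring curves are higher and the floor $g^t\geq g^b$; since for a Bernoulli path the admissible set is an interval of integers, a higher interval gives a stochastically larger uniform pick, so the common-$U$ update preserves the coordinatewise order. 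Hence if the two chains are started from ordered initial states, they remain ordered for all time, $\mathbb{P}$-almost surely. For the initial states I would take $\mathfrak{L}^t_0$ = the maximal element of $\mathcal{S}^t$ and $\mathfrak{L}^b_0$ = the maximal element of $\mathcal{S}^b$ (constructed exactly as the candidate $\mathfrak{B}$ in the proof of Lemma \ref{LemmaWD}, with $f\equiv\infty$ and floor $g^t$, resp. $g^b$); one checks $\mathfrak{L}^t_0 \geq \mathfrak{L}^b_0$ directly, again by the inductive construction, since raising the floor and the boundary data only raises the greedily-built maximal path.

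Finally I would pass to the limit. By irreducibility and finiteness, the marginal law of $\mathfrak{L}^t_n$ (resp. $\mathfrak{L}^b_n$) converges as $n\to\infty$ to $\mathbb{P}_{avoid,Ber;S}^{T_0,T_1,\vec{x}\,',\vec{y}\,',\infty,g^t}$ (resp. $\mathbb{P}_{avoid,Ber;S}^{T_0,T_1,\vec{x},\vec{y},\infty,g^b}$), uniformly, as total-variation convergence of a finite irreducible aperiodic chain — aperiodicity is ensured since at each step there is positive probability the resampled value equals the current one. By compactness of the space of probability measures on the finite product $\mathcal{S}^t\times\mathcal{S}^b$, the joint laws of $(\mathfrak{L}^t_n,\mathfrak{L}^b_n)$ have a subsequential limit $\nu$; its marginals are the two desired measures, and since the event $\{L_i^t(r)\geq L_i^b(r)\ \forall i,r\}$ is closed (in fact clopen, the space being discrete) and has probability $1$ under every $(\mathfrak{L}^t_n,\mathfrak{L}^b_n)$, it has probability $1$ under $\nu$. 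Taking $(\Omega,\mathcal{F},\mathbb{P})$ to carry a pair with law $\nu$ and defining $\mathfrak{L}^t,\mathfrak{L}^b$ to be its two components finishes the proof; then Lemma \ref{MCLxy} follows by setting $g^b=g^t=g$, and Lemma \ref{MCLfg} by setting $\vec{x}=\vec{x}\,'$, $\vec{y}=\vec{y}\,'$.

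I expect the main obstacle to be the careful verification of the stochastic-domination step for the single-site conditional laws, i.e.\ showing that under the coordinatewise ordering hypothesis the admissible interval for the top chain's resampled value genuinely contains (is shifted up relative to) that of the bottom chain — this requires bookkeeping of all the constraints (up-right increments forcing $L_i(r)\in\{L_i(r-1),L_i(r-1)+1\}\cap\{L_i(r+1)-1,L_i(r+1)\}$, the ordering $L_{i-1}\geq L_i\geq L_{i+1}$, and the floor on $S$ versus off $S$), together with checking that the resulting set is always a contiguous block of integers so that inverse-CDF coupling with a common uniform is monotone. The irreducibility/aperiodicity and the greedy construction of ordered maximal initial states are routine given Lemma \ref{LemmaWD}.
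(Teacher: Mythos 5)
Your proposal is correct and follows essentially the same route as the paper's proof: a coupled monotone Glauber-type Markov chain on the two finite state spaces $\Omega_{avoid}(T_0,T_1,\vec{x},\vec{y},\infty,g^b;S)$ and $\Omega_{avoid}(T_0,T_1,\vec{x}\,',\vec{y}\,',\infty,g^t;S)$, started from ordered maximal configurations, with the two uniform measures as invariant laws, followed by a limiting argument to extract the coupling. The only differences are cosmetic (heat-bath resampling with a common uniform versus the paper's coin-flip single-site update, and a finite-space subsequential-limit argument versus the paper's Prohorov/Skorohod step).
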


\begin{proof} Throughout the proof, we will write $\Omega_{a,S}$ to mean $\Omega_{avoid}(T_0,T_1,\vec{x},\vec{y},\infty,g^b;S)$ and $\Omega_{a,S}'$ to mean $\Omega_{avoid}(T_0,T_1,\vec{x}\,',\vec{y}\,',\infty,g^t;S)$. We split the proof into two steps.\\
	
	\noindent\textbf{Step 1.} We first aim to construct a Markov chain $(X^n,Y^n)_{n\geq 0}$, with $X^n\in \Omega_{a,S}$, $Y^n\in \Omega_{a,S}'$, with initial distribution given by
	\begin{align*}
	& X^0_i(t) = \min(x_i+t-T_0, \, y_i), \qquad Y^0_i(t) = \min(x_i'+t-T_0, \, y_i'),
	\end{align*}
	for $t\in\llbracket T_0, T_1\rrbracket$ and $1\leq i\leq k$. First observe that we do in fact have $X^0 \in \Omega_{a,S}$, since $X_i^0(T_0) = x_i$, $X_i^0(T_1) = y_i$, $X_i^0(t) \leq \min(x_{i-1} + t - T_0, \, y_{i-1}) = X_{i-1}^0(t)$, and $X_k^0(t) \geq x_i + t - T_0 \geq g^b(T_0) + t - T_0 \geq g^b(t)$. We also note here that $X^0$ is \textit{maximal} on the entire space $\Omega(T_0,T_1,\vec{x},\vec{y})$, in the sense that for any $Z\in\Omega(T_0,T_1,\vec{x},\vec{y})$, we have $Z_i(t) \leq X^0_i(t)$ for all $t\in\llbracket T_0, T_1 \rrbracket$. In particular, $X^0$ is maximal on $\Omega_{a,S}$. Likewise, we see that $Y^0$ is maximal on $\Omega'_{a,S}$.
	
	We want the chain $(X^n,Y^n)$ to have the following properties: 
	\begin{enumerate}[label=(\arabic*)]
		
		\item $(X^n)_{n\geq 0}$ and $(Y^n)_{n\geq 0}$ are both Markov in their own filtrations,
		
		\item $(X^n)$ is irreducible and aperiodic, with invariant distribution $\mathbb{P}_{avoid,Ber;S}^{T_0,T_1,\vec{x},\vec{y},\infty,g^b}$,
		
		\item $(Y^n)$ is irreducible and aperiodic, with invariant distribution $\mathbb{P}_{avoid,Ber;S}^{T_0,T_1,\vec{x}\,',\vec{y}\,',\infty,g^t}$,
		
		\item $X^n_i\leq Y^n_i$ on $\llbracket T_0, T_1 \rrbracket$ for all $n\geq 0$ and $1\leq i \leq k$.
		
	\end{enumerate}
	
	\noindent This will allow us to conclude convergence of $X^n$ and $Y^n$ to these two uniform measures.
	
	We specify the dynamics of $(X^n, Y^n)$ as follows. At time $n$, we uniformly sample a triple $(i,t,z)\in \llbracket 1, k\rrbracket \times \llbracket T_0, T_1\rrbracket \times \llbracket x_k,y_1'-1\rrbracket$. We also flip a fair coin, with $\mathbb{P}(\textrm{heads})=\mathbb{P}(\textrm{tails})=1/2$. We update $X^n$ and $Y^n$ using the following procedure. If $j\neq i$, we leave $X_j,Y_j$ unchanged, and for all points $s\neq t$, we set $X^{n+1}_i(s) = X^n_i(s)$. If $T_0 < t < T_1$, $X^n_i(t-1)=z$, and $X^n_i(t+1)=z+1$ (note that this implies $X^n_i(t)\in\{z,z+1\}$), we consider two cases. If $t \in S$, then we set
	\[
	X^{n+1}_i(t) = \begin{cases}
	z+1, & \textrm{if heads},\\
	z, & \textrm{if tails},
	\end{cases}
	\]
	assuming this does not cause $X^{n+1}_i(t)$ to fall below $X^n_{i+1}(t)$, with the convention that $X^n_{k+1} = g^b$. If $t\notin S$, we perform the same update regardless of whether it results in a crossing. In all other cases, we leave $X^{n+1}_i(t)=X^n_i(t)$. We update $Y^n$ using the same rule, with $g^t$ in place of $g^b$.
	
	We first observe that $X^n$ and $Y^n$ are in fact non-crossing on $S$ for all $n$. Note $X^0$ is non-crossing, and if $X^n$ is non-crossing, then the only way $X^{n+1}$ could be crossing on $S$ is if the update were to push $X^{n+1}_i(t)$ below $X^n_{i+1}(t)$ for some $i,t$ with $t\in S$. But any update of this form is suppressed, so it follows by induction that $X^n\in\Omega_{a,S}$ for all $n$. Similarly, we see that $Y^n\in\Omega_{a,S}'$.
	
	It is easy to see that $(X^n,Y^n)$ is a Markov chain, since at each time $n$, the value of $(X^{n+1},Y^{n+1})$ depends only on the current state $(X^n,Y^n)$, and not on the time $n$ or any of the states prior to time $n$. Moreover, the value of $X^{n+1}$ depends only on the state $X^n$, not on $Y^n$, so $(X^n)$ is a Markov chain in its own filtration. The same applies to $(Y^n)$. This proves the property (1) above.
	
	We now argue that $(X^n)$ and $(Y^n)$ are irreducible. Fix any $Z \in \Omega_{a;S}$. As observed above, we have $Z_i \leq X^0_i$ on $\llbracket T_0,T_1\rrbracket$ for all $i$. We argue that we can reach the state $Z$ starting from $X^0$ in some finite number of steps with positive probability. Due to the maximality of $X^0$, we only need to move the paths downward. If we do this starting with the bottom path, then there is no danger of the paths $X_i$ crossing on $S$, or of $X_k$ crossing $g^b$ on $S$. To ensure that $X^n_k = Z_k$, we successively sample triples $(k,t,z)$ as follows. We initialize $t = T_0 + 1$. If $X_k^n(t) = Z_k(t)$, we increment $t$ by 1. Otherwise, we have $X_k^n(t) > Z_k(t)$, so we set $z = X_k^n(t) - 1$ and flip tails. This may or may not push $X_k(t)$ downwards by 1. We then increment $t$ and repeat this process. If $t$ reaches $T_1 - 1$, then at the increment we reset $t = T_0 + 1$. After finitely many steps, $X_k$ will agree with $Z_k$ on all of $\llbracket T_0, T_1 \rrbracket$. We then repeat this process for $X^n_i$ and $Z_i$, with $i$ descending. Since each of these samples and flips has positive probability, and this process terminates in finitely many steps, the probability of transitioning from $X^n$ to $Z$ after some number of steps is positive. The same reasoning applies to show that $(Y^n)$ is irreducible.
	
	To see that the chains are aperiodic, simply observe that if we sample a triple $(i,T_0,z)$ or $(i,T_1,z)$, then the states of both chains will be unchanged.
	
	To see that the uniform measure $\mathbb{P}_{avoid,Ber;S}^{T_0,T_1,\vec{x},\vec{y},\infty,g^b}$ on $\Omega_{a,S}$ is invariant for $(X^n)$, fix any $\omega\in\Omega_{a,S}$. For simplicity, write $\mu$ for the uniform measure. Then for all $\tau\in\Omega_{a,S}$, we have $\mu(\tau) = 1/|\Omega_{a,S}|$. Hence
	\begin{align*}
	& \sum_{\tau\in\Omega_{a,S}} \mu(\tau)\mathbb{P}(X^{n+1} = \omega\,|\,X^n = \tau) = \frac{1}{|\Omega_{a,S}|}\sum_{\tau\in\Omega_{a,S}} \mathbb{P}(X^{n+1} = \omega\,|\,X^n = \tau) = \\
	& \frac{1}{|\Omega_{a,S}|}\sum_{\tau\in\Omega_{a,S}} \mathbb{P}(X^{n+1} = \tau\,|\,X^n = \omega) = \frac{1}{|\Omega_{a,S}|}\cdot 1 = \mu(\omega).
	\end{align*}
	The second equality is clear if $\tau=\omega$. Otherwise, note that $\mathbb{P}(X_{n+1} = \omega\,|\,X_n = \tau) \neq 0$ if and only if $\tau$ and $\omega$ differ only in one indexed path (say the $i$th) at one point $t$, where $|\tau_i(t)-\omega_i(t)|=1$, and this condition is also equivalent to $\mathbb{P}(X^{n+1} = \tau\,|\,X^n = \omega) \neq 0$. If $X^n=\tau$, there is exactly one choice of triple $(i,t,z)$ and one coin flip which will ensure $X^{n+1}_i(t)=\omega(t)$, i.e., $X^{n+1}=\omega$. Conversely, if $X^n=\omega$, there is one triple and one coin flip which will ensure $X^{n+1}=\tau$. Since the triples are sampled uniformly and the coin flips are fair, these two conditional probabilities are in fact equal. This proves (2), and an analogous argument proves (3).
	
	Lastly, we argue that $X^n_i\leq Y^n_i$ on $\llbracket T_0, T_1 \rrbracket$ for all $n\geq 0$ and $1\leq i\leq k$. This is of course true at $n=0$. Suppose it holds at some $n\geq 0$, and suppose that we sample a triple $(i,t,z)$. Then the update rule can only change the values of the $X_i^n(t)$ and $Y_i^n(t)$. Notice that the values can change by at most 1, and if $Y^n_i(t) - X^n_i(t) = 1$, then the only way the ordering could be violated is if $Y_i$ were lowered and $X_i$ were raised at the next update. But this is impossible, since a coin flip of heads can only raise or leave fixed both curves, and tails can only lower or leave fixed both curves. Thus it suffices to assume $X^n_i(t) = Y^n_i(t)$.
	
	There are two cases to consider that violate the ordering of $X^{n+1}_i(t)$ and $Y^{n+1}_i(t)$. Either (i) $X_i(t)$ is raised but $Y_i(t)$ is left fixed, or (ii) $Y_i(t)$ is lowered yet $X_i(t)$ is left fixed. These can only occur if the curves exhibit one of two specific shapes on $\llbracket t-1, t+1\rrbracket$. For $X_i(t)$ to be raised, we must have $X^n_i(t-1) = X^n_i(t) = X^n_i(t+1) - 1$, and for $Y_i(t)$ to be lowered, we must have $Y^n_i(t-1) - 1 = Y^n_i(t) = Y^n_i(t+1)$. From the assumptions that $X^n_i(t) = Y^n_i(t)$, and $X^n_i \leq Y^n_i$, we observe that both of these requirements force the other curve to exhibit the same shape on $\llbracket t-1, t+1\rrbracket$. Then the update rule will be the same for both curves for either coin flip, proving that both (i) and (ii) are impossible. \\
	
	\noindent\textbf{Step 2.} It follows from (2) and (3) and \cite[Theorem 1.8.3]{Norris} that $(X^n)_{n\geq 0}$ and $(Y^n)_{n\geq 0}$ converge weakly to $\mathbb{P}_{avoid,Ber;S}^{T_0,T_1,\vec{x},\vec{y},\infty,g^b}$ and $\mathbb{P}_{avoid,Ber;S}^{T_0,T_1,\vec{x}\,',\vec{y}\,',\infty,g^t}$ respectively. In particular, $(X^n)$ and $(Y^n)$ are tight, so $(X^n,Y^n)_{n\geq 0}$ is tight as well. By Prohorov's theorem, it follows that $(X^n,Y^n)$ is relatively compact. Let $(n_m)$ be a sequence such that $(X^{n_m},Y^{n_m})$ converges weakly. Then by the Skorohod representation theorem \cite[Theorem 6.7]{Billing}, it follows that there exists a probability space $(\Omega,\mathcal{F},\mathbb{P})$ supporting random variables $\mathfrak{X}^n$, $\mathfrak{Y}^n$ and $\mathfrak{X},\mathfrak{Y}$ taking values in $\Omega_{a,S},\Omega_{a,S}'$ respectively, such that
	\begin{enumerate}[label=(\arabic*)]
		
		\item The law of $(\mathfrak{X}^n,\mathfrak{Y}^n)$ under $\mathbb{P}$ is the same as that of $(X^n,Y^n)$,
		
		\item $\mathfrak{X}^n(\omega) \longrightarrow \mathfrak{X}(\omega)$ for all $\omega\in\Omega$,
		
		\item $\mathfrak{Y}^n(\omega) \longrightarrow \mathfrak{Y}(\omega)$ for all $\omega\in\Omega$.
		
	\end{enumerate}
	
	In particular, (1) implies that $\mathfrak{X}^{n_m}$ has the same law as $X^{n_m}$, which converges weakly to $\mathbb{P}_{avoid,Ber;S}^{T_0,T_1,\vec{x},\vec{y},\infty,g^b}$. It follows from (2) and the uniqueness of limits that $\mathfrak{X}$ has law $\mathbb{P}_{avoid,Ber;S}^{T_0,T_1,\vec{x},\vec{y},\infty,g^b}$. Similarly, $\mathfrak{Y}$ has law $\mathbb{P}_{avoid,Ber;S}^{T_0,T_1,\vec{x}\,',\vec{y}\,',\infty,g^t}$. Moreover, condition (4) in Step 1 implies that $\mathfrak{X}^n(i, \cdot) \leq \mathfrak{Y}^n(i, \cdot) $, $\mathbb{P}$-a.s., so $\mathfrak{X}(i, \cdot)\leq \mathfrak{Y}(i, \cdot)$ for $1\leq i\leq k$, $\mathbb{P}$-a.s. Thus we can take $\mathfrak{L}^b = \mathfrak{X}$ and $\mathfrak{L}^t = \mathfrak{Y}$.
	
\end{proof}

%

\subsection{Proof of Lemmas \ref{scaledavoidBB} and \ref{inftydistinct}}\label{BGPapp} In this section we use the same notation as in Section \ref{Section4.3}. We first prove Lemma \ref{scaledavoidBB}. We will use the following lemma, which proves an analogous convergence result for a single rescaled Bernoulli random walk.

\begin{lemma}\label{scaledRWbb}
	Let $x,y,a,b\in \mathbb{R}$ with $a<b$, and let $a_N,b_N\in N^{-\alpha}\mathbb{Z}$, $x^N,y^N \in N^{-\alpha/2}\mathbb{Z}$ be sequences with $a_N \leq a$, $b_N\geq b$, and $|y^N-x^N| \leq (b_N-a_N)N^{\alpha/2}$. Suppose $a_N\to a$, $b_N\to b$. Write $\tilde{x}^N = (x^N - pa_NN^{\alpha/2})/\sqrt{p(1-p)}$, $\tilde{y}^N = (y^N - pb_NN^{\alpha/2})/\sqrt{p(1-p)}$, and assume $\tilde{x}^N \to x$, $\tilde{y}^N\to y$ as $N\to\infty$. Let $Y^N$ be a sequence of random variables with laws $\mathbb{P}^{a_N,b_N,x^N,y^N}_{free,N}$, and let $Z^N = Y^N|_{[a,b]}$. Then the law of $Z^N$ converges weakly to $\mathbb{P}^{a,b,x,y}_{free}$ as $N\to\infty$.
\end{lemma}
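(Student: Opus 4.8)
\textbf{Proof plan for Lemma \ref{scaledRWbb}.}

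The plan is to identify the scaled Bernoulli bridge $Y^N$ with the image of a genuine Bernoulli bridge under the affine rescaling map appearing in Definition \ref{scaledRW}, invoke the strong coupling of Theorem \ref{KMT} to replace that Bernoulli bridge by a Brownian bridge with diffusion parameter $\sigma = \sqrt{p(1-p)}$ up to a uniformly small error, and then check that the Brownian bridge so produced, after the affine rescaling and restriction to $[a,b]$, converges to $\mathbb{P}^{a,b,x,y}_{free}$ by an elementary Gaussian-process computation. Concretely, write $T_N = (b_N - a_N)N^{\alpha}$ and $z_N = (y^N - x^N)N^{\alpha/2}$, so that $\ell^{(T_N,z_N)}$ has law $\mathbb{P}^{0,T_N,0,z_N}_{Ber}$ and
\[
Y^N(t) = \frac{x^N + N^{-\alpha/2}\bigl[\ell^{(T_N,z_N)}((t-a_N)N^{\alpha}) - ptN^{\alpha}\bigr]}{\sqrt{p(1-p)}}, \qquad t\in[a_N,b_N].
\]
First I would record that, since $a_N\to a$, $b_N\to b$, $\tilde x^N\to x$, $\tilde y^N\to y$, we have $|z_N - pT_N|\le C\sqrt{T_N}$ for some constant $C$ and all large $N$; hence Corollary \ref{Cheb} applies to $\Delta(T_N,z_N) = \sup_{0\le s\le T_N}|\sqrt{T_N}B^\sigma_{s/T_N} + (s/T_N)z_N - \ell^{(T_N,z_N)}(s)|$, giving $\Delta(T_N,z_N) = o(N^{\alpha/2})$ in probability (indeed $o(T_N^{1/4})$, say).

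Next, on the coupling space of Theorem \ref{KMT}, define the rescaled Brownian object
\[
\hat Y^N(t) = \frac{x^N + N^{-\alpha/2}\bigl[\sqrt{T_N}\,B^\sigma_{((t-a_N)N^{\alpha})/T_N} + \tfrac{(t-a_N)N^{\alpha}}{T_N}z_N - ptN^{\alpha}\bigr]}{\sqrt{p(1-p)}},
\]
for $t\in[a_N,b_N]$. By the preceding bound, $\sup_{t\in[a_N,b_N]}|Y^N(t) - \hat Y^N(t)| = N^{-\alpha/2}\Delta(T_N,z_N)/\sqrt{p(1-p)} \to 0$ in probability, so it suffices to show $\hat Y^N|_{[a,b]}$ converges weakly to $\mathbb{P}^{a,b,x,y}_{free}$; then Slutsky's lemma (applied in $C([a,b])$, which is a complete separable metric space by Lemma \ref{Polish} with $\Sigma = \{0\}$) finishes the job. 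Now $\hat Y^N$ is, by construction, an affine function of the standard Brownian bridge $B^\sigma$ on $[0,1]$ reparametrized to $[a_N,b_N]$ with endpoints $\tilde x^N/\sqrt{p(1-p)}$-type data; a direct check using the formula \eqref{BBcovar} for the covariance of $B^\sigma$ shows that $\hat Y^N$ is precisely a Brownian bridge on $[a_N,b_N]$ with diffusion parameter $1$ from $\hat Y^N(a_N) = \tilde x^N$ to $\hat Y^N(b_N) = \tilde y^N$ (in the sense of \eqref{BBDef}, after absorbing the $\sqrt{p(1-p)}$ normalization). Restricting to $[a,b]\subseteq[a_N,b_N]$ and using $a_N\to a$, $b_N\to b$, $\tilde x^N\to x$, $\tilde y^N\to y$, one sees from the explicit representation \eqref{BBDef} that $\hat Y^N|_{[a,b]}$ converges almost surely, uniformly on $[a,b]$, to the Brownian bridge on $[a,b]$ with diffusion parameter $1$ from $x$ to $y$: the reparametrization maps and the affine interpolation terms all depend continuously on $(a_N,b_N,\tilde x^N,\tilde y^N)$, and $B^\sigma$ has almost surely uniformly continuous sample paths on $[0,1]$. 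Hence the law of $\hat Y^N|_{[a,b]}$ converges weakly to $\mathbb{P}^{a,b,x,y}_{free}$, and combining with the coupling estimate yields the claim.

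The main obstacle is purely bookkeeping rather than conceptual: one must handle the fact that the Brownian bridge $B^\sigma$ in Theorem \ref{KMT} is parametrized over $[0,n]$ via $s\mapsto\sqrt{n}B^\sigma_{s/n}$ while the target measure $\mathbb{P}^{a,b,x,y}_{free}$ is described via the scaling \eqref{BBDef} over $[a,b]$, so care is needed to verify that the time-change $(t-a_N)N^{\alpha}/T_N$ together with the vertical rescaling by $N^{-\alpha/2}$ exactly reproduces the Brownian-scaling identity — this is where one checks that $(b-a)^{1/2}\tilde B((t-a)/(b-a))$ matches $N^{-\alpha/2}\sqrt{T_N}B^\sigma_{\cdot}$ up to the $\sqrt{p(1-p)}$ factor that is divided out in Definition \ref{scaledRW}. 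Once the parametrizations are aligned, continuity of the endpoint-and-interval dependence of \eqref{BBDef} and the Skorohod/Slutsky argument are routine. I would also note explicitly that $\mathbb{P}^{a_N,b_N,x^N,y^N}_{free,N}$ is well-defined for all large $N$ because $0\le z_N\le T_N$ eventually, which follows from $|z_N - pT_N|\le C\sqrt{T_N}$ and $p\in(0,1)$.
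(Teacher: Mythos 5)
Your proposal is correct and follows essentially the same route as the paper's proof: couple the rescaled Bernoulli bridge with a Brownian bridge via Theorem \ref{KMT}, show the uniform difference is $o(1)$ in probability using the bound $|z_N-pT_N|\leq C\sqrt{T_N}$ (Corollary \ref{Cheb} versus the paper's direct Chebyshev computation is an immaterial difference), and identify the coupled Brownian object $\hat{Y}^N$ as exactly the diffusion-parameter-$1$ bridge on $[a_N,b_N]$ from $\tilde{x}^N$ to $\tilde{y}^N$, which converges to $\mathbb{P}^{a,b,x,y}_{free}$ after restriction, concluding by the convergence-together theorem.
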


\begin{proof}
	Let us write $z^N = (y^N-x^N)N^{\alpha/2}$ and $T_N = (b_N-a_N)N^\alpha$. Let $\tilde{B}$ be a standard Brownian bridge on $[0,1]$, and define random variables $B^N$, $B$ taking values in $C([a_N,b_N])$, $C([a,b])$ respectively via
	\begin{align*}
	B^N(t) &= \sqrt{b_N-a_N}\cdot\tilde{B}\left(\frac{t-a_N}{b_N-a_N}\right) + \frac{t-a_N}{b_N-a_N}\cdot \tilde{y}^N + \frac{b_N-t}{b_N-a_N}\cdot \tilde{x}^N,\\
	B(t) &= \sqrt{b-a}\cdot \tilde{B}\left(\frac{t-a}{b-a}\right) + \frac{t-a}{b-a}\cdot y + \frac{b-t}{b-a}\cdot x.
	\end{align*}
	We observe that $B$ has law $\mathbb{P}^{a,b,x,y}_{free}$ and $B^N\implies B$ as $N\to\infty$. By \cite[Theorem 3.1]{Billing}, to show that $Z^N\implies B$, it suffices to find a sequence of probability spaces supporting $Y^N,B^N$ so that
	\begin{equation}\label{scaledRWweak}
	\rho(B^N,Y^N) = \sup_{t\in[a_N,b_N]} |B^N(t) - Y^N(t)| \implies 0 \quad \mathrm{as} \quad N\to\infty.
	\end{equation}
	It follows from Theorem \ref{KMT} that for each $N\in\mathbb{N}$ there is a probability space supporting $B^N$ and $Y^N$, as well as constants $C,a',\alpha' > 0$, such that
	\begin{equation}\label{scaledRWcheb}
	\ex\left[e^{a'\Delta(N,x^N,y^N)}\right] \leq Ce^{\alpha'\log N} e^{|z^N-pT_N|^2/N^\alpha},
	\end{equation}
	where $\Delta(N,x^N,y^N) = \sqrt{p(1-p)}\,N^{\alpha/2}\rho(B^N,Y^N)$. Since $(z^N - pT_N)N^{-\alpha/2} \to \sqrt{p(1-p)}\,(y-x)$ by assumption, there exist $N_0\in\mathbb{N}$ and $A>0$ so that $|z-pT_N| \leq AN^{\alpha/2}$ for $N\geq N_0$. Then for $\epsilon > 0$ and $N\geq N_0$, Chebyshev's inequality and \eqref{scaledRWcheb} give
	\[
	\mathbb{P}(\rho(B^N,Y^N) > \epsilon) \leq C e^{-a'\epsilon\sqrt{p(1-p)}\, N^{\alpha/2}}e^{\alpha'\log N} e^{A^2}.
	\]
	The right hand side tends to 0 as $N\to\infty$, implying \eqref{scaledRWweak}.
\end{proof}

We now give the proof of Lemma \ref{scaledavoidBB}.

\begin{proof}(of Lemma \ref{scaledavoidBB}) We prove the two statements of the lemma in two steps.\\
	
	\noindent\textbf{Step 1. } In this step we fix $N_0 \in \mathbb{N}$ so that $\mathbb{P}^{a_N,b_N,\vec{x}\,^N,\vec{y}\,^N,f_N,g_N}_{avoid,N}$ is well-defined for $N\geq N_0$. Observe that we can choose $\epsilon > 0$ and continuous functions $h_1,\dots,h_k : [a,b]\to\mathbb{R}$ depending on $a,b,\vec{x},\vec{y},f,g$ with $h_i(a) = x_i$, $h_i(b)=y_i$ for $i\in\llbracket 1,k\rrbracket$, such that if $u_i:[a,b]\to\mathbb{R}$ are continuous functions with $\rho(u_i,h_i) = \sup_{x\in[a,b]} |u_i(x)-h_i(x)| < \epsilon$, then
	\begin{equation}\label{scaledavoidwindow}
	f(x) - \epsilon > u_1(x) + \epsilon > u_1(x) - \epsilon > \cdots > u_k(x) + \epsilon > u_k(x) - \epsilon > g(x) + \epsilon
	\end{equation}
	for all $x\in[a,b]$. By Lemma \ref{Spread}, we have
	\begin{equation}\label{BBwindow}
	\mathbb{P}^{a,b,\vec{x},\vec{y}}_{free}(\rho(\mathcal{Q}_i,h_i) < \epsilon \mbox{ for } i\in\llbracket 1,k\rrbracket) > 0.
	\end{equation}
	Since $y_i^N - x_i^N - p(b_N-a_N)N^{\alpha/2} \to \sqrt{p(1-p)}\,(y_i-x_i)$ as $N\to\infty$ for $i\in\llbracket 1,k\rrbracket$ and $p<1$, we can find $N_1\in\mathbb{N}$ so that for $N\geq N_1$, $|y_i^N-x_i^N| \leq (b_N-a_N)N^{\alpha/2}$. It follows from Lemma \ref{scaledRWbb} that if $\tilde{\mathcal{Y}}^N$ have laws $\mathbb{P}^{a_N,b_N,\vec{x}\,^N,\vec{y}\,^N}_{free,N}$ for $N\geq N_1$ and $\tilde{\mathcal{Z}}^N = \tilde{\mathcal{Y}}^N|_{\llbracket 1, k \rrbracket \times[a,b]}$, then the law of $\tilde{\mathcal{Z}}^N$ converges weakly to $\mathbb{P}^{a,b,\vec{x},\vec{y}}_{free}$. In view of \eqref{BBwindow} we can then find $N_2$ so that if $N \geq \max(N_1,N_2)$ then
\begin{equation}\label{BBwindow2}
	\mathbb{P}^{a_N,b_N,\vec{x}\,^N,\vec{y}\,^N}_{free,N}(\rho(\tilde{\mathcal{Y}}^N_i,h_i) < \epsilon \mbox{ for } i\in\llbracket 1,k\rrbracket) > 0.
\end{equation}
	We now choose $N_3$ so that $\sup_{x\in[a-1,b+1]}|f(x)-f_N(x)| < \epsilon/4$ and $\sup_{x\in[a-1,b+1]}|g(x)-g_N(x)| < \epsilon/4$. If $f=\infty$ (resp. $g=-\infty$), we interpret this to mean that $f_N=\infty$ (resp. $g_N=-\infty$). We take $N_4$ large enough so that if $N\geq N_4$ and $|x-y|\leq N^{-\alpha/2}$ then $|f(x)-f(y)|<\epsilon/4$ and $|g(x)-g(y)|<\epsilon/4$. Lastly, we choose $N_5$ so that $N_5^{-\alpha} < \epsilon/4$. Then for $N\geq N_0 = \max(N_1,N_2,N_3,N_4,N_5)$, we have using (\ref{scaledavoidwindow}) that 
\begin{equation}\label{BBwindow3}
	\{\rho(\tilde{\mathcal{Y}}^N_i,h_i) < \epsilon \mbox{ for } i\in\llbracket 1,k\rrbracket\} \subset \{f_N \geq \mathcal{Y}^N_1 \geq \cdots \geq \mathcal{Y}^N_k \geq g_N \mbox{ on } [a_N,b_N]\}.
\end{equation}
	By \eqref{BBwindow2} and \eqref{BBwindow3} we conclude that 
$$ \mathbb{P}_{free,N}^{a_N, b_N, \vec{x}^{N}, \vec{y}^N} \left( \{f_N \geq \mathcal{Y}^N_1 \geq \cdots \geq \mathcal{Y}^N_k \geq g_N \mbox{ on } [a_N,b_N]\} \right) > 0,$$
which implies that $\mathbb{P}^{a_N,b_N,\vec{x}\,^N,\vec{y}\,^N,f_N,g_N}_{avoid,N}$ is well-defined.\\
	
	\noindent\textbf{Step 2. } In this step we prove that $\mathcal{Z}^N \implies \mathbb{P}^{a,b,\vec{x},\vec{y},f,g}_{avoid}$, with $\mathcal{Z}^N$ defined in the statement of the lemma. We write $\Sigma = \llbracket 1,k\rrbracket$, $\Lambda = [a,b]$, and $\Lambda_N = [a_N,b_N]$. It suffices to show that for any bounded continuous function $F : C(\Sigma\times\Lambda)\to\mathbb{R}$ we have
	\begin{equation}\label{scaledavoidweak}
	\lim_{N\to\infty} \ex[F(\mathcal{Z}^N)] = \ex[F(\mathcal{Q})],
	\end{equation}
	where $\mathcal{Q}$ has law $\mathbb{P}^{a,b,\vec{x},\vec{y},f,g}_{avoid}$.
	
	We define the functions $H_{f,g} : C(\Sigma\times\Lambda)\to\mathbb{R}$ and $H^N_{f,g}:C(\Sigma\times\Lambda_N)\to\mathbb{R}$ by
	\begin{align*}
	H_{f,g}(\mathcal{L}) &= \mathbf{1}\{f > \mathcal{L}_1 > \cdots > \mathcal{L}_k > g \mbox{ on } \Lambda\},\\
	H^N_{f,g}(\mathcal{L}^N) &= \mathbf{1}\{f \geq \mathcal{L}^N_1 \geq \cdots \geq \mathcal{L}^N_k \geq g \mbox{ on } \Lambda_N\}.
	\end{align*}
	Then we observe that for $N\geq N_0$,
	\begin{equation}\label{scaledavoidcond}
	\ex[F(\mathcal{Z}^N)] = \frac{\ex[F(\mathcal{L}^N|_{\Sigma\times[a,b]})H_{f,g}^N(\mathcal{L}^N)]}{\ex[H_{f,g}^N(\mathcal{L}^N)]},
	\end{equation}
	where $\mathcal{L}^N$ has law $\mathbb{P}^{a_N,b_N,\vec{x}\,^N,\vec{y}\,^N}_{free,N}$. By our choice of $N_0$ in Step 1, the denominator in \eqref{scaledavoidcond} is positive for all $N\geq N_0$. Similarly, we have
	\begin{equation}\label{BBavoidcond}
	\ex[F(\mathcal{Q})] = \frac{\ex[F(\mathcal{L})H_{f,g}(\mathcal{L})]}{\ex[H_{f,g}(\mathcal{L})]}, 
	\end{equation}
	where $\mathcal{L}$ has law $\mathbb{P}^{a,b,\vec{x},\vec{y}}_{free}$. From \eqref{scaledavoidcond} and \eqref{BBavoidcond}, we see that to prove \eqref{scaledavoidweak} it suffices to show that for any bounded continuous function $F:C(\Sigma\times\Lambda)\to\mathbb{R}$,
	\begin{equation}\label{scaledBBex}
	\lim_{N\to\infty}\ex[F(\mathcal{L}^N|_{\Sigma\times[a,b]})H_{f,g}^N(\mathcal{L}^N)] = \ex[F(\mathcal{L})H_{f,g}(\mathcal{L})].
	\end{equation}
	By Lemma \ref{scaledRWbb}, $\mathcal{L}^N|_{\Sigma\times[a,b]} \implies \mathcal{L}$ as $N\to\infty$. Since $C(\Sigma\times\Lambda)$ is separable, the Skorohod representation theorem \cite[Theorem 6.7]{Billing} gives a probability space $(\Omega,\mathcal{F},\mathbb{P})$ supporting $C(\Sigma\times\Lambda_N)$-valued random variables $\mathcal{L}^N$ with laws $\mathbb{P}^{a_N,b_N,\vec{x}\,^N,\vec{y}\,^N}_{free,N}$ and a $C(\Sigma\times\Lambda)$-valued random variable $\mathcal{L}$ with law $\mathbb{P}^{a,b,\vec{x},\vec{y}}_{free}$ such that $\mathcal{L}^N|_{\Sigma\times[a,b]}\to\mathcal{L}$ uniformly on compact sets, pointwise on $\Omega$. Here we rely on the fact that $a_N,b_N$ are respectively the largest element of $N^{-\alpha}\mathbb{Z}$ less than $a$ and the smallest element greater than $b$, so that $\mathcal{L}^N|_{\Sigma\times[a,b]}$ uniquely determines $\mathcal{L}^N$ on $[a_N,b_N]$.
	
	Define the events
	\begin{align*}
	E_1 &= \{\omega : f > \mathcal{L}_1(\omega) > \cdots > \mathcal{L}_k(\omega) > g \mbox{ on } [a,b]\},\\
	E_2 &= \{\omega : \mathcal{L}_i(\omega)(r) < \mathcal{L}_{i+1}(\omega)(r) \mbox{ for some } i\in\llbracket 0,k\rrbracket \mbox{ and } r\in[a,b]\},
	\end{align*}
	where in the definition of $E_2$ we use the convention $\mathcal{L}_0 = f$, $\mathcal{L}_{k+1} = g$. The continuity of $F$ implies that $F(\mathcal{L}^N|_{\Sigma\times[a,b]})H^N_{f_N,g_N}(\mathcal{L}^N) \to F(\mathcal{L})$ on the event $E_1$, and $F(\mathcal{L}^N|_{\Sigma\times[a,b]})H^N_{f_N,g_N}(\mathcal{L}^N)\to 0$ on the event $E_2$. By Lemma \ref{NoTouch} we have $\mathbb{P}(E_1 \cup E_2) = 1$, so $\mathbb{P}$-a.s. we have $F(\mathcal{L}^N|_{\Sigma\times[a,b]})H^N_{f_N,g_N}(\mathcal{L}^N) \to F(\mathcal{L})H_{f,g}(\mathcal{L})$. The bounded convergence theorem then implies \eqref{scaledBBex}, completing the proof of \eqref{scaledavoidweak}.
\end{proof}

We now state two lemmas about Brownian bridges which will be used in the proof of Lemma \ref{inftydistinct}. The first lemma shows that a Brownian bridge started at 0 almost surely becomes negative somewhere on its domain.

\begin{lemma}\label{BBcross0}
	Fix any $T>0$ and $y\in\mathbb{R}$, and let $Q$ denote a random variable with law $\mathbb{P}^{0,T,0,y}_{free}$. Define the event $C = \{\inf_{s\in[0,T]} Q(s) < 0\}$. Then $\mathbb{P}^{0,T,0,y}_{free}(C) = 1$.
\end{lemma}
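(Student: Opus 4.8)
The plan is to reduce the statement to a symmetric fact about Brownian bridges from $0$ to $0$ and then to exploit the strong Markov property together with the symmetry $B \mapsto -B$. First I would write $Q(s) = \tilde{B}(s) + (s/T)\,y$, where $\tilde{B}$ has law $\mathbb{P}^{0,T,0,0}_{free}$, i.e. $\tilde{B}(s) = \sqrt{T}\,\tilde{B}^1(s/T)$ for a standard Brownian bridge $\tilde{B}^1$ on $[0,1]$ with diffusion parameter $1$ (see (\ref{BBDef})). If $y \leq 0$, then on any $\omega$ for which $\tilde{B}$ dips below $0$ somewhere on $(0,T)$ we also have $\inf_{s} Q(s) < 0$, so it suffices to handle the case $y \leq 0$ first via the $y = 0$ case, and then treat $y > 0$ separately. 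Actually a cleaner route: it suffices to prove $\mathbb{P}^{0,T,0,0}_{free}\big(\inf_{s\in[0,T]} \tilde{B}(s) < 0\big) = 1$ and $\mathbb{P}^{0,T,0,y}_{free}\big(\inf_{s\in[0,T]} Q(s) < 0\big) = 1$ for $y>0$ by a separate argument; I will organize it so that the $y \le 0$ case follows from the $y=0$ case by the monotone coupling observation above (pathwise, lowering the right endpoint only lowers the bridge — or simply note $Q(s) \le \tilde B(s)$ when $y \le 0$).

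For the core case $y = 0$: by symmetry of the standard Brownian bridge under $B \mapsto -B$, we have $\mathbb{P}^{0,T,0,0}_{free}\big(\inf_s \tilde{B}(s) < 0\big) = \mathbb{P}^{0,T,0,0}_{free}\big(\sup_s \tilde{B}(s) > 0\big)$, and these two events are, up to the null event $\{\tilde B \equiv 0\}$ (which has probability $0$), complementary is false — rather, their union is all of $C([0,T])$ minus $\{\tilde B \ge 0 \text{ everywhere}\} \cap \{\tilde B \le 0 \text{ everywhere}\} = \{\tilde B \equiv 0\}$. Hence $\mathbb{P}(\inf \tilde B < 0) + \mathbb{P}(\sup \tilde B > 0) \geq \mathbb{P}(\inf \tilde B < 0 \text{ or } \sup \tilde B > 0) = 1$, and by symmetry both probabilities are equal, so $\mathbb{P}(\inf \tilde B < 0) \geq 1/2$. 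To upgrade $1/2$ to $1$ I would use the strong Markov property / scaling: condition on $\tilde B(T/2)$. Given $\tilde B(T/2) = v$, the restriction of $\tilde B$ to $[0,T/2]$ is (up to affine shift) a Brownian bridge from $0$ to $v$. If $v \le 0$ then by the case already handled (lowering the endpoint) the left half goes negative a.s.; if $v > 0$, apply the same argument to the right half $[T/2,T]$, which is a bridge from $v>0$ down to $0$, and a bridge ending at $0$ from a positive value goes below $0$ with probability that I can again bound below by a positive constant by symmetry considerations. Iterating the halving (a $0$--$1$-law style argument, or directly a Borel–Cantelli over dyadic scales near the endpoint $0$) forces the probability to be $1$. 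Alternatively — and this is probably the slickest — I would invoke the fact that a Brownian bridge from $0$ to $0$ has the same law as $W_t - tW_1$ for standard BM $W$, and near $t = 0$ it behaves like Brownian motion, which a.s. changes sign infinitely often in every neighborhood of $0$; hence a.s. $\inf_{s \in [0,\epsilon]} \tilde B(s) < 0$ for every $\epsilon$.

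For $y > 0$ the same near-zero argument at the \emph{left} endpoint works verbatim: $Q(s) = \tilde B(s) + (s/T)y$ with $\tilde B$ a $0$--$0$ bridge, and as $s \downarrow 0$ the linear term $(s/T)y$ is $O(s)$ while $\tilde B(s)$ oscillates on the scale $\sqrt{s\log\log(1/s)}$ by the law of the iterated logarithm for the bridge near $0$ (which near $0$ is a time-changed Brownian motion). So $\liminf_{s\downarrow 0} Q(s)/\sqrt{s} = -\infty$ a.s., in particular $Q$ takes negative values in every right-neighborhood of $0$, giving $\mathbb{P}^{0,T,0,y}_{free}(C) = 1$.

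The main obstacle is purely a matter of choosing the cleanest rigorous phrasing of the ``$Q$ goes negative immediately after $0$'' claim: the $y=0$ symmetry argument only gives probability $\ge 1/2$ cheaply, and the jump to $1$ requires either a Blumenthal $0$--$1$-law argument for the bridge near its starting point (which needs a small remark that the bridge is a semimartingale whose germ $\sigma$-algebra at $0$ is trivial, inherited from Brownian motion via the representation $\tilde B_t = W_t - tW_1$ and the fact that $W$ and $\tilde B$ generate the same germ field at $0$), or an explicit Borel–Cantelli over dyadic times $2^{-n}T$ using the reflection-principle-type lower bounds on $\mathbb{P}(\tilde B(2^{-n}T) < -c2^{-n/2}\sqrt{T})$. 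I would present the Blumenthal version: write $\tilde B_t = W_t - tW_1$, note that for any $\epsilon>0$ the event $\{\inf_{s\le\epsilon}\tilde B_s < 0\}$ differs from $\{\inf_{s\le\epsilon}W_s<0\}$ only on an event controlled by $\sup_{s \le \epsilon}|sW_1| = \epsilon|W_1|$, and since a.s. $\inf_{s\le\epsilon}W_s < -2\epsilon|W_1|$ for all small $\epsilon$ (LIL for $W$ at $0$), conclude $\inf_{s\le\epsilon}\tilde B_s<0$ a.s.; the linear drift $(s/T)y$ in $Q$ is absorbed the same way. This keeps the whole proof to a short paragraph once set up.
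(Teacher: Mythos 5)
Your proof is correct but takes a genuinely different route from the paper. The paper introduces the stopping time $\tau=\inf\{s>0:Q(s)<0\}$, observes that $\{\tau=0\}$ lies in the germ $\sigma$-algebra $\bigcap_n\sigma(Q(s):s<1/n)$ so that Blumenthal's $0$--$1$ law forces $\mathbb{P}(\tau=0)\in\{0,1\}$, and then rules out the value $0$ by the one-line Gaussian estimate $\mathbb{P}(\tau\leq s)\geq \mathbb{P}\bigl(B_{s/T}<-sy/T\bigr)=\mathbb{P}\bigl(\mathcal{N}(0,1)>y\sqrt{s/(T-s)}\bigr)\to 1/2$ as $s\to 0$. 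You instead argue pathwise through the representation $\tilde{B}_t=W_t-tW_1$ and the law of the iterated logarithm at $0$: the bridge oscillates on scale $\sqrt{s\log\log(1/s)}$ near $s=0$ while the corrections $-sW_1$ and $(s/T)y$ are only $O(s)$, so $Q$ goes negative in every right-neighborhood of $0$ almost surely. Your approach avoids the (slightly glossed, in the paper) point that the $0$--$1$ law applies to the bridge's germ field --- indeed your representation is exactly what justifies it --- and it yields the stronger quantitative statement $\liminf_{s\downarrow 0}Q(s)/\sqrt{s}=-\infty$; the paper's approach needs no LIL, only Blumenthal plus a single Gaussian computation. One caution on your final paragraph: the uniform claim ``a.s.\ $\inf_{s\leq\epsilon}W_s<-2\epsilon|W_1|$ for all small $\epsilon$'' does not follow from the LIL quite as stated, since the LIL times below $\epsilon$ may be much smaller than $\epsilon$ (making $\sqrt{2s\log\log(1/s)}$ potentially smaller than $\epsilon$); that uniform bound is true but needs a small-ball estimate with Borel--Cantelli over dyadic scales. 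It is cleaner, and entirely sufficient, to evaluate at the LIL times themselves, exactly as in your sentence establishing $\liminf_{s\downarrow 0}Q(s)/\sqrt{s}=-\infty$: at times $s_n\downarrow 0$ with $W_{s_n}\leq-\tfrac12\sqrt{2s_n\log\log(1/s_n)}$ the corrections are $o(\sqrt{s_n})$, so $Q(s_n)<0$ for large $n$, which already proves the lemma.
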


\begin{proof}
	Let $B$ denote a standard Brownian bridge on $[0,1]$, and let 
	\[
	\tilde{B}_s = B_{s/T} + \frac{sy}{T}, \quad \mathrm{for}\,s\in[0,T].
	\]
	Then $\tilde{B}$ has the law of $Q$. Consider the stopping time $\tau = \inf\{s>0 : \tilde{B}_s < 0 \}$. We will argue that $\tau = 0$ a.s, which implies the conclusion of the lemma since $\{\tau = 0\} \subset C$. We observe that since $\tilde{B}$ is a.s. continuous and $\mathbb{Q}$ is dense in $\mathbb{R}$,
	\[
	\{\tau = 0\} = \cap_{n \in \mathbb{N}}\,\cup_{s\in(0,1/n)\cap\mathbb{Q}} \{\tilde{B}_s < 0 \} \in \cap_{n \in \mathbb{N}} \sigma(\tilde{B}_s : s<1/n).
	\]
	Here, $\sigma(\tilde{B}_s : s<\epsilon)$ denotes the $\sigma$-algebra generated by $\tilde{B}_s$ for $s<\epsilon$. We used the fact that for a fixed $\epsilon$, each set $\{\tilde{B}_s < 0\}$ for $s\in (0,\epsilon)\cap\mathbb{Q}$ is contained in this $\sigma$-algebra, and thus so is their countable union. It follows from Blumenthal's 0-1 law \cite[Theorem 7.2.3]{Durrett} that $\mathbb{P}(\tau = 0) \in \{0,1\}$. To complete the proof, it suffices to show that $\mathbb{P}(\tau = 0) > 0$. By \eqref{BBcovar}, $B_{s/T}$ is distributed normally with mean 0 and variance $\sigma^2 = (s/T)(1-s/T)$. We observe that for any $s\in(0,T)$, 
	\[
	\mathbb{P}(\tau\leq s) \geq \mathbb{P}(B_{s/T} < -sy/T) = \mathbb{P}\left(\sigma\mathcal{N}(0,1) > (s/T)y\right) = \mathbb{P}\left(\mathcal{N}(0,1) > y\sqrt{s/(T-s)}\right).
	\]
	As $s\to 0$, the probability on the right tends to $\mathbb{P}(\mathcal{N}(0,1) > 0) = 1/2$. Since $\{\tau = 0\} = \bigcap_{n=1}^\infty \{\tau \leq 1/n\}$ and $\{\tau\leq 1/(n+1)\} \subset \{\tau \leq 1/n\}$, we conclude that
	\[
	\mathbb{P}(\tau = 0) = \lim_{n\to\infty} \mathbb{P}(\tau \leq 1/n) \geq 1/2.
	\]
	Therefore $\mathbb{P}(\tau = 0) = 1$.
	
\end{proof}	

The second lemma shows that a difference of two independent Brownian bridges is another Brownian bridge.

\begin{lemma}\label{BBDif}
	Let $a,b,x_1,y_1,x_2,y_2\in \R$ with $a<b$. Let $B_1(t)$, $B_2(t)$ be independent Brownian bridges from on $[a,b]$ from $x_1$ to $y_1$ and from $x_2$ to $y_2$ respectively, as defined in (\ref{BBDef}). If $B(t)=B_1(t)-B_2(t)$ for $t\in[a,b]$, then $2^{-1/2}B$ is itself a Brownian bridge on $[a,b]$ from $2^{-1/2}(x_1 -x_2)$ to $2^{-1/2}(y_1 - y_2)$.
\end{lemma}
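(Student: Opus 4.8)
\textbf{Proof proposal for Lemma \ref{BBDif}.}

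The plan is to argue via Gaussian covariance, exactly as in the proof of Lemma \ref{2bridges}: a continuous process on $[a,b]$ that is a centered Gaussian process after subtracting a deterministic linear function, with the right covariance and the right endpoint behavior, must be a Brownian bridge with the claimed diffusion parameter. First I would recall from \eqref{BBDef} that for $j=1,2$ we may write $B_j(t) = (b-a)^{1/2}\tilde B_j\!\left(\tfrac{t-a}{b-a}\right) + \tfrac{b-t}{b-a}x_j + \tfrac{t-a}{b-a}y_j$, where $\tilde B_1,\tilde B_2$ are independent standard Brownian bridges on $[0,1]$. Subtracting, $B(t) = (b-a)^{1/2}\big(\tilde B_1-\tilde B_2\big)\!\left(\tfrac{t-a}{b-a}\right) + \tfrac{b-t}{b-a}(x_1-x_2) + \tfrac{t-a}{b-a}(y_1-y_2)$, so $2^{-1/2}B$ has the same form with $\tilde B_1 - \tilde B_2$ replaced by $2^{-1/2}(\tilde B_1-\tilde B_2)$ and endpoints $2^{-1/2}(x_1-x_2)$, $2^{-1/2}(y_1-y_2)$. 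Thus it suffices to show $D := 2^{-1/2}(\tilde B_1 - \tilde B_2)$ is a standard Brownian bridge on $[0,1]$.

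The process $D$ is almost surely continuous, vanishes at $0$ and $1$, and is a centered Gaussian process since it is a linear combination of the independent centered Gaussian processes $\tilde B_1,\tilde B_2$; hence it is determined by its covariance. Using independence and \eqref{BBcovar} with $\sigma = 1$, for $0\le r\le s\le 1$,
\begin{equation*}
\ex[D_r D_s] = \tfrac12\big(\ex[\tilde B_1(r)\tilde B_1(s)] + \ex[\tilde B_2(r)\tilde B_2(s)]\big) = \tfrac12\big((r - rs) + (r - rs)\big) = r - rs,
\end{equation*}
which is exactly the covariance of a standard Brownian bridge. Therefore $D$ is a standard Brownian bridge on $[0,1]$, and tracing back through the affine change of variables shows that $2^{-1/2}B$ is a Brownian bridge on $[a,b]$ from $2^{-1/2}(x_1-x_2)$ to $2^{-1/2}(y_1-y_2)$, as claimed.

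There is no real obstacle here; the only point requiring a little care is making explicit that equality of finite-dimensional Gaussian laws (via mean and covariance) together with a.s. continuity suffices to identify the law of the process on $C([a,b])$ — this is the same fact already invoked in the proof of Lemma \ref{2bridges}, and I would simply cite that reasoning rather than repeat it.
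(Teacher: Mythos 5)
Your proposal is correct and follows essentially the same route as the paper: reduce via the affine representation \eqref{BBDef} to the difference of two independent standard Brownian bridges on $[0,1]$, and identify $2^{-1/2}(\tilde B_1-\tilde B_2)$ as a standard bridge by Gaussianity and covariance. If anything, your explicit computation $\ex[D_rD_s]=r-rs$ is stated more carefully than the paper's version of the same step, so no changes are needed.
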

\begin{proof}
	By definition, for $i = 1,2$ we have
	\begin{equation*}
		B_i(t)=(b-a)^{1/2}\cdot \tilde{B_i}\left(\frac{t-a}{b-a}\right)+\left(\frac{b-t}{b-a}\right)\cdot x_i+\left(\frac{t-a}{b-a}\right)\cdot y_i,
	\end{equation*} 
	with $\tilde{B_i}(t)=W^i_t-tW^i_1$ for independent Brownian motions $W^1$ and $W^2$. We have
	\begin{equation}\label{BBdifeq}
	  B(t)=(b-a)^{1/2}\cdot(\tilde{B_1}-\tilde{B_2})\left(\frac{t-a}{b-a}\right)+\left(\frac{b-t}{b-a}\right)\cdot (x_1-x_2)+\left(\frac{t-a}{b-a}\right)\cdot (y_1-y_2).
	\end{equation}
	Note that the process $\tilde{B}_1 - \tilde{B}_2$ is a linear combination of continuous Gaussian mean 0 processes, so it is a continuous Gaussian mean 0 process, and is thus characterized by its covariance. Since $\tilde{B}_1(\cdot)$ and $\tilde B_2(\cdot)$ are both Gaussian with mean 0 and the covariance $\min(s,t)$, their difference $\tilde{B}_1(\cdot) - \tilde{B}_2(\cdot)$ is also Gaussian with mean $0$ and covariance $2\min(s,t)$. This implies that $2^{-1/2}(\tilde{B_1}-\tilde{B_2})$ is itself a Brownian bridge $\tilde B$ on $[a,b]$, and hence equation \ref{BBdifeq} can be rewritten  \begin{equation*}
		2^{-1/2} B(t) =(b-a)^{1/2}\cdot\tilde{B}\left(\frac{t-a}{b-a}\right)+\left(\frac{b-t}{b-a}\right)\cdot 2^{-1/2} (x_1-x_2)+\left(\frac{t-a}{b-a}\right)\cdot 2^{-1/2} (y_1-y_2).
	\end{equation*}
	This is a Brownian bridge on $[a,b]$ from $2^{-1/2}(x_1 -x_2)$ to $2^{-1/2}(y_1 - y_2)$ as desired.
\end{proof}

To conclude this section, we prove Lemma \ref{inftydistinct}.

\begin{proof}(of Lemma \ref{inftydistinct})
Suppose that $\mathcal{L}^\infty$ is a subsequential limit of $(\tilde{f}_1^N, \dots, \tilde{f}_{k-1}^N)$. By possibly passing to a subsequence we may assume that $(\tilde{f}_1^N, \dots, \tilde{f}_{k-1}^N) \implies \mathcal{L}^\infty$. We will still call the subsequence $(\tilde{f}_1^N, \dots, \tilde{f}_{k-1}^N)$ to not overburden the notation. By the Skorohod representation theorem \cite[Theorem 6.7]{Billing}, we can also assume that $(\tilde{f}_1^N, \dots, \tilde{f}_{k-1}^N)$ and $\mathcal{L}^\infty$ are all defined on the same probability space with measure $\mathbb{P}$ and the convergence is happening $\mathbb{P}$-almost surely. Here we are implicitly using Lemma \ref{Polish}  from which we know that the random variables $(\tilde{f}_1^N, \dots, \tilde{f}_{k-1}^N)$ and $\mathcal{L}^\infty$ take value in a Polish space so that the Skorohod representation theorem is applicable. 

Let us denote the random variables with laws $(\tilde{f}_1^N, \dots, \tilde{f}_{k-1}^N) $ by $\mathcal{X}^N$ and the one with law $\mathcal{L}^\infty$ by $\mathcal{X}$ and so $\mathcal{X}^N \rightarrow \mathcal{X}$ almost surely w.r.t. $\mathbb{P}$. In particular, $\mathcal{X}^N(s) \to \mathcal{X}(s)$ for any $s \in \mathbb{R}$. Recall that $f^N_i(s) = N^{-\alpha/2}(L^N_i(sN^\alpha) - psN^\alpha) + \lambda s^2$, so $\mathcal{X}^N_i(s) = N^{-\alpha/2}(\mathcal{L}^N_i(sN^\alpha) - psN^\alpha)/\sqrt{p(1-p)}$, where $\mathcal{L}^N$ has the law of $L^N$.
	
	Suppose that $\mathcal{X}_i(s) = \mathcal{X}_{i+1}(s)$ for some $i\in\llbracket 1,k-2\rrbracket$. Then we have $\mathcal{X}_i^N(s) - \mathcal{X}^N_{i+1}(s) \to 0$, i.e., $N^{-\alpha/2}(\mathcal{L}_i^N(sN^\alpha) - \mathcal{L}^N_{i+1}(sN^\alpha))\to 0$ as $N\to\infty$. Let us write $a = \lfloor sN^\alpha\rfloor N^{-\alpha}$, $b = \lceil (s+2)N^\alpha\rceil N^{-\alpha}$ and $x^N = \mathcal{L}_i^N(aN^\alpha) - \mathcal{L}_{i+1}^N(aN^\alpha)$, $y^N = \mathcal{L}_i^N(bN^\alpha) - \mathcal{L}_{i+1}^N(bN^\alpha)$. Then $N^{-\alpha/2}x^N\to 0$. If $Q_i,Q_{i+1}$ are independent Bernoulli bridges with laws $\mathbb{P}^{a,b,\mathcal{L}_i^N(aN^\alpha),\mathcal{L}_i^N(bN^\alpha)}_{Ber}$ and $\mathbb{P}^{a,b,\mathcal{L}_{i+1}^N(aN^\alpha),\mathcal{L}_{i+1}^N(bN^\alpha)}_{Ber}$, then $\ell=Q_i - Q_{i+1}$ is a random walk bridge taking values in $\{-1,0,1\}$, from $(a,x^N)$ to $(b,y^N)$. Let us denote the law of $N^{-\alpha/2}\ell/\sqrt{p(1-p)}$ by $\mathbb{P}^{a,b,x^N,y^N}_{di\!f\!f}$.
	
	By Lemma \ref{scaledRWbb}, $(x^N+N^{-\alpha/2}Q_{i+1}-ptN^\alpha)/\sqrt{p(1-p)}$ and $(x^N+N^{-\alpha/2}Q_i-ptN^\alpha)/\sqrt{p(1-p)}$ converge weakly to the law of two Brownian bridges $B^i$ from $\mathcal{L}_i^\infty(s)$ to $\mathcal{L}_i^\infty(s+2)$ and $B^{i+1}$ from $\mathcal{L}_{i+1}^\infty(s)$ to $\mathcal{L}_{i+1}^\infty(s+2)$ respectively. Consequently, their difference $N^{-\alpha/2}\ell/\sqrt{p(1-p)}$ converges weakly to the difference of two independent Brownian bridges, $B^1-B^2$. 
	By Lemma \ref{BBDif}, this difference is equal to $2^{1/2} B$, where $B$ is a Brownian bridge $B$ on $[s,s+2]$ from 0 to $2^{-1/2} y$, where $y = \mathcal{L}_i^\infty(s+2) - \mathcal{L}_{i+1}^\infty(s+2).$ In other words, $B$ has law $\mathbb{P}^{s,s+2,0,2^{-1/2}y}_{free}$. Therefore $\mathbb{P}^{a,b,x^N,y^N}_{dif\!f}$ converges weakly to $\mathbb{P}^{s,s+2,0,y}_{free}$. With probability one, $\min_{t\in[s,s+2]} B_t < 0$ by Lemma \ref{BBcross0}. Thus given $\delta > 0$, we can choose $N$ large enough so that the probability of $N^{-\alpha/2}\ell/\sqrt{p(1-p)}$, or equivalently $\ell$, remaining above 0 on $[a,b]$ is less than $\delta$. Thus for large enough $N$ we have
	\begin{equation}\label{collideAP}
		\begin{split}
		&\mathbb{P}\left(f_i^\infty(s) = f_{i+1}^\infty(s)\right) \leq \mathbb{P}\bigg(\mathbb{P}^{a,b,x^N,y^N}_{dif\!f}\bigg(\sup_{s\in[a,b]}\ell(s) \geq 0\bigg) < \delta\bigg) \leq\\
		& \mathbb{P}\left(Z(a,b,\mathcal{L}^N(aN^\alpha),\mathcal{L}^N(bN^\alpha),\infty,\mathcal{L}^N_k) < \delta\right).
		\end{split}
	\end{equation}
	Here, $Z$ denotes the acceptance probability of Definition \ref{DefAP}. This is the probability that $k-1$ independent Bernoulli bridges $Q_1,\dots,Q_{k-1}$ on $[a,b]$ with entrance and exit data $\mathcal{L}^N(a)$ and $\mathcal{L}^N(b)$ do not cross one another or $\mathcal{L}_k^N$. The last inequality follows because $\ell$ has the law of the difference of $Q_i$ and $Q_{i+1}$, and the acceptance probability is bounded above by the probability that $Q_i$ and $Q_{i+1}$ do not cross, i.e., that $Q_i - Q_{i+1} \geq 0$. By Proposition \ref{PropMain}, given $\epsilon > 0$ we can choose $\delta$ so that the probability on the right in \eqref{collideAP} is $<\epsilon$. We conclude that
	\[
	\mathbb{P}\left(f_i^\infty(s) = f_{i+1}^\infty(s)\right) = 0.
	\]
\end{proof}

%
\section{Appendix B} \label{Section9}

The goal of this section is to prove Proposition \ref{prob17}, which roughly states that if the boundary data of an avoiding Bernoulli line ensemble converges then the fixed time distribution of the ensemble converges weakly to a random vector with density $\rho$. In the process of the proof we will identify this limiting density $\rho$.

Throughout this section we fix $k \in \mathbb{N}$ and consider sequences of $\llbracket 1,k\rrbracket$-indexed line ensembles with distribution given by $\mathbb{P}^{0,T,\vec{x},\vec{y}}_{avoid,Ber}$ in the sense of Definition \ref{DefAvoidingLawBer}. Recall that this is just the law of $k$ independent Bernoulli random walks that have been conditioned to start from $\vec{x}=(x_{1},\dots,x_{k})$ at time $0$ and end at $\vec{y}=(y_1,\cdots,y_{k})$ at time $T$ and are conditioned on never crossing. Here $\vec{x}$, $\vec{y}\in\mathfrak{W}_{k}$ satisfy $T\geq y_{i}-x_{i}\geq 0$ for $i=1,\dots,k$, which by Lemma \ref{LemmaWD} ensures the well-posedness of $\mathbb{P}^{0,T,\vec{x},\vec{y}}_{avoid,Ber}$. 

 In Section \ref{Section9.1}, we introduce some definitions and formulate the precise statements of the two results we want to prove as Propositions \ref{S9PropH} and \ref{WeakConvDistinct}. In Section \ref{Section9.2}, we introduce some basic results about skew Schur polynomials and express the fixed time distribution of avoiding Bernoulli line ensembles through these polynomials in Lemma \ref{BerDist}. In Sections \ref{Section9.3} and \ref{Section9.4}, we prove Propositions \ref{S9PropH} and \ref{WeakConvDistinct} for an important special case. In Section \ref{Section9.5} we introduce some notations and results about multi-indices and multivariate functions which paves the way for the full proofs of Propositions \ref{S9PropH} and \ref{WeakConvDistinct} in that section and Section \ref{Section9.6}. 

%
\subsection{Weak convergence}{\label{Section9.1}}
We start by recalling and introducing some helpful notation. Recall,
$$W_k = \{ \vec{x} \in \mathbb{R}^k: x_1 \geq x_2 \geq \cdots \geq x_k \}, \hspace{5mm} W_k^{\circ} = \{\vec{x} \in \mathbb{R}^k: x_1 > x_2 > \cdots > x_k \}.$$
\begin{definition}\label{DefScaled} Here we recall the scaling from Proposition \ref{prob17}. We fix $p,t\in(0,1)$, and $\vec{a},\vec{b}\in {W}_{k}$. Suppose that $\vec{x}^{T}=(x_{1}^{T},\dots,x_{k}^{T})$ and $\vec{y}^{T}=(y_{1}^{T},\dots,y_{k}^{T})$ are two sequences of $k$-dimensional vectors in $\mathfrak{W}_{k}$ such that $$\lim_{T\rightarrow\infty}\frac{x_{i}^{T}}{\sqrt{T}}=a_{i} \text{ and } \lim_{T\rightarrow\infty}\frac{y_{i}^{T}-pT}{\sqrt{T}}=b_{i}$$ for $i=1,\dots,k$. Define the sequence of random $k$-dimensional vectors $Z^{T}$ by 
	\begin{align}{\label{ZTandL}}
		\begin{split}
		Z^{T}= (Z_1^T, \dots, Z_k^T) =  \left(\frac{L^T_{1}(tT)-ptT}{\sqrt{T}},\dots,\frac{L^T_{k}(tT)-ptT}{\sqrt{T}}\right),
	\end{split}
	\end{align}
	where $(L^T_{1},\dots,L^T_{k})$ is $\mathbb{P}^{0,T,\vec{x}^{T},\vec{y}^{T}}_{avoid,Ber}$-distributed.
\end{definition}

We next define a class of functions that will be used to express the limiting density $\rho$ in Proposition \ref{prob17}. These functions depend on two vectors $\vec{a}, \vec{b} \in W_k$ as well as parameters $p, t \in (0,1)$ through the quantities
\begin{equation}\label{CWeights}
c_{1}(p,t)=\frac{1}{p(1-p)t}, \quad c_{2}(p,t)=\frac{1}{p(1-p)(1-t)}, \quad c_{3}(p,t)=\frac{1}{2p(1-p)t(1-t)}.
\end{equation}
Suppose the vectors $\vec{a}$ and $\vec{b}$ have the following form
\begin{equation}{\label{Block}}
\begin{split}
	&\vec{a}=(a_{1},\dots,a_{k})=(\underbrace{\alpha_{1},\dots,\alpha_{1}}_{m_{1}},\dots,\underbrace{\alpha_{p},\dots,\alpha_{p}}_{m_{p}})\\
	&\vec{b}=(b_{1},\dots,b_{k})=(\underbrace{\beta_{1},\dots,\beta_{1}}_{n_{1}},\dots,\underbrace{\beta_{q},\dots,\beta_{q}}_{n_{q}})
\end{split}
\end{equation}
where $\alpha_{1}>\alpha_{2}>\cdots>\alpha_{p}$, $\beta_{1}>\beta_{2}>\cdots>\beta_{q}$ and $\sum_{i=1}^{p}m_{i}=\sum_{i=1}^{q}n_{i}=k$. We denote $\vec{m}=(m_{1},\cdots,m_{p})$, $\vec{n}=(n_{1},\cdots,n_{q})$ and define two determinants $\varphi(\vec{a},\vec{z},\vec{m})$ and $\psi(\vec{b},\vec{z},\vec{n})$  as follows
\begin{equation}{\label{TwoDet}}
\begin{split}
	\varphi(\vec{a},\vec{z},\vec{m})= \det
	\left[ \begin{array}{ccc}
		\left((c_{1}(t,p)z_{j})^{i-1}e^{c_{1}(t,p)\alpha_{1}z_{j}}\right)_{\substack{i=1,\dots,m_{1}\\j=1,\dots,k}}\\
	\vdots\\
	\left((c_{1}(t,p)z_{j})^{i-1}e^{c_{1}(t,p)\alpha_{p}z_{j}}\right)_{\substack{i=1,\dots,m_{p} \\j=1,\dots,k}}
	\end{array}
	\right]
\\
	\psi(\vec{b},\vec{z},\vec{n})= \det
	\left[ \begin{array}{ccc}
		\left((c_{2}(t,p)z_{j})^{i-1}e^{c_{2}(t,p)\beta_{1}z_{j}}\right)_{\substack{i=1,\dots,n_{1}\\j=1,\dots,k}}\\
	\vdots\\
	((c_{2}(t,p)z_{j})^{i-1}e^{c_{2}(t,p)\beta_{q}z_{j}})_{\substack{i=1,\dots, n_{q} \\j=1,\dots,k}}
	\end{array}
	\right].
\end{split}
\end{equation}

Then we define the function 
\begin{align}{\label{PreDensity17}}
	H(\vec{z})=\varphi(\vec{a},\vec{z},\vec{m}) \cdot \psi(\vec{b},\vec{z},\vec{n}) \cdot \prod_{i=1}^{k}e^{-c_{3}(t,p)z_{i}^{2}}.
\end{align}
The function $H$ implicitly depends on $p, t, k, \vec{a}, \vec{b}$ but we will not reflect this dependence in the notation. The following result summarizes the properties we will require from $H(\vec{z})$. Its proof can be found in Sections \ref{Section9.3} and \ref{Section9.5}.

\begin{proposition}\label{S9PropH} Fix $p,t \in(0,1)$ and $\vec{a}, \vec{b} \in W_k$ and let $H(\vec{z})$ be as in (\ref{PreDensity17}). Then we have: 
\begin{enumerate}
\item $H(\vec{z}) \geq 0$ for all $\vec{z} \in W_k$.
\item $H(\vec{z}) = 0$ for $\vec{z} \in W_k \setminus W_k^{\circ}$ and $H(\vec{z}) > 0$ for $\vec{z} \in W_k^{\circ}$.
\item $Z_c := \int_{W_k} H(\vec{z}) d\vec{z}\in (0, \infty)$, where $d\vec{z}$ stands for the usual Lebesgue measure.
\end{enumerate}
\end{proposition}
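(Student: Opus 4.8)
The plan is to establish the three properties of $H(\vec z)$ by unwinding the determinantal structure in \eqref{TwoDet}. The central observation is that $\varphi(\vec a,\vec z,\vec m)$ and $\psi(\vec b,\vec z,\vec n)$ are confluent (generalized) Vandermonde-type determinants in the variables $c_1 z_j$ (resp. $c_2 z_j$), obtained as coalescence limits of ordinary Vandermonde determinants $\det\bigl[ e^{c_1 a_i' z_j}\bigr]$ as the distinct nodes $a_i'$ merge into the clusters $\alpha_1,\dots,\alpha_p$ with multiplicities $m_1,\dots,m_p$. The rows $(c_1 z_j)^{i-1} e^{c_1\alpha_\ell z_j}$ are exactly the derivatives $\partial_\alpha^{i-1} e^{c_1 \alpha z_j}\big|_{\alpha=\alpha_\ell}$ up to the harmless constant $c_1^{i-1}$, so a standard confluent-Vandermonde evaluation applies.

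First I would record the sign/vanishing structure. For an ordinary exponential Vandermonde $\det\bigl[e^{\xi_i w_j}\bigr]_{i,j=1}^k$ with $\xi_1>\cdots>\xi_k$, the generalized Cauchy–Binet / Gantmacher–Krein theory of totally positive kernels gives that the determinant is strictly positive when $w_1>\cdots>w_k$ and vanishes when two $w_j$ coincide; taking the confluent limit preserves positivity on the open chamber and yields a polynomial-times-exponential whose only zeros on $W_k$ come from coincidences among the $z_j$ (one may also argue directly: $\varphi(\vec a,\vec z,\vec m)$ as a function of the $z_j$ is, after factoring $\prod e^{c_1\alpha_p z_j}$, a linear combination of monomials that is antisymmetric in the $z_j$ and hence divisible by $\prod_{i<j}(z_i-z_j)$, with the quotient a positive "Schur-like" object by the Lindström–Gessel–Viennot / path interpretation implicit in Lemma~\ref{BerDist}). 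Either way I obtain: $\varphi(\vec a,\vec z,\vec m)$ and $\psi(\vec b,\vec z,\vec n)$ have a common sign on $W_k^\circ$, are strictly nonzero there, and vanish on $W_k\setminus W_k^\circ$. Since $H$ multiplies them together, the product of the two equal-sign factors is strictly positive on $W_k^\circ$, which gives (1) and (2). (If one is worried about the global sign, note $H$ must be nonnegative because, by Lemma~\ref{BerDist} together with the scaling limit carried out in Sections~\ref{Section9.3}–\ref{Section9.6}, $H$ arises as the pointwise limit of nonnegative probability densities; alternatively fix the sign by choosing the row orderings consistently and checking one point.)

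Next, for (3) I would bound the growth of $H$. Each of $\varphi$ and $\psi$ is a finite sum of terms of the form $(\text{polynomial in }\vec z)\cdot \exp\bigl(c_1\sum_j \alpha_{\sigma(j)} z_j\bigr)$ and $\exp\bigl(c_2\sum_j \beta_{\tau(j)} z_j\bigr)$ respectively, so $|\varphi(\vec a,\vec z,\vec m)\,\psi(\vec b,\vec z,\vec n)| \le P(\vec z)\exp\bigl(C_1\sum_j |z_j|\bigr)$ for some polynomial $P$ and constant $C_1$ depending on $p,t,\vec a,\vec b,k$. Multiplying by $\prod_i e^{-c_3 z_i^2}$ with $c_3=\tfrac{1}{2p(1-p)t(1-t)}>0$ produces an integrable Gaussian-type decay, so $\int_{W_k} |H| \le \int_{\mathbb R^k} P(\vec z) e^{C_1\sum|z_j| - c_3\sum z_j^2}\,d\vec z < \infty$, giving $Z_c<\infty$. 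Positivity $Z_c>0$ is immediate from (2): $H>0$ on the nonempty open set $W_k^\circ$, which has positive Lebesgue measure.

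The main obstacle is pinning down the precise sign and strict positivity of the confluent determinants $\varphi$ and $\psi$ on $W_k^\circ$ — i.e. making rigorous that coalescing the nodes of a totally positive exponential kernel does not destroy strict positivity on the open Weyl chamber. I expect to handle this by the Lindström–Gessel–Viennot lemma / path-counting representation that already underlies Lemma~\ref{BerDist} (so that $\varphi$ is manifestly a limit of sums over non-intersecting path families, hence has a definite sign), combined with an explicit factorization $\varphi(\vec a,\vec z,\vec m) = \Bigl(\prod_{i<j}(z_i-z_j)\Bigr)\cdot G(\vec z)$ with $G$ continuous and nonvanishing on $W_k$ (obtained by dividing out the antisymmetric Vandermonde factor and using a limiting/continuity argument from the non-confluent case where $G$ is a genuine exponential Schur polynomial, known to be positive). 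Once this factorization and the positivity of $G$ on $W_k$ are in hand, all three claims follow as above.
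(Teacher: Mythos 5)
Your overall architecture parallels the paper's: nonnegativity of $H$ is obtained as a limit of manifestly nonnegative quantities (the skew-Schur/path counts behind Lemma \ref{BerDist} in the distinct-entry case, and a suitably rescaled coalescence limit of distinct-node determinants $H^{\pm}_{\epsilon}$ in the general case, which is exactly how Section \ref{Section9.5} proceeds via a multivariate Taylor expansion producing the Vandermonde constants $C(\vec{m})C(\vec{n})$); vanishing on $W_k\setminus W_k^{\circ}$ comes from repeated columns; integrability follows from the permutation expansion of the two determinants against the Gaussian factor $\prod_i e^{-c_3(t,p)z_i^2}$; and $Z_c>0$ follows from continuity together with strict positivity on the open chamber. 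All of these pieces are sound and match the paper.

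The genuine gap is precisely the step you flag as the main obstacle: strict positivity of $\varphi$ and $\psi$ (equivalently of $H$) on $W_k^{\circ}$. Every route you propose for it is a limiting argument, and limits only preserve weak inequalities: the coalescence limit of strictly positive non-confluent determinants gives $\varphi\geq 0$; the Lindstr\"om--Gessel--Viennot/path-counting representation again only fixes the sign, i.e.\ gives $\geq 0$; and the factorization $\varphi=\prod_{i<j}(z_i-z_j)\,G(\vec{z})$ buys nothing on the open chamber, since there the Vandermonde factor is already nonzero, so ``$G$ nonvanishing on $W_k$'' is a restatement of the claim to be proved, and obtaining $G$ ``by a limiting/continuity argument from the non-confluent case'' once more yields only $G\geq 0$. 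What is missing is an actual nonvanishing statement for the confluent exponential determinant at distinct points $z_1>\cdots>z_k$. The paper supplies exactly this in Lemma \ref{NonVanish}: by induction on $k$, using Rolle's theorem, any nontrivial linear combination of the functions $z^{i-1}e^{\alpha_\ell z}$ has at most $k-1$ distinct real zeros, so the $k$ rows evaluated at $k$ distinct points are linearly independent and the determinant cannot vanish; combined with the sign obtained from the limit, this gives $H>0$ on $W_k^{\circ}$. You could alternatively close the gap by invoking the theory of extended complete Tchebycheff systems (extended strict total positivity of the kernel $e^{xy}$, as in Karlin), but some such input is required; as written, your sketch does not establish strict positivity, and without it part (2) and the lower bound $Z_c>0$ in part (3) remain unproved.
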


In view of Proposition \ref{S9PropH} we know that the function
\begin{equation}\label{defRho}
\rho(\vec{z}) = \rho(z_1, \dots, z_k) = Z_c^{-1} \cdot {\bf 1}_{\{z_1 > z_2 > \cdots > z_k \}} \cdot H(\vec{z}),
\end{equation}
defines a density on $\mathbb{R}^k$. This is the limiting density in Proposition \ref{prob17}. 
We end this section by stating the main convergence statement we want to establish. 
\begin{proposition}{\label{WeakConvDistinct}}
Assume the same notation as in the Definition \ref{DefScaled}. Then the random vectors $Z^{T}$ converge weakly to $\rho$ as in (\ref{defRho}) as $T \rightarrow \infty$.	
\end{proposition}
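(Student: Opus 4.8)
\textbf{Proof proposal for Proposition \ref{WeakConvDistinct}.} The plan is to reduce the weak convergence of $Z^T$ to a local (pointwise) limit statement for the probability mass function of $Z^T$ on the lattice $(N^{-1/2}\mathbb{Z})^k$, and then upgrade this to weak convergence by a scaled-lattice Scheff\'e-type argument. First I would use Lemma \ref{BerDist} (to be proved in Section \ref{Section9.2}) to write the fixed-time distribution $\mathbb{P}^{0,T,\vec{x}^T,\vec{y}^T}_{avoid,Ber}(L^T_i(tT) = w_i,\ i=1,\dots,k)$ as a ratio involving skew Schur polynomials: the Lindstr\"om--Gessel--Viennot lemma expresses the number of $k$-tuples of non-crossing up-right paths from $\vec{x}^T$ through $\vec w$ at time $tT$ to $\vec y^T$ as a product of two determinants (one for the interval $[0,tT]$, one for $[tT,T]$), normalized by the total count $|\Omega_{avoid}(0,T,\vec{x}^T,\vec y^T)|$, which is itself a single determinant. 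Each entry is a binomial coefficient, i.e. a single-walk transition count.

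Second, I would perform the asymptotic analysis of these binomial entries. With $w_i = ptT + \sqrt{T}\,z_i$ (rounding $z_i$ to the lattice), $x_i^T \sim \sqrt{T}\,a_i$, $y_i^T \sim pT + \sqrt{T}\,b_i$, the local central limit theorem for the binomial distribution gives that $|\Omega(0,tT,x_i^T,w_j)|$ behaves, up to a common Gaussian prefactor, like $\exp\!\big(-c_1(p,t)(z_j - a_i)^2/2 + \text{lower order}\big)$ after extracting the slowly varying envelope; the determinant structure then converts the shared Gaussian factors $e^{-c_3 z_j^2}$ out front and the cross terms $e^{c_1 a_i z_j}$ into exactly the matrix entries of $\varphi(\vec a,\vec z,\vec m)$ once one handles the \emph{confluent} case where several $a_i$ (resp. $b_i$) coincide. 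This is where the row-reduction producing the powers $(c_1 z_j)^{i-1}$ comes from: when $m_\ell$ coordinates of $\vec a$ equal $\alpha_\ell$, one subtracts adjacent rows and Taylor-expands, extracting successive derivatives in the $a$-parameter, which is precisely the content deferred to Sections \ref{Section9.5}--\ref{Section9.6}. The upshot is that $T^{k/2}\mathbb{P}^{0,T,\vec{x}^T,\vec{y}^T}_{avoid,Ber}(Z^T = \vec z) \to Z_c^{-1} H(\vec z)$ uniformly on compact subsets of $W_k^\circ$, with the normalization constant $Z_c$ emerging from the asymptotics of $|\Omega_{avoid}(0,T,\vec x^T,\vec y^T)|$ (and finiteness/positivity of $Z_c$ being exactly Proposition \ref{S9PropH}(3)).

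Third, I would convert this local limit into weak convergence. Since $\rho$ is a genuine probability density on $\mathbb{R}^k$ supported on $\overline{W_k^\circ}$ (Proposition \ref{S9PropH}) and $Z^T$ lives on the lattice $(T^{-1/2}\mathbb{Z})^k \cap W_k$ with total mass $1$, the standard argument is: for any bounded continuous $f$, write $\mathbb{E}[f(Z^T)] = \sum_{\vec z} f(\vec z)\,\mathbb{P}(Z^T=\vec z)$ as a Riemann sum with mesh $T^{-1/2}$; the pointwise convergence $T^{k/2}\mathbb{P}(Z^T=\vec z)\to Z_c^{-1}H(\vec z)$ plus a uniform-integrability / tightness input forces $\mathbb{E}[f(Z^T)] \to \int f\rho$. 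Tightness of $\{Z^T\}$ I would get cheaply from the monotone coupling Lemma \ref{MCLxy}: bounding $Z^T_1$ above and $Z^T_k$ below by extreme-endpoint free Bernoulli bridges and invoking Lemma \ref{LemmaMinFreeS4} (this is essentially the argument already used for Lemma \ref{prob19}), so no mass escapes to infinity. The only subtlety is behavior near the boundary $\partial W_k$ where the local limit is not uniform; but there $H$ vanishes and one controls the contribution using the portmanteau/monotone-convergence trick already exhibited in the proof of Lemma \ref{prob 20} (the sets $E_i^\eta$ have $\rho$-measure tending to $0$), together with the crude bound that $\mathbb{P}(Z^T \in E_i^\eta)$ is comparable to the count of lattice points there times the $O(T^{-k/2})$ uniform upper bound on the pmf, which is $O(\eta)$.

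\textbf{Main obstacle.} The hard part is the determinant asymptotics in the confluent case — establishing that the binomial entries, after the appropriate row operations and Taylor expansions, converge to the entries of $\varphi$ and $\psi$ with \emph{uniform} error control good enough to pass to the limit inside the determinant, and simultaneously that the error terms in the local CLT (which are only $o(1)$ pointwise) do not accumulate over the $k!$ terms of the determinant expansion in a region where the leading determinant may be small (near $\partial W_k^\circ$). Handling this cleanly is exactly why the paper isolates the multi-index machinery of Section \ref{Section9.5}; I would follow that route, proving the local limit first on compact subsets of $W_k^\circ$ and then separately bounding the boundary layer, rather than attempting a single uniform estimate. A secondary (but routine) obstacle is verifying that the normalization constants match, i.e. that the $T\to\infty$ asymptotics of the numerator determinants divided by $|\Omega_{avoid}(0,T,\vec x^T,\vec y^T)|$ produce precisely $Z_c^{-1}H(\vec z)$ and not some extra constant; this is bookkeeping of Gaussian prefactors and powers of $T$, best organized by noting that $\sum_{\vec z} T^{k/2}\mathbb{P}(Z^T=\vec z)\cdot T^{-k/2} \to 1$ on one hand and $\to Z_c^{-1}\int H$ on the other, which pins down the constant once Proposition \ref{S9PropH}(3) is known.
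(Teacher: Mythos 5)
For the case $\vec{a},\vec{b}\in W_k^{\circ}$ your plan coincides with the paper's: Lemma \ref{BerDist} plus the entry asymptotics of Lemma \ref{Limh} give the local limit $T^{k/2}\,\mathbb{P}(Z^T=\vec z)\to Z_c^{-1}H(\vec z)$ uniformly on compacts of $W_k^{\circ}$ (this is Step 3 of Section \ref{Section9.4}), the constant is pinned down exactly by your ``sum equals $1$'' bookkeeping (Steps 1--2 there), and the upgrade to weak convergence is done by integrating over rectangles and then open sets via the portmanteau liminf criterion — so your separate tightness input from Lemma \ref{MCLxy} and Lemma \ref{LemmaMinFreeS4} is harmless but not needed, since the liminf inequality against the probability density $\rho$ already precludes escape of mass.

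The genuine gap is in how you treat the confluent case, i.e.\ general $\vec a,\vec b\in W_k$ with repeated entries. You propose to prove the local limit directly by row-reducing the determinants of binomial counts and Taylor-expanding ``in the $a$-parameter,'' and you assert that this is the content of Sections \ref{Section9.5}--\ref{Section9.6}; it is not. When several $a_i$ (or $b_i$) coincide, the leading Gaussian asymptotics of $\det\left[e_{\lambda_i-x_j^T-i+j}(1^m)\right]$ cancel: the determinant is smaller than the generic size by a factor $T^{-\frac{1}{2}\sum_i\binom{m_i}{2}}$ (and similarly for the other factor and for the denominator), and the quantities you must extract are discrete differences of rows whose size is exactly of order $T^{-1/2}$ per repeated row. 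But the multiplicative error in Lemma \ref{Limh} (equivalently, in any off-the-shelf local CLT) is only $\exp(O(T^{-1/2}))$, i.e.\ of the \emph{same} order as the first differences you need, so the pointwise limit $T^{k/2}\mathbb{P}(Z^T=\vec z)\to Z_c^{-1}H(\vec z)$ does not follow from the ingredients you cite; you would need higher-order expansions of the entries with uniform control, which is a substantially harder estimate than anything in the paper. The paper sidesteps this entirely: Section \ref{Section9.5} performs the multi-index Taylor expansion only at the level of the \emph{limiting} functions, showing $\epsilon^{-u-v}H^{\pm}_{\epsilon}\to C(\vec m)C(\vec n)H$ and hence $\rho^{\pm}_{\epsilon}\Rightarrow\rho$ (Corollary \ref{WeakConvRho}), and Section \ref{Section9.6} perturbs the discrete boundary data by $\lfloor\epsilon\sqrt{T}\rfloor$-shifts so that the limits become distinct, sandwiches the distribution function of $Z^T$ between the two perturbed ensembles via the monotone coupling Lemma \ref{MCLxy}, applies the already-proved distinct case, and then lets $\epsilon\to0+$. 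If you want to salvage your route you must either prove the sharpened entrywise expansions needed for a confluent local CLT, or replace that step by the monotonicity sandwich as the paper does.
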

The way the proof of the above two propositions is organized in the remainder of the section is as follows. We first prove Proposition \ref{S9PropH} and Proposition \ref{WeakConvDistinct} for the case when $\vec{a}, \vec{b} \in W_k^{\circ}$ -- this is done in Sections \ref{Section9.3} and \ref{Section9.4} respectively. Afterwards we will prove Proposition \ref{S9PropH} for vectors $\vec{a}$, $\vec{b}$ that have the form in (\ref{Block}) in Section \ref{Section9.5} and then use Proposition \ref{WeakConvDistinct} for the case $\vec{a}, \vec{b} \in W_k^{\circ}$ and the monotone coupling Lemma \ref{MCLxy} to prove Proposition \ref{WeakConvDistinct} in the general case in Section \ref{Section9.6}.

%
\subsection{Skew Schur polynomials}{\label{Section9.2}}
In this section we give some definitions and elementary results regarding skew Schur polynomials, which are mainly based on \cite[Chapter 1]{Mac}. Afterwards we explain how the fixed time distribution of an avoiding Bernoulli line ensemble is expressible in terms of these skew Schur polynomials. 
\begin{definition}{\label{DefPar}} \emph{Partitions, skew diagrams, interlacing, conjugation}
\begin{enumerate}
	\item A \emph{partition} is an infinite sequence $\lambda=(\lambda_{1}, \lambda_{2}, \dots, \lambda_{r}, \dots)$ of non-negative integers in decreasing order $\lambda_{1}\geq \lambda_{2}\geq \cdots\geq \lambda_{r}\geq \cdots$ and containing only finitely many non-zero terms. The non-zero $\lambda_{i}$ are called \emph{parts} of $\lambda$, the number of parts is called the \emph{length} of the partition $\lambda$, denoted by $l(\lambda)$, and the sum of the parts is the \emph{weight} of $\lambda$, denoted by $|\lambda|$.
	\item A partition $\lambda$ is graphically represented by a Young diagram that has $\lambda_1$ left-justified boxes on the top row, $\lambda_2$ boxes on the second row and so on. Suppose $\lambda$ and $\mu$ are two partitions, we write $\lambda\supset\mu$ if $\lambda_{i}\geq \mu_{i}$ for all $i\in \mathbb{N}$. We call the set-theoretic difference of the two Young diagrams of $\lambda$ and $\mu$ a {\em skew diagram} and denote it by $\lambda / \mu$.
	\item Partitions $\lambda=(\lambda_{1}, \lambda_{2},\cdots)$ and $\mu=(\mu_{1}, \mu_{2},\cdots)$ are call \emph{interlaced}, denoted by $\mu\preceq \lambda$, if $\lambda_1\geq \mu_1\geq \lambda_2\geq \mu_2\geq\cdots$.
	\item The \emph{conjugate} of a partition $\lambda$ is the partition $\lambda^{\prime}$ such that
	\begin{equation*}
		\lambda^{\prime}_{i}=\max_{j\geq 1}\{j:\lambda_{j}\geq i\}
	\end{equation*}
	In particular, $\lambda_{1}^{\prime}=l(\lambda)$, $\lambda_{1}=l(\lambda^{\prime})$ and notice that $\lambda^{\prime\prime}=\lambda$. For example, the conjugate of $(5441)$ is $(43331)$.
\end{enumerate}
\end{definition}

According to Definition \ref{DefPar}, we directly get that if $\mu\subset \lambda$ then $l(\lambda)\geq l(\mu)$ and $l(\lambda^{\prime})\geq l(\mu^{\prime})$. Also, $\mu\preceq\lambda$ implies $\mu\subset\lambda$. Also as explained in \cite[pp. 5]{Mac} we have that if $\mu\preceq\lambda$ are interlaced, then $\lambda^{\prime}_{i}-\mu^{\prime}_{i}=0\text{ or }1$ for every $i\geq 1$.

\begin{definition} \emph{Elementary Symmetric Functions.}{\label{ElemSymFunc}} For each integer $r\geq 0$, the $r$-th \emph{elementary symmetric function} $e_{r}$ is the sum of all products of $r$ distinct variables $x_i$, so that $e_0=1$ and
	\begin{align}
		e_{r}=\sum_{i_1<i_2<\cdots<i_r}x_{i_1}x_{i_2}\cdots x_{i_r}
	\end{align}
	for $r\geq 1$. For $r<0$, we define $e_r$ to be zero. In particular, when $x_1=x_2=\cdots=x_n=1$, $x_{n+1}=x_{n+2}=\cdots=0$, $e_r$ is just the binomial coefficient when $0\leq r\leq n$:
	 \begin{equation*}
	 	e_{r}(1^{n})=\binom{n}{r}
	 \end{equation*}
	 	and $e_r=0$ when $r>n$.
\end{definition}

Next, we introduce Skew Schur Polynomial based on \cite[Chapter 1, (5.5), (5.11), (5.12)]{Mac}.
\begin{definition} \emph{Skew Schur Polynomial, Jacob-Trudi Formula}{\label{DefSkewSchurPoly}}
\begin{enumerate}
	\item Suppose $\mu\subset\lambda$ are partitions. If $\mu\preceq\lambda$ are interlaced, then the \emph{skew Schur polynomial} $s_{\lambda/\mu}$ with single variable $x$ is defined by $s_{\lambda/\mu}(x)=x^{|\lambda-\mu|}$. Otherwise, we define $s_{\lambda/\mu}(x)=0$.
	\item Suppose $\mu\subset\lambda$ are two partitions, define the \emph{skew Schur polynomial} $s_{\lambda/\mu}$ with respect to variables $x_1, x_2, \cdots, x_{n}$ by
	\begin{align}\label{S9Split}
		s_{\lambda/\mu}(x_1,\cdots,x_n)=\sum_{(\nu)}\prod_{i=1}^{n}s_{\nu^{i}/\nu^{i-1}}(x_i)=\sum_{(\nu)}\prod_{i=1}^{n}x_{i}^{|\nu^{i}-\nu^{i-1}|}
	\end{align}
	summed over all sequences $(\nu)=(\nu^{0},\nu^{1},\cdots,\nu^{n})$ of partitions such that $\nu^{0}=\mu$, $\nu^{n}=\lambda$ and $\nu^{0}\preceq\nu^{1}\preceq\cdots\preceq\nu^{n}$. In particular, when $x_1=x_2=\cdots=x_{n}=1$, the skew Schur polynomial is just the number of such sequences of interlaced partitions $(\nu)$.	 This definition also implies the following \emph{branching relation} of skew Schur polynomials:
	\begin{equation}{\label{Branch}}
		\begin{split}
			s_{\kappa/\mu}(x_1,\dots, x_n)=\sum_{\lambda}s_{\kappa/\lambda}(x_1, \dots, x_m)\cdot s_{\lambda/\mu}(x_{m+1}, \dots, x_n),
		\end{split}
	\end{equation}
	where $1 \leq m \leq n$ and $s_{\kappa/\lambda}(\varnothing) = {\bf 1}\{ \kappa = \lambda\}$. 
	\item We also have the following \emph{Jacob-Trudi Formula}\cite[Chapter 1, (5.5)]{Mac} for the skew Schur polynomial:
	\begin{equation}{\label{J-TFormula}}
		s_{\lambda/\mu}=\det\left(e_{\lambda_{i}^{\prime}-\mu_{j}^{\prime}-i+j}\right)_{1\leq i,j\leq m}
	\end{equation}
	where $m\geq l(\lambda^{\prime})$, and $e_r$ is the elementary symmetric function in Definition \ref{ElemSymFunc}.
\end{enumerate}
\end{definition}

Based on the above preparation, we are ready to state the following lemma giving the distribution of avoiding Bernoulli line ensembles at time $\lfloor tT \rfloor$.
\begin{lemma}{\label{BerDist}}
Assume the same notation as in Definition \ref{DefScaled}, denote $m=\lfloor tT \rfloor$, $n=T-\lfloor tT \rfloor$ and assume $m,n \in \mathbb{N}$. Then, the avoiding Bernoulli line ensemble at time $m$ has the following distribution: 
\begin{align}{\label{ProbMassFunc}}
\begin{split}
&\mathbb{P}_{avoid, Ber}^{0,T,\vec{x}^{T},\vec{y}^{T}}(L^T_{1}(m) = \lambda_{1}, \cdots, L^T_{k}(m) = \lambda_{k})= \\
&\frac{\det\left(e_{\lambda_{i}-x_{j}^{T}-i+j}(1^m)\right)_{1\leq i,j\leq k}\cdot \det\left(e_{y_{i}^T-\lambda_{j}-i+j}(1^n)\right)_{1\leq i,j\leq k}}{\det\left(e_{y_{i}^T-x_{j}^T-i+j}(1^{m+n})\right)_{1\leq i,j\leq k}},
\end{split}
\end{align}
where $\lambda_{1}\geq\lambda_{2}\geq\cdots\geq\lambda_{k}$ are integers. 
\end{lemma}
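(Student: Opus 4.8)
The plan is to express the fixed-time distribution of the avoiding Bernoulli line ensemble combinatorially and then identify the object in (\ref{ProbMassFunc}) as a ratio of (skew) Schur polynomials evaluated at all-ones. First I would pass from the ensemble $(L^T_1,\dots,L^T_k)$ with law $\mathbb{P}^{0,T,\vec x^T,\vec y^T}_{avoid,Ber}$ to a tuple of partitions: after shifting by a staircase, a $k$-tuple of non-crossing up-right paths on $\llbracket 0,T\rrbracket$ with entrance data $\vec x^T$ and exit data $\vec y^T$ corresponds bijectively to a sequence of interlacing partitions, and the trajectory of the ensemble between two fixed times is recorded by the interlacing chain. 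Concretely, the event $\{L^T_\cdot(i)=\nu^{(i)}\}_{i=0}^T$ for a given chain has probability proportional (under the uniform measure on $\Omega_{avoid}$) to $1$, so $\mathbb{P}^{0,T,\vec x^T,\vec y^T}_{avoid,Ber}$ assigns to a configuration the value $1/|\Omega_{avoid}(0,T,\vec x^T,\vec y^T)|$. Using the branching relation (\ref{Branch}) for skew Schur polynomials at the specialization $x_1=\dots=x_m=1$, the number of interlacing chains from $\mu$ to $\lambda$ of length $m$ is exactly $s_{\lambda/\mu}(1^m)$. Therefore $|\Omega_{avoid}(0,T,\vec x^T,\vec y^T)| = s_{\vec y^T/\vec x^T}(1^T)$ and the number of configurations passing through $\vec\lambda$ at time $m$ equals $s_{\vec\lambda/\vec x^T}(1^m)\cdot s_{\vec y^T/\vec\lambda}(1^n)$, where we identify the shifted coordinate vectors with partitions. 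This immediately yields
\begin{equation*}
\mathbb{P}_{avoid,Ber}^{0,T,\vec x^T,\vec y^T}\big(L^T_\cdot(m)=\vec\lambda\big) = \frac{s_{\vec\lambda/\vec x^T}(1^m)\,s_{\vec y^T/\vec\lambda}(1^n)}{s_{\vec y^T/\vec x^T}(1^T)}.
\end{equation*}

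The second step is to rewrite each skew Schur polynomial as a determinant of elementary symmetric functions via the Jacobi--Trudi formula (\ref{J-TFormula}). The subtlety here is that (\ref{J-TFormula}) is stated in terms of conjugate partitions $\lambda',\mu'$, whereas the claimed formula (\ref{ProbMassFunc}) is written directly in terms of the (shifted) coordinates $\lambda_i$, $x_j^T$, $y_i^T$ and the single-row specializations $e_r(1^m)$. To bridge this I would use the dual Jacobi--Trudi identity $s_{\lambda/\mu} = \det(h_{\lambda_i-\mu_j-i+j})$ for complete homogeneous symmetric functions, together with the fact that under the specialization $x_1=\dots=x_m=1$ one has $h_r(1^m) = \binom{m+r-1}{r}$ while $e_r(1^m)=\binom{m}{r}$; alternatively, one can apply (\ref{J-TFormula}) to the conjugate skew shape and use the well-known identity $s_{\lambda/\mu}=s_{\lambda'/\mu'}$ in transposed variables, which for one-row evaluations swaps the roles of $h$ and $e$. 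Either route reduces to a bookkeeping identity matching indices $\lambda_i' - \mu_j' - i + j$ on the conjugate side with the quantities $\lambda_i - x_j^T - i + j$ appearing in (\ref{ProbMassFunc}); care is needed because the size of the Jacobi--Trudi determinant in (\ref{J-TFormula}) is governed by $l(\lambda')=\lambda_1$, not $k$, so I would argue that the $k\times k$ determinant in terms of $e_r$ (with $e$ now indexed by the difference of coordinates rather than conjugate coordinates) is the correct "rows-and-columns" reindexing of the original determinant, invoking that adding/removing rows and columns corresponding to padding zeros leaves the determinant unchanged.

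The main obstacle I expect is precisely this translation between the conjugate-partition Jacobi--Trudi formula quoted in the excerpt and the coordinate-form determinant claimed in the lemma: one must be scrupulous about (i) the shift turning coordinate vectors into genuine partitions (requiring $T \geq y_i^T - x_i^T \geq 0$, which is assumed), (ii) which of $e$ versus $h$ appears, and (iii) the fact that the determinant size and the index ranges match up after the conjugation. Once these are pinned down the proof is essentially a formal manipulation. I would organize the writeup as: (1) the bijection between non-crossing up-right path ensembles and interlacing partition chains, with a figure reference; (2) counting chains by $s_{\lambda/\mu}(1^\ell)$ using (\ref{Branch}) and the one-variable rule $s_{\kappa/\nu}(x)=x^{|\kappa-\nu|}$; (3) assembling the ratio; (4) applying Jacobi--Trudi and the binomial specializations to reach (\ref{ProbMassFunc}), with the index-matching lemma isolated as a short sublemma. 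I would also remark that the hypothesis $m,n\in\mathbb{N}$ (i.e. $0<t<1$ and $T$ large) guarantees $\lfloor tT\rfloor\geq 1$ and $T-\lfloor tT\rfloor\geq 1$ so that both split factors are genuine skew Schur polynomials in a positive number of variables, and that $\mathbb{P}^{0,T,\vec x^T,\vec y^T}_{avoid,Ber}$ is well-posed by Lemma \ref{LemmaWD}.
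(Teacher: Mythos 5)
Your overall skeleton (uniformity of the measure, splitting the count at time $m$, identifying counts with skew Schur specializations, then Jacobi--Trudi) matches the paper's, but the combinatorial identification at the heart of your argument is off by a conjugation, and neither of your proposed bridges repairs it. Because the walkers take Bernoulli ($0/1$) steps, two consecutive columns $\nu^{i}\subset\nu^{i+1}$ of the ensemble differ by at most one box \emph{per row} (a vertical strip); in general they are \emph{not} interlaced, and the staircase shift does not produce a bijection: the shifted columns do interlace, but the map into interlacing chains is injective and not onto, since interlacing allows a single row to jump by more than $1$ in one step. Concretely, with $\vec{x}=(0,0)$, $\vec{y}=(1,1)$, $T=2$ there are $3$ non-crossing Bernoulli configurations, whereas the number of interlacing chains is $s_{(1,1)/\varnothing}(1^2)=1$; the correct count is the conjugate one, $s_{(1,1)'/\varnothing'}(1^2)=s_{(2)}(1^2)=3$. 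So your asserted identity $|\Omega_{avoid}(0,T,\vec{x}^T,\vec{y}^T)|=s_{\vec{y}^T/\vec{x}^T}(1^T)$ is false; the counts are $s_{\lambda'/\mu'}(1^m)$, $s_{\kappa'/\lambda'}(1^n)$ and $s_{\kappa'/\mu'}(1^T)$ for the \emph{conjugate} shapes, which is exactly what (\ref{S9Split}) gives once one notes that for $\mu\subset\lambda$ the interlacing $\mu\preceq\lambda$ is equivalent to $\lambda'_i-\mu'_i\in\{0,1\}$ for all $i$.

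Your two bridges then both fail. The dual Jacobi--Trudi route applied to your (unconjugated) Schur functions yields $\det\bigl(h_{\lambda_i-x_j^T-i+j}(1^m)\bigr)$ with entries $h_r(1^m)=\binom{m+r-1}{r}$, which is a genuinely different formula from (\ref{ProbMassFunc}); in the toy example above it evaluates to $1$, not $3$. The alternative claim that ``$s_{\lambda/\mu}=s_{\lambda'/\mu'}$ in transposed variables'' is not an identity: the true statement is $\omega(s_{\lambda/\mu})=s_{\lambda'/\mu'}$, where $\omega$ exchanges $e_r\leftrightarrow h_r$, and $\omega$ does not preserve the specialization at $1^m$. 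The correct (and shorter) route, which is the paper's, is: shift all of $x_i^T,y_i^T,\lambda_i$ by a common constant (both sides of (\ref{ProbMassFunc}) are invariant), so they become partitions $\mu,\kappa,\lambda$ of length at most $k$; identify configurations with chains whose conjugates interlace, so that the three counts are the conjugate skew Schur specializations above; and then apply (\ref{J-TFormula}) to the conjugate shapes. Since conjugation is an involution, this gives $s_{\lambda'/\mu'}=\det\bigl(e_{\lambda_i-\mu_j-i+j}\bigr)_{1\le i,j\le k}$ (the $k\times k$ size is legitimate because $l((\lambda')')=l(\lambda)\le k$), and specializing at $1^m$, $1^n$, $1^{m+n}$ yields precisely (\ref{ProbMassFunc}) with the $e_r(1^m)=\binom{m}{r}$ entries --- no dual Jacobi--Trudi identity or $h$-functions are needed.
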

\begin{proof} Notice that if we shift all $x_i, y_i$ and $\lambda_i$ by the same integer, both sides of (\ref{ProbMassFunc}) stay the same and so we may assume that all of these quantities are positive by adding the same large integer to all the coordinates. We then let $\kappa$ be the partition with parts $\kappa_i = y_i^T$, $\mu$ be the partition with $\mu_i = x_i^T$ and $\lambda$ be the partition with parts $\lambda_i$ for $i = 1, \dots, k$. All three partitions have length $k$. In view of  (\ref{J-TFormula}) we see that the right side of (\ref{ProbMassFunc}) is precisely 
\begin{equation}\label{S9P1E1}
\mbox{ RHS of (\ref{ProbMassFunc})} = \frac{s_{\lambda^{\prime}/\mu^{\prime}}(1^{m})\cdot s_{\kappa^{\prime}/\lambda^{\prime}}(1^{n})}{s_{\kappa^{\prime}/\mu^{\prime}}(1^T)}.
\end{equation}

Let $\Omega(0,T,\vec{x}^T, \vec{y}^T)$ be the set of all avoiding Bernoulli line ensembles from $\vec{x}^T$ to $\vec{y}^T$, and analogously define $\Omega(0,m,\vec{x}^T, \lambda)$ and $\Omega(0,n,\lambda, \vec{y}^T)$. Then we get by the uniformity of the measure $\mathbb{P}_{avoid, Ber}^{0,T,\vec{x}^{T},\vec{y}^{T}}$ that 
\begin{equation}\label{S9P1E2}
\mbox{ LHS of (\ref{ProbMassFunc})} = \frac{|\Omega(0,m,\vec{x}^T, \lambda)|\cdot|\Omega(0,n,\lambda, \vec{y}^T)|}{|\Omega(0,T,\vec{x}^T, \vec{y}^T)|}.
\end{equation}

Let us define the set 
\[TB_{\kappa/\mu}^T:=\{(\lambda^0,\dots,\lambda^T)\mid \lambda^0=\mu, \lambda^T=\kappa, \lambda^i\preceq\lambda^{i+1}\text{ for }i=0,\cdots,T-1\}.\]
Notice that by the definition of $\Omega(0,T,\vec{x}^T, \vec{y}^T)$ the map $f: \Omega(0,T,\vec{x}^T, \vec{y}^T) \rightarrow TB_{\kappa/\mu}^T$ given by 
$$f(\mathfrak{L}) = (\mathfrak{L}(\cdot, 0), \dots, \mathfrak{L}(\cdot, T)),$$
where $\mathfrak{L}(\cdot, j)$ stands for the partition with parts $\mathfrak{L}(i, j)$ for $i = 1,\dots, k$ defines a bijection between $\Omega(0,T,\vec{x}^T, \vec{y}^T)$ and $TB_{\kappa/\mu}^T$ and so we conclude that 
$$|\Omega(0,T,\vec{x}^T, \vec{y}^T)| = | TB_{\kappa/\mu}^T| = s_{\kappa^{\prime}/\mu^{\prime}}(1^T),$$
where in the last equality we used (\ref{S9Split}). Applying the same argument to $\Omega(0,m,\vec{x}^T, \lambda)$ and $\Omega(0,n,\lambda, \vec{y}^T)$ we conclude 
\begin{equation}\label{S9P1E3}
|\Omega(0,m,\vec{x}^T, \lambda)| = s_{\lambda^{\prime}/\mu^{\prime}}(1^{m}), \hspace{2mm}|\Omega(0,n,\lambda, \vec{y}^T)| = s_{\kappa^{\prime}/\lambda^{\prime}}(1^{n}),  \hspace{2mm} |\Omega(0,T,\vec{x}^T, \vec{y}^T)| = s_{\kappa^{\prime}/\mu^{\prime}}(1^T).
\end{equation}
Combining (\ref{S9P1E1}), (\ref{S9P1E2}) and (\ref{S9P1E3}) gives (\ref{ProbMassFunc}).
\end{proof}

%
\subsection{Proof of Proposition \ref{S9PropH} for $\vec{a}, \vec{b} \in W_k^{\circ}$}{\label{Section9.3}} In this section we prove a few technical results we will need later as well as Proposition \ref{S9PropH} for the case when $\vec{a}, \vec{b}$ have distinct entries.

\begin{lemma}{\label{Limh}}
Suppose that $p\in(0,1)$ and $R>0$ are given. Suppose that $x\in[-R,R]$ and $N=pn+\sqrt{n}x \in [0, n]$ is an integer. Then 
\begin{equation}
\begin{split}{\label{AsympForh}}
&e_{N}(1^{n})=(\sqrt{2\pi})^{-1}\cdot \exp\left(-\frac{x^2}{2(1-p)p}\right)\cdot \exp\left(N\log\left(\frac{1-p}{p}\right)\right)\cdot \exp\left(O(n^{-1/2})\right) \cdot \\
& \exp\left(-n\log(1-p)-(1/2)\log n-(1/2)\log\left(p(1-p)\right)\right)
\end{split}
\end{equation}
where the constant in the big $O$ notation depends on $p$ and $R$ alone. Moreover, there exist positive constants $C,c>0$ depending on $p$ alone such that for all large enough $n\in\mathbb{N}$ and $N\in[0,n]$,
\begin{equation}{\label{ebounded}}
	e_{N}(1^n)\leq C\cdot \exp\left(N\log\frac{1-p}{p}-n\log(1-p)-(1/2)\log n\right)\cdot \exp\left(-cn^{-1}(N-pn)^2\right).
\end{equation}
\end{lemma}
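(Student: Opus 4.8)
The statement to prove is Lemma~\ref{Limh}, a local central limit theorem for the binomial coefficients $e_N(1^n) = \binom{n}{N}$, together with a Gaussian-type upper bound valid uniformly for $N \in [0,n]$. My plan is to treat it as a refined Stirling-approximation exercise.

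\textbf{Approach.} First I would write $\binom{n}{N} = \frac{n!}{N!(n-N)!}$ and substitute Stirling's formula with error control: $\log m! = m\log m - m + \tfrac12\log(2\pi m) + O(1/m)$ for $m \to \infty$. For \eqref{AsympForh} the hypothesis $N = pn + \sqrt{n}\,x$ with $x \in [-R,R]$ forces both $N \to \infty$ and $n - N \to \infty$ at rate $\Theta(n)$, so the $O(1/m)$ errors are all $O(1/n)$, hence certainly $O(n^{-1/2})$; this is where the big-$O$ constant picks up its dependence on $p$ (through the bounds $N, n-N \geq cn$) and nowhere on $R$ beyond ensuring $n$ is large. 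Then I would expand
\[
\log\binom{n}{N} = n\log n - N\log N - (n-N)\log(n-N) + \tfrac12\log\frac{n}{2\pi N(n-N)} + O(n^{-1}).
\]
The main computation is the Taylor expansion of $-N\log N - (n-N)\log(n-N)$ around $N = pn$. Writing $N = pn + \delta$ with $\delta = \sqrt{n}\,x$, one has $N\log N = (pn+\delta)\log(pn) + (pn+\delta)\log(1 + \delta/pn)$ and similarly for $n-N$ with $(1-p)n$ in place of $pn$. Expanding $\log(1+u) = u - u^2/2 + O(u^3)$ with $u = \delta/pn = O(n^{-1/2})$ gives, after the linear-in-$\delta$ terms combine into $N\log\frac{1-p}{p}$ and a clean cancellation in the constant term producing $-n\log(1-p)$, a quadratic term $-\frac{\delta^2}{2}\left(\frac1{pn} + \frac1{(1-p)n}\right) = -\frac{x^2}{2p(1-p)}$, with the cubic remainder being $O(n^{-1/2})$. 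The $\tfrac12\log$ term: replacing $N$ by $pn$ and $n-N$ by $(1-p)n$ inside the logarithm changes it by $O(n^{-1/2})$, yielding $\tfrac12\log\frac{n}{2\pi n^2 p(1-p)} = -\tfrac12\log(2\pi) - \tfrac12\log n - \tfrac12\log(p(1-p))$. Collecting all pieces exactly reproduces \eqref{AsympForh}.

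\textbf{The uniform bound.} For \eqref{ebounded} I cannot assume $N$ is within $O(\sqrt n)$ of $pn$, so the argument must change. I would split into two regimes. In the ``central'' regime, say $|N - pn| \leq n^{2/3}$, both $N$ and $n-N$ are still $\Theta(n)$, so the Stirling expansion above applies verbatim; the quadratic term is now $-\frac{(N-pn)^2}{2p(1-p)n}(1 + o(1))$ with the cubic remainder $O\!\big(|N-pn|^3/n^2\big) = O(n^{-1/2} \cdot n^{-1/2}\cdot\text{stuff})$ — more carefully, $O(|N-pn|^3/n^2) \le \tfrac14 \cdot \frac{(N-pn)^2}{2p(1-p)n}$ once $|N-pn| \le n^{2/3}$ and $n$ is large, so it can be absorbed, giving the claimed bound with $c = \frac{1}{4p(1-p)}$ (say) and a suitable constant $C$. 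In the ``tail'' regime $|N - pn| > n^{2/3}$, the Gaussian factor $\exp(-cn^{-1}(N-pn)^2) \le \exp(-cn^{1/3})$ is super-polynomially small, so it suffices to show the left side of \eqref{ebounded} divided by $\exp(N\log\frac{1-p}{p} - n\log(1-p) - \tfrac12\log n)$ is at most, say, polynomially large — in fact I would show this ratio is bounded by a constant uniformly over \emph{all} $N\in[0,n]$. This follows from the exact identity $\binom{n}{N}(1-p)^n (p/(1-p))^N = \binom{n}{N}p^N(1-p)^{n-N} = \mathbb{P}(\mathrm{Bin}(n,p) = N) \le 1$, combined with the local CLT which shows the maximum of this binomial pmf is $\Theta(n^{-1/2})$; hence $\binom{n}{N}(1-p)^n(p/(1-p))^N \cdot \sqrt n \le C$ for all $N$, which is exactly \eqref{ebounded} without even the Gaussian factor. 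Then multiplying by $\exp(-cn^{-1}(N-pn)^2) \le 1$ gives the stated form, and in the tail regime there is nothing more to do.

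\textbf{Main obstacle.} The genuinely delicate point is bookkeeping the cancellations in the constant (i.e.\ $\delta$-independent and $\delta$-linear) terms of the Stirling expansion so that exactly the factors $\exp(N\log\frac{1-p}{p})$ and $\exp(-n\log(1-p) - \tfrac12\log n - \tfrac12\log(p(1-p)))$ emerge with no leftover; a sign error or a misplaced $p \leftrightarrow 1-p$ would be easy to make. I would organize this by grouping terms by their power of $\delta$ before substituting $\delta = \sqrt n\,x$, and double-check the $\delta^0$ term using the sanity check that summing \eqref{AsympForh} (an approximate Gaussian integral over $x$) against $dx = dN/\sqrt n$ should recover $\sum_N \binom{n}{N}(1-p)^n(p/(1-p))^N = 1$ up to $1+o(1)$. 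A secondary, minor obstacle is making the ``$O(n^{-1/2})$'' in the exponent uniform in $x\in[-R,R]$, which just requires noting the cubic Taylor remainder is bounded by $C(p)\,R^3 n^{-1/2}$ and the other error sources are $O(1/n)$.
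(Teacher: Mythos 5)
Your treatment of \eqref{AsympForh} is essentially the paper's: Stirling's formula for $\binom{n}{N}$, Taylor expansion of the entropy terms around $N=pn$ with $\delta=\sqrt{n}\,x$, and uniform control of the cubic remainder over $x\in[-R,R]$. That part is fine.

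The gap is in your tail regime for \eqref{ebounded}. For $|N-pn|>n^{2/3}$ you argue that the identity $\binom{n}{N}p^N(1-p)^{n-N}=\mathbb{P}(\mathrm{Bin}(n,p)=N)\leq Cn^{-1/2}$ gives ``\eqref{ebounded} without the Gaussian factor,'' and that ``multiplying by $\exp(-cn^{-1}(N-pn)^2)\leq 1$ gives the stated form.'' This inverts the inequality: from $A\leq CB$ and $G\leq 1$ you can only conclude $AG\leq CB$, not $A\leq CBG$. Equation \eqref{ebounded} in the tail is a strictly stronger statement than the uniform pmf bound --- at, say, $N=n$ it demands $p^n\lesssim n^{-1/2}e^{-c(1-p)^2 n}$, an exponentially small bound that a local-CLT maximum estimate cannot produce. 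What is needed in the tail is a genuine large-deviation input, e.g.\ Hoeffding/Chernoff: $\mathbb{P}(\mathrm{Bin}(n,p)=N)\leq \exp(-2(N-pn)^2/n)$, after which one can write $\exp(-2(N-pn)^2/n)\leq \exp(-(N-pn)^2/n)\exp(-n^{1/3})\leq Cn^{-1/2}\exp(-n^{-1}(N-pn)^2)$ on the tail regime, recovering \eqref{ebounded} there with $c=1$; with that substitution your central/tail split goes through. For comparison, the paper avoids the split altogether: after the Stirling upper bound it studies the function $\psi_n(s)=-(pn+s+1/2)\log(1+\tfrac{s}{pn})-((1-p)n-s+1/2)\log(1-\tfrac{s}{(1-p)n})$, shows $\psi_n''(s)\leq -\tfrac{1}{2n}$ on all of $[-pn+1,(1-p)n-1]$, and integrates twice to get $\psi_n(s)\leq C - cn^{-1}s^2$ globally, which yields the Gaussian decay uniformly in $N$ in one stroke.
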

\begin{remark}
Notice that when $R>0$ is fixed and $N\in[pn-R\sqrt{n},pn+R\sqrt{n}]$ we have $N \in [0,n]$ for all large enough $n$ so that our insistence that $N \in [0,n]$ in the first part of Lemma \ref{Limh} does not affect the asymptotics. The second part of the lemma, equation (\ref{ebounded}), also trivially holds if $N \not \in [0, n]$ since $e_{N}(1^{n}) = 0$ in this case by Definition \ref{ElemSymFunc}.
\end{remark}
\begin{proof}
For clarity the proof is split into several steps.\\
\textbf{Step 1.} In this step we prove (\ref{AsympForh}). Using Definition \ref{ElemSymFunc} we obtain
\begin{align}{\label{ElemSymFactorial}}
	e_{N}(1^n)=\frac{n!}{N!(n-N)!}
\end{align}

We have the following formula \cite{Rob} for $n\geq 1$
\begin{equation}{\label{Stirling}}
	n!=\sqrt{2\pi n}n^ne^{-n}e^{r_{n}}\text{, where }\frac{1}{12n+1}<r_{n}<\frac{1}{12n}
\end{equation}
Applying (\ref{Stirling}) to equation (\ref{ElemSymFactorial}) gives
\begin{equation}{\label{hStirling}}
\begin{split}
	& e_{N}(1^n)=\frac{\exp\left( (n+1/2)\log n-(N+1/2)\log N-(n-N+1/2)\log(n-N)+O\left(n^{-1}\right) \right)}{\sqrt{2\pi}} \\
	& =  (\sqrt{2\pi})^{-1}\cdot \exp\left((n+1/2)\log n -(N+1/2)\log \frac{N}{pn}-(n-N+1/2)\log \frac{n-N}{(1-p)n}\right) \cdot \\
	& \exp\left(-(N+1/2)\log(pn)-(n-N+1/2)\log((1-p)n)+O\left(n^{-1}\right)\right).
\end{split}
\end{equation}

Denote $\Delta=\sqrt{n}x=O\left(n^{1/2}\right)$, and we now use the Taylor expansion of the logarithm and the expression for $N$ to get
\begin{equation*}
	\log \frac{N}{pn}=\log\left(1+\frac{\Delta}{pn}\right) = \frac{\Delta}{pn}-\frac{1}{2}\frac{\Delta^2}{p^2 n^2}+O\left(n^{-3/2}\right)
\end{equation*}
Analogously, we have 
\begin{equation*}
\log \frac{n-N}{(1-p)n} =\log\left(1-\frac{\Delta}{(1-p)n}\right)=-\frac{\Delta}{(1-p)n}-\frac{1}{2}\frac{\Delta^2}{(1-p)^2 n^2}+O\left(n^{-3/2}\right)	
\end{equation*}

Plugging the two equations above into equation (\ref{hStirling}) we get
\begin{equation}{\label{eN}}
\begin{split}
	& e_{N}(1^n)= (\sqrt{2\pi})^{-1}\cdot \exp\left(-(N+1/2)\left[\frac{\Delta}{pn}-\frac{1}{2}\frac{\Delta^2}{p^2 n^2}+O\left(n^{-3/2}\right)\right]\right) \cdot \\
	&  \exp\left(-(n-N+1/2)\left[-\frac{\Delta}{(1-p)n}-\frac{1}{2}\frac{\Delta^2}{(1-p)^2 n^2}+O\left(n^{-3/2}\right)\right]\right) \cdot \\
	& \exp\left((n+1/2)\log n-(N+1/2)\log (pn)-(n-N+1/2)\log((1-p)n)+O\left(n^{-1}\right)\right)
\end{split}
\end{equation}

We next observe that 
\begin{equation}{\label{eNComp}}
	\begin{split}
		&-\frac{\Delta(N+1/2)}{pn}+\frac{(n-N+1/2)\Delta}{(1-p)n}=-\frac{\Delta^{2}}{p(1-p)n}+O\left(n^{-1/2}\right)\\
		 &\frac{\Delta^2(N+1/2)}{2n^2 p^2}+\frac{\Delta^2 (n-N+1/2)}{2(1-p)^2 n^2}=\frac{\Delta^2}{2p(1-p)n}+O\left(n^{-1/2}\right)\\
		&(n+1/2)\log n-(N+1/2)\log (pn)-(n-N+1/2)\log((1-p)n)=\\
		&N\log \frac{1-p}{p}-\frac{1}{2}\log p(1-p)-\frac{1}{2}\log n-n\log(1-p) 
	\end{split}
\end{equation}
Plugging (\ref{eNComp}) into (\ref{eN}) we arrive at (\ref{AsympForh}).\\
\textbf{Step 2.} In this step we prove (\ref{ebounded}). If $N=0$ or $n$ we know that $e_{N}(1^n)=1$ and then (\ref{ebounded}) is easily seen to hold with $C=1$ and any $c\in\left(0,\min(-\log p, -\log(1-p))\right)$. Thus it suffices to consider the case when $N\in[1,n-1]$ and in the sequel we also assume that $n\geq 2$.

Combining (\ref{ElemSymFactorial}) and (\ref{Stirling}) we conclude that 
\begin{equation}{\label{Inequlh}}
	e_{N}(1^n)\leq \exp\left((n+1/2)\log n-(N+1/2)\log N-(n-N+1/2)\log (n-N)\right)
\end{equation}
From (\ref{Inequlh}) we get for all large enough $n$ that
\begin{equation*}
\begin{split}
	&\phi_{n}:=\log\left[e_{N}(1^n)\cdot\exp\left(-N\log((1-p)/p)+n\log(1-p)+(1/2)\log n\right)\right]\\
	&\leq (n+\frac{1}{2})\log n-(N+\frac{1}{2})\log N-(n-N+\frac{1}{2})\log (n-N)-N\log\frac{1-p}{p}+n\log(1-p)+\frac{1}{2}\log n\\
	& =(n+1/2)\log n-(N+1/2)\log \frac{N}{pn}-(N+1/2)\log (pn)-(n-N+1/2)\log\frac{n-N}{(1-p)n}\\
	& -(n-N+1/2)\log((1-p)n)-N\log\frac{1-p}{p}+n\log(1-p)+(1/2)\log n\\
	&=-(N+1/2)\log\frac{N}{pn}-(n-N+1/2)\log\frac{n-N}{(1-p)n}-(1/2)\log \left(p(1-p)\right)\\
	& =-(pn+\Delta+1/2)\log \left(1+\frac{\Delta}{pn}\right)-((1-p)n-\Delta+1/2)\log\left(1-\frac{\Delta}{(1-p)n}\right)-\frac{1}{2}\log \left(p(1-p)\right)\\
	&\leq C_{1}+\psi_{n}(\Delta)
\end{split}
\end{equation*}
where $C_{1}>0$ is sufficiently large depending on $p$ alone and
\begin{equation}
	\psi_{n}(s)=-(pn+s+1/2)\log\left(1+\frac{s}{pn}\right)-((1-p)n-s+1/2)\log\left(1-\frac{s}{(1-p)n}\right)
\end{equation}
where $s\in [-pn+1,(1-p)n-1]$. We claim that we can find positive constants $C_2>0$ and $c>0$ such that for all $n$ sufficiently large and $s\in[-pn+1,(1-p)n-1]$ we have
\begin{equation}{\label{psinbounded}}
	\psi_{n}(s)\leq C_2-cn^{-1}s^{2}
\end{equation}
We prove (\ref{psinbounded}) in Step 3 below. For now we assume its validity and conclude the proof of (\ref{ebounded}).
In view of $\phi_{n}\leq C_1+\psi_{n}(s)$ and (\ref{psinbounded}) we know that
\begin{equation*}
	e_{N}(1^n)\leq \exp\left(C_1+C_2+N\log((1-p)/p)-n\log(1-p)-(1/2)\log n\right)\cdot\exp(-cn^{-1}(N-pn)^2),
\end{equation*}
which proves (\ref{ebounded}) with $C=e^{C_1+C_2}$.\\
\textbf{Step 3.} In this step we prove (\ref{psinbounded}). A direct computation gives
\begin{equation}{\label{CompDerivative}}
\begin{split}
	& \psi_{n}^{\prime}(s)=-\log\left(1+\frac{t}{pn}\right)+\log\left(1-\frac{t}{(1-p)n}\right)+\frac{1}{2}\cdot\frac{1}{pn+t}+\frac{1}{2}\cdot\frac{1}{(1-p)n-t}\\
	& \psi_{n}^{\prime\prime}(s)=\frac{(n+1)\cdot s^2+(2p-1)n(n+1)\cdot s+p(p-1)n^2(n+1)+(1/2)n^2}{(pn+s)^2((1-p)n-s)^2}
\end{split}
\end{equation}
Notice that the numerator of $\psi_{n}^{\prime\prime}(s)$ is a quadratic function with min at $x_{min}=-\frac{(2p-1)n(n+1)}{2(n+1)}=(-p+1/2)n$, which is the midpoint of the interval $[-pn+1,(1-p)n-1]$. Consequently, the numerator reaches its maximum at either one of the two endpoints of the interval $[-pn+1,(1-p)n-1]$. The denominator is the square of a parabola that reaches its minimum also at the endpoints of the interval $[-pn+1,(1-p)n-1]$. Therefore, we conclude that 
\begin{equation}{\label{psi2primebounded}}
\begin{split}
	& \psi_{n}^{\prime\prime}(s)\leq \psi_{n}^{\prime\prime}(-pn+1)=\psi_{n}^{\prime\prime}((1-p)n-1)=\frac{-\frac{1}{2}n^2+1}{(n-1)^2}=-\frac{1}{2} -\frac{1}{n-1}+\frac{1}{2}\cdot\frac{1}{(n-1)^2}\\
	& \leq -\frac{1}{2}\cdot\frac{1}{n-1}\leq -\frac{1}{2n}=-2cn^{-1}
\end{split}
\end{equation}
where $c=1/4$. Next, we prove (\ref{psinbounded}) under two cases when $s\in[-pn+1,0]$ and $s\in [0,(1-p)n-1]$, respectively. \\
$1^{\circ}$. When $s\in[-pn+1,0]$, by the fundamental theorem of calculus and (\ref{psi2primebounded}) we get 
\begin{equation*}
	\psi_{n}^{\prime}(s)=\psi_{n}^{\prime}(0)-\int_{s}^{0}\psi_{n}^{\prime\prime}(y)dy\geq \psi_{n}^{\prime}(0)-(-s)(-2cn^{-1})=\frac{2p-1}{2p(1-p)n}-2cn^{-1}s,
\end{equation*}
and a second application of the same argument yields for $s\in[-pn+1,0]$
\begin{equation*}
	\psi_{n}(s)=\psi_{n}(0)-\int_{s}^{0}\psi_{n}^{\prime}(y)dy\leq -\int_{s}^{0}\left(\frac{2p-1}{2p(1-p)n}-2cn^{-1}y\right) dy=\frac{(2p-1)s}{2p(1-p)n}-cn^{-1}s^2,
\end{equation*}
When $p\leq 1/2$, $\frac{(2p-1)s}{2p(1-p)n}\leq \frac{(2p-1)pn}{2p(1-p)n}=\frac{1-2p}{2(1-p)}$, so (\ref{psinbounded}) gets proved with $C_{2}=\frac{1-2p}{2(1-p)}$. When $p>1/2$, (\ref{psinbounded}) gets proved $C_{2}=0$.\\
$2^{\circ}$. When $s\in[0,(1-p)n-1]$, using the fundamental theorem of calculus and (\ref{psi2primebounded}) we get 
\begin{equation*}
	\psi_{n}^{\prime}(s)=\psi_{n}^{\prime}(0)+\int_{0}^{s}\psi_{n}^{\prime\prime}(y)dy\leq =\frac{2p-1}{2p(1-p)n}-2cn^{-1}s,
\end{equation*}
and a second application of the same argument yields for $s\in[0,(1-p)n-1]$
\begin{equation*}
	\psi_{n}(s)=\psi_{n}(0)+\int_{0}^{s}\psi_{n}^{\prime}(y)dy\leq \frac{(2p-1)s}{2p(1-p)n}-cn^{-1}s^2,
\end{equation*}
When $p\geq 1/2$, $\frac{(2p-1)s}{2p(1-p)n}\leq \frac{(2p-1)(1-p)n}{2p(1-p)n}=\frac{2p-1}{2p}$, so (\ref{psinbounded}) gets proved with $C_{2}=\frac{2p-1}{2p}$. When $p<1/2$, (\ref{psinbounded}) gets proved $C_{2}=0$. Combining cases $1^{\circ}$ and $2^{\circ}$ we complete the proof.
\end{proof}

\begin{lemma}{\label{ALambdaBLambda}}
	Assume the same notation as in Definition \ref{DefScaled}. Fix $\vec{z}\in\mathbb{R}^{k}$ such that $z_1>\cdots>z_k$. Suppose that $T_{0}\in\mathbb{N}$ is sufficiently large so that for $T\geq T_{0}$ we have
$$z_{k}\sqrt{T}+ptT\geq a_1\sqrt{T}+k+1 \text{ and } b_{k}\sqrt{T}+pT\geq z_{1}\sqrt{T}+ptT+k+1,$$
and define $\lambda_i^T = \lfloor z_{i}\sqrt{T}+ptT \rfloor$ for $i = 1, \dots, k$ (to ease notation we suppress the dependence of $\lambda$ on $T$ in what follows). Setting $m = \lfloor tT \rfloor $ and $n = T - m$ define
\begin{align}{\label{DefALambda}}
	A_\lambda(T) =\det\left(e_{\lambda_{i}-x_{j}^{T}-i+j}(1^m)\right)_{1\leq i,j\leq k}\cdot \det\left(e_{y_{i}^T-\lambda_{j}-i+j}(1^n)\right)_{1\leq i,j\leq k},
\end{align} 
\begin{align}{\label{DefBLambda}}
\begin{split}
	& B_{\lambda}(T)=(\sqrt{2\pi})^{k}\cdot\exp\left(kT\log(1-p)+k\log T +(k/2)\log(p(1-p))\right) \cdot\\
	&  \exp\left(-\log\left(\frac{1-p}{p}\right)\sum_{i=1}^{k}(y_{i}^{T}-x_{i}^{T})\right)\cdot A_{\lambda}(T)
\end{split}
\end{align}
We claim that
\begin{align}{\label{BLambda}}
\begin{split}
&\lim_{T\rightarrow\infty} B_{\lambda}(T)=(2\pi)^{-k/2}\cdot\exp(-(k/2)\log(p(1-p))-(k/2)\log(t(1-t))) \cdot  \\
&\det\left[e^{c_1(t,p)a_i z_j}\right]_{i,j=1}^{k} \cdot \det\left[e^{c_2(t,p)b_i z_j}\right]_{i,j=1}^{k} \cdot\prod_{i=1}^{k}\exp\left(-\frac{c_1(t,p)a_i^{2}+c_2(t,p)b_i^2}{2}\right).
\end{split}
\end{align}
\end{lemma}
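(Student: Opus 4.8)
The plan is to compute the large-$T$ asymptotics of the $k \times k$ determinants in $A_\lambda(T)$ directly by applying Lemma \ref{Limh} entrywise. First I would observe that since $\vec{z} \in W_k^\circ$ is fixed, the entries $\lambda_i = \lfloor z_i \sqrt{T} + ptT \rfloor$ satisfy $\lambda_i - x_j^T = p(tT) + \sqrt{tT}\cdot\big(\sqrt{t/t}\,\cdot\, (z_i - a_j)/\sqrt{t}\big) + O(1)$ more precisely $\lambda_i - x_j^T - i + j = ptT + \sqrt{tT}\,\xi_{ij}^T + O(1)$ with $\xi_{ij}^T \to (z_i - a_j)/\sqrt{t}$, which lies in a bounded set; the same holds for $y_i^T - \lambda_j - i + j$ relative to the step count $n = T - \lfloor tT\rfloor \sim (1-t)T$, where the relevant deviation tends to $(b_i - z_j)/\sqrt{1-t}$. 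The hypothesis on $T_0$ guarantees all these arguments lie in $[0, m]$ (resp.\ $[0,n]$) for $T \geq T_0$, so \eqref{AsympForh} applies to every entry.

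Next I would substitute the asymptotic formula \eqref{AsympForh} into each entry of the two $k\times k$ matrices. The key algebraic point is that the prefactors in \eqref{AsympForh} that do \emph{not} depend on both row and column indices — namely the $\exp(-n\log(1-p) - \tfrac12\log n - \tfrac12\log(p(1-p)))$ factor and the $(\sqrt{2\pi})^{-1}$ — pull out of the determinant as a global multiplicative constant raised to the $k$-th power, while the $\exp\big(N\log\frac{1-p}{p}\big)$ factor splits as $\exp\big((\lambda_i - x_j^T - i + j)\log\frac{1-p}{p}\big) = \exp\big(\lambda_i \log\frac{1-p}{p}\big)\exp\big(-(x_j^T + i - j)\log\frac{1-p}{p}\big)$, i.e.\ a product of a row factor and a column factor, which can therefore be extracted from the determinant (row factors multiply the determinant, column factors multiply it; the $\exp((j-i)\log\frac{1-p}{p})$ piece contributes row and column factors whose product over the diagonal is $1$ and which factor out cleanly). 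What remains inside each determinant is $\det\big[\exp(-\xi_{ij}^{T,2}/(2p(1-p))) (1 + o(1))\big]$, and since $\xi_{ij}^T \to (z_i - a_j)/\sqrt{t}$ uniformly over the finitely many $(i,j)$, continuity of the determinant gives convergence to $\det\big[\exp\big(-\tfrac{(z_i-a_j)^2}{2p(1-p)t}\big)\big]$. Expanding $(z_i - a_j)^2 = z_i^2 - 2a_j z_i + a_j^2$ and pulling out the pure-$z_i$ (row) and pure-$a_j$ (column) Gaussian factors turns this into $\det\big[e^{c_1(t,p) a_j z_i}\big] \cdot \prod_i e^{-c_1(t,p) z_i^2/2} \cdot \prod_j e^{-c_1(t,p) a_j^2/2}$. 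The analogous computation for the second determinant produces $\det\big[e^{c_2(t,p) b_i z_j}\big]$ together with $\prod e^{-c_2(t,p) z_i^2/2}$ and $\prod e^{-c_2(t,p) b_i^2/2}$.

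I would then collect all the scalar prefactors. Tracking the powers of $\sqrt{2\pi}$, of $\log(1-p)$, of $\log T$, of $\log(p(1-p))$, and of $\log\frac{1-p}{p}$ coming from the two $k$-fold products and matching them against the explicit normalizing factors inside the definition \eqref{DefBLambda} of $B_\lambda(T)$, the bookkeeping should show that $B_\lambda(T)$ equals (up to $1 + o(1)$) the product of the two limiting determinants, the Gaussian column factors $\prod_i \exp(-(c_1 a_i^2 + c_2 b_i^2)/2)$, a residual $\prod_i \exp(-(c_1 + c_2) z_i^2/2)$ — note $c_1 + c_2 = 2c_3$ by \eqref{CWeights} — and a single scalar that must reduce to $(2\pi)^{-k/2}\exp(-(k/2)\log(p(1-p)) - (k/2)\log(t(1-t)))$. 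The $z_i$-dependent residual $\prod_i e^{-c_3(t,p) z_i^2}$ is precisely what is missing from the right side of \eqref{BLambda}; indeed looking carefully at the claimed statement, $B_\lambda(T)$ as written does not contain this factor, so either the residual should be there or I have mis-split a term — I would recheck whether the $\lambda_j$ appearing in the \emph{second} determinant carries an $\exp(\lambda_j\log\frac{1-p}{p})$ with the \emph{opposite} sign (it does, since $N = y_i - \lambda_j - i + j$), so that the row factor $\exp(-\lambda_i \log\frac{1-p}{p})$ from the second determinant cancels against the column factor $\exp(+\lambda_j\log\frac{1-p}{p})$ one might try to extract — in fact the two appearances of $\lambda$ must be handled jointly, and it is the cross term in the quadratics, not the linear term, that generates the final $z$-dependence. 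This cancellation/combination of the $\lambda$-dependent pieces between the two determinants is the one genuinely delicate step; I expect it to be the main obstacle, and the cleanest way to organize it is to carry the $O(n^{-1/2})$ error terms from \eqref{AsympForh} through uniformly (valid because all arguments stay in a fixed window by the $T_0$ hypothesis), factor the $\lambda$-linear exponentials out of \emph{both} determinants before taking limits, verify they combine to cancel against the $\exp(-\log\frac{1-p}{p}\sum(y_i^T - x_i^T))$ factor in \eqref{DefBLambda} together with the Gaussian cross terms, and only then pass to the limit in the remaining bounded determinantal expressions via continuity.
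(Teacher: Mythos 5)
Your approach is the same as the paper's own proof: apply Lemma \ref{Limh} entrywise to both determinants, pull the factors $\exp\big(N\log\tfrac{1-p}{p}\big)$ out of each determinant as row times column factors (yielding $\exp\big(\log\tfrac{1-p}{p}\sum_i(\lambda_i-x_i^T)\big)$ from the first and $\exp\big(\log\tfrac{1-p}{p}\sum_i(y_i^T-\lambda_i)\big)$ from the second), cancel these against the prefactor $\exp\big(-\log\tfrac{1-p}{p}\sum_i(y_i^T-x_i^T)\big)$ in \eqref{DefBLambda}, pass to the limit in the remaining Gaussian determinants (this is exactly \eqref{BLambdafinal}, obtained from \eqref{SkewSchur1} and \eqref{SkewSchur2}), and finish with the factorization identities \eqref{DetIdentities}. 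Your bookkeeping of the $\sqrt{2\pi}$, $\log(1-p)$, $\log T$ and $\log(p(1-p))$ factors is the same as in the paper.

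The one place you waver is precisely where you should have trusted your computation. The $\lambda$-linear exponentials cancel exactly against the prefactor in \eqref{DefBLambda} — the two sums add up to $\sum_i(y_i^T-x_i^T)$ — and there is no further "delicate" cancellation involving the Gaussian cross terms; your closing plan to have the cross terms in the quadratics help remove the residual $z$-dependence would not go through, because that residual does not cancel. The factor $\prod_{i=1}^k e^{-c_3(t,p)z_i^2}$ that you found genuinely appears in the limit: expanding $(z_i-a_j)^2$ and $(b_i-z_j)^2$ as in \eqref{DetIdentities} produces $\prod_i e^{-\frac{c_1(t,p)}{2}z_i^2}\cdot\prod_i e^{-\frac{c_2(t,p)}{2}z_i^2}=\prod_i e^{-c_3(t,p)z_i^2}$ since $c_1(t,p)+c_2(t,p)=2c_3(t,p)$. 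That this factor is absent from the displayed right-hand side of \eqref{BLambda} is an omission in the statement rather than a mis-split on your part: the paper's own later use of the lemma in \eqref{PointwiseConvfT} writes the limit of $f_T(\vec z)$ as $H(\vec z)$ (which contains $\prod_i e^{-c_3(t,p)z_i^2}$) times $(2\pi p(1-p)t(1-t))^{-k/2}\prod_i\exp\big(-\tfrac{c_1(t,p)a_i^2+c_2(t,p)b_i^2}{2}\big)$, which is exactly what your calculation gives. So your argument is correct and matches the paper once you state the limit with the $z$-Gaussian factor included; the only genuine defect in the write-up is the unresolved hedge at the end, which, if pursued as written, would fail.
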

\begin{proof}
Let us write 
$$A^1_\lambda  = \det\left(e_{\lambda_{i}-x_{j}^{T}-i+j}(1^m)\right)_{1\leq i,j\leq k}, \hspace{2mm} A_\lambda^2 =  \det\left(e_{y_{i}^T-\lambda_{j}-i+j}(1^n)\right)_{1\leq i,j\leq k} \mbox{, and }$$
$$A^3_\lambda = \det\left(e_{y_{i}^T-x_{j}^T-i+j}(1^{m+n})\right)_{1\leq i,j\leq k}.$$
Then from Lemma \ref{Limh} we have
\begin{align}{\label{SkewSchur1}}
	\begin{split}
		& A^1_\lambda=\det\left[\exp\left(-\frac{(\lambda_{i}-x_{j}^{T}+j-i-pm)^{2}}{2(1-p)pm}\right)\exp\left(O\left( T^{-1/2}\right)\right)\right]\cdot (\sqrt{2\pi})^{-k}\cdot\\
		& \exp\left(-km\log(1-p)-(k/2)\log m-(k/2)\log(p(1-p))+\log\left(\frac{1-p}{p}\right)\sum_{i=1}^{k}(\lambda_{i}-x^{T}_{i})\right)
	\end{split}
\end{align}
\begin{align}{\label{SkewSchur2}}
	\begin{split}
		&A^2_\lambda =\det\left[\exp\left(-\frac{(y_i^{T}-\lambda_{j}+j-i-pn)^{2}}{2(1-p)pn}\right)\exp\left(O\left( T^{-1/2}\right)\right)\right]\cdot (\sqrt{2\pi})^{-k}\cdot\\
		& \exp\left(-kn\log(1-p)-(k/2)\log n-(k/2)\log(p(1-p))+\log\left(\frac{1-p}{p}\right)\sum_{i=1}^{k}(y_{i}^{T}-\lambda_{i})\right)
	\end{split}
\end{align}
\begin{align}{\label{SkewSchur3}}
	\begin{split}
		&A^3_\lambda =\det\left[\exp\left(-\frac{(y_{i}^{T}-x_{j}^{T}+j-i-pT)^{2}}{2(1-p)pT}\right)\exp\left(O\left( T^{-1/2}\right)\right)\right]\cdot (\sqrt{2\pi})^{-k}\cdot\\
		& \exp\left(-kT\log(1-p)-(k/2)\log T-(k/2)\log(p(1-p))+\log\left(\frac{1-p}{p}\right)\sum_{i=1}^{k}(y_{i}^{T}-x^{T}_{i})\right)
	\end{split}
\end{align}
where the constants in the big $O$ notation are uniform as $z_{i}$ vary over compact subsets of $\mathbb{R}$. Combining (\ref{SkewSchur2}), (\ref{SkewSchur1}) and (\ref{DefBLambda}) we see that
\begin{align}{\label{BLambdafinal}}
	\begin{split}
		B_{\lambda}&(T)=(2\pi)^{-k/2}\cdot\exp(-(k/2)\log(p(1-p))-(k/2)\log(t(1-t))+O(T^{-1})) \cdot \\
		& \det\left[\exp\left(-\frac{(z_i-a_j)^2}{2p(1-p)t}+O(T^{-1/2})\right)\right]\cdot\det\left[\exp\left(-\frac{(b_i-z_j)^2}{2p(1-p)(1-t)}+O(T^{-1/2})\right)\right]
	\end{split}
\end{align}
Taking the limit $T\rightarrow\infty$ in (\ref{BLambdafinal}), and using the identities
\begin{align}\label{DetIdentities}
	\begin{split}
		\det\left[\exp\left(-\frac{(z_{i}-a_j)^2}{2p(1-p)t}\right)\right]=\det\left[e^{c_1(t,p)a_i z_j}\right]_{i,j=1}^{k}\cdot \prod_{i=1}^{k}\exp\left(-\frac{c_1(t,p)}{2}(a_i^2+z_i^2)\right)\text{, and}\\
		\det\left[\exp\left(-\frac{(b_{i}-z_j)^2}{2p(1-p)(1-t)}\right)\right]=\det\left[e^{c_2(t,p)b_i z_j}\right]_{i,j=1}^{k}\cdot \prod_{i=1}^{k}\exp\left(-\frac{c_2(t,p)}{2}(b_i^2+z_i^2)\right)
	\end{split}
\end{align}
we get (\ref{BLambda}).
\end{proof}

\begin{lemma}{\label{NonVanish}}
	Suppose the vector $\vec{m}=(m_1,\dots,m_p)$ satisfies $k=\sum_{i=1}^{p}m_{i}$, and $\alpha_1>\alpha_2>\cdots>\alpha_p$. Then the following determinant
\[ U= \det
	\left[ \begin{array}{ccc}
		(z_{j}^{i-1}e^{\alpha_{1}z_{j}})_{\substack{i=1,\dots,m_{1}\\j=1,\dots,k}}\\
	\vdots\\
	(z_{j}^{i-1}e^{\alpha_{p}z_{j}})_{\substack{i=1,\dots,m_{p}\\j=1,\dots,k}}
	\end{array}
	\right]
\]
is non-zero for any $\vec{z} = (z_{1},\dots,z_{k}) \in \mathbb{R}^k$ whose entries are distinct.
\end{lemma}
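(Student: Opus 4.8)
The plan is to recognize $U$ as the ``generalized Vandermonde'' determinant of the system of functions $\phi_{\ell,i}(z) := z^{i-1}e^{\alpha_\ell z}$, $\ell=1,\dots,p$, $i=1,\dots,m_\ell$, evaluated at the $k=\sum_\ell m_\ell$ distinct points $z_1,\dots,z_k$. Since $U=\det\big(\phi_{\ell,i}(z_j)\big)$, we have $U=0$ if and only if the rows are linearly dependent, i.e.\ there is a nontrivial real vector $(c_{\ell,i})$ such that $g(z)=\sum_{\ell,i}c_{\ell,i}\,z^{i-1}e^{\alpha_\ell z}$ vanishes at all of $z_1,\dots,z_k$. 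Every such $g$ has the form $g(z)=\sum_{\ell=1}^p q_\ell(z)e^{\alpha_\ell z}$ with $q_\ell$ a real polynomial of degree at most $m_\ell-1$. Hence it suffices to prove two facts: (i) the $\phi_{\ell,i}$ are linearly independent over $\mathbb{R}$, so a nontrivial choice of $(c_{\ell,i})$ forces $g\not\equiv 0$; and (ii) any such $g\not\equiv 0$ has at most $k-1$ distinct real zeros. Granting (i) and (ii), a nontrivial linear dependence among the rows would produce a $g\not\equiv 0$ with $k$ distinct real zeros, which is impossible; therefore $U\neq 0$.

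For (i), suppose $\sum_{\ell=1}^p q_\ell(z)e^{\alpha_\ell z}\equiv 0$ with the $q_\ell$ not all zero, and let $\ell_0=\min\{\ell:q_\ell\not\equiv 0\}$, so that $\alpha_{\ell_0}$ is the largest exponent carrying a nonzero coefficient. Multiplying by $e^{-\alpha_{\ell_0}z}$ gives $q_{\ell_0}(z)=-\sum_{\ell>\ell_0}q_\ell(z)e^{(\alpha_\ell-\alpha_{\ell_0})z}$; since $\alpha_\ell-\alpha_{\ell_0}<0$ for $\ell>\ell_0$, the right-hand side tends to $0$ as $z\to+\infty$, so the polynomial $q_{\ell_0}$ tends to $0$ at $+\infty$ and is therefore identically zero, a contradiction.

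For (ii), I would induct on $p$, and for fixed $p$ on $\deg q_1$ (after relabelling so $q_1\not\equiv 0$). The base case $p=1$ is immediate, since $g=q_1e^{\alpha_1 z}$ has exactly the zeros of $q_1$, at most $\deg q_1\le m_1-1=k-1$ of them. For the inductive step set $h(z)=e^{-\alpha_1 z}g(z)=q_1(z)+\sum_{\ell\ge 2}q_\ell(z)e^{(\alpha_\ell-\alpha_1)z}$, which has the same zeros as $g$; if $g$ has $N$ distinct real zeros with $N>d:=\deg q_1+1$, then repeated application of Rolle's theorem shows $h^{(d)}$ has at least $N-d$ distinct real zeros. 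A Leibniz-rule computation gives $h^{(d)}(z)=\sum_{\ell\ge 2}r_\ell(z)e^{(\alpha_\ell-\alpha_1)z}$, where each $r_\ell$ is a polynomial with $\deg r_\ell=\deg q_\ell\le m_\ell-1$, the leading coefficient of $r_\ell$ picking up the nonzero factor $(\alpha_\ell-\alpha_1)^{d}$; in particular $r_\ell\equiv 0$ if and only if $q_\ell\equiv 0$. If $h^{(d)}\not\equiv 0$, the inductive hypothesis (now with the $p-1$ distinct exponents $\alpha_\ell-\alpha_1$ and total dimension $\sum_{\ell\ge 2}m_\ell=k-m_1$) yields $N-d\le(k-m_1)-1$, hence $N\le k-m_1+\deg q_1\le k-1$. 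If instead $h^{(d)}\equiv 0$, then (i) forces all $r_\ell\equiv 0$, so $q_\ell\equiv 0$ for $\ell\ge 2$ and $g=q_1e^{\alpha_1 z}$ has at most $\deg q_1\le k-1$ zeros. This closes the induction. I expect the only delicate point to be the bookkeeping in this Rolle-based induction — tracking that $\deg r_\ell=\deg q_\ell$ survives the $d$-fold differentiation and correctly disposing of the degenerate branch $h^{(d)}\equiv 0$ via linear independence — while the reduction to (i) and (ii) and the growth argument for (i) are routine.
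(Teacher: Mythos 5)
Your proof is correct and follows essentially the same route as the paper: both reduce the non-vanishing of $U$ to the claim that a nontrivial combination $\sum_{\ell}q_\ell(z)e^{\alpha_\ell z}$ with $\deg q_\ell\le m_\ell-1$ has at most $k-1$ distinct real zeros, established via Rolle's theorem and induction. The differences are purely organizational — you induct on the number of exponents $p$, eliminating a whole polynomial block with a $d$-fold derivative and handling the degenerate branch through an explicit linear-independence (growth at $+\infty$) argument, while the paper inducts on $k$ taking one derivative per step — and if anything your treatment of the degenerate cases is more careful than the paper's.
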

\begin{proof} We claim that, the following equation with respect to $z$ over $\mathbb{R}$
$$(\xi_{1}+\xi_{2}z+\cdots+\xi_{m_1}z^{m_{i}-1})e^{\alpha_{1}z}+\cdots + (\xi_{m_{1}+\cdots+m_{p-1}+1}+\cdots+\xi_{k}z^{m_{p}-1})e^{\alpha_{p}z}=0$$ has at most $(k-1)$ distinct roots, where $(\xi_{1},\dots,\xi_{k})\in\mathbb{R}^{k}$ is non-zero.\\

Denote the rows of the matrix in the definition of $U$ by $v_1, \dots, v_k$. If the above claim holds, we can conclude that we cannot find non-zero $(\xi_{1},\cdots,\xi_{k})\in\mathbb{R}^{k}$ such that $\xi_{1}v_{1}+\cdots+\xi_{k}v_{k}=0$. Thus, the $k$ row vectors of the determinant are linearly independent and the determinant is non-zero. Thus it suffices to prove the claim, and we do it by induction on $k$.\\
$1^{\circ}$. If $k=2$, the equation is $(\xi_{1}+\xi_{2}z)e^{\alpha_{1}z}=0$ or $\xi_{1}e^{\alpha_{1}z}+\xi_{2}e^{\alpha_{2}z}=0$, where $\xi_{1},\xi_{2}\in\mathbb{R}$ cannot be zero at the same time. Then, it's easy to see that the equation has at most $1$ root in two scenarios.\\
$2^{\circ}$. Suppose the claim holds for $k\leq n$.\\
$3^{\circ}$. When $k=n+1$, we have the equation $$(\xi_{1}+\xi_{2}z+\cdots+\xi_{m_1}z^{m_{i}-1})e^{\alpha_{1}z}+\cdots(\xi_{m_{1}+\cdots+m_{p-1}+1}+\cdots+\xi_{k}z^{m_{p}-1})e^{\alpha_{p}z}=0$$ but now $\sum_{i=1}^{p}m_{i}=n+1$. WLOG, suppose $(\xi_{1},\dots,\xi_{m_{1}})$ has a non-zero element and $\xi_{\ell}$ is the first non-zero element. Notice that the above equation has the same roots as the following one:
$$F(z)=(\xi_{\ell}z^{\ell-1}+\cdots+\xi_{m_1}z^{m_1-1})+\cdots+(\xi_{m_1+\cdots+m_{p-1}+1}+\cdots+\xi_{k}z^{m_{p}-1})e^{(\alpha_{p}-\alpha_{1})z}=0$$
Assume it has at least $(n+1)$ distinct roots $\eta_{1}<\eta_{2}<\cdots<\eta_{n+1}$. Then $F^{\prime}(z)=0$ has at least $n$ distinct roots $\delta_{1}<\cdots<\delta_{n}$ such that $\eta_{1}<\delta_{1}<\eta_{2}<\cdots<\delta_{n}<\eta_{n+1}$, by Rolle's Theorem. Actually, $F^{\prime}(z)=(\xi_{\ell}(\ell-1))z^{\ell-2}+\cdots+\xi_{m_1}(m_1-1)z^{m_{1}-2})+\cdots+(\xi_{m_1+\cdots+m_{p-1}+1}^{\prime}+\cdots+\xi_{k}^{\prime}z^{m_{p}-1})e^{(\alpha_{p}-\alpha_{1})z}=0$
where $\xi_{i}^{\prime}$, $i=m_1+1,\cdots,k$ are coefficients that can be calculated. This equation has at most $(m_1-1)+m_2+\cdots+m_{p}-1=n-1$ roots by $2^{\circ}$, which leads to a contradiction. Therefore, our claim holds and we have proved the lemma.
\end{proof}

\begin{proof}(of Proposition \ref{S9PropH} when $\vec{a}, \vec{b} \in W_k^{\circ}$) Let us fix $\vec{z} \in W_k^{\circ}$, and define $\lambda^T$ as in Lemma \ref{ALambdaBLambda}. We also let $x_i^T$ and $y_i^T$ be sequences of integers such that 
 $$\lim_{T\rightarrow\infty}\frac{x_{i}^{T}}{\sqrt{T}}=a_{i} \text{ and } \lim_{T\rightarrow\infty}\frac{y_{i}^{T}-pT}{\sqrt{T}}=b_{i}$$ for $i=1,\dots,k$. 
In view of the Jacobi-Trudi formula (\ref{J-TFormula}) we know that $B_\lambda(T)$ as in Lemma \ref{ALambdaBLambda} are non-negative and from (\ref{BLambda}) they converge as $T$ tends to infinity to 
\begin{align*}
\begin{split}
&(2\pi)^{-k/2}\cdot\exp(-(k/2)\log(p(1-p))-(k/2)\log(t(1-t))) \cdot  \\
&\det\left[e^{c_1(t,p)a_i z_j}\right]_{i,j=1}^{k} \cdot \det\left[e^{c_2(t,p)b_i z_j}\right]_{i,j=1}^{k} \cdot\prod_{i=1}^{k}\exp\left(-\frac{c_1(t,p)a_i^{2}+c_2(t,p)b_i^2}{2}\right).
\end{split}
\end{align*}
On the other hand, we have that when the entries of $\vec{a}, \vec{b}$ are distinct 
$$H(\vec{z}) = \det\left[e^{c_1(t,p)a_i z_j}\right]_{i,j=1}^{k} \cdot \det\left[e^{c_2(t,p)b_i z_j}\right]_{i,j=1}^{k} \cdot \prod_{i=1}^{k}e^{-c_{3}(t,p)z_{i}^{2}}.$$
The last two statements imply that $H(\vec{z}) \geq 0$ and from Lemma \ref{NonVanish} we have $H(\vec{z}) \neq 0$ so that $H(\vec{z}) > 0$ for $\vec{z} \in W_k^{\circ}$. If $\vec{z} \in W_k \setminus W_k^{\circ}$ then $z_i = z_j$ for some $i \neq j$ and then we see that $H(\vec{z}) = 0$ since the matrices in determinants in the equation above for $H(\vec{z})$ have $i$-th and $j$-th column that are equal, which makes the determinant vanish. This proves the first two statements in the proposition. 

To prove the third statement observe that by the continuity, non-negativity of $H(\vec{z})$ and the fact that it is strictly positive in the open set $W_k^{\circ}$ we know that $Z_c \in (0, \infty]$ and so we only need to prove that $Z_c < \infty$. Using the formula $$\det\left[A_{i,j}\right]_{i,j=1}^{k}=\sum_{\sigma\in S_{k}}(-1)^{\sigma}\cdot\prod_{i=1}^{k}A_{i,\sigma(i)}$$ and the triangle inequality we see that 
\begin{equation}{\label{Det1bounded2}}
	\begin{split}
		& \left|\det\left[e^{c_1(t,p)a_i z_j}\right]_{i,j=1}^{k}\right|\leq\sum_{\sigma\in S_k}\prod_{j=1}^{k}e^{c_1(t,p)a_{\sigma(j)}z_{j}}\leq \sum_{\sigma\in S_k}\prod_{j=1}^{k}e^{c_1(t,p)\left(\sum_{i=1}^{k}|a_i|\right)\cdot|z_j|}\\
		& \leq k! \cdot \prod_{i=1}^{k}e^{C_1|z_j|}\text{, where $C_{1}=\sum_{i=1}^{k}c_1(t,p)|a_i|$}
	\end{split}
\end{equation}
Analogously, define the constant $C_2=\sum_{i=1}^{k}c_2(t,p)|b_i|$ and we have
\begin{equation}{\label{Det2bounded2}}
	\left|\det\left[e^{c_2(t,p)b_i z_j}\right]_{i,j=1}^{k}\right|\leq k! \cdot \prod_{i=1}^{k}e^{C_{2}|z_{j}|}
\end{equation} 
Using (\ref{Det1bounded2}) and (\ref{Det2bounded2}) we get
\begin{align}{\label{Dominate}}
	|H(\vec{z})|\leq (k!)^2\cdot\prod_{i=1}^{k}e^{C|z_i|-c_3(t,p)z_i^2}
\end{align} where $C=C_{1}+C_{2}$. Since the right side of (\ref{Dominate}) is integrable (because of the square in the exponential) we conclude that $H(\vec{z})$ is also integrable by domination and so $Z_c < \infty$ as desired.
\end{proof}

%
\subsection{Proof of Proposition \ref{WeakConvDistinct} for $\vec{a}, \vec{b} \in W_k^{\circ}$}\label{Section9.4} For clarity we split the proof into several steps.\\
{\raggedleft \textbf{Step 1.}} In this step we prove that $Z_c$ from Proposition \ref{S9PropH} in the case when $\vec{a}, \vec{b}$ have distinct entries satisfies the equation
\begin{equation}\label{ZCform}
Z_c = (2\pi)^{\frac{k}{2}}(p(1-p)t(1-t))^{\frac{k}{2}}\cdot e^{\frac{c_{1}(t,p)}{2}\sum_{i=1}^{k}a_{i}^{2}}\cdot e^{\frac{c_{2}(t,p)}{2}\sum_{i=1}^{k}b_{i}^{2}}\det\left[e^{-\frac{1}{2p(1-p)}(b_{i}-a_{j})^{2}}\right]_{i,j=1}^{k}.
\end{equation}

Let $B_\lambda(T)$ be as in Lemma \ref{ALambdaBLambda} for $\lambda \in \mathfrak{W}_k$, with $\vec{x}^T, \vec{y}^T$ as in the statement of the proposition. It follows from Lemma \ref{BerDist} that 
\begin{align}{\label{BSum}}
\begin{split}
& \sum_{\lambda\in\mathfrak{W}_{k}}\frac{B_{\lambda}(T)}{T^{k/2}}=(\sqrt{2\pi})^k\cdot\exp(kT\log(1-p)+(k/2)\log T+(k/2)\log p(1-p))\cdot\\ &  \exp\left(-\log\left(\frac{1-p}{p}\right)\sum_{i=1}^{k}(y_{i}^{T}-x_{i}^{T})\right)\cdot \det\left(e_{y_{i}^T-x_{j}^T-i+j}(1^{m+n})\right)_{1\leq i,j\leq k},
\end{split}
\end{align}
where we recall that $m = \lfloor t T \rfloor$ and $n = T - m$. Taking the $T \rightarrow \infty$ limit in (\ref{BSum}) and using (\ref{SkewSchur3}) we obtain 
\begin{align}{\label{LimSum}}
	\lim_{T\rightarrow\infty}\sum_{\lambda\in\mathfrak{W}_{k}}\frac{B_{\lambda}(T)}{T^{k/2}}=\det\left[e^{-\frac{1}{2p(1-p)}(b_i-a_j)^2}\right]_{i,j=1}^{k}.
\end{align}

For $\lambda\in\mathfrak{W}_{k}$ and $T\in\mathbb{N}$ we define $Q_{\lambda}(T)$ to be the cube $[\lambda_{1}T^{-1/2}-pt\sqrt{T},(\lambda_{1}+1)T^{-1/2}-pt\sqrt{T})\times \cdots\times[\lambda_{k}T^{-1/2}-pt\sqrt{T},(\lambda_{k}+1)T^{-1/2}-pt\sqrt{T})$ and note that $Q_\lambda(T)$ has Lebesgue measure $T^{-k/2}$. In addition, we define the step functions $f_{T}$ through
\begin{align}
	f_{T}(\vec{z})=\sum_{\lambda\in\mathfrak{W}_{k}}B_{\lambda}(T)\cdot\mathbf{1}_{Q_{\lambda}(T)}(\vec{z})
\end{align}
and observe that 
\begin{align}{\label{IntSimpleFunc}}
	\sum_{\lambda\in\mathfrak{W}_{k}}\frac{B_{\lambda}(T)}{T^{k/2}}=\int_{\mathbb{R}^{k}}f_{T}(\vec{z})d\vec{z}
\end{align}
where $d\vec{z}$ represents the usual Lebesgue measure on $\mathbb{R}^{k}$.

In view of (\ref{BLambda}) we know that for almost every $\vec{z}=(z_1,\cdots,z_k)\in\mathbb{R}^{k}$ we have 
\begin{align}{\label{PointwiseConvfT}}
\begin{split}
&\lim_{T\rightarrow\infty}f_{T}(\vec{z})={\bf 1}_{\{z_1 > \cdots > z_k \}} \cdot H(z) \cdot(2\pi p(1-p)t(1-t))^{-\frac{k}{2}}\cdot \\
&\prod_{i=1}^{k}\exp\left(-\frac{c_1(t,p)a_i^2+c_2(t,p)b_i^2}{2}\right).
\end{split}
\end{align}
We claim that there exists a non-negative integrable function $g$ on $\mathbb{R}^{k}$ such that if $T$ is large enough 
\begin{align}{\label{Dominatefg}}
	|f_{T}(z_1,\dots,z_k)|\leq|g(z_1,\dots,z_k)|
\end{align}
We will prove (\ref{Dominatefg}) in Step 2 below. For now we assume its validity and conclude the proof of (\ref{ZCform}).\\

From (\ref{PointwiseConvfT}) and the dominated convergence theorem with dominating function $g$ as in (\ref{Dominatefg}) we know that
\begin{equation}
	\begin{split}{\label{LimIntfT}}
	\lim_{T\rightarrow\infty}\int_{\mathbb{R}^{k}}f_T(\vec{z})d\vec{z}=\int_{W_k}H(\vec{z})(2\pi p(1-p)t(1-t))^{-\frac{k}{2}}\prod_{i=1}^{k}\exp\left(-\frac{c_{1}(t,p)a_i^2+c_2(t,p)b_i^2}{2}\right)d\vec{z}.
\end{split}
\end{equation}
Combining (\ref{LimIntfT}), (\ref{IntSimpleFunc})  and (\ref{LimSum}) we conclude that
\begin{equation}
	\begin{split}
		\det\left[e^{-\frac{1}{2p(1-p)}(b_i-a_j)^2}\right]_{i,j=1}^{k}=\int_{W_k}H(\vec{z})\cdot(2\pi p(1-p)t(1-t))^{-\frac{k}{2}}\cdot\prod_{i=1}^{k}e^{-\frac{c_1(t,p)a_i^2+c_2(t,p)b_i^2}{2}}d\vec{z}.
	\end{split}
\end{equation}
which clearly establishes (\ref{ZCform}).\\

{\raggedleft \textbf{Step 2. }}In this step we demonstrate an integrable function $g$ that satisfies (\ref{Dominatefg}). Let us fix $\lambda\in\mathfrak{W}_k$. If $\lambda_i\geq x_{i}^{T}+m+1$ or $\lambda_i<x^T_{i}$ for some $i\in\{1,2,\dots,k\}$ we know that 
$$ \det\left(e_{\lambda_{i}-x_{j}^{T}-i+j}(1^m)\right)_{1\leq i,j\leq k} = 0.$$
To see this, observe that if $\lambda_s\geq x_{s}^{T}+m+1$ then the top-right $s \times (k-s)$-th block in the matrix consists of zeros (since $e_N(1^m) = 0$ for $N \geq m+1$). Thus if $A$ and $B$ are the top-left $(s-1) \times (s-1)$ submatrix and bottom-right $(k-s+1) \times (k-s + 1)$ submatrix we would have that 
$$\det\left(e_{\lambda_{i}-x_{j}^{T}-i+j}(1^m)\right)_{1\leq i,j\leq k} = \det A \cdot \det B,$$
but then $\det B = 0$ since its top row consists of $0$'s. Similar arguments show that the determinant is $0$ if $\lambda_s<x^T_{s}$ for some $s\in\{1,2,\dots,k\}$, where now we would get a block of $0$'s in the bottom left corner using $e_N(1^m) = 0$ for $N < 0$. From the definition of $B_\lambda(T)$ we conclude that $B_\lambda(T) = 0$ if $\lambda_i\geq x_{i}^{T}+m+1$ or $\lambda_i<x^T_{i}$ . Similarly, we have that $B_\lambda(T) = 0$ if $y_{i}^T\geq \lambda_i+n+1$ or $y_{i}^T< \lambda_i$ for some $i\in\{1,2,\dots,k\}$, using that
$$ \det\left(e_{y_{i}^{T} - \lambda_{j} -i+j}(1^n)\right)_{1\leq i,j\leq k} = 0$$
in this case. Overall, we conclude that $B_{\lambda}(T)=0$ unless
$$m\geq \lambda_i-x^{T}_i\geq 0\text{ and } n\geq y^{T}_i-\lambda_i\geq 0\text{ for all } i\in\{1,\dots,k\}$$
which implies that for all large enough $T$ we have
\begin{equation}{\label{BLambda0unless}}
	\begin{split}
		B_{\lambda}(T)=0 \text{, unless }|\lambda_{i}-x_{j}^T+j-i|\leq (1+p)m \text{ and }|y_{i}^T-\lambda_{j}+j-i|\leq (1+p)n
	\end{split}
\end{equation}
for all $i,j\in\{1,\cdots, k\}$. To see the latter, suppose that there exist $i,j$ such that $(1+p)m<|\lambda_i-x^{T}_{j}+j-i|$. Then we have 
\begin{equation*}
	\begin{split}
		(1+p)m<|\lambda_i-x^{T}_{j}+j-i|\leq |\lambda_i - x^{T}_i| + k + |x^T_i - x^T_j| = |\lambda_i - x^{T}_i| + O(\sqrt{T}).
	\end{split}
\end{equation*}
When $T$ is sufficiently large, the above inequality implies $\lambda_{i}-x_i^T \not \in [0,m]$ so that $B_{\lambda}(T)=0$, and similar result holds for $y_{i}^T-\lambda_{j}+j-i$, which justifies (\ref{BLambda0unless}). From the definition of $B_{\lambda}(T)$ we know
\begin{equation}{\label{BHC}}
	\begin{split}
		& B_{\lambda}(T)=C_{T}\cdot \det[E(\lambda_i-x_j^{T}+j-i,m)]_{i,j=1}^{k}\cdot\det[E(y_i^{T}-\lambda_j+j-i,n)]_{i,j=1}^{k}\text{, where}\\
		& E(N,n)=e_{N}(1^n)\cdot\exp\left(-N\log\left(\frac{1-p}{p}\right)+n\log(1-p)+(1/2)\log n\right)\text{, and}\\
		& C_T=(\sqrt{2\pi})^k (p(1-p))^{k/2}\cdot\exp(k\log T-(k/2)\log n-(k/2)\log m).
	\end{split}
\end{equation}
Notice that $C_{T}$ is uniformly bounded for all $T$ large enough, because
\begin{equation}
\begin{split}
	k\log T-\frac{k}{2}\log n-\frac{k}{2}\log m=\frac{k}{2}\log\left(\frac{T^2}{\lfloor tT \rfloor \cdot (T-\lfloor tT \rfloor)}\right)=-\frac{k}{2}\log(t(1-t))+O\left(T^{-1}\right)
\end{split}
\end{equation}
and $O\left(T^{-1}\right)$ is uniformly bounded.

In view of (\ref{ebounded}) we know that we can find constants $C_1$, $c_1>0$ such that for all large enough $T$ and $N_{1}\in[0,m]$ and $N_2\in[0,n]$ we have 
\begin{equation}{\label{H1H2}}
	\begin{split}
		E(N_1,m)\leq C_1\exp(-c_1 m^{-1}(N_1-pm)^2) \text{ and } E(N_2,n)\leq C_1\exp(-c_1 n^{-1}(N_2-pn)^2)
	\end{split}
\end{equation}
Observing that $e_{r}(1^{n})=0$ for $r>n$ or $r<0$, we know that (\ref{H1H2}) also holds for all $N_1\in[-(1+p)m,(1+p)m]$ and $N_2\in[-(1+p)n,(1+p)n]$. Combining (\ref{BLambda0unless}), (\ref{BHC}) and (\ref{H1H2}) we see that for all $\lambda\in\mathfrak{W}_k$ and $T$ sufficiently large
\begin{equation}{\label{BLambdaBounded}}
	\begin{split}
		0\leq B_{\lambda}(T)\leq \widetilde{C}\hspace{-2mm}\sum_{\sigma ,\tau \in S_{k}} \prod_{i = 1}^k\mathbf{1}\{|\lambda_i-x_j^{T}+j-i|\leq(1+p)m\}\cdot \mathbf{1}\{|y_{i}^{T}-\lambda_{j}+j-i|\leq(1+p)n\} \cdot \\
		 \exp\left(-\widetilde{c}T^{-1}\left[(\lambda_i-\sqrt{T}a_{\sigma(i)}-ptT)^2+(\sqrt{T}b_{i}-\lambda_{\tau(i)}+ptT)^2\right]\right)
	\end{split}
\end{equation}
where $\widetilde{c}$, $\widetilde{C}>0$ depend on $p,t,k$ but not on $T$ provided that it is sufficiently large.

In particular, we see that if $\vec{z} \in\mathbb{R}^{k}$ then either $\vec{z} \not\in Q_{\lambda}(T)$ for any $\lambda\in\mathfrak{W}_{k}$ in which case $f_{T}(\vec{z})=0$ or $\vec{z} \in Q_{\lambda}(T)$ for some $\lambda\in\mathfrak{W}_k$ in which case (\ref{BLambdaBounded}) implies
\begin{equation}{\label{DominatefT}}
	\begin{split}
		0\leq f_T(\vec{z})\leq C\sum_{\sigma, \tau \in S_{k}}\prod_{i = 1}^k\exp\left(-c((z_i-a_{\sigma(i)})^2+(b_i-z_{\tau(i)})^2)\right)
	\end{split}
\end{equation}
where $C$, $c>0$ depend on $p,t,k$ but not on $T$ provided that it is sufficiently large. We finally see that (\ref{Dominatefg}) holds with $g$ being equal to the right side of (\ref{DominatefT}), which is clearly integrable.\\

{\raggedleft \bf Step 3.} Our work in Steps 1 and 2 implies that the density $\rho(\vec{z})$ we want to prove to be the weak limit of $Z^T$ has the form
\begin{equation}\label{rhoDist}
\begin{split}
&\rho(\vec{z}) =Z_c^{-1} \cdot \det\left[e^{c_1(t,p)a_i z_j}\right]_{i,j=1}^{k} \cdot \det\left[e^{c_2(t,p)b_i z_j}\right]_{i,j=1}^{k} \cdot \prod_{i=1}^{k}e^{-c_{3}(t,p)z_{i}^{2}}, \mbox{ where } \\ 
& Z_c = (2\pi)^{\frac{k}{2}}(p(1-p)t(1-t))^{\frac{k}{2}}\cdot e^{\frac{c_{1}(t,p)}{2}\sum_{i=1}^{k}a_{i}^{2}}\cdot e^{\frac{c_{2}(t,p)}{2}\sum_{i=1}^{k}b_{i}^{2}}\det\left[e^{-\frac{1}{2p(1-p)}(b_{i}-a_{j})^{2}}\right]_{i,j=1}^{k}. 
\end{split}
\end{equation}

We fix a compact set $K \subset W_k^{\circ}$ and for $\vec{z} \in K$ we define $\lambda^T(\vec{z}) \in \mathfrak{W}_k$ through 
$$\lambda^T_i(\vec{z}) =  \lfloor ptT + z_i T^{1/2} \rfloor \mbox{ for $i = 1, \dots, k $}.$$
In this step we prove that 
\begin{equation}\label{UniformConvSlice}
\lim_{T\rightarrow \infty} T^{k/2} \cdot \mathbb{P}_{avoid, Ber}^{0,T,\vec{x}^{T},\vec{y}^{T}}(L^T_{1}(m) = \lambda^T_{1}(\vec{z}), \cdots, L^T_{k}(m) = \lambda^T_{k}(\vec{z})) = \rho(\vec{z}),
\end{equation}
where the convergence is uniform over $K$. Combining (\ref{ProbMassFunc}), (\ref{DefBLambda}),(\ref{SkewSchur3}), (\ref{BLambdafinal}), (\ref{DetIdentities}) we get 
\begin{equation*}
\begin{split}
&T^{k/2} \cdot \mathbb{P}_{avoid, Ber}^{0,T,\vec{x}^{T},\vec{y}^{T}}(L^T_{1}(m) = \lambda^T_{1}(\vec{z}), \cdots, L^T_{k}(m)= \lambda^T_{k}(\vec{z}))  = [1 + O(T^{-1/2})] (2\pi)^{-k/2}\cdot \\
&\det\left[e^{c_1(t,p)a_i z_j}\right]  \cdot \det\left[e^{c_2(t,p)b_i z_j}\right]\cdot \exp(-(k/2)\log(p(1-p))-(k/2)\log(t(1-t))  ) \cdot \\
& \prod_{i=1}^{k}\exp\left(-\frac{c_1(t,p)}{2}(a_i^2+z_i^2)-\frac{c_2(t,p)}{2}(b_i^2+z_i^2)\right) \cdot \det\left[\exp\left(-\frac{(y_{i}^{T}-x_{j}^{T}+j-i-pT)^{2}}{2(1-p)pT}\right)\right]^{-1},
\end{split}
\end{equation*}
where the constants in the big $O$ notation are uniform over $K$. Using that 
$$\det\left[\exp\left(-\frac{(y_{i}^{T}-x_{j}^{T}+j-i-pT)^{2}}{2(1-p)pT}\right)\right] = \det\left[e^{-\frac{1}{2p(1-p)}(b_{i}-a_{j})^{2}}\right] \cdot [1 + o(1)],$$
where the constant in the little $o$ notation does not depend on $K$ and (\ref{rhoDist}) we see that 
$$T^{k/2} \cdot \mathbb{P}_{avoid, Ber}^{0,T,\vec{x}^{T},\vec{y}^{T}}(L^T_{1}(m) = \lambda^T_{1}(\vec{z}), \cdots, L^T_{k}(m)= \lambda^T_{k}(\vec{z})) = [1 + O(T^{-1/2})]  [1 + o(1)] \cdot \rho(\vec{z}),$$
which implies (\ref{UniformConvSlice}).\\

\noindent \textbf{Step 4.} In this step, we prove that for any compact rectangle $R=[u_{1},v_{1}]\times\cdots\times[u_{k},v_{k}]\subset {W}_{k}^{\circ}$
\begin{align}\label{ConvRectangle}
\lim_{T\rightarrow\infty}\mathbb{P}(Z^T\in R)=\int_{R}\rho(\vec{z})d\vec{z},
\end{align}
where we have written $\mathbb{P}$ in place of $\mathbb{P}_{avoid, Ber}^{0,T,\vec{x}^{T},\vec{y}^{T}}$ to ease the notation.

Define $m_{i}^{T}=\lceil u_{i}\sqrt{T}+ptT\rceil$ and $M_{i}^{T}=\lfloor v_{i}\sqrt{T}+ptT\rfloor$. Then we have:
\begin{align*}
&\mathbb{P}\left(Z^T \in R\right) =\mathbb{P}\left(u_{i}\sqrt{T}+ptT\leq L^T_{i}(\lfloor tT\rfloor) \leq v_{i}\sqrt{T}+ptT, i=1,\dots, k\right)\\
&=\sum_{\lambda_{1}=m_{1}^{T}}^{M_{1}^{T}}\cdots\sum_{\lambda_{k}=m_{k}^{T}}^{M_{k}^{T}}\mathbb{P}(L^T_{1}(\lfloor tT \rfloor)=\lambda_{1},\dots,L^T_{k}(\lfloor tT \rfloor)=\lambda_{k})\\
&=\sum_{\lambda_{1}=m_{1}^{T}}^{M_{1}^{T}}\dots\sum_{\lambda_{k}=m_{k}^{T}}^{M_{k}^{T}}T^{-k/2} \cdot T^{k/2} \cdot \mathbb{P}(L^T_{1}(\lfloor tT \rfloor)=\lambda_{1},\dots,L^T_{k}(\lfloor tT \rfloor)=\lambda_{k}) = \int_{\mathbb{R}^k} h_T(\vec{z}) d\vec{z},
\end{align*}
where $h_T(\vec{z})$ is the step function
$$h_T(\vec{z}) = \sum_{\lambda_{1}=m_{1}^{T}}^{M_{1}^{T}}\dots\sum_{\lambda_{k}=m_{k}^{T}}^{M_{k}^{T}} \mathbf{1}_{Q_{\lambda}(T)}(\vec{z}) \cdot  T^{k/2} \cdot \mathbb{P}(L^T_{1}(\lfloor tT \rfloor)=\lambda_{1},\dots,L^T_{k}(\lfloor tT \rfloor)=\lambda_{k}) ,$$
where as in Step 1, $Q_{\lambda}(T)$ is the cube $[\lambda_{1}T^{-1/2}-pt\sqrt{T},(\lambda_{1}+1)T^{-1/2}-pt\sqrt{T})\times \cdots\times[\lambda_{k}T^{-1/2}-pt\sqrt{T},(\lambda_{k}+1)T^{-1/2}-pt\sqrt{T})$. The last equation and (\ref{UniformConvSlice}) together imply that 
$$\mathbb{P}\left(Z^T \in R\right)  = [1 + o(1)] \cdot \int_{R} \rho(\vec{z}) d\vec{z}.$$
 Letting $T \rightarrow \infty$ in the last equation we obtain (\ref{ConvRectangle}).\\

\noindent \textbf{Step 5.} In this step, we conclude the proof of the proposition. By \cite[Theorem 3.10.1]{Durrett} to prove the weak convergence of $Z^T$ to $\rho$ it suffices to show that 
for any open set $U \subset W^{\circ}_k$ we have 
\begin{equation}\label{OpenConv}
\liminf_{T\rightarrow\infty}\mathbb{P}( Z^T\in U) \geq \int_U \rho(z) dz.
\end{equation}
In the remainder we fix an open set $U \subset W^{\circ}_k$ and prove (\ref{OpenConv}).

From \cite[Theorem 1.4]{Stein} we know that we can write $U=\cup_{i=1}^{\infty}R_{i}$, where $R_{i}=[u_{1}^{i},v_{1}^{i}]\times\cdots\times[u_{k}^{i},v_{k}^{i}]$ are rectangles with pairwise disjoint interiors. Let us fix $n \in \mathbb{N}$ and $\epsilon > 0$ and put $R_{i}^{\epsilon}=[u_{1}^{i}+\epsilon,v_{1}^{i}-\epsilon]\times\cdots\times[u_{k}^{i}+\epsilon,v_{k}^{i}-\epsilon]$. By finite additivity of $\mathbb{P}$ and (\ref{ConvRectangle}) we know  
\begin{align*}
	&\liminf_{T\rightarrow\infty}\mathbb{P}(Z^T\in U)\geq\liminf_{T\rightarrow\infty}\mathbb{P}(Z^T\in \cup_{i=1}^{n}R_{i}^{\epsilon}) = \liminf_{T\rightarrow\infty}\sum_{i=1}^{n}\mathbb{P}(Z^T\in R_{i}^{\epsilon})=\sum_{i=1}^{n}\int_{R_{i}^{\epsilon}}\hspace{-3mm}\rho(\vec{z})d\vec{z} =\int_{\cup_{i=1}^{n}R_{i}^{\epsilon}}\hspace{-9mm}\rho(\vec{z})d\vec{z}.
\end{align*}
We can now let $\epsilon \rightarrow 0+$ and $n \rightarrow \infty$ above and apply the monotone convergence theorem to conclude that the right side converges to $\int_U \rho(z)dz$. Here we use that $\rho$ is continuous and non-negative. Doing this brings us to (\ref{OpenConv}) and thus we conclude the statement of the proposition.

%
\subsection{Proof of Proposition \ref{S9PropH} for any $\vec{a}, \vec{b} \in W_k$}\label{Section9.5} 
In this section, we give the proof of Proposition \ref{S9PropH} for any $\vec{a}, \vec{b} \in W_k$. In what follows we assume that $\vec{a}, \vec{b}$ have the form in (\ref{Block}), which we recall here for the reader's convenience. 
\begin{equation}{\label{Block1}}
\begin{split}
	&\vec{a}=(a_{1},\cdots,a_{k})=(\underbrace{\alpha_{1},\cdots,\alpha_{1}}_{m_{1}},\cdots,\underbrace{\alpha_{p},\cdots,\alpha_{p}}_{m_{p}})\\
	&\vec{b}=(b_{1},\cdots,b_{k})=(\underbrace{\beta_{1},\cdots,\beta_{1}}_{n_{1}},\cdots,\underbrace{\beta_{q},\cdots,\beta_{q}}_{n_{q}})
\end{split}
\end{equation}
We recall that $\alpha_{1}>\alpha_{2}>\cdots>\alpha_{p}$, $\beta_{1}>\beta_{2}>\cdots>\beta_{q}$ and $\sum_{i=1}^{p}m_{i}=\sum_{i=1}^{q}n_{i}=k$. We denote $\vec{m}=(m_{1},\cdots,m_{p})$, $\vec{n}=(n_{1},\cdots,n_{q})$. If $\vec{a}, \vec{b}$ have the above form we recall from (\ref{PreDensity17}) that 
\begin{align}{\label{PreDensity172}}
	H(\vec{z})=\varphi(\vec{a},\vec{z},\vec{m}) \cdot \psi(\vec{b},\vec{z},\vec{n}) \cdot \prod_{i=1}^{k}e^{-c_{3}(t,p)z_{i}^{2}},
\end{align}
where $\varphi$ and $\psi$ are as in (\ref{TwoDet}). 

We next introduce some new notation that will be useful for our arguments. For any $\epsilon > 0$ we define the vectors $\vec{a}^+_{\epsilon}$ and $\vec{b}^+_{\epsilon}$ through 
\begin{equation}\label{vecpos}
\begin{split}
(a^+_{\epsilon})_{m_1 + \cdots + m_{i-1} + j} = \alpha_i + (m_i-j + 1)\epsilon &\mbox{ for $i = 1, \dots, p$ and $j =1 ,\dots, m_i$}, \\
(b^+_{\epsilon})_{n_1 + \cdots + n_{i-1} + j} = \beta_i + (n_i-j + 1)\epsilon &\mbox{ for $i = 1, \dots, q$ and $j =1 ,\dots, n_i$}. \\
\end{split}
\end{equation}
Similarly, we define the vectors $\vec{a}^-_{\epsilon}$ and $\vec{b}^-_{\epsilon}$ through
\begin{equation}\label{vecneg}
\begin{split}
(a^-_{\epsilon})_{m_1 + \cdots + m_{i-1} + j} = \alpha_i -j \epsilon &\mbox{ for $i = 1, \dots, p$ and $j =1 ,\dots, m_i$}, \\
(b^-_{\epsilon})_{n_1 + \cdots + n_{i-1} + j} = \beta_i - j\epsilon &\mbox{ for $i = 1, \dots, q$ and $j =1 ,\dots, n_i$}. \\
\end{split}
\end{equation}
We next let $H_\epsilon^+$, $H_\epsilon^-$ be as in (\ref{PreDensity172}) for the vectors $\vec{a}_\epsilon^+, \vec{b}^+_\epsilon$ and $\vec{a}_\epsilon^-, \vec{b}^-_\epsilon$ respectively. In particular, 
\begin{equation}\label{HPM}
H^{\pm}_\epsilon(\vec{z}) = \det\left[e^{c_1(t,p)(a^\pm_\epsilon)_i z_j}\right]_{i,j=1}^{k} \cdot \det\left[e^{c_2(t,p)(b^\pm_\epsilon)_i  z_j}\right]_{i,j=1}^{k} \cdot \prod_{i=1}^{k}e^{-c_{3}(t,p)z_{i}^{2}}.
\end{equation}
Observe that by construction we have $\vec{a}^{\pm}_\epsilon, \vec{b}^{\pm}_\epsilon \in W_k^{\circ}$ for all $\epsilon \in (0,1)$ that are sufficiently small, which we implicity assume in the sequel. It follows from our work in Section \ref{Section9.3} that 
$$Z_\epsilon^{\pm} = \int_{W_k} H^{\pm}_\epsilon(z) dz \in (0,\infty)$$
and so the functions 
\begin{equation}\label{HPM}
\rho^{\pm}_\epsilon(\vec{z}) = [Z_\epsilon^{\pm} ]^{-1} \cdot H^{\pm}_\epsilon(\vec{z}) 
\end{equation}
are well-defined densities on $W_k$. 

We next recall some basic notation for multivariate Taylor series, following \cite[Chapter 3]{CJ}. Suppose $\sigma = (\sigma_{1},\dots,\sigma_{k})$ is a multi-index of \emph{length} $k$. In our context, we require $\sigma_{1},\dots,\sigma_{k}$ be all non-negative integers (some of them might be equal). We define $|\sigma|=\sum_{i=1}^{k}\sigma_{i}$ as the \emph{order} of $\sigma$. Suppose $\tau=(\tau_{1},\dots,\tau_{k})$ is another multi-index of length $n$. We say $\tau\leq\sigma$ if $\tau_{i}\leq \sigma_{i}$ for $i=1,\cdots,k$. We say $\tau<\sigma$ if $\tau\leq \sigma$ and there exists at least one index $i$ such that $\tau_{i}<\sigma_{i}$. Then, define the partial derivative with respect to the multi-index $\sigma$:
$$D^{\sigma}f(x_{1},\cdots,x_{k})=\frac{\partial^{|\sigma|}f(x_{1},\cdots,x_{k})}{\partial x_{1}^{\sigma_{1}}\partial x_{2}^{\sigma_{2}}\cdots \partial x_{k}^{\sigma_{k}}}.$$ 

We also have the Taylor expansion for multi-variable functions:
\begin{equation}{\label{MultiTaylor}}
	f(x_{1},\cdots,x_{k})=\sum_{|\sigma|\leq r}\frac{1}{\sigma!}D^{\sigma}f(\vec{x}_{0})(\vec{x}-\vec{x}_{0})^{\sigma}+R^f_{r+1}(\vec{x},\vec{x}_{0})
\end{equation}
 
In the equation, $\sigma!=\sigma_{1}!\sigma_{2}!\cdots\sigma_{k}!$ is the factorial with respect to the multi-index $\sigma$, $\vec{x}_{0}=(x_{1}^{0},\cdots,x_{k}^{0})$ is a constant vector at which we expand the function $f$, $(\vec{x}-\vec{x}_{0})^{\sigma}$ stands for $(x_{1}-x_{1}^{0})^{\sigma_{1}}\cdots(x_{k}-x_{k}^{0})^{\sigma_{k}}$, and 
$$R^f_{r+1}(\vec{x},\vec{x}_{0})=\sum_{\sigma:|\sigma|=r+1}\frac{1}{\sigma!}D^{\sigma}f(\vec{x}_{0}+\theta(\vec{x}-\vec{x}_{0}))(\vec{x}-\vec{x}_{0})^{\sigma}$$ is the remainder, where $\theta\in (0,1)$.

We also need some notation for \emph{permutations}. Suppose $s_{n}$ is a permutation of  $\{1,\dots,n\}$, and $s_{n}(i)$ represents the $i$-$th$ element in the permutation $s_{n}$. We define \emph{the number of inversions} of $s_{n}$ by $I(s_{n})=\sum_{i=1}^{n-1}\sum_{j=i+1}^{n}\mathbf{1}_{\{s_{n}(i)>s_{n}(j)\}}$. For example, the permutation $s_{n}=(1,\dots,n)$ has $0$ number of inversions, while the permutation $s_{5}=(3,2,5,1,4)$ has number of inversions equal to $2+1+2+0+0 = 5$. Define the sign of permutation $s_{n}$ by $sgn(s_{n})=(-1)^{I(s_{n})}$. For instance, $sgn((1,\dots,n))=1$ and $sgn(s_{5})=-1$ in the previous example.

We now turn to the proof of Proposition \ref{S9PropH}.

\begin{proof}(of Proposition \ref{S9PropH}) For clarity we split the proof into two steps.

{\bf \raggedleft Step 1.} In this step we prove that for every $\vec{z} \in W_k$ we have
\begin{equation}\label{limitH}
\begin{split}
&\lim_{\epsilon \rightarrow 0+} \epsilon^{- \sum_{i = 1}^p \binom{m_i}{2} - \sum_{i = 1}^p \binom{n_i}{2}} H_{\epsilon}^{\pm} (\vec{z}) = C(\vec{m}) \cdot C(\vec{n})\cdot  H(\vec{z}), \mbox{ where }\\
&C(\vec{m}) =  \prod_{i = 1}^p \frac{1}{m_i!} \cdot \prod_{1 \leq j_1 < j_2 \leq m_i} (j_2 - j_1)  \mbox{ and } C(\vec{n}) =  \prod_{i = 1}^q \frac{1}{n_i!} \cdot \prod_{1 \leq j_1 < j_2 \leq n_i} (j_2 - j_1).
\end{split}
\end{equation}
As the cases are very similar we only show (\ref{limitH}) for $H^{+}_\epsilon$. Equation (\ref{limitH}) would follow if we can show that 
\begin{equation}\label{limitH2}
\begin{split}
&\lim_{\epsilon \rightarrow 0+} \epsilon^{- \sum_{i = 1}^p \binom{m_i}{2} }\cdot \det\left[e^{c_1(t,p)(a^\pm_\epsilon)_i z_j}\right]_{i,j=1}^{k} =C(\vec{m})  \cdot \varphi(\vec{a},\vec{z},\vec{m}), \\
& \lim_{\epsilon \rightarrow 0+} \epsilon^{ - \sum_{i = 1}^p \binom{n_i}{2}}\cdot \det\left[e^{c_2(t,p)(b^\pm_\epsilon)_i  z_j}\right]_{i,j=1}^{k}  =C(\vec{n}) \cdot \psi(\vec{b},\vec{z},\vec{n}).
\end{split}
\end{equation}
Let us put $f(\vec{c}, \vec{z}) = \det\left[e^{c_1(t,p) c_i z_j}\right]_{i,j=1}^{k},$ $g(\vec{c}, \vec{z}) =  \det\left[e^{c_2(t,p)c_i  z_j}\right]_{i,j=1}^{k} $, $u =  \sum_{i = 1}^p \binom{m_i}{2}$, $v = \sum_{i = 1}^p \binom{n_i}{2}$. By the multi-variable Taylor series expansion (\ref{MultiTaylor}) we know that 
\begin{equation}{\label{Expandf}}
\begin{split}
f(\vec{a}_{\epsilon}^+,\vec{z})&=\sum_{|\sigma|\leq u}\frac{D^{\sigma}f(\vec{a},\vec{z})}{\sigma !}(\vec{a}^+_{\epsilon}-\vec{a})^{\sigma}+R^f_{u+1}(\vec{a}_{\epsilon}^+,\vec{a},\vec{z}).
\end{split}
\end{equation}
where 
\begin{equation}{\label{remainder}}
R^f_{u+1}(\vec{a}_{\epsilon}^+,\vec{a},\vec{z})=\sum_{\sigma:|\sigma|=u+1}\frac{1}{\sigma!}D^{\sigma}f(\vec{a}+\theta(\vec{a}_{\epsilon}^+-\vec{a}),\vec{z})(\vec{a}_\epsilon^+-\vec{a})^{\sigma}.
\end{equation} 
and $\theta \in (0,1)$. We also observe, by basic linear algebra, that for any multi-index $\alpha = (\alpha_1, \dots, \alpha_k)$
\begin{equation}\label{DiffDet}
D^{\alpha} f(\vec{x}, \vec{z}) =  \det\left[(c_1(t,p) z_j)^{\alpha_i} e^{c_1(t,p) x_i z_j}\right]_{i,j=1}^{k}.
\end{equation}

We note that if $|\sigma| < u$ then there exist $i \in \{1, \dots, p\}$ and $j_1, j_2 \in \{1, \dots, m_i\}$ such that $j_1 \neq j_2$ and $\sigma_{m_1 + \cdots + m_{i-1} + j_1} = \sigma_{m_1 + \cdots + m_{i-1} + j_2}$. The latter implies that $D^{\sigma}f(\vec{a},\vec{z}) = 0$ since by (\ref{DiffDet}) the latter is the determinant of a matrix with two equal rows. An analogous argument shows that $D^{\sigma}f(\vec{a},\vec{z}) = 0$ unless $|\sigma| = u$ and $\{ \sigma_{m_1 + \cdots + m_{i-1}+ j}: j = 1, \dots, m_i\} = \{0, 1, \dots, m_{i} - 1\}$ for all $i \in \{1, \dots, p\}$. 

On the other hand, if $|\sigma| = u$ and $\{ \sigma_{m_1 + \cdots + m_{i-1}+ j}: j = 1, \dots, m_i\} = \{0, 1, \dots, m_{i} - 1\}$ for all $i \in \{1, \dots, p\}$ we have that 
$$\sigma = (\sigma^1, \sigma^2, \dots, \sigma^p),$$
where $\sigma^i \in S_{m_i}$ (the permutation group of $\{0, 1, \dots, m_i- 1\}$). Using the multi-linearity of the determinant we obtain for all such $\sigma$ that  
$$\frac{D^{\sigma}f(\vec{a},\vec{z})}{\sigma!}(\vec{a}^+_{\epsilon}-\vec{a})^{\sigma} = \epsilon^{u} \cdot F(\sigma) \cdot \varphi(\vec{a},\vec{z},\vec{m}), \mbox{ where }F(\sigma) =  \prod_{i = 1}^p \frac{sgn(\sigma^i) \cdot \prod_{r= 1}^{m_i} r^{\sigma^i(j) }}{m_i!}.$$
Summing over all $\sigma$ we conclude that 
$$\sum_{|\sigma|\leq u}\frac{D^{\sigma}f(\vec{a},\vec{z})}{\sigma !}(\vec{a}^+_{\epsilon}-\vec{a})^{\sigma} = \epsilon^{u} \cdot  \varphi(\vec{a},\vec{z},\vec{m}) \cdot  \prod_{i = 1}^p \frac{1}{m_i!} \cdot \prod_{1 \leq j_1 < j_2 \leq m_i} (j_2 - j_1) = \epsilon^{u} \cdot  \varphi(\vec{a},\vec{z},\vec{m})  \cdot C(\vec{m}),$$
where in deriving the above we used the formula for a Vandermonde determinant, cf. \cite[pp. 40]{Mac}. Combining the latter with (\ref{Expandf}) and (\ref{remainder}) we conclude that 
\begin{equation}{\label{Expandf2}}
\begin{split}
\left| \epsilon^{- u} f(\vec{a}_{\epsilon}^+,\vec{z}) - C(\vec{m})  \cdot \varphi(\vec{a},\vec{z},\vec{m})\right|  \leq | \epsilon^{- u} R^f_{u+1}(\vec{a}_{\epsilon}^+,\vec{a},\vec{z})| = O(\epsilon).
\end{split}
\end{equation}
Analogous arguments show that 
\begin{equation}{\label{Expandg2}}
\begin{split}
\left| \epsilon^{- v} g(\vec{b}_{\epsilon}^+,\vec{z}) - C(\vec{n})  \cdot \psi(\vec{b},\vec{z},\vec{n})\right|  \leq | \epsilon^{- v} R^g_{v+1}(\vec{b}_{\epsilon}^+,\vec{b},\vec{z})| = O(\epsilon).
\end{split}
\end{equation}
Combining (\ref{Expandf2}) and (\ref{Expandg2}) we conclude (\ref{limitH2}).\\

{\raggedleft \bf Step 2.} In this step we conclude the proof of the proposition. In view of (\ref{limitH}) and the fact that $H_\epsilon^+(\vec{z}) > 0$ for $\vec{z} \in W_k^{\circ}$ (we proved this in Section \ref{Section9.3}) we conclude that $H(\vec{z}) \geq 0$ for $\vec{z} \in W_k^{\circ}$. Also by Lemma \ref{NonVanish} we know that $H(\vec{z}) \neq 0$ for $\vec{z} \in W_k^{\circ}$ and so indeed, $H(\vec{z}) > 0$ for $\vec{z} \in W_k^{\circ}$. Furthermore, we know that $H(\vec{z}) = 0$ for $\vec{z} \in W_k \setminus W_k^{\circ}$ since the determinants in the definition of $H(\vec{z})$ vanish due to equal columns when $\vec{z} \in W_k \setminus W_k^{\circ}$. Finally, we observe that by (\ref{Expandf2}) and (\ref{Expandg2}) we know that 
there exist positive constants $D, d > 0$ independent of $\epsilon$ provided it is sufficiently small such that 
\begin{equation}\label{DomFunH}
|\epsilon^{-u - v} \cdot H_\epsilon(\vec{z})| \leq D \cdot \exp \left(d \|\vec{z}\| - c_3(t,p) \|\vec{z}\|^2 \right), 
\end{equation}
where as usual $\|\vec{z} \|^2 = \sum_{i = 1}^k z_i^2$. In view of (\ref{DomFunH}) and the dominating convergence theorem, we conclude that $H(\vec{z})$ is integrable and since it is continuous and positive on $W_k^{\circ}$ we conclude that $Z_c \in (0, \infty)$ as desired.
\end{proof}

The above proof essentially shows the following statement. 

\begin{corollary}\label{WeakConvRho}
Let $\vec{a}, \vec{b} \in W_k$. Let $\rho^{\pm}_{\epsilon}$ be as (\ref{HPM}), and let $\rho$ be as in Proposition \ref{S9PropH} for the two vectors $\vec{a}, \vec{b}$. Then $\rho^{\pm}_\epsilon$ weakly converge to $\rho$ as $\epsilon \rightarrow 0+$. 
\end{corollary}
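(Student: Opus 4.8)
The plan is to derive the corollary directly from the two estimates already established inside the proof of Proposition \ref{S9PropH}: the pointwise limit \eqref{limitH} and the uniform Gaussian-type domination \eqref{DomFunH}. Write $u=\sum_{i=1}^p\binom{m_i}{2}$ and $v=\sum_{i=1}^q\binom{n_i}{2}$ as in that proof. Recall from \eqref{limitH} that for every $\vec{z}\in W_k$ one has $\epsilon^{-u-v}H^{\pm}_\epsilon(\vec{z})\to C(\vec{m})C(\vec{n})H(\vec{z})$ as $\epsilon\to 0+$, and from \eqref{DomFunH} (which follows from \eqref{Expandf2} and \eqref{Expandg2}) that there exist constants $D,d>0$, independent of $\epsilon$ for all sufficiently small $\epsilon$, such that $\epsilon^{-u-v}H^{\pm}_\epsilon(\vec{z})\le D\exp\!\big(d\|\vec{z}\|-c_3(t,p)\|\vec{z}\|^2\big)$ for all $\vec{z}\in W_k$. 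Since the right-hand side is integrable over $W_k$, the dominated convergence theorem is applicable, and this is the only analytic input I will need.

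First I would handle the normalizing constants. Since $\epsilon^{-u-v}Z^{\pm}_\epsilon=\int_{W_k}\epsilon^{-u-v}H^{\pm}_\epsilon(\vec{z})\,d\vec{z}$, the dominated convergence theorem gives $\lim_{\epsilon\to0+}\epsilon^{-u-v}Z^{\pm}_\epsilon=C(\vec{m})C(\vec{n})\int_{W_k}H(\vec{z})\,d\vec{z}=C(\vec{m})C(\vec{n})Z_c$, which lies in $(0,\infty)$ by Proposition \ref{S9PropH}(3) together with $C(\vec{m}),C(\vec{n})>0$. In particular $\epsilon^{-u-v}Z^{\pm}_\epsilon$ is bounded away from $0$ and $\infty$ for $\epsilon$ small.

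Next I would test against an arbitrary bounded continuous $F:\mathbb{R}^k\to\mathbb{R}$. Using that $H^{\pm}_\epsilon$ is supported on $W_k$, write
\[
\int_{\mathbb{R}^k}F(\vec{z})\,\rho^{\pm}_\epsilon(\vec{z})\,d\vec{z}=\frac{\int_{W_k}F(\vec{z})\,\epsilon^{-u-v}H^{\pm}_\epsilon(\vec{z})\,d\vec{z}}{\epsilon^{-u-v}Z^{\pm}_\epsilon}.
\]
The numerator's integrand is dominated by $\|F\|_\infty\,D\exp\!\big(d\|\vec{z}\|-c_3(t,p)\|\vec{z}\|^2\big)$ and converges pointwise to $C(\vec{m})C(\vec{n})F(\vec{z})H(\vec{z})$, so by dominated convergence the numerator tends to $C(\vec{m})C(\vec{n})\int_{W_k}F(\vec{z})H(\vec{z})\,d\vec{z}$. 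Combining with the limit of the denominator from the previous step, $\int F\,\rho^{\pm}_\epsilon\to Z_c^{-1}\int_{W_k}F(\vec{z})H(\vec{z})\,d\vec{z}=\int F\,\rho$, which is exactly weak convergence of $\rho^{\pm}_\epsilon$ to $\rho$.

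I do not expect a genuine obstacle here: all the substantive work — the exact order $\epsilon^{u+v}$ of vanishing of the perturbed determinants, the identification of the leading coefficient via Vandermonde determinants, and the uniform Gaussian domination — has already been done in the proof of Proposition \ref{S9PropH}. The only point that deserves a line of care is checking that the domination \eqref{DomFunH} is genuinely uniform in $\epsilon$, so that the dominated convergence theorem can be invoked simultaneously for the numerators and for the normalizing constants; once that is noted, the argument is routine bookkeeping.
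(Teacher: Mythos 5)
Your proof is correct and follows essentially the same route as the paper: both rely on the pointwise limit (\ref{limitH}) together with the uniform Gaussian domination (\ref{DomFunH}) and apply dominated convergence twice — once for the normalizing constant over $W_k$ and once for the integral against a Borel set (paper) or a bounded continuous test function (you) — before taking the ratio. The only cosmetic difference is that the paper verifies convergence of $\int_B \rho^{\pm}_\epsilon$ for Borel sets $B$ rather than testing against bounded continuous $F$; the substance is identical.
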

\begin{proof} We use the same notation as in the proof of Proposition \ref{S9PropH} above. As the proofs are analogous we only show that $\rho_{\epsilon}^+$ weakly converges to $\rho$. We claim that for any Borel set $B \subset W_k$ 
\begin{equation}\label{limitBorel}
\lim_{\epsilon \rightarrow 0+} \int_B \epsilon^{-u - v}  H_\epsilon^+(z) dz =C(\vec{m})  \cdot C(\vec{n})  \cdot   \int_B H(z) dz.
\end{equation}

Assuming the validity of (\ref{limitBorel}) we see that for any Borel set $B \subset W_k$ we have 
$$\int_B \rho(z) dz = \frac{\int_B H(z) dz}{\int_{W_k} H(z) dz} = \lim_{\epsilon \rightarrow 0+} \frac{C(\vec{m})  \cdot C(\vec{n}) \cdot  \int_B H^+_\epsilon(z) dz}{C(\vec{m})  \cdot C(\vec{n})  \cdot \int_{W_k} H^+_\epsilon(z) dz} = \lim_{\epsilon \rightarrow 0+}  \int_B \rho^+_\epsilon(z) dz,$$
which proves the weak convergence we wanted. Thus we only need to show (\ref{limitBorel}).

In view of (\ref{limitH}) we know that $\epsilon^{-u - v}  H_\epsilon^+(z)$ converges pointwise to $C(\vec{m})  \cdot C(\vec{n}) \cdot H(z)$ and then (\ref{limitBorel}) follows from the dominated convergence theorem, once we invoke (\ref{DomFunH}). 
\end{proof}

%
\subsection{Proof of Proposition \ref{WeakConvDistinct} for any $\vec{a}, \vec{b} \in W_k$}{\label{Section9.6}}
We fix the same notation as in Section \ref{Section9.5} and suppose that $\epsilon > 0$ is sufficiently small so that $\vec{a}^{\pm}_\epsilon, \vec{b}^{\pm}_\epsilon \in W_k^{\circ}$. To prove the proposition it suffices to show that for any $\vec{c} \in \mathbb{R}^k$ we have that 
\begin{equation}\label{S9ETS}
\lim_{T\rightarrow \infty}\mathbb{P}^{0,T,\vec{x}^{T},\vec{y}^{T}}_{avoid,Ber} \left( Z_1^T \leq c_1, \dots, Z_k^T \leq c_k \right) = \int_{W_k \cap R} \rho(\vec{z})d\vec{z},
\end{equation}
where $R = (-\infty, c_1] \times \cdots \times (-\infty, c_k]$. 

We define the vectors $\vec{x}^+_{\epsilon,T}$ and $\vec{y}^+_{\epsilon,T}$ through 
\begin{equation*}\label{vecposXY}
\begin{split}
(x^+_{\epsilon,T})_{m_1 + \cdots + m_{i-1} + j} = x^T_{m_1 + \cdots + m_{i-1} + j} + \lfloor \sqrt{T} (m_i-j + 1)\epsilon \rfloor &\mbox{ for $i = 1, \dots, p$ and $j =1 ,\dots, m_i$}, \\
(y^+_{\epsilon, T})_{n_1 + \cdots + n_{i-1} + j} =y^T_{n_1 + \cdots + n_{i-1} + j} + \lfloor \sqrt{T} (n_i-j + 1)\epsilon \rfloor &\mbox{ for $i = 1, \dots, q$ and $j =1 ,\dots, n_i$}. \\
\end{split}
\end{equation*}
Similarly, we define the vectors $\vec{x}^-_{\epsilon,T}$ and $\vec{y}^-_{\epsilon,T}$ through
\begin{equation*}\label{vecnegXY}
\begin{split}
(x^-_{\epsilon,T})_{m_1 + \cdots + m_{i-1} + j} =x^T_{m_1 + \cdots + m_{i-1} + j} - \lfloor \sqrt{T} j \epsilon \rfloor &\mbox{ for $i = 1, \dots, p$ and $j =1 ,\dots, m_i$}, \\
(y^-_{\epsilon,T})_{n_1 + \cdots + n_{i-1} + j} = y^T_{n_1 + \cdots + n_{i-1} + j}  - \lfloor \sqrt{T} j\epsilon \rfloor &\mbox{ for $i = 1, \dots, q$ and $j =1 ,\dots, n_i$}. \\
\end{split}
\end{equation*}

It follows from Lemma \ref{MCLxy} that 
\begin{equation}\label{S9ETS2}
\begin{split}
&\mathbb{P}^{0,T,\vec{x}^+_{\epsilon,T},\vec{y}^+_{\epsilon,T}}_{avoid,Ber} \left( Z_1^T \leq c_1, \dots, Z_k^T \leq c_k \right)  \leq \mathbb{P}^{0,T,\vec{x}^{T},\vec{y}^{T}}_{avoid,Ber} \left( Z_1^T \leq c_1, \dots, Z_k^T \leq c_k \right)  \leq \\
&\mathbb{P}^{0,T,\vec{x}^-_{\epsilon,T},\vec{y}^-_{\epsilon,T}}_{avoid,Ber} \left( Z_1^T \leq c_1, \dots, Z_k^T \leq c_k \right).
\end{split}
\end{equation}
Taking the limit as $T \rightarrow \infty$ in (\ref{S9ETS2}) and applying our result from Section \ref{Section9.4} we obtain
\begin{equation}\label{S9ETS3}
\begin{split}
&\int_{W_k \cap R} \rho^+(\vec{z})d\vec{z}  \leq \liminf_{T \rightarrow \infty} \mathbb{P}^{0,T,\vec{x}^{T},\vec{y}^{T}}_{avoid,Ber} \left( Z_1^T \leq c_1, \dots, Z_k^T \leq c_k \right)  \leq \\
& \limsup_{T \rightarrow \infty} \mathbb{P}^{0,T,\vec{x}^{T},\vec{y}^{T}}_{avoid,Ber} \left( Z_1^T \leq c_1, \dots, Z_k^T \leq c_k \right)  \leq\int_{W_k \cap R} \rho^-(\vec{z})d\vec{z}   .
\end{split}
\end{equation}
Taking the $\epsilon \rightarrow 0+$ limit in (\ref{S9ETS3}) and invoking Corollary \ref{WeakConvRho} we arrive at (\ref{S9ETS}). This suffices for the proof.

\bibliographystyle{amsplain}

\end{document}